\numberwithin{equation}{section}
\renewcommand*{\backrefalt}[4]{%
  \ifcase #1 %
  \or
    {↑#2}%
  \else
    {↑#2, ↑#3}%
  \fi}
\setlist[itemize,2]{label=$\circ$}
\def\mathcolor#1#{\@mathcolor{#1}}
\def\@mathcolor#1#2#3{%
  \protect\leavevmode
  \begingroup
    \color#1{#2}#3%
  \endgroup
}
\newenvironment{manualtheorem}[1]{%
  \IfBlankTF{#1}
    {}
    {}%
  \manualtheoreminner
}{\endmanualtheoreminner}
\newenvironment{manualconjecture}[1]{%
  \IfBlankTF{#1}
    {}
    {}%
  \manualconjectureinner
}{\endmanualconjectureinner}
\newenvironment{manualproposition}[1]{%
  \IfBlankTF{#1}
    {}
    {}%
  \manualpropositioninner
}{\endmanualpropositioninner}
\newenvironment{manualinsight}[1]{%
  \IfBlankTF{#1}
    {}
    {}%
  \manualinsightinner
}{\endmanualinsightinner}
\newtheorem{thm}{Theorem}[section]
\newtheorem{lem}[thm]{Lemma}
\newtheorem{prop}[thm]{Proposition}
\newtheorem{cor}[thm]{Corollary}
\newtheorem{claim}[thm]{Claim}
\newtheorem{conj}[thm]{Conjecture}
\theoremstyle{definition}
\newtheorem{defn}[thm]{Definition}
\newtheorem{nota}[thm]{Notation}
\theoremstyle{remark}
\newtheorem{rem}[thm]{Remark}
\newtheoremstyle{named}{}{}{\itshape}{}{\bfseries}{.}{.5em}{\thmnote{#3 }#1}
\theoremstyle{named}
\newcommand{\nc}{\newcommand} 
\nc{\mb}{\mathbb}
\nc{\mc}{\mathcal}
\nc{\mf}{\mathfrak}
\nc{\ic}{\mathbf{IC}}
    \def\AC{{\mathcal{A}}}
    \def\CC{{\mathcal{C}}}
    \def\DC{{\mathcal{D}}}
    \def\HC{{\mathcal{H}}}
\def\a{\alpha}
\def\d{\delta}
\def\l{\lambda}
\def\th{\theta}
\def\t{\tau}
\def\1{\mathbbm{1}}
\newcommand{\id}{{\rm id}}
\newcommand{\Aut}{\textrm{Aut}}
\def\1{\mathbbm{1}}
\def\cen{\operatorname{cen}}
\def\Pgn{\operatorname{Pgn}}
\def\cPgn{\operatorname{cPgn}}
\def\Cone{\operatorname{Cone}}
\def\Par{\operatorname{Par}}
\def\Sgm{\operatorname{Sgm}}
\def\cSgm{\operatorname{cSgm}}
\def\DD{\mathcal{D}}
\newcommand{\norm}[1]{\left\lVert#1\right\rVert}
\definecolor{airforceblue}{rgb}{0.36, 0.54, 0.66}
\definecolor{lilac}{rgb}{0.78, 0.64, 0.78}
\title[Shape and class of Bruhat Intervals]{Shape and class of Bruhat Intervals \\[0.5em] \normalfont{\footnotesize{Towards a classification via Euclidean geometry}}}
\author{Gaston Burrull}
\address{(Gaston Burrull) \newline \indent Beijing International Center for Mathematical Research, Peking University, No.\@ 5 Yiheyuan Road, Haidian District, Beijing 100871, China}
\email{gaston(at)bicmr(dot)pku(dot)edu(dot)cn}
\author{Nicolas Libedinsky}
\address{(Nicolas Libedinsky) \newline \indent Universidad de Chile, Departamento de Matem\'aticas, Las Palmeras 3425, Casilla 653, \newline \indent Santiago, Ñuñoa, 7800003, Chile}
\email{nlibedinsky@gmail.com}
\author{Rodrigo Villegas}
\address{(Rodrigo Villegas)}
\email{rod(dot)villegasg(at)gmail(dot)com}
\date{}
\begin{document}
\begin{abstract}
    We study Bruhat intervals in affine Weyl groups by viewing them as regions of alcoves.
    In type $\widetilde{A}_2$ we show that each interval coincides with a generalized permutohedron minus a star-shaped polygon, and we prove a subtler version inside the dominant chamber of type $\widetilde{A}_n$.
    Motivated by this geometry, we conjecture that whenever two Bruhat intervals are isomorphic, there exists an isomorphism realized by a piecewise isometry.
    We prove this when both endpoints are dominant in $\widetilde{A}_2$ and obtain partial results in $\widetilde{A}_n$. In the course of proving these results, we made the surprising observation that much of the information contained in a Bruhat interval is already encoded in a tiny portion of it.
\end{abstract}
\keywords{Affine Weyl groups, Bruhat order, classification of Bruhat intervals, thick intervals, dihedral intervals, Euclidean geometry, Kazhdan--Lusztig polynomials}
\subjclass[2020]{05E10 (Primary), 20F55 (Secondary)}
\maketitle

\section{Introduction}
Consider the following selection of fundamental and interconnected open problems in Lie theory, listed in roughly decreasing order of complexity:

\begin{enumerate}
    \item\label{problem: characters} Determine the characters of simple representations of reductive algebraic groups over fields of positive characteristic.
    \item\label{problem: p-canonical} Compute the $p$-canonical basis for affine Weyl groups.
    \item\label{problem: class} Classify all Bruhat intervals modulo poset isomorphism.
    \item\label{problem: projectors} Construct projectors onto indecomposable objects in the (anti-spherical) Hecke category in characteristic zero.
    \item\label{problem: closed KL} Find closed formulas for Kazhdan--Lusztig polynomials in affine Weyl groups.
    \item\label{problem: structure of Bruhat intervals} At the more accessible end: counting and analyzing the geometry of Bruhat intervals.
\end{enumerate}
We view Problems~(\ref{problem: class}) and~(\ref{problem: structure of Bruhat intervals}) as especially compelling because they may offer critical insights into the other problems listed above (see Remark~\ref{rem: euclideanalcovic},  Section~\ref{ss: stabilization}, and Section~\ref{ss: dihedral} for some examples), by illuminating structural patterns that might be obscured in richer categorical or representation-theoretic settings.

\subsection{The geometry of Bruhat intervals}
Let $W$ be a crystallographic Coxeter group.
The main result of the important paper \emph{On the Shape of Bruhat Intervals} by Björner and Ekedahl~\cite{BE09} is the following: for any $y \in W$, let $f_i^y$ denote the number of elements of length $i$ in the Bruhat interval $[\mathrm{id}, y]$.
Then,
\begin{equation*}
    0 \leq i < j \leq \ell(y) - i \implies f_i^y \leq f_j^y.
\end{equation*}
These inequalities constrain the ``combinatorial shape'' of the lower Bruhat interval $[\mathrm{id}, y]$.
In this paper, we are interested in the  ``geometric shape'' of Bruhat intervals in the context of affine Weyl groups and general intervals $[x, y]$: by viewing an affine Weyl group as a collection of alcoves in Euclidean space, we aim to describe $[x, y]$ in geometric terms---for instance, as unions of a small number of convex polytopes or similar structures.

Let us begin with the affine Weyl group $W$ of type~$\widetilde{A}_2$.
This is the simplest affine Weyl group that already exhibits a rich combinatorial structure.
Despite its apparent simplicity, some of the fundamental questions listed above remain open in this setting.
For example, Problem~(\ref{problem: characters}) is open for $\mathrm{SL}_3$ and Problem~(\ref{problem: p-canonical}) is open for $\widetilde{A}_2$, though there is the \emph{billiards conjecture} by Lusztig and Williamson~\cite{LW18} that if proved, would give a partial solution to these problems.
Problems~(\ref{problem: projectors}) and~(\ref{problem: closed KL}), concerning projectors in the Hecke category and Kazhdan--Lusztig polynomials, have been settled only recently for $\widetilde{A}_2$ by the second author and Patimo~\cite{LP23}.

For ease of exposition, we now focus on \emph{dominant elements}, i.e., those lying in the dominant chamber $C_+$, which corresponds to the light-blue region\footnote{Note that the red alcove represents the identity element.
This convention is commonly used in representation theory but differs from the standard Bourbaki labeling~\cite{Bou68}.} in Figure~\ref{subfig: hexagon 1}.
We aim to give a geometric visualization of the interval $[x, y]$, where both $x$ and $y$ lie in~$C_+$.

We begin by choosing an alcove $y$ as shown in Figure~\ref{subfig: hexagon 1}.
We then construct a hexagon $C_y$ as the convex hull of the orbit of the center of this alcove under the action of the finite Weyl group $W_f \cong S_3$, which is generated by reflections across the thick black lines in the figure.

\begin{figure}[H]
    \centering
    \begin{subfigure}{.38\linewidth}
        \includegraphics[width=\textwidth]{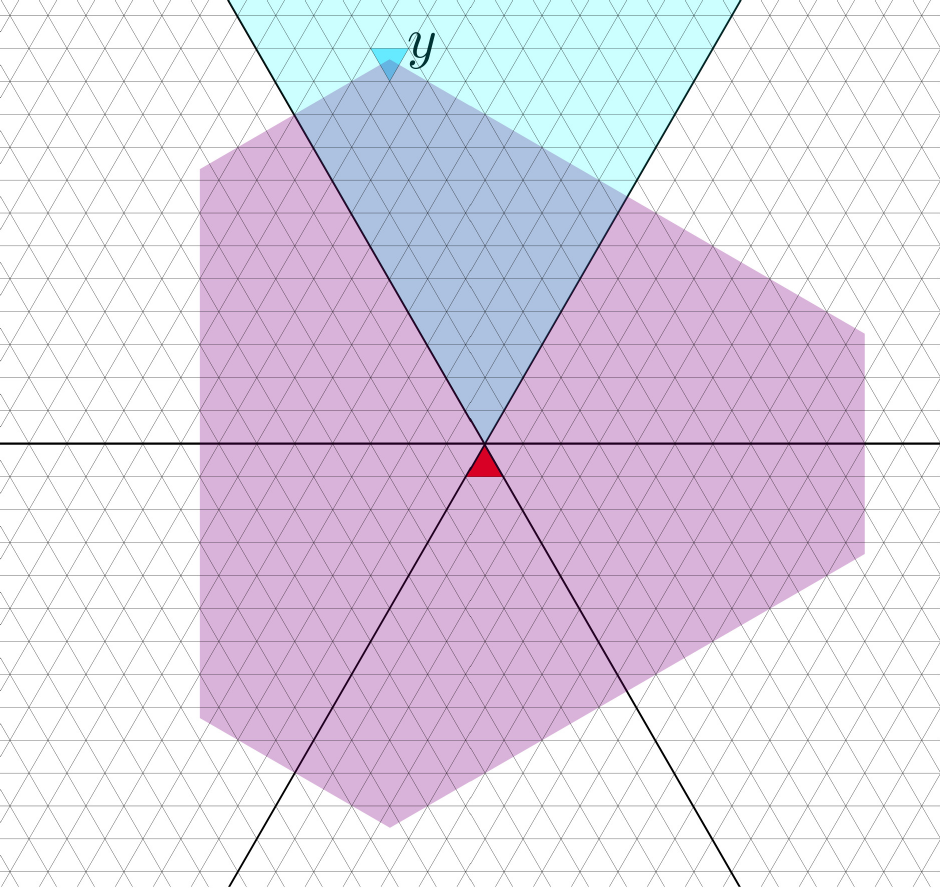}
        \caption{The dominant chamber and the hexagon $C_y=\operatorname{Conv}({W_f}\cdot y)$.}
        \label{subfig: hexagon 1}
    \end{subfigure}
    \hspace{0.7cm} 
    \begin{subfigure}{.38\linewidth}
        \includegraphics[width=\linewidth]{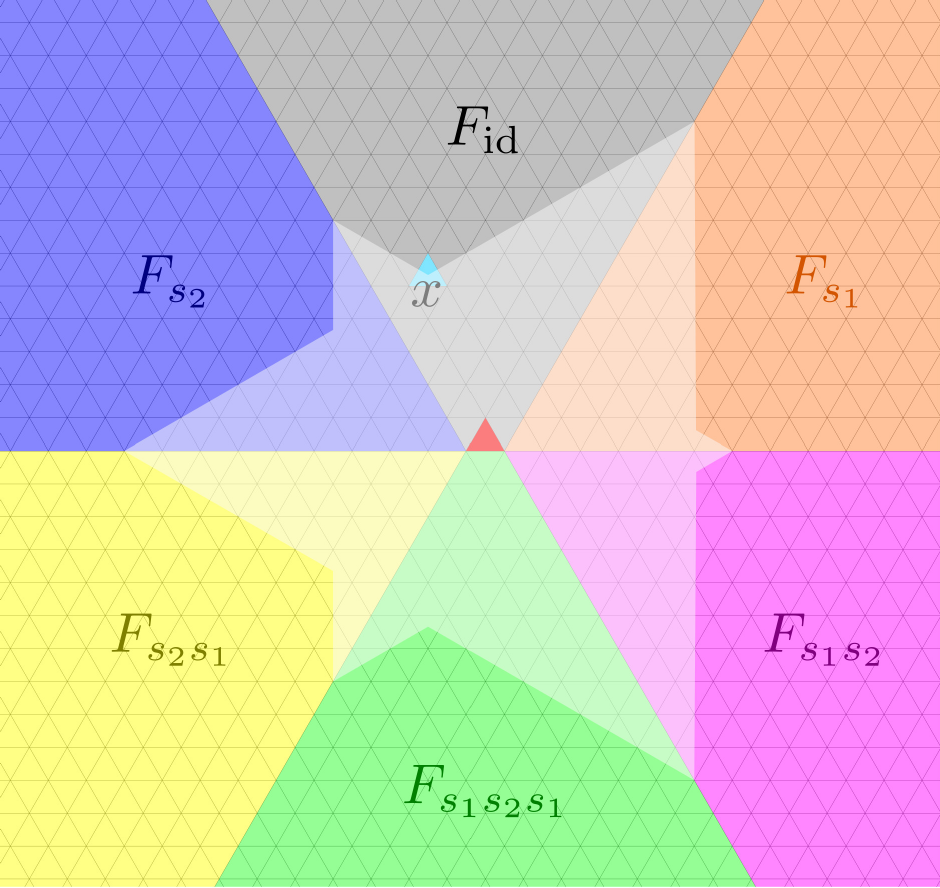} 
        \caption{The star $\mathcal{St}(x)$ and the six zones $F_w$ (for $w\in W_f$.)}
        \label{subfig: star and Fw zones intro}
    \end{subfigure}\\
    \vspace{0.3cm} 
    \caption{}
    \label{fig: hexagon and star intro}
\end{figure}

Now, fix another alcove $x$.
We define a star-shaped region $\mathcal{St}(x)$ passing through $x$, illustrated in Figure~\ref{subfig: star and Fw zones intro}.
This star is the union of two equilateral triangles, denoted $\triangleleft$ and $\triangleright$, constructed as follows:
\begin{itemize}
    \item $\triangleleft$ is a left-facing triangle centered at the bottom-right vertex of the red triangle.
    Its boundary contains the center of~$x$.
    \item $\triangleright$ is a right-facing triangle centered at the bottom-left vertex of the red triangle, and its boundary also contains the center of~$x$.
\end{itemize}

For any $x, y\in W$ (i.e., not necessarily in the dominant chamber), we also define a convex polygon (usually a hexagon) $C_y$ passing through the center of $y$ and a star-shaped region $\mathcal{St}(x)$ whose boundary contains the center of~$x$ (see Section~\ref{subsec: geometry of general intervals}).

\begin{manualtheorem}{A}\label{thm: A}
    An element $z \in W$ belongs to the interval $[x, y]$ if and only if the center of its alcove lies in $C_y$, and does not lie in the interior of~$\mathcal{St}(x)$.
\end{manualtheorem}
The following figure illustrates this theorem for the example above.
\begin{figure}[H]
    \centering
    \begin{subfigure}{.38\linewidth}
        \includegraphics[width=\linewidth]{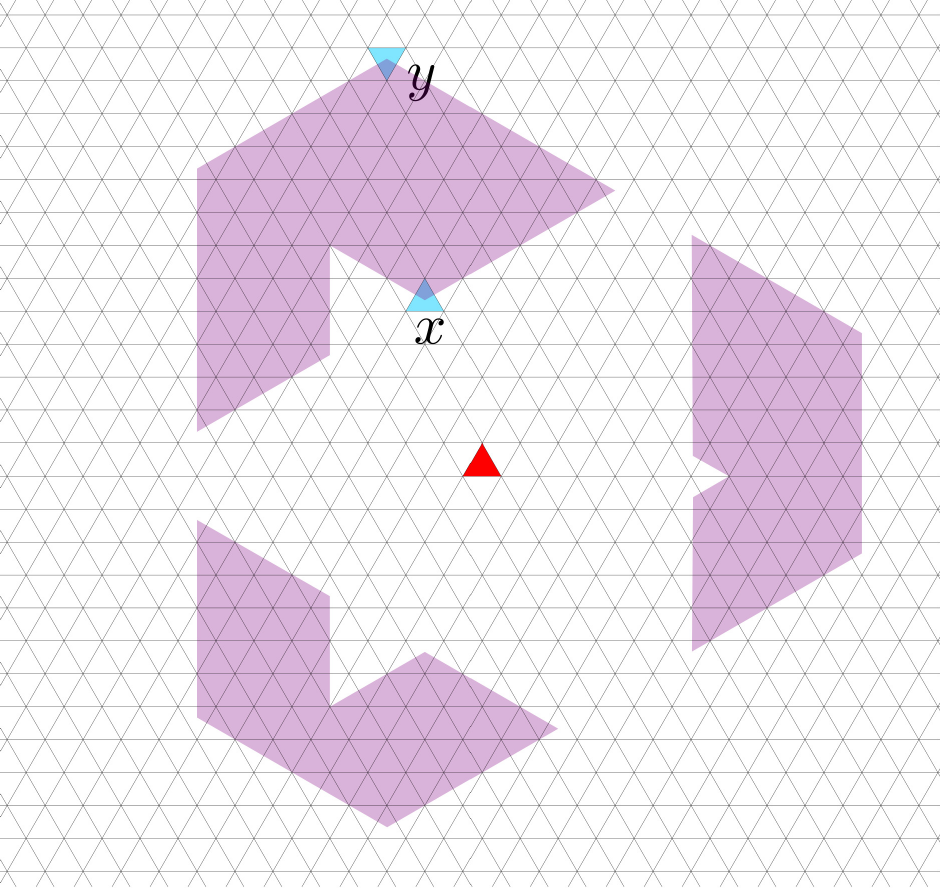} 
        \caption{The set $C_y\setminus\mathcal{St}^\circ (x)$.}
        \label{subfig: mixing hexagon and star}
    \end{subfigure}
    \hspace{0.7cm} 
    \begin{subfigure}{.38\linewidth}
        \includegraphics[width=\linewidth]{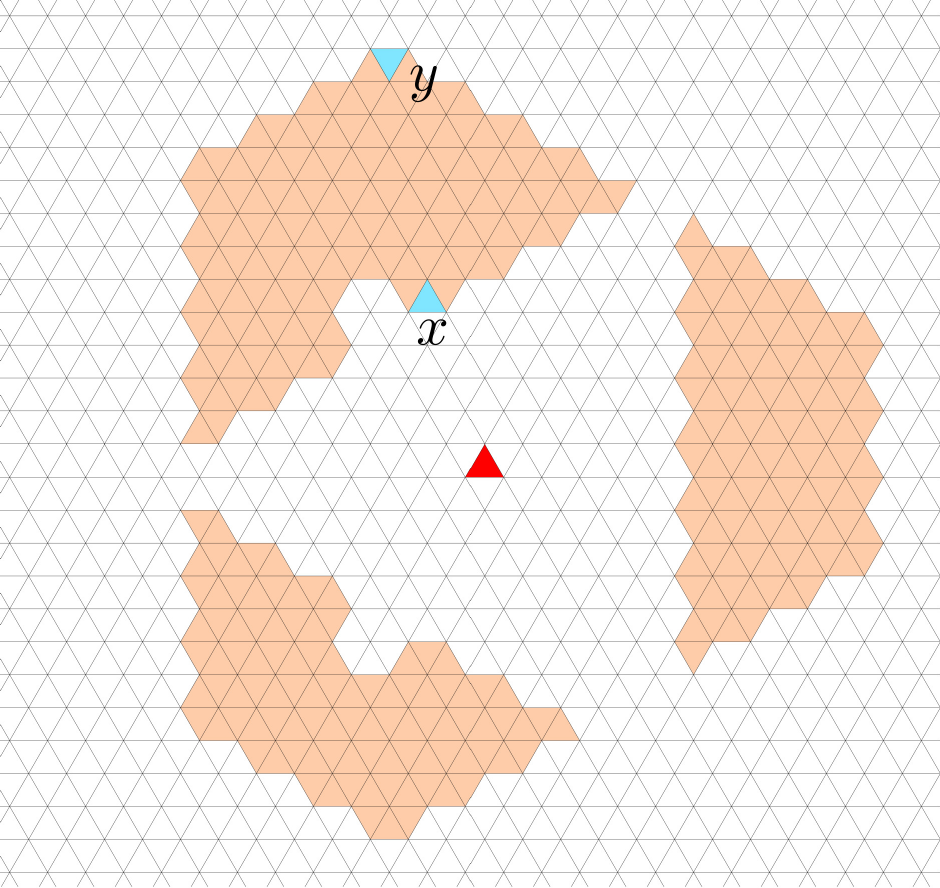} 
        \caption{Alcoves in~$[x,y]$.}
        \label{subfig: interval hexagon and star}
    \end{subfigure}
    \caption{}
    \label{fig: geometric realization of a Bruhat interval}
\end{figure} 
To generalize the previous result, we begin by reinterpreting the construction within the dominant chamber $C_+$.
For $x \in W$, let $A_x$ denote the alcove corresponding to $x$, and let $\mathrm{cen}(x)$ denote the center of~$A_x$.
Let $K \coloneqq \mathrm{cone}(\alpha_1, \alpha_2)$, where $\alpha_1$ and $\alpha_2$ are the simple roots for $\widetilde{A}_2$ (see Figure~\ref{Fig: teselado} for a visual reference).
Suppose that $x,y$ lie in~$C_+$.
Then Theorem~\ref{thm: A}, when restricted to $C_+$, is equivalent to the following:
\vspace{0.1cm}
\begin{equation}
    \centering
    z \in [x, y] \cap C_+ \ \ \iff \ \  \mathrm{cen}(z) \in C_+ \cap \left( \mathrm{cen}(x) + K \right) \cap \left( \mathrm{cen}(y) - K \right).
\end{equation}
We now generalize this result to higher ranks.
Let $W$ be the affine Weyl group of type~$\widetilde{A}_n$ with simple roots $\Delta = \{ \alpha_1, \ldots, \alpha_n \}$, and define the convex cone $K \coloneqq \mathrm{cone}(\Delta)$.

Let $\operatorname{V}(A_{\mathrm{id}})$ be the vertex set of $A_{\mathrm{id}}$.
For any $x \in W$ and $v \in \operatorname{V}(A_{\mathrm{id}})$, let $v(x)$ denote the unique vertex of~$A_x$ such that the difference $v - v(x)$ lies in the coroot lattice, i.e.,
\begin{equation*}
    v - v(x) \in \sum_{i=1}^n \mathbbm{Z} \alpha_i.
\end{equation*}
The next proposition is proved in Section~\ref{subsec: results in An tilde}.
\begin{manualproposition}{B}\label{prop: An intro}
    Let $W$ be the affine Weyl group of type~$\widetilde{A}_n$.
    Then:
    \begin{equation*}
        z \in [x, y] \cap C_+ \  \iff \  v(z) \in \overline{C_+} \cap \left( v(x) + K \right) \cap \left( v(y) - K \right) \  \text{for all } v \in \operatorname{V}(A_{\mathrm{id}}).
    \end{equation*}
\end{manualproposition}
Although this proposition is not as strong as Theorem~\ref{thm: A} in the $\widetilde{A}_2$ case, it captures the essence of a potential generalization of Theorem~\ref{thm: A} to affine type~$A$ and possibly beyond.
For other types, we expect the situation to be more subtle, yet still governed by rich Euclidean-geometric structures.
  
\subsection{Classification of Bruhat intervals} 
We now turn to the classification of Bruhat intervals up to poset isomorphism (Problem~(\ref{problem: class})).
This question has been studied in the context of finite Weyl groups (see, e.g.,~\cite{Dy91, hul03, HulPhd2003, Jan79}; see also~\cite[Section 2.8]{BB05} for a review).
In the symmetric group~$S_6$, for instance, researchers have identified 24 distinct isomorphism classes of intervals of length at most~4, and 36 classes of intervals of length at most~5.
The main challenge with this approach is the apparent lack of structure or pattern, which renders a full classification seemingly intractable.

Rather than pursuing this classification problem in finite Weyl groups, we focus on affine Weyl groups---specifically, on those intervals that lie in the ``complement'' of the finite Weyl group embeddings within the affine Weyl group.

Returning to the affine Weyl group~$\widetilde{A}_2$, we partition the plane into six colored zones (gray, orange, purple, green, yellow, and blue), each associated with an element of~$W_f$ (Figure~\ref{subfig: star and Fw zones intro}).
Let $\mathrm{Gray}\subset \mathbbm{R}^2$ be the gray zone.
For $x,y$ \emph{dominant elements} of~$W$ (i.e., with $A_x,A_y\subset C_+$) define the polygon 
\begin{equation*}
    \Pgn_{x,y}\coloneqq \mathrm{Gray}\cap \left( \mathrm{cen}(x) + \mathrm{cone}(\alpha_1, \alpha_2) \right) \cap \left( \mathrm{cen}(y) - \mathrm{cone}(\alpha_1, \alpha_2)\right).
\end{equation*}
We focus on intervals~$[x, y]$ that are \emph{thick}, meaning they satisfy the condition $x + 2\alpha_i,\, y - 2\alpha_i \in \operatorname{Gray} \cap\, [x,y]$ for each $i \in \{1,2\}$.
This excludes ``thin'' intervals arising from $\widetilde{A}_1$, whose poset structures fail to reflect the geometry of the polygon~$\Pgn_{x,y}$.
In contrast, for thick intervals, the polygon~$\Pgn_{x,y}$ is determined---up to isometry---by the poset structure of $[x,y]$. 
\begin{manualtheorem}{C}\label{thm-main intro}
    Let $W$ be the affine Weyl group of type~$\widetilde{A}_2$ and let $x,x',y,y'$ be dominant elements with $[x,y]$ and $[x',y']$ thick.
    Then $[x,y]$ and $[x',y']$ are isomorphic as posets if and only if $\Pgn_{x,y}$ and $\Pgn_{x',y'}$ are isometric.
\end{manualtheorem}
\begin{rem}\label{rem: euclideanalcovic}
    We view Theorems~\ref{thm: A} and~\ref{thm-main intro} as strong evidence for an emerging philosophy: that Bruhat intervals in affine Weyl groups are deeply connected with Euclidean geometry.
    Further manifestations of this idea appear in~\cite{CdLP23, CdLP25}, where the authors explain---partially conjecturally---that the cardinalities of most lower Bruhat intervals in affine Weyl groups are linear combinations of the volumes of the faces of certain polytopes.
    Similarly, in~\cite{BGH23}, it is shown that the volumes of hyperplane sections of these polytopes---when intersected with $C_+$---approximate the length-counting sequences of lower intervals (of dominant elements) intersected with $C_+$.
    In both cases, a convex polytope governs the number of elements in a given Bruhat interval, providing partial answers to Problem~(\ref{problem: structure of Bruhat intervals}).
\end{rem}
We now provide a visual explanation of the isomorphisms appearing in Theorem~\ref{thm-main intro}, which fall into two distinct categories.
The first type is classical, arising from Coxeter theory~\cite[Corollary 2.3.6]{BB05}, and is induced by an automorphism of the Dynkin diagram.
Geometrically, this corresponds to a reflection (or ``flip'') across the vertical axis through the origin.
The second type is new and more subtle: it consists of piecewise translations.
We illustrate this phenomenon with two examples.

Figure~\ref{subfig: star and Fw zones intro} shows the plane partitioned into six distinct “zones”: $F_\mathrm{id}, F_{s_1}, F_{s_1s_2}$, and so on.
Each zone is assigned a specific color---gray, orange, yellow, etc.---for clarity.
When $\Pgn_{x,y}$ is a parallelogram, we can translate it freely within the gray zone, say by a weight~$\lambda$; in the zone~$F_w$, the corresponding figure is translated by the weight~$w(\lambda)$.
This is illustrated in Figure~\ref{subfig: traslacion paralelogramo intro}, which depicts the poset isomorphism $[x, y] \mapsto [x + \lambda, y + \lambda]$, where $\lambda = 5\varpi_1 + \varpi_2$, and $x, y$ are the alcoves shown in the figure.

In contrast, when $\Pgn_{x,y}$ is a pentagon with a side parallel to~$\varpi_i$ for some $1 \leq i \leq 2$ (as in Figure~\ref{subfig: traslacion pentagono intro}), the allowed piecewise translations are more constrained: the weight~$\lambda$ must be a multiple of~$\varpi_i$.
This is again illustrated by a piecewise translation representing the poset isomorphism $[x, y] \mapsto [x + \lambda, y + \lambda]$, where $\lambda = 5\varpi_1$, and $x, y$ are the alcoves shown in Figure~\ref{subfig: traslacion pentagono intro}.

Finally, when $\Pgn_{x,y}$ is a hexagon, there exist no piecewise translations that induce poset isomorphisms.
\begin{figure}[H]
    \centering
    \begin{subfigure}{.38\linewidth}
        \includegraphics[width=\linewidth]{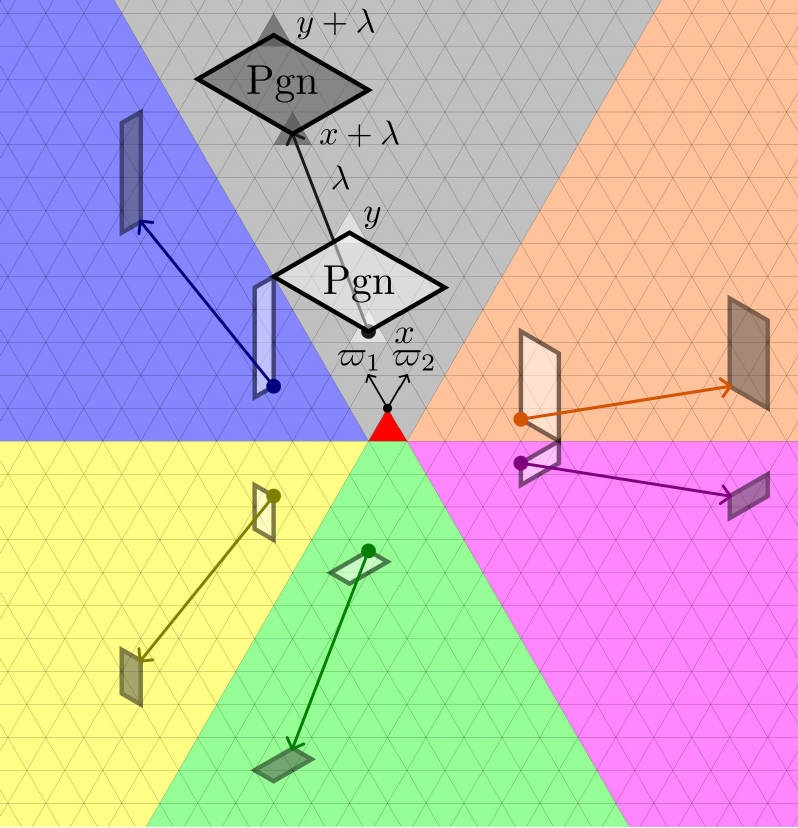} 
        \caption{Piecewise translation of a parallelogram.}
        \label{subfig: traslacion paralelogramo intro}
    \end{subfigure}
    \hspace{0.7cm} 
    \begin{subfigure}{.38\linewidth}
        \includegraphics[width=\linewidth]{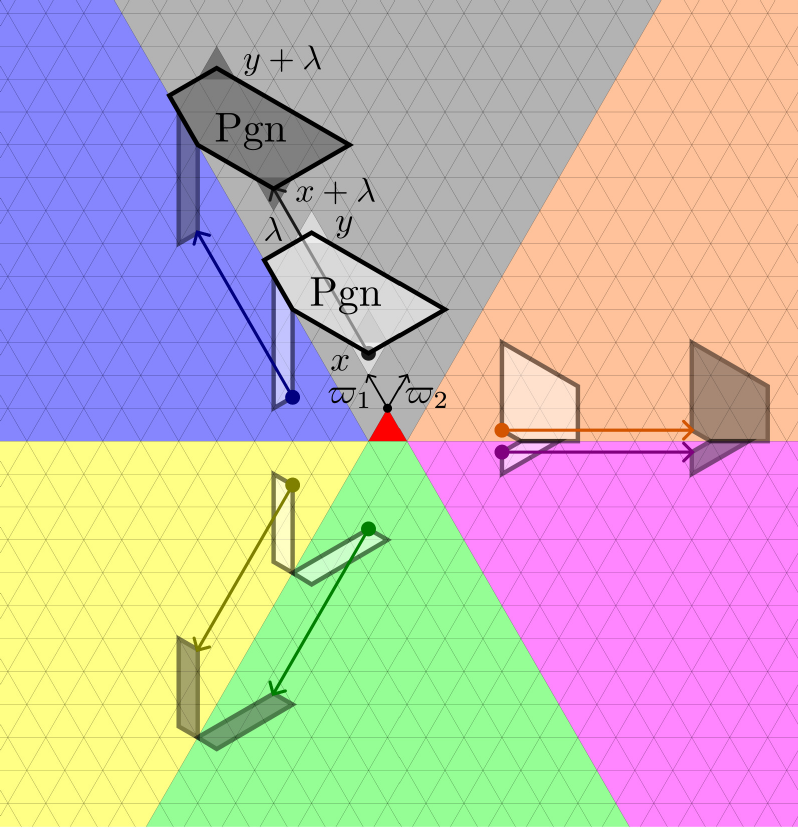} 
        \caption{Piecewise translation of a pentagon.}
        \label{subfig: traslacion pentagono intro}
    \end{subfigure}
    \caption{}
    \label{fig: translations intro}
\end{figure} 

\begin{rem}
    Let $\lessdot$ denote the covering relation in the Bruhat order.
    We say that an interval $[x, y]$ is \emph{full} if, for any $z \in W$ satisfying $x \lessdot z$ or $z \lessdot y$, one has $z \in [x, y]$.
    Such intervals are fairly generic---for instance, every thick interval in~$\widetilde{A}_2$ is full (Lemma~\ref{lem: thick implies full}).
    In Conjecture~\ref{conj: a2general}, we propose an analog of Theorem~\ref{thm-main intro} for arbitrary full intervals $[x, y]$, where $x$ and $y$ are not assumed to be dominant.
    This conjecture closely mirrors Theorem~\ref{thm-main intro} in that the only nontrivial poset isomorphisms arise from suitably defined piecewise translations.
\end{rem}
In Section~\ref{subsec: results in An tilde}, we will partially generalize this result to $\widetilde{A}_n$ when restricted to $C_+$.
This is proved in Proposition~\ref{prop: dominant translations in An tilde}.
\begin{manualproposition}{D}\label{prop: dominant translations in An tilde intro}
    Let $W$ be the affine Weyl group of type~$\widetilde{A}_n$, and let $x$ and $y$ be dominant elements.
    Suppose there exists $i \in \{1, \ldots, n\}$ such that
    \begin{equation*}
        (\alpha_i, v) \neq 0 \quad \text{for every vertex } v \text{ of every dominant alcove } A_z \text{ with } z \in [x, y].
    \end{equation*}
    Then the translation by $\varpi_i$ induces a poset isomorphism
    \begin{equation*}
        [x, y] \cap C_+ \cong [x + \varpi_i, y + \varpi_i] \cap C_+.
    \end{equation*}
\end{manualproposition}

This proposition suggests a broader phenomenon, which we now explain.
Every Coxeter system \((W,S)\) has a well-understood group \( G \) of automorphisms that preserve the Bruhat order ~\cite[Corollary~2.3.6]{BB05}.
This group is generated by the inversion map \( x\mapsto x^{-1} \) and the automorphisms induced by the symmetries of the Dynkin diagram.

Let \( W \) be an affine Weyl group, and let \( x, y \in W \).
We write \( y - x \in \Lambda^\vee \) if there exists a coweight \( \lambda \in \Lambda^\vee \) such that \( A_x + \lambda = A_y \).
\begin{manualconjecture}{E}\label{conj: weak conj intro}
    Suppose \( x, x', y, y' \in W \) are dominant elements such that the intervals \( [x, y] \) and \( [x', y'] \) are full and isomorphic as posets.
    Assume further that there exists \( g_0 \in G \) such that \( x - g_0x' \in \Lambda^\vee \).
    Then there exists \( g \in G \) such that
    \begin{equation*}
        y - g y' = x - g x' \in \Lambda^\vee.
    \end{equation*}
\end{manualconjecture}
In Section~\ref{subsec: weak conjecture}, we describe the extent of our computational evidence supporting the conjecture.
Note that by Theorem~\ref{thm-main intro}, this conjecture is established for thick intervals in type~$\widetilde{A}_2$.

Conjecture~\ref{conj: weak conj intro} essentially asserts that, modulo the ``easy'' poset isomorphisms common to all Coxeter groups, any poset isomorphism between two Bruhat intervals must restrict to a translation on the endpoints $\{x, y\}$.

Ideally, we would have liked to formulate a stronger conjecture: under the same hypotheses, there exists $g\in G$ and a piecewise translation $PT$ such that $ g\circ PT$ is a poset isomorphism from $[x, y]$ to 
$[x', y']$.
However, two main obstacles prevent us from formulating this stronger conjecture.
First, for general affine Weyl groups, we do not yet know the precise analog of the six zones for arbitrary elements, which makes it difficult to define a piecewise translation in this broader setting.
Second, verifying such a statement computationally is considerably more difficult.
That said, within the dominant cone, translations are well understood, so the first issue does not arise when restricting to this region.

\subsection{Applications and insights}
\subsubsection{Combinatorial invariance conjecture}
The Combinatorial Invariance Conjecture~(CIC), introduced by Lusztig and Dyer in the 1980s, asserts that if two Bruhat intervals (potentially from different Coxeter groups) are poset isomorphic, their Kazhdan--Lusztig polynomials must be equal.
This conjecture has divided expert opinion, largely due to the geometric interpretation of Kazhdan--Lusztig polynomials as dimensions of intersection cohomology spaces.
In analogous geometric settings, this type of statement is false, which makes the conjecture particularly surprising\footnote{The analogous statement is that the poset defining the stratification determines the dimensions of the corresponding local intersection cohomology spaces.}.

An \emph{idée-force} of this paper is that the CIC holds for what might be called ``silly reasons''---namely, that poset isomorphisms between Bruhat intervals are exceedingly rare.
\begin{rem}
    The most impressive evidence in favor of the CIC comes from its validity on lower intervals~\cite{dC03, BCM06}.
    In this direction, Proposition~\ref{prop: main lower} shows that for~$W$ of type~$\widetilde{A}_2$ and any elements $x, y \in W$, the intervals $[\mathrm{id}, x]$ and $[\mathrm{id}, y]$ are poset isomorphic if and only if $x \in G \cdot y$.
    We stress that this proposition applies to \textbf{arbitrary} elements $x$ and $y$ in the group.
\end{rem} 
When the hypotheses of Theorem~\ref{thm-main intro} are satisfied, we prove the CIC in Proposition~\ref{prop: CIC for thick}.
This result is not new---indeed, in~\cite{BLP23}, the first and second authors, together with David Plaza, established the CIC for the entire group.
What is novel here is not the result itself but the method.
We show that the longest dihedral subintervals starting at $x$ give approximately the Euclidean distances in~$\Pgn_{x,y}$ from $x$ to its nearest vertices.
A similar statement holds for $y$.
The key word here is ``approximately''---not ``exactly''---and this subtlety is the main reason this paper spans over 60~pages instead of just 30.
Nevertheless, we believe that the underlying phenomenon holds for all affine Weyl groups, suggesting that the methods developed here are far more generalizable than those used in~\cite{BLP23}.

Another notable difference is the nature of the argument: the proof in this paper is conceptually transparent---we explicitly describe the set of intervals isomorphic to a given one and directly verify that their Kazhdan–Lusztig polynomials coincide.
In contrast, the approach in~\cite{BLP23} relies on a collection of carefully chosen invariants, making it more opaque despite its elegance.

\subsubsection{Stabilization of Bruhat intervals}\label{ss: stabilization}
One of the main observations of this paper is that the polygon $\Pgn_{x,y}$ determines the poset structure of the interval $[x,y]$.
If an analogous statement were true in higher rank---namely, that a corresponding polytope $\operatorname{Ptp}_{x,y}$ encodes the structure of $[x,y]$---then, using the fact that this polytope stabilizes when translated sufficiently far from the origin (see Lemma~\ref{lem: Pgn stabilizes} for a proof in $\widetilde{A}_2$), it would follow that the interval $[x,y]$ also stabilizes.
In other words, we conjecture that for dominant coweights $\lambda$, the interval $[x + n\lambda, y + n\lambda]$ stabilizes as $n \to \infty$ (see Conjecture~\ref{conj: stabilization of dominant Bruhat intervals}).
We prove this conjecture in the case of $\widetilde{A}_2$ (Proposition~\ref{prop: dominant Bruhat intervals stabilize in A2}).

We note that if both this conjecture and CIC hold, they would together imply a known result: that the Kazhdan--Lusztig polynomial $P_{x + n\lambda, y + n\lambda}(q)$ eventually stabilizes as~$n \to \infty$ (see~\cite{Lus80, Kat85}).

\subsubsection{}\label{ss: dihedral}
We reserve the end of this introduction for the most speculative part of the paper.
As mentioned earlier, in the case of~$\widetilde{A}_2$, we observed that when $[x,y]$ is a full interval, the set $D_x$---consisting of elements $z \in [x,y]$ such that $[x,z]$ is a dihedral interval (i.e., isomorphic to an interval in some dihedral group)---almost recovers the  Euclidean distances from $x$ to its two nearest vertices in the polygon $\Pgn_{x,y}$.
A similar statement holds for $y$ and the similarly defined set $D_y$.
Remarkably, the subposet $D_x \cup D_y \subset [x,y]$ almost completely determines the isometric type of~$\Pgn_{x,y}$.
Once again, the crucial nuance lies in the word ``almost.''

What makes this insight particularly striking is that the following double implication, though not strictly true, comes surprisingly close:
\begin{manualinsight}{F}\label{insight: dihedral}
    $[x,y] \cong [x',y'] \iff D_x \cup D_y \cong D_{x'} \cup D_{y'}$.
\end{manualinsight}
If this equivalence held, the CIC would shift from a remarkable and mysterious conjecture to one with practical computational consequences---potentially providing a pathway to directly compute Kazhdan--Lusztig polynomials via combinatorial data.
This is because the information carried by the full interval $[x,y]$ is absurdly richer than that contained in the comparatively microscopic subposet $D_x \cup D_y$.
In all the examples we have examined so far, either the equivalence holds exactly, or---when it doesn’t---the intervals $[x,y]$ and $[x',y']$ have nearly identical invariants (cardinality, Kazhdan--Lusztig polynomials, Betti numbers, etc.), providing an excellent approximation.

One possible (though not necessarily optimal) conjectural fix is the following.
Let $x, x' \in C_+$ or $x, x' \in s_0 C_+$, and let $y, y' \in C_+$ or $y, y' \in s_0 C_+$.
Consider the set $\geq x\coloneqq \{ z \in W \mid z \geq x \}$ of elements greater than or equal to $x$ in the Bruhat order.
Then
\begin{equation}
    [x,y] \cong [x',y'] \text{ and both intervals are full} \iff 
    \begin{cases}
        D_x \cup D_y \cong D_{x'} \cup D_{y'}, \\
        W \setminus (\geq x) \not\subset [\mathrm{id}, y], \\
        |[x,y]| = |[x',y']|.
    \end{cases}
\end{equation}
We have verified this conjecture extensively in type~$\widetilde{A}_2$, for pairs of full intervals $[x, y]$ with $\ell(x, y) \leq 9$ and $9 \leq \ell(x) \leq 12$.
Additional checks include intervals with $\ell(x, y) \leq 8$ and $16 \leq \ell(x) \leq 20$, as well as intervals with $21 \leq \ell(x) \leq 22$ and $\ell(x, y) \leq 7$.
In type~$\widetilde{A}_3$, due to computational constraints, we have only tested full intervals with $\ell(x, y) \leq 8$ and $9 \leq \ell(x) \leq 12$.
Altogether, these computations cover more than 500 full intervals, and around $\binom{100}{2}$ pairs among them.

\subsection{Overview of the Paper}
Below, we summarize the contents of each section.
\begin{itemize}
    \item In Section~\ref{section: prelim}, we collect relevant definitions and notations, and review basic facts about the affine Weyl group of type~$\widetilde{A}_2$.
    We conclude the section by proving Proposition~\ref{prop description of all lower intervals}, which provides a geometric description of every lower interval in~$\widetilde{A}_2$.
    
    \item In Section~\ref{section: geometry of intervals}, we give a geometric description of the set $\geq x$ for dominant elements~$x$ (Proposition~\ref{prop: no mayores geometrico}).
    From this geometric perspective, we derive several local properties of the Bruhat order when restricted to the zones of the form $F_w$ for $w\in W_f$.
    In Proposition~\ref{prop: Dih-resumen}, we analyze the subsets $\DC^u(y)$ and $\DC^l(x)$ of~$W$, related to the subsets $D_x$ and $D_y$ of $[x,y]$, using this geometric framework.
    In Section~\ref{subsec: polygon}, we define the polygon $\operatorname{Pgn}_{x,y}$ associated with $[x,y]$, a key ingredient in Theorem~\ref{thm-main intro}.
    We conclude the section by proving Theorem~\ref{thm: A}, thereby obtaining a Euclidean geometric description of every Bruhat interval in~$\widetilde{A}_2$.
    \item In Section~\ref{section: translations}, we introduce the piecewise translation $\tau_\lambda$ on~$W$.
    The guiding philosophy from this point onward is that the polygon $\Pgn_{x,y}$ encodes combinatorial information about the poset $[x,y]$.
    After establishing some basic properties of the map $\tau_\lambda$, we prove Proposition~\ref{prop: cardinality under translation}, which provides conditions (formulated in terms of the polygon $\Pgn_{x,y}$) under which the posets $[x,y]$ and $\tau_\lambda([x,y])$ have the same cardinality.
    We then observe that whenever the map $\tau_\lambda$ restricts to a bijection on $[x,y]$, it induces a poset isomorphism (Proposition~\ref{prop: trasl-por-dominat}).
    \item In Section~\ref{section: sides}, we continue studying the relationship between Euclidean geometry (through the polygon $\Pgn_{x,y}$) and Bruhat order combinatorics (through the interval $[x,y]$).
    In the first part of the section, we prove Proposition~\ref{prop: length of all sides}, which gives an exact description of the Euclidean side lengths of~$\Pgn_{x,y}$ in terms of structural data associated with the interval~$[x,y]$, not all of which is purely combinatorial.
    In the second part, we explore the notion of thick intervals, which allows us to reduce the problem to a more manageable class of intervals---namely, those to which our proof of Theorem~\ref{thm-main intro} applies.
    The main result of the section is Proposition~\ref{prop: seg max fixed under iso}, which shows that poset isomorphisms between thick intervals are subject to strong Euclidean geometric constraints, related to the subsets $D_x$ and $D_y$.
    \item In Section~\ref{subsec: congruence}, we prove the invariance of~$\Pgn_{x,y}$ under a poset isomorphism (Proposition~\ref{prop: iso implies congruent polygons}). 
    The proof is lengthy and relies on several technical results. 
    The key idea is that a poset isomorphism $\phi\colon [x,y] \to [x',y']$ satisfies $\phi(x+\rho) = x' + \rho$, and, together with Proposition~\ref{prop: seg max fixed under iso}, induces a bijection $f_{\phi}\colon \mathrm{V}(\Pgn_{x,y}) \to \mathrm{V}(\Pgn_{x',y'})$ between the vertex sets of the polygons.  
    In Section~\ref{subsec: main result}, we prove the main theorem of the paper: Theorem~\ref{thm-main intro}.
    \item In Section~\ref{section: applications}, we present three applications of our results:  
    a non-recursive proof of the combinatorial invariance conjecture for thick intervals (Proposition~\ref{prop: CIC for thick});  
    a stabilization result for thick intervals (Proposition~\ref{prop: dominant Bruhat intervals stabilize in A2});  
    and a classification of lower intervals, where the isomorphisms are induced by~$G$ (Theorem~\ref{prop: main lower}).
    \item In Section~\ref{section: towards a general picture}, we explore possible extensions of our results to other affine Weyl groups.  
    In Section~\ref{subsec: weak conjecture}, we present computational evidence for Conjecture~\ref{conj: weak conj intro}.  
    We conclude the paper by proving Propositions~\ref{prop: An intro} and~\ref{prop: dominant translations in An tilde intro}, which partially generalize results from Sections~\ref{section: geometry of intervals} and~\ref{section: translations} to the affine Weyl group of type~$\widetilde{A}_n$.
\end{itemize}

\subsection*{Acknowledgements}
The first author is grateful to David Kazhdan for a year of stimulating weekly discussions related to this work.
We also thank Luis Arenas-Carmona, David Plaza, and Paolo Sentinelli for their detailed comments on a previous version of the paper.
\section{Preliminaries}\label{section: prelim}
\subsection{Basic notions in Coxeter systems}\label{subsec: Coxeter systems}
In this section, we recall some basic notions about Coxeter systems.
We mainly follow~\cite[Chapter 2]{BB05} for notation and terminology with some minor differences.

Let $(W,S)$ be a Coxeter system with length $\ell$ and Bruhat order $\leq$.
We denote $\ell(u,w)\coloneqq\ell(w)-\ell(u)$.
A \emph{Bruhat interval} is a set $[x,y]=\{z\in W\mid x\leq z\leq y\}$, with  $x,y\in W$.
Given an element $x\in W$, the \emph{upper set} of~$x$ is the set
\begin{equation*}
    \geq x \coloneqq\{z\in W\mid x\leq z\},
\end{equation*}
and the \emph{lower interval} of~$x$ is the set
$[\id,x]$.

If $u,z\in W,$ we say that $z$ \emph{covers} $u$, and we denote it by $u\lessdot z$, if $u<z$ and there is no element $z'$ such that $u<z'<z$.
If $x,y\in W$, we say that $z\in [x,y]$ is an  \emph{atom} (resp.\@ \emph{coatom}) of the interval  $[x,y]$ if $x \lessdot z$ (resp.\@ $z\lessdot y$).

Let $x,y\in W$, with $x\leq y$.
The \emph{length counting sequence} $\operatorname{LC}(x,y)$ of the interval $[x,y]$ is the finite sequence
\begin{equation*}
   \operatorname{LC}(x,y)= (f_0^{x,y},f_1^{x,y},\hdots,f_{\ell(x,y)}^{x,y})\in \mathbbm{N}^{\ell(x,y)+1}
\end{equation*} 
given by $f_i^{x,y}:=|\{z\in [x,y]\mid \ell(x,z)=i\}|$.
\begin{lem}[\cite{BB05} pp.\@ 35]\label{lem-graded} 
    The interval $[x,y]$ is a graded poset, with the rank function given by the length function $z\mapsto \ell(x,z)$.
\end{lem}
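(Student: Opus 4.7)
The plan is to deduce this standard fact from the \emph{chain property} of the Bruhat order, which states that whenever $u<v$ in $W$, there exists a saturated chain $u=u_0 < u_1 < \cdots < u_k = v$ with $\ell(u_{i+1}) = \ell(u_i)+1$ for all $i$. This is a well-known consequence of the subword property (or equivalently the lifting property) of the Bruhat order; see for instance \cite[Theorem~2.2.6]{BB05} or the surrounding discussion.

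Granting the chain property, I would proceed in three short steps. First, define the candidate rank function $r\colon [x,y]\to \mathbbm{Z}_{\ge 0}$ by $r(z) = \ell(x,z) = \ell(z)-\ell(x)$; this is nonnegative because $x\leq z$ implies $\ell(x)\leq \ell(z)$, and clearly $r(x)=0$. Second, I would verify that $r$ is strictly increasing along the order: if $u<z$ in $[x,y]$ then $r(u)<r(z)$, again because lengths strictly increase along the Bruhat order. Third, and most importantly, I would show that $r$ jumps by exactly $1$ on covers: suppose $u\lessdot z$ in $[x,y]$, so in particular $u<z$ in~$W$; by the chain property there exists a saturated length-increasing chain $u=u_0<u_1<\cdots<u_k=z$ in~$W$, all of whose elements lie between $x$ and $y$ in Bruhat order, hence in $[x,y]$. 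Since $u\lessdot z$ forbids any intermediate element, one must have $k=1$, so $\ell(z)=\ell(u)+1$ and $r(z)=r(u)+1$. This shows that $[x,y]$ is graded with rank function~$r$.

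Finally, I would note that the same argument shows that every maximal chain from $x$ to $y$ in $[x,y]$ has length exactly $\ell(x,y)$, which is the common equivalent formulation of ``graded.'' There is no serious obstacle in this proof; the only nontrivial input is the chain property of the Bruhat order, which is classical. The whole lemma is essentially a packaging of that property for the reader, so the write-up can be very brief, perhaps just citing \cite[Chapter~2]{BB05} directly.
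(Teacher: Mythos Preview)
Your argument is correct and is the standard proof of this fact. Note, however, that the paper does not actually give its own proof of this lemma: it simply records the statement with a citation to \cite[p.~35]{BB05} and moves on. So there is nothing to compare against; your write-up supplies the (routine) details that the paper chose to omit. If anything, you could trim it further and just point to \cite[Theorem~2.2.6]{BB05} for the chain property, since that is the only substantive input.
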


\begin{lem}\label{lem: betti invariante} 
    Let $\varphi\colon[x,y]\to [x',y']$ be an isomorphism of posets.
    Then, $\varphi$ is an isomorphism of graded posets.
    In other words, it preserves the rank function or, in formulas, $\ell(x,z)=\ell(\varphi(x),\varphi(z))$.
    In particular, $\operatorname{LC}(x,y)=\operatorname{LC}(x',y')$.
\end{lem}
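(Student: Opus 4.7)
The plan is to reduce the statement to the standard fact that any isomorphism of graded posets preserves the rank function, and to invoke Lemma~\ref{lem-graded} for both intervals.

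First I would note that any poset isomorphism $\varphi$ must send the minimum to the minimum and the maximum to the maximum, so $\varphi(x)=x'$ and $\varphi(y)=y'$. Moreover, since the covering relation $u\lessdot z$ is defined purely in order-theoretic terms (namely, $u<z$ and no $z'$ with $u<z'<z$), $\varphi$ preserves covers, and hence sends maximal chains in $[x,y]$ bijectively to maximal chains in $[x',y']$.

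By Lemma~\ref{lem-graded}, both $[x,y]$ and $[x',y']$ are graded posets with rank functions $z\mapsto\ell(x,z)$ and $z'\mapsto\ell(x',z')$, respectively. In a graded poset, the rank of an element equals the length of every maximal chain from the minimum up to that element. Combining this with the previous paragraph: for any $z\in[x,y]$, pick a maximal chain from $x$ to $z$; its image under $\varphi$ is a maximal chain from $x'$ to $\varphi(z)$ of the same length. Hence $\ell(x,z)=\ell(x',\varphi(z))=\ell(\varphi(x),\varphi(z))$, which is the required equality.

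The statement about length counting sequences is then immediate: $\varphi$ restricts to a bijection between $\{z\in[x,y]\mid \ell(x,z)=i\}$ and $\{z'\in[x',y']\mid \ell(x',z')=i\}$ for each $i$, so $f_i^{x,y}=f_i^{x',y'}$ and therefore $\operatorname{LC}(x,y)=\operatorname{LC}(x',y')$. There is no real obstacle here; the argument is essentially formal once Lemma~\ref{lem-graded} is in hand.
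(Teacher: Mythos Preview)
Your proof is correct and follows essentially the same approach as the paper: both argue that $\varphi$ preserves covers, then transport a maximal chain $x=z_0\lessdot z_1\lessdot\cdots\lessdot z_k=z$ to a maximal chain from $\varphi(x)$ to $\varphi(z)$ and invoke Lemma~\ref{lem-graded} to conclude the rank is preserved. The paper cites \cite[Lemma~2.2.1]{BB05} for the existence of such a chain, whereas you extract it from the graded structure, but this is the same argument.
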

\begin{proof}
    It is clear that if $z$ {covers} $u$ then $\varphi(z)$ {covers} $\varphi(u)$.
    If $z\in [x,y]$ is such that $\ell(x,z)=k$, by~\cite[Lemma 2.2.1]{BB05} there is a chain $x=z_0\lessdot z_1\lessdot\cdots \lessdot z_k=z$, so applying $\varphi$ we obtain a chain $\varphi(x)=\varphi(z_0)\lessdot \varphi(z_1)\lessdot\cdots \lessdot \varphi(z_k)=\varphi(z)$.
    By Lemma~\ref{lem-graded} this implies that $\ell(\varphi(x),\varphi(z))=k$.
\end{proof}

An interval $[x,y]$ in~$W$ is \emph{dihedral} if it is isomorphic to an interval of a dihedral Coxeter group.
For each $y\in W$, we define $\mathcal{D}(y):=\{z\in [\id,y]\mid  [z,y]\text{ is dihedral}\}$.
\begin{prop}[{\cite[Theorem 7.25]{Dyer87}}]\label{prop: Dyer dihedral} 
    Let $x,y\in W$ with $\ell(x,y)>1$.
    The following statements are equivalent:
    \begin{enumerate}
        \item $[x,y]$ has two atoms;
        \item $[x,y]$ has two coatoms;
        \item $[x,y]$ is dihedral.
    \end{enumerate}    
\end{prop}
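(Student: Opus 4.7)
The plan is to establish (3)~$\Rightarrow$~(1) and (3)~$\Rightarrow$~(2) by direct inspection of dihedral Coxeter groups, and then to prove (1)~$\Rightarrow$~(3) via a reflection-subgroup argument; (2)~$\Rightarrow$~(3) will follow by a dual argument using right reflections.

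First I would verify (3)~$\Rightarrow$~(1) and (3)~$\Rightarrow$~(2). In any dihedral Coxeter group $W''$, elements of the same positive length form antichains of size at most $2$, and pairs of different length compare according to the subword property. Consequently, every interval $[u, v] \subset W''$ with $\ell(u, v) \geq 2$ has exactly two atoms (the two length-$(\ell(u)+1)$ elements, both of which lie below $v$) and exactly two coatoms. A poset isomorphism $[x, y] \cong [u, v]$ then transports these counts back to $[x, y]$, yielding (1) and (2).

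The main content is (1)~$\Rightarrow$~(3). Write the two atoms as $a_i = t_i x$ with $t_i$ a reflection, and set $W' := \langle t_1, t_2\rangle$, which is dihedral since it is generated by two reflections. By Dyer's theorem on reflection subgroups, $W'$ is itself a Coxeter group with a canonical generating set $S'$, and the Bruhat order on $W$ restricted to the coset $W' x$ corresponds, under the natural bijection $W' \to W' x$, to the Bruhat order of $(W', S')$. Hence, once we establish the containment $[x, y] \subseteq W' x$, the interval $[x, y]$ is identified with an interval in the dihedral group $W'$ and is therefore dihedral. To prove this containment, assume for contradiction that $z \in [x, y] \setminus W' x$ has minimal length among such elements, and take a saturated chain $x = z_0 \lessdot z_1 \lessdot \cdots \lessdot z_k = z$ inside $[x, y]$. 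By minimality $z_{k-1} \in W' x$, so $z_k = r z_{k-1}$ for some reflection $r \notin W'$. Iterating the lifting (Z-)lemma of the Bruhat order down the chain, using that each $z_i \leq y$, produces a reflection $r'$ with $r' x \leq y$, $\ell(r' x) = \ell(x) + 1$, and $r' \notin \{t_1, t_2\}$; this $r' x$ is then a third atom of $[x, y]$, contradicting~(1). For (2)~$\Rightarrow$~(3), mirror the whole argument on the right: write the coatoms as $c_i = y t_i'$, consider $W'' := \langle t_1', t_2'\rangle$, and run the descending chain argument inside $y W''$.

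The hard part will be the iterated use of the lifting property that manufactures the third atom. The Z-lemma yields only one-step information, so its iteration along a saturated chain must be carried out while carefully tracking which intermediate reflections remain in $W'$ and which leave it. This delicate descent is the technical core of Dyer's Theorem~7.25 and the primary reason the proposition lies beyond elementary combinatorics.
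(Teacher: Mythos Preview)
The paper does not give its own proof of this proposition; it is quoted with a citation to Dyer's thesis and used as a black box throughout. There is therefore no paper proof to compare against, and your sketch has to be judged on its own.

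The implications (3)$\Rightarrow$(1),(2) are fine. For (1)$\Rightarrow$(3) your outline has the right architecture but two genuine gaps. First, the assertion that the Bruhat order on $W$ restricted to the coset $W'x$ matches the Bruhat order of $(W',S')$ under $w'\mapsto w'x$ is Dyer's coset theorem, and it requires $x$ to be the unique Bruhat-minimal element of $W'x$. Knowing only $t_1x>x$ and $t_2x>x$ does not imply this: $W'=\langle t_1,t_2\rangle$ contains further reflections (e.g.\ $t_1t_2t_1$), and $t_1,t_2$ need not coincide with the canonical Coxeter generators $S'$ of $W'$, so some reflection of $W'$ could still take $x$ down. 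Second, you correctly isolate the descent step---pushing the reflection $r\notin W'$ down the saturated chain to manufacture a third atom---as the crux, but you do not carry it out. The lifting/Z-property in its standard form is for \emph{simple} reflections; there is no off-the-shelf version for an arbitrary reflection $r$ that lets you propagate the cover $z_{k-1}\lessdot r z_{k-1}$ down to level $\ell(x)+1$ while keeping track of membership in $W'$. Your final paragraph openly concedes this and points back to Dyer's argument, so what you have written is an honest outline of the strategy rather than a proof.
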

The following lemma is immediate from Proposition~\ref{prop: Dyer dihedral} and Lemma~\ref{lem: betti invariante}.
\begin{lem}\label{lem: iso-preserva-dih} 
    Let $\phi \colon [x, y] \to [x', y']$ be an isomorphism of posets.
    If $[u,v] \subset [x,y]$ is  dihedral, then $\phi([u,v])$ is dihedral.
\end{lem}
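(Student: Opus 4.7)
The plan is to reduce the statement directly to Proposition~\ref{prop: Dyer dihedral} via the auxiliary observation that $\phi$ maps intervals to intervals and atoms to atoms.

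First I would argue that for any $u\le v$ in $[x,y]$ one has $\phi([u,v]) = [\phi(u),\phi(v)]$: if $u\le z\le v$, then applying $\phi$ gives $\phi(u)\le \phi(z)\le\phi(v)$, and conversely $\phi^{-1}$ (which is also a poset isomorphism) maps $[\phi(u),\phi(v)]$ back into $[u,v]$. In particular $\phi$ restricts to a poset isomorphism $[u,v]\xrightarrow{\sim}[\phi(u),\phi(v)]$.

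Next I would split according to the length $\ell(u,v)$. If $\ell(u,v)\le 1$, then $[u,v]$ has either one or two elements, and by Lemma~\ref{lem: betti invariante} the same is true of $[\phi(u),\phi(v)]$; such intervals are trivially dihedral. If $\ell(u,v)>1$, then by Lemma~\ref{lem: betti invariante} also $\ell(\phi(u),\phi(v))=\ell(u,v)>1$, so Proposition~\ref{prop: Dyer dihedral} applies to both intervals. Since $[u,v]$ is dihedral, it has exactly two atoms by that proposition. Because $\phi$ is a poset isomorphism it sends atoms of $[u,v]$ bijectively to atoms of $[\phi(u),\phi(v)]$, so the latter also has exactly two atoms. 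Applying Proposition~\ref{prop: Dyer dihedral} in the reverse direction yields that $[\phi(u),\phi(v)]=\phi([u,v])$ is dihedral.

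There is no genuine obstacle here: the entire content of the lemma is Proposition~\ref{prop: Dyer dihedral} combined with the rank preservation of Lemma~\ref{lem: betti invariante}, which is why the paper labels the lemma as immediate. The only point requiring a small amount of care is the boundary case $\ell(u,v)\le 1$, which must be treated separately since Proposition~\ref{prop: Dyer dihedral} is stated under the hypothesis $\ell(x,y)>1$.
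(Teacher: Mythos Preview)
Your proof is correct and follows exactly the approach the paper indicates: the lemma is stated as immediate from Proposition~\ref{prop: Dyer dihedral} and Lemma~\ref{lem: betti invariante}, and your argument is a faithful unpacking of that one-line justification, including the trivial boundary case $\ell(u,v)\le 1$.
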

\begin{defn}\label{def-lower-cover-upper-cover}
    Given $y\in W$, we define the sets
    \begin{align*}
        \operatorname{L}(y):=&\{z\in W\mid z\lessdot y\,\}\\
        \operatorname{U}(y):=&\{z\in W\mid y\lessdot z\,\}
    \end{align*}
    of \emph{lower covers} of~$y$ and \emph{upper covers} of~$y$ respectively.
\end{defn}
\begin{defn}\label{def-full-interval}
    An interval $[x,y]$ is \emph{full} if $\operatorname{U}(x),\operatorname{L}(y)\subset [x,y].$ 
\end{defn}
As a consequence of the \textit{lifting property} of the Bruhat order~\cite[Proposition 2.2.7]{BB05}, the following result holds.
\begin{lem}\label{prop: intervals are union by right}
    Let $y \in W$ and $s\in S$ be such that $y<sy$.
    Then
    \begin{equation*}
        [\id,sy]=[\id, y]\cup s[\id,y],
    \end{equation*}
    where $s[\id,y]\coloneqq \{sx \mid x\leq y\}$.
\end{lem}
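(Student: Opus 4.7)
The plan is to prove the two set inclusions separately. For the forward inclusion $[\id, y]\cup s[\id, y]\subseteq[\id, sy]$ I would use the lifting property alluded to in the hypothesis (\cite[Proposition~2.2.7]{BB05}); for the reverse inclusion I would use the subword characterization of the Bruhat order.

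For the forward direction, the containment $[\id,y]\subseteq[\id,sy]$ is immediate from $y\leq sy$. For $s[\id,y]\subseteq[\id,sy]$, given $x\leq y$ I would establish $sx\leq sy$ by splitting into cases. If $sx<x$, then $sx<x\leq y<sy$, so $sx\in[\id,sy]$. If $sx>x$, then $s$ is a left descent of $sy$ but not of $x$, and applying the lifting property to the pair $x\leq y$ with $s$ a left descent of $sy$ directly gives $sx\leq sy$.

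For the reverse direction, observe that since $y<sy$, any reduced expression $y=s_{i_1}\cdots s_{i_n}$ extends to a reduced expression $sy=s\cdot s_{i_1}\cdots s_{i_n}$. Given $z\leq sy$, the subword property realizes $z$ as the product of some subword of $(s, s_{i_1}, \ldots, s_{i_n})$. Two cases arise, according to whether the leading letter $s$ appears in this subword. If it is discarded, $z$ is obtained as a subword of a reduced expression for $y$, so $z\leq y$ and hence $z\in[\id,y]$. If it is retained, then $z=sz'$ where $z'$ is obtained as a subword of a reduced expression for $y$, so $z'\leq y$ and $z\in s[\id,y]$.

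No substantive obstacle is expected; the statement is essentially a direct consequence of either characterization of Bruhat order. The only points of care are matching the direction of the inequalities in the lifting property to the hypothesis $y<sy$ (rather than $y>sy$), and observing that the choice $y<sy$ is exactly what allows a reduced expression of $sy$ to start with $s$, which is what makes the subword case analysis clean.
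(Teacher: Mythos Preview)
Your proof is correct. The paper does not actually supply a proof of this lemma; it merely states that the result follows from the lifting property and moves on. Your argument fills in all the details and is entirely standard.

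One small remark: you invoke the subword characterization for the reverse inclusion, whereas the paper's phrasing suggests using only the lifting property throughout. Indeed, the reverse inclusion can also be obtained directly from the lifting property: given $z\leq sy$, since $s(sy)=y<sy$, the lifting property yields $\min(z,sz)\leq y$; if $sz<z$ this gives $sz\leq y$ so $z\in s[\id,y]$, and if $sz>z$ this gives $z\leq y$. Your subword argument is of course equally valid and arguably more transparent. Either way, the content is the same and your write-up is fine.
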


\subsection{The affine Weyl group of type~\texorpdfstring{$\widetilde{A}_2$}{A2~} and alcoves} \label{subsec: alcoves}
From this point forward, $W$ denotes the affine Weyl group of type~$\widetilde{A}_2.$ As a Coxeter system it has generators $S=\{s_0,s_1,s_2\}$.
For simplicity, we will often denote $s_0,s_1,s_2$ by $0,1,2$ respectively.
We will use  ``label mod~3'' notation.
For example, $1456$ stands for $s_1s_1s_2s_0$.

A \emph{reflection} of~$W$ is an element belonging to the set 
\begin{equation*}
    T=\bigcup_{z\in W} zSz^{-1}.
\end{equation*}
The finite subgroup $W_f\coloneqq\langle s_1,s_2\rangle\leq W$ is the \emph{Weyl group} of type~$A_2$.
It is isomorphic to the dihedral group of order $6$.
The longest element of~$W_f$ is denoted by $w_0=s_1s_2s_1$.

The Dynkin diagram of~$W$ has six symmetries (it can be seen as an equilateral triangle).
Each one of them induces an automorphism of~$W$.
Let $\delta\colon S\to S$ be the map $\delta(i)=i+1$, i.e., the map given by $\delta(s_0) = s_1, \delta(s_1) = s_2$, and $\delta(s_2) = s_0$.
Similarly, we consider $\sigma\colon S \to S$ the map that fixes $s_0$ and permutes $s_1$ and $s_2$.
The automorphisms of~$W$ induced by~$\delta$ and~$\sigma$ will be denoted by the same symbols.
The subgroup $\langle \delta, \sigma\rangle $ of~$\Aut(W)$ generated by~$\delta$ and $\sigma$ is isomorphic to the dihedral group of order $6$.
Let $\iota\colon W \to W$ be the inversion group anti-automorphism that sends $x\mapsto x^{-1}$.
We define the group~$G$ of \emph{generic automorphisms} as the subgroup of the symmetric group of~$W$ generated by $\delta,\sigma$, and $\iota$.
The order of~$G$ is $12$.
\begin{lem}[{\cite[Proposition 8.8]{Hum90}}]\label{lem: elements of G are isomorphism of poset}
    Suppose $g\in G$.
    The map $z\mapsto g(z)$ induces an isomorphism of posets $[x,y]\cong [g(x), g(y)]$.
\end{lem}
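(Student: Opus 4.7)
The plan is to verify that each generator of $G$---namely $\delta$, $\sigma$, and $\iota$---defines a poset automorphism of $(W,\leq)$, and then observe that compositions of poset automorphisms are poset automorphisms, so the image of $[x,y]$ under any $g\in G$ is exactly $[g(x),g(y)]$.

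First, I would handle the two diagram automorphisms $\delta$ and $\sigma$ together. Each is induced by a bijection of $S$ that preserves the orders $m(s,t)$ of the products $st$ in the Coxeter matrix, so each extends to a group automorphism of $W$. Applying this automorphism to any expression in the generators sends words of the same length to words of the same length; in particular, a reduced expression for $z$ maps to a reduced expression for $\delta(z)$ (respectively $\sigma(z)$), so $\ell(\delta(z))=\ell(z)$. Using the subword characterization of Bruhat order (\cite[Theorem 2.2.2]{BB05}), a subword of a reduced expression for $y$ is reduced and represents some $z\leq y$ if and only if the image subword is reduced and represents $\delta(z)\leq \delta(y)$. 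Hence $\delta$ and $\sigma$ are poset automorphisms of $(W,\leq)$.

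Next I would treat the inversion $\iota\colon z\mapsto z^{-1}$. Reversing a reduced expression $z=s_{i_1}\cdots s_{i_k}$ gives a reduced expression $z^{-1}=s_{i_k}\cdots s_{i_1}$, so $\iota$ preserves length. Applying the subword characterization again, extracting a subword from a reduced expression for $y$ and reading it in reverse order gives a subword of a reduced expression for $y^{-1}$; this shows $z\leq y \iff z^{-1}\leq y^{-1}$. Therefore $\iota$ is also a poset automorphism of $(W,\leq)$. (Note that although $\iota$ is a group anti-automorphism, it is still an order automorphism, which is all we need.)

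Finally, since $G$ is generated by $\delta,\sigma,\iota$ and the set of poset automorphisms of $(W,\leq)$ is closed under composition and inverses, every $g\in G$ is a poset automorphism of $(W,\leq)$. In particular $g$ restricts to a bijection $\{z\in W\mid x\leq z\leq y\}\to\{w\in W\mid g(x)\leq w\leq g(y)\}$, that is, an isomorphism of posets $[x,y]\cong [g(x),g(y)]$. There is no real obstacle here; the only point requiring some care is to remember that $\iota$ is an anti-automorphism of the group structure yet still an automorphism of the Bruhat poset.
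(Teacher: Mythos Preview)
Your argument is correct and is the standard one: the paper itself gives no proof for this lemma, simply citing \cite[Proposition~8.8]{Hum90}, and your approach---checking that each generator $\delta,\sigma,\iota$ preserves reduced expressions and hence the subword characterization of Bruhat order---is exactly what underlies that reference.
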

Let $(E, (-,-))$ be the real Euclidean plane and let $\Phi\subset E$ be the root system of type~$A_2$, with simple roots $\{\alpha_1,\alpha_2\}$ spanning the root lattice $\mathbbm{Z}\Phi,$ and fundamental weights $\{\varpi_1,\varpi_2\}$ spanning the weight lattice $\Lambda$.
The elements of~$\Lambda$ are called \emph{weights}.
A \emph{dominant weight} is a weight that has non-negative coefficients when written as a linear combination of fundamental weights.
Let $\Phi_+=\{\alpha_1,\alpha_2,\alpha_1+\alpha_2\}$ be the  set of positive roots.
For each $\alpha\in \Phi$ and $k\in \mathbbm{Z}$ consider the affine line
\begin{equation*}
    H_{\a,k}\coloneqq\{v\in E\mid (\a, v)=k\}.
\end{equation*}
Let $s_{\a,k}$ be the orthogonal reflection through $H_{\a,k}$.
We denote by $\mathcal{H}$ the collection of all affine lines $H_{\a,k}$, with $\a\in\Phi,k\in\mathbbm{Z}$.
\begin{defn}\label{def: alcoves}
    The set $\mathcal{A}$ of \emph{alcoves} of~$\widetilde{A}_2$ is the set of connected components of 
    \begin{equation*}
        E-\bigcup_{H\in \mathcal{H}} H
    \end{equation*}
    The \emph{fundamental alcove} is  the following open subset of~$E$ 
    \begin{equation}\label{eq: fundamental eq}
        A_+\coloneqq\{v\in E\mid -1< ( \a,v)< 0 \text{ for all } \alpha\in \Phi_+\}\in \mathcal{A}.
    \end{equation}
\end{defn}
Let us recall a classical result.
\begin{prop}[{\cite[Chapitre VI, $\mathsection{2}$, $n^o \,3$]{Bou68}}]\label{prop: bijection between group and alcoves}
    Let $\mathcal{A}$ be the set of alcoves.
    The map $w\mapsto w\cdot A_+$ gives a one-to-one correspondence between $W$~and~$\mathcal{A}$.
\end{prop}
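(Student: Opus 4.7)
The plan is to verify the two statements hidden in the word ``bijection'': the action of $W$ on $\mathcal{A}$ is transitive (surjectivity of $w\mapsto w\cdot A_+$) and free (injectivity). Both are classical, and I would follow the strategy of~\cite[Chapter VI]{Bou68}.

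For surjectivity, I would first observe that the generators $s_0,s_1,s_2$ are precisely the orthogonal reflections across the three walls of~$A_+$. Given any $A\in\mathcal{A}$, pick points $p\in A_+$ and $q\in A$ and choose a path $\gamma$ from $p$ to $q$ that is generic, meaning it meets each line of~$\mathcal{H}$ transversally and avoids every intersection of two such lines. Then $\gamma$ traverses finitely many alcoves $A_+=A_0,A_1,\dots,A_m=A$ in which consecutive ones share a wall. An induction on~$j$ shows that each $A_j$ lies in $W\cdot A_+$: if $A_j=w_j\cdot A_+$, then the wall $H_{j+1}$ shared by $A_j$ and $A_{j+1}$ is of the form $w_j\cdot H$ for some wall $H$ of~$A_+$, so the reflection through $H_{j+1}$ equals $w_js_iw_j^{-1}\in W$ and hence $A_{j+1}=w_js_i\cdot A_+$.

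For injectivity, the cleanest route is to first establish that $\overline{A_+}$ is a strict fundamental domain for the $W$-action on~$E$, i.e., every orbit meets $\overline{A_+}$ in exactly one point. Granted this, if $w\cdot A_+=A_+$ then any $p\in A_+$ satisfies $wp\in A_+\cap (W\cdot p)=\{p\}$, so $w$ fixes the open set $A_+$ pointwise and hence $w=\mathrm{id}$ since $w$ is an affine isometry. The fundamental domain statement splits into existence (every orbit meets $\overline{A_+}$, proved by applying the same path-crossing argument to the point $q$ in place of the alcove $A$) and uniqueness (no non-identity element of~$W$ fixes a point of the open alcove $A_+$, because the stabilizer of any point is generated by reflections through lines of~$\mathcal{H}$ passing through it, and no such line meets~$A_+$).

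The main obstacle in a fully self-contained write-up is uniqueness in the fundamental domain statement, which ultimately rests on the structure of~$W$ as the semidirect product of $W_f$ with the coroot lattice acting by translations: the lattice acts freely on~$E$ and $W_f$ acts freely on interior points of its own fundamental chamber. Since the proposition itself is classical, I would keep the write-up terse and defer to~\cite{Bou68} for the remaining bookkeeping.
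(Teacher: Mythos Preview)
Your sketch is a correct outline of the classical Bourbaki argument, but note that the paper does not actually prove this proposition: it is stated with a citation to~\cite[Chapitre VI, \S2, $n^o\,3$]{Bou68} and used as a black box. So there is nothing to compare against beyond the fact that you are reproducing the very proof the paper defers to.
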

The red triangle in Figure~\ref{Fig: teselado} is the fundamental alcove $A_+$ of~$\widetilde{A}_2$.
Each little triangle in the figure is an alcove.
Via Proposition~\ref{prop: bijection between group and alcoves}, we identify $A_+$ with the identity $\mathrm{id}\in W$.
The colors on the edges of the triangles represent the following reflections: blue is $s_0$, green is $s_1$, and red is $s_2$.
\begin{figure}[!ht]
    \centering
    \begin{subfigure}[b]{0.48\textwidth}
        \centering
        \includegraphics[width=0.9\textwidth]{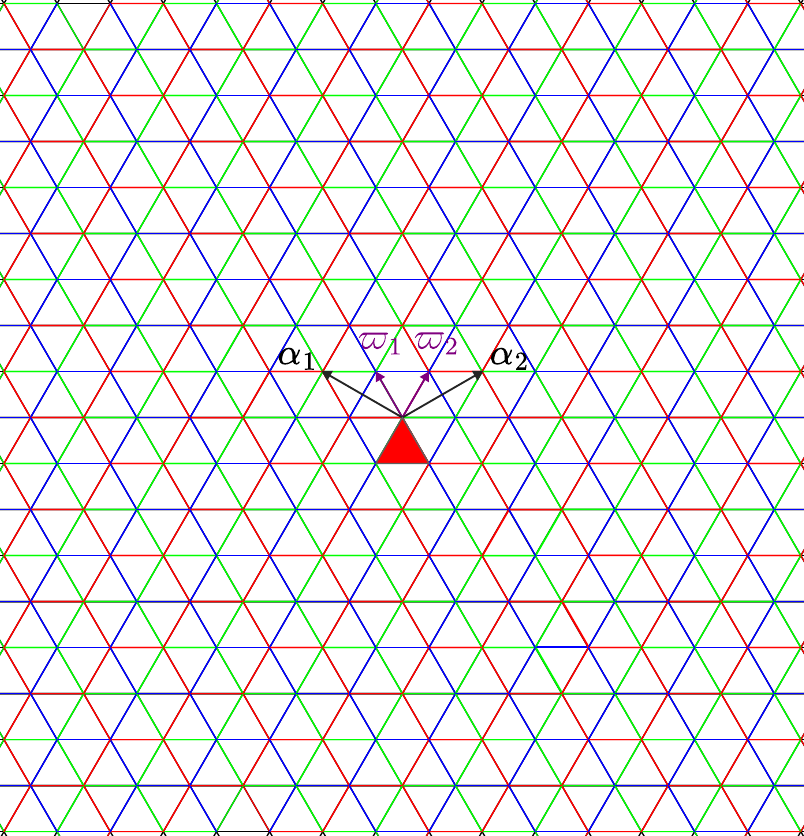}
        \caption{The Coxeter complex of~$W$.}
        \label{Fig: teselado}
    \end{subfigure}
        \hfill
    \begin{subfigure}[b]{0.48\textwidth}
        \centering 
        \includegraphics[width=0.9\textwidth]{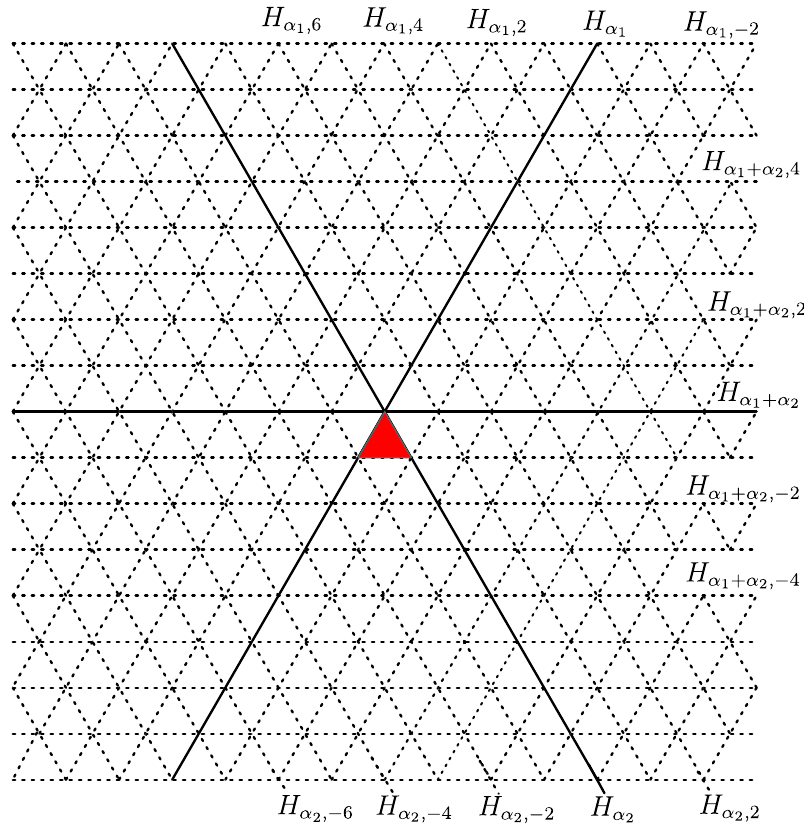}
        \caption{The set $\HC$ of affine lines.}
        \label{fig: Teselado con raiz}
    \end{subfigure}
    \caption{}
    \label{Fig: Raices e hiperplanos}
\end{figure}

We define the function $\operatorname{cen}\colon W\to E$, which maps an element $z\in W$ to the \emph{center} of its alcove $zA_+$.
When discussing the vertices, sides, faces, or area of an element $z\in W$, we refer to the corresponding property on the closure of its alcove, $\overline{zA_+}$.
To avoid cumbersome notation, for $z\in W$ and a point $v\in E$ in the plane, we write $v\in z$ as a shortcut for $v\in \overline{zA_+}$.
Since the left action of~$W$ on~$\AC$ is simply transitive, the equality $xA_+=yA_+$ implies that $x=y$.
In particular, $\cen(x)=\cen(y)$ implies that $x=y$.
An element $z\in W$ can be seen as a bijection $z\colon E\to E$, and its restriction $z\vert_{\cen(W)}\colon \cen(W)\to z(\cen(W))=\cen(W)$ is a bijection.

For any weight $\lambda\in\Lambda$, it is not hard to see that $\cen(W)+\lambda=\cen(W)$.
Indeed, it is enough to check this for $\lambda=\varpi_1$ (by symmetry, this implies the case $\lambda=\varpi_2$), and one can check this directly in Figure~\ref{Fig: teselado}.

We say that two alcoves $A$ and $B$ have the same \emph{orientation} if $A$ can be obtained from $B$ by a translation.
If $A$ has the same orientation as $A_+$, we say it is \emph{up-oriented}; otherwise, we say \emph{down-oriented}.
The \emph{orientation} of an element $z\in W$ is the orientation of its alcove~$zA_+$.

\begin{lem}\label{lem: length determines orientation} Let~$z\in W.$
    \begin{itemize}
        \item If $\ell(z)$ is odd, then $z$ is down-oriented.
        \item If $\ell(z)$ is even, then $z$ is up-oriented.
    \end{itemize}
\end{lem}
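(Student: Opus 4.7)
The plan is to induct on $\ell(z)$. The base case $z = \mathrm{id}$ is immediate: $\ell(\mathrm{id}) = 0$ is even, and $\mathrm{id} A_+ = A_+$ is up-oriented by the definition of up-orientation.

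For the inductive step I would take $z \in W$ with $\ell(z) = k+1 \geq 1$ and choose $s \in S$ with $\ell(zs) = k$. The two alcoves $zA_+$ and $(zs)A_+ = z(sA_+)$ are then adjacent: they share the wall obtained by applying the isometry $z$ to the unique wall of $A_+$ fixed by $s$. The crucial geometric fact is that, in the tiling of $E$ by the alcoves of $\widetilde{A}_2$ — all of which are congruent equilateral triangles — any two alcoves sharing a full edge have opposite orientations, because reflecting an equilateral triangle across any of its three sides produces a triangle pointing the other way. Therefore $z$ and $zs$ have opposite orientations, and the inductive hypothesis applied to $zs$ (whose orientation matches the parity of $k$) yields the claim for $z$ (whose orientation now matches the parity of $k+1$).

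The only step that is not purely formal is the geometric assertion that adjacent alcoves have opposite orientations. I would justify it once and for all as follows: each of the three simple reflections $s_0, s_1, s_2$ sends $A_+$ to an alcove sharing a full edge with $A_+$, and one can read off from Figure~\ref{Fig: teselado} (or verify by a one-line computation with the defining inequalities in~\eqref{eq: fundamental eq}) that $s_0 A_+$, $s_1 A_+$, and $s_2 A_+$ are all down-oriented while $A_+$ is up-oriented. Since every wall in $\mathcal{H}$ is a $W$-translate of a wall of $A_+$ and $W$ acts by isometries that permute the two orientation classes as a single involution per reflection, the local picture at $A_+$ propagates to every pair of adjacent alcoves.

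I do not expect any serious obstacle here: the proof is essentially a two-line induction once the geometric ``adjacent alcoves have opposite orientations'' observation is in hand, and the latter is built into the very shape of the type-$\widetilde{A}_2$ tiling.
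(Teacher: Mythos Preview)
Your proof is correct and follows essentially the same approach as the paper's: both argue by induction on $\ell(z)$, using that passing to an adjacent alcove (equivalently, crossing a single hyperplane of $\mathcal{H}$) flips the orientation. The paper phrases this via the standard fact that $\ell(z)$ equals the number of hyperplanes separating $A_+$ from $zA_+$, while you make the inductive step explicit by right-multiplying by a simple reflection; the content is the same.
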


\begin{proof}
    It is standard that for $z\in W$, the length $\ell(z)$ measures the minimal number of affine lines of the form $H_{\alpha,k}$ that one must cross to go from the identity alcove to the alcove~$zA_+$.
    Each crossing changes the orientation, allowing one to conclude by induction on~$\ell(z)$.
\end{proof}

\begin{nota}\label{nota: centroide mas vector}
    For $z\in W$ and a vector $v\in \Lambda$, we denote by $z+v$ the unique element in~$W$ with center $\operatorname{cen}(z)+v$.
    In other words, $\operatorname{cen}(z+v) = \operatorname{cen}(z)+v$.
\end{nota}

\subsection{The \texorpdfstring{$X\Theta$-partition}{XTheta-partition} of \texorpdfstring{$\widetilde{A}_2$}{A2~}}\label{subsec: xtheta partition}
In this section, we follow~\cite{BLP23} and~\cite{LP23} with slight modifications to their original notation.

\begin{defn}
    Let $\underline{y} = (r_1, r_2,\cdots , r_k)$ be any expression of~$y$ (not necessarily reduced).
    If $1<i<k$, we say that there is a \emph{braid triplet in position} $i$ if $r_{i-1}= r_{i+1} \neq r_i$.
    We define the \emph{distance} between a braid triplet in position $i$ and a braid triplet in position $j>i$ to be the number $j-i-1$.
\end{defn}

\begin{lem}[{\cite[Lemma 1.1]{LP23}}]\label{lem: reduced expresion}
    An expression of an element in~$W$ without adjacent simple reflections is reduced if and only if the distance between any two braid triplets is~odd.
\end{lem}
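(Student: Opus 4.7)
The plan is to introduce a single parity invariant on braid-triplet positions and conclude via Tits' solution to the word problem in Coxeter groups. Observe first that the distance $j-i-1$ between two braid triplets at positions $i<j$ is odd if and only if $i$ and $j$ have the same parity mod $2$. Writing $P(\underline{y})\subseteq\{0,1\}$ for the set of parities of braid-triplet positions occurring in $\underline{y}$, the condition ``all pairwise distances are odd'' is equivalent to $|P(\underline{y})|\le 1$.

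The key technical step I would establish is the effect of a single braid move $aba\mapsto bab$ performed at a triplet sitting at position~$i$ in a no-adjacency expression. A direct case analysis on the values $r_{i\pm 2}\in\{a,b,c\}$ (the value~$a$ being ruled out by no-adjacency at positions $i\pm 1$) shows: (i)~the move creates adjacent equal letters if and only if $r_{i-2}=b$ or $r_{i+2}=b$, which is equivalent to the original expression containing a braid triplet at $i-1$ or $i+1$ (both of parity opposite to~$i$); (ii)~if no such adjacency arises, then the set of braid-triplet positions in the resulting expression differs from the original one only possibly at the positions $i\pm 2$, which share the parity of~$i$. Thus braid moves at triplets of parity~$p$ alter other triplets only at positions of parity~$p$, whereas adjacencies are triggered by triplets of the opposite parity. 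From~(ii) the ``if'' direction of the lemma follows at once: if $P(\underline{y})\subseteq\{p\}$, every braid move preserves both the no-adjacency property and the inclusion $P\subseteq\{p\}$, so by Tits' theorem---any non-reduced expression admits a sequence of braid moves producing an $s_is_i$ substring---the expression $\underline{y}$ is reduced.

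For the ``only if'' direction (contrapositive) I would argue as follows. Assume braid triplets at positions $i<j$ with $j-i$ odd, and choose such a pair with $j-i$ minimal, so that no triplet lies strictly between. Since $\widetilde{A}_2$ has exactly three generators and $\underline{y}$ has no adjacent equal letters, the letters between $i-1$ and $j+1$ are forced into a rigid period-$3$ pattern; one verifies explicitly that $r_{j-3}$ equals the middle letter of the triplet at~$j$. Performing the braid move at~$j$ therefore produces a fresh braid triplet at position $j-2$ while leaving the triplet at~$i$ undisturbed, shrinking the gap by~$2$ and preserving parity. Iterating until the gap closes to~$0$, one more braid move produces the promised $s_is_i$-adjacency, and after cancellation $\underline{y}$ equals a strictly shorter expression for the same element---so it is not reduced. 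The main obstacle is this inductive shrinking: at each step one must track the forced letter pattern to confirm that the triplet at~$i$ remains untouched, that the claimed new triplet genuinely appears (which is exactly what the identity $r_{j-3}=r_j$ ensures), and that the final collision yields the promised cancellation; the degenerate two-generator subcase collapses to the dihedral pattern $ababab\cdots$ and provides a useful sanity check.
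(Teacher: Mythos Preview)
The paper does not give its own proof of this lemma; it is imported wholesale from \cite{LP23} and used as a black box. So there is no in-paper argument to compare your proposal against.

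Your argument is correct. The parity invariant $P(\underline{y})$ together with the local analysis of a single braid move (your points (i) and (ii)) gives a clean proof of the ``if'' direction via the word property: if $P\subseteq\{p\}$, then every braid move is at a position of parity $p$, never produces an adjacency (that would require a triplet of parity $1-p$), and keeps $P\subseteq\{p\}$; hence no sequence of braid moves ever reaches an $ss$, so the expression is reduced. For the ``only if'' direction, your shrinking argument is also sound: with three generators, no adjacencies, and no intermediate braid triplets, the letters strictly between the two chosen triplets are forced into a period-$3$ cycle, which gives exactly the identity $r_{j-3}=r_j$ you need, and one checks that the braid move at $j$ leaves the triplet at $i$ untouched (its letters sit at positions $\le j-2$) while creating the new triplet at $j-2$. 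Two minor points worth tightening: the gap $j-i$ stays odd and bottoms out at $1$, not $0$, before the final move produces the adjacency; and if at some intermediate step the braid move at $j$ already creates an adjacency on the right (namely when $r_{j+2}=r_j$, which you have not excluded and cannot), you are simply done early---this does not interfere with the induction, but should be said.
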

 
For $n\in \mathbbm{Z}_{\geq 1}$, consider the element $\mathrm{\mathbf{x}}_n\coloneqq 123\cdots n$.
Recall that this denotes the product $s_1s_2s_3\cdots s_n$, where the indices are considered modulo~3.
By Lemma~\ref{lem: reduced expresion}, the expression $(1,2,\hdots,n)$ is reduced, as there are no braid triplets, 
so $\ell(\mathrm{\mathbf{x}}_n)=n$.
Recall the group $G$ from Section~\ref{subsec: alcoves}.
Let us define 
\begin{equation}
    X^{\mathrm{odd}} \coloneqq \{g(\mathrm{\mathbf{x}}_n)\mid\mbox{$n$ is odd, $g\in G$}\} \quad\mbox{and}\quad  X^{\mathrm{even}}\coloneqq \{g(\mathrm{\mathbf{x}}_n)\mid\mbox{$n$ is even, $g\in G$}\}, 
\end{equation} 
and we denote $X\coloneqq X^\mathrm{odd}\uplus X^\mathrm{even}$, where $\uplus$ is the disjoint union.

For $m,n\in \mathbbm{Z}_{\geq 0}$, we define 
\begin{equation}\label{theta}   
    \theta(m,n)\coloneqq 1234\cdots(2m+1)(2m+2)(2m+1)\cdots(2m-2n+1)
\end{equation} 
Since $(2m+1)(2m+2)(2m+1)$ is the unique braid triplet, Lemma~\ref{lem: reduced expresion} implies that the expression in \eqref{theta} is reduced.
In particular, we have that $\ell(\theta(m,n))=2m+2n+3$.
We define
\begin{equation*}
    \Theta\coloneqq \{g(\theta(m,n))\mid m,n\geq0, g\in G\}.
\end{equation*}
The elements in this set are precisely those elements in~$W$ having two simple reflections in both their left and right descent sets.
In particular, for any $\theta(m,n)$ there exists a unique $s_{m,n}\in S$ such that 
\begin{equation}\label{eq-smn}
    \theta(m,n)<\theta(m,n)s_{m,n}.
\end{equation}
To simplify formulas, we will use the notation $\theta^s(m,n)\coloneqq \theta(m,n)s_{m,n}$.
On the other hand, $s_0$ is the only simple reflection not included in the left descent set of~$\theta(m,n)$.
We define, 
\begin{align*}
    \Theta^s&\coloneqq\{g(\theta^s(m,n))\mid m,n\geq0, g\in G\},\\
    {}^s\Theta^s&\coloneqq \{g(s_0\theta^s(m,n))\mid m,n\geq0, g\in G\}.
\end{align*}
Note that $\ell(\theta^s(m,n))=2m+2n+4$ and $\ell(s_0\theta^s(m,n))=2m+2n+5$.
\begin{rem}\label{rem: facts of theta-partition}
    \begin{enumerate}[label=(\roman*)]
        \item\label{remark-item: length of the elements} The elements in 
        $X^{\mathrm{even}}\uplus \Theta^s$ have even length, while the elements in 
        $X^{\mathrm{odd}}\uplus \Theta\uplus {^s\Theta^s}$ have odd length.
        \item\label{rem-item: difference of thetas as combination of fundamental weights} The element   $\theta(m,n)$ can be defined as the unique element in the affine Weyl group, such that $\theta(m,n)A_+=w_0A_++m\varpi_1+n\varpi_2.$
        Thus, for $m,m',n,n'\geq 0$ and $v=(m'-m)\varpi_1+(n'-n)\varpi_2\in\Lambda$, we have $\theta(m,n)+v=\theta(m',n')$.
        This last equation implies that $\theta^s(m,n)+v=\theta^s(m',n')$, because $\cen(\theta^s(m,n))=\cen(\theta(m,n))+(\varpi_1+\varpi_2)/3$.
        \item\label{rem-item: difference of wall elements as combination of fundamental weights} For $k,k'\geq 0$ and $v=(k'-k)\varpi_1\in\Lambda$, we have $\mathrm{\mathbf{x}}_{2k}+v=\mathrm{\mathbf{x}}_{2k'}$ and $\mathrm{\mathbf{x}}_{2k+1}+v=\mathrm{\mathbf{x}}_{2k'+1}$.
    \end{enumerate}
\end{rem}

The \emph{$X\Theta$-partition} of~$W$ is the decomposition $W\setminus\{\id\}=X\uplus\Theta\uplus\Theta^s\uplus {^s\Theta^s}$.
This last equality is illustrated in Figure~\ref{Fig: Theta-p}.
\begin{figure}
    \includegraphics[width=0.5\textwidth]{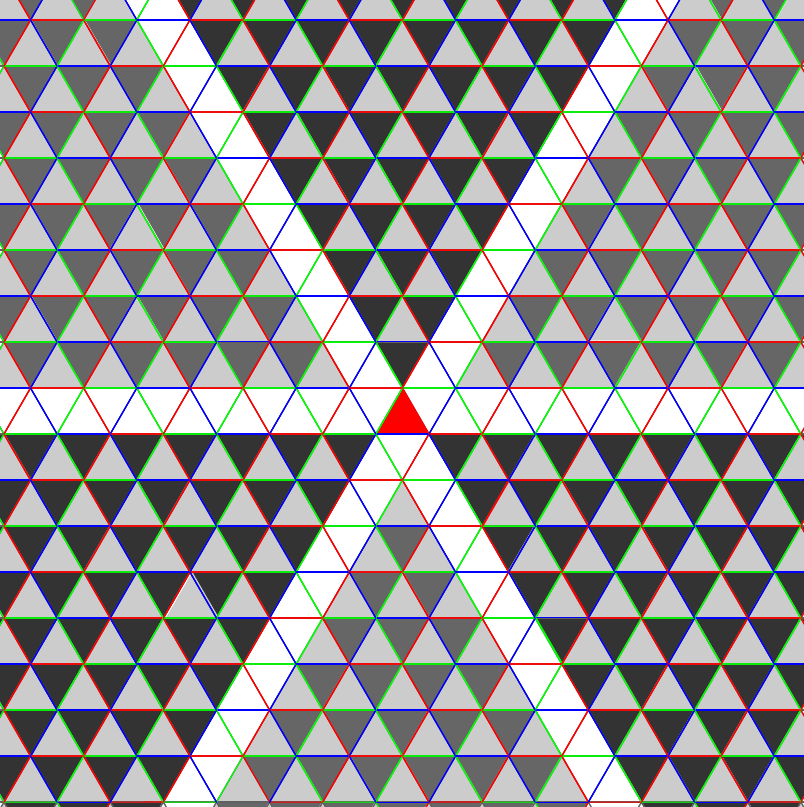}
    \caption{From lightest (white) to darkest gray:  $X,\Theta^s,{^s\Theta^s}$, and $\Theta$.
    The red triangle is $A_+$.}
    \label{Fig: Theta-p}
\end{figure}
\begin{defn}
    We say that an element $x\in W$ is \emph{dominant} if its alcove has a non-empty intersection with the dominant chamber (the light blue region in Figure~\ref{subfig: hexagon 1}).
    Equivalently, $x$ is dominant if it belongs to the set $D\coloneqq\{\theta(m,n),\theta^s(m,n)\mid m,n\geq 0\}.$ 
\end{defn} 
\begin{rem}\label{rem: remark tontos}
    For any $y\in W\setminus\{\id \}$, there is an element $g\in \langle \delta, \sigma\rangle$ such that 
    \begin{equation*}
        gy\in \{\theta(m,n),s_0\theta(m,n),\theta^s(m,n),s_0\theta^s(m,n),\mathrm{\mathbf{x}}_k\mid m,n\geq 0,k\geq 1\}.
    \end{equation*}
    Such a $g$ is not necessarily unique for a fixed $y$.
    For example, $\sigma(\theta(3,4))=\theta(4,3)$.
\end{rem}
Using the equation $\ell(\mathrm{\mathbf{x}}_n)=n$ and Remark~\ref{rem: facts of theta-partition}\ref{rem-item: difference of wall elements as combination of fundamental weights}, we obtain the following lemma.
\begin{lem}\label{lem: difference of lengths for alcoves in the wall}
    If $x,y\in \{\mathrm{\mathbf{x}}_n \mid \mbox{$n$ is odd}\}$, then $\cen(y)-\cen(x)=({\ell(x,y)}/{2})\varpi_1$.
\end{lem}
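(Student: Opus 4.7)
The plan is to simply combine two facts that are already on the table: the length formula $\ell(\mathrm{\mathbf{x}}_n)=n$ and item~\ref{rem-item: difference of wall elements as combination of fundamental weights} of Remark~\ref{rem: facts of theta-partition}. Write $x=\mathrm{\mathbf{x}}_{2k+1}$ and $y=\mathrm{\mathbf{x}}_{2k'+1}$ for some $k,k'\geq 0$ (which is possible by hypothesis). Then by the length formula we have
\begin{equation*}
    \ell(x,y)=\ell(y)-\ell(x)=(2k'+1)-(2k+1)=2(k'-k),
\end{equation*}
so $\ell(x,y)/2=k'-k$.

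Next, applying Remark~\ref{rem: facts of theta-partition}\ref{rem-item: difference of wall elements as combination of fundamental weights} with the weight $v=(k'-k)\varpi_1\in\Lambda$ yields $\mathrm{\mathbf{x}}_{2k+1}+v=\mathrm{\mathbf{x}}_{2k'+1}$, i.e.\ $x+v=y$. By Notation~\ref{nota: centroide mas vector}, this translates to the identity of centers $\cen(y)=\cen(x)+(k'-k)\varpi_1$, which by the previous paragraph is precisely $\cen(y)-\cen(x)=(\ell(x,y)/2)\varpi_1$.

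The argument has no real obstacle; the only subtlety worth flagging is that the formula for $\ell(x,y)$ used here is the signed difference $\ell(y)-\ell(x)$ introduced in Section~\ref{subsec: Coxeter systems}, so no assumption $x\leq y$ is needed and the statement remains symmetric under swapping $x$ and $y$ (with both sides changing sign). Everything reduces to unpacking the definitions, so the proof will be a short two-line verification.
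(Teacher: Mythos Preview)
Your proof is correct and follows exactly the approach indicated by the paper, which simply states that the lemma is obtained from $\ell(\mathrm{\mathbf{x}}_n)=n$ together with Remark~\ref{rem: facts of theta-partition}\ref{rem-item: difference of wall elements as combination of fundamental weights}. Your write-up just makes this two-line verification explicit.
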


\subsection{Upper and lower covers} 
In this section we study the sets $\operatorname{U}(z)$ and $\operatorname{L}(z)$ for $z\in D\cup s_0 D$.

For $y\in D$, we define the following subset of~$D$:
\begin{equation*}
    \operatorname{L}_D(y)\coloneqq 
    \left\{
    \begin{array}{@{}l@{\thinspace}l}
        \{\theta^s(m-1,n),\theta^s(m,n-1)\}&\mbox{ if $y=\theta(m,n)$,} \medskip\\
        \{\theta(m,n),\theta(m-1,n+1),\theta(m+1,n-1)\}&\mbox{ if $y=\theta^s(m,n)$.}
    \end{array}  
    \right.
\end{equation*}
If either $m-1<0$ or $n-1<0$, then we omit the corresponding element of~$\operatorname{L}_D(y)$.
For instance,  $\operatorname{L}_D(\theta^s(3,0))=\{\theta(3,0),\theta(2,1)\}$.

The following lemma describes the sets $\operatorname{L}(y)$ for $y\in D\cup s_0D$.
The case $y=s_0\theta^s(m,n)$ with $n,m$ positive integers is proven in~\cite[Lemma 2.2]{BLP23}.
The remaining cases follow similar arguments, so we omit the details.
\begin{lem} \label{lem: set of lower covers}
    Let $y\in D$, then  $\operatorname{L}(y)=\{s_1y,s_2y\}\uplus \operatorname{L}_D(y).$ In particular, $\operatorname{L}(y)\cap D=\operatorname{L}_D(y)$.
    Furthermore, we have that $\operatorname{L}(s_0y)=s_0\operatorname{L}(y)\uplus \{y\}$.
\end{lem}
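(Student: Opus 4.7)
The case $y = s_0\theta^s(m,n)$ with $m, n > 0$ is handled in~\cite[Lemma 2.2]{BLP23}. My plan is to extend that argument to the remaining cases---$y = \theta(m,n)$, $y = \theta^s(m,n)$, $y = s_0\theta(m,n)$, and the boundary instances with $m = 0$ or $n = 0$---and then derive the ``furthermore'' clause from the main clause via the Lifting Property.

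For the main equality, the inclusion $\{s_1 y, s_2 y\} \subseteq \operatorname{L}(y)$ is immediate from $D_L(y) \supseteq \{s_1, s_2\}$ (noted after~\eqref{eq-smn}). For each $z \in \operatorname{L}_D(y)$, I would exhibit an explicit reflection $t \in T$ with $ty = z$ and $\ell(ty) = \ell(y) - 1$; the length comparison follows from Remark~\ref{rem: facts of theta-partition}\ref{remark-item: length of the elements}, and the reflection can be identified by comparing the standard reduced expression of $y$ in~\eqref{theta} with that of $z$ (using Remark~\ref{rem: facts of theta-partition}\ref{rem-item: difference of thetas as combination of fundamental weights} to locate $\cen(z)$). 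For the reverse inclusion I would invoke the Strong Exchange Condition: every lower cover of $y$ has the form $s_{i_1}\cdots\widehat{s_{i_j}}\cdots s_{i_\ell}$ for some reduced expression of $y$ and some position $j$ for which the deleted word remains reduced. By Lemma~\ref{lem: reduced expresion}, reducedness is governed by the parity of distances between braid triplets; a direct case analysis tracks the unique braid triplet (at positions $2m+1, 2m+2, 2m+3$ of the standard reduced expression of $\theta(m,n)$) together with the braid triplets created or destroyed by each deletion. For $y = \theta(m,n)$ with $m,n \geq 1$, exactly four positions yield reduced deletions, producing the four elements listed; in the boundary cases $m=0$ or $n=0$ the would-be fourth position coincides with $s_1y$ or $s_2 y$ (for instance, for $y = \theta(1,0)$, deletion at position~5 yields $s_1 s_2 s_0 s_1 = s_2 y$, not a new cover), which matches the omission convention in $\operatorname{L}_D(y)$. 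The cases $y = \theta^s(m,n)$ and $y = s_0\theta(m,n)$ differ from $y = \theta(m,n)$ only by the addition of one letter at the end or beginning of the reduced expression, and the analogous enumeration applies.

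For the ``furthermore'' clause, since $s_0 \notin D_L(y)$ for $y \in D$, we have $y < s_0 y$ and $\ell(s_0 y) = \ell(y) + 1$. Applying the Lifting Property~\cite[Prop.~2.2.7]{BB05} with $s = s_0$ and $v = s_0 y$, any $u \lessdot s_0 y$ satisfies either $s_0 u > u$ (forcing $u = y$ via the length equality $\ell(u) = \ell(y)$) or $s_0 u < u$ (in which case $\ell(s_0 u) = \ell(y) - 1$ together with $s_0 u \leq y$ yields $s_0 u \lessdot y$). This gives the inclusion $\operatorname{L}(s_0 y) \subseteq \{y\} \uplus s_0 \operatorname{L}(y)$, and the disjointness is automatic from length considerations. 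For the reverse inclusion I would verify $s_0 \notin D_L(z)$ for every $z \in \operatorname{L}(y)$: for $z \in \operatorname{L}_D(y) \subseteq D$ this is a direct check using reduced expressions; for $z = s_i y$ a short computation confirms that prepending $s_0$ to the reduced expression of $s_i y$ creates at most one new braid triplet whose distance to the original triplet is $2m-1$ (odd), so the resulting word is reduced by Lemma~\ref{lem: reduced expresion}, hence $s_0 z > z$. The main obstacle is the enumeration in the first step: systematically certifying that no deletion beyond the listed positions produces a reduced expression. This combinatorial bookkeeping closely parallels the argument in~\cite[Lemma 2.2]{BLP23}, and the small cases of $m,n$ require separate but routine inspection.
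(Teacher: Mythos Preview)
Your proposal is correct and follows essentially the same approach as the paper, which simply cites \cite[Lemma~2.2]{BLP23} for the case $y=s_0\theta^s(m,n)$ with $m,n>0$ and asserts that the remaining cases follow by similar arguments without further detail. Your derivation of the ``furthermore'' clause via the Lifting Property is a clean shortcut compared to repeating the deletion analysis for $s_0y$ directly, but both routes are standard and your verification that $s_0\notin D_L(z)$ for each $z\in\operatorname{L}(y)$ is the only nontrivial step there.
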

An immediate consequence of Lemma~\ref{lem: set of lower covers} and the fact that $\operatorname{L}(gy)=g\operatorname{L}(y)$ for $g\in \langle\sigma,\delta\rangle$ is the following.
\begin{lem}\label{lem: cardinality of lower covers}
    Let $y\in W\setminus  (X\cup\{\id\})$, we have three cases:
    \begin{enumerate}
        \item If $y\in \Theta$, then $|\operatorname{L}(y)|\in\{2,3,4\}$.
        \item If $y\in \Theta^s$, then $|\operatorname{L}(y)|\in\{3,4,5\}$.
        \item If $y\in {^s\Theta^s}$, then $|\operatorname{L}(y)|\in\{4,5,6\}$.
    \end{enumerate}
\end{lem}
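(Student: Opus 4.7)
The plan is to leverage the $G$-invariance of $|\operatorname{L}(y)|$ and then apply Lemma~\ref{lem: set of lower covers} to canonical representatives.

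The first step is to extend the identity $\operatorname{L}(gy)=g\operatorname{L}(y)$, stated just before the lemma for $g\in\langle\sigma,\delta\rangle$, to all of $G$. The only additional generator is $\iota$, and since inversion is an anti-automorphism preserving both length and Bruhat order, $u\lessdot y$ holds if and only if $u^{-1}\lessdot y^{-1}$. Hence $\iota(\operatorname{L}(y))=\operatorname{L}(\iota(y))$, and in particular $|\operatorname{L}(gy)|=|\operatorname{L}(y)|$ for every $g\in G$. Since $\Theta$, $\Theta^s$, and ${}^s\Theta^s$ are by definition the $G$-orbits of the canonical families $\{\theta(m,n)\}$, $\{\theta^s(m,n)\}$, $\{s_0\theta^s(m,n)\}$ with $m,n\geq 0$, it suffices to compute $|\operatorname{L}(y)|$ for $y$ of one of these three forms.

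The second step is a short case analysis using Lemma~\ref{lem: set of lower covers}. For $y=\theta(m,n)$, the formula gives $|\operatorname{L}(y)|=2+|\operatorname{L}_D(y)|$, where $\operatorname{L}_D(y)\subseteq\{\theta^s(m-1,n),\theta^s(m,n-1)\}$ has $0$, $1$, or $2$ elements according to whether $(m,n)=(0,0)$, exactly one of $m,n$ vanishes, or both are positive; thus $|\operatorname{L}(\theta(m,n))|\in\{2,3,4\}$. For $y=\theta^s(m,n)$, the analogous analysis on $\operatorname{L}_D(y)\subseteq\{\theta(m,n),\theta(m-1,n+1),\theta(m+1,n-1)\}$---where $\theta(m,n)$ is always present---yields cardinality $1$, $2$, or $3$, and so $|\operatorname{L}(\theta^s(m,n))|\in\{3,4,5\}$. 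Finally, for $y=s_0\theta^s(m,n)\in{}^s\Theta^s$, the last sentence of Lemma~\ref{lem: set of lower covers} gives $|\operatorname{L}(y)|=|\operatorname{L}(\theta^s(m,n))|+1\in\{4,5,6\}$.

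There is no genuine conceptual obstacle; the argument is essentially bookkeeping, and the only mildly subtle point is the extension of the equivariance identity to $\iota$, which is exactly what makes the word \emph{immediate} appropriate for this corollary.
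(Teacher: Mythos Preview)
Your proof is correct and follows essentially the same approach as the paper, which simply states that the lemma is ``an immediate consequence of Lemma~\ref{lem: set of lower covers} and the fact that $\operatorname{L}(gy)=g\operatorname{L}(y)$ for $g\in\langle\sigma,\delta\rangle$''; you have made this immediate step explicit. One minor remark: your extension to $\iota$ is not actually needed, since by Remark~\ref{rem: remark tontos} the subgroup $\langle\sigma,\delta\rangle$ already suffices to move any $y\notin X\cup\{\id\}$ into $D\cup s_0D$, and Lemma~\ref{lem: set of lower covers} covers the extra representative $s_0\theta(m,n)$ (which lies in $\Theta^s$ and has $|\operatorname{L}|\in\{3,4,5\}$) via its last sentence.
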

\begin{lem}\label{lem: cardinality of upper covers}
    Let $x\in W\setminus (X\cup\{\id\})$, we have three cases:
    \begin{enumerate}
        \item If $x\in \Theta$, then $|\operatorname{U}(x)|=6$.
        \item If $x\in \Theta^s$, then $|\operatorname{U}(x)|=5$.
        \item If $x\in {^s\Theta^s}$, then $|\operatorname{U}(x)|=4$.
    \end{enumerate}
\end{lem}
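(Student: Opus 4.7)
The plan is to use the characterization $y\in\operatorname{U}(x)\iff x\in\operatorname{L}(y)$ together with the explicit description of lower covers in Lemma~\ref{lem: set of lower covers} and the $G$-equivariance from Lemma~\ref{lem: elements of G are isomorphism of poset}.

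Since $|\operatorname{U}(x)|$ is constant on $G$-orbits, Remark~\ref{rem: remark tontos} reduces the problem to the case where $x$ lies in the canonical set $\{\theta(m,n),s_0\theta(m,n),\theta^s(m,n),s_0\theta^s(m,n)\}$: one picks a representative for each of $\Theta$, $\Theta^s$, and ${}^s\Theta^s$. For any $y\in\operatorname{U}(x)$, the constraint $\ell(y)=\ell(x)+1$ combined with Remark~\ref{rem: facts of theta-partition}\ref{remark-item: length of the elements} restricts $y$ to lie in specific classes of the $X\Theta$-partition of opposite length parity.

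Next, I would write $y=g(y_0)$ for some $g\in G$ and canonical $y_0$, so that the condition $x\in\operatorname{L}(y)$ becomes $g^{-1}(x)\in\operatorname{L}(y_0)$. Lemma~\ref{lem: set of lower covers} describes $\operatorname{L}(y_0)$ explicitly both for $y_0\in D$ and for $y_0\in s_0 D$ via the formula $\operatorname{L}(s_0y')=s_0\operatorname{L}(y')\uplus\{y'\}$. For each admissible pair $(g,y_0)$ of the correct length, one checks directly whether $g^{-1}(x)$ belongs to the listed set, and collects the distinct $y=g(y_0)$ thereby produced. For $x=\theta(m,n)$ the upper covers are, generically, one cover of the form $s_0\theta(m,n)$, three of the form $\theta^s(m,n)$ and $\theta^s(m{\pm}1,n{\mp}1)$, and two further covers obtained via $\delta,\sigma$-translates of the canonical list, giving $6$ in total; an analogous but shorter enumeration gives $5$ for $x=\theta^s(m,n)$ and $4$ for $x=s_0\theta^s(m,n)$.

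The main obstacle is the careful bookkeeping for boundary positions, where $m=0$ or $n=0$ forces some terms in Lemma~\ref{lem: set of lower covers} to be omitted; in such cases one has to show that the upper covers which disappear from the dominant orbit are replaced by additional covers from non-dominant $G$-translates of the canonical set, so that the total remains $6$, $5$, or $4$. This stability across the boundary is the combinatorial heart of the lemma, and it follows by matching up the omissions in Lemma~\ref{lem: set of lower covers} with new $y_0$ contributions under the $\delta$- and $\sigma$-action.
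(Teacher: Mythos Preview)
Your approach is essentially the same as the paper's: both reduce via $\langle\sigma,\delta\rangle$-equivariance to canonical $x$, then determine $\operatorname{U}(x)$ by searching for all $y$ with $x\in\operatorname{L}(y)$ using the explicit description of lower covers. The enumeration you sketch for generic $x=\theta(m,n)$ matches the paper's list exactly.

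However, there is a genuine gap. Lemma~\ref{lem: set of lower covers} only describes $\operatorname{L}(y_0)$ for $y_0\in D\cup s_0D$; it says nothing about $\operatorname{L}(\mathrm{\mathbf{x}}_k)$. Your proposal writes every candidate upper cover as $y=g(y_0)$ with $y_0$ canonical and then appeals to Lemma~\ref{lem: set of lower covers}, but the canonical set from Remark~\ref{rem: remark tontos} includes $\mathrm{\mathbf{x}}_k$, and you never explain how to test whether $x\in\operatorname{L}(\mathrm{\mathbf{x}}_k)$. This is not a vacuous concern: for boundary elements such as $x=\theta(m,0)$ one has $\mathrm{\mathbf{x}}_{2m+4}\in\operatorname{U}(x)$, so covers in $X$ genuinely occur and must be counted. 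The paper closes this gap by supplying, inside the proof, an explicit formula for $\operatorname{L}(\mathrm{\mathbf{x}}_k)$ (see Equation~\eqref{eq: set lower xk}); in the generic case $x\in(D+\rho)\cup s_0(D+\rho)$ this formula is used to show that no upper cover lies in $X$, while in the boundary case it provides the replacement covers you allude to. Without this additional ingredient your enumeration is incomplete: length parity alone does not exclude $X$, since $X$ contains elements of both parities.
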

\begin{proof}
    To prove the statement, we distinguish two cases.
    First, suppose that $x\in D+\rho$ or $s_0x\in D+\rho$.
    Let $z\in \operatorname{U}(x)$.
    We claim that $z\not\in X$, which follows from the explicit description\footnote{This description was not introduced earlier, as it plays no direct role in the main results.
    Its proof is similar in spirit to that of Lemma~\ref{lem: set of lower covers}, and we omit it to avoid unnecessary repetition.} for $\operatorname{L}(\mathrm{\mathbf{x}}_k)$
    \begin{equation}\label{eq: set lower xk}
        \operatorname{L}(\mathrm{\mathbf{x}}_k) = 
        \begin{cases}
            \{\mathrm{\mathbf{x}}_{k-1},s_1\mathrm{\mathbf{x}}_k, \theta(\tfrac{k-4}{2},0),\delta^2(\theta(\tfrac{k-4}{2},0))\} & \text{if } k \text{ is even and } k \geq 4,\medskip\\
            \{\mathrm{\mathbf{x}}_{k-1},s_1\mathrm{\mathbf{x}}_k, \delta(s_0\theta(\tfrac{k-5}{2},0)),\delta^2(\theta^s(\tfrac{k-5}{2},0))\} & \text{if } k \text{ is odd and } k \geq 5.
        \end{cases}
    \end{equation}
    This implies that $\operatorname{L}(z)$ can be described using Lemma~\ref{lem: set of lower covers} and the fact that $\operatorname{L}(gz)=g\operatorname{L}(z)$ for $g\in \langle\sigma,\delta\rangle$.
    Let us illustrate this with one example.
    Let $z=s_0\theta(m-1,n+1)$.
    By Lemma~\ref{lem: set of lower covers}, we have that $s_0s_1\theta(m-1,n+1)\in \operatorname{L}(z)$, but we note that $s_0s_1\theta(m-1,n+1)=\delta^2(\theta(m,n))$.
    Thus we get $\delta(s_0s_1\theta(m-1,n+1))=\theta(m,n)\in \operatorname{L}(\delta z)$.
    
    Therefore, we obtain
    \begingroup
    \allowdisplaybreaks
    \begin{align*}
        \operatorname{U}(\theta(m,n)) &= 
            \{ \theta^s(m,n), \theta^s(m{+}1,n{-}1), \theta^s(m{-}1,n{+}1), \\
            &\phantom{=\{\;} s_0\theta(m,n), \delta(s_0\theta(m{-}1,n{+}1)), \delta^2(s_0\theta(m{+}1,n{-}1)) \}, \\[0.5ex]
        \operatorname{U}(\theta^s(m,n)) &= 
            \{ \theta(m{+}1,n), \theta(m,n{+}1), s_0\theta^s(m,n), \\
            &\phantom{=\{\;} \delta(s_0\theta^s(m{-}1,n{+}1)), \delta^2(s_0\theta^s(m{+}1,n{-}1)) \}, \\[0.5ex]
        \operatorname{U}(s_0\theta^s(m,n)) &= 
            \{ s_0\theta(m,n{+}1), s_0\theta(m{+}1,n), \\
            &\phantom{=\{\;} \delta(\theta^s(m,n{+}1)), \delta^2(\theta^s(m{+}1,n)) \}.
    \end{align*}
    \endgroup
    It is easy to see that the elements in each of these sets are mutually distinct, so the lemma follows for $x\in (D+\rho)\uplus s_0(D+\rho)$.
     
    Otherwise, suppose that $x\in D\setminus (D+\rho)$ or $s_0x\in D\setminus (D+\rho)$.
    This case follows by similar reasoning combining Lemma~\ref{lem: set of lower covers}, Equation \eqref{eq: set lower xk}, and the fact that $\operatorname{L}(gz)=g\operatorname{L}(z)$, for $g\in \langle\sigma,\delta\rangle$.
    We omit the proof, as we will not use this case in any of the paper's main results.
    Removing it from the hypothesis would make reading several lemmas' statements more cumbersome.
\end{proof}
\begin{cor}\label{cor: iso intvls same theta partition}
    Let $[x,y],[x',y']$ be full intervals and let $x,x',y,y'$ be dominant elements.
    If $[x,y]$ and $[x',y']$ are isomorphic as posets, then $x$ and $x'$ belong to the same part of the $X\Theta$-partition, and similarly, $y$ and $y'$ belong to the same part of the $X\Theta$-partition.
    In particular, there are $\lambda,\mu\in\Lambda$ such that $x+\lambda=x'$ and $y+\mu=y'$.
\end{cor}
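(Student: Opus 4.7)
The plan is to exploit fullness to recover the atom and coatom counts of each interval, then use Lemma~\ref{lem: cardinality of upper covers} to pin down the $X\Theta$-part of $x$ (and of $x'$), and finally close a parity gap for $y$ and $y'$ via Remark~\ref{rem: facts of theta-partition}\ref{remark-item: length of the elements} together with the rank-preserving property from Lemma~\ref{lem: betti invariante}.

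First I would observe that any poset isomorphism $\phi\colon[x,y]\to[x',y']$ sends $x\mapsto x'$ and $y\mapsto y'$ and preserves atoms and coatoms. Since both intervals are full, the atoms of $[x,y]$ coincide with $\operatorname{U}(x)$ and the coatoms with $\operatorname{L}(y)$ (and similarly for $[x',y']$), so
\begin{equation*}
|\operatorname{U}(x)|=|\operatorname{U}(x')|\quad\text{and}\quad |\operatorname{L}(y)|=|\operatorname{L}(y')|.
\end{equation*}
Because $x$ and $x'$ are dominant, they lie in $\Theta\cup\Theta^s$, and Lemma~\ref{lem: cardinality of upper covers} cleanly separates these: $|\operatorname{U}|=6$ for elements of $\Theta$ versus $|\operatorname{U}|=5$ for elements of $\Theta^s$. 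The first equality above therefore forces $x$ and $x'$ into the same $X\Theta$-part.

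The analogous argument for $y$ and $y'$ via $|\operatorname{L}|$ does \emph{not} work directly, as the ranges $\{2,3,4\}$ (for $\Theta$) and $\{3,4,5\}$ (for $\Theta^s$) from Lemma~\ref{lem: cardinality of lower covers} overlap. This is the main (and essentially only) obstacle, and I would resolve it by a length-parity argument. By Remark~\ref{rem: facts of theta-partition}\ref{remark-item: length of the elements}, a dominant element lies in $\Theta$ if and only if its length is odd (and in $\Theta^s$ if and only if its length is even). From the previous paragraph $\ell(x)\equiv\ell(x')\pmod 2$, and Lemma~\ref{lem: betti invariante} gives $\ell(x,y)=\ell(x',y')$, so adding these yields $\ell(y)\equiv\ell(y')\pmod 2$ and hence $y,y'$ belong to the same $X\Theta$-part as well.

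For the ``in particular'' conclusion, once it is known that $x$ and $x'$ are both of type $\theta(m,n)$ or both of type $\theta^s(m,n)$, Remark~\ref{rem: facts of theta-partition}\ref{rem-item: difference of thetas as combination of fundamental weights} expresses $\operatorname{cen}(x')-\operatorname{cen}(x)$ as an integer combination of $\varpi_1$ and $\varpi_2$, yielding $\lambda\in\Lambda$ with $x+\lambda=x'$; the identical argument supplies $\mu\in\Lambda$ with $y+\mu=y'$. I do not expect any step beyond the parity bookkeeping to require real work.
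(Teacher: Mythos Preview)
Your proof is correct and follows essentially the same approach as the paper: use fullness to identify atoms with $\operatorname{U}(x)$, apply Lemma~\ref{lem: cardinality of upper covers} to determine the $X\Theta$-part of $x$ (hence of $x'$), and then settle $y$ and $y'$ by the length-parity argument via Remark~\ref{rem: facts of theta-partition}\ref{remark-item: length of the elements} and Lemma~\ref{lem: betti invariante}. The paper additionally remarks that $y,y'\notin\{\theta(0,0),\theta^s(0,0)\}$ since otherwise the intervals are not full, but this is not needed for your argument.
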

\begin{proof} 
    We may assume that $y,y'\not\in\{\theta(0,0),\theta^s(0,0)\}$, otherwise, it is easy to see that the intervals $[x,y]$ and $[x',y']$ are not full.

    Since $[x,y]$ is full, the set of atoms of~$[x,y]$ is $\operatorname{U}(x)$, and the set of coatoms of~$[x,y]$ is $\operatorname{L}(y)$.
    This implies that $f_1^{x,y}=|\operatorname{U}(x)|$ and $f_{\ell(x,y)-1}=|\operatorname{L}(y)|.$
    By Lemma~\ref{lem: cardinality of upper covers} and the fact that $f_1^{x,y}=|\operatorname{U}(x)|$, this quantity determines the part of the $X\Theta$-partition containing $x$.
    By Remark~\ref{rem: facts of theta-partition}\ref{remark-item: length of the elements}, this determines whether $\ell(x)$ is odd or even.
    Now we can determine the $X\Theta$-partition containing $y$.
    There are two cases: If $\ell(x)$ and $\ell(x,y)$ have the same parity, then $\ell(y)$ is even, so by Remark~\ref{rem: facts of theta-partition}\ref{remark-item: length of the elements} we have $y\in \Theta^s$ and we are done.
    If $\ell(x)$ and $\ell(x,y)$ have different parity, then $\ell(y)$ is odd so by Remark~\ref{rem: facts of theta-partition}\ref{remark-item: length of the elements} we have $y\in \Theta$.
    The first part follows from Lemma~\ref{lem: betti invariante}.
    The second part follows from Remark~\ref{rem: facts of theta-partition}\ref{rem-item: difference of thetas as combination of fundamental weights}.
\end{proof}

\subsection{Basic notions in planar convex geometry} \label{subsec: basic notions convex}
A convex bounded polygon $P$ is the convex hull of a finite set of points in the plane.
If such a set is minimal, we call it the \emph{vertex set} of~$P$, denoted by $\operatorname{V}(P)$.
An \emph{edge} of~$P$ is a closed segment obtained by intersecting $P$ with a closed half-plane.
Two vertices $a,b$ are said to be \emph{adjacent} if the segment $ab$ is an edge of~$P$.

For any subset $Y$ of the plane, we denote its convex hull by $\operatorname{Conv}(Y)$.

To avoid cumbersome notation, and when the context is clear, we usually do not distinguish between elements of~$W$ and the centers of their respective alcoves.
For $A \subset W$, we define $\operatorname{Conv}(A) \coloneqq \operatorname{Conv}(\operatorname{cen}(A))$.
Similarly, if $\operatorname{V}(P) = \operatorname{cen}(A)$, we write $\operatorname{V}(P) = A$.

If $x, y \in W$, we denote by $\operatorname{Sgm}(x, y)$ the line segment in the plane with endpoints $\operatorname{cen}(x)$ and $\operatorname{cen}(y)$---that is,
\begin{equation*}
    \operatorname{Sgm}(x, y) = \operatorname{Conv}(\{\operatorname{cen}(x), \operatorname{cen}(y)\}).
\end{equation*}
If $x \in W$ and $v \in E$, we write $\operatorname{Sgm}(x, v)$ (or $\operatorname{Sgm}(v, x)$) for the segment between $\operatorname{cen}(x)$ and~$v$.

We adopt similar conventions for the inner product~$(-,-)$: for instance, $(v, x)$ denotes $(v, \operatorname{cen}(x))$, where $v \in E$ and $x \in W$.

If $x, y, z \in W$, we write $z \in \operatorname{Sgm}(x, y)$ instead of $\operatorname{cen}(z) \in \operatorname{Sgm}(x, y)$.
More generally, if $Y \subset E$ and $x \in W$, we may write $x \in Y$ instead of $\operatorname{cen}(x) \in Y$.

Finally, we define the Euclidean distance between two points as
\begin{equation*}
    d(a, b) := \lVert \operatorname{Sgm}(a, b) \rVert,
\end{equation*}
where $a, b$ may be points in~$E$, elements of~$W$, or a combination of both.

\subsection{Geometric description of lower intervals} \label{subsec geometric description lower}
In this section, we generalize the geometric descriptions of lower intervals in~\cite{LP23, BLP23}.
Applying these results, we give formulas for the cardinalities of all lower intervals.
\begin{prop}\label{prop description of all lower intervals} 
    For any $y\in W$ there is a bounded convex polygon $\CC_y$ such that 
    \begin{equation}
        [\id,y]=\{x\in W\mid \operatorname{cen}(x)\in \CC_y\}.
    \end{equation}
\end{prop}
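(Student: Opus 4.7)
The plan is to proceed by a case analysis that reduces to the standard forms listed in Remark~\ref{rem: remark tontos} via the Dynkin symmetries, and then to handle each remaining form either by direct geometric description (dominant and wall cases) or by a union argument (the $s_0$-reflected case).

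First I would observe that every $g \in \langle \delta, \sigma \rangle$ acts on the plane as a Euclidean isometry that fixes $\operatorname{cen}(\mathrm{id})$ and permutes alcove centers, and by Lemma~\ref{lem: elements of G are isomorphism of poset} it induces a poset isomorphism $[\mathrm{id}, y] \cong [\mathrm{id}, g(y)]$. Hence if the proposition holds for $gy$ with polygon $\mathcal{C}_{gy}$, it holds for $y$ with $\mathcal{C}_y := g^{-1}(\mathcal{C}_{gy})$. By Remark~\ref{rem: remark tontos}, it therefore suffices to prove the statement when $y$ lies in $\{\mathrm{id}, \theta(m,n), \theta^s(m,n), s_0\theta(m,n), s_0\theta^s(m,n), \mathrm{\mathbf{x}}_k\}$.

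For $y = \mathrm{id}$ take $\mathcal{C}_y = \{\operatorname{cen}(\mathrm{id})\}$. For dominant $y \in \{\theta(m,n), \theta^s(m,n)\}$ I would set $\mathcal{C}_y := \operatorname{Conv}(W_f \cdot \operatorname{cen}(y))$, the (possibly degenerate) hexagon obtained as the convex hull of the finite Weyl-group orbit; the equality with $\{\operatorname{cen}(x) : x \in [\mathrm{id}, y]\}$ is the content of the geometric descriptions of lower intervals developed in \cite{LP23} and \cite{BLP23}, and can alternatively be checked directly by induction on $\ell(y)$ using Proposition~\ref{prop: intervals are union by right} and Lemma~\ref{lem: set of lower covers}. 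For $y = \mathrm{\mathbf{x}}_k$ the polygon degenerates to a thin lens-shaped region whose vertices are read off from \eqref{eq: set lower xk}, and the equality is verified by the same inductive recipe. The only essentially new case is $y = s_0 u$ with $u \in \{\theta(m,n), \theta^s(m,n)\}$: applying Proposition~\ref{prop: intervals are union by right} gives
\begin{equation*}
    [\mathrm{id}, s_0 u] = [\mathrm{id}, u] \cup s_0[\mathrm{id}, u],
\end{equation*}
so I would define $\mathcal{C}_{s_0 u} := \mathcal{C}_u \cup s_0(\mathcal{C}_u)$, where $s_0$ acts on the plane as reflection across $H_{\alpha_0, 1}$. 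The set equality is then immediate because reflections send alcove centers to alcove centers.

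I expect the main obstacle to be the final convexity check: one must verify that $\mathcal{C}_u \cup s_0(\mathcal{C}_u)$ is convex, which reduces to showing that $\mathcal{C}_u$ has an edge lying in the reflecting line $H_{\alpha_0, 1}$. This is intuitively clear because the $W_f$-orbit of $\operatorname{cen}(u)$ contains two points whose joining segment is parallel to (and realizes the maximal distance from $\operatorname{cen}(\mathrm{id})$ within $\mathcal{C}_u$ in the direction of) this wall, but it must be verified in several degenerate sub-cases, for instance when $m = 0$, $n = 0$, or $u = \theta(0,0)$, where $\mathcal{C}_u$ collapses to a triangle or a segment. A secondary delicate point is to handle the wall case $y = \mathrm{\mathbf{x}}_k$ uniformly, making sure the degenerate polygon contains exactly the centers listed in Lemma~\ref{lem: set of lower covers} and \eqref{eq: set lower xk} and no spurious ones.
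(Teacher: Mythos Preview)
Your overall strategy matches the paper's: reduce via $\langle \delta,\sigma\rangle$, cite \cite{LP23,BLP23} for the dominant and wall cases, and handle $y=s_0u$ (for $u$ dominant) via Lemma~\ref{prop: intervals are union by right}. The gap is in your proposed convexity argument for $\mathcal{C}_u\cup s_0\mathcal{C}_u$.

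Your reduction ``$\mathcal{C}_u$ has an edge lying in the reflecting line'' is false: in this paper's conventions $s_0=s_{\rho,-1}$, and the line $H_{\rho,-1}$ passes through the interior of $\mathcal{C}_u=\operatorname{Conv}(W_f\cdot u)$ (it contains the lower edge of the fundamental alcove, while $\mathcal{C}_u$ is centered at the origin and has large radius). The paper's mechanism is different: one uses the $W_f$-symmetry of $\mathcal{C}_u$ (specifically $w_0\mathcal{C}_u=\mathcal{C}_u$) together with $s_0=w_0-\rho$ to get $s_0\mathcal{C}_u=\mathcal{C}_u-\rho$. The union $\mathcal{C}_u\cup(\mathcal{C}_u-\rho)$ is then convex precisely when the two edges of $\mathcal{C}_u$ parallel to $\rho$ have length at least $\|\rho\|$, which the paper shows is equivalent to $(\operatorname{cen}(u),\alpha_i)\ge 1$ for $i=1,2$, i.e.\ to $m,n\ge 1$.

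More seriously, when $m=0$ or $n=0$ the union $\mathcal{C}_u\cup s_0\mathcal{C}_u$ is \emph{not} convex, so your plan of ``verifying convexity'' cannot succeed. The paper takes $\mathcal{C}_{s_0u}$ to be the convex hull $\operatorname{Conv}(\{u,s_1u,s_2u,s_0u,s_0s_1u,s_0s_2u\})$, which strictly contains $\mathcal{C}_u\cup s_0\mathcal{C}_u$ in these cases, and then checks that the difference---one or two small equilateral triangles sitting inside the union of two adjacent alcove closures---contains no alcove centers. This ``no centers in the gap'' step is the actual content you are missing; it is what replaces your convexity check in the degenerate sub-cases you flagged.
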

\begin{proof}
    We will consider two cases, $\ell(y)\leq 1$ and $\ell(y)\geq2$.
    For $\ell(y)\leq 1$, the polygons $\CC_{\mathrm{id}}:=\operatorname{Conv}(\{\mathrm{id}\})$ and $\CC_{s}:=\operatorname{Conv}(\{\mathrm{id}, s\})$ ($s\in S$) satisfy the proposition.
    We suppose $\ell(y)\geq 2$.
    Consider the following two statements.
    \begin{itemize}
        \item If $y$ is dominant, the hexagon $\CC_y$ with vertex set $W_f\cdot y$  satisfies the proposition.
        \item Let $k\geq 2$ and $y=\mathrm{\mathbf{x}}_k$.
        The quadrilateral $\CC_{\mathrm{\mathbf{x}}_k}$ with vertex set
        \begin{equation*}
            \{\mathrm{\mathbf{x}}_k,s_1\mathrm{\mathbf{x}}_k,s_2s_1\mathrm{\mathbf{x}}_k,s_1s_2s_1\mathrm{\mathbf{x}}_k\}
        \end{equation*}
        satisfies the proposition.
    \end{itemize}
    The proof of the first bullet for $y=\theta(m,n)$ is done in~\cite[Lemma 1.4]{LP23}.
    The remaining case of the first bullet, i.e., $y=\theta^s(m,n)$ and the second bullet, i.e., $\mathrm{\mathbf{x}}_k$, have proofs similar to that of~\cite[Lemma 1.4]{LP23}, so we omit them.
    We now focus on the case $y\in {}^s\Theta^s$.
    
    For $m$ and $n$ non-negative integers, let $y=s_0\theta^s(m,n)$.
    Define $z:=\theta^s(m,n)$.
    By Lemma~\ref{prop: intervals are union by right}, we have $[\id,y]=[\id,z]\cup s_0[\id,z].$
    As $z$ belongs to the cases treated in the first bullet above, this is equivalent to 
    \begin{equation*}
        [\id,y]=\{x\in W\mid \operatorname{cen}(x)\in\mathcal{C}_{z}\cup s_0\mathcal{C}_{z}\}.
    \end{equation*}
    \begin{claim}\label{claim: fac1 Cz}
        $s_0\CC_z=\CC_z-\rho$, where $\rho=\varpi_1+\varpi_2.$ 
    \end{claim}
    \begin{proof}
        We have
        \begin{align}
            \operatorname{V}(\CC_z)&=\{z,s_1z,s_2z, s_1s_2z,s_2s_1z,s_1s_2s_1z\}, \label{eq: Claim hex 1}\ \mathrm{and}\\
            \operatorname{V}(s_0\mathcal{C}_{z})&=\{s_0z,s_0s_1z,s_0s_2z, s_0s_1s_2z,s_0s_2s_1z,s_0s_1s_2s_1z\}.\label{eq: Claim hex 2}
        \end{align}
        By definition, $s_0= s_{\rho,-1}$ and one can easily check that $s_{\rho,0}= s_1s_2s_1$ (for example, check what both elements do to the alcove $A_+$).
        This implies that for any $x\in W,$  we have $s_0x=s_1s_2s_1x-\rho.$
        This last equation implies the claim as $s_0z=s_1s_2s_1z-\rho$, $s_0s_1z=s_1s_2z-\rho$, $s_0s_2z=s_2s_1z-\rho$, etc.
    \end{proof}
    \begin{claim}
        $\cen^{-1}(\CC_z\cup s_0\CC_z)=\cen^{-1}(\operatorname{Conv}(\{z,s_1z,s_2z, s_0z, s_0s_1z,s_0s_2z\}))$
    \end{claim}
    \begin{proof} 
        By Equation \eqref{eq: Claim hex 1} and as $s_{\rho,0}= s_1s_2s_1$, we have that two edges of the hexagon $\CC_z$ are parallel to the vector $\rho$, and the same applies to the hexagon $s_0\CC_z$.
        
        Let $a=s_1z, b=s_1s_2z, c=s_2z, d=s_2s_1z,$ so that  $a,b,c,d$ are four of the six vertices of~$\CC_z$ and the segments $\Sgm(a,b)$ and $\Sgm(c,d)$ are the edges of~$\CC_z$  parallel to  $\rho$.
        By Claim~\ref{claim: fac1 Cz}, we have that $a-\rho, b-\rho, c-\rho, d-\rho\in \operatorname{V}(s_0\CC_z)$.
        Note that it is possible for $a-\rho$ (resp.\@ $c-\rho$) to either lie on~$\Sgm(a,b)$ (resp.\@ $\Sgm(c,d)$) or not.
        To clarify in which case we are, we will now provide a necessary and sufficient condition to determine if $a-\rho$ (resp.\@~$c-\rho$) lies on~$\Sgm(a,b)$ (resp.\@ ~$\Sgm(c,d)$).
        
        As $s_{\rho,0}=s_1s_2s_1$, we have that  $b=s_{\rho,0}a$ and $d=s_{\rho,0}c$.
        This implies that $|\Sgm(a,b)|=2d(a,H_{\rho,0})$ (resp.\@ $|\Sgm(c,d)|=2d(c,H_{\rho,0})$).
        Then $a-\rho \in\Sgm(a,b)$ if and only if $2d(a,H_{\rho,0})\geq\norm{\rho}$, i.e., $d(a,H_{\rho,0})\geq\frac{1}{2}\norm{\rho}$.
        
        On the other hand, the Euclidean distance between $a$ and $H_{\rho,0}$ is given by $d(a,H_{\rho,0})=\frac{\left| (a,\rho)\right|}{\norm{\rho}}$.
        Thus, $a-\rho \in\Sgm(a,b)$ if and only if $\left|(a,\rho)\right| \geq\frac{\norm{\rho}^2}{2}$.
        Since $\rho=\varpi_1+\varpi_2$ and, as  we have that $\norm{\varpi_i}^2=\frac{2}{3}$ and the angle between $\varpi_1$ and $\varpi_2$ is $\frac{\pi}{3}$, we conclude $\norm{\rho}=\sqrt{2}$.
        Hence, $a-\rho \in\Sgm(a,b)$ if and only if $\left|(a,\rho)\right| \geq 1$.
        
        Additionally, since  $a=s_1z$ and $\rho=s_1\alpha_2$, we have that $\left|(a,\rho)\right| \geq1$ 
        if and only if $(\cen(z),\alpha_2)\geq1$.
        We can rewrite the above condition as follows:  
        \begin{equation}\label{eq: condition a_2}
            a-\rho \in\Sgm(a,b)\quad\text{if and only if} \quad (\cen(z),\alpha_2)\geq  1
        \end{equation}
        Similarly, we have that  
        \begin{equation} \label{eq: condition a_1}
            c-\rho \in\Sgm(c,d)\quad\text{if and only if} \quad (\cen(z),\alpha_1)\geq  1.
        \end{equation}
        
        We split the rest of the proof into three cases.
        Recall that by definition, $z=\theta^s(m,n)$.
        \begin{enumerate}
            \item Let us assume that $m,n\geq 1$ (see Figure~\ref{fig: demo hexa1}).
            In this case, one can check that $(\cen(z),\alpha_1)\geq  1$ and $(\cen(z),\alpha_2)\geq  1$.
            To see this, it is enough to calculate these inequalities with $m=n=1$  and that one is easy to calculate, as $\cen(\theta^s(1,1))=\cen(w_0)+\frac{4}{3}\rho$ (recall that $w_0=s_1s_2s_1$ denotes the longest element in~$W_f$).
            By Equation \eqref{eq: condition a_2} and Equation \eqref{eq: condition a_1}, we have $a-\rho \in \Sgm(a,b)$ and $c-\rho \in \Sgm(c,d)$.
            This implies that
            $b\in\Sgm(a-\rho,b-\rho)$ and $d\in \Sgm(c-\rho,d-\rho)$.
            In other words, we have \begin{align}        \Sgm(a,b)\cup \Sgm(a-\rho,b-\rho)&=\Sgm(a,b-\rho)\\ \Sgm(c,d)\cup \Sgm(c-\rho,d-\rho)&=\Sgm(c,d-\rho).\end{align}
            Furthermore, as $z=\theta^s(m,n)$ with $m,n\geq 1$, one can see that $z-\rho\in D$, so  $z-\rho \in \CC_z$.
            We also have that  $s_1s_2s_1z\in s_0\CC_z$, because $s_1s_2s_1z-\rho \in \operatorname{V}(s_0\CC_z)$.
            So we have $ \{z-\rho,a-\rho,c-\rho\}\subset\CC_z$ and $\{b,d,s_1s_2s_1z\}\subset s_0\CC_z $.
            From this two inclusions, together with Equation \eqref{eq: Claim hex 1}, Equation \eqref{eq: Claim hex 2}, Claim \eqref{claim: fac1 Cz} and the fact that $\Sgm(a,b),\Sgm(c,d)$ are parallel  to $\rho$, we conclude that $\CC_z\cup s_0\CC_z$ is a hexagon with vertex set $\{z,s_0z,a,c,b-\rho,d-\rho\}$.
            Therefore, we have 
            \begin{equation}\label{eq: igualdad de hexagonos}
                \CC_z\cup s_0\CC_z=\operatorname{Conv}(\{z,s_1z,s_2z, s_0z, s_0s_1z,s_0s_2z\}),
            \end{equation} 
            so the claim follows.
            \begin{figure}[ht!]
                \centering
                \includegraphics[width=0.44\textwidth]{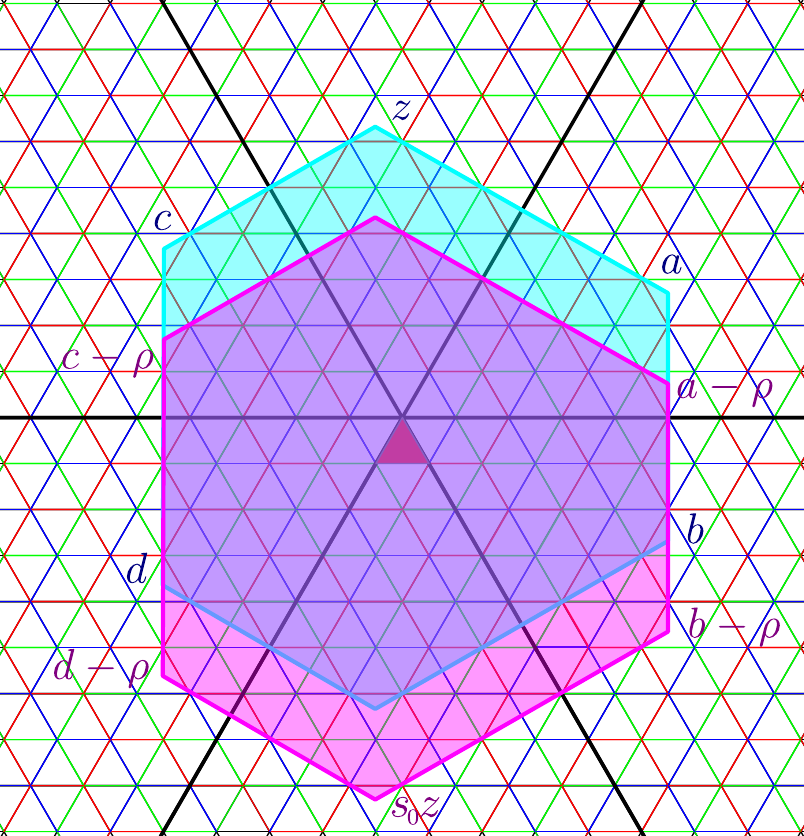}
                \caption{$\CC_z$ and $s_0\CC_z$, for $z=\theta^s(m,n)$.}
                \label{fig: demo hexa1}
            \end{figure}
            \item In this case we assume $m=0$ and $n\geq 1$ or the symmetric case $n=0$ and $m\geq 1,$ see Figure~\ref{subfig: demo hexa 2} for an illustration.
            Let us prove the case $z=\theta^s(m,0)$ with $m\geq 1$, the other being symmetric.
            We have $(\cen(z),\alpha_1)\geq  1$ and $0<(\cen(z),\alpha_2)<  1$.
            By Equation \eqref{eq: condition a_1}, we have that $c-\rho\in \Sgm(c,d)$, so $d\in\Sgm(c-\rho,d-\rho)$.
            However, by Equation \eqref{eq: condition a_2}, we have $a-\rho\notin\Sgm(a,b)$.
            Thus, Equation \eqref{eq: igualdad de hexagonos} does not hold,
            although the strict inclusion $\subsetneq$ still holds.
            The difference between the set on the right and the set on the left of Equation \eqref{eq: igualdad de hexagonos} is a ``small'' equilateral triangle $T$, with two vertices of~$T$ being $a-\rho$ and $b$.
            It is easy to see that $T$ contains no center of alcoves other than $a-\rho$ and $b$, as $a-\rho$ and $b$ are the centers of two alcoves (call them $A$ and $B$), the closures  $\overline{A}$ and $\overline{B}$  share an edge, and $T\subset \overline{A}\cup \overline{B}.$
            This proves the claim for $n=0$ and $m\geq 1$.
            \item Let us assume $m=n=0$, see Figure~\ref{subfig: demo hexa 3}.
            We have that $0<(\cen(z),\alpha_1)<1$ and $0<(\cen(z),\alpha_2)<1$.
            This case is similar to the previous one.
            However, instead of obtaining just one equilateral triangle, we now obtain two: one with $a-\rho$ and $b$ as two of its vertices and the other with $c-\rho$ and $d$ as two of its vertices.\qedhere
            \begin{figure}[!ht]
                \centering
                \begin{subfigure}[b]{0.45\textwidth}
                    \centering
                    \includegraphics[width=0.9\textwidth]{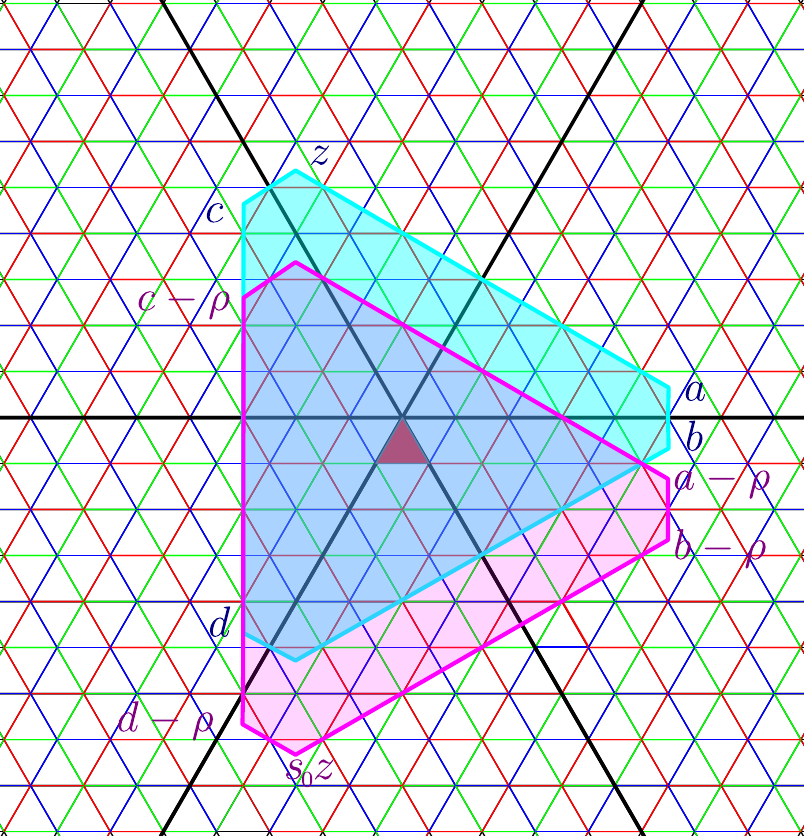}
                    \caption{$\CC_z$ and $s_0\CC_z$, for $z=\theta^s(m,0)$.}
                    \label{subfig: demo hexa 2}
                \end{subfigure}
                    \hfill
                \begin{subfigure}[b]{0.45\textwidth}
                \centering 
                \includegraphics[width=0.9\textwidth]{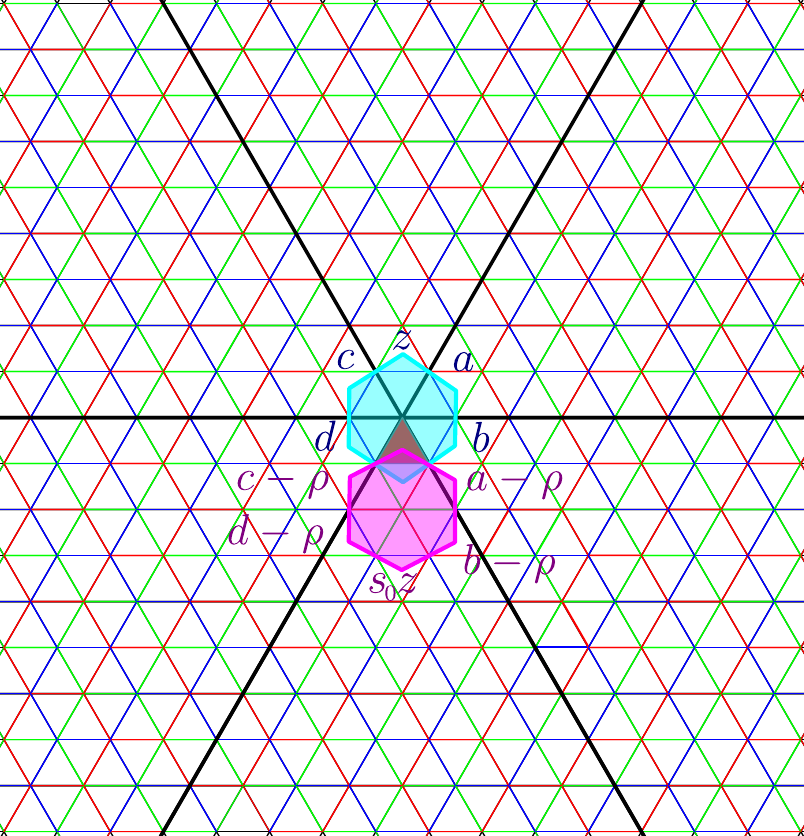}
                \caption{$\CC_z$ and $s_0\CC_z$, for $z=\theta^s(0,0)$.}
                \label{subfig: demo hexa 3}
                \end{subfigure}
                \caption{}
            \end{figure}
        \end{enumerate}
    \end{proof}
    Back to the proof of the proposition, we conclude that the hexagon $\CC_y$, for $y=s_0\theta^s(m,n)$ has vertex set $\{z,s_1z,s_2z, s_0z, s_0s_1z,s_0s_2z\}$, where $z=\theta^s(m,n)$.
    
    The case $y = s_0\theta(m,n)$ can be proved in the same way as the case $y = s_0\theta^s(m,n)$, simply replacing $z = \theta^s(m,n)$ with $z = \theta(m,n)$ in the proof, so we omit the details.
    Finally, consider $y$ and $g$ as in Remark~\ref{rem: remark tontos}, then $x=gy$ belongs to one of the cases above.
    Since $g$ acts as an isometry, for any set $Y$, we have that $g\operatorname{Conv}(Y)=\operatorname{Conv}(gY)$.
    So $\CC_y:=g^{-1}\CC_{x}$ satisfies the proposition.
\end{proof}
\begin{cor}\label{cor: characterization of the Bruhat order}
    Let $y\in W$.
    Let $\CC_y$ be the unique bounded convex polygon such that
    \begin{equation*}
        \operatorname{V}(\CC_y)=
        \begin{cases}
            \delta^iW_f \d^{-i}y&\mbox{ if $y\in \delta^iD$,}\\
            \{y, s_iy,s_{1+i}s_{i}y,s_{2+i}s_{i}y,s_{i}s_{1+i}s_iy,s_is_{2+i}s_iy\}&\mbox{ if $y\in \delta^i s_0D$,}\\
            g\{\mathrm{\mathbf{x}}_k,s_1\mathrm{\mathbf{x}}_k,s_2s_1\mathrm{\mathbf{x}}_k,s_1s_2s_1\mathrm{\mathbf{x}}_k\}&\mbox{ if $y=g\mathrm{\mathbf{x}}_k$.}
        \end{cases}
    \end{equation*}
    Where $D=\{\theta(m,n),\theta^s(m,n)\mid m,n\geq 0\}$.
    Then, $\CC_y$ satisfies Proposition~\ref{prop description of all lower intervals}.
    In other words, $x\leq y$ if and only if $x\in \CC_y$.
\end{cor}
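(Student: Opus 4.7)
The plan is to show that each explicit vertex-set description in the corollary reproduces the polygon constructed case-by-case in the proof of Proposition~\ref{prop description of all lower intervals}, and then invoke that proposition. The work is largely bookkeeping, but some consistency checks are required because the decomposition of $y$ into data like $(i, g, m, n)$ is not unique (see Remark~\ref{rem: remark tontos}).

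First, I would verify that the three cases cover $W\setminus\{\id\}$ exhaustively: by Remark~\ref{rem: remark tontos}, every non-identity $y$ can be written as $g y_0$ with $g \in \langle \delta,\sigma\rangle$ and $y_0 \in \{\theta(m,n), s_0\theta(m,n), \theta^s(m,n), s_0\theta^s(m,n), \mathbf{x}_k\}$, and the additional symmetry $\sigma$ present in $D$ and $s_0D$ means it is enough to apply powers of $\delta$ to reach $D$ or $s_0D$, so the three cases are indeed exhaustive.

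Next, I would handle each case with $i = 0$ (and $g = \id$ in case~3):
\begin{itemize}
\item For $y \in D$ (case~1, $i=0$), the hexagon with $\operatorname{V}(\CC_y) = W_f \cdot y$ is precisely the one produced in the first bullet of the proof of Proposition~\ref{prop description of all lower intervals}.
\item For $y = \mathbf{x}_k$ (case~3, $g=\id$), the quadrilateral with $\operatorname{V}(\CC_y) = \{\mathbf{x}_k, s_1\mathbf{x}_k, s_2 s_1 \mathbf{x}_k, s_1 s_2 s_1 \mathbf{x}_k\}$ is the polygon produced in the second bullet.
\item For $y \in s_0 D$ (case~2, $i=0$), writing $y = s_0 z$ with $z\in D$, the proof of Proposition~\ref{prop description of all lower intervals} shows that $\CC_y = \operatorname{Conv}\{z, s_1 z, s_2 z, s_0 z, s_0 s_1 z, s_0 s_2 z\}$. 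Substituting $z = s_0 y$ (so $s_i z = s_i s_0 y$ and $s_0 s_i z = s_0 s_i s_0 y$) gives exactly the set $\{y, s_0 y, s_1 s_0 y, s_2 s_0 y, s_0 s_1 s_0 y, s_0 s_2 s_0 y\}$ listed in the corollary for $i=0$.
\end{itemize}

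For $i \neq 0$ in cases~1 and~2, and for arbitrary $g \in G$ in case~3, I would invoke the fact (used already at the end of the proof of Proposition~\ref{prop description of all lower intervals}) that elements of $G$ act as isometries on the Euclidean plane via their action on alcove centers, so $g\operatorname{Conv}(Y) = \operatorname{Conv}(gY)$ for any $Y\subset E$. Applying $\delta^{-i}$ (respectively $g^{-1}$) to $y$ reduces to a case already handled, and the explicit vertex set in the corollary is simply the image under $\delta^i$ (respectively $g$) of the vertex set in the base case. The final equivalence $x \leq y \iff \operatorname{cen}(x) \in \CC_y$ is then immediate from Proposition~\ref{prop description of all lower intervals}.

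The main subtlety, and what I expect to be the only genuine obstacle, is well-definedness. The decomposition $y = \delta^i(\cdot)$ or $y = g\mathbf{x}_k$ is not unique, so one must check that the listed vertex sets do not depend on the choice. For case~1 this is automatic, since $W_f$ conjugated by $\delta^i$ is a well-defined subset of $W$ acting on $y$ and the resulting hexagon of centers is intrinsic. For case~3 the ambiguity in $g$ is confined to elements of $G$ stabilizing the axis containing the centers of $\{\mathbf{x}_k\}$, and these act as symmetries of the quadrilateral. Case~2 is handled similarly by a direct check that the cyclic relabeling $i \mapsto i+1$ permutes the six listed vertices. Once these consistency checks are dispatched, the corollary follows.
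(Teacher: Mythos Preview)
Your proposal is correct and follows the same approach as the paper, which in fact gives no separate proof of the corollary: it is stated immediately after the proof of Proposition~\ref{prop description of all lower intervals} as a summary of the polygons explicitly constructed there, together with the isometry argument in the last paragraph of that proof. Your treatment is a careful unpacking of exactly this implicit reasoning, and your discussion of well-definedness (non-uniqueness of the data $(i,g)$) is a legitimate detail that the paper leaves to the reader.
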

Throughout the remainder of the paper, we use the notation $\CC_y$ for $y\in W$ to denote the convex polygon defined in Corollary~\ref{cor: characterization of the Bruhat order}.
\begin{rem}
    The previous corollary does not generalize in this exact form to all affine Weyl groups.
    For example, one can find instances where it fails in type~$\widetilde{A}_3$.
    The closest we know of a geometric description of the Bruhat order for general affine Weyl groups is the ``lattice formula''~\cite[Theorem A]{CdLP23}, which applies only for elements of the form $\theta(\lambda)$ where $\lambda$ is a dominant weight.
\end{rem}
We end the section by giving formulas for $|[\mathrm{id}, y]|$, for every $y\in W$.
They are easy to obtain from Corollary~\ref{subsec geometric description lower}.
\begin{prop}\label{prop: cardinal lower}
    For all $m,n\in \mathbbm{Z}_{\geq0}$, we have 
    \begin{align}
        |[\id,\theta(m,n)]|&=3m^2+3n^2+12mn+9m+9n+6.\\
        |[\id,\theta^s(m,n)]|&=3m^2+3n^2+12mn+15m+15n+12.\\
        |[\id,s_0\theta^s(m,n)]|&=3m^2+3n^2+12mn+21m+21n+22.
    \end{align}
    Additionally, for $k\geq1$, we have $|[\mathrm{id}, \mathrm{\mathbf{x}}_{2k}]|=3k^2+k$ and $|[\mathrm{id}, \mathrm{\mathbf{x}}_{2k+1}]|=3k^2+5k$.
\end{prop}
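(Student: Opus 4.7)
The plan is to apply Corollary~\ref{cor: characterization of the Bruhat order}, so that $|[\mathrm{id}, y]|$ is exactly the number of alcove centers contained in the explicit convex polygon $\mathcal{C}_y$, and then perform a direct lattice count in each of the four cases. A convenient general device is to split the count by orientation, using Lemma~\ref{lem: length determines orientation}: up-oriented alcove centers form one sublattice (the $\mathbb{Z}\Phi$-translates of $\operatorname{cen}(\mathrm{id})$) and down-oriented alcove centers form another (the $\mathbb{Z}\Phi$-translates of $\operatorname{cen}(s_1)$, say), so each orientation can be enumerated cleanly and the two counts added.

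For $y=\theta(m,n)$, the hexagon $\mathcal{C}_y=\operatorname{Conv}(W_f\cdot y)$ is $W_f$-invariant, with pairs of parallel edges perpendicular to the three positive roots. I would slice $\mathcal{C}_y$ by the family of parallel affine lines $H_{\alpha_1+\alpha_2,k}$ for $k\in\mathbb{Z}$; on each slice one has an arithmetic progression of alcove centers whose length is determined by where the slice meets the boundary of the hexagon, which in turn is described explicitly in terms of $m$ and $n$ via Remark~\ref{rem: facts of theta-partition}\ref{rem-item: difference of thetas as combination of fundamental weights}. Summing these arithmetic progressions (separately for the two orientations) gives a quadratic polynomial in $m,n$ which, after collecting terms, is $3m^2+3n^2+12mn+9m+9n+6$. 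For $y=\theta^s(m,n)$, the hexagon has the same $W_f$-symmetry but is enlarged by the translation $\rho/3$ of its center (again by Remark~\ref{rem: facts of theta-partition}\ref{rem-item: difference of thetas as combination of fundamental weights}), so exactly the same slicing argument applies with the corresponding boundary data, producing the second formula.

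For $y=s_0\theta^s(m,n)$, I would argue inductively: Lemma~\ref{prop: intervals are union by right} gives $[\mathrm{id},s_0\theta^s(m,n)]=[\mathrm{id},\theta^s(m,n)]\cup s_0[\mathrm{id},\theta^s(m,n)]$, and the claim in the proof of Proposition~\ref{prop description of all lower intervals} already identifies the overlap $[\mathrm{id},\theta^s(m,n)]\cap s_0[\mathrm{id},\theta^s(m,n)]$ with the set of alcove centers lying in both hexagons $\mathcal{C}_{\theta^s(m,n)}$ and $s_0\mathcal{C}_{\theta^s(m,n)}=\mathcal{C}_{\theta^s(m,n)}-\rho$; this intersection is itself a convex region bounded by the affine lines $H_{\alpha_1+\alpha_2,0}$ and $H_{\alpha_1+\alpha_2,-1}$, whose alcove-center count is another easy quadratic in $m,n$. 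Subtracting then yields the third formula. Finally, for $y=\mathrm{\mathbf{x}}_k$ the polygon $\mathcal{C}_{\mathrm{\mathbf{x}}_k}$ is the small quadrilateral described in Corollary~\ref{cor: characterization of the Bruhat order}, and one counts its alcove centers by a single direct enumeration, separated by parity of length, to obtain $3k^2+k$ and $3k^2+5k$.

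The only genuine obstacle is bookkeeping of boundary alcove centers and of degenerate cases $m=0$ or $n=0$, where the hexagon collapses onto a wall and the slicing picks up contributions from the small triangles identified in the three-case analysis of the proof of Proposition~\ref{prop description of all lower intervals}. I would address this uniformly by doing the count first in the interior of the dominant chamber and then adding the (linear in $m,n$) contribution from alcove centers on the walls, which matches exactly the difference between the quadratic and the lower-order terms in each of the three hexagonal formulas.
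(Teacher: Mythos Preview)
Your approach is essentially the same as the paper's: the paper does not give a detailed proof at all, merely remarking before the statement that the formulas ``are easy to obtain from Corollary~\ref{cor: characterization of the Bruhat order}'' and citing \cite{LP23,BLP23} after it. Any direct lattice-point count inside the explicit polygons $\CC_y$ of Corollary~\ref{cor: characterization of the Bruhat order} is exactly what is intended, and your slicing and orientation-splitting are reasonable implementations of this.

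One small correction: in the $s_0\theta^s(m,n)$ case, your description of the overlap $\CC_{\theta^s(m,n)}\cap(\CC_{\theta^s(m,n)}-\rho)$ as ``bounded by the affine lines $H_{\alpha_1+\alpha_2,0}$ and $H_{\alpha_1+\alpha_2,-1}$'' is not right---the intersection of the two hexagons is itself a hexagon (generically), not a strip. This does not affect the viability of your inclusion--exclusion argument, only the bookkeeping. Alternatively, since Corollary~\ref{cor: characterization of the Bruhat order} already gives $\CC_{s_0\theta^s(m,n)}$ explicitly as a hexagon with listed vertices, you can bypass inclusion--exclusion entirely and count directly in that hexagon by the same slicing method you use for $\theta(m,n)$ and $\theta^s(m,n)$.
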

These formulas already appear in~\cite[Lemma 1.5]{LP23} and~\cite[Equations 2.10, 2.11, and 2.12]{BLP23}.
\section{The geometry of Bruhat intervals in \texorpdfstring{$\widetilde{A}_2$}{A2⁓}}\label{section: geometry of intervals}
The purpose of the section is to prove Theorem~\ref{thm: A}.
We first focus on the case where \( x \) and \( y \) are dominant elements, 
and then extend the argument to the general case in Section~\ref{subsec: geometry of general intervals}.

\subsection{Description of \texorpdfstring{$\geq x$}{≥ x}}\label{subsec: description de mayores que x}
For \( w \in W_f \), we define \( F_w \) as the closed, convex, unbounded subset of \( E \) illustrated in Figure~\ref{subfig: star and Fw zones intro}.
Each zone will be either called by its label or by its color.
Some zones are contiguous to others and delimited by unbounded rays.
There are $6$ of these rays.
Each of them is contained in a line of~$\HC$.
The corresponding affine reflections of these lines are $s_{\alpha_1,-1}, s_0, s_2, s_1, s_0$, and $s_{\alpha_2,-1}$.
For convenience, we denote them by $r_1,r_2,r_3,r_4,r_5$, and $r_6$ respectively.
We use ``label mod~$6$'' notation, so for example $r_{11}=r_5$.
Note that $r_2=r_5=s_0$.
We also relabel the zones by $F_1,\ldots, F_6$ (again ``label mod~$6$'' notation) in a ``clockwise'' way starting from $F_{\mathrm{id}}$, in other words, $F_1$ is gray, $F_2$ is orange, $F_3$ is pink, $F_4$ is green, $F_5$ is yellow, and $F_6$ is blue.
From Figure~\ref{subfig: star and Fw zones intro}, it is immediate that $r_i\cdots r_1(D)\subset F_{i+1}$ for $i\geq 1$.
We use the convention $A_+\in F_\id$

\begin{defn}\label{def: estrella}[Construction of~$\mathcal{St}(x)$]
    See Figure~\ref{subfig: star section 3} for an example of the construction that follows.
    Let $x\in D\subset W$.
    Define $x_1:=\cen(x)\in E$ and $x_{i+1}:=r_ix_i$ for all $1\leq i\leq 6$.
    As noted above, $x_i\in F_i$.
    The segment $\overline{x_ix_{i+1}}$ divides the ray corresponding to $r_i$ into two distinct parts: bounded and unbounded.
    A unique point $u_{i+1}$ satisfying that $x_iu_{i+1}x_{i+1}$ is an equilateral triangle, lies in the ray's unbounded part.
    
    Note that $x_7=x_1$ because $r_6\cdots r_1=\mathrm{id}$ (this is clear from Figure~\ref{subfig: star and Fw zones intro} since $r_6\cdots r_1$ fixes $A_+$).
    For convenience, we use label mod~$6$ also for $x_i$ and $u_i$.
    The union of the segments $x_iu_{i+1}$ and $u_{i+1}x_{i+1}$, for $1\leq i\leq 6$, forms a closed loop which is the boundary of a (non-convex) bounded $12$-sided polygon which we denote by $\mathcal{St}(x)$.
    It is easy to see that $\mathcal{St}(x)$ is a $6$-pointed star with acute inner angles $\pi/3$ and obtuse outer angles $2\pi/3$.
\end{defn}

We relabel the $x_i$'s in the following way: For $w\in W_f$, there is a unique $i\mod{6}$ such that $F_w=F_i$, so we write $x_w:=x_i$.
Let us denote by $\mathrm{IVS}_x:=\{x_w\mid w\in W_f\}$ and $\mathrm{EVS}_x:=\{u_i\mid 1\leq i\leq 6\}$ the sets of inner and exterior vertices of~$\mathcal{St}(x)$ respectively.

\begin{prop} \label{prop: no mayores geometrico}
    If $x$ is dominant
    \begin{equation*}
        {\geq x}=\{z\in W\mid  \operatorname{cen}(z)\not\in\mathcal{St}^{\circ}(x)\},
    \end{equation*}
    where $\mathcal{St}^{\circ}(x)$ denotes the interior of the star-shaped polygon $\mathcal{St}(x)$.
\end{prop}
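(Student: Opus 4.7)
By Corollary~\ref{cor: characterization of the Bruhat order}, for any $z\in W$ we have $z\geq x$ if and only if $\operatorname{cen}(x)\in\mathcal{C}_z$. Thus the proposition reduces to the geometric equivalence
\begin{equation*}
    \operatorname{cen}(x)\in\mathcal{C}_z \iff \operatorname{cen}(z)\notin\mathcal{St}^{\circ}(x),
\end{equation*}
which we would prove by case analysis on the zone $F_w$ containing $\operatorname{cen}(z)$. The first task is the dominant case: for $z$ dominant (and hence in $F_\mathrm{id}$), $\mathcal{C}_z$ is the hexagon $\operatorname{Conv}(W_f\cdot\operatorname{cen}(z))$, and since $x$ is also dominant, $\operatorname{cen}(x)\in\mathcal{C}_z$ is equivalent to the standard condition $\operatorname{cen}(z)-\operatorname{cen}(x)\in\operatorname{cone}(\alpha_1,\alpha_2)$. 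By construction, the two segments forming the boundary of $\mathcal{St}(x)$ inside $F_\mathrm{id}$ meet at $x_1=\operatorname{cen}(x)$ and are perpendicular to $H_{\alpha_1,-1}$ and $H_{\alpha_2,-1}$ respectively; they cut out of $F_\mathrm{id}$ exactly the complement of the translated cone $\operatorname{cen}(x)+\operatorname{cone}(\alpha_1,\alpha_2)$, which settles the dominant case.

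\textbf{Propagating through the remaining zones.} For a general zone $F_w$, the portion of $\mathcal{St}(x)$ inside $F_w$ is bounded by the two segments meeting at the inner vertex $x_w=r_{w-1}\cdots r_1(\operatorname{cen}(x))$. The plan is to propagate the dominant-case equivalence along the chain of adjacent zones $F_\mathrm{id},F_{s_1},F_{s_1s_2},\dots$, noting that each time $\operatorname{cen}(z)$ crosses one of the rays separating two adjacent zones, the relevant supporting line of $\mathcal{C}_z$ (the one "facing" $\operatorname{cen}(x)$) gets reflected by the corresponding $r_i$---the same reflection that produces $x_{w'}$ from $x_w$ in Definition~\ref{def: estrella}. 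For the cases where $z$ lies in $X$, $\Theta^s$, or ${}^s\Theta^s$, the polygon $\mathcal{C}_z$ is explicitly described in Corollary~\ref{cor: characterization of the Bruhat order} and has to be handled separately, but the shape analysis is short and uniform across each $X\Theta$-class.

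\textbf{Main obstacle.} The principal difficulty is that $\mathcal{C}_z$ changes shape across the $X\Theta$-partition: a hexagon for $z\in\Theta\cup(D\cap\Theta^s)$, a longer hexagon for $z\in s_0D$, and a quadrilateral for $z\in X$. In each case, the supporting lines of $\mathcal{C}_z$ facing $\operatorname{cen}(x)$ must coincide with the two segments of $\partial\mathcal{St}(x)$ inside the appropriate zone, and one has to verify that, despite the shape variation, the resulting constraint on $\operatorname{cen}(z)$ is uniform. A secondary subtlety is the careful treatment of the boundary of $\mathcal{St}(x)$: the inner vertices $x_i$ are precisely the centers of alcoves obtained from $x$ by (sequences of) affine reflections preserving $x$ in the Bruhat order, and the closed condition "$\operatorname{cen}(x)\in\mathcal{C}_z$" of Corollary~\ref{cor: characterization of the Bruhat order} should match exactly the open exclusion "$\operatorname{cen}(z)\notin\mathcal{St}^{\circ}(x)$" once these boundary alcoves are identified; this bookkeeping is routine but must be done case by case.
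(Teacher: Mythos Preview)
Your approach is essentially the same as the paper's: reduce to $\operatorname{cen}(x)\in\mathcal{C}_z$ via Corollary~\ref{cor: characterization of the Bruhat order}, settle the case $z\in F_{\mathrm{id}}$ by the cone condition $\operatorname{cen}(z)-\operatorname{cen}(x)\in\operatorname{cone}(\alpha_1,\alpha_2)$, and transport to other zones through the reflections $r_i$ (the paper organizes this by pulling $z$ back to a point $z(1)\in H_1$ that is always a vertex of $\mathcal{C}_z$, rather than pushing the star outward zone by zone). The step you label ``routine'' is in fact the technical heart (the paper's Lemma~\ref{lem: Fact 3}): for $z$ in certain thin strips near $\partial F_w$ the set-theoretic equality $\mathcal{C}_z\cap H_1=(z(1)-\operatorname{Cone}(\alpha_1,\alpha_2))\cap H_1$ fails, and one must check that the discrepancy is a small triangle containing no alcove centers, so the desired equivalence holds only after intersecting with $\operatorname{cen}(W)$.
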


Before we prove this proposition, we need some preliminary lemmas.
For the rest of the section, we fix an element $x\in D$.

Consider the following regions (see Figure~\ref{subfig: G_i})
\begin{gather*}
    H_1:=F_1\setminus\{v\in E\mid -1\leq (v,\alpha_2)< 0\},\\
    H_2:=F_2, H_3:=F_3,\\
    H_4:=F_4\setminus\{v\in E\mid -1\leq (v,\alpha_1)< 0\}.
\end{gather*}
\begin{figure}
    \centering
    \begin{subfigure}[b]{0.45\textwidth}
        \centering
        \includegraphics[width=0.9\textwidth]{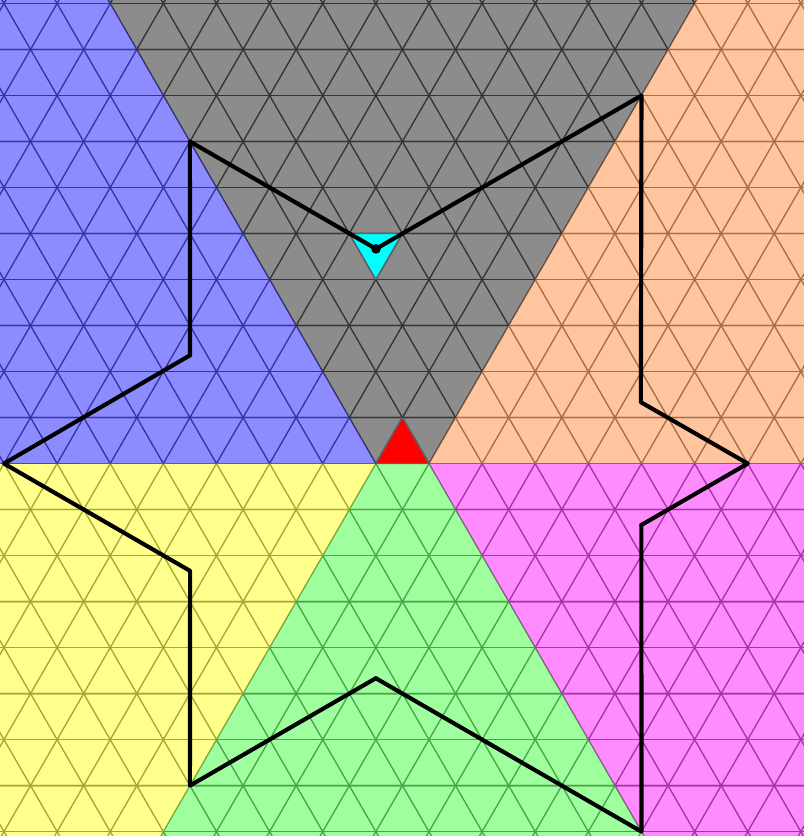}
        \caption{The set $\partial\mathcal{St}(\th(2,1))$.}
        \label{subfig: star section 3}
    \end{subfigure}
        \hfill
    \begin{subfigure}[b]{0.45\textwidth}
        \centering 
        \includegraphics[width=0.9\textwidth]{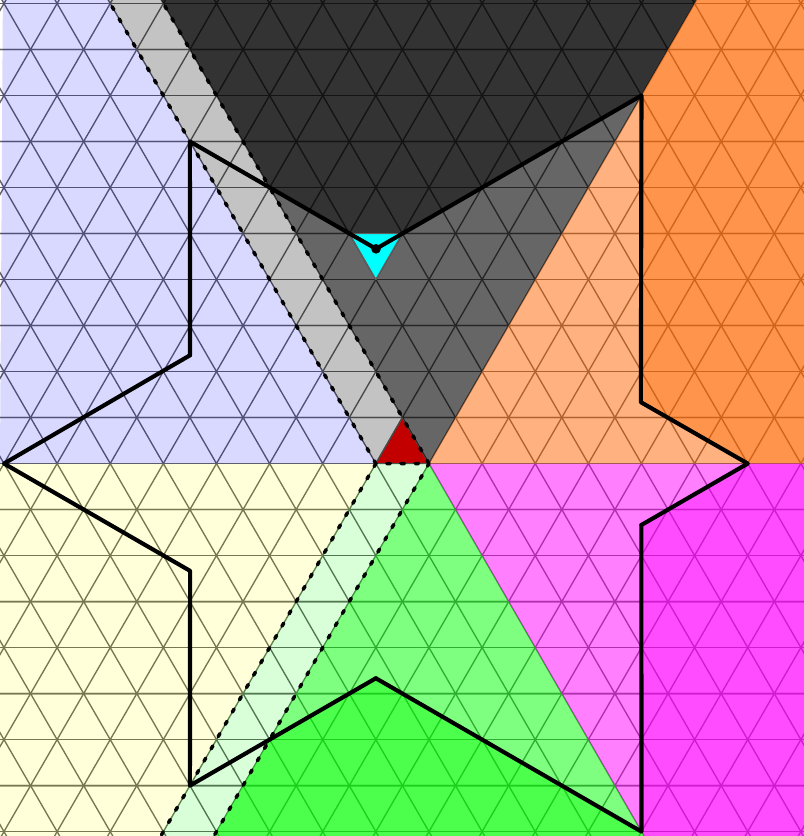}
        \caption{ Regions
        $H_i^{in},H_i^{ex}, 1\leq i\leq 4$.}
        \label{subfig: G_i}
    \end{subfigure}
    \caption{Illustration of~$\partial\mathcal{St}(x)$ and the regions $H_i^{in},H_i^{ex}$ for  $x=\th(2,1)$, $1\leq i\leq 4$.
    The teal-colored alcove corresponds to $x$.
    In the figure on the right, the complement of~$H$ is colored lighter than in the figure on the left, and the four exterior regions $H_i^{\mathrm{ex}}$ are depicted with darker colors than their corresponding interior regions $H_i^{\mathrm{in}}$.}
\end{figure}
For $1\leq j\leq 4$, the set $H_j\setminus \partial(\mathcal{St}(x))$ has two connected components, one in the interior of the star and one in the exterior of the star.
Let us call $H_j^{\mathrm{in}}$ and $H_j^{\mathrm{ex}}$ the connected components in the interior and in the exterior, and  $H_j^{\mathrm{bor}}:= H_j\cap \partial(\mathcal{St}(x))$.
We emphasize that these sets depend on~$x$, but we omit $x$ from the notation.
From the description of~$\mathcal{St}(x)$,
one can deduce that for $1\leq j\leq 3$ the reflection $r_j\in W$ induces a bijection $r_j\colon H_j\to H_{j+1}$ such that $r_j(H_j^{*})=H_{j+1}^{*}$,  for $*\in \{\mathrm{in},\mathrm{ex},\mathrm{bor}\}$.

Let us denote the union of these sets  $H:=\biguplus_{j=1}^4 H_j$.
Let $v\in H_i$.
If $i=1,$ we define $v(1)=v\in H_1$.
If not, we define $v(1)=r_1r_2\cdots r_{i-1}v\in H_1$.
For $2\leq k\leq 4$ we define $v(k):=r_{i-1}\cdots r_2r_1(v(1))\in H_k$.

The following lemma is immediate.
\begin{lem}\label{lem: fact1}
    We have $H_j^*(1)=H_1^*$ for every $1\leq j\leq 4$ and $*\in \{\mathrm{in},\mathrm{ex},\mathrm{bor}\}$.
\end{lem}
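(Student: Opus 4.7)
The statement is essentially an iterated application of the property stated immediately before the lemma, namely that for each $1\leq j\leq 3$ the affine reflection $r_j$ restricts to a bijection $H_j\to H_{j+1}$ with $r_j(H_j^*)=H_{j+1}^*$ for $*\in\{\mathrm{in},\mathrm{ex},\mathrm{bor}\}$. My plan is therefore simply to unwind the definition of $v(1)$ and invoke this property repeatedly.

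First I would handle the base case $j=1$, which is immediate since by definition $v(1)=v$ for $v\in H_1$, so $H_1^*(1)=H_1^*$ tautologically. Next, for $2\leq j\leq 4$, I would use the key observation that each $r_k$ is an involution: combining $r_k(H_k^*)=H_{k+1}^*$ with $r_k\circ r_k=\mathrm{id}$ gives the reverse equality $r_k(H_{k+1}^*)=H_k^*$. Thus each reflection $r_k$, viewed as a map $H_{k+1}\to H_k$, also preserves the $*$-decomposition.

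The proof then proceeds by straightforward induction on the number of reflections applied. Given $v\in H_j^*$ with $j\geq 2$, applying $r_{j-1}$ yields an element of $H_{j-1}^*$; applying $r_{j-2}$ to that yields an element of $H_{j-2}^*$; and so on, until applying $r_1$ produces an element of $H_1^*$. Since $v(1)=r_1r_2\cdots r_{j-1}v$ is by definition the result of this chain of applications (reading the product right to left as function composition), we conclude $v(1)\in H_1^*$. The reverse inclusion $H_1^*\subset H_j^*(1)$ is obtained symmetrically: given $w\in H_1^*$, the element $r_{j-1}\cdots r_1 w$ lies in $H_j^*$ by the same argument, and its image under $r_1\cdots r_{j-1}$ is $w$.

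There is no real obstacle: the lemma is a direct consequence of the stated bijection property of each $r_j$ together with the involutivity of reflections, and the induction collapses into a one-line argument. The only bookkeeping concern is keeping track of the order in which the reflections are composed in the definition $v(1)=r_1r_2\cdots r_{i-1}v$, which matches the descending chain $H_i\to H_{i-1}\to\cdots\to H_1$ exactly.
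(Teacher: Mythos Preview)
Your proof is correct and matches the paper's approach: the paper simply declares the lemma ``immediate'' from the bijection property $r_j(H_j^*)=H_{j+1}^*$ stated just before it, and your argument is precisely the unwinding of that immediacy.
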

\begin{lem}\label{lem: Fact2}
    Let $z\in W$.
    If $z\in H$, then $z(1)\in \mathrm{V}(\CC_z)$.
\end{lem}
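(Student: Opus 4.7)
The plan is a case analysis on $j\in\{1,2,3,4\}$ with $z\in H_j$, combined with the explicit vertex description of $\mathcal{C}_z$ furnished by Corollary~\ref{cor: characterization of the Bruhat order}.

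The case $j=1$ is immediate: the map $v\mapsto v(1)$ restricts to the identity on $H_1$, so $z(1)=z$. Inspection of each of the three forms in Corollary~\ref{cor: characterization of the Bruhat order} shows that $z$ itself is always one of the listed vertices (take $w=e$ in the first form, or read $z$ off directly from the explicit lists in the other two), so $z(1)=z\in\mathrm{V}(\mathcal{C}_z)$.

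For $j\in\{2,3,4\}$ I would argue as follows. Using Remark~\ref{rem: remark tontos}, first reduce $z$ to one of the five standard representatives $\theta(m,n)$, $s_0\theta(m,n)$, $\theta^s(m,n)$, $s_0\theta^s(m,n)$, or $\mathrm{\mathbf{x}}_k$ by applying a suitable $g\in\langle\delta,\sigma\rangle$. Corollary~\ref{cor: characterization of the Bruhat order} then gives $\mathrm{V}(\mathcal{C}_z)$ explicitly, and the task reduces to checking in each subcase that the affine element $r_1 r_2\cdots r_{j-1}\in W$, acting on the left on $z$, coincides with one of the listed vertices. The guiding geometric picture is that, in the hexagonal case, $\mathcal{C}_z$ has exactly one vertex in each zone $F_w$, and the composition $r_1\cdots r_{j-1}$ is precisely the path-reflection $F_j\to F_1$ that one reads off from Figure~\ref{subfig: star and Fw zones intro}, so $z(1)$ should be the $F_1$-vertex of $\mathcal{C}_z$.

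The main obstacle will be the combinatorial bookkeeping across these subcases, since the reflections $r_i$ include non-simple affine reflections such as $r_1=s_{\alpha_1,-1}$ and $r_6=s_{\alpha_2,-1}$, and one must match the product $r_1\cdots r_{j-1}$ either against a conjugate $\delta^i w \delta^{-i}$ (for some $w\in W_f$) or against one of the explicit products $s_{1+i}s_i,\ldots$ appearing in Corollary~\ref{cor: characterization of the Bruhat order}. The exclusion of the strip $\{v:-1\leq(v,\alpha_2)<0\}$ from $H_1$ (and of the analogous $\alpha_1$-strip from $H_4$) will be essential: these strips are precisely where $\mathcal{C}_z$ degenerates to the quadrilateral case $g\mathrm{\mathbf{x}}_k$ and need not have any vertex in $F_1$, so outside them we remain in the hexagonal regime and the identification of $z(1)$ with the $F_1$-vertex of $\mathcal{C}_z$ is clean.
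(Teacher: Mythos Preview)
Your plan is essentially the paper's own proof: split into the four cases $z\in H_j$, identify which branch of Corollary~\ref{cor: characterization of the Bruhat order} applies, and match the product $r_1\cdots r_{j-1}$ against one of the listed vertices. The detour through Remark~\ref{rem: remark tontos} is harmless but unnecessary, since the corollary already gives $\mathrm{V}(\CC_z)$ directly for every $z$ (parametrized by the three cases $z\in\delta^iD$, $z\in\delta^is_0D$, $z=g\mathrm{\mathbf{x}}_k$); the paper simply determines which of these holds from the zone $H_j$ and writes down the matching vertex, e.g.\ $z(1)=s_2s_0s_2z$ for $z\in H_2$.

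One correction to your last paragraph: it is \emph{not} true that outside the excluded strips $\CC_z$ is always hexagonal. The zone $H_2$ (which has no strip removed) contains the elements $z=\delta\mathrm{\mathbf{x}}_k$, for which $\CC_z$ is a quadrilateral; the paper treats this as an explicit subcase of $j=2$ and checks that $z(1)=s_2s_0s_2z=\delta(s_1s_2s_1\mathrm{\mathbf{x}}_k)$ is still one of the four vertices. The same happens symmetrically in $H_4$. So the strip exclusions from $H_1$ and $H_4$ are not what keeps you in the hexagonal regime, and the lemma would in fact remain true even on those strips (since $z\in F_1$ is always a vertex of $\CC_z$); the exclusions are there for the later Lemma~\ref{lem: Fact 3} and for the $H\leftrightarrow\sigma(H)$ symmetry in the proof of Proposition~\ref{prop: no mayores geometrico}, not for this lemma. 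Since you already list $\mathrm{\mathbf{x}}_k$ among your standard representatives, your case analysis will still go through once you drop the ``hexagonal regime'' heuristic and handle the quadrilateral subcases directly.
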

\begin{proof} 
    We split the proof into four cases.
    \begin{itemize}
        \item Case $z \in H_1$.
        By Corollary~\ref{cor: characterization of the Bruhat order}, we have $z(1)=z\in \mathrm{V}(\CC_z)$.
        \item Case $z\in H_2$.
        There are two possibilities:
        \begin{itemize}
            \item $\delta z\in s_0D$.
            Then $z\in\delta^2 (s_0D)$.
            By Corollary~\ref{cor: characterization of the Bruhat order}, we have  $z(1)=s_2s_0s_2z\in \mathrm{V}(C_z)$.
            \item $z\in X$.
            Then $z=\delta \mathrm{\mathbf{x}}_k$ for some $k\geq 2$.
            By Corollary~\ref{cor: characterization of the Bruhat order}, we have $z(1)=s_2s_0s_2z=\delta(s_1s_2s_1\mathrm{\mathbf{x}}_k)\in \mathrm{V}(C_z)$.
        \end{itemize} 
        \item Case $z\in H_3$.
        By Corollary~\ref{cor: characterization of the Bruhat order}, we have $z(1)=s_2s_0z\in \mathrm{V}(C_z)$.
        \item Case $z\in H_4$.
        The proof is similar to the second case.\qedhere
    \end{itemize}
\end{proof}
\begin{defn} 
    Let $w\in W_f$.
    Let $\operatorname{Cone}^w(\alpha_1,\alpha_2)$ be the positive cone generated by $w(\alpha_1)$ and $w(\alpha_2)$.
    In formulas, $\operatorname{Cone}^w(\alpha_1,\alpha_2):=\{aw(\alpha_1)+bw(\alpha_2)\mid a,b\in \mathbbm{R}_{\geq 0}\}$.
    For simplicity, we write $\operatorname{Cone}(\alpha_1,\alpha_2):=\operatorname{Cone}^{\mathrm{id}}(\alpha_1,\alpha_2)$.
\end{defn}
\begin{lem}\label{lem: Fact 3} 
    Let $z\in W$.
    If $z\in H$, then 
    \begin{equation*}
        \CC_z\cap H_1\cap \cen(W)=\Bigl( z(1)-\Cone(\alpha_1,\alpha_2)\Bigr)\cap H_1\cap \cen(W).
    \end{equation*}
\end{lem}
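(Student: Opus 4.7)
The plan is to verify the set equality by case analysis on the region $H_j$ containing $z$, using Corollary~\ref{cor: characterization of the Bruhat order} to describe $\CC_z$ explicitly and Lemma~\ref{lem: Fact2} to locate $z(1)$ as a specific vertex of $\CC_z$. The governing geometric picture is that the two edges of $\CC_z$ incident to the vertex $z(1)$ always lie along the rays $z(1) - \mathbbm{R}_{\geq 0}\alpha_1$ and $z(1) - \mathbbm{R}_{\geq 0}\alpha_2$, so that inside $H_1$ the polygon is locally indistinguishable from the cone $z(1) - \Cone(\alpha_1,\alpha_2)$.

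First I would treat the base case $z \in H_1$, where $z(1) = z$. In the subcase $z \in D$, the polygon $\CC_z = \operatorname{Conv}(W_f \cdot z)$ is a hexagon, and since $z$ is dominant every orbit element $wz$ satisfies $z - wz \in \Cone(\alpha_1,\alpha_2)$; this gives $\CC_z \subset z - \Cone(\alpha_1,\alpha_2)$ and hence one inclusion. For the reverse inclusion, the two edges from $z$ go toward $s_1 z = z - (z,\alpha_1)\alpha_1$ and $s_2 z = z - (z,\alpha_2)\alpha_2$, i.e., along the rays in directions $-\alpha_1$ and $-\alpha_2$; a direct comparison shows that $(z - \Cone(\alpha_1,\alpha_2)) \cap H_1$ is bounded by these rays and by $\partial H_1$, so its portion not already lying in $\CC_z$ consists of a region whose interior meets no centers of alcoves. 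The subcases $z \in s_0 D$ (hexagon) and $z \in X$ (quadrilateral) are handled identically after reading off the two edges of $\CC_z$ at the vertex $z$ from Corollary~\ref{cor: characterization of the Bruhat order}.

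For $z \in H_j$ with $j \in \{2,3,4\}$, I would verify directly from Corollary~\ref{cor: characterization of the Bruhat order} that the two edges of $\CC_z$ incident to the vertex $z(1)$ again lie along the rays $z(1) - \mathbbm{R}_{\geq 0}\alpha_1$ and $z(1) - \mathbbm{R}_{\geq 0}\alpha_2$. Concretely, one checks this by computing the neighbors of $z(1)$ in the explicit vertex list: for instance, if $z \in H_2$ with $\delta z \in s_0 D$, then $z(1) = s_2 s_0 s_2 z$ is a vertex and its neighbors in the hexagon are elements whose differences with $z(1)$ are indeed $-\alpha_1$ and $-\alpha_2$ multiples. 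This local picture at $z(1)$ plus the convexity of $\CC_z$ makes $\CC_z \cap H_1$ agree with $(z(1) - \Cone(\alpha_1,\alpha_2)) \cap H_1$ on centers of alcoves, exactly as in the base case.

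The main obstacle is the bookkeeping near the boundary of $H_1$. When $(z(1),\alpha_i)$ is small for some $i$, the edge of $\CC_z$ emanating from $z(1)$ in direction $-\alpha_i$ terminates at its other endpoint \emph{before} exiting $H_1$, and the cone $z(1) - \Cone(\alpha_1,\alpha_2)$ can strictly exceed $\CC_z$ inside $H_1$. The claim in this degenerate situation is that any such excess is a union of small triangular slivers (analogous to those analyzed in the proof of Proposition~\ref{prop description of all lower intervals}, case~(2) and~(3)) whose interiors contain no centers of alcoves. This last verification---checking that the excess region never harbors a point of $\cen(W)$---is what forces the $\cen(W)$ restriction in the statement and is the most delicate piece of the argument.
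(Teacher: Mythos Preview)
Your approach is essentially the same as the paper's: both arguments use Corollary~\ref{cor: characterization of the Bruhat order} to read off the vertices of $\CC_z$, identify $z(1)$ as a distinguished vertex via Lemma~\ref{lem: Fact2}, observe that the two edges of $\CC_z$ incident to $z(1)$ point along $-\alpha_1$ and $-\alpha_2$, and then check that any discrepancy between $\CC_z\cap H_1$ and the cone near the boundary of $H_1$ is a small triangular region containing no centers.

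The only real difference is organizational. The paper does \emph{not} split the argument according to which $H_j$ contains $z$; instead it isolates five thin strips $X^{\mathrm{ne}},X^{\mathrm{e}},X^{\mathrm{se}},Y^{\mathrm{ne}},Y^{\mathrm{se}}\subset H$ and shows that for $z$ outside these strips (regardless of $j$) one has the exact equality $\CC_z\cap H_1=(z(1)-\Cone(\alpha_1,\alpha_2))\cap H_1$, because $z(1)$ is then the \emph{unique} vertex of $\CC_z$ lying in $H_1$. The sliver analysis is confined entirely to $z$ in those strips. This packaging is cleaner than your case split by $H_j$, since it separates the generic situation from the degenerate one in a single stroke rather than revisiting the boundary issue inside each zone.

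One small slip: in your base case $z\in H_1$ you list $z\in s_0D$ as a subcase, but $s_0D$ does not meet $H_1\subset F_{\id}$. The actual possibilities for $z\in H_1$ are $z\in D$ and $z=\mathrm{\mathbf{x}}_k$ for some $k$ (the northeast wall strip $X^{\mathrm{ne}}$).
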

\begin{proof} 
    Let $X^{\mathrm{ne}}\subset E$ (resp.\@ $X^{\mathrm{e}}, X^{\mathrm{se}}\subset E$) be the strip of the plane starting at the identity and going northeast (resp.\@ east, southeast) 
    intersected with $H_1$ (resp.\@ $H_2, H_4$).
    In formulas,
    \begin{align*}
        X^{\mathrm{ne}}&\coloneqq\{v\in H_1\mid-1\leq(v,\alpha_1)<0)\},\\
        X^{\mathrm{e}}&\coloneqq\{v\in H_2\mid -1\leq (v,\rho)<0\},\\
        X^{\mathrm{se}}&\coloneqq\{v\in H_4\mid -1\leq (v,\alpha_2)<0\}.
    \end{align*}
    Let $Y^{\mathrm{ne}}\subset H_1$ and $ Y^{\mathrm{se}} \subset H_4$ be the strips directly to the left of~$X^{\mathrm{ne}}$ and $X^{\mathrm{se}}$ respectively.
    In formulas,
    \begin{align}
        Y^{\mathrm{ne}}&\coloneqq\{v\in H_1\mid 0\leq (v,\alpha_1)<1\},\label{equation: def Yne}\\
        Y^{\mathrm{se}}&\coloneqq\{v\in H_4\mid -2\leq (v,\alpha_2)<-1\}.\nonumber
    \end{align}
    By Corollary~\ref{cor: characterization of the Bruhat order} and case-by-case inspection, if $z\in H\setminus (X^{\mathrm{ne}}\cup X^{\mathrm{e}}\cup X^{\mathrm{se}}\cup Y^{\mathrm{ne}}\cup Y^{\mathrm{se}})$, then  $\CC_z$ is a hexagon with only one vertex in~$H_1$ (for instance, the blue hexagon in Figure~\ref{fig: proofa}).
    If $z\in X^{\mathrm{e}}$, then $\CC_z$ is a quadrilateral with only one vertex in~$H_1$ (for instance, the yellow quadrilateral in Figure~\ref{fig: proofa}).
    In both cases, we have
    \begin{equation}\label{eq: onthenose}
        \CC_z \cap H_1=\Bigl(z(1)-\Cone(\alpha_1,\alpha_2)\Bigr)\cap H_1,
    \end{equation}
    so the lemma follows.
    \begin{figure}[ht!]
        \centering
        \includegraphics[width=0.44\textwidth]{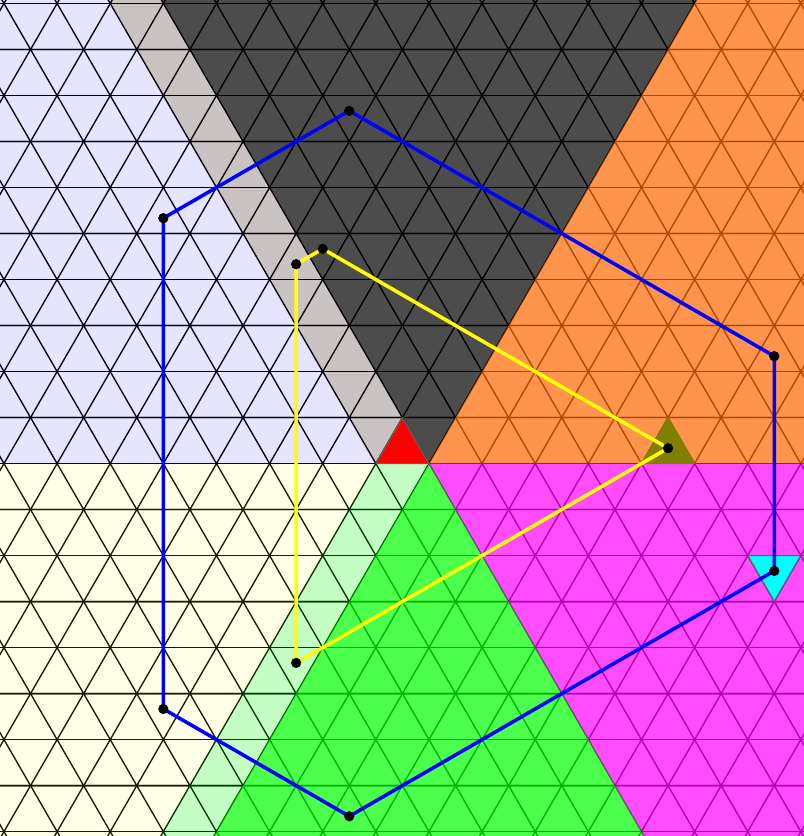}
        \caption{$\partial C_z$ for $z\notin X^{\mathrm{ne}}\cup X^{\mathrm{se}}\cup Y^{\mathrm{ne}}\cup Y^{\mathrm{se}}$.} 
        \label{fig: proofa}
    \end{figure}

    It remains to prove the case $z\in X^{\mathrm{ne}}\cup X^{\mathrm{se}}\cup Y^{\mathrm{ne}}\cup Y^{\mathrm{se}}$.
    Suppose that $z$ is as in Figures~\ref{fig: proofb} or~\ref{fig: proofc}.
    By Corollary~\ref{cor: characterization of the Bruhat order} and case-by-case inspection, Equation \eqref{eq: onthenose} does not hold, although the strict inclusion $\subsetneq$ always holds in this case.
    The difference between the two sets in this strict inclusion (denoted by $\operatorname{Dif}_z$) is a right triangle minus its hypotenuse.
    The set $\operatorname{Dif}_z$ contains no center of alcoves (although along its hypotenuse, there are either one or two elements of~$\operatorname{cen}(W)$.)
    This proves the lemma for the elements $z$ appearing in Figures~\ref{fig: proofb} and~\ref{fig: proofc}.
    Let us argue why this is enough to prove the lemma.

    In Figure~\ref{fig: proofb}, we see a yellow quadrilateral corresponding to a specific element $z\in X^{\mathrm{se}}$.
    If one chooses a different element $z$ in~$X^{\mathrm{se}}$, then  Corollary~\ref{cor: characterization of the Bruhat order} gives a quadrilateral $\CC_z$ with the same inner angles as the yellow quadrilateral.
    In this case, $z(1)$ also belongs to $ X^{\mathrm{ne}}$.
    Modulo a translation of the plane, the right triangle 
    $\operatorname{Dif}_z$ depends only on the parity of~$\ell(z)$.
    If the parity of~$\ell(z)$ is different from the one considered in Figure~\ref{fig: proofb}, the area of~$\operatorname{Dif}_z$ is much smaller (it is one-sixth of the area of an alcove) than in the case depicted there.
    In both cases, $\operatorname{Dif}_z \cap \cen(W)=\emptyset$.
    The remaining three cases, $z\in X^{\mathrm{ne}}\cup Y^{\mathrm{ne}}\cup Y^{\mathrm{se}}$ are similar.
    This concludes the proof of the Lemma.
    \begin{figure}[ht!]
         \centering
         \begin{subfigure}[b]{0.45\textwidth}
            \centering
             \includegraphics[width=0.9\textwidth]{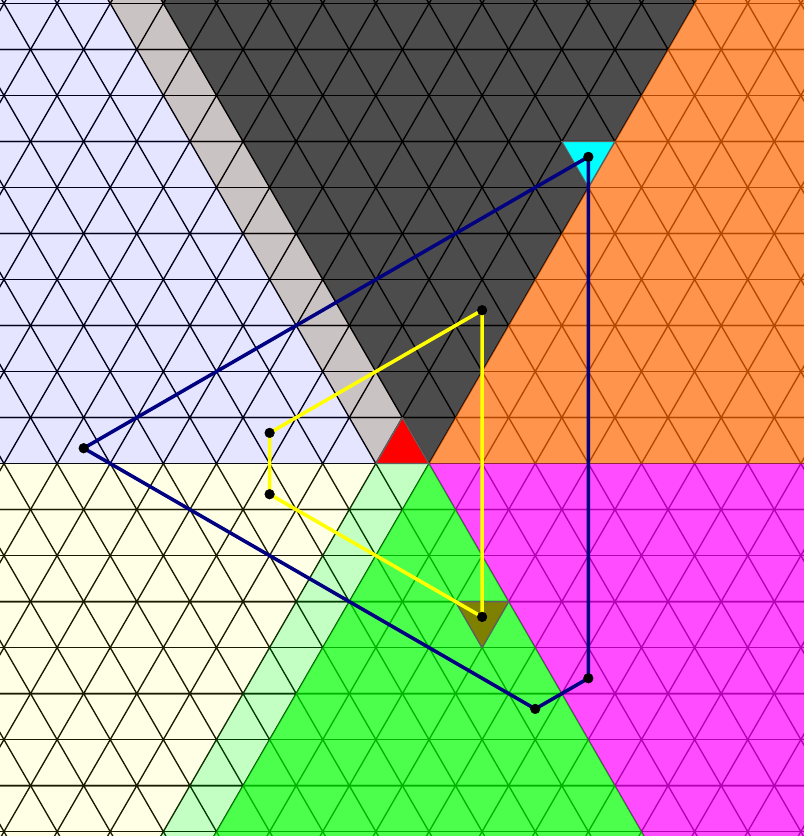}
             \caption{$z\in X^{\mathrm{ne}}\cup X^{\mathrm{se}}$.} 
             \label{fig: proofb}
         \end{subfigure}
         \hfill
         \begin{subfigure}[b]{0.45\textwidth}
             \centering
             \includegraphics[width=0.9\textwidth]{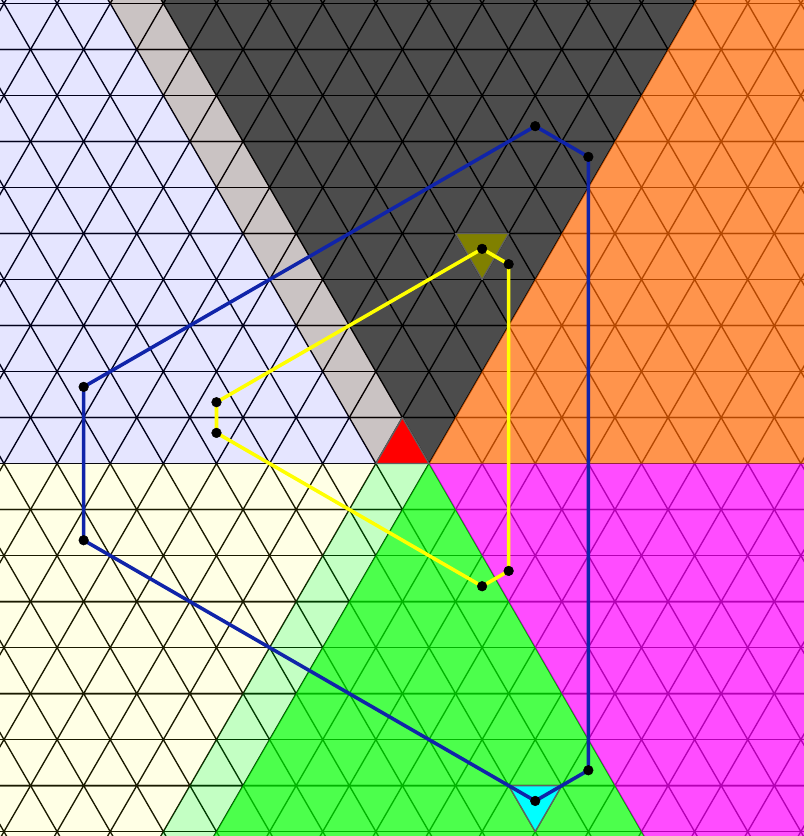}
             \caption{$z\in Y^{\mathrm{ne}}\cup Y^{\mathrm{se}}$.}
             \label{fig: proofc}
         \end{subfigure} 
         \caption{Other examples of~$\partial C_z$.}
         \label{fig: otros casos dem}
    \end{figure}
\end{proof}
The following lemma is the analog of Equation \eqref{eq: onthenose} for the set $E\setminus \mathcal{St}(x)$ in place of the set $\CC_z$.
It follows directly from the definition of~$\mathcal{St}(x)$, and we will omit its proof.
Unlike Equation \eqref{eq: onthenose}, an equality holds in each possible case.
\begin{lem}\label{lem: Fact 4} 
    We have $H_1\setminus \mathcal{St}(x)^{\circ}=H_1\cap(x+\operatorname{Cone}(\alpha_1,\alpha_2))$.
    In particular, if $z\in W$ and $z\in H_1$, then $z\notin \mathcal{St}(x)^{\circ}$ if and only if $x \in z-\Cone(\alpha_1,\alpha_2)$.
\end{lem}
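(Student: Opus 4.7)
The idea is to reduce to the larger zone $F_1 \supset H_1$, identify the boundary of $\mathcal{St}(x)$ inside $F_1$ with the two walls of the translated cone $x + \operatorname{Cone}(\alpha_1,\alpha_2)$, and only then intersect with $H_1$. By Definition~\ref{def: estrella}, the only edges of $\mathcal{St}(x)$ meeting the interior of $F_1$ are the two segments $\overline{x_1 u_2}$ and $\overline{u_1 x_1}$ incident to $x_1 = \operatorname{cen}(x)$; the remaining ten edges of $\partial\mathcal{St}(x)$ lie across the rays $r_1,\ldots,r_6$ inside $F_2 \cup \cdots \cup F_6$. So it suffices to show that these two segments are supported on the two bounding half-rays of $x + \operatorname{Cone}(\alpha_1,\alpha_2)$, and that the star lies locally on the side opposite the cone opening.

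The heart of the argument is computing the directions of $u_2 - x_1$ and $u_1 - x_1$. Set $c_i := (\alpha_i, x) + 1$; since $x$ is dominant, $c_i > 0$. Using $r_1 = s_{\alpha_1,-1}$, one obtains $x_2 - x_1 = -c_1 \alpha_1$, so that the midpoint $m_2 = x_1 - (c_1/2)\alpha_1$ of $\overline{x_1 x_2}$ lies on $H_{\alpha_1,-1}$. The apex $u_2$ is obtained by moving from $m_2$ along $H_{\alpha_1,-1}$ (perpendicular to $\alpha_1$) by a vector of norm $(\sqrt{3}/2)\|x_1-x_2\|$; after identifying from Figure~\ref{subfig: star and Fw zones intro} that the unbounded direction of $r_1$ is the $+\varpi_2$ side, a short computation using $\varpi_2 = (\alpha_1 + 2\alpha_2)/3$ yields $u_2 - m_2 = (c_1/2)\alpha_1 + c_1\alpha_2$, hence $u_2 - x_1 = c_1\alpha_2$. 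The symmetric argument with $r_6 = s_{\alpha_2,-1}$ gives $u_1 - x_1 = c_2\alpha_1$. Thus the two star-edges at $x_1$ lie exactly on the half-rays $x + \R_{\geq 0}\alpha_1$ and $x + \R_{\geq 0}\alpha_2$ bounding the translated cone.

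With the directions identified, the lemma is essentially done. Inside $F_1$, the bounded set $\mathcal{St}(x)$ exits $x_1$ along the two cone walls and lies on the side opposite the cone opening (it is a bounded neighborhood of the central hexagon $\{x_1,\ldots,x_6\}$), so $F_1 \setminus \mathcal{St}(x)^\circ = F_1 \cap \bigl(x + \operatorname{Cone}(\alpha_1,\alpha_2)\bigr)$. Intersecting both sides with $H_1 \subset F_1$ yields the claimed equality, and the \emph{in particular} clause is merely the rewriting $\operatorname{cen}(z) \in x + \operatorname{Cone}(\alpha_1,\alpha_2) \iff x \in \operatorname{cen}(z) - \operatorname{Cone}(\alpha_1,\alpha_2)$. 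The only mildly delicate point is the sign check identifying the unbounded direction of $r_1$ with $+\varpi_2$ (and of $r_6$ with $+\varpi_1$); this can be confirmed either from the figure or by verifying the construction on a single example such as $x = \theta(0,0) = w_0$, for which $c_1 = c_2 = 4/3$ and the displayed picture in Figure~\ref{subfig: star section 3} can be checked by hand.
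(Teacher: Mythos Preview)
Your proof is correct. The paper itself explicitly omits the proof of this lemma, remarking only that it ``follows directly from the definition of~$\mathcal{St}(x)$,'' so there is no argument in the paper to compare against; what you have written is precisely the kind of unpacking the authors declined to include. Your key computation---that $u_1 - x_1 = c_2\alpha_1$ and $u_2 - x_1 = c_1\alpha_2$, so the two star-edges at $x_1$ lie exactly on the bounding rays of $x+\Cone(\alpha_1,\alpha_2)$---is the right way to see why the lemma holds, and the sign check via the unbounded directions of $r_1,r_6$ (confirmed by Lemma~\ref{lem: traslacion preserva region}, which states that the unbounded edges of $F_w$ point along $w\varpi_1,w\varpi_2$) is exactly what is needed.

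The only step that could be tightened is the final passage from ``the star lies on the side opposite the cone opening'' to the set equality in $F_1$. One clean way to make this rigorous: the path $\overline{u_1 x_1}\cup\overline{x_1 u_2}$ joins two boundary points of the convex region $F_1$ through its interior, hence separates $F_1^\circ$ into two components; on the unbounded component both $F_1\setminus\mathcal{St}(x)^\circ$ and $F_1\cap(x+\Cone(\alpha_1,\alpha_2))$ contain all far-away points (the star is bounded), while on the bounded component the point $x_1-\epsilon\rho$ lies in the inner hexagon $\operatorname{Conv}(x_1,\dots,x_6)\subset\mathcal{St}(x)^\circ$ and outside the cone. This matches your informal reasoning and completes the argument.
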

\begin{proof}[Proof of Proposition~\ref{prop: no mayores geometrico}]
    Let us first prove the proposition for $z\in W$ with $\cen(z)\in H_i$ for some $1\leq i\leq 4$.
    By Corollary~\ref{cor: characterization of the Bruhat order}, $x\leq z$ if and only if $x\in \CC_z$.
    Note that $x\in H_1$.
    By Lemma~\ref{lem: Fact 3}, $x\in \CC_z$ if and only if $x\in z(1)-\Cone(\alpha_1,\alpha_2)$.
    By Lemma~\ref{lem: Fact 4}, $x\in z(1)-\Cone(\alpha_1,\alpha_2)$ if and only if $z(1)\notin \mathcal{St}(x)^{\circ}$ if and only if $z(1)\in H_1^{\mathrm{ex}}\cup H_1^{\mathrm{bor}}$.
    By Lemma~\ref{lem: fact1}, $z(1)\in H_1^{\mathrm{ex}}\cup H_1^{\mathrm{bor}}$ if and only if $z\in H_i^{\mathrm{ex}}\cup H_i^{\mathrm{bor}}$ if and only if $z\notin \mathcal{St}(x)$.
    This proves the proposition for $z\in H$.
    
    If one replaces everywhere in the argument $H$ by $\sigma(G)$ (swap $s_1$ and $s_2$, $v$ and $\sigma v$, etc.\@) we obtain a proof for the proposition for $z\in W$ such that $z\in \sigma H$.
    The only elements $z\in W$ such that $z\notin H\cup \sigma H$ are $\mathrm{id}$ and $s_0$.
    However $\mathrm{id}, s_0\notin (\geq x)$.
    The proposition is proved.
\end{proof}

\subsection{Local Bruhat order} Now we will present results similar to Lemma~\ref{lem: Fact 3}, which 
provide a local description of the Bruhat order in each zone $F_w$.
These results will be used in Section~\ref{subsec:Trasl-Preserv-Bruhat-order} and Section~\ref{subsec: thicks}.

\begin{lem}\label{lem: local bruhat order}
    Let $z\in F_\id$.
    Then $\CC_z \cap F_\id \cap \cen(W)=\big( z-\Cone(\alpha_1,\alpha_2)\big)\cap F_\id\cap \cen(W)$.
\end{lem}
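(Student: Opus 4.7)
The strategy is to extend Lemma~\ref{lem: Fact 3} from the region $H_1$ to the larger zone $F_\id$. The plan is to separate into two cases based on whether $z$ lies in $H_1 \subset F_\id$ or in the thin strip $F_\id \setminus H_1 = \{v\in F_\id\mid -1\le (v,\alpha_2)<0\}$.

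For $z\in H_1$, Lemma~\ref{lem: Fact 3} applies with $z(1)=z$ and yields the desired equality after intersecting with $H_1$. It remains to extend this to $F_\id$ by inspecting how $\CC_z$ and $z-\Cone(\alpha_1,\alpha_2)$ intersect the strip $F_\id \setminus H_1$. By Corollary~\ref{cor: characterization of the Bruhat order}, $\CC_z$ is either a hexagon (for $z\in D$) or a quadrilateral (for $z\in X\cap F_\id$), and in each case the two edges of $\CC_z$ meeting at the vertex $z$ are parallel to $-\alpha_1$ and $-\alpha_2$. These are precisely the two rays bounding the cone $z-\Cone(\alpha_1,\alpha_2)$, so the two sets coincide inside $F_\id$ up to the small-triangle discrepancies already controlled in the proof of Lemma~\ref{lem: Fact 3}.

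For $z\in F_\id \setminus H_1$, I would run a direct case analysis analogous to the one in Lemma~\ref{lem: Fact 3}, stratified by the part of the $X\Theta$-partition to which $z$ belongs. The possible shapes of $\CC_z$ are again described by Corollary~\ref{cor: characterization of the Bruhat order}, and for each of them the intersection with $F_\id$ yields a convex region whose boundary near $z$ consists of two segments parallel to $-\alpha_1$ and $-\alpha_2$. These segments form precisely the two rays of $z-\Cone(\alpha_1,\alpha_2)$ near $z$, so the two sets coincide on the common portion of $F_\id$.

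The hardest step is, as in Lemma~\ref{lem: Fact 3}, controlling the discrepancies that arise when $z$ lies in one of the thin sub-strips $X^{\mathrm{ne}}, X^{\mathrm{e}}, X^{\mathrm{se}}, Y^{\mathrm{ne}}, Y^{\mathrm{se}}$ (or their analogs reflected into $F_\id \setminus H_1$). I expect to conclude by the same alcove-packing argument used at the end of that proof: the set-theoretic difference between $\CC_z\cap F_\id$ and $(z-\Cone(\alpha_1,\alpha_2))\cap F_\id$ is always a right triangle minus its hypotenuse, and the alcove geometry forces any such triangle to contain no element of $\cen(W)$ in its interior. This reduces the lemma to a finite verification of boundary cases.
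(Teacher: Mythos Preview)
Your approach is essentially the same as the paper's: reduce to Lemma~\ref{lem: Fact 3} (with $z(1)=z$) for $z\in H_1$, and handle the remaining strip case $z=\mathbf{x}_n$ via the same small-right-triangle argument from the end of that proof. The paper's proof is terser and simply notes that $z\in H_1$ or $z\in\sigma(H_1)$, invoking Lemma~\ref{lem: Fact 3} in the first case and treating $z=\mathbf{x}_n$ separately.

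One small inaccuracy to fix: for $z=\mathbf{x}_k\in X\cap F_\id$, the quadrilateral $\CC_z$ has edges at $z$ parallel to $-\alpha_1$ and $-\rho$, not $-\alpha_1$ and $-\alpha_2$ (the vertices adjacent to $z$ are $s_1z$ and $s_1s_2s_1z=s_{\rho,0}z$). This does not break your argument, since the ray $z-\mathbb{R}_{\ge0}\alpha_2$ exits $F_\id$ almost immediately and the difference between $\CC_z\cap F_\id$ and $(z-\Cone(\alpha_1,\alpha_2))\cap F_\id$ is again a small right triangle, sitting inside $\overline{zA_+}$, containing no element of $\cen(W)$---exactly as you anticipate in your final paragraph and as the paper states.
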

\begin{proof}
    Note that either $z\in H_1=F_{\id}\setminus\{v\in E\mid -1\leq (v,\alpha_2)< 0\}$ or $z\in\sigma(H_1)$.
    The case $z\in H_1$ follows by Lemma~\ref{lem: Fact 3} with $z(1)=z$.

    The remaining case $z=\mathrm{\mathbf{x}}_n$ is proved in a similar way as in the part of the proof of Lemma~\ref{lem: Fact 3} following Figure~\ref{fig: proofb}.
    This is because, although a different $z$ is considered, the difference again has the following properties: it is a right triangle minus its hypotenuse, it is included in~$\overline{zA_+}$, and it contains no element of~$\cen(W)$.
\end{proof}
\begin{cor}\label{cor: equivalence of being in the hexagon in the gray region}
    Let $z,z'\in F_\id$.
    Then  $z\in \CC_{z'}$ if and only if  $(\varpi_1,\cen(z')-\cen(z))\geq 0$ and $(\varpi_2,\cen(z')-\cen(z))\geq 0$.
\end{cor}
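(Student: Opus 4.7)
The plan is to deduce this corollary directly from Lemma~\ref{lem: local bruhat order}, using the standard duality between the simple roots $\{\alpha_1,\alpha_2\}$ and the fundamental weights $\{\varpi_1,\varpi_2\}$.

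First, I would observe that since $z \in F_\id$ and $\cen(z) \in \cen(W)$, the condition $z \in \CC_{z'}$ is equivalent to $\cen(z) \in \CC_{z'} \cap F_\id \cap \cen(W)$. Applying Lemma~\ref{lem: local bruhat order} (with the roles of $z$ and $z'$ taken appropriately), this latter set equals $\bigl(\cen(z') - \Cone(\alpha_1,\alpha_2)\bigr) \cap F_\id \cap \cen(W)$. Hence $z \in \CC_{z'}$ is equivalent to
\begin{equation*}
    \cen(z')-\cen(z) \in \Cone(\alpha_1,\alpha_2).
\end{equation*}

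Next, I would translate membership in the cone $\Cone(\alpha_1,\alpha_2)$ into the inner product conditions appearing in the statement. In type $A_2$ with the standard normalization, the simple roots and fundamental weights are dual in the sense that $(\varpi_i,\alpha_j)=\delta_{ij}$. Writing $v = a_1\alpha_1 + a_2\alpha_2$, one obtains $(\varpi_i, v) = a_i$ for $i=1,2$. Therefore $v \in \Cone(\alpha_1,\alpha_2)$ (i.e.\ $a_1,a_2 \geq 0$) if and only if $(\varpi_1,v)\geq 0$ and $(\varpi_2,v)\geq 0$. Applying this to $v = \cen(z') - \cen(z)$ finishes the argument.

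There is no real obstacle here; the only point that deserves a line of verification is the duality identity $(\varpi_i,\alpha_j)=\delta_{ij}$ in the chosen normalization, which follows from the simply-laced nature of type $A_2$ and the convention $(\alpha_i,\alpha_i)=2$ implicit in the definition of $\varpi_1,\varpi_2$ used throughout Section~\ref{subsec: alcoves}.
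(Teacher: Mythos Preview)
Your proposal is correct and follows essentially the same approach as the paper's proof: both invoke Lemma~\ref{lem: local bruhat order} to rewrite $z\in\CC_{z'}$ as $\cen(z')-\cen(z)\in\Cone(\alpha_1,\alpha_2)$, and then use the duality $(\varpi_i,\alpha_j)=\delta_{ij}$ to convert cone membership into the two inner-product inequalities.
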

\begin{proof}
    By Lemma~\ref{lem: local bruhat order},   $z\in \CC_{z'}$ if and only if $\cen(z)=\cen(z')-t_1\alpha_1-t_2\alpha_2$, for some $t_1,t_2\in \mathbbm{R}_{\geq 0}$ if and only if for $i\in \{1,2\}$
    \begin{equation*}
        (\varpi_i,\cen(z')-\cen(z))=(\varpi_i,t_1\alpha_1+t_2\alpha_2)=t_i\geq 0.\qedhere
    \end{equation*}
\end{proof}
\begin{lem}\label{lem: gral local bruhat order}
    Let $z$ be dominant and $w\in W_f$.
    We have,
    \begin{equation*}
        \CC_z\cap F_w\cap\cen(W)=\big(wz-\Cone^w(\alpha_1,\alpha_2)\big)\cap F_w\cap \cen(W).
    \end{equation*}
\end{lem}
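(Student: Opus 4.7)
The plan is to reduce to Lemma~\ref{lem: local bruhat order} (the case $w = \id$) by exploiting the $W_f$-symmetry of the hexagon $\CC_z$ that comes with $z$ being dominant. Since $z$ is dominant, Corollary~\ref{cor: characterization of the Bruhat order} gives $\operatorname{V}(\CC_z) = W_f \cdot z$, so $\CC_z$ is invariant under the linear action of $W_f$ on $E$ (the action fixing the origin). The set $\cen(W)$ is also $W_f$-invariant because $\cen(ux) = u \cdot \cen(x)$ for every $u \in W_f$ and $x \in W$. Finally, $\Cone^w(\alpha_1,\alpha_2) = w(\Cone(\alpha_1,\alpha_2))$ by definition, and $\cen(wz) = w \cdot \cen(z)$, so
\begin{equation*}
    w^{-1}\bigl(wz - \Cone^w(\alpha_1,\alpha_2)\bigr) = z - \Cone(\alpha_1,\alpha_2).
\end{equation*}

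Applying the isometry $w^{-1}$ to both sides of the claimed equality therefore transforms the statement into
\begin{equation*}
    \CC_z \cap w^{-1}(F_w) \cap \cen(W) = \bigl(z - \Cone(\alpha_1,\alpha_2)\bigr) \cap w^{-1}(F_w) \cap \cen(W),
\end{equation*}
which would follow immediately from Lemma~\ref{lem: local bruhat order} once $w^{-1}(F_w)$ is replaced by $F_\id$ on both sides.

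The main obstacle is precisely this replacement. The zones $F_u$ are delimited by a mixture of linear walls ($s_1, s_2$) and affine walls ($s_0, s_{\alpha_1,-1}, s_{\alpha_2,-1}$), so the \emph{linear} $W_f$-action on $E$ does not literally permute them. To circumvent this, I would verify the weaker statement that $w^{-1}(F_w)$ and $F_\id$ contain the same alcove centers inside $\CC_z$, i.e., that
\begin{equation*}
    w^{-1}(F_w) \cap \cen(W) \cap \CC_z = F_\id \cap \cen(W) \cap \CC_z.
\end{equation*}
This is a purely combinatorial assertion about which alcoves lie in which zone, and I would check it case by case for the six elements of $W_f$, using the explicit description of the six bounding rays $r_1,\dots,r_6$ listed in the paragraph preceding Definition~\ref{def: estrella} together with the hexagonal structure of $\CC_z$ (whose adjacent vertices $uz$ and $(us_i)z$ are separated by an edge parallel to $u(\alpha_i)$). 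The discrepancies between $w^{-1}(F_w)$ and $F_\id$ always lie outside $\CC_z$ or on walls that contain no alcove centers relevant to the claim, essentially because the two edges of $\CC_z$ incident to the vertex $wz$ go in the directions $-w(\alpha_1)$ and $-w(\alpha_2)$, matching the boundary of the cone $wz - \Cone^w(\alpha_1,\alpha_2)$ within the wedge $F_w$. Once this local identification is in place, Lemma~\ref{lem: local bruhat order} closes the argument.
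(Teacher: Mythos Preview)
Your reduction via the $W_f$-invariance of $\CC_z$ is a good idea and genuinely different from the paper's approach, but the intermediate claim you state is false. Take $w=s_1$: the paper records (Equation~\eqref{eq: Fs1 como imagen de Fid}) that $F_{s_1}=s_1\{v\in F_\id\mid (\alpha_1,v)\geq 1\}$, so $s_1^{-1}(F_{s_1})=\{v\in F_\id\mid (\alpha_1,v)\geq 1\}$ is a \emph{proper} subset of $F_\id$. The identity alcove has $(\alpha_1,\cen(\id))\in(-1,0)$, so $\cen(\id)$ lies in $F_\id\cap\cen(W)\cap\CC_z$ for every dominant $z$, yet not in $s_1^{-1}(F_{s_1})$. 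Thus the equality $w^{-1}(F_w)\cap\cen(W)\cap\CC_z=F_\id\cap\cen(W)\cap\CC_z$ fails, and your justification that ``the discrepancies \dots\ always lie outside $\CC_z$'' is incorrect.

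The good news is that you do not need that equality: the containment $w^{-1}(F_w)\subset F_\id$ (equivalently $F_w\subset w(F_\id)$, which the paper uses in the proof of Proposition~\ref{prop: pol otras regiones}) already suffices. Once you know this inclusion, intersect both sides of Lemma~\ref{lem: local bruhat order} with $w^{-1}(F_w)$ to obtain
\[
\CC_z\cap w^{-1}(F_w)\cap\cen(W)=\bigl(z-\Cone(\alpha_1,\alpha_2)\bigr)\cap w^{-1}(F_w)\cap\cen(W),
\]
and then apply $w$ to both sides. This is exactly your transformed equation, and the lemma follows. By contrast, the paper argues directly in $F_w$: for $z\in D+\rho$ the six vertices $wz$ of $\CC_z$ lie in the six distinct zones, so $\CC_z\cap F_w$ is literally the cone at $wz$; for $z$ on the boundary strips $\{\theta(0,n),\theta^s(0,n),\dots\}$ one must argue, as in the proof of Lemma~\ref{lem: Fact 3}, that the small leftover triangle contains no alcove centers. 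Your symmetry reduction, once patched with the inclusion $F_w\subset w(F_\id)$, is cleaner because it pushes that boundary case analysis entirely into the already-proved Lemma~\ref{lem: local bruhat order}.
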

\begin{proof}
    If each element of~$\mathrm{V}(\CC_z)$ belongs to a different zone, then it is clear that $\CC_z\cap F_w=\big(wz-\Cone^w(\alpha_1,\alpha_2)\big)\cap F_w,$ and we are done.
    This happens if $z\in D+\rho$.
    The complement by this set in~$D$ is composed of two strips, and by symmetry, we can restrict to the case $z\in \{\theta(0,n),\theta^s(0,n)\, \vert\, n\in \mathbbm{N}\}$.
    For the case $z=\theta(0,n)$ see Figure~\ref{fig: proofc}, and use a similar argument as in the part of the proof of Lemma~\ref{lem: Fact 3} following Figure~\ref{fig: proofa}.
    The case $z=\theta^s(0,n)$ is similar.
\end{proof}
The next lemma is an analogue of Lemma~\ref{lem: gral local bruhat order} for  \(z \in \{\mathrm{\mathbf{x}}_k \mid k > 3\}\).
\begin{lem}\label{lem: gral local bruhat order for x}
    For $k\geq 4$, we have that
    \begin{equation*}
            \CC_{\mathrm{\mathbf{x}}_k}\cap F_w\cap\cen(W)=
            \begin{cases}
                \big(w\mathrm{\mathbf{x}}_k-\Cone^w(\a_1,\rho)\big)\cap F_w\cap\cen(W)& \mbox{ if } w\not\in\{s_2,s_1s_2\},
                \\
                \big(ws_2\mathrm{\mathbf{x}}_k-\Cone^{ws_2}(\a_1,\rho)\big)\cap F_w\cap\cen(W)& \mbox{ if } w\in\{s_2,s_1s_2\}.
            \end{cases}
    \end{equation*}
\end{lem}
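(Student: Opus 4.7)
The plan is to mimic the proof of the analogous Lemma for dominant $z$, adapted to the fact that the polygon $\CC_{\mathrm{\mathbf{x}}_k}$ is now a quadrilateral with vertex set $\{\mathrm{\mathbf{x}}_k,s_1\mathrm{\mathbf{x}}_k,s_2s_1\mathrm{\mathbf{x}}_k,s_1s_2s_1\mathrm{\mathbf{x}}_k\}$ given by Corollary~\ref{cor: characterization of the Bruhat order}, and that the alcove $\mathrm{\mathbf{x}}_kA_+$ is adjacent to the wall $H_{\alpha_2,0}$ separating $F_\id$ from $F_{s_2}$. Because this quadrilateral is invariant under the reflection $s_1$ (which swaps $\{\mathrm{\mathbf{x}}_k,s_1\mathrm{\mathbf{x}}_k\}$, swaps $\{s_2s_1\mathrm{\mathbf{x}}_k,s_1s_2s_1\mathrm{\mathbf{x}}_k\}$, and swaps $F_w\leftrightarrow F_{s_1w}$), it suffices to verify the formula for the three representatives $w\in\{\id,s_2,s_2s_1\}$; the other three zones follow by applying $s_1$.

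For $w\in\{\id,s_2s_1,s_1s_2s_1\}$ the anchor vertex $w\mathrm{\mathbf{x}}_k$ is already one of the four vertices of $\CC_{\mathrm{\mathbf{x}}_k}$. Reducing to $w=\id$ and transporting by the $W_f$-action, one checks directly that the two edges of $\CC_{\mathrm{\mathbf{x}}_k}$ incident to $\mathrm{\mathbf{x}}_k$ point in directions $-\alpha_1$ and $-\rho$: the edge to $s_1\mathrm{\mathbf{x}}_k$ is a negative multiple of $\alpha_1$ by reflection across the $\alpha_1$-wall, and the edge to $s_2s_1\mathrm{\mathbf{x}}_k$ is a negative multiple of $\rho$ since $s_2s_1\varpi_1=-\varpi_2$. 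Consequently $\CC_{\mathrm{\mathbf{x}}_k}\cap F_w=(w\mathrm{\mathbf{x}}_k-\Cone^w(\alpha_1,\rho))\cap F_w$ as identities of subsets of $E$, and the formula with $\cen(W)$ intersected follows.

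The delicate case is $w\in\{s_2,s_1s_2\}$: no vertex of $\CC_{\mathrm{\mathbf{x}}_k}$ lies in the interior of $F_w$, so we anchor at the nearby wall-vertex $ws_2\mathrm{\mathbf{x}}_k$ (for $w=s_2$ this is $\mathrm{\mathbf{x}}_k$; for $w=s_1s_2$ this is $s_1\mathrm{\mathbf{x}}_k$). The key observation will be that $\alpha_1+\alpha_2=\rho$ forces $s_2(\alpha_1)=\rho$ and $s_2(\rho)=\alpha_1$, so $s_2$ merely swaps the two generators of $\Cone(\alpha_1,\rho)$; in particular the cone $\mathrm{\mathbf{x}}_k-\Cone(\alpha_1,\rho)$ is bilaterally symmetric across the wall $H_{\alpha_2,0}$. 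Its intersection with $F_{s_2}$ therefore describes exactly the portion of $\CC_{\mathrm{\mathbf{x}}_k}$ that crosses into $F_{s_2}$, bounded laterally by the wall and on the far side by the edge incident to $s_2s_1\mathrm{\mathbf{x}}_k$.

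The main obstacle is the boundary bookkeeping, exactly as in the proof of Lemma~\ref{lem: gral local bruhat order} for dominant $z$: any set-theoretic discrepancy between $\CC_{\mathrm{\mathbf{x}}_k}\cap F_w$ and the cone description reduces to a right triangle minus its hypotenuse contained in the closure of a single alcove, and hence contributes no element of $\cen(W)$. The hypothesis $k\geq 4$ is precisely what ensures that $\CC_{\mathrm{\mathbf{x}}_k}$ is large enough that $w\mathrm{\mathbf{x}}_k\in F_w$ for $w\in\{\id,s_2s_1,s_1s_2s_1\}$ and that the cone sectors meet each of the six zones non-degenerately; for smaller $k$ the quadrilateral can shrink to a sliver and the anchor vertices fail to enter the corresponding zones.
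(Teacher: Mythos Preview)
Your reduction by the $s_1$-symmetry is the main problem. The quadrilateral $\CC_{\mathrm{\mathbf{x}}_k}$ is indeed $s_1$-invariant, but the zones are not: one has only $F_{s_1w}\subset s_1F_w$, not equality (cf.\ the paper's Equation~\eqref{eq: Fs1 como imagen de Fid}, which gives $F_{s_1}=s_1\{v\in F_\id\mid(\alpha_1,v)\ge 1\}\subsetneq s_1F_\id$). So knowing $\CC_{\mathrm{\mathbf{x}}_k}\cap F_\id$ and applying $s_1$ yields $\CC_{\mathrm{\mathbf{x}}_k}\cap s_1F_\id$, not $\CC_{\mathrm{\mathbf{x}}_k}\cap F_{s_1}$; your ``three representatives'' reduction does not go through. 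The paper instead argues zone by zone: it first locates each vertex of the quadrilateral ($\mathrm{\mathbf{x}}_k\in F_\id$, $s_1\mathrm{\mathbf{x}}_k\in F_{s_1}$, $s_2s_1\mathrm{\mathbf{x}}_k\in F_{s_1s_2s_1}$, $s_1s_2s_1\mathrm{\mathbf{x}}_k\in F_{s_2s_1}$), records the edge directions $-\alpha_1,-\alpha_2,+\alpha_1,-\rho$ in cyclic order, and then for each $F_w$ reads off which edge(s) bound $\CC_{\mathrm{\mathbf{x}}_k}\cap F_w$.

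Two smaller slips also need fixing. First, the edge of $\CC_{\mathrm{\mathbf{x}}_k}$ incident to $\mathrm{\mathbf{x}}_k$ in direction $-\rho$ goes to $s_1s_2s_1\mathrm{\mathbf{x}}_k$, not to $s_2s_1\mathrm{\mathbf{x}}_k$ (the latter is the opposite vertex). Second, you have the location of the ``small triangle'' discrepancy reversed: the paper shows that for $w\in\{s_2,s_1s_2\}$ the set-level equality $\CC_{\mathrm{\mathbf{x}}_k}\cap F_w=(ws_2\mathrm{\mathbf{x}}_k-\Cone^{ws_2}(\alpha_1,\rho))\cap F_w$ holds \emph{exactly} in $E$ (no triangle correction), whereas it is for $w\in\{s_2s_1,s_1s_2s_1\}$ that one must pass to $\cen(W)$ to kill a triangular sliver. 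Your claim that the identity holds ``as subsets of $E$'' for $w\in\{\id,s_2s_1,s_1s_2s_1\}$ is therefore wrong for the last two.
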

\begin{proof}
    By Corollary~\ref{cor: characterization of the Bruhat order}, we have  $V(\CC_{\mathrm{\mathbf{x}}_k})=\{\mathrm{\mathbf{x}}_k,s_1\mathrm{\mathbf{x}}_k,s_2s_1\mathrm{\mathbf{x}}_k,s_1s_2s_1\mathrm{\mathbf{x}}_k\}$.
    Recall that $s_1=s_{\a_1,0}, s_2=s_{\a_2,0}$ and $s_1s_2s_1=s_{\rho,0}$.
    Note that
    \begin{equation}\label{eq: regiones de vert para x_k}
        s_1\mathrm{\mathbf{x}}_k\in F_{s_1},\quad s_2s_1\mathrm{\mathbf{x}}_k\in F_{s_1s_2s_1},\quad 
    s_1s_2s_1\mathrm{\mathbf{x}}_k\in F_{s_2s_1}.
    \end{equation}
    Moreover, we have the following      
    \begin{align}
        \begin{split}\label{eq: vert para x_k dem}  
            s_1(\mathrm{\mathbf{x}}_k)&=\mathrm{\mathbf{x}}_k-2t_1\a_1,\\
            s_2s_1(\mathrm{\mathbf{x}}_k)&=s_1(\mathrm{\mathbf{x}}_k)-2t_2\a_2,\\
            s_1s_2s_1(\mathrm{\mathbf{x}}_k)&=s_2s_1(\mathrm{\mathbf{x}}_k)+2t_3\a_1,\\
            s_1s_2s_1(\mathrm{\mathbf{x}}_k)&=s_{\rho,0}(\mathrm{\mathbf{x}}_k)=\mathrm{\mathbf{x}}_k-2t\rho.
        \end{split}
    \end{align}
     for some $t_1,t_2,t_3,t\in\mathbbm{R}_{\geq 0}.$ 
    
    We will first prove the equation in the second case, i.e., $w\in \{s_2,s_1s_2\}$.
    From Equations \eqref{eq: regiones de vert para x_k}, it follows that there is no vertex on \( F_w \).
    Also, we have that the zones adjacent to $ F_{s_2} $ are $F_\id $ and $F_{s_2s_1}.$
    By Equation \eqref{eq: regiones de vert para x_k}, the vertices of~$\CC_{\mathrm{\mathbf{x}}_k}$ in~$F_\id$ and $F_{s_2s_1}$ are $\mathrm{\mathbf{x}}_k$ and  $s_1s_2s_1 \mathrm{\mathbf{x}}_k$, respectively.
    By Equation \eqref{eq: vert para x_k dem}, we have that $s_1s_2s_1\mathrm{\mathbf{x}}_k= \mathrm{\mathbf{x}}_k-2t\rho$, so 
    \begin{equation*}
        \CC_{\mathrm{\mathbf{x}}_k}\cap F_{s_2}= \left(\mathrm{\mathbf{x}}_k-\Cone(\a_1,\rho)\right)\cap F_{s_2},
    \end{equation*}
    and after intersecting both sides with $\cen(W)$ this  concludes the proof of the lemma in the case $w=s_2.$
    
    The adjacent zones to $F_{s_1s_2}$ are $F_{s_1}$ and $F_{s_1s_2s_1}$.
    By Equation \eqref{eq: regiones de vert para x_k}, the vertices of~$\CC_{\mathrm{\mathbf{x}}_k}$ in~$F_{s_1}$ and $F_{s_2s_1s_2}$ are $s_1\mathrm{\mathbf{x}}_k$ and  $s_2s_1 \mathrm{\mathbf{x}}_k$, respectively.
    By Equation \eqref{eq: vert para x_k dem}, we have that $s_2s_1\mathrm{\mathbf{x}}_k= s_1\mathrm{\mathbf{x}}_k-2t_2\alpha_2$ and $s_1(\mathrm{\mathbf{x}}_k)=\mathrm{\mathbf{x}}_k-2\t_1\a_1$, so
    \begin{align}
         \CC_{\mathrm{\mathbf{x}}_k}\cap F_{s_1s_2}&= \left(s_1\mathrm{\mathbf{x}}_k-\Cone(-\a_1,\a_2)\right)\cap F_{s_1s_2}\\
         &= \left(s_1\mathrm{\mathbf{x}}_k-\Cone^{s_1}
         (\a_1,\rho)\right)\cap F_{s_1s_2},
    \end{align}
    and after intersecting both sides with $\cen(W)$ this  concludes the proof of the lemma in the case $w=s_1s_2.$
    
    The result for the remaining cases follows similarly, with the caveat that for $F_{s_2s_1}$ and $F_{s_1s_2s_1}$ the equation is not true before intersecting with \(\cen(W)\).
    As in several previous arguments, a small triangle---without intersection with $\cen(W)$---is the difference between the left-hand side and the right-hand side that after intersecting with $\cen(W)$ disappears.
    For brevity, we omit further details.
\end{proof}
We conclude this part with two lemmas that study the Bruhat order between elements in~$F_\id$ that differ by a multiple of a simple root.
\begin{lem}\label{lem: comparable elements with nonempty intersection}
    Let $z,z'\in F_\id$ such that $\cen(z')-\cen(z)\in \mathbbm{R}_{>0}\alpha_i$ for some $i\in\{1,2\}$ and $\overline{z A_+}\cap\overline{z'A_+}\neq \emptyset$.
    Then $z< z'$ and $\ell(z,z')=1$.
\end{lem}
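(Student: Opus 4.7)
My plan is to prove the two assertions in turn: first that \(z\le z'\) via a general criterion for the Bruhat order in the dominant chamber, and then that the length difference is exactly one via a short enumeration of the possible configurations. To see \(z\le z'\), I compute \((\varpi_j,\cen(z')-\cen(z))=t(\varpi_j,\alpha_i)=t\delta_{ij}\ge 0\) for \(j\in\{1,2\}\). Since both values are nonnegative, Corollary~\ref{cor: equivalence of being in the hexagon in the gray region} yields \(z\in\CC_{z'}\), and Corollary~\ref{cor: characterization of the Bruhat order} gives \(z\le z'\).

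Next, I use the hypothesis that \(\overline{zA_+}\cap\overline{z'A_+}\neq\emptyset\) to bound \(t\). An alcove is an equilateral triangle whose circumradius equals \(|\rho/3|=\sqrt{2}/3\). The triangle inequality applied to any common point of \(\overline{zA_+}\) and \(\overline{z'A_+}\) gives \(|\cen(z')-\cen(z)|\le 2\sqrt{2}/3\). Since \(|\cen(z')-\cen(z)|=t|\alpha_i|=t\sqrt{2}\), this yields \(t\le 2/3\).

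Because \(z,z'\in F_\id\cap W=D=\Theta\uplus\Theta^s\), the centers are known explicitly: \(\cen(\theta(m,n))=\tfrac{1}{3}\rho+m\varpi_1+n\varpi_2\) and \(\cen(\theta^s(m,n))=\tfrac{2}{3}\rho+m\varpi_1+n\varpi_2\). I match the \(\varpi_j\)-coefficients of the equation \(\cen(z')-\cen(z)=t\alpha_i=2t\varpi_i-t\varpi_{3-i}\). If \(z\) and \(z'\) belong to the same part of \(D\), the \(\varpi_{3-i}\)-coefficient equation forces \(t\in\mathbbm{Z}_{>0}\), hence \(t\ge 1>2/3\), a contradiction. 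Thus \(z\) and \(z'\) lie in different parts; the coefficient equations, combined with \(t\in(0,2/3]\) and the integrality of \(m',n'\ge 0\), single out, up to the swap \(\alpha_1\leftrightarrow\alpha_2\), exactly two configurations: (a) \(z=\theta^s(m,n)\) and \(z'=\theta(m+1,n)\) with \(t=1/3\); (b) \(z=\theta(m,n)\) and \(z'=\theta^s(m+1,n-1)\) with \(t=2/3\) (requiring \(n\ge 1\)).

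Using \(\ell(\theta(m,n))=2m+2n+3\) and \(\ell(\theta^s(m,n))=2m+2n+4\), a direct check gives \(\ell(z')=\ell(z)+1\) in both cases; together with \(z\le z'\) and \(z\ne z'\), this forces \(z\lessdot z'\), completing the proof. The main obstacle is keeping the case analysis honest: four a priori sub-cases arise (depending on whether each of \(z,z'\) is in \(\Theta\) or \(\Theta^s\)), and one must combine the distance bound \(t\le 2/3\) with the \(\varpi_{3-i}\)-coefficient matching modulo \(\tfrac{1}{3}\) to pinpoint exactly the two configurations above.
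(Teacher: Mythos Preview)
Your first step, showing $z\le z'$ via Corollary~\ref{cor: equivalence of being in the hexagon in the gray region}, is correct and matches the paper. The bound $t\le 2/3$ from the circumradius is also fine.

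The gap is in the sentence ``Because $z,z'\in F_\id\cap W=D=\Theta\uplus\Theta^s$.'' This equality is false: $F_\id$ contains wall elements of $X$, namely $\mathrm{\mathbf{x}}_k$ and $\sigma(\mathrm{\mathbf{x}}_k)$ for $k\ge 1$, in addition to the dominant elements of $D$. (This is visible, for instance, in the proof of Lemma~\ref{lem: local bruhat order}, which treats the case $z=\mathrm{\mathbf{x}}_n$ separately, and in Lemma~\ref{lem: gral local bruhat order for x}.) Your case analysis therefore omits configurations such as $z'=\mathrm{\mathbf{x}}_{2k}$ with $z=\theta(k-2,0)$, or $z'=\theta^s(m,0)$ with $z=\mathrm{\mathbf{x}}_{2m+3}$; these are exactly the boundary cases that the paper's proof handles explicitly. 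Your explicit center formulas $\cen(\theta(m,n))=\tfrac{1}{3}\rho+m\varpi_1+n\varpi_2$ and $\cen(\theta^s(m,n))=\tfrac{2}{3}\rho+m\varpi_1+n\varpi_2$ simply do not cover these elements.

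The fix is not difficult: one can compute $\cen(\mathrm{\mathbf{x}}_{2k})$ and $\cen(\mathrm{\mathbf{x}}_{2k+1})$ (they lie on the strip $\{-1\le(\alpha_2,v)<0\}$, cf.\ Remark~\ref{rem: facts of theta-partition}\ref{rem-item: difference of wall elements as combination of fundamental weights}) and extend your coefficient-matching to those cases, or one can follow the paper and split on the orientation of $z'$, using the shared-edge argument when $z'$ is down-oriented and a short enumeration of the up-oriented possibilities (including the wall elements) otherwise. As written, though, the argument is incomplete.
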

\begin{proof}
    By hypothesis we have that $z\in z'-\Cone(\alpha_1,\alpha_2)$.
    By Lemma~\ref{lem: local bruhat order}, $z\in \CC_{z'}$.
    By Corollary~\ref{cor: characterization of the Bruhat order}, this implies that $z\leq z'$.
    It remains to prove that $\ell(z,z')=1$.

    Let us suppose that $z'$ is down-oriented.
    In this case, we have that
    $\cen(z')-\cen(z)=\frac{1}{3}\alpha_i$ and $z,z'$ share an edge.
    It is a known fact from Coxeter complexes that if two elements $z,z'$ share a face, then there is $s\in S$ such that $zs=z'$.
    Since $z\leq z'$, we have that $\ell(z,z')=1$.
    
    Let us suppose that $z'$ is up-oriented.
    In this case, $\overline{z A_+}$ and $\overline{z'A_+}$ intersect in a vertex and
    \begin{equation}\label{eq: c(z)=c(z')-2/3}
        \cen(z)=\cen(z')-\frac{2}{3}\alpha_i.
    \end{equation} 
    Now we study all the possibilities for $z$ and $z'$.
    In each case, we verify that $\ell(z,z')=1$ by using the formulas for the length of the elements of~$W$ provided in Section~\ref{subsec: xtheta partition}.
    \begin{itemize}
        \item $z'= \theta^s(m,n)$, for  integers $m,n>0$.
        In this case, we have that $\ell(z')=2m+2n+4$.
        Then Equation \eqref{eq: c(z)=c(z')-2/3} implies that either $z=\theta(m-1,n+1)$ or $z=\theta(m+1,n-1)$.
        In either case we have that $\ell(z)=2m+2n+3$, so $\ell(z,z')=1$.
        \item $z'= \mathrm{\mathbf{x}}_{2k}$ or $\sigma(\mathrm{\mathbf{x}}_{2k})$ for $k\geq 2$.
        By symmetry, we can assume $z'=\mathrm{\mathbf{x}}_{2k}$.
        In this cas,e we have $\ell(z')=2k$.
        Since $\cen(z')-\frac{2}{3}\a_2\not\in F_\id$, we have that $i=1$, so
        $z=\theta(k-2,0)$.
        Thus $\ell(z)=2(k-2)+3=2k-1$ and $\ell(z,z')=1$.
        \item $z'=\theta^s(m,0)$ or $z'=\theta^s(0,n)$.
        By symmetry, we consider only the first case.
        In this case, we have $\ell(z')=2m+4$.
        Equation \eqref{eq: c(z)=c(z')-2/3} implies that either $z=\theta(m-1,1)$ or $z=\mathrm{\mathbf{x}}_{2m+3}$.
        In both cases we have $\ell(z)=2m+3$, so $\ell(z,z')=1$.\qedhere
    \end{itemize}
\end{proof}
\begin{lem}\label{lem: if distance is 4 then difference is more than 2 times the simple root}
    Let $x,y\in F_\id$ and $t\in\mathbbm{R}_{\geq 0}$ such that $z=y-t\alpha_i\in F_\id$.
    Then $\ell(z,y)\geq 4$ if and only if $t\geq 2$.
    Similarly, let $z=x+t\alpha_i\in F_\id$.
    Then $\ell(x, z)\geq 4$ if and only if $t\geq 2$.
\end{lem}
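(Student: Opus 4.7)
The plan is to construct an explicit chain of covering relations from $z$ to $y$ along the straight segment $\Sgm(z,y)$, and then read off $\ell(z,y)$ from its length. Since $F_\id$ is convex and contains both $z$ and $y$, the entire segment $\Sgm(z,y)$ lies in $F_\id$; moreover it points in direction $\alpha_i$. Let $z = z_0, z_1, \ldots, z_k = y$ be the sequence of alcoves whose centers lie on this segment, ordered by increasing displacement. By construction $\cen(z_{j+1}) - \cen(z_j) \in \mathbbm{R}_{>0}\alpha_i$ and $\overline{z_jA_+}\cap \overline{z_{j+1}A_+}\neq \emptyset$ (consecutive alcoves meet at least at the point of the segment on their common boundary). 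Hence Lemma~\ref{lem: comparable elements with nonempty intersection} gives $\ell(z_j,z_{j+1})=1$ for every $j$, so $\ell(z,y)=k$.

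Next I would extract the allowed values of $t$ from a step-size analysis. From the proof of Lemma~\ref{lem: comparable elements with nonempty intersection}, a step whose second endpoint is down-oriented has size $\tfrac{1}{3}\alpha_i$ (adjacent across an edge), while a step whose second endpoint is up-oriented has size $\tfrac{2}{3}\alpha_i$ (touching only at a vertex). By Lemma~\ref{lem: length determines orientation}, consecutive alcoves in the chain alternate in orientation, so the step pattern is fully determined by the orientation of $z_0 = z$. If $\ell(z)$ is even (up-oriented $z$) the cumulative displacements for $k=1,2,3,4,5,6,\ldots$ are $\tfrac{1}{3},1,\tfrac{4}{3},2,\tfrac{7}{3},3,\ldots$, while if $\ell(z)$ is odd (down-oriented $z$) they are $\tfrac{2}{3},1,\tfrac{5}{3},2,\tfrac{8}{3},3,\ldots$. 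In both lists $t=2$ occurs precisely at $k=4$, any $t>2$ forces $k\geq 5$, and any $t<2$ forces $k\leq 3$. Since $\ell(z,y)=k$, this yields $\ell(z,y)\geq 4$ iff $t\geq 2$. The parallel statement for $z = x+t\alpha_i$ follows by the same argument applied to the chain running from $x$ to $z$.

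The main (minor) obstacle is justifying the step-size pattern rigorously. Concretely, one must show that any line in direction $\alpha_i$ inside $F_\id$ meets alcove centers only at the positions listed above, with the correct alternating pattern. This is a direct but slightly tedious verification in the Coxeter complex of $\widetilde{A}_2$: it reduces to the explicit computations already present in the proof of Lemma~\ref{lem: comparable elements with nonempty intersection}, and needs no new idea beyond keeping track of orientations and of the two possible distances between centers of adjacent alcoves.
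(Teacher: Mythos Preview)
Your proposal is correct and follows essentially the same approach as the paper: both proofs build the saturated chain of alcoves whose centers lie on $\Sgm(z,y)$, use Lemma~\ref{lem: comparable elements with nonempty intersection} to see that each step is a covering relation, observe that the step sizes alternate between $\tfrac{1}{3}\alpha_i$ and $\tfrac{2}{3}\alpha_i$ according to orientation, and conclude that the cumulative displacement after four steps is exactly $2\alpha_i$. The paper packages the final step slightly differently (noting $u_4=z+2\alpha_i$ and that $u_k-u_l\in\mathbbm{R}_{\geq 0}\alpha_i$ iff $l\leq k$), but the argument is the same.
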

\begin{proof}
    We will prove only the first statement; the second is analogous.
    Let us define $C \coloneqq\{u\in W \mid u\in \Sgm(y,z)\}$.
    Lemma~\ref{lem: local bruhat order} implies that $\Sgm(y,z) \subset [z,y]$ and any pair of elements of~$W$ in~$\Sgm(y,z)$ is comparable.
    This implies that $C \subset [z,y]$ and any pair of elements in~$C$ is comparable.

    We will enumerate the elements of~$C$ as follows:
    we define $u_0\coloneqq z$.
    Since $z$ is the end-point of~$\Sgm(y,z)$, we define $u_1$ as the unique element in~$C$ such that $\overline{u_1A_+}\cap \overline{u_0A_+}\neq \emptyset$.
    For $k\geq 1$, if $u_k$ is defined and $u_k\neq y$, then we define $u_{k+1}$ as the unique element in~$C\setminus \{u_{k-1}\}$ such that $\overline{u_{k+1}A_+}\cap \overline{u_kA_+}\neq \emptyset$.
    We define $n\in\mathbbm{N}$ such that $u_n=y$.
    
    By Lemma~\ref{lem: comparable elements with nonempty intersection}, we have that $C$ is a maximal chain in~$[z,y]$.
    In other words, we have $n=\ell(z,y)$ and
    \begin{equation*}
        z=u_0\lessdot u_1 \lessdot \hdots \lessdot u_n=y.
    \end{equation*}
    For $1\leq k\leq n$, if $u_{k-1}$ is up-oriented then $u_k$ is down-oriented and $u_{k-1}=u_k-\frac{1}{3}\alpha_i$.
    If $u_{k-1}$ is down-oriented then $u_{k}$ is up-oriented and $u_{k-1}=u_k-\frac{2}{3}\alpha_i$.
    This implies the following facts.
    \begin{itemize}
        \item If $u_{k-1}=u_{k}-\frac{1}{3}\alpha_i$ then $u_k=u_{k+1}-\frac{2}{3}\alpha_i$.
        \item If $u_{k-1}=u_{k}-\frac{2}{3}\alpha_i$ then $u_k=u_{k+1}-\frac{1}{3}\alpha_i$.
    \end{itemize}
    From the facts above we have that $u_k-u_l\in \mathbbm{R}_{\geq 0}\alpha_i$ if and only if $l\leq k$.

    Note that $t\geq 2$ if and only if $u_n-u_4 \in \mathbbm{R}_{\geq 0}\alpha_i$ (note that $u_4=z+2\alpha_i$) if and only if $n=\ell(z,y)\geq 4$.
\end{proof}
\begin{rem}
    We note the following simple generalization, which we do not prove here as it is not required for this paper.
    If $z,z'\in F_\id$ are such that $\cen(z')-\cen(z)=r\alpha_i$ for some $r>0$ and $i\in\{1,2\}$,
    then $\ell(z,z')=\lfloor r \rfloor+\lceil r\rceil$.
\end{rem}
\subsection{The geometry of dihedral subintervals}\label{section: maximal dihedral}
Let $x,y\in D$ be dominant elements.
Consider the sets $\mathcal{D}^u(y)\coloneqq\mathcal{D}(y)=\{w\in W\mid [w,y]\text{ is dihedral}\}$ and $\mathcal{D}^l(x)\coloneqq\{w\in W\mid [x,w]\text{ is dihedral}\}$\footnote{Here the superscript $u$ and $l$ stands for ``upper'' and ``lower'', respectively.}.
We define $\DC^u_{\mathrm{sgm}}(y):=\{z\in W\mid z\in \operatorname{Sgm}(y,s_1y)\cup \operatorname{Sgm}(y,s_2y)\}.$ 
\begin{defn} \label{def: Dsgmx Drestx}
    Let $x\in D$ and $\{u_i\}_{1\leq i\leq 6}= \mathrm{EVS}_x$ be the exterior vertices of~$\mathcal{St}(x)$.
    We define 
    \begin{align*}
        \DD_{\mathrm{sgm}}^l(x)&\coloneqq\{z \geq x\mid z\in\Sgm(u_1,u_3)\cup \Sgm(u_2,u_6)\}.
    \end{align*}
\end{defn}
The main goal of the section is to prove the following proposition:
\begin{prop}\label{prop: Dih-resumen}
    Let $x,y\in D$ and $z\in W$.
    Then 
    \begin{enumerate}
        \item\label{item-prop: Dih-rsm for y} If $z\in \DC^u(y)$ and $\ell(z,y)\geq 4$, then $z\in \DC^{u}_{\mathrm{sgm}}(y)$.
        Furthermore, if $z\in \DC^{u}_{\mathrm{sgm}}(y)$ then $z\in \DC^u(y)$.
        \item\label{item-prop: Dih-rsm for x} If $z\in\DC^l(x)$ and $\ell(x,z)\geq 4$, then $z\in \DC^{l}_{\mathrm{sgm}}(x)$.
        Furthermore, if  $z\in \DC^{l}_{\mathrm{sgm}}(x)$ then $z\in \DC^l(x)$.
    \end{enumerate}
\end{prop}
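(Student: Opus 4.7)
The plan is to prove parts~(1) and~(2) by parallel arguments, combining Dyer's dihedral characterization (Proposition~\ref{prop: Dyer dihedral}) with the Euclidean descriptions of lower intervals (Corollary~\ref{cor: characterization of the Bruhat order}) and of upper sets (Proposition~\ref{prop: no mayores geometrico}), together with the enumeration of lower covers (Lemma~\ref{lem: set of lower covers}) and of upper covers (Lemma~\ref{lem: cardinality of upper covers}). The $\sigma$-symmetry interchanging $s_1\leftrightarrow s_2$ reduces each part to analyzing a single segment.

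For the ``segment implies dihedral'' direction of part~(1), fix $z\in\Sgm(y,s_1y)$. The segment is perpendicular to $H_{\alpha_1,0}$, and hence parallel to $\alpha_1$, so every affine line of $\HC$ meeting its interior is a parallel translate $H_{\alpha_1,k}$; the corresponding reflections generate an infinite dihedral reflection subgroup of $W$. Using Corollary~\ref{cor: characterization of the Bruhat order}, one identifies the two coatoms of $[z,y]$ explicitly, with a short case analysis according to whether the endpoints $y, s_1 y$ lie in $F_\id$ or $F_{s_1}$; Proposition~\ref{prop: Dyer dihedral} then gives dihedrality (the cases $\ell(z,y)\le 1$ are automatic). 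The analogous argument for part~(2) rests on the fact that $\Sgm(u_1,u_3)$ and $\Sgm(u_2,u_6)$ are sides of the two equilateral triangles whose union is $\mathcal{St}(x)$, each parallel to a simple-root direction; Proposition~\ref{prop: no mayores geometrico} is then used to identify the two atoms.

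For the converse direction of part~(1), suppose $[z,y]$ is dihedral with $\ell(z,y)\ge 4$. Proposition~\ref{prop: Dyer dihedral} provides exactly two coatoms $\{a,b\}\subset L(y)$. Enumerate $L(y)$ via Lemma~\ref{lem: set of lower covers} according to the $X\Theta$-component of $y$; this yields $|L(y)|\le 5$ and correspondingly few pairs. The pair $\{s_1y,s_2y\}$ is excluded by hypothesis, because the finite dihedral subgroup $W_f=\langle s_1,s_2\rangle$ has longest element of length~$3$, so any dihedral interval whose two coatoms are $s_1y,s_2y$ has length at most~$3$; this is precisely the role of the condition $\ell(z,y)\ge 4$. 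For each remaining pair $(a,b)$, Corollary~\ref{cor: characterization of the Bruhat order} applied to $\CC_a$, $\CC_b$, and $\CC_c$ for $c\in L(y)\setminus\{a,b\}$ describes all $z$ having $\{a,b\}$ as the coatoms of $[z,y]$, and a direct check places this locus inside $\Sgm(y,s_1y)\cup\Sgm(y,s_2y)$. Part~(2) is symmetric, using Proposition~\ref{prop: no mayores geometrico} and Lemma~\ref{lem: cardinality of upper covers} to analyze pairs of atoms, and verifying that the ``long'' dihedral intervals correspond to pairs whose associated $z$ lies on one of $\Sgm(u_1,u_3)$ or $\Sgm(u_2,u_6)$.

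The main obstacle is the case analysis in the converse direction. With up to $\binom{6}{2}=15$ atom pairs when $x\in\Theta$ (part~(2)) and up to $\binom{5}{2}=10$ coatom pairs when $y\in\Theta^s$ (part~(1)), plus additional subcases when $y$ or $x$ sits on a boundary strip where the (co)atom set is smaller, the bookkeeping is intricate. A recurring subtlety is matching each (co)atom pair to its corresponding edge of $\CC_y$ or side of $\mathcal{St}(x)$, and confirming that the ``long'' dihedral intervals line up exactly with the two specified segments rather than with parallel edges of the hexagon or other sides of the star.
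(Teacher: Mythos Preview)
Your overall strategy is the same as the paper's: use Dyer's characterization together with the geometric description of lower intervals to compute, for each pair of (co)atoms, the locus of $z$ making that pair the (co)atom set of $[z,y]$ (resp.\ $[x,z]$). However, your converse argument for part~(1) contains a genuine error.

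You claim that after excluding the pair $\{s_1y,s_2y\}$, for \emph{every} remaining coatom pair $(a,b)$ the locus $A_{a,b}=(\CC_a\cap\CC_b)\setminus\bigcup_{c\ne a,b}\CC_c$ lies inside $\Sgm(y,s_1y)\cup\Sgm(y,s_2y)$. This is false. Take $y=\theta(m,n)$ with $m,n\ge 1$, and write $L(y)=\{c_1,c_2,c_3,c_4\}$ with $c_1=\theta^s(m-1,n)$, $c_2=s_2y$, $c_3=s_1y$, $c_4=\theta^s(m,n-1)$. Only the two ``mixed'' pairs $\{c_1,c_2\}$ and $\{c_3,c_4\}$ yield the segments; the pairs $\{c_1,c_4\}$, $\{c_1,c_3\}$, $\{c_2,c_4\}$ (and $\{c_2,c_3\}=\{s_1y,s_2y\}$) each produce a finite set of off-segment elements such as $\theta^s(m-1,n-1)$, $\theta(m-1,n)$, $\theta(m,n-1)$, $s_1\theta(m,n-1)$, $s_2\theta(m-1,n)$, etc. These are precisely the elements the paper collects into the set $\DC^u_{\mathrm{rest}}(y)$, and the key point (Lemma~\ref{lem: dih menores que 3}) is that they all satisfy $\ell(z,y)\le 3$. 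So the correct conclusion of the case analysis is not ``each remaining locus is on the segments'' but rather ``each remaining locus is either on the segments or consists entirely of elements at length-distance at most~$3$''; this is what makes the hypothesis $\ell(z,y)\ge 4$ do the work. The same issue recurs in part~(2): for $x=\theta^s(m,n)$, the element $\theta(m+1,n+1)$ gives a dihedral $[x,z]$ of length~$3$ lying off both segments $\Sgm(u_1,u_3)$, $\Sgm(u_2,u_6)$.

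Your shortcut for the pair $\{s_1y,s_2y\}$ via the $W_f$-coset is a nice observation and does give the right bound $\ell(z,y)\le 3$ for that pair, but it does not extend to the other off-segment pairs, which involve reflections not lying in a common finite dihedral subgroup. The paper handles all pairs uniformly by the direct computation of $A_{i,j}$, then packages the off-segment elements into $\DC^u_{\mathrm{rest}}(y)$ and $\DC^l_{\mathrm{rest}}(x)$ and checks their length bound separately.
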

Before the proof, we need some definitions and a better description of the sets $\DC^u(y)$ and $\DC^l(x)$.
\begin{nota}
    We call an element $y\in W$ \emph{left-dominant} if $y\in D$ and $y\in \mathbbm{R}\rho$ or $y$ lives in the left half-space determined by the line $\mathbbm{R}\rho$.
    We define similarly \emph{right-dominant.}
\end{nota}
For the next definition, we suppose that $y$ is left-dominant.
In other words, $y=\theta(m,n)$ or $y=\theta^s(m,n)$ with $m\geq n\geq 0$.
Let $W_{\geq 2,y}\coloneqq\{s_1s_2y,s_2s_1y,s_1s_2s_1y\}$, we have
\begingroup
\allowdisplaybreaks
\begin{align*}
    \DC^u_{\mathrm{rest}}(\theta(m,0)) &= 
        W_{\geq 2,y} \uplus \{\mathbf{x}_{2m}, \mathbf{x}_{2m+1}, \theta(m{-}1,0)\}, \\[0.5ex]
    \DC^u_{\mathrm{rest}}(\theta(m,n)) &= 
        W_{\geq 2,y} \uplus \{ \theta^s(m{-}1,n{-}1), \theta(m,n{-}1), \theta(m{-}1,n), \\
        &\quad s_1\theta(m,n{-}1), s_2\theta(m{-}1,n) \}, \\[0.5ex]
    \DC^u_{\mathrm{rest}}(\theta^s(m,0)) &= 
        W_{\geq 2,y} \uplus \{ \mathbf{x}_{2m+2}, s_1\mathbf{x}_{2m+3}, \theta(m,0), \\
        &\quad \theta^s(m{-}1,n), s_2\theta(m{-}1,n) \}, \\[0.5ex]
    \DC^u_{\mathrm{rest}}(\theta^s(m,n)) &= 
        W_{\geq 2,y} \uplus \{ \theta(m,n), \theta^s(m,n{-}1), \theta^s(m{-}1,n), \\
        &\quad s_1\theta(m,n), s_2\theta(m,n), s_2\theta(m{-}1,n{+}1), s_1\theta(m{+}1,n{-}1) \}.
\end{align*}
\endgroup
The following lemma holds by simple case-by-case verification.
\begin{lem}\label{lem: dih menores que 3}
    We have $\ell(z,y)\leq 3$, for all  $z\in \DC^u_{\mathrm{rest}}(y)$.
\end{lem}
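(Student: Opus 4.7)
The plan is to verify the bound $\ell(z,y)\le 3$ directly, element by element, using the length formulas from Section~\ref{subsec: xtheta partition}. Recall that $\ell(\theta(m,n))=2m+2n+3$, $\ell(\theta^s(m,n))=2m+2n+4$, and $\ell(\mathrm{\mathbf{x}}_k)=k$, and that by construction of the reduced expression~\eqref{theta} every element of $\Theta$ has both $s_1$ and $s_2$ in its left descent set (so that left multiplication by $s_1$ or $s_2$ on an element of $\Theta$ decreases the length by exactly one). Analogously, $s_1$ belongs to the left descent set of $\mathrm{\mathbf{x}}_k$ for $k\ge 2$. These facts let us read off $\ell(z)$ for each $z$ in the enumeration of $\DC^u_{\mathrm{rest}}(y)$, hence compute $\ell(z,y)=\ell(y)-\ell(z)$.

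First I would dispatch the three elements of $W_{\ge 2,y}$ common to all four cases. Left-dominance of $y$ yields that both $s_1$ and $s_2$ lie in the left descent set of $y$, and a direct inspection of the reduced expression of $y$ further shows that $s_1$ lies in the left descent set of $s_2y$. Therefore
\begin{equation*}
    \ell(s_1s_2y)=\ell(s_2s_1y)=\ell(y)-2,\qquad \ell(s_1s_2s_1y)=\ell(y)-3,
\end{equation*}
which gives $\ell(z,y)\in\{2,3\}$ for every $z\in W_{\ge 2,y}$.

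Next, I would treat the four sub-cases for the remaining elements of $\DC^u_{\mathrm{rest}}(y)$. For $y=\theta(m,0)$, the elements $\mathrm{\mathbf{x}}_{2m}$, $\mathrm{\mathbf{x}}_{2m+1}$, $\theta(m-1,0)$ have lengths $2m,2m+1,2m+1$, each within $3$ of $\ell(y)=2m+3$. For $y=\theta(m,n)$ with $n\ge 1$, the element $\theta^s(m-1,n-1)$ drops by $1$; the two shifted $\theta$'s drop by $2$; and $s_1\theta(m,n-1)$, $s_2\theta(m-1,n)$ drop by $3$ because $s_1,s_2$ belong to the left descent set of every element of $\Theta$. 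The cases $y=\theta^s(m,0)$ and $y=\theta^s(m,n)$ with $n\ge 1$ are handled in exactly the same spirit: each listed element is either a $\theta$, a $\theta^s$, an $\mathrm{\mathbf{x}}_k$, or one of these left-multiplied by a single simple reflection whose effect on the length is determined by the known descent set.

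The main (minor) obstacle is the bookkeeping for elements of the form $s_i\theta(\cdot,\cdot)$ and $s_1\mathrm{\mathbf{x}}_{2m+3}$, where one must confirm that left multiplication decreases rather than increases the length. This is uniformly resolved by the descent-set observations above, or alternatively by invoking Lemma~\ref{lem: reduced expresion} to certify that the resulting concatenated expression is reduced. With these facts in hand, the inequality $\ell(z,y)\le 3$ is verified in every case, completing the proof.
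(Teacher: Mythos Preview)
Your proposal is correct and follows exactly the paper's own approach, which is literally ``simple case-by-case verification'' of the length formulas. One small arithmetic slip: for $y=\theta(m,n)$ the element $\theta^s(m-1,n-1)$ has length $2(m-1)+2(n-1)+4=2m+2n$, so it drops by $3$, not $1$---but of course this still satisfies $\ell(z,y)\le 3$, so the conclusion is unaffected.
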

\begin{lem}\label{lem: descripcion dih y} 
    If $y$ is left-dominant then $\mathcal{D}(y)=\DC^u_{\mathrm{sgm}}(y)\uplus \DC^u_{\mathrm{rest}}(y)$.
    If $y$ is right-dominant then $\mathcal{D}(y)=\DC^u_{\mathrm{sgm}}(y)\uplus \sigma(\DC^u_{\mathrm{rest}}(y))$.
\end{lem}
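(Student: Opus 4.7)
The plan is to apply Dyer's criterion (Proposition~\ref{prop: Dyer dihedral}): for $\ell(z,y) \geq 2$, the interval $[z,y]$ is dihedral if and only if its coatom set $\operatorname{L}(y) \cap [z,y]$ has cardinality exactly two, while for $\ell(z,y) \leq 1$ the interval is trivially dihedral. By the symmetry $\sigma$---which exchanges $s_1$ with $s_2$ and hence sends left-dominant elements to right-dominant ones---it suffices to establish the left-dominant statement, and the right-dominant version follows by applying $\sigma$ to both sides.

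For the inclusion $\DC^u_{\mathrm{sgm}}(y) \subseteq \mathcal{D}(y)$, I would observe that any $z \in \operatorname{Sgm}(y, s_i y)$ satisfies $z \leq y$ because $s_i y \in \mathrm{V}(\mathcal{C}_y)$ and $\mathcal{C}_y$ is convex, so Corollary~\ref{cor: characterization of the Bruhat order} applies. The alcoves traversed by this segment are confined between two affine reflection hyperplanes perpendicular to $\alpha_i$ (the closest such hyperplanes to $\operatorname{cen}(y)$ and $\operatorname{cen}(s_i y)$), hence $[z,y]$ lives in a dihedral reflection subgroup generated by $s_i$ and one further affine reflection, and is dihedral. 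For $\DC^u_{\mathrm{rest}}(y) \subseteq \mathcal{D}(y)$, Lemma~\ref{lem: dih menores que 3} reduces the question to a finite check: for each of the four left-dominant cases $y \in \{\theta(m,0), \theta(m,n), \theta^s(m,0), \theta^s(m,n)\}$ with $m \geq n > 0$, and for each element in the explicit list, I would verify using Lemma~\ref{lem: set of lower covers} and Corollary~\ref{cor: characterization of the Bruhat order} that at most two coatoms of $y$ lie above~$z$.

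For the reverse inclusion $\mathcal{D}(y) \subseteq \DC^u_{\mathrm{sgm}}(y) \cup \DC^u_{\mathrm{rest}}(y)$, I would split on length. The case $\ell(z,y) \leq 3$ is a finite enumeration using the explicit description of the coatoms of $y$: one checks by hand that every $z$ with exactly two coatoms of $y$ above it either lies on one of the two segments or belongs to the explicit list defining $\DC^u_{\mathrm{rest}}(y)$. For $\ell(z,y) \geq 4$ the crucial claim is that if $z \notin \operatorname{Sgm}(y, s_1 y) \cup \operatorname{Sgm}(y, s_2 y)$, then $z$ lies below at least three distinct coatoms of $y$, contradicting Dyer's criterion. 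Geometrically, the polygons $\mathcal{C}_w$ for $w \in \operatorname{L}(y)$ cover the portion of $\mathcal{C}_y$ far from $y$, and any alcove center inside $\mathcal{C}_y$ that is off both segments and at sufficient Euclidean distance from $\operatorname{cen}(y)$ lies in at least three of the $\mathcal{C}_w$.

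The main obstacle will be the final geometric step, especially the boundary cases $y = \theta(m,0)$ and $y = \theta^s(m,0)$. There, $\operatorname{L}_D(y)$ has only one element, so the third coatom required to produce the contradiction must come from the reflections $s_i y$ themselves or from the asymmetric wall-element neighbors of $y$, and the relevant hexagons $\mathcal{C}_w$ must be inspected individually via Corollary~\ref{cor: characterization of the Bruhat order} to confirm they jointly cover all alcoves off the two segments at length $\geq 4$ from $y$. Once these subcases are handled, the left-dominant lemma follows, and the right-dominant statement is obtained by applying $\sigma$.
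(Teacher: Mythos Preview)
Your overall architecture matches the paper's: both proofs rest on Dyer's criterion (Proposition~\ref{prop: Dyer dihedral}) and on Corollary~\ref{cor: characterization of the Bruhat order} to reduce the question to counting which coatoms $c\in\operatorname{L}(y)$ satisfy $\cen(z)\in\CC_c$. The paper, however, does this in one stroke: it defines
\[
A_{i,j}=(\CC_{c_i}\cap\CC_{c_j})\setminus\bigcup_{t\notin\{i,j\}}\CC_{c_t}
\]
and computes each $A_{i,j}$ explicitly from the vertex descriptions of the hexagons $\CC_{c}$ (given in the internal Claim). This yields both inclusions simultaneously, and shows directly that the two ``long'' $A_{i,j}$'s are precisely the segments $\Sgm(y,s_1y)$ and $\Sgm(y,s_2y)$, while the remaining $A_{i,j}$'s account for $\DC^u_{\mathrm{rest}}(y)$.

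There is a genuine gap in your argument for $\DC^u_{\mathrm{sgm}}(y)\subseteq\mathcal{D}(y)$. You claim that the alcoves on $\Sgm(y,s_iy)$ are ``confined between two affine reflection hyperplanes perpendicular to $\alpha_i$'', and hence that $[z,y]$ lies in a dihedral reflection subgroup. But the segment $\Sgm(y,s_iy)$ is \emph{parallel} to $\alpha_i$ (since $s_i=s_{\alpha_i,0}$), so it crosses many hyperplanes $H_{\alpha_i,k}$ rather than being bounded by two of them; and in $\widetilde{A}_2$ no root hyperplane is orthogonal to~$\alpha_i$. More seriously, even if the centres on the segment were the orbit of $y$ under some dihedral subgroup, this would not by itself force the \emph{Bruhat interval} $[z,y]$ to be contained in that subgroup---one would still need to rule out other elements $w$ with $z\le w\le y$ whose centres lie off the segment. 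The paper avoids this issue entirely by instead showing that each such $z$ lies in exactly two of the polygons $\CC_c$, so that $[z,y]$ has exactly two coatoms; you should replace the reflection-subgroup step by this coatom count. Once you do, your case split on $\ell(z,y)$ becomes essentially a reorganisation of the paper's $A_{i,j}$ computation.
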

\begin{proof}
    We prove the lemma for $y$ left-dominant; the argument for $y$ right-dominant is symmetric.
    Let $z \in \mathcal{D}(y)$ be such that $\ell(z,y)>1$.
    By Proposition~\ref{prop: Dyer dihedral}, the interval $[z, y]$  contains exactly two coatoms.
    If $c$ is a coatom of~$[z, y]$, then $c\in \operatorname{L}(y)$.
    Let us label the elements of~$\operatorname{L}(y)$ by $c_1, c_2, \ldots, c_k$.
    By Proposition~\ref{prop: Dyer dihedral}, and Corollary~\ref{cor: characterization of the Bruhat order} 
    there are two different indexes $i,j$ in~$\{1,\ldots,k\}$, such that $\cen(z) \in \CC_{c_i} \cap \CC_{c_j}$ and $\cen(z) \notin \CC_{c_t}$, for any $t\notin \{i,j\}$.
    So, if we define 
    \begin{equation*}
        A_{i, j}\coloneqq (\CC_{c_i}\cap \CC_{c_j})\setminus\left( \bigcup_{y\notin \{i,j\}} \CC_{c_t}\right),
    \end{equation*}
    we have that 
    \begin{equation}\label{eq: dihedral}
        \mathcal{D}(y)=\operatorname{L}(y)\cup \left(\bigcup_{i,j}A_{i,j}\right).
    \end{equation}
    Before proceeding with the proof, we establish the following claim.
    \begin{claim}
        Let $y$ be left-dominant.
        If $c\in \operatorname{L}(y)$, then we have
        \begin{equation*}
            \CC_{c}=
            \begin{cases}
                \operatorname{Conv}(W_f\cdot c)&\mbox{if $c\in D$,}\\
                \operatorname{Conv}(\{c,y-\alpha_1,s_2(y-\alpha_1),s_1s_2y,s_2s_1y,s_1s_2s_1y\})&\mbox{if $c=s_1y$,}\\
                \operatorname{Conv}(\{c,y-\alpha_2,s_1(y-\alpha_2),s_1s_2y,s_2s_1y,s_1s_2s_1y\})&\mbox{if $c=s_2y$ and $c\not\in X$,}\\
                \operatorname{Conv}(\{c,s_1s_2y,s_1s_2s_1y,s_2s_1y\})&\mbox{if $c=s_2y$ and $c\in X$.}
            \end{cases}
        \end{equation*} 
    \end{claim}
    \begin{proof}
        By Lemma~\ref{lem: set of lower covers}, we have either $c\in D$ or $c\in  \{s_1y,s_2y\}$.
        If $c\in D$, the result follows directly from Corollary~\ref{cor: characterization of the Bruhat order}.
        It remains the case when $c\in \{s_1y,s_2y\}.$ 
        
        First, assume that $c=s_1y$.
        By Corollary~\ref{cor: characterization of the Bruhat order}, we have that 
        \begin{align} 
            \CC_c&=\operatorname{Conv}(\{c,s_2c,s_0s_2c,s_1s_2c,s_2s_0s_2c,s_2s_1s_2c\}).
        \end{align}
        The result follows from the identities $s_0=s_{\rho,-1}=s_1s_2s_1-\rho$ and $s_2\alpha_1=\rho$ (for example, we observe that 
        $s_0s_2c=s_1s_2s_1s_2c-\rho=s_2s_1c-\rho=s_2y-\rho=s_3(y-\alpha_1)$).
    
        Now, assume that  $c=s_2y$.
        Since $y$ is left dominant, we have either $c\in \delta s_0D$ or $c\in\{\mathrm{\mathbf{x}}_{2m+2},\mathrm{\mathbf{x}}_{2m+3}\}$.
        The first case is analogous to the case $c=s_1y$.
        Therefore, it remains to consider that case $c\in\{\mathrm{\mathbf{x}}_{2m+2},\mathrm{\mathbf{x}}_{2m+3}\}$, which follows directly from Corollary~\ref{cor: characterization of the Bruhat order}.
    \end{proof}  
    Returning to the proof of the lemma, we first consider the case $y=\theta(5,2)$---we will later show that this case is generic for $\theta(m,n)$.
    In this case $\vert \operatorname{L}(y)\vert =k=4$.
    \begin{figure}[ht!]
        \centering
        \begin{subfigure}[b]{0.45\textwidth}
            \centering
            \includegraphics[width=0.9\textwidth]{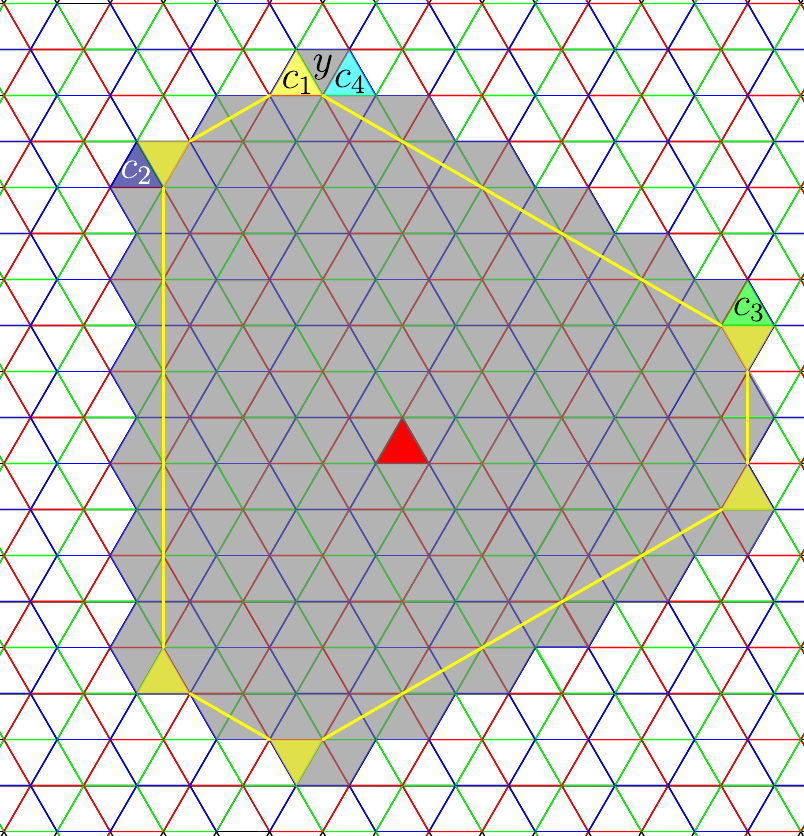}
            \caption{The hexagon $\CC_{c_1}$.\newline} 
            \label{fig: proofdiha}
        \end{subfigure}
        \hfill
        \begin{subfigure}[b]{0.45\textwidth}
            \centering
            \includegraphics[width=0.9\textwidth]{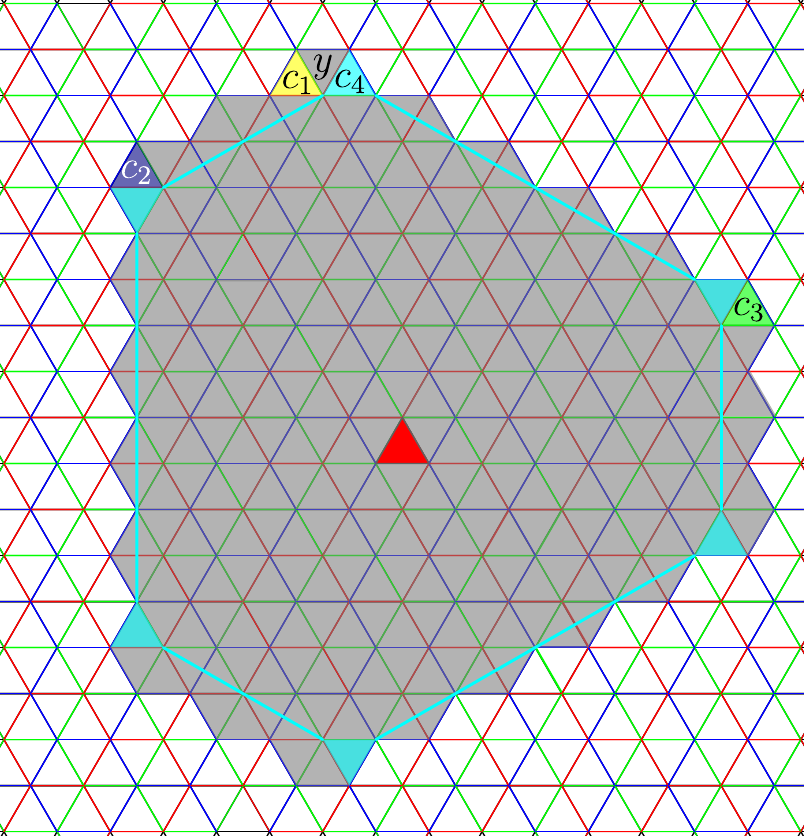}
            \caption{The hexagon $\CC_{c_4}$.\newline} 
            \label{fig: proofdihb}
        \end{subfigure}
        \vfill
        \begin{subfigure}[b]{0.45\textwidth}
            \centering
            \includegraphics[width=0.9\textwidth]{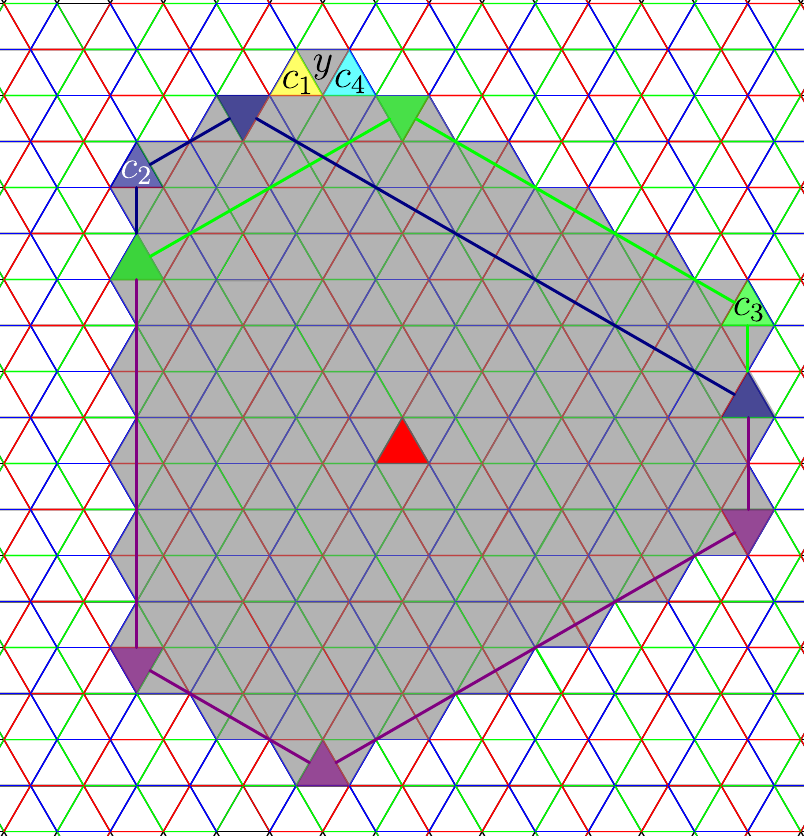}
            \caption{The hexagons $\CC_{c_2}$, $\CC_{c_3}$, and their intersection.}
            \label{fig: proofdihc}
        \end{subfigure} 
        \hfill
        \begin{subfigure}[b]{0.45\textwidth}
            \centering
            \includegraphics[width=0.9\textwidth]{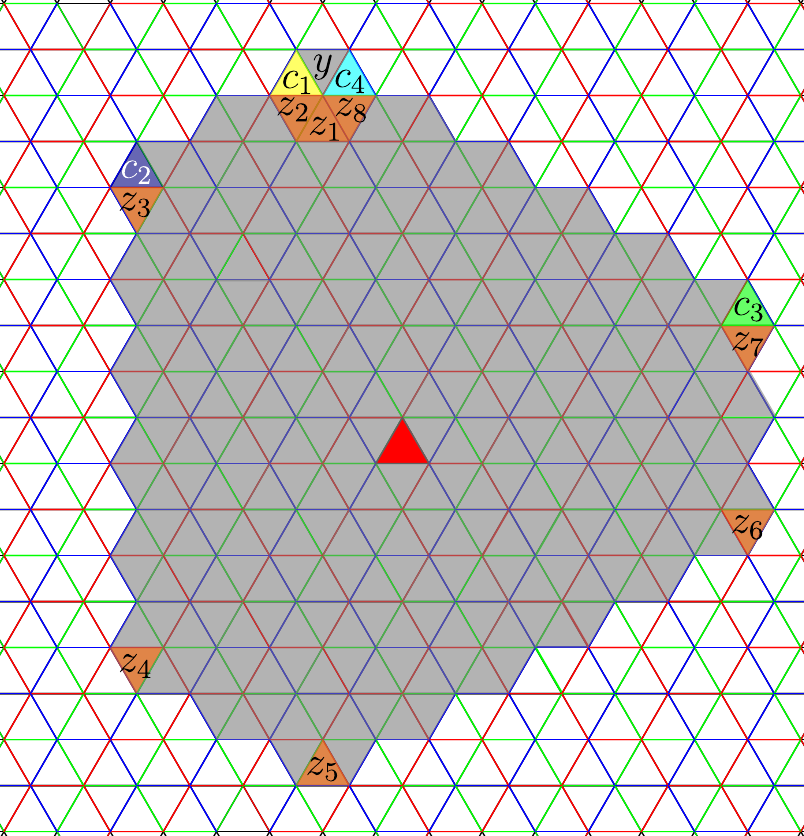}
            \caption{In orange, the eight elements of $\mathcal{D}_{\mathrm{rest}}(y)$.}
            \label{fig: proofdihd}
        \end{subfigure} 
        \caption{Polygons $\CC_z$ for $z\in \operatorname{L}(\theta(5,2))$.}
        \label{fig: Proof Dihedral}
    \end{figure}
    In Figures~\ref{fig: proofdiha},~\ref{fig: proofdihb}, and~\ref{fig: proofdihc}, we have drawn the hexagons $\CC_c$ for all $c\in \operatorname{L}(y)=\{c_1,c_2,c_3,c_4\}$.
    Also, in Figure~\ref{fig: proofdihc}, the purple hexagon corresponds to the intersection $\CC_{c_2}\cap \CC_{c_3}$.
    In Figure~\ref{fig: proofdihd}, the orange alcoves represent the elements of~$\DC^u_{\mathrm{rest}}(y)$.
    A quick inspection shows that: 
    \begin{enumerate}[label=\roman*]
        \item\label{item: 1 proof} $\cen(z)\in A_{1,2}$ if and only if $z \in \operatorname{Sgm}(y,s_2y)\setminus \{c_1,c_2\}$.
        \item\label{item: 2 proof} $\cen(z)\in A_{3,4}$ if and only if $z \in \operatorname{Sgm}(y,s_1y)\setminus \{c_3,c_4\}$.
        \item\label{item: 3 proof} In the remaining cases, we have:
        \begin{equation*}
             A_{1,4}=\{z_1,z_2,z_8\},\quad A_{1,3}=\{z_7\},\quad A_{2,4}=\{z_3\},\quad A_{2,3}=\{z_4,z_5,z_6\}.
        \end{equation*} 
    \end{enumerate}
    Therefore, using Equation \eqref{eq: dihedral} we have proved in this case that $z\in\DC^u(y)$ if and only if $z\in \DC^u_{\mathrm{sgm}}(y)\uplus \DC^u_{\mathrm{rest}}(y)$.

    We claim that the argument for a general $\theta(m,n)$ is essentially the same.
    Lemma~\ref{lem: set of lower covers} tell us that $\mathrm{L}(\theta(m,n))= \{s_1 y,s_2y,\theta^s(m-1,n),\theta^s(m,n-1)\}$.
    This means that $c_1:=\theta^s(m-1,n)$ and $c_4:=\theta^s(m,n-1)$ are the two alcoves sharing an edge with $y$ different from $\theta^s(m,n)$, $c_2:=s_2y$ and $c_3:=s_1y$.
    We will still have that $z_2$ (resp.\@ $z_8$) is the alcove below $c_1$ (resp.\@ $c_4$).
    We have that $z_1$ is the only alcove sharing an edge with both $z_2$ and $z_8$.
    The alcove $z_7$ is the one below $c_3$, and so on.
    All of this implies that the calculations of~$A_{i,j}$ give the exact same results as in \ref{item: 1 proof}, \ref{item: 2 proof}, and \ref{item: 3 proof} above.
    
    The proof for $\theta^s(m,n)$ is similar to the previous case, so we omit the details.
\end{proof}

We already have a description of~$\DD^u(y)$, so our next task is to describe $\DD^l(x)$.
To do this, we need to consider the sets 
\begin{equation*}
    \DD^l_{\mathrm{rest}, \leq 2}(x)\coloneqq \{z\in W\setminus \DD_{\mathrm{sgm}}^l(x)\mid x\leq z\mbox{ and }  \ell(x,z)\leq2\},
\end{equation*}
and
\begin{align*}
    \DC^l_{\mathrm{rest}}(x)\coloneqq& 
    \begin{cases}
        \DD^l_{\mathrm{rest}, \leq 2}(x)&\mbox{ if  $x=\theta(m,n)$,}\\
        \DD^l_{\mathrm{rest}, \leq 2}(x)\uplus \{\theta(m+1,n+1)\}&\mbox{ if  $x=\theta(m,n)s$.}
    \end{cases}
\end{align*}

The following is the analog of Lemma~\ref{lem: descripcion dih y} for $\DD^l(x)$.
\begin{lem}\label{lem: descripcion dih x} 
    Let $x\in D$ be a dominant element.
    Then $\DD^l(x)=\DD_{\mathrm{sgm}}^l(x)\uplus\DD_{\mathrm{rest}}^l(x)$.
\end{lem}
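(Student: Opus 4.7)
The plan is to mirror the strategy of the proof of Lemma~\ref{lem: descripcion dih y} dually, describing $\DC^l(x)$ via atoms of $[x,z]$ rather than coatoms. By Proposition~\ref{prop: Dyer dihedral}, for $\ell(x,z)>1$ the interval $[x,z]$ is dihedral if and only if it has exactly two atoms, i.e., exactly two upper covers of $x$ lie below $z$; by Corollary~\ref{cor: characterization of the Bruhat order}, this amounts to $|\{u\in \operatorname{U}(x):\cen(u)\in \CC_z\}|=2$. So for each pair $\{u_i,u_j\}\subset \operatorname{U}(x)$ I would define
\begin{equation*}
    A_{i,j}\coloneqq\{z\in W\mid x\leq z,\ u_i,u_j\in \CC_z,\ u_k\notin \CC_z\text{ for }k\notin\{i,j\}\},
\end{equation*}
so that $\DC^l(x)=\{z:\ell(x,z)\leq 1\}\cup\bigcup_{i<j}A_{i,j}$, reducing the lemma to a geometric description of each $A_{i,j}$.

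Next I would list $\operatorname{U}(x)$ explicitly using the formulas from the proof of Lemma~\ref{lem: cardinality of upper covers}: six upper covers when $x=\theta(m,n)$ and five when $x=\theta^s(m,n)$. Following the strategy used for $y=\theta(5,2)$ in the proof of Lemma~\ref{lem: descripcion dih y}, I would first treat a generic representative, say $x=\theta(m,n)$ with $m,n$ both large, by drawing the polygons $\CC_u$ for each $u\in \operatorname{U}(x)$ and intersecting them pairwise modulo the remaining $\CC_{u_k}$'s. The expected picture is that most pairs yield a small bounded region whose intersection with $\cen(W)$ consists only of elements of length $\leq 2$ above $x$ (so these contribute to $\DC^l_{\mathrm{rest},\leq 2}(x)$), while two distinguished pairs of ``opposite'' upper covers --- those aligned with the $\varpi_1$ and $\varpi_2$ directions --- yield unbounded strip-like regions whose intersection with $\cen(W)$ is exactly the two exterior segments $\Sgm(u_1,u_3)$ and $\Sgm(u_2,u_6)$ of $\mathcal{St}(x)$, giving $\DC^l_{\mathrm{sgm}}(x)$.

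The main obstacle is twofold. First, the boundary cases $m=0$ or $n=0$, together with their $\theta^s$-analogues: here some upper covers of $x$ involve elements of $X$, several of the polygons $\CC_u$ degenerate from hexagons to quadrilaterals, and the pairwise-intersection bookkeeping must be redone case by case, in the spirit of the arguments following Figure~\ref{fig: proofb} in the proof of Lemma~\ref{lem: Fact 3} and the three enumerated points in the proof of Lemma~\ref{lem: descripcion dih y}. Second, for $x=\theta^s(m,n)$ the extra element $\theta(m+1,n+1)$, of length $\ell(x,z)=3$, must be accounted for: I would verify directly from Corollary~\ref{cor: characterization of the Bruhat order} applied to $\theta(m+1,n+1)$ that exactly the two upper covers $\theta(m+1,n)$ and $\theta(m,n+1)$ of $x$ lie in $\CC_{\theta(m+1,n+1)}$ (the remaining upper covers $s_0\theta^s(m,n)$, $\delta(s_0\theta^s(m-1,n+1))$, $\delta^2(s_0\theta^s(m+1,n-1))$ all lie in non-dominant zones and hence are excluded), so $\theta(m+1,n+1)$ sits in the corresponding $A_{i,j}$ but outside $\DC^l_{\mathrm{sgm}}(x)$. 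Once the generic geometry is pinned down, the remaining verifications are parallel to those in the proof of Lemma~\ref{lem: descripcion dih y}, and the disjointness $\DC^l_{\mathrm{sgm}}(x)\cap \DC^l_{\mathrm{rest}}(x)=\emptyset$ is built into the definition of $\DC^l_{\mathrm{rest}}(x)$.
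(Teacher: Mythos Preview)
Your setup is correct up to the definition of $A_{i,j}$: indeed $z\in\DC^l(x)$ with $\ell(x,z)>1$ iff exactly two elements of $\operatorname{U}(x)$ lie in $\CC_z$. But your plan to compute $A_{i,j}$ by ``drawing the polygons $\CC_u$ for each $u\in\operatorname{U}(x)$ and intersecting them pairwise'' is backwards and will not work. In the proof of Lemma~\ref{lem: descripcion dih y} one draws $\CC_{c}$ for each coatom $c$ because the condition there is $\cen(z)\in\CC_c$; the polygons are fixed and $z$ varies inside them. In your dual situation the condition is $\cen(u)\in\CC_z$: the points $u_1,\dots,u_k$ are fixed and the polygon $\CC_z$ moves with $z$. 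So what you actually need, for each $u\in\operatorname{U}(x)$, is the set $\{z:u\leq z\}=(\geq u)$, described not by $\CC_u$ but by the complement of a star $\mathcal{St}(u)$. The polygons $\CC_u$ are small hexagons containing $x$, and their pairwise intersections say nothing about which $z$'s dominate exactly two of the $u$'s. Moreover, several of the upper covers (e.g.\ $s_0\theta(m,n)$, $\delta(s_0\theta(m-1,n+1))$) are not dominant, so Proposition~\ref{prop: no mayores geometrico} does not directly give you their stars either.

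The paper sidesteps this difficulty entirely. For the inclusion $\subset$, it first observes that $\operatorname{U}(x)\subset\CC_{x+\rho}$, so every $z\geq x+\rho$ has all upper covers below it and is not dihedral. This confines the non-trivial analysis to the shell $\mathcal{St}^\circ(x+\rho)\setminus\mathcal{St}^\circ(x)$. On that shell intersected with $F_{\id}$ (and outside the claimed set $A$), one checks directly that at least three upper covers lie in $\CC_b$; for the other zones $F_w$ one uses Lemma~\ref{lem: Fact2} to reduce to the $F_{\id}$ case via $b\mapsto b(1)$. The inclusion $\supset$ is then handled by exhibiting the two atoms explicitly for each $z$ in $\DC^l_{\mathrm{sgm}}(x)$ and for $\theta(m+1,n+1)$, much as you sketched. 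Your verification for $\theta(m+1,n+1)$ is essentially right, though ``lie in non-dominant zones and hence are excluded'' is not quite an argument; one must check these three points are actually outside $\CC_{\theta(m+1,n+1)}$.
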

\begin{proof} 
    Assume that $x\in\{\theta(m,n),\theta^s(m,n)\mid m,n\geq1\}$.
    Let $z \in \mathcal{D}^l(x)$ be such that $\ell(x,z)>1$.
    By Proposition~\ref{prop: Dyer dihedral}, the interval $[x, z]$ contains exactly two atoms.
    By Lemma~\ref{lem: cardinality of upper covers}, we have that 
    \begin{equation}
        \operatorname{U}(x)=\left\{
        \begin{array}{@{}l@{\thinspace}l}
            \{\theta^s(m,n),\theta^s(m+1,n-1),\theta^s(m-1,n+1),& \\
            s_0\theta(m,n),  \delta(s_0\theta(m-1,n+1)),\delta^2(s_0\theta(m+1,n-1))\} &\mbox{ if $x=\theta(m,n)$},\medskip\\
            \{\theta(m+1,n),\theta(m,n+1), s_0\theta^s(m,n),&\\
            \d(s_0\theta^s(m-1,n+1)),
            \delta^2(s_0\theta^s(m+1,n-1))\}&\mbox{ if $x=\theta^s(m,n)$}.\medskip
        \end{array}
        \right.
    \end{equation}
    It is evident that if $a$ is an atom of~$[x,z]$, then $a\in \operatorname{U}(x)$.
    We will now proceed by outlining the main ideas of the proof:
   
    \noindent\textbf{The inclusion $\subset$.}
    \begin{enumerate}
        \item Note that $\operatorname{U}(x)\subset \CC_{x+\rho}$.
        This observation implies that $[x,x+\rho]$ is not dihedral.
        \item Define the sets
        \begin{align*}
            A&=\DD_{\mathrm{sgm}}^l(x)\uplus \DD_{\mathrm{rest}}^l(x)\quad \mathrm{and}\\
            D^w(x)&=\big(\mathcal{St}^\circ(x+\rho)\setminus (\mathcal{St}^\circ(x)\cup A)\big) \cap F_w.
        \end{align*}
        \item  For $b\in D^\id(x)$, the description of~$\operatorname{U}(x)$ from above implies that  $|\operatorname{U}(x)\cap \CC_{b}|\geq 3$.
        \item  Let $w\in W_f\setminus\{\id\}$ and let $b\in D^w(x)$.
        By Lemma~\ref{lem: Fact2}, we know that $b(1)\in V(\CC_b)$.
        Since $b(1)\in D^\id(x)$, it follows that  $[x,b(1)]$ is not dihedral.
        Consequently, $[x,b]$ is also not dihedral.
        This proves that 
        if $b\in \cup_{w\in W_f}D^w(x)$, then $b\not\in \DD^l(x)$.
        Equivalently, as $\cup_{w\in W_f}D^w(x)$ is the complement of~$A$ in~$\mathcal{St}^\circ(x+\rho)$ we have that $\DD^l(x)\subset\DD_{\mathrm{sgm}}^l(x)\uplus\DD_{\mathrm{rest}}^l(x)$.
    \end{enumerate}

    \noindent\textbf{The inclusion $\supset$.}
    We consider two cases
    \begin{enumerate}
        \item Suppose that $z\in  \mathcal{D}^{l}_{\mathrm{rest}}(x)$.
        \begin{itemize}
            \item If $\ell(x,z)=1$, then $[x,z]$ is dihedral.
            \item If $\ell(x,z)=2$ then $[x,z]$ is also dihedral because any interval of length~$2$ contains exactly two atoms.
            \item In the case where $z=\theta(m+1,n+1)$.
            The atoms of~$[x,z]$ are $\theta(m+1,n)$ and $\theta(m,n+1)$.
        \end{itemize}
        \item Suppose that $z\in\DD_{\mathrm{sgm}}^l(x)$.
        \begin{itemize}
            \item If $z\in \Sgm(u_1,u_3)$.
            The of atoms of~$[x,z]$ are 
            \begin{equation}
                \begin{cases}
                    \{\theta^s(m+1,n-1),\delta^2(s_0\theta(m+1,n-1))\}&\mbox{ if  $x=\theta(m,n)$,}\\
                    
                    \{\theta(m+1,n),\delta^2(s_0\theta^s(m+1,n-1))\}&\mbox{ if  $x=\theta^s(m,n)$.}
                \end{cases}
            \end{equation}
            \item If $z\in \Sgm(u_2,u_6)$.
            The atoms of~$[x,z]$ are 
            \begin{equation}
                \begin{cases}
                    \{\theta^s(m-1,n+1),\delta(s_0\theta(m-1,n+1))\}&\mbox{ if  $x=\theta(m,n)$,}\\
                    
                    \{\theta(m,n+1),\delta(s_0\theta^s(m-1,n+1))\}&\mbox{ if  $x=\theta^s(m,n)$.}
                \end{cases}
            \end{equation}
        \end{itemize}
        In both cases $[x,z]$ has two atoms, which implies that $z\in \DD^{l}(x)$.
    \end{enumerate}
    The case $x\in\{\theta(m,0),\theta^s(m,0),\theta(0,m),\theta^s(0,m)\mid m>0\}$ is similar so we omit it.
\end{proof}
\begin{proof}[Proof of Proposition~\ref{prop: Dih-resumen}]
    \begin{enumerate}
        \item The first part follows from Lemmas~\ref{lem: dih menores que 3} and~\ref{lem: descripcion dih y}.
        The second part follows from the inclusion~$\supset$ in Lemma~\ref{lem: descripcion dih y}.
        \item The first part follows from Lemma~\ref{lem: descripcion dih x}, the definition of~$\DC^l_{\mathrm{rest}}$, and the fact that $\ell(\theta^s(m,n), \theta(m+1,n+1))=3$.
        The second part follows from the inclusion~$\supset$ in Lemma~\ref{lem: descripcion dih x}.\qedhere
    \end{enumerate}
\end{proof}

\subsection{Types of intervals and the polygon \texorpdfstring{$\operatorname{Pgn}_{x,y}$}{Pgn(x,y)}}\label{subsec: polygon}
We now define the polygon $\Pgn_{x,y}$, which plays a central role in our geometric description of intervals.
Recall that at the beginning of Section~\ref{section: geometry of intervals}, we assume that $x$ and $y$ are dominant elements.
As $[x,y]=[\mathrm{id},y]\cap (\geq x)$, Corollary~\ref{cor: characterization of the Bruhat order} and Proposition~\ref{prop: no mayores geometrico} give the following geometric description of the interval 
\begin{equation}
    \begin{aligned}\label{eq: interval as centers of hexagon without the star}
        [x,y]&=\{ z\in W \mid \operatorname{cen}(z) \in \mathcal{C}_y\setminus \mathcal{St}^\circ(x)\}\\
        &=\cen^{-1}(\mathcal{C}_y\setminus \mathcal{St}^\circ(x)).
    \end{aligned}
\end{equation}
This description was illustrated in Figure~\ref{fig: geometric realization of a Bruhat interval} of the introduction.
Figure~\ref{fig: PGN} illustrates the next two definitions.
\begin{defn}\label{def: paralelogramo}
    We define the polygon $\operatorname{Par}_{x,y}^w$ as the parallelogram with opposite vertices $\{\operatorname{cen}(x_w), w\operatorname{cen}(y)\}$ and edges parallel to $w\alpha_1$ and $w\alpha_2$.
    In the degenerate case, i.e., when  $\operatorname{cen}(x_w)$ and $ w\operatorname{cen}(y)$ differ by a multiple of~$w\alpha_1$ or $w\alpha_2$, we  denote $\operatorname{Par}_{x,y}^w:= \Sgm(\operatorname{cen}(x_w),w\operatorname{cen}(y))$ and still call it a parallelogram.
    We denote $\operatorname{Par}_{x,y}:=\operatorname{Par}_{x,y}^{\mathrm{id}}$.
\end{defn}
\begin{defn}\label{def: polygon}
    For each $w\in W_f$, we define the following convex bounded polygon 
    \begin{equation}
        \operatorname{Pgn}_{x,y}^w= \operatorname{Par}_{x,y}^w\cap\, F_{w}.
    \end{equation}
    As before, we denote $\operatorname{Pgn}_{x,y}:=\operatorname{Pgn}_{x,y}^{\mathrm{id}}$.
\end{defn}
\begin{rem}\label{rem: geometry-of-vertex-of-Pgn}
    \begin{enumerate}
        \item  It is not hard to see that each element of~$\mathrm{V}(\operatorname{Par}_{x,y}^w)$ is either the center or a vertex of an alcove.
        \item Vertices of~$\Pgn_{x,y}$ adjacent to $\cen(x)$ can be either a vertex or the center of an alcove.
        \item Vertices of~$\Pgn_{x,y}$ adjacent to $\cen(y)$ can be the midpoint of an edge, the center, or a vertex of an alcove.
        \item\label{rem: vertex of Pgn} In some cases, $w\cen(y)$ is not contained in~$F_w$, so it is not a vertex of~$\Pgn_{x,y}^{w}$.
    \end{enumerate}
\end{rem}
\begin{lem}
    $\cen^{-1}(\operatorname{Pgn}_{x,y}^w)=\cen^{-1}(F_w\cap \mathcal{C}_y\setminus \mathcal{St}^\circ(x))$.
\end{lem}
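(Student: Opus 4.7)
The plan is to intersect two local cone descriptions---one for $\CC_y$ restricted to $F_w$ and one for the complement of $\mathcal{St}^\circ(x)$ restricted to $F_w$---and recognize the intersection as the parallelogram $\Par_{x,y}^w$.

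First, I would apply Lemma~\ref{lem: gral local bruhat order} directly: for $z \in \cen(W) \cap F_w$,
\begin{equation*}
    \cen(z) \in \CC_y \iff \cen(z) \in w\cen(y) - \Cone^w(\alpha_1, \alpha_2).
\end{equation*}
Second, I would establish the zone-local analog of Lemma~\ref{lem: Fact 4}: for $z \in \cen(W) \cap F_w$,
\begin{equation*}
    \cen(z) \notin \mathcal{St}^\circ(x) \iff \cen(z) \in \cen(x_w) + \Cone^w(\alpha_1, \alpha_2).
\end{equation*}
This extension of Lemma~\ref{lem: Fact 4} from $F_{\id}$ to $F_w$ is a consequence of the reflective construction of $\mathcal{St}(x)$ in Definition~\ref{def: estrella}: the two edges of $\mathcal{St}(x)$ meeting at $x_w$ are images of the two edges meeting at $x_{\id}$ under the composition $r_{i-1}\cdots r_1$, whose linear part maps $\Cone(\alpha_1,\alpha_2)$ to $\Cone^w(\alpha_1,\alpha_2)$.

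Combining these two descriptions, after applying $\cen^{-1}$ and intersecting with $\cen^{-1}(F_w)$, one obtains
\begin{equation*}
    \cen^{-1}\bigl(F_w \cap \CC_y \setminus \mathcal{St}^\circ(x)\bigr) = \cen^{-1}\bigl(F_w \cap (\cen(x_w) + \Cone^w(\alpha_1, \alpha_2)) \cap (w\cen(y) - \Cone^w(\alpha_1, \alpha_2))\bigr).
\end{equation*}
By Definition~\ref{def: paralelogramo}, the intersection of the two cones on the right-hand side is precisely $\Par_{x,y}^w$ (the degenerate segment case being handled by the convention in that definition), so the right-hand side equals $\cen^{-1}(\Par_{x,y}^w \cap F_w) = \cen^{-1}(\Pgn_{x,y}^w)$, yielding the claim.

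The main technicality is verifying that the intersection of the two cones actually equals the parallelogram $\Par_{x,y}^w$; this hinges on the displacement $w\cen(y) - \cen(x_w)$ having nonnegative coordinates in the basis $\{w\alpha_1, w\alpha_2\}$, which follows from $x \leq y$ together with Corollary~\ref{cor: equivalence of being in the hexagon in the gray region} applied in $F_{\id}$ and transported by the linear $W_f$-action to $F_w$. In the degenerate case where one coordinate vanishes, both the parallelogram and the intersection of cones collapse to the same segment, so the equality persists.
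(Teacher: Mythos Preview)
Your proof is correct and follows essentially the same route as the paper's: both replace $\CC_y\cap F_w$ by the cone $wy-\Cone^w(\alpha_1,\alpha_2)$ via Lemma~\ref{lem: gral local bruhat order}, replace $F_w\setminus\mathcal{St}^\circ(x)$ by the cone $x_w+\Cone^w(\alpha_1,\alpha_2)$, and identify the intersection of the two cones with $\Par_{x,y}^w$. The only cosmetic difference is that the paper asserts the star identity $F_w\setminus\mathcal{St}^\circ(x)=F_w\cap(x_w+\Cone^w(\alpha_1,\alpha_2))$ as an exact equality of sets without comment, whereas you justify it (for centers) through the reflective construction in Definition~\ref{def: estrella}; and the paper takes the cone-intersection equals $\Par_{x,y}^w$ as immediate from Definition~\ref{def: paralelogramo}, rather than separately checking the sign of the displacement.
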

\begin{proof}
    The lemma follows by proving $ F_w\cap \big(\CC_y\setminus \mathcal{St}^\circ(x)\big)\cap \cen(W)=\Pgn_{x,y}^w\cap \cen(W)$.
    Note that $F_w\setminus \mathcal{St}^\circ(x)=F_w\cap \big(x_w+\Cone^w(\alpha_1,\alpha_2)\big)$.
    Thus we obtain
    \begingroup
    \allowdisplaybreaks
    \begin{align*}
        F_w\cap\big( \CC_y\setminus \mathcal{St}^\circ(x)\big)\cap \cen(W)&=(F_w\setminus\mathcal{St}^\circ(x))\cap \CC_y\cap \cen(W)\\
        &=F_w\cap \big(x_w+\Cone^w(\alpha_1,\alpha_2)\big)\cap \CC_y\cap \cen(W)\\
        &=\big(F_w\cap\CC_y\cap \cen(W)\big)\cap\big(x_w+\Cone^w(\alpha_1,\alpha_2)\big)\\
        &=\big(F_w\cap \big(wy-\Cone^w(\alpha_1,\alpha_2)\big)\cap\cen(W)\big)\cap\\
        & \hspace{14pt}\big(x_w+\Cone^w(\alpha_1,\alpha_2)\big)\\
        &=F_w\cap \Par_{x,y}^w\cap \cen(W)\\
        &=\Pgn_{x,y}^w\cap\cen(W).
    \end{align*}
    \endgroup
    The fourth equality is given in Lemma~\ref{lem: gral local bruhat order}.
    The fifth and sixth equalities follow by definition of~$\Par_{x,y}^w$ and $\Pgn_{x,y}^w$, respectively.
\end{proof}
Therefore, we have 
\begin{equation}\label{eq: interval as union of polygons} 
    [x,y]=\biguplus_{w\in W_f}\operatorname{cen}^{-1}(\operatorname{Pgn}_{x,y}^w).
\end{equation}
In other words, $[x,y]$ is completely determined by the six polygons $\operatorname{Pgn}_{x,y}^w$.
We can now state an important definition, although we will not use it until Section~\ref{section: translations}.
\begin{figure}[!ht]
    \centering
    \begin{subfigure}[b]{0.48\textwidth}
        \centering
        \includegraphics[width=0.9\textwidth]{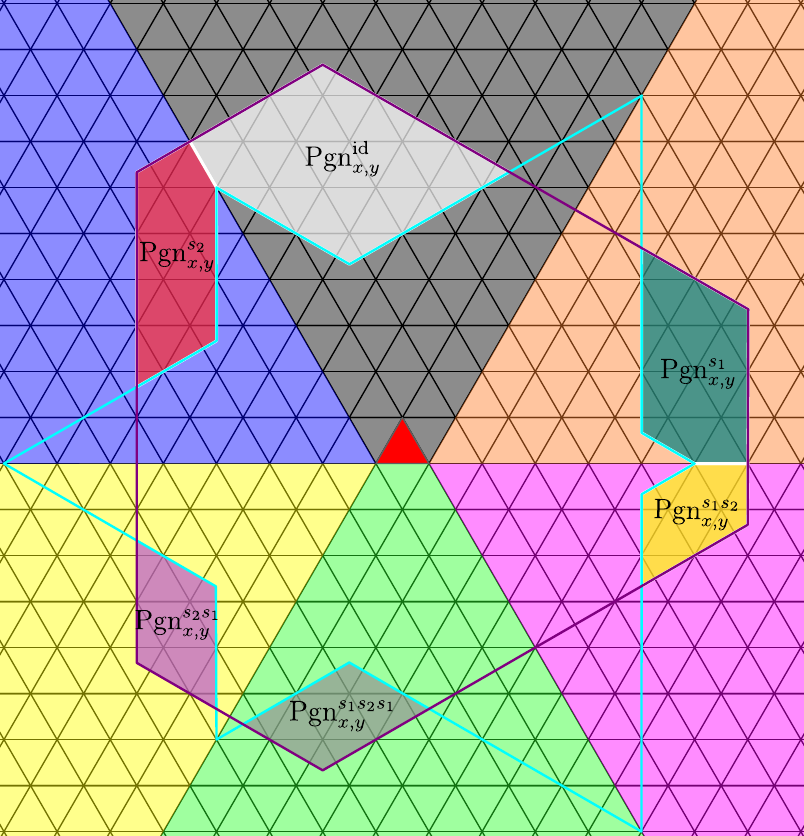}
        \caption{The polygons $\Pgn_{x,y}^w$.\newline}
        \label{fig:Pgn}
    \end{subfigure}
    \hfill
    \begin{subfigure}[b]{0.48\textwidth}
        \centering
        \includegraphics[width=0.9\textwidth]{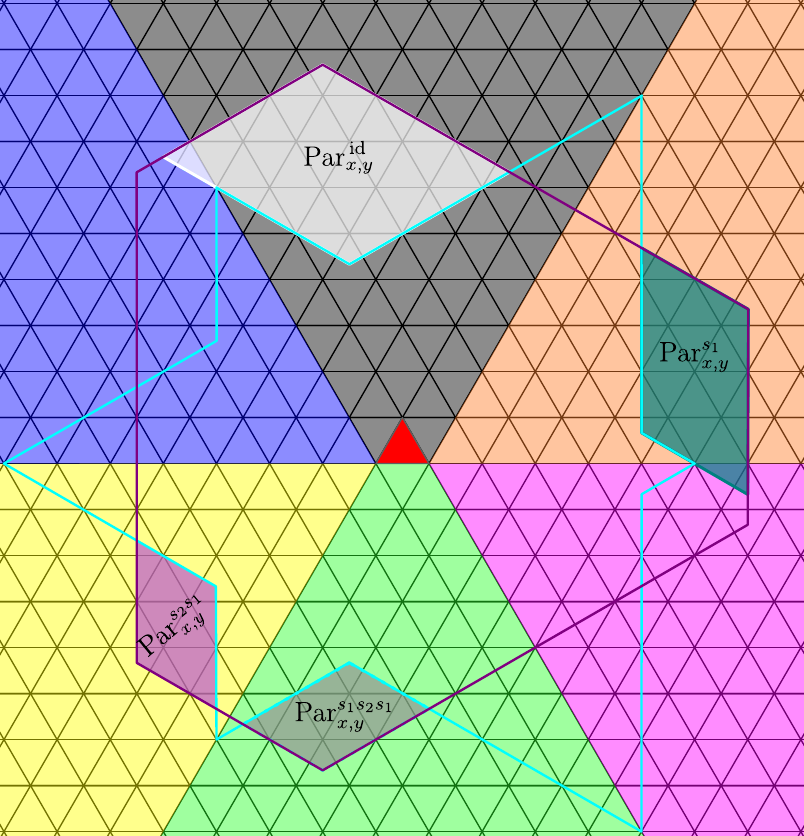}
        \caption{The parallelograms $\Par_{x,y}^\id,\Par_{x,y}^{s_1}\Par_{x,y}^{s_2s_1}$ and $\Par_{x,y}^{s_1s_2s_1}$.}
        \label{fig: three sin x}
    \end{subfigure}
    \caption{Polygons and parallelograms associated with the interval $[\theta^s(2,0),\theta(5,2)]$.} 
    \label{fig: PGN}
\end{figure}
\begin{defn}\label{def: types of intervals}
    We call $[x,y]$ a \emph{parallelogram interval} (resp.\@ \emph{pentagon interval}, \emph{hexagon interval}) if $\operatorname{Pgn}_{x,y}^{}$ is a parallelogram (resp.\@ pentagon, hexagon).
    If the area of~$\operatorname{Pgn}_{x,y}^{}$ is zero, i.e., $\operatorname{Pgn}_{x,y}^{}$ is a line segment, we also say that $[x,y]$ is a \emph{parallelogram interval}.
\end{defn}
We will now describe $\operatorname{Pgn}_{x,y}^w$ in terms of~$\operatorname{Pgn}_{x,y}$.
The polygons $\operatorname{Pgn}_{x,y}^w$ have equal or smaller area than $\operatorname{Pgn}_{x,y}$.
We will shortly see that one can obtain them by subtracting a particular set $S^{x,y}_{w}$ from $\operatorname{Pgn}_{x,y}$ before applying $w$.
Before proving this, we need some definitions.

Consider the  strips in the plane: 
\begin{align*}
    S^{x,y}_{\alpha_1}&:=\{\operatorname{cen}(x)+t\alpha_1+t'\alpha_2\in \operatorname{Par}_{x,y}^{}\mid t\in[0,1), t'\in \mathbbm{R}\},\\
    S^{x,y}_{\alpha_2}&:=\{\operatorname{cen}(x)+t\alpha_2+t'\alpha_1\in \operatorname{Par}_{x,y}^{}\mid t\in[0,1), t'\in \mathbbm{R}\}.
\end{align*}
Furthermore, for $w\in W_f$, we define 
\begin{equation*}
    S^{x,y}_{w}\coloneqq 
    \begin{cases}
        \emptyset & \mbox{if  $w=\id$},\\
        S^{x,y}_{\alpha_1}&\mbox{if  $w=s_1$},\\
        S^{x,y}_{\alpha_2}&\mbox{if  $w=s_2$},\\
        S^{x,y}_{\alpha_1}\cup S^{x,y}_{\alpha_2}&\mbox{otherwise},
    \end{cases}\quad\mbox{and}\quad
    v_w\coloneqq\begin{cases}
        0 &\mbox{ if $w=\id$,}\\
        \alpha_1&\mbox{ if $w=s_1$,}\\
        \alpha_2&\mbox{ if $w=s_2$,}\\
        \rho&\mbox{ otherwise.}
    \end{cases}
\end{equation*}
We have the following equalities.
\begin{equation}\label{eq: identity xw}
    \cen(x_{w})=w\cen(x)+wv_w=w\operatorname{cen}(x+v_w)\in F_w.
\end{equation}
The first equality follows from the definitions of~$x_w$ and $v_w$ (it is indeed the reason to define $v_w$ as so).
The second equality is a tautology, using Notation~\ref{nota: centroide mas vector}.
\begin{prop}\label{prop: pol otras regiones}
    For $w\in W_f$, we have
    \begin{equation}
        \operatorname{Pgn}_{x,y}^w = w(\operatorname{Par}_{x,y}^{}\setminus \, S^{x,y}_{w})\cap F_w = w(\operatorname{Pgn}_{x,y}^{}\setminus \, S^{x,y}_{w})\cap F_w.
    \end{equation}
\end{prop}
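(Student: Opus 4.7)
I would establish the equality chain $\Pgn_{x,y}^w = w(\Par_{x,y} \setminus S^{x,y}_w) \cap F_w = w(\Pgn_{x,y} \setminus S^{x,y}_w) \cap F_w$ by treating each equality in turn.

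For the first equality, the key step is to recognize $\Par_{x,y} \setminus S^{x,y}_w$ as itself a (possibly degenerate) parallelogram. Removing the strip $S^{x,y}_{\alpha_i}$ from $\Par_{x,y}$ slides the $\cen(x)$-vertex forward by $\alpha_i$, so by the construction of $v_w$, the set $\Par_{x,y} \setminus S^{x,y}_w$ is a parallelogram with opposite vertices $\cen(x) + v_w$ and $\cen(y)$ and sides parallel to $\alpha_1$ and $\alpha_2$. Applying the linear isometry $w \in W_f$ and invoking Equation~\eqref{eq: identity xw} to identify $w(\cen(x) + v_w) = \cen(x_w)$, the image is the parallelogram $\Par_{x,y}^w$ of Definition~\ref{def: paralelogramo}. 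Intersecting both sides with $F_w$ and invoking Definition~\ref{def: polygon} gives $\Pgn_{x,y}^w$.

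For the second equality, the $\supset$ inclusion is immediate from $\Pgn_{x,y} \subset \Par_{x,y}$. For $\subset$, setting $A \coloneqq \Par_{x,y} \setminus S^{x,y}_w$, one reduces to proving
\begin{equation*}
    w(A \setminus F_\id) \cap F_w = \emptyset.
\end{equation*}
I would verify this by case analysis on the four values $v_w \in \{0, \alpha_1, \alpha_2, \rho\}$: the shift $v_w$ is tailored so that the overhang $A \setminus F_\id$ sits in neighboring zones of $F_\id$ whose $w$-image avoids $F_w$. Concretely, $F_\id$ is cut out (among other constraints) by the two rays along $H_{\alpha_1,-1}$ and $H_{\alpha_2,-1}$, and the parts of $A$ crossing these walls are carried by the linear action of $w$ into zones other than $F_w$.

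The main obstacle will be this case analysis in the second equality. When the interval $[x,y]$ is long, $A$ genuinely extends outside $F_\id$, so one cannot simply argue $A \subset F_\id$; the cancellation must be extracted from the explicit zone arrangement together with the fact that $W_f$ permutes the zones compatibly with its linear action on the subregion of $A$ that matters. In particular, the case $v_w = \rho$ (corresponding to $w \in \{s_1 s_2, s_2 s_1, s_1 s_2 s_1\}$) is the most delicate, since both the $\alpha_1$- and $\alpha_2$-overhangs must be accounted for, and the reflections carrying them away from $F_w$ depend on the specific $w$. Once the four cases are dispatched, combining with the first equality yields the proposition.
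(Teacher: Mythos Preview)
Your treatment of the first equality matches the paper's exactly: identify $\Par_{x,y}\setminus S^{x,y}_w$ as the parallelogram with opposite vertices $\cen(x)+v_w$ and $\cen(y)$, apply $w$, invoke Equation~\eqref{eq: identity xw}, and intersect with $F_w$.

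For the second equality, your reduction to $w(A\setminus F_\id)\cap F_w=\emptyset$ is correct, but the proposed case analysis on $v_w$ is far more work than needed. The paper dispatches this in one line via the inclusion $F_w\subset w(F_\id)$: since
\[
w(\Pgn_{x,y}\setminus S^{x,y}_w)\cap F_w
= w\bigl((\Par_{x,y}\setminus S^{x,y}_w)\cap F_\id\bigr)\cap F_w
= w(\Par_{x,y}\setminus S^{x,y}_w)\cap w(F_\id)\cap F_w,
\]
and $F_w\subset w(F_\id)$ makes the factor $w(F_\id)$ redundant, this equals $\Par_{x,y}^w\cap F_w=\Pgn_{x,y}^w$. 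Equivalently, in your formulation, $w(A\setminus F_\id)\cap F_w\subset (E\setminus w(F_\id))\cap F_w=\emptyset$ holds for \emph{any} set $A$, independently of the strip structure, so no case-by-case tracking of overhangs is needed. Your route would work, but it obscures that the statement has nothing to do with the particular shape of $A$ and everything to do with the elementary fact that each zone $F_w$ sits inside the $w$-image of the fundamental zone.
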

\begin{proof}
    Since $\Pgn_{x,y}^w = \Par_{x,y}^w \cap \, F_w$, to prove the first equality, it is sufficient to prove that 
    \begin{equation}\label{eq: wPgn1}
        \Par_{x,y}^w = w(\Par_{x,y} \setminus \, S^{x,y}_{w}).
    \end{equation}
    By definition, $\Par_{x,y}^w$ is completely determined by its opposite vertices $\operatorname{cen}(x_w)$ and $w\operatorname{cen}(y)$.
    It is easy to check that the set $\Par_{x,y} \setminus \, S^{x,y}_{w}$ is a parallelogram with opposite vertices $\operatorname{cen}(x) + v_w$ and $\operatorname{cen}(y)$, and edges parallel to $\alpha_1$ and $\alpha_2$.
    Then $w(\Par_{x,y} \setminus \, S^{x,y}_{w})$ is a parallelogram with opposite vertices $w(\operatorname{cen}(x) + v_w)$ and $w\operatorname{cen}(y)$.
    By Equation~\eqref{eq: identity xw}, we have
    $w(\operatorname{cen}(x) + v_w)=\cen(x_w)$, so
    $w(\Par_{x,y} \setminus \, S^{x,y}_{w})$ and $\Par_{x,y}^w$ have the same opposite vertices and edges parallel to $w\alpha_1$ and $w\alpha_2$ and thus we have proved Equation~\eqref{eq: wPgn1}.
    
    For the second equality, we have
    \begin{align*}
        w(\Pgn_{x,y}\setminus \, S^{x,y}_{w})\cap F_w&=w((\Par_{x,y}\cap \,F_\id)\setminus \, S^{x,y}_{w})\cap F_w\\
        &=w((\Par_{x,y}\setminus \, S^{x,y}_{w})\cap F_\id)\cap F_w\\
        &=w(\Par_{x,y}\setminus \, S^{x,y}_{w})\cap w(F_\id)\cap F_w\\
        &=\Par_{x,y}^w\cap \, F_w.
    \end{align*}
    The last equality follows from the fact that $F_w\subset w(F_\id)$.
\end{proof}

\subsection{The geometry of general intervals}\label{subsec: geometry of general intervals}
We conclude the section with a geometric description of every interval $[x,y]$ in~$W$.
First, observe that 
\begin{equation*}
    [x,y]=[\id,y]\cap (\geq x).
\end{equation*}
Second, Corollary~\ref{cor: characterization of the Bruhat order} provides a geometric description of~$[\id,y]$ for any $y\in W.$
Thus, the remaining issue concerns the geometric description of a set $S$ such that
\begin{equation*}
    \geq x =\{z\in W\mid \cen(z)\not\in S\}.
\end{equation*}

As a consequence of Remark~\ref{rem: remark tontos}, we have that for each $x\in W\setminus\{\id\}$ there is an element $g\in \langle\sigma,\delta\rangle$ such that 
$gx\in D\cup s_0D\cup \{\delta \mathrm{\mathbf{x}}_k\}_{k\geq 1}$.
We begin by describing $S$ for elements in~$D\cup s_0D\cup \{\delta \mathrm{\mathbf{x}}_k\}_{k\geq 1}$.
By Proposition~\ref{prop: no mayores geometrico}, we have $S=St^\circ(x)$ for $x\in D$.
It remains to describe $S$ for $x\in s_0D\cup \{\delta\mathrm{\mathbf{x}}_k\}_{k\geq 1}$.
Fortunately, the description of~$S$ can be given uniformly for all of these cases, which behave differently from the case $x\in D$.
\begin{defn} 
    Let $x\in s_0D\cup \{\delta\mathrm{\mathbf{x}}_k\}_{k\geq 1}$.
    We define the \emph{star} of~$x$, denoted $\mathcal{St}(x)$, as the union of the two equilateral triangles $\triangleleft_a$ and $\triangleright_a$ centered at the origin (the top vertex of~$A_+$), satisfying the following conditions:
    \begin{itemize}
        \item $\triangleleft_a$ is a left-facing triangle whose boundary passes through $\cen(x)$.
        \item $\triangleright_a$ is a right-facing triangle whose boundary passes through $\cen(x)$.
    \end{itemize}
\end{defn}
\begin{rem}
    The subscript $a$ in~$\triangleleft_a$, and $\triangleright_a$ emphasizes that the element $x$ belongs to $F_{s_1s_2s_1}$ (that is, $x$ is \textbf{a}nti-dominant or of the form $\delta x_k$).
    In the introduction, we described a similar construction for dominant elements, where no subscript was used.
\end{rem}
Since $g(\geq x) = \ \geq \hspace{-0.1cm}(gx)$ for any $g \in \langle \sigma, \delta \rangle$, as $g$ is an automorphism of the Bruhat order, we define $\mathcal{St}(gx) = g(\mathcal{St}(x))$.
We restate Theorem~\ref{thm: A} as follows:
\begin{thm}\label{thm: Visualizacion del intervalo}
    For any $x,y\in W$, we have:
    \begin{equation}
        [x,y] = \cen^{-1}\left(\CC_y\setminus \mathcal{St}^\circ(x)\right).
    \end{equation}
\end{thm}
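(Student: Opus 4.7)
The theorem reduces in a familiar way: since $[x,y]=[\id,y]\cap(\geq x)$ and Corollary~\ref{cor: characterization of the Bruhat order} already supplies $[\id,y]=\cen^{-1}(\CC_y)$ for every $y$, the whole statement is equivalent to the identity
\begin{equation*}
    \geq x \;=\; \{z\in W\mid \cen(z)\notin\mathcal{St}^\circ(x)\},
\end{equation*}
valid for every $x\in W$. For dominant $x$ this is precisely Proposition~\ref{prop: no mayores geometrico}, so the remaining work is to extend the identity beyond the dominant chamber.

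My first observation is that the identity is equivariant under the subgroup $\langle\sigma,\delta\rangle\subset G$: each $g$ acts as a Euclidean isometry of $E$ and as an automorphism of the Bruhat order, so $g(\geq x)=\geq gx$, and the very definition $\mathcal{St}(gx):=g(\mathcal{St}(x))$ guarantees compatibility on the geometric side. By Remark~\ref{rem: remark tontos}, every $x\in W\setminus\{\id\}$ lies in the $\langle\sigma,\delta\rangle$-orbit of an element of $D\cup s_0 D\cup\{\mathbf{x}_k\}_{k\geq 1}$, and the case $x=\id$ is vacuous. After at most one further application of $\delta$, it therefore suffices to verify the identity for $x$ in the canonical list $s_0 D\cup\{\delta\mathbf{x}_k\}_{k\geq 1}$, which is exactly where the non-dominant star $\mathcal{St}(x)=\triangleleft_a\cup\triangleright_a$ was defined directly.

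For each such $x$, my plan is to replay the four-step template underlying Proposition~\ref{prop: no mayores geometrico}: (i) partition the plane into regions $H'_j$ adapted to the new star (which is now a centrally symmetric Star of David, so the partition is in fact more symmetric than in the dominant case), together with reflections $r'_j$ folding each $H'_j$ onto a distinguished region $H'_1$; (ii) a vertex lemma analogous to Lemma~\ref{lem: Fact2} asserting $z(1)\in\operatorname{V}(\CC_z)$, read off case by case from Corollary~\ref{cor: characterization of the Bruhat order}; (iii) a local Bruhat-order formula analogous to Lemma~\ref{lem: Fact 3},
\begin{equation*}
    \CC_z\cap H'_1\cap \cen(W)=\bigl(z(1)-\Cone(\alpha_1,\alpha_2)\bigr)\cap H'_1\cap \cen(W);
\end{equation*}
and (iv) the purely geometric identity $H'_1\setminus\mathcal{St}^\circ(x)=H'_1\cap(x+\Cone(\alpha_1,\alpha_2))$ analogous to Lemma~\ref{lem: Fact 4}. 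Granting (i)--(iv), the chain of equivalences that closes the proof of Proposition~\ref{prop: no mayores geometrico} delivers the identity for $x$ verbatim.

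The main obstacle is step (iii). In the dominant case, the equality $\CC_z\cap H_1=(z(1)-\Cone(\alpha_1,\alpha_2))\cap H_1$ held on the nose outside a handful of narrow strips $X^{\mathrm{ne}},X^{\mathrm{e}},X^{\mathrm{se}},Y^{\mathrm{ne}},Y^{\mathrm{se}}$, and inside those strips the two sides differed by a small right-triangle-minus-hypotenuse containing no alcove centers. The analogous exceptional strips must be redrawn for each new star, and the ``no alcove centers'' check must be rerun for each degeneracy of $\CC_z$---notably when $z\in X$ or $z\in s_0 D$, where Corollary~\ref{cor: characterization of the Bruhat order} produces a quadrilateral rather than a hexagon. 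The analysis is conceptually parallel to the dominant case but combinatorially denser, since the Star-of-David $\mathcal{St}(x)$ straddles both chambers $F_{\id}$ and $F_{s_1s_2s_1}$ and thus the role played by the four quadrants $H_1,\dots,H_4$ in the dominant proof is taken over by an essentially symmetric family paired through the origin. Once this tabulation is carried out once, the equivariance from the second paragraph distributes the result to every $x\in W$, completing the proof.
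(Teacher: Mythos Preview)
Your proposal is correct and follows essentially the same approach as the paper. The paper's own proof is a brief sketch: it reduces to showing $\geq x = \{z\in W\mid \cen(z)\notin\mathcal{St}^\circ(x)\}$ for non-dominant $x$, states this ``follows from arguments similar to those in Proposition~\ref{prop: no mayores geometrico}'' and explicitly omits the details; you give a more fleshed-out version of that same sketch, making the $\langle\sigma,\delta\rangle$-equivariance reduction and the four-step template (regions $H'_j$, vertex lemma, local cone formula, geometric identity) explicit, which is exactly the content the paper suppresses ``for brevity.''
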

\begin{proof}
    The description of~$\geq x = \{z \in W \mid \cen(z) \notin \mathcal{St}^\circ(x)\}$ for non-dominant~$x$ 
    follows from arguments similar to those in Proposition~\ref{prop: no mayores geometrico}.
    For brevity, we omit the details.
    
    The interval $[x,y]$ is determined by:
    \begin{itemize}
        \item The decomposition $[x,y] = [\mathrm{id}, y] \cap (\geq x)$.
        \item Corollary~\ref{cor: characterization of the Bruhat order}: $[\id,y]=\{x\in W\mid \operatorname{cen}(x)\in \CC_y\}$.
        \item The description $\geq x = \{z \in W \mid \cen(z) \notin \mathcal{St}^\circ(x)\}$.\qedhere
    \end{itemize}
\end{proof}
\section{Piecewise translations}\label{section: translations}
In this section, we assume $x, y \in W$ and $\lambda \in \Lambda$ are dominant unless otherwise stated, and that $x < y$.
We study important maps 
\begin{equation*}
    \tau_\lambda\colon [x,y] \to [x+\lambda, y+\lambda]
\end{equation*}
given by piecewise translations in the Euclidean plane.
These maps play a major role in the main theorem of this paper.
The type of the interval (Definition~\ref{def: types of intervals}) tells us exactly which of these piecewise translations are poset isomorphisms (Proposition~\ref{prop: trasl-por-dominat}).

\subsection{Basic facts about translations and piecewise translations}\label{subsec-translations} 
Let $i\in\{1,2\}$, we define the \emph{piecewise translation by the $i$-th fundamental weight} $\tau_i\colon W\to W$ as follows: 
if $z\in W$ there is a unique $w\in W_f$ such that $z\in F_w$, so we set $\tau_i(z)\coloneqq z+w\varpi_i$ (see Notation~\ref{nota: centroide mas vector}) where $\varpi_i$ is the $i$-th fundamental weight.
For $\lambda=a\varpi_1+b\varpi_2$ dominant weight, i.e., $a,b\geq 0$, we define the \emph{piecewise translation by $\lambda$}  as $\tau_{\lambda}:=\tau^a_1\circ \tau^b_2$.
\begin{lem}\label{lem: traslacion preserva region}
    For $w\in W_f$ and $i\in\{1,2\}$, we have $\tau_i(F_w)\subset F_w$.
    Furthermore, 
    $\tau_\l(F_w)\subset F_w$.
\end{lem}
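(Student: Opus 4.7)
The plan is to reduce the statement to a one-line inner-product computation, using an explicit half-plane description of each zone $F_w$. First, I would verify that
\[
F_{\id} = \{v \in E \mid (\alpha_1, v) \geq -1 \text{ and } (\alpha_2, v) \geq -1\}.
\]
This follows since the two bounding rays $r_1, r_6$ of $F_{\id}$ lie on the affine lines $H_{\alpha_1,-1}$ and $H_{\alpha_2,-1}$ respectively, together with the observation that $A_+ \subset F_{\id}$ and $A_+$ satisfies $(\alpha_i, v) \in (-1,0)$ (Equation~(2.1)).

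Next, for each $w \in W_f$ I would check by case-by-case inspection of the bounding rays $r_1,\ldots,r_6$ that there exist constants $c_1^w, c_2^w \in \{-1, 0\}$ such that
\[
F_w = \{v \in E \mid (w\alpha_1, v) \geq c_1^w \text{ and } (w\alpha_2, v) \geq c_2^w\}.
\]
Each of the six bounding rays lies on a line $H_{\beta, c}$ with $\beta \in \Phi$ and $c \in \{-1,0\}$, and as $w$ ranges over $W_f$ the pairs of roots defining the two bounding walls of $F_w$ coincide with $\{w\alpha_1, w\alpha_2\}$. The matching can be done uniformly using the relation $r_i \cdots r_1(D) \subset F_{i+1}$.

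With this description in hand, the main claim becomes immediate. By the $W_f$-invariance of the form $(-,-)$, we have $(w\alpha_j, w\varpi_i) = (\alpha_j, \varpi_i)$, which equals $1$ when $i=j$ and $0$ otherwise. Consequently, for any $v \in F_w$ and $j \in \{1,2\}$,
\[
(w\alpha_j, v + w\varpi_i) = (w\alpha_j, v) + (\alpha_j, \varpi_i) \geq c_j^w,
\]
so $v + w\varpi_i \in F_w$. Combined with the fact (noted in Section~\ref{subsec: alcoves}) that $\cen(W) + \varpi_i = \cen(W)$, this proves $\tau_i(F_w) \subset F_w$ for $i = 1, 2$. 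The assertion for $\tau_\l$ with $\l = a\varpi_1 + b\varpi_2$ dominant follows by iterating $\tau_1^a \circ \tau_2^b$, since each factor preserves every $F_w$.

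The main obstacle is the bookkeeping in the second paragraph: correctly matching each $w \in W_f$ to its pair of bounding rays in the clockwise labeling $F_1, \ldots, F_6$. This amounts to tracking the orbit of $\{\alpha_1, \alpha_2\}$ under the reflection sequence $r_i \cdots r_1$ and verifying the identification for each of the six zones. It is routine but slightly tedious; once done, everything else is purely formal.
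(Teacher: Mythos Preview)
Your approach and the paper's are two sides of the same coin: the paper observes that the two unbounded edges of $F_w$ point in the directions $w\varpi_1$ and $w\varpi_2$, so convexity gives $F_w + w\varpi_i \subset F_w$; you instead write $F_w$ as an intersection of half-planes and check that each half-plane is preserved under translation by $w\varpi_i$. These are dual formulations (recession cone versus inward normals), and the inner-product computation $(w\alpha_j, w\varpi_i) = (\alpha_j,\varpi_i) \geq 0$ is exactly what makes both work.

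However, your half-plane description is not quite right. The paper explicitly says each $F_w$ has ``two or three edges,'' and in fact $F_{\id}$ and $F_{w_0}$ have three. Your claimed identity $F_{\id} = \{v : (\alpha_1,v)\ge -1,\ (\alpha_2,v)\ge -1\}$ is false: the right-hand side is a wedge with vertex $-\rho$, whereas the rays $r_1$ and $r_6$ actually start at $-\varpi_1$ and $-\varpi_2$ respectively, so $F_{\id}$ has a third bounded edge along $H_{\rho,-1}$ (the segment from $-\varpi_1$ to $-\varpi_2$, shared with $F_{w_0}$). For instance, $\cen(s_0)=-\tfrac{2}{3}\rho$ satisfies both of your inequalities but lies in $F_{w_0}$, not $F_{\id}$. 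Your side claim that the constants $c_j^w$ lie in $\{-1,0\}$ is also off: for example $F_{s_1}=\{v:(s_1\alpha_1,v)\ge 1,\ (s_1\alpha_2,v)\ge -1\}$, with a constant $1$.

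The fix is painless: allow a third half-plane with inward normal $w\rho$ when needed, and drop the restriction on the constants. Since $(w\rho, w\varpi_i)=(\rho,\varpi_i)=1>0$, the extra constraint is also preserved, and the rest of your argument goes through verbatim. The paper's one-line convexity argument sidesteps this bookkeeping entirely, which is its main advantage.
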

\begin{proof}
    For any $w\in W_f$, the set $F_w$ is a convex and unbounded polygon with two or three edges.
    The two unbounded edges of~$F_w$ are rays in the direction of~$w\varpi_1$ and $w\varpi_2$.
    The first part follows.
    The second part is a direct consequence of the first.
\end{proof}
\begin{lem}\label{lem: tras es inyectivo}
    For $i\in \{1,2\}$, the map $\tau_i$ is injective.
\end{lem}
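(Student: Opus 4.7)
The plan is to exploit the fact, established in Lemma~\ref{lem: traslacion preserva region} immediately above, that $\tau_i$ preserves each zone $F_w$. Combined with the uniqueness of the zone containing a given element $z\in W$ (which is built into the definition of $\tau_i$), this reduces injectivity to the trivial injectivity of an ordinary translation on a single zone.

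Concretely, I would argue as follows. Suppose $\tau_i(z_1)=\tau_i(z_2)$ for some $z_1,z_2\in W$. Let $w_1,w_2\in W_f$ be the unique elements such that $z_1\in F_{w_1}$ and $z_2\in F_{w_2}$, so by definition $\tau_i(z_j)=z_j+w_j\varpi_i$ for $j=1,2$. By Lemma~\ref{lem: traslacion preserva region}, $\tau_i(z_j)\in F_{w_j}$. Since $\tau_i(z_1)=\tau_i(z_2)$ is a single element of $W$, and any element of $W$ belongs to a unique zone $F_w$, we must have $w_1=w_2=:w$. Then the equality
\begin{equation*}
    \operatorname{cen}(z_1)+w\varpi_i=\operatorname{cen}(\tau_i(z_1))=\operatorname{cen}(\tau_i(z_2))=\operatorname{cen}(z_2)+w\varpi_i
\end{equation*}
forces $\operatorname{cen}(z_1)=\operatorname{cen}(z_2)$, and by the simple transitivity of the $W$-action on alcoves (recalled in Section~\ref{subsec: alcoves}), this gives $z_1=z_2$.

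There is essentially no obstacle: once we invoke Lemma~\ref{lem: traslacion preserva region} to guarantee that the two elements must lie in the same zone, the map becomes a genuine Euclidean translation by the fixed vector $w\varpi_i$, which is trivially injective. The only point requiring any care is to confirm that the zones $F_w$ genuinely partition the set $\operatorname{cen}(W)$ of centers of alcoves (so that $w$ is uniquely determined by $z$); this is clear from Figure~\ref{subfig: star and Fw zones intro} and the description of the boundary rays $r_1,\ldots,r_6$, since those boundaries lie on affine hyperplanes $H_{\alpha,k}$ which never pass through the center of any alcove.
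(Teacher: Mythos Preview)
Your proof is correct and follows essentially the same approach as the paper: both use Lemma~\ref{lem: traslacion preserva region} to force the two elements into the same zone, after which injectivity reduces to that of a single Euclidean translation. The paper's version is slightly terser (it starts from the zone of the common image $\tau_i(u)=\tau_i(v)$ rather than from the zones of $u$ and $v$), but the logic is identical.
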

\begin{proof}
    Suppose $\tau_i(u)=\tau_i(v)\in F_w$ for some $w\in W_f$ and $u,v\in W$.
    By Lemma~\ref{lem: traslacion preserva region}, we have $u,v\in F_w$, so the previous equality becomes $u+w\varpi_i=v+w\varpi_i$.
    This implies $\cen(u)=\cen(v)$, so $u=v$.
\end{proof}
\begin{lem}\label{lem: traslacion xw} 
    We have $(x+\lambda)_w=x_w+w\lambda$ for all $w\in W_f$, where $x_w$ is as in Definition~\ref{def: estrella}.
\end{lem}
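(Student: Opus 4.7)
The plan is to unwind Definition 3.2 and exploit the compatibility between affine reflections and translations.

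First I would observe that, by Definition 3.2, the inner vertex $x_w$ is obtained from $\cen(x)$ by a specific composition of affine reflections: if $i \in \{1,\ldots,6\}$ is the unique index such that $F_i = F_w$, then
$$\cen(x_w) \;=\; g_w(\cen(x)), \qquad g_w \coloneqq r_{i-1} \circ \cdots \circ r_1,$$
with the convention $g_\id = \id$ when $i = 1$. Since each $r_j$ is an affine reflection, it has the form $r_j(v) = L_j(v) + c_j$ for some $L_j \in W_f$ (its linear part) and some constant vector $c_j \in E$; composing these, $g_w(v) = L_w(v) + c_w$ where $L_w \coloneqq L_{i-1} \cdots L_1$.

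The main step, and the only real piece of work, is to verify that $L_w = w$ for every $w \in W_f$. Using the data $r_1 = s_{\alpha_1,-1}$, $r_2 = r_5 = s_0$, $r_3 = s_2$, $r_4 = s_1$, $r_6 = s_{\alpha_2,-1}$ from the paragraph preceding Definition 3.2, together with the identity $s_\rho = s_1 s_2 s_1 = w_0$, this is a zone-by-zone check: $L_{s_1} = s_1$, $L_{s_1 s_2} = s_\rho \cdot s_1 = s_1 s_2$, $L_{w_0} = s_2 \cdot s_\rho \cdot s_1 = w_0$, $L_{s_2 s_1} = s_1 \cdot w_0 = s_2 s_1$, and $L_{s_2} = s_\rho \cdot s_2 s_1 = s_2$, with $L_{\id} = \id$ trivially. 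This confirms that the labelling of the zones in Figure 2b is precisely dictated by the linear part of $g_w$.

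Granting this, the lemma follows from a one-line affine computation. Since $\cen(x+\lambda) = \cen(x) + \lambda$, we obtain
$$\cen\bigl((x+\lambda)_w\bigr) \;=\; g_w\bigl(\cen(x) + \lambda\bigr) \;=\; L_w(\cen(x)) + L_w(\lambda) + c_w \;=\; \cen(x_w) + w\lambda,$$
and the injectivity of $\cen$ on $W$ together with Notation 2.16 gives $(x+\lambda)_w = x_w + w\lambda$. The only obstacle worth naming is the bookkeeping in identifying the linear parts of the compositions $r_{i-1} \cdots r_1$ with the zone labels; everything else is immediate.
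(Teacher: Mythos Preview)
Your proof is correct and follows essentially the same idea as the paper: both rely on the fact that the affine map $\cen(x) \mapsto \cen(x_w)$ has linear part equal to $w$. The paper's proof is a one-liner citing Equation~\eqref{eq: identity xw}, $\cen(x_w) = w\cen(x) + wv_w$, which it had already established in Section~\ref{subsec: polygon}; you re-derive this fact directly from Definition~\ref{def: estrella} via the zone-by-zone computation of linear parts, which is precisely how Equation~\eqref{eq: identity xw} is justified in the paper (``follows from the definitions of~$x_w$ and $v_w$'').
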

\begin{proof}
    By Equation \eqref{eq: identity xw}, we have $\cen((x+\lambda)_w)=\cen(x_w)+w\lambda=\cen(x_w+w\lambda)$ so the result follows.
\end{proof}
\begin{lem}\label{lem: identities translation}
    For all $w\in W_f$, we have:
    \begin{enumerate}[label=(\roman*)]
        \item $\Par_{x,y}^w + w\lambda= \Par_{x+\lambda,y+\lambda}^w$.\label{lem: identities translation part 1}
        \item $\Pgn_{x,y}^w + w\lambda \subset \Pgn_{x+\lambda,y+\lambda}^w$.\label{lem: identities translation part 2}
    \end{enumerate}
\end{lem}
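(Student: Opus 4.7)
The plan is to establish both parts by reducing them to simple set-theoretic and geometric observations about the parallelograms $\Par_{x,y}^w$ and the zones $F_w$. For part~(i), the key is that $\Par_{x,y}^w$ is completely determined, per Definition~\ref{def: paralelogramo}, by its two opposite vertices $\cen(x_w)$ and $w\cen(y)$ together with the requirement that its edges be parallel to $w\alpha_1$ and $w\alpha_2$. Under the translation $v \mapsto v + w\lambda$, the parallelogram with opposite vertices $\{\cen(x_w), w\cen(y)\}$ becomes the parallelogram with opposite vertices $\{\cen(x_w) + w\lambda, w\cen(y) + w\lambda\}$, and the edge directions are preserved since translation commutes with direction. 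By Lemma~\ref{lem: traslacion xw} we have $\cen(x_w) + w\lambda = \cen((x+\lambda)_w)$, while $w\cen(y) + w\lambda = w\cen(y+\lambda)$ follows from Notation~\ref{nota: centroide mas vector}. Comparing with the definition of $\Par_{x+\lambda,y+\lambda}^w$ then yields equality.

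For part~(ii), I would use the distributive identity
\begin{equation*}
\Pgn_{x,y}^w + w\lambda \;=\; (\Par_{x,y}^w + w\lambda) \cap (F_w + w\lambda) \;=\; \Par_{x+\lambda,y+\lambda}^w \cap (F_w + w\lambda),
\end{equation*}
where the second equality uses part~(i). Comparing with the definition $\Pgn_{x+\lambda,y+\lambda}^w = \Par_{x+\lambda,y+\lambda}^w \cap F_w$, the desired inclusion reduces to the geometric containment $F_w + w\lambda \subset F_w$. This follows from the description of $F_w$ recalled in the proof of Lemma~\ref{lem: traslacion preserva region}: $F_w$ is a convex polygonal region whose two unbounded edges are rays in the directions $w\varpi_1$ and $w\varpi_2$, so translating by any nonnegative linear combination $w\lambda = aw\varpi_1 + bw\varpi_2$ (with $a, b \geq 0$, since $\lambda$ is dominant) sends $F_w$ into itself.

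The only mild subtlety is to handle the degenerate case of Definition~\ref{def: paralelogramo}, where $\cen(x_w)$ and $w\cen(y)$ differ by a scalar multiple of $w\alpha_1$ or $w\alpha_2$ and $\Par_{x,y}^w$ collapses to a segment $\Sgm(\cen(x_w), w\cen(y))$. In that situation the same argument applies verbatim: a segment translates to the segment between the translated endpoints, and the collinearity defining degeneracy is clearly preserved under translation, so the reduction to opposite vertices still determines the figure uniquely. Thus there is no genuine obstacle to the proof; both statements follow directly from Lemma~\ref{lem: traslacion xw} together with the description of $F_w$ as a cone based on $w\varpi_1, w\varpi_2$.
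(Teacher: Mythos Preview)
Your proposal is correct and follows essentially the same approach as the paper. The only cosmetic difference is in part~(ii): the paper writes the one-line inclusion $(\Par_{x,y}^w\cap F_w)+w\lambda\subset(\Par_{x,y}^w+w\lambda)\cap F_w$ directly, whereas you first use the equality $(A\cap B)+v=(A+v)\cap(B+v)$ and then invoke $F_w+w\lambda\subset F_w$; both routes amount to the same observation, and your explicit treatment of the degenerate segment case is a nice bit of extra care that the paper leaves implicit.
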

\begin{proof}
    We prove both claims:
    \begin{enumerate}[label=(\roman*)]
        \item By definition of~$\Par_{x,y}^w$ and Lemma~\ref{lem: traslacion xw}, $\Par_{x,y}^w + w\lambda$ is the parallelogram with opposite vertices $w\cen(y)+w\lambda=w\cen(y+\lambda)$ and $\cen(x_w)+w\lambda=(x+\lambda)_w$ and sides parallel to $w\alpha_1$ and $w\alpha_2$.
        By definition of~$\Par_{x+\lambda,+\lambda y}^w$, we conclude  $\Par_{x,y}^w+w\lambda=\Par_{x+\lambda,y+\lambda}^w$.
        \item $\Pgn_{x,y}^w + w\lambda=(\Par_{x,y}^w\cap\, F_w) + w\lambda \subset (\Par_{x,y}^w+w\lambda)\cap F_w = \Pgn_{x+\lambda,y+\lambda}^w$.
        This is because $F_w+w\lambda\subset F_w$ (Lemma~\ref{lem: traslacion preserva region}.)
    \end{enumerate}
\end{proof}
\begin{lem}\label{lem: tras intervalo es subconjunto}
    For $i\in \{1,2\}$, we have $\tau_i([x,y])\subset [\tau_i(x),\tau_i(y)]$.
\end{lem}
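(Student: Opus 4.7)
The plan is to reduce the statement to a direct application of Lemma~\ref{lem: identities translation}\ref{lem: identities translation part 2} via the geometric description of intervals provided by Equation~\eqref{eq: interval as union of polygons}.

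First, I would unpack the right-hand side. Since $x$ and $y$ are dominant, both lie in $F_{\id}$, so by definition of $\tau_i$ we have $\tau_i(x) = x + \varpi_i$ and $\tau_i(y) = y + \varpi_i$. The goal is therefore to show that for every $z \in [x,y]$,
\begin{equation*}
    \tau_i(z) \in [x + \varpi_i, y + \varpi_i].
\end{equation*}

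Next, I would localize $z$ to a single zone. By Equation~\eqref{eq: interval as union of polygons}, the interval decomposes as $[x,y] = \biguplus_{w \in W_f} \cen^{-1}(\Pgn_{x,y}^w)$, so there exists a unique $w \in W_f$ with $\cen(z) \in \Pgn_{x,y}^w \subset F_w$. In particular $z \in F_w$, so by the definition of the piecewise translation $\tau_i$, we get $\cen(\tau_i(z)) = \cen(z) + w\varpi_i$.

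Finally, I would apply Lemma~\ref{lem: identities translation}\ref{lem: identities translation part 2}, which gives $\Pgn_{x,y}^w + w\varpi_i \subset \Pgn_{x+\varpi_i, y+\varpi_i}^w$. Combining this with the previous step,
\begin{equation*}
    \cen(\tau_i(z)) = \cen(z) + w\varpi_i \in \Pgn_{x+\varpi_i, y+\varpi_i}^w.
\end{equation*}
Invoking Equation~\eqref{eq: interval as union of polygons} once more for the translated interval, this yields $\tau_i(z) \in [x+\varpi_i, y+\varpi_i] = [\tau_i(x), \tau_i(y)]$, which is the desired inclusion. There is no substantial obstacle: the whole argument is a bookkeeping exercise once Lemma~\ref{lem: identities translation} and the geometric description of Bruhat intervals from Section~\ref{subsec: polygon} are in hand.
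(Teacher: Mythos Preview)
Your proposal is correct and follows essentially the same approach as the paper's own proof: localize $z$ to a zone $F_w$ via Equation~\eqref{eq: interval as union of polygons}, then apply Lemma~\ref{lem: identities translation}\ref{lem: identities translation part 2} to push $\cen(z)$ into $\Pgn_{x+\varpi_i,y+\varpi_i}^w$, and conclude using Equation~\eqref{eq: interval as union of polygons} again. The paper's argument is slightly more terse but structurally identical.
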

\begin{proof}
    Let $x\leq z\leq y$ and $w\in W_f$ be such that $z\in F_w$.
    We need to prove that $x+\varpi_i\leq z+w\varpi_i\leq y+\varpi_i$.
    By Equation \eqref{eq: interval as union of polygons}, it is enough to show that $\operatorname{cen}(z+w\varpi_i)\in \Pgn^w_{x+\varpi_i,y+\varpi_i}$.
    Note that $\cen(z)\in \Pgn_{x,y}^w$.
    By Lemma~\ref{lem: identities translation}\ref{lem: identities translation part 2},
    \begin{equation*}
    \cen(z+w\varpi_i)=\cen(z)+w\varpi_i\in \Pgn_{x,y}^w+w\varpi_i\subset \Pgn^w_{x+\varpi_i,y+\varpi_i}.\qedhere
    \end{equation*}
\end{proof}
\begin{prop}\label{prop: traslaciones que preservan pentagonos}
    Let $\mu\in \Lambda$ be any weight.
    If $\Pgn_{x,y}$ is a pentagon with two adjacent vertices on~$\mathbbm{R}_{\geq 0}\varpi_{i}+\alpha_{i}$ for some $i\in\{1,2\}$, then $\Pgn_{x,y}+\mu=\Pgn_{x+\mu,y+\mu}$ only if $\mu\in\mathbbm{R}\varpi_{i}$.
\end{prop}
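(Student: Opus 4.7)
The plan is to show that the unique side of the pentagon $\Pgn_{x,y}$ not parallel to $\alpha_1$ or $\alpha_2$ lies on a specific wall of the dominant zone $F_\id$, and then to compute how this wall shifts under translation by $\mu$.

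First I would identify the line $\mathbbm{R}_{\geq 0}\varpi_i + \alpha_i$. In type $A_2$, the identities $(\alpha_j, \varpi_i) = \delta_{ij}$ and $(\alpha_j, \alpha_i) = -1$ (for $j \neq i$) give $(\alpha_j, t\varpi_i + \alpha_i) = -1$ for every $t \in \mathbbm{R}$, where $j$ denotes the unique index in $\{1,2\} \setminus \{i\}$. Hence $\mathbbm{R}\varpi_i + \alpha_i$ coincides with the affine line $H_{\alpha_j, -1}$, which by Definition~\ref{def: estrella} is one of the two walls bounding $F_\id$ (the other being $H_{\alpha_i, -1}$, carried by the reflections $r_1 = s_{\alpha_1,-1}$ and $r_6 = s_{\alpha_2,-1}$). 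The hypothesis therefore becomes the statement that $\Pgn_{x,y} = \Par_{x,y} \cap F_\id$ is a pentagon with a full edge on the wall $H_{\alpha_j, -1}$, and this edge is parallel to $\varpi_i$.

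Next I would transport this edge under translation. The proof of Lemma~\ref{lem: identities translation}\ref{lem: identities translation part 1} (with $w = \id$) is a purely geometric identity and does not use dominance of the translation parameter, so $\Par_{x+\mu, y+\mu} = \Par_{x,y} + \mu$ for every $\mu \in \Lambda$. Consequently the translated pentagon $\Pgn_{x,y} + \mu$ has an edge on $H_{\alpha_j, -1} + \mu = H_{\alpha_j, -1+(\alpha_j, \mu)}$, still parallel to $\varpi_i$.

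Finally I would match this with $\Pgn_{x+\mu, y+\mu} = \Par_{x+\mu, y+\mu} \cap F_\id$. By the assumed equality, that same segment is an edge of $\Pgn_{x+\mu, y+\mu}$, whose boundary is composed of edges of $\Par_{x+\mu, y+\mu}$ (parallel to $\alpha_1$ or $\alpha_2$) together with portions of the walls $H_{\alpha_1, -1}$ and $H_{\alpha_2, -1}$ of $F_\id$ (parallel to $\varpi_2$ and $\varpi_1$ respectively). Since in type $A_2$ no $\alpha_k$ is parallel to any $\varpi_m$, an edge parallel to $\varpi_i$ can only be contributed by the wall $H_{\alpha_j, -1}$; equating this with the translated description yields $H_{\alpha_j, -1+(\alpha_j, \mu)} = H_{\alpha_j, -1}$, hence $(\alpha_j, \mu) = 0$, which forces $\mu \in \mathbbm{R}\varpi_i$. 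The argument is essentially a one-line direction count once the wall is identified, so no substantive obstacle is expected; the one care point is to verify that the $\varpi_i$-parallel side of $\Pgn_{x+\mu, y+\mu}$ is unique, which holds because roots and fundamental weights span pairwise distinct lines in type $A_2$.
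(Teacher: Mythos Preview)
Your argument is correct and amounts to the same geometric observation as the paper's proof, phrased in terms of edge directions rather than interior angles: the paper notes that a pentagon touching $\mathbbm{R}\varpi_1+\alpha_1$ and one touching $\mathbbm{R}\varpi_2+\alpha_2$ have different interior angles and so cannot be translates, which is equivalent to your statement that the unique $\varpi_i$-parallel edge must lie on the wall $H_{\alpha_j,-1}$. Both arguments then conclude $(\alpha_j,\mu)=0$ immediately.
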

\begin{proof}
    Let $x',y'\in D$ be dominant elements, such that $\Pgn_{x',y'}$ is a pentagon with vertices lying in~$\mathbbm{R}\varpi_2+\alpha_2$.
    It is easy to prove, by comparing the interior angles of both polygons, that if $\Pgn_{x,y}$ has two vertices lying on  $\mathbbm{R}\varpi_{1}+\alpha_{1}$, then $\Pgn_{x',y'}$ cannot be a translation of~$\Pgn_{x,y}$.
    Since this remains true for any choice of dominant elements $x',y'$, the result follows.
\end{proof}
\begin{prop}\label{prop: traslaciones que preservan Pgn}
    The following statements hold: 
    \begin{enumerate}
        \item\label{item-prop: trasl presev par} If $\Pgn_{x,y}$ is a parallelogram, then $\Pgn_{x,y}+\lambda= \Pgn_{x+\lambda,y+\lambda}$.
        \item\label{item-prop: trasl presev pent} If $\Pgn_{x,y}$ is a pentagon with two adjacent vertices on~$\mathbbm{R}_{\geq 0}\varpi_{i}+\alpha_{i}$ for some $i\in\{1,2\}$, then $\Pgn_{x,y}+\lambda=\Pgn_{x+\lambda,y+\lambda}$ if and only if $\lambda\in\mathbbm{R}_{\geq 0}\varpi_{i}$.
        \item\label{item-prop: trasl presev hex} If $[x,y]$ is a hexagon interval, then $\Pgn_{x,y}+\lambda= \Pgn_{x+\lambda,y+\lambda}$ only if $\lambda=0$.
    \end{enumerate}
\end{prop}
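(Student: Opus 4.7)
The plan is to reduce all three claims to a single geometric analysis of when translation commutes with intersection against the cone $F_\id$. By Lemma~\ref{lem: identities translation}\ref{lem: identities translation part 1}, $\Par_{x+\lambda,y+\lambda}=\Par_{x,y}+\lambda$, so I will compare $\Pgn_{x+\lambda,y+\lambda}=(\Par_{x,y}+\lambda)\cap F_\id$ against $\Pgn_{x,y}+\lambda=(\Par_{x,y}+\lambda)\cap(F_\id+\lambda)$. For dominant $\lambda$, the inclusion $F_\id+\lambda\subset F_\id$ holds, so the desired identity is equivalent to $(\Par_{x,y}+\lambda)\cap(F_\id\setminus(F_\id+\lambda))=\emptyset$. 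The two walls of $F_\id$ are rays emanating from the apex $-\rho$ in the directions $\varpi_1$ and $\varpi_2$, contained in the lines $H_{\alpha_2,-1}$ and $H_{\alpha_1,-1}$ respectively, and a direct calculation with $(\alpha_j,\varpi_i)=\delta_{ij}$ gives
\begin{equation*}
F_\id\setminus(F_\id+c\varpi_i)=\{v\in F_\id:(\alpha_i,v)<c-1\},
\end{equation*}
i.e.\@ a strip along the $\varpi_{3-i}$-wall; this identity drives the proof.

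For (1), $\Pgn_{x,y}$ being a parallelogram is equivalent to $\Par_{x,y}\subset F_\id$, and since $F_\id+\lambda\subset F_\id$, the translated parallelogram $\Par_{x,y}+\lambda$ still lies in $F_\id$, making the set-difference above trivially empty. For the ``only if'' direction of (2), I apply Proposition~\ref{prop: traslaciones que preservan pentagonos} to the dominant weight $\lambda$ to obtain $\lambda\in\mathbb{R}\varpi_i$, and dominance then forces $\lambda\in\mathbb{R}_{\geq 0}\varpi_i$. For the ``if'' direction, write $\lambda=c\varpi_i$ with $c\geq 0$; the pentagon hypothesis says $\Par_{x,y}$ does not cross the $\varpi_{3-i}$-wall, which in coordinates reads $(\alpha_i,v)\geq -1$ on $\Par_{x,y}$, so $(\alpha_i,v')\geq c-1$ on $\Par_{x,y}+\lambda$, precisely the condition to avoid the strip above.

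For (3), I will argue by contradiction. In a genuine hexagon $\Pgn_{x,y}$ all six vertices are distinct, so each of the four wall-vertices differs from both $\cen(x)$ and $\cen(y)$ (otherwise $\Par_{x,y}$ degenerates to a segment). Moreover, none of the wall-vertices can coincide with the apex $-\rho$: for instance the $\varpi_1$-wall vertex $P_1=\cen(x)+s\alpha_1$ with $s=(\alpha_2,\cen(x))+1$ would equal $-\rho$ only if $\cen(x)=-\rho$, which is impossible since $-\rho$ is not an alcove center, and the other three wall-vertices are handled analogously. Now suppose $\Pgn_{x,y}+\lambda=\Pgn_{x+\lambda,y+\lambda}$ for some dominant $\lambda=a\varpi_1+b\varpi_2\neq 0$, and pick a $\varpi_1$-wall vertex $P=-\rho+t\varpi_1$ of $\Pgn_{x,y}$ with $t>0$. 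Then $P+\lambda$ is a vertex of the hexagon $\Pgn_{x+\lambda,y+\lambda}$, so it must equal $\cen(x+\lambda)$, $\cen(y+\lambda)$, or a wall-vertex of $F_\id$. The first two are ruled out by the previous paragraph, and $P+\lambda$ lying on the $\varpi_2$-wall would force $t+a=0$ with $t,a\geq 0$, contradicting $t>0$. Hence $P+\lambda$ lies on the $\varpi_1$-wall, i.e.\@ $(\alpha_2,P+\lambda)=-1$, yielding $b=0$; the symmetric argument applied to a $\varpi_2$-wall vertex gives $a=0$, contradicting $\lambda\neq 0$.

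The principal technical hurdle is the apex-avoidance verification for the wall-vertices in a hexagon; once this geometric fact is established, the three parts reduce to routine bookkeeping through the strip identity displayed above together with Lemma~\ref{lem: identities translation}.
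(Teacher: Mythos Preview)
Your approach matches the paper's: part~(1) is identical, part~(2) uses the same half-plane-invariance idea (the paper packages it as $S_H+\lambda=(S+\lambda)_H$ for the half-plane $H=\{(\alpha_2,u)\geq-1\}$), and part~(3) is the same wall-vertex tracking argument in different coordinates (the paper parametrizes the walls as the affine lines $\mathbbm{R}\varpi_i+\alpha_i$ rather than from a putative apex).

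There is one recurring geometric slip worth flagging. The region $F_\id$ is not a cone with apex $-\rho$; it has three edges --- the two unbounded rays on $H_{\alpha_1,-1}$ and $H_{\alpha_2,-1}$ start at $-\varpi_1$ and $-\varpi_2$ respectively and are joined by the bounded edge of $\overline{A_+}$ on $H_{\rho,-1}$ (cf.\ the proof of Lemma~\ref{lem: traslacion preserva region}, which says $F_w$ has ``two or three edges''). Consequently your strip identity is not literally correct, since translating by $c\varpi_i$ also shifts the $\rho$-edge. This does not break the argument: every point of $\Par_{x,y}+\lambda$ satisfies $(\rho,v)\geq(\rho,\cen(x))+(\rho,\lambda)>(\rho,\lambda)-1$ because $x$ is dominant, so the $\rho$-edge of $F_\id+\lambda$ is never reached on the parallelograms in play, and your strip computation is valid once restricted there. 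Relatedly, your apex-avoidance step is both unnecessary and incorrect as written: the implication ``$P_1=-\rho$ only if $\cen(x)=-\rho$'' fails (solving $\cen(x)+s\alpha_1=-\rho$ with $s=(\alpha_2,\cen(x))+1$ gives $\cen(x)=-\rho-s\alpha_1$ for \emph{any} $s$), but since the $\varpi_1$-wall is actually $\{-\varpi_2+t'\varpi_1:t'\geq0\}=\{-\rho+(t'+1)\varpi_1:t'\geq0\}$, every wall vertex already has $t\geq1>0$ automatically.
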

\begin{proof} 
    \begin{enumerate}
        \item It is not hard to prove (using that $\cen(x),\cen(y)\in \operatorname{V}(\Pgn_{x,y})$) that $\Pgn_{x,y}$ is a parallelogram if and only if $\Par_{x,y}=\Pgn_{x,y}$.
        By Lemma~\ref{lem: traslacion preserva region}, we have $\Pgn_{x,y}+\lambda\subset F_\id$.
        By Lemma~\ref{lem: identities translation}\ref{lem: identities translation part 1}, we have $\Pgn_{x,y}+\lambda=\Par_{x+\lambda,y+\lambda}$, so
        \begin{equation*}
            \Pgn_{x+\lambda,y+\lambda}=\Par_{x+\lambda,y+\lambda}\cap\, F_\id=(\Pgn_{x,y}+\lambda)\cap F_\id = \Pgn_{x,y}+\lambda.
        \end{equation*}
        \item  The implication $\implies$ is given by Proposition~\ref{prop: traslaciones que preservan pentagonos}.
            
        Let us prove the implication $\impliedby$.
        Without loss of generality, we can assume that $i=1$.
        Consider the following general result of planar geometry: 
    
        Let $H$ be a closed half-plane of the plane $E$.
        For any subset $X\subset E$, we use the notation $X_H:=X\cap H$.
        If $v$ is a vector such that $\partial H+v=\partial H$, then for any subset $S\subset E$ we have $ S_H+v=(S+v)_H$.
        
        We apply this result to the special case where $H=\{u\in E\mid (\alpha_2, u)\geq -1\}$, $v=\lambda\in\mathbbm{R}_{\geq 0}\varpi_1$, and $S=\Par_{x,y}$ (note that $\partial H = \mathbbm{R}\varpi_1+\alpha_1$ and $\lambda+\partial H = \partial H$.)
        Thus, we have $(\Par_{x,y})_H+\lambda = (\Par_{x,y}+\lambda)_H$.
        We have $(\Par_{x,y})_H\subset F_\id$, so $(\Par_{x,y})_H=\Pgn_{x,y}$.
        By combining these identities, we get $\Pgn_{x,y}+\lambda = (\Par_{x,y}+\lambda)_H$.
        By Lemma~\ref{lem: identities translation}\ref{lem: identities translation part 1} and the fact that $F_\id\subset H$, we have
        \begin{align*}
            \Pgn_{x+\lambda,y+\lambda} 
            &=(\Par_{x+\lambda,y+\lambda})_H\cap F_\id\\
            &=(\Par_{x,y}+\lambda)_H\cap F_\id\\
            &=(\Pgn_{x,y}+\lambda)\cap F_\id\\
            &= \Pgn_{x,y}+\lambda.
        \end{align*}
        The last equality follows from $\Pgn_{x,y}\subset F_\id$ and Lemma~\ref{lem: traslacion preserva region}.
        \item Recall that every element of~$\mathrm{V}(\Pgn_{x,y})$ different from $\cen(x)$ and $\cen(y)$ belongs either to $\mathbbm{R}\varpi_1+\alpha_1$ or $\mathbbm{R}\varpi_{2}+\alpha_{2}$.
        The analogous statement holds for $\mathrm{V}(\Pgn_{x+\mu,y+\mu})$.
        Let $i\in \{1,2\}$, and let $a_i\in \operatorname{V}(\Pgn_{x,y})$ adjacent to $\cen(x)$ such that 
        $a_i\in \mathbbm{R}\varpi_i+\alpha_i$.
        Since $a_i+\lambda\in \mathrm{V}(\Pgn_{x+\lambda,y+\lambda})\setminus\{\cen(x+\lambda),\cen(y+\lambda)\}$, and $\l$ is a dominant weight, we have
        \begin{equation}
            a_i+\l\in \mathbbm{R}\varpi_i+\alpha_i.
        \end{equation}
        The condition $a_i+\l\in \mathbbm{R}\varpi_i+\a_i$ holds only if $\l\in \mathbbm{R}_{\geq 0}\varpi_i$, so $\lambda=0$.\qedhere
    \end{enumerate}
\end{proof}

\subsection{Piecewise translations preserving cardinality}\label{subsec: translations preserving cardinality}

To determine whether the map $\tau_\lambda$ defines a poset isomorphism, we begin by comparing the cardinalities of the intervals $[x, y]$ and $[x+\lambda, y+\lambda]$.
In this section, we prove Proposition~\ref{prop: cardinality under translation}, which establishes conditions under which these two intervals have the same cardinality.

\begin{nota}
    If $A \subset E$, we denote $cA := \operatorname{cen}(W) \cap A$.
\end{nota}

Let $A \subset E$.
For any weight $\mu \in \Lambda$, recall from Section~\ref{subsec: alcoves} that $\operatorname{cen}(W) = \operatorname{cen}(W) + \mu$.
This implies that $\operatorname{cen}(W) \cap (A + \mu) = (\operatorname{cen}(W) \cap A) + \mu$, or in compressed notation,
\begin{equation*}
    c(A + \mu) = cA + \mu.
\end{equation*}
Similarly, for $z \in W$, we have $z(\operatorname{cen}(W)) = \operatorname{cen}(W)$.
Hence, for any subset $A \subset E$, we obtain $\operatorname{cen}(W) \cap zA = z(\operatorname{cen}(W) \cap A)$, or equivalently, 
\begin{equation*}
    czA = z(cA).
\end{equation*}
We typically prefer the notation $czA$ over $zcA$ to emphasize that the result lies in $\operatorname{cen}(W)$.

Let $z \in W$.
As discussed in Section~\ref{subsec: alcoves}, the map $z\colon E \to E$ is a bijection.
In particular, for any subset $A \subset E$, the restricted map $z\vert_{cA} \colon cA \to czA$ is a bijection, so $|cA| = |czA|$.
Similarly, for any $\mu \in \Lambda$ and any $A \subset E$, we have $|cA| = |cA + \mu|$.

Summarizing, for any $z \in W$, $\mu \in \Lambda$, and $A \subset E$, we have:
\begin{align}
    c(A + \mu) &= cA + \mu, \label{eq:centro de translation} \\
    |cA| &= |cA + \mu|.\label{eq:cardinality translation}
\end{align}
Finally, by Equation~\eqref{eq: interval as union of polygons}, we have:
\begin{equation}\label{eq: cardinal intervalo con centros}
    |[x, y]| = \sum_{w \in W_f} |\cPgn_{x, y}^w|.
\end{equation}

\begin{lem}\label{lem: cantidad de centroides de Pgn w}
    Let $w\in W_f$, then $|\cPgn^w_{x,y}|\leq |\cPgn^w_{x+\lambda,y+\lambda}|$ for all $w\in W_f$.
    So $|[x,y]|\leq |[x+\lambda,y+\lambda]|$.
\end{lem}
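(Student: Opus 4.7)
The plan is to deduce the lemma almost directly from Lemma~\ref{lem: identities translation}\ref{lem: identities translation part 2} together with the identities~\eqref{eq:centro de translation} and~\eqref{eq:cardinality translation} relating centers of alcoves and translations by weights.

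First, I would fix $w \in W_f$ and apply Lemma~\ref{lem: identities translation}\ref{lem: identities translation part 2}, which gives
\begin{equation*}
    \Pgn_{x,y}^w + w\lambda \subset \Pgn_{x+\lambda,y+\lambda}^w.
\end{equation*}
Intersecting both sides with $\cen(W)$ and using that $w\lambda \in \Lambda$, Equation~\eqref{eq:centro de translation} yields
\begin{equation*}
    c\Pgn_{x,y}^w + w\lambda = c(\Pgn_{x,y}^w + w\lambda) \subset c\Pgn_{x+\lambda,y+\lambda}^w.
\end{equation*}
Taking cardinalities and applying Equation~\eqref{eq:cardinality translation} to the left-hand side, we get
\begin{equation*}
    |c\Pgn_{x,y}^w| = |c\Pgn_{x,y}^w + w\lambda| \leq |c\Pgn_{x+\lambda,y+\lambda}^w|,
\end{equation*}
which is the first assertion.

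For the second assertion, sum the inequality over all $w \in W_f$ and use Equation~\eqref{eq: cardinal intervalo con centros} on both sides to conclude
\begin{equation*}
    |[x,y]| = \sum_{w \in W_f} |\cPgn_{x,y}^w| \leq \sum_{w \in W_f} |\cPgn_{x+\lambda,y+\lambda}^w| = |[x+\lambda,y+\lambda]|.
\end{equation*}
There is no real obstacle here; the statement is essentially a bookkeeping consequence of the containment of polygons established in Lemma~\ref{lem: identities translation} combined with the translation-invariance of the set of centers of alcoves under weights.
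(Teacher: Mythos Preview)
Your proof is correct and follows essentially the same approach as the paper's own argument: both use Lemma~\ref{lem: identities translation}\ref{lem: identities translation part 2} to obtain the containment $c\Pgn^w_{x,y}+w\lambda\subset c\Pgn^w_{x+\lambda,y+\lambda}$ via Equation~\eqref{eq:centro de translation}, apply Equation~\eqref{eq:cardinality translation} for the cardinality inequality, and then sum over $w\in W_f$ using Equation~\eqref{eq: cardinal intervalo con centros}.
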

\begin{proof}
    By Equation \eqref{eq:centro de translation} and Lemma~\ref{lem: identities translation}\ref{lem: identities translation part 2} we have
    \begin{equation*}
        c\Pgn^w_{x,y}+w\lambda\subset c\Pgn^w_{x+\lambda, y+\lambda}.
    \end{equation*}
    By Equation \eqref{eq:cardinality translation} we have $\vert c\Pgn^w_{x,y}+w\lambda \vert= \vert c\Pgn^w_{x,y}\vert$, which allows us to conclude $|\cPgn^w_{x,y}|\leq |\cPgn^w_{x+\lambda,y+\lambda}|$.
    By Equation \eqref{eq: cardinal intervalo con centros}, this implies $|[x,y]|\leq |[x+\lambda,y+\lambda]|$.
\end{proof}

\begin{prop}\label{prop: igualdad en F_id implica igualdad Pgn^w+l}
    If $\Pgn_{x,y}+\lambda=\Pgn_{x+\lambda,y+\lambda}$, then $\Pgn_{x,y}^w+w\lambda=\Pgn_{x+\lambda,y+\lambda}^w$ for all $w\in W_f.$
\end{prop}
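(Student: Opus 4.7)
The idea is to unfold the definitions into a ``strip-avoidance'' condition and then transport it from $F_\id$ to every other zone $F_w$.

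Since $\Pgn_{x,y}^w = \Par_{x,y}^w\cap F_w$ by Definition~\ref{def: polygon}, and $\Par_{x+\lambda,y+\lambda}^w = \Par_{x,y}^w+w\lambda$ by Lemma~\ref{lem: identities translation}\ref{lem: identities translation part 1}, the target identity $\Pgn_{x,y}^w+w\lambda = \Pgn_{x+\lambda,y+\lambda}^w$ rewrites as
\begin{equation*}
\Par_{x+\lambda,y+\lambda}^w\cap(F_w+w\lambda) = \Par_{x+\lambda,y+\lambda}^w\cap F_w.
\end{equation*}
Because $F_w+w\lambda\subset F_w$ (Lemma~\ref{lem: traslacion preserva region}), one inclusion is automatic and the content becomes
\begin{equation*}
\Par_{x+\lambda,y+\lambda}^w\cap\bigl(F_w\setminus(F_w+w\lambda)\bigr)=\emptyset,
\end{equation*}
i.e., the translated parallelogram must avoid the ``missing strip'' of $F_w$ left behind when $F_w$ is pushed by $w\lambda$. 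Applying the same reasoning with $w=\id$, the hypothesis of the proposition becomes exactly this claim for the gray zone, so the task is to propagate strip-avoidance from $F_\id$ to the other five zones.

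To propagate, I invoke Proposition~\ref{prop: traslaciones que preservan Pgn}, which classifies the situations where the hypothesis can hold: the trivial case $\lambda=0$; the case where $\Pgn_{x,y}$ is a parallelogram, forcing $\Par_{x,y}\subset F_\id$; and the pentagon case where $\Pgn_{x,y}$ has an edge on $\R_{\geq 0}\varpi_i+\alpha_i$ for some $i\in\{1,2\}$ and $\lambda\in\R_{\geq 0}\varpi_i$. In each case I express
$\Par_{x+\lambda,y+\lambda}^w = w\bigl(\Par_{x+\lambda,y+\lambda}\setminus S^{x+\lambda,y+\lambda}_w\bigr)$
using Proposition~\ref{prop: pol otras regiones}, together with the elementary identity $S^{x+\lambda,y+\lambda}_w = S^{x,y}_w+\lambda$ (immediate from the definition of the strips), and then verify the strip-avoidance in $F_w$ directly from the constraints on $\Par_{x,y}$ and $\lambda$.

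The main obstacle will be the pentagon case. The zones $F_w$ are not the naive $W_f$-images $wF_\id$: their bounding rays come from the affine reflections $s_{\alpha_i,-1}$ and $s_0$ rather than from the linear reflections $s_1,s_2$, so $F_w$ is only a translate of a subset of $wF_\id$. One therefore has to track simultaneously the linear $w$-action on $\Par_{x,y}$ and the affine positioning of the walls of $F_w$, and check that the edge of the pentagon lying along $\R\varpi_i+\alpha_i$ is carried by $w$ to an edge of $\Par_{x,y}^w$ that hugs the parallel wall of $F_w$, tangentially to $w\lambda$. The parallelogram case is comparatively easy: $\Par_{x,y}\subset F_\id$ forces the subparallelogram $\Par_{x,y}\setminus S^{x,y}_w$ to be small enough that its $w$-image, shifted by $w\lambda$, lands comfortably inside $F_w+w\lambda$.
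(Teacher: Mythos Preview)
Your proposal is correct and follows essentially the same approach as the paper's proof. Both arguments invoke Proposition~\ref{prop: traslaciones que preservan Pgn} to split into the hexagon, pentagon, and parallelogram cases, both rely on the identity $\Par_{x,y}^w = w(\Par_{x,y}\setminus S^{x,y}_w)$ from (the proof of) Proposition~\ref{prop: pol otras regiones} together with $S^{x+\lambda,y+\lambda}_w = S^{x,y}_w+\lambda$, and both reduce the nontrivial inclusion to the statement that $w(\Pgn_{x,y}\setminus S^{x,y}_w)$ already lies in $F_w$ (or, in your equivalent phrasing, that $\Par_{x+\lambda,y+\lambda}^w$ avoids the strip $F_w\setminus(F_w+w\lambda)$). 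The paper records this last step as two explicit claims---one for the pentagon case distinguishing $w=s_i$ from the remaining $w$'s, and one for the parallelogram case---whereas you describe the same verification in geometric language; the technical content is identical.
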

\begin{proof}
    \textbf{Case A.} If $\Pgn_{x,y}$ is a hexagon, by Proposition~\ref{prop: traslaciones que preservan Pgn} we have $\lambda=w\lambda=0$ and the proposition follows.
        
    To prove the cases where $\Pgn_{x,y}$ is a pentagon or a parallelogram, we need much more work.
    Let $w\in W_f$.
    By Proposition~\ref{prop: pol otras regiones}, we have 
    \begin{align}
        \Pgn_{x,y}^w+w\lambda &=\big(w(\operatorname{Pgn}_{x,y}^{}\setminus \, S^{x,y}_{w})\cap F_w\big)+w\lambda\\
        & =\big(w\big(\operatorname{Pgn}_{x,y}^{}\setminus \, S^{x,y}_{w}\big) +w\lambda\big)\cap \big(F_w +w\lambda\big)\\
        & \subset w  \big( (\operatorname{Pgn}_{x,y}^{}\setminus \, S^{x,y}_{w})+\lambda\big) \cap F_w \label{eq: inclusion lema igul implica idnt}\\
        &=w\big( \operatorname{Pgn}_{x+\lambda,y+\lambda}^{}\setminus \, S^{x+\lambda,y+\lambda}_{w}\big)\cap F_w\\
        & = \Pgn_{x+\lambda,y+\lambda}^w.
    \end{align}
    The inclusion in the third line of \eqref{eq: inclusion lema igul implica idnt} follows from Lemma~\ref{lem: traslacion preserva region}.
    The fourth equality is a consequence of the hypothesis and $S^{x+\lambda,y+\lambda}_{w}=S^{x,y}_{w}+\lambda$ (this last equality easily follows from the definition of~$S^{x,y}_{w}$ and Lemma~\ref{lem: identities translation}\ref{lem: identities translation part 1}).
    To finish the proof of the proposition, we need to prove the inverse inclusion  $\Pgn_{x+\lambda,y+\lambda}^w\subset \Pgn_{x,y}^w+w\lambda$, in other words, we need to prove that
    \begin{equation*}
        A\coloneqq \big( w   (\operatorname{Pgn}_{x,y}^{}\setminus \, S^{x,y}_{w})+w\lambda\big) \cap F_w\subset \big(w(\operatorname{Pgn}_{x,y}^{}\setminus \, S^{x,y}_{w})\cap F_w\big) +w\lambda\eqqcolon B.
    \end{equation*}
    Let $p\in A$, then $p=wp_1+w\lambda\in F_w$ for some $p_1\in \Pgn_{x,y}\setminus\, S_{w}^{x,y}$.
    Then $p\in B$ if and only if $wp_1\in F_w$.
    So we need to prove that $wp_1\in F_w$.
    \begin{claim}\label{claim: Pentagon minus strip in F_w}
            Let $\Pgn_{x,y}$ be a pentagon with two adjacent vertices on~$\mathbbm{R}\varpi_{1}+\alpha_{1}$.
            \begin{enumerate}[label=(\roman*)]
                \item $s_1(\Pgn_{x,y}\setminus\, S_{s_1}^{x,y})\subset  F_{s_1}$.\label{claim: Pentagon minus strip in F_w part 1}
                \item Let $w\in W_f\setminus \{\id,s_1\}$ and $a\in \mathbbm{Z}_{\geq 0}$.
                If $v\in\Pgn_{x,y}\setminus\, S_{w}^{x,y}$ and $w(v+a\varpi_1)\in F_{w}$, then $wv\in F_{w}$.\label{claim: Pentagon minus strip in F_w part 2}  
            \end{enumerate}                   
        \end{claim}
        \begin{proof}
            \begin{enumerate}[label=(\roman*)]
                \item From Figure~\ref{subfig: star and Fw zones intro} we get 
                \begin{equation}\label{eq: Fs1 como imagen de Fid}
                    F_{s_1}=s_1\{v\in F_{\id}\mid (\alpha_1, v)\geq 1\}.
                \end{equation}
                Let $b$ be the only vertex of~$\Pgn_{x,y}$ adjacent to both $\cen(x)$ and $\cen(y)$.
                In particular, $(\alpha_1, b+\alpha_1)\geq 1$.
                From the definitions of~$S^{x,y}_{s_1}$ and $\Pgn_{x,y}$, every element of~$\Pgn_{x,y}\setminus\, S^{x,y}_{s_1}$ can be written as $b+\alpha_1+c\alpha_1-d\alpha_2$, for some $c,d\geq 0$.
                Hence $\Pgn_{x,y}\setminus\, S^{x,y}_{s_1}\subset \{v\in F_\id\mid (\alpha_1,v)\geq (\alpha_1,b+\alpha_1)\}$.
                The claim follows from Equation \eqref{eq: Fs1 como imagen de Fid}.
                \item We only prove the case $w=s_2$; the other cases are similar, so we omit them.
                By hypothesis $v\in\Pgn_{x,y}\setminus\, S_{s_2}^{x,y}$.
                Let us prove the claim by contrapositive, so let us suppose that $s_2v\not\in F_{s_2}$.
                Thus $s_2v\in s_2(F_{\id})\setminus F_{s_2}$.
                From Figure~\ref{subfig: star and Fw zones intro}, we note that $s_2(F_{\id})\setminus F_{s_2}\subset\{u\in E\mid -1< (\alpha_2,u)\leq 1\}$ and $F_{s_2}\subset \{u\in E\mid (\alpha_2,u)\leq -1\}$.
                In particular, $-1 < (\alpha_2, s_2v+a\varpi_1)$, so $s_2(v+a\varpi_1)\notin F_{s_2}$ (recall that $s_2\varpi_1=\varpi_1)$.
                This proves the claim.\qedhere
            \end{enumerate}
        \end{proof}
    
    \noindent\textbf{Case B.} If $\Pgn_{x,y}$ is a pentagon, without loss of generality, assume that $\Pgn_{x,y}$ has two adjacent vertices in~$\mathbbm{R}\varpi_1+\alpha_1$.
    By Proposition~\ref{prop: traslaciones que preservan Pgn}, we have $\lambda=a\varpi_1$ for some $a\in \mathbbm{Z}_{\geq 0}$.
    We have two cases:
    \begin{itemize}
        \item $w=s_1$.
        By Claim~\ref{claim: Pentagon minus strip in F_w}\ref{claim: Pentagon minus strip in F_w part 1} we have $s_1p_1\in F_{s_1}$ and the proposition follows.
        \item $w\in W_f\setminus\{\id,s_1\}$.
        Note that $w(p_1+a\varpi_1)=p\in F_{w}$, so by Claim~\ref{claim: Pentagon minus strip in F_w}\ref{claim: Pentagon minus strip in F_w part 2} we have $wp_1\in F_{w}$ and the proposition follows.
    \end{itemize}
    \begin{claim}\label{claim: Parallelogram minus strip in F_w}
        Let $\Pgn_{x,y}$ be a parallelogram.
        We have $w(\Pgn_{x,y}\setminus\, S_{w}^{x,y})\subset F_w$.
    \end{claim}
    \begin{proof}
        The proof is similar to the first part of Claim~\ref{claim: Pentagon minus strip in F_w}, so we will omit it.
    \end{proof}
    \textbf{Case C.} If $\Pgn_{x,y}$ is a parallelogram, Claim~\ref{claim: Parallelogram minus strip in F_w} gives $wp_1\in F_w$ and the proposition follows.
    This finishes the proof of the proposition.
\end{proof}
\begin{lem}\label{lem: isometria poligono en identidad implica en otras regiones}
    If $\Pgn_{x,y}+\lambda=\Pgn_{x+\lambda,y+\lambda}$, then $|[x,y]|=|[x+\lambda,y+\lambda]|$.
\end{lem}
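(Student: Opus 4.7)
The plan is to reduce the equality of cardinalities to the equality of the individual polygon cells $\Pgn^w$, and then translate back to a statement about the lattice $\operatorname{cen}(W)$ of alcove centers. The key point is that the previous proposition promotes the hypothesis $\Pgn_{x,y}+\lambda=\Pgn_{x+\lambda,y+\lambda}$ (on the zone $F_{\mathrm{id}}$) to an identity $\Pgn^w_{x,y}+w\lambda=\Pgn^w_{x+\lambda,y+\lambda}$ on every zone $F_w$, so all six regions can be treated simultaneously.

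First, I would invoke Proposition~\ref{prop: igualdad en F_id implica igualdad Pgn^w+l} to obtain, for every $w\in W_f$,
\begin{equation*}
    \Pgn^w_{x+\lambda,y+\lambda}=\Pgn^w_{x,y}+w\lambda.
\end{equation*}
Intersecting both sides with $\cen(W)$ and applying Equation~\eqref{eq:centro de translation} with $\mu=w\lambda\in\Lambda$, we get
\begin{equation*}
    \cPgn^w_{x+\lambda,y+\lambda}=c\bigl(\Pgn^w_{x,y}+w\lambda\bigr)=\cPgn^w_{x,y}+w\lambda.
\end{equation*}
Then Equation~\eqref{eq:cardinality translation} (with $A=\Pgn^w_{x,y}$ and $\mu=w\lambda$) gives $|\cPgn^w_{x+\lambda,y+\lambda}|=|\cPgn^w_{x,y}|$.

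Finally, summing over $w\in W_f$ and using the disjoint-union decomposition recorded in Equation~\eqref{eq: cardinal intervalo con centros},
\begin{equation*}
    |[x+\lambda,y+\lambda]|=\sum_{w\in W_f}|\cPgn^w_{x+\lambda,y+\lambda}|=\sum_{w\in W_f}|\cPgn^w_{x,y}|=|[x,y]|,
\end{equation*}
which is the desired equality. There is no real obstacle here: all the technical work is hidden in Proposition~\ref{prop: igualdad en F_id implica igualdad Pgn^w+l}, which is what allowed us to upgrade a one-zone hypothesis to a zone-wise identity. Once that is in hand, the lemma reduces to the purely combinatorial fact that translating a subset of $\cen(W)$ by a weight preserves both the centers lattice and cardinality.
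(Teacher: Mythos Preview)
Your proof is correct and follows essentially the same approach as the paper's own proof: invoke Proposition~\ref{prop: igualdad en F_id implica igualdad Pgn^w+l} to upgrade the hypothesis to $\Pgn^w_{x,y}+w\lambda=\Pgn^w_{x+\lambda,y+\lambda}$ for every $w\in W_f$, then use Equations~\eqref{eq:centro de translation} and~\eqref{eq:cardinality translation} to conclude that $|\cPgn^w_{x,y}|=|\cPgn^w_{x+\lambda,y+\lambda}|$, and sum via Equation~\eqref{eq: cardinal intervalo con centros}.
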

\begin{proof}
    By Equation \eqref{eq: cardinal intervalo con centros}, Proposition~\ref{prop: igualdad en F_id implica igualdad Pgn^w+l}, Equation \eqref{eq:centro de translation}, and again Equation \eqref{eq: cardinal intervalo con centros} we obtain the following sequence of equalities.
    \begingroup
    \allowdisplaybreaks
    \begin{align}
        |[x+\lambda,y+\lambda]|&=\sum_{w\in W_f} |c\Pgn_{x+\lambda,y+\lambda}^w|\\ &=\sum_{w\in W_f}|c(\Pgn_{x,y}^w+w\lambda)|\\
        &=\sum_{w\in W_f}|c\Pgn_{x,y}^w|\\
        &=|[x,y]|.\qedhere
    \end{align}
    \endgroup
\end{proof}
\begin{defn}\label{def: label of vertices}
    Let $x<y$ and $1\leq i \leq 2$ and let $v^{x,y}_i(y)\in \operatorname{V}(\Pgn_{x,y})$ be the vertex adjacent to $\cen(y)$ such that $\cen(y)-v^{x,y}_i(y)\in \mathbbm{R}_{>0}\alpha_i$.
    Similarly, let $v^{x,y}_i(x)\in \operatorname{V}(\Pgn_{x,y})$ be the vertex adjacent to $\cen(x)$ such that $v^{x,y}_i(x)-\cen(x)\in \mathbbm{R}_{>0}\alpha_i$.
    If the interval $[x,y]$ is clear from the context, the superscript $(-)^{x,y}$ will be omitted.
\end{defn} 
\begin{lem}\label{lem: vertice de 60 grados}
    Let $\{i,j\}=\{1,2\}$.
    We have that $v^{x,y}_{i}(x)=v^{x,y}_j(y)$ implies the equality 
    \begin{equation*}
        v^{x,y}_{i}(x)=\cen(x)+(\varpi_i, \cen(y)-\cen(x))\alpha_i.
    \end{equation*}
    Moreover, if we denote $a_i\coloneqq(\varpi_i, \cen(y)-\cen(x))$ for $1\leq i\leq 2$, we have
    \begin{equation*}
        \operatorname{V}(\Par_{x,y})=\{\cen(x),\cen(y), \cen(x)+a_1\alpha_1, \cen(x)+a_2\alpha_2\}.
    \end{equation*}
\end{lem}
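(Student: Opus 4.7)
The plan is to reduce both assertions to the duality $(\varpi_i,\alpha_j)=\delta_{ij}$, a standard fact in type~$A_2$ that one can verify using the decomposition $\alpha_1=2\varpi_1-\varpi_2$ and $\alpha_2=-\varpi_1+2\varpi_2$ together with the inner products recorded in Section~\ref{section: prelim} (namely $\norm{\varpi_i}^2=2/3$ and the angle $\pi/3$ between $\varpi_1$ and $\varpi_2$, which give $\norm{\rho}^2=2$). Once this pairing is in hand, the lemma becomes a short linear-algebra computation together with an appeal to the definition of $\Par_{x,y}$.

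For the first claim I would unpack Definition~\ref{def: label of vertices}: by definition there exist $t,s>0$ with
\begin{equation*}
    v_i^{x,y}(x)=\cen(x)+t\alpha_i \quad\text{and}\quad v_j^{x,y}(y)=\cen(y)-s\alpha_j.
\end{equation*}
The hypothesis $v_i^{x,y}(x)=v_j^{x,y}(y)$ then yields the relation $\cen(y)-\cen(x)=t\alpha_i+s\alpha_j$. Pairing both sides with $\varpi_i$ and using $(\varpi_i,\alpha_i)=1$ and $(\varpi_i,\alpha_j)=0$ (which holds because $\{i,j\}=\{1,2\}$) gives $t=(\varpi_i,\cen(y)-\cen(x))$. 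Substituting this value back into the formula for $v_i^{x,y}(x)$ produces the required identity.

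For the moreover part I would invoke Definition~\ref{def: paralelogramo} directly: $\Par_{x,y}$ is the parallelogram with opposite vertices $\cen(x)$ and $\cen(y)$ and edges parallel to $\alpha_1$ and $\alpha_2$. Decomposing $\cen(y)-\cen(x)=b_1\alpha_1+b_2\alpha_2$ in the basis of simple roots, elementary planar geometry identifies the remaining two vertices as $\cen(x)+b_1\alpha_1$ and $\cen(x)+b_2\alpha_2$. Pairing once more with $\varpi_i$ and using the same duality gives $b_i=(\varpi_i,\cen(y)-\cen(x))=a_i$, which yields the claimed vertex set.

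There is essentially no genuine obstacle here: the content of the lemma is a repackaging of the duality between simple roots and fundamental weights combined with the definitions of $v_i^{x,y}(\cdot)$ and $\Par_{x,y}$. The only mildly delicate point is to check that the hypothesis $v_i^{x,y}(x)=v_j^{x,y}(y)$ forces $t,s>0$, which rules out the degenerate situation in which $\Par_{x,y}$ collapses to a segment and thereby guarantees that the four points exhibited in the moreover part are indeed pairwise distinct vertices.
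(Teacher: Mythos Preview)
Your proposal is correct and follows essentially the same approach as the paper: both arguments reduce to the duality $(\varpi_i,\alpha_j)=\delta_{ij}$ together with the definitions of $v_i^{x,y}(\cdot)$ and $\Par_{x,y}$. One small clarification: the ``moreover'' statement is independent of the hypothesis $v_i^{x,y}(x)=v_j^{x,y}(y)$, so your closing remark tying that hypothesis to non-degeneracy of the parallelogram is unnecessary (and the displayed vertex set is still correct, with possible repetitions, in the degenerate case).
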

\begin{proof}
    Note that $v=v^{x,y}_{i}(x)=v^{x,y}_j(y)$ is in the intersection between the affine lines $\cen(x)+\mathbbm{R}\alpha_i$ and $\cen(y)+\mathbbm{R}\alpha_j$.
    Thus, there is $a_i\in \mathbbm{R}$ such that $v=\cen(x)+a_i\alpha_i \in \cen(y)+\mathbbm{R}\alpha_j$.
    This implies that $(\varpi_i,\cen(x)+a_i\alpha_i)=(\varpi_i, \cen(y))$ so we obtain $a_i=(\varpi_i, \cen(y)-\cen(x))$.
    Similarly, the second statement holds.
\end{proof}
\begin{lem}\label{lem: traslacion de vertice de 60 grados}
    Let $\{i,j\}=\{1,2\}$.
    If $v=v^{x,y}_{i}(x)=v^{x,y}_j(y)$, then 
    \begin{equation*}
        v+\lambda =v^{x+\lambda,y+\lambda}_{i}(x+\lambda)=v^{x+\lambda,y+\lambda}_j(y+\lambda).
    \end{equation*}
\end{lem}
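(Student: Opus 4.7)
The plan is to reduce the statement to the parallelogram translation identity of Lemma~\ref{lem: identities translation}\ref{lem: identities translation part 1}, together with a convexity argument ensuring that the distinguished vertex is not cut off when we intersect the translated parallelogram with $F_\id$.

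I would first apply Lemma~\ref{lem: vertice de 60 grados} to record the common vertex in canonical form:
\begin{equation*}
    v = \cen(x) + a_i\alpha_i = \cen(y) - a_j\alpha_j, \qquad a_k \coloneqq (\varpi_k, \cen(y) - \cen(x)),
\end{equation*}
with $a_i, a_j > 0$. Since translation by $\lambda$ does not change the difference $\cen(y) - \cen(x)$, the scalars $a_i, a_j$ are identical for the interval $[x+\lambda, y+\lambda]$.

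Next, Lemma~\ref{lem: identities translation}\ref{lem: identities translation part 1} gives $\Par_{x+\lambda, y+\lambda} = \Par_{x,y} + \lambda$, so $v + \lambda$ is a vertex of $\Par_{x+\lambda, y+\lambda}$ connected to $\cen(x+\lambda)$ and $\cen(y+\lambda)$ by edges in directions $-\alpha_i$ and $+\alpha_j$ respectively. To transfer this information from $\Par_{x+\lambda, y+\lambda}$ to $\Pgn_{x+\lambda, y+\lambda} = \Par_{x+\lambda, y+\lambda} \cap F_\id$, I would argue that both of these edges sit inside $F_\id$: the analogous edges of $\Pgn_{x,y}$ lie in $F_\id$ (being segments between vertices of $\Pgn_{x,y} \subset F_\id$, and $F_\id$ is convex), and since the recession cone of $F_\id$ is spanned by $\varpi_1$ and $\varpi_2$ (the continuous counterpart of Lemma~\ref{lem: traslacion preserva region}), translating by the dominant $\lambda$ keeps these edges inside $F_\id$. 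Consequently, $v + \lambda$ is a vertex of $\Pgn_{x+\lambda, y+\lambda}$ adjacent to both $\cen(x+\lambda)$ and $\cen(y+\lambda)$.

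A direct computation then yields $(v+\lambda) - \cen(x+\lambda) = a_i\alpha_i \in \mathbbm{R}_{>0}\alpha_i$ and $\cen(y+\lambda) - (v+\lambda) = a_j\alpha_j \in \mathbbm{R}_{>0}\alpha_j$, so by Definition~\ref{def: label of vertices} the element $v + \lambda$ is simultaneously $v^{x+\lambda, y+\lambda}_i(x+\lambda)$ and $v^{x+\lambda, y+\lambda}_j(y+\lambda)$, as claimed. The only delicate point in this outline is the translation stability $F_\id + \lambda \subseteq F_\id$, which is a purely geometric fact about the zone $F_\id$ (essentially because $\lambda$ lies in its recession cone).
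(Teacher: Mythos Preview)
Your argument is correct and follows essentially the same route as the paper's proof. The paper compresses steps 2--4 of your outline into a single equivalence: $v_i^{x,y}(x)=v_j^{x,y}(y)$ holds if and only if the unique point of $(x+\mathbbm{R}\alpha_i)\cap(y+\mathbbm{R}\alpha_j)$ lies in $F_\id$, and then applies Lemma~\ref{lem: traslacion preserva region} directly to that single point rather than to the two incident edges; your more explicit unpacking via $\Par_{x,y}+\lambda=\Par_{x+\lambda,y+\lambda}$ and convexity of $F_\id$ amounts to justifying that equivalence.
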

\begin{proof}
    The equality $v^{x,y}_{i}(x)=v^{x,y}_j(y)$ is equivalent to  the inclusion
    \begin{equation*}
        \emptyset \neq (x+\mathbbm{R}\alpha_i) \cap (y+\mathbbm{R}\alpha_j)\subset F_{\id}.
    \end{equation*}
    This and Lemma~\ref{lem: traslacion preserva region} imply that
    \begin{equation*}
        \lambda+\big((x+\mathbbm{R}\alpha_i) \cap (y+\mathbbm{R}\alpha_j)\big)=(x+\lambda+\mathbbm{R}\alpha_i) \cap (y+\lambda+\mathbbm{R}\alpha_j)\in F_{\id}.
    \end{equation*}
    This in turn implies that $v^{x+\lambda,y+\lambda}_{i}(x+\lambda)=v^{x+\lambda,y+\lambda}_j(y+\lambda)=v+\l$.
\end{proof}
\begin{lem}\label{lem: vertex in the wall and vertex outside the wall}
    Let $\{i,j\}=\{1,2\}$.
    \begin{enumerate}[label=(\roman*)]
        \item \label{lem: vertex in the wall and vertex outside the wall part 1} If $v^{x,y}_i(x)\notin \partial F_{\id}$, then  
        \begin{equation*}
            v^{x,y}_{i}(x)=v^{x,y}_j(y)=\cen(x)+(\varpi_i, \cen(y)-\cen(x))\alpha_i.
        \end{equation*}
        \item \label{lem: vertex in the wall and vertex outside the wall part 2} If $v^{x,y}_i(x)\in \partial F_{\id}$, then  
        \begin{equation*}
            v^{x,y}_i(x)=\cen(x)+(1+(\alpha_j, \cen(x)))\alpha_i.
        \end{equation*}
    \end{enumerate}
\end{lem}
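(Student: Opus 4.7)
The plan is to describe $v^{x,y}_i(x)$ as the endpoint of the edge of $\Pgn_{x,y}=\Par_{x,y}\cap F_\id$ emanating from $\cen(x)$ in direction $\alpha_i$; equivalently, it is the last point of the ray $L_i\coloneqq\{\cen(x)+t\alpha_i : t\geq 0\}$ that still lies in $\Pgn_{x,y}$. The two cases of the lemma correspond to whether the binding constraint on $L_i$ comes from $\Par_{x,y}$ (Case (i)) or from $F_\id$ (Case (ii)).

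By Lemma~\ref{lem: vertice de 60 grados}, the parallelogram $\Par_{x,y}$ has vertex set $\{\cen(x),\cen(y),\cen(x)+a_1\alpha_1,\cen(x)+a_2\alpha_2\}$ with $a_k\coloneqq(\varpi_k,\cen(y)-\cen(x))$, so $L_i\cap\Par_{x,y}$ is the segment from $\cen(x)$ to $\cen(x)+a_i\alpha_i$. On the other hand, the zone $F_\id$ is delimited by the two rays $r_1\subset H_{\alpha_1,-1}$ and $r_6\subset H_{\alpha_2,-1}$, and since $\cen(x)$ lies strictly inside the dominant chamber one computes that along $L_i$ the pairing $(\alpha_i,\cen(x)+t\alpha_i)=(\alpha_i,\cen(x))+2t$ is increasing while $(\alpha_j,\cen(x)+t\alpha_i)=(\alpha_j,\cen(x))-t$ is decreasing. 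Hence $L_i$ can only exit $F_\id$ through $H_{\alpha_j,-1}$, and this occurs at the time $T_i\coloneqq 1+(\alpha_j,\cen(x))$. Combining both constraints yields $v^{x,y}_i(x)=\cen(x)+\min(a_i,T_i)\alpha_i$.

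In Case (i), the hypothesis $v^{x,y}_i(x)\notin\partial F_\id$ forces $a_i<T_i$, so $v^{x,y}_i(x)=\cen(x)+a_i\alpha_i$, which equals $\cen(y)-a_j\alpha_j$ because $\cen(y)=\cen(x)+a_1\alpha_1+a_2\alpha_2$. An entirely parallel computation along the ray emanating from $\cen(y)$ in direction $-\alpha_j$ shows that the corresponding edge of $\Par_{x,y}$ also stays inside $F_\id$, whence this common point is the vertex $v^{x,y}_j(y)$ as well. In Case (ii), $v^{x,y}_i(x)\in\partial F_\id$ forces $T_i\leq a_i$, giving immediately $v^{x,y}_i(x)=\cen(x)+(1+(\alpha_j,\cen(x)))\alpha_i$.

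The argument is essentially mechanical once $L_i$ is intersected separately with $\Par_{x,y}$ and with $F_\id$. The only subtlety, which I do not expect to pose any genuine obstacle, is correctly identifying which wall of $F_\id$ the ray $L_i$ hits first: this is dictated by the sign of $(\alpha_k,\alpha_i)$, and forces the bounding hyperplane to be $H_{\alpha_j,-1}$ with $j\neq i$ rather than $H_{\alpha_i,-1}$.
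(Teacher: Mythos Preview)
Your proof is correct and follows essentially the same approach as the paper: both identify $v^{x,y}_i(x)$ by deciding whether the constraint from $\Par_{x,y}$ or from $\partial F_\id$ is binding along the ray $\cen(x)+\mathbb{R}_{\geq 0}\alpha_i$, and both compute the exit time through $H_{\alpha_j,-1}$ in case (ii). One small inaccuracy: $F_\id$ actually has a third, bounded edge on $H_{\rho,-1}$ (between the vertices $-\varpi_1$ and $-\varpi_2$ of $A_+$), which the paper explicitly rules out in its proof of (ii); your argument still goes through because $(\rho,\cen(x)+t\alpha_i)=(\rho,\cen(x))+t$ is increasing and positive for dominant $x$, so the ray never reaches that wall.
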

\begin{proof}
    \begin{enumerate}[label=(\roman*)]
        \item If $v^{x,y}_{i}(x)\notin \partial F_{\id}$ then $v^{x,y}_{i}(x)=v^{x,y}_j(y)$.
        The second equality follows from Lemma~\ref{lem: vertice de 60 grados}.
        \item By definition of~$v^{x,y}_{i}(x)$, there is $a_i>0$ such that $\cen(x)+a_i\alpha_i= v^{x,y}_i(x)$.
        Since $v^{x,y}_{i}(x)\in \partial F_{\id}$, we have either $v^{x,y}_{i}(x)$ is in~$H_{\alpha_j,-1}$, $H_{\alpha_i,-1}$, or $H_{\rho,-1}$.
        The second case is impossible since $a_i$, $(\alpha_i, \cen(x))$, and $(\alpha_i, \alpha_i)$ are positive.
        Similarly, the third case is impossible since $a_i$, $(\rho, \cen(x))$, and $(\rho, \alpha_i)$ are positive.
        Therefore $(\alpha_j, \cen(x)+a_i\alpha_i)=-1$, so we obtain $a_i=1+(\alpha_j, \cen(x))$.\qedhere
    \end{enumerate}
\end{proof}
\begin{lem}\label{lem: no isometria poligono en identidad implica mas cardinal}
    Suppose $\Pgn_{x,y}+\lambda\neq\Pgn_{x+\lambda,y+\lambda}$.
    Then $|\cPgn_{x,y}|<|\cPgn_{x+\lambda,y+\lambda}|$ and $|[x,y]| < |[x+\lambda,y+\lambda]|$.
\end{lem}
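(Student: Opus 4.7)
The plan is to prove the contrapositive of the first strict inequality. Assuming $|c\Pgn_{x,y}|=|c\Pgn_{x+\lambda,y+\lambda}|$, I will derive $\Pgn_{x,y}+\lambda=\Pgn_{x+\lambda,y+\lambda}$, contradicting the hypothesis. Lemma~\ref{lem: identities translation}\ref{lem: identities translation part 2} and Equation~\eqref{eq:centro de translation} give $c\Pgn_{x,y}+\lambda\subseteq c\Pgn_{x+\lambda,y+\lambda}$, and Equation~\eqref{eq:cardinality translation} gives $|c\Pgn_{x,y}+\lambda|=|c\Pgn_{x,y}|$, so the cardinality equality forces this inclusion of sets to be equality. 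Thus it suffices to find a single alcove center in the difference
\[
D\coloneqq\Pgn_{x+\lambda,y+\lambda}\setminus(\Pgn_{x,y}+\lambda)=\Par_{x+\lambda,y+\lambda}\cap\bigl(F_{\id}\setminus(F_{\id}+\lambda)\bigr),
\]
where the second equality uses Lemma~\ref{lem: identities translation}\ref{lem: identities translation part 1}.

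Next I apply Proposition~\ref{prop: traslaciones que preservan Pgn} to locate the relevant geometry. Write $\lambda=b_1\varpi_1+b_2\varpi_2$ with $b_1,b_2\in\ZM_{\geq 0}$. The hypothesis $\Pgn_{x,y}+\lambda\neq\Pgn_{x+\lambda,y+\lambda}$ forces one of: \textbf{(a)} $\Pgn_{x,y}$ is a hexagon and $\lambda\neq 0$; or \textbf{(b)} $\Pgn_{x,y}$ is a pentagon with two adjacent vertices on some $H_{\alpha_j,-1}$ and $b_j>0$. In both cases there exists $j\in\{1,2\}$ such that $\Par_{x,y}$ crosses $H_{\alpha_j,-1}$ and the corresponding coefficient satisfies $b_j\geq 1$ (integrality of weights). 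Fixing such a $j$ and letting $i$ be the other index, a direct parametrization of the $\alpha_i$-edge of $\Par_{x+\lambda,y+\lambda}$---together with the vertex formula of Lemma~\ref{lem: vertex in the wall and vertex outside the wall}\ref{lem: vertex in the wall and vertex outside the wall part 2}---shows that the point $v_{\mathrm{new}}$ where this edge meets $H_{\alpha_j,-1}$ (and which is a vertex of $\Pgn_{x+\lambda,y+\lambda}$) is displaced by exactly $b_j\alpha_i$ from the translate of the corresponding wall-vertex $v_{\mathrm{old}}$ of $\Pgn_{x,y}$:
\[
v_{\mathrm{new}}=(v_{\mathrm{old}}+\lambda)+b_j\alpha_i.
\]

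This displacement is the geometric heart of the argument: it exhibits $D$ as containing a trapezoidal slab inside $\Par_{x+\lambda,y+\lambda}$, bounded above by $H_{\alpha_j,-1+b_j}$ and below by $H_{\alpha_j,-1}$, of extent at least $b_j|\alpha_i|$ along $H_{\alpha_j,-1}$. Since the centers of alcoves in $F_{\id}$ adjacent to $H_{\alpha_j,-1}$ occur at $(\alpha_j,v)\in\{-\tfrac{2}{3},-\tfrac{1}{3}\}$ and are spaced by $\varpi_i$ along each such line, the integrality $b_j\geq 1$ forces at least one such center to lie in this slab. This yields the strict inequality $|c\Pgn_{x,y}|<|c\Pgn_{x+\lambda,y+\lambda}|$. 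The second inequality $|[x,y]|<|[x+\lambda,y+\lambda]|$ then follows immediately from Equation~\eqref{eq: cardinal intervalo con centros} combined with Lemma~\ref{lem: cantidad de centroides de Pgn w}, since the $w=\id$ summand is already strictly larger on the right while the remaining summands satisfy the non-strict inequality.

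The main obstacle is the tiling-inspection step: not because it is deep, but because $\Pgn_{x+\lambda,y+\lambda}$ may itself change combinatorial type under translation (for example, a pentagon may become a parallelogram if $b_j$ is large enough to push the translated parallelogram entirely back into $F_{\id}$). The displacement formula together with the integrality $b_j\geq 1$ is exactly what makes the argument robust across these subcases: it uniformly guarantees that $D$ absorbs a full row of alcoves adjacent to $H_{\alpha_j,-1}$, obviating the need for a delicate case split.
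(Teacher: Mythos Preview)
Your setup is correct and genuinely different from the paper's: you reduce to exhibiting a single center in $D=\Par_{x+\lambda,y+\lambda}\cap(F_{\id}\setminus(F_{\id}+\lambda))$, and you invoke Proposition~\ref{prop: traslaciones que preservan Pgn} to single out a wall $H_{\alpha_j,-1}$ with $b_j\geq 1$. The paper never uses the polygon-type classification; instead it works directly on the $\alpha_i$-edge of the parallelogram, setting $a_i,b_i$ to be the maximal $t$ with $\cen(x)+t\alpha_i\in\Pgn_{x,y}$ (resp.\ $\cen(x+\lambda)+t\alpha_i\in\Pgn_{x+\lambda,y+\lambda}$) and $n_i=\lfloor a_i\rfloor$, $m_i=\lfloor b_i\rfloor$. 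If some $n_{i_0}<m_{i_0}$ then $\cen(x+\lambda)+m_{i_0}\alpha_{i_0}$ is the extra center; if all $n_i=m_i$, a residue computation (forcing $a_i=n_i+\tfrac13$ and $b_i=n_i+\tfrac23$) shows the parallelogram vertex $v_i^{x+\lambda,y+\lambda}(x+\lambda)$ is itself an alcove center not hit by the translation.

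The slab step, however, has a real gap. Your claim that $D$ ``absorbs a full row of alcoves adjacent to $H_{\alpha_j,-1}$'' presupposes that $\Par_{x+\lambda,y+\lambda}$ reaches the strip $\{-1<(\alpha_j,v)<0\}$. But the minimum of $(\alpha_j,\cdot)$ on $\Par_{x+\lambda,y+\lambda}$ is $(\alpha_j,\cen(x))-A_i+b_j$ with $A_i=(\varpi_i,\cen(y)-\cen(x))$, and once $b_j$ exceeds $A_i-(\alpha_j,\cen(x))$ the whole parallelogram sits above the strip, so there are no centers at level $-\tfrac23$ or $-\tfrac13$ to find. In that regime $D$ is the translated ``tip'' $(\Par_{x,y}\setminus F_{\id})+\lambda$, a thin triangle whose apex is the $\alpha_i$-vertex of $\Par_{x,y}$; whether this triangle contains an alcove center depends on the fractional part of $a_i$ and on whether that apex is a center or an alcove vertex (Remark~\ref{rem: geometry-of-vertex-of-Pgn}). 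Your final paragraph asserts the displacement formula handles this uniformly, but no argument is given. Even in the case where the new edge does meet the wall, the claim that extent $\geq b_j|\alpha_i|$ plus $\varpi_i$-spacing forces a center to lie in $\Par_{x+\lambda,y+\lambda}\cap D$ is not justified: you would also need to pin down where along the wall the slab sits relative to the $\varpi_i$-lattice of centers. The paper's edge-based argument sidesteps both problems by working only with the guaranteed centers $\cen(x)+k\alpha_i$, $k\in\ZM_{\geq 0}$, on the boundary of $\Par$ itself.
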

\begin{proof}
    We start the proof by replacing the condition $\Pgn_{x,y}+\lambda=\Pgn_{x+\lambda,y+\lambda}$ with a simpler statement.
    Let $n_i\in \mathbbm{Z}_{\geq 0}$ be maximal such that $\cen(x)+n_i\alpha_i\in \Pgn_{x,y}$ and $m_i\in \mathbbm{N}$ maximal such that $\cen(x)+\lambda+m_i\alpha_i\in \Pgn_{x+\lambda,y+\lambda}$ for $i\in\{1,2\}$.
    Similarly, let $a_i\in \mathbbm{R}_{\geq 0}$ be maximal such that $\cen(x)+a_i\alpha_i\in \Pgn_{x,y}$ and $b_i\in \mathbbm{R}_{\geq 0}$ be maximal such that $\cen(x)+\lambda+b_i\alpha_i\in \Pgn_{x+\lambda,y+\lambda}$ for $i\in\{1,2\}$.
    Clearly, $n_i=\lfloor a_i \rfloor$ and $m_i=\lfloor b_i \rfloor$.
    By Lemma~\ref{lem: identities translation}\ref{lem: identities translation part 2}, we have $\Pgn_{x,y}+\lambda\subset\Pgn_{x+\lambda,y+\lambda}$ which implies that 
    \begin{equation}\label{eq:bi>ai}
    m_i\geq n_i\ \mathrm{ and } \  b_i\geq a_i.
    \end{equation}
    In particular, $v^{x,y}_i(x)=\cen(x)+a_i\alpha_i$ and $v^{x+\lambda,y+\lambda}_i(x+\lambda)=\cen(x+\lambda)+b_i\alpha_i$.
    \begin{claim}\label{claim: aibi}
        The condition $a_i=b_i$ for $i\in\{1,2\}$ implies $\Pgn_{x,y}+\lambda=\Pgn_{x+\lambda,y+\lambda}$.
    \end{claim}
    \begin{proof}
        Let us assume that $a_i=b_i$ for $i\in\{1,2\}$.
        There exist $c_i\in \mathbbm{R}_{\geq 0}$ such that 
        \begin{equation}
            \operatorname{V}(\Pgn_{x,y})=\{x,y, x+a_1\alpha_1,x+a_2\alpha_2,x+a_1\alpha_1+c_1\varpi_1,x+a_2\alpha_2+c_2\varpi_2 \},
        \end{equation}
        where if one or two of the $c_i$'s are zero, we are in the pentagon or parallelogram case.
        As $x+a_i\alpha_i+c_i\varpi_i\in y+\mathbbm{R}\alpha_j$, by pairing this inclusion with $\varpi_i$ we obtain 
        \begin{equation}\label{eq: ci}
            c_i=\frac{3}{2}((y-x,\varpi_i)-a_i).
        \end{equation}
        The set $\operatorname{V}(\Pgn_{x+\lambda,y+\lambda})$ is given by
        \begin{equation}
            \{ x+\lambda,y+\lambda,x+\lambda+a_1\alpha_1,x+\lambda+a_2\alpha_2,x+\lambda+a_1\alpha_1+c'_1\varpi_1,x+\lambda+a_2\alpha_2+c'_2\varpi_2\}.
        \end{equation}
        As the right-hand side of Equation \eqref{eq: ci} only depends on~$y-x=(y+\lambda)-(x+\lambda)$ and $a_i=b_i$, we obtain that $c_i=c_i'$ and we conclude.
    \end{proof}
    \begin{claim}\label{claim: ai dist b_i implica posicion de vertice}
        Let $1\leq i\leq 2$ and suppose that $n_i=m_i$.
        If $a_i\neq b_i$, then $v_i^{x,y}(x)\in \partial F_\id$ and $ v_i^{x+\lambda,y+\lambda}(x+\lambda)\not\in \partial F_\id.$
    \end{claim}
    \begin{proof}
        Let $j$ be such that $\{i,j\}=\{1,2\}$.
        Recall that $v_i^{x,y}(x)\in \mathrm{V}(\Pgn_{x,y})\subset F_{\id}$.
        \begin{itemize}
            \item Suppose that $v^{x,y}_i(x)\notin\partial F_{\id}$.
            By Lemma~\ref{lem: vertex in the wall and vertex outside the wall}\ref{lem: vertex in the wall and vertex outside the wall part 1} we have that $v^{x,y}_{i}(x)=v^{x,y}_j(y)$.
            By Lemma~\ref{lem: traslacion de vertice de 60 grados} we have that $v^{x,y}_i(x)+\lambda =v^{x+\lambda,y+\lambda}_{i}(x+\lambda)$, so $a_i=b_i$, a contradiction.
            \item Suppose that $v^{x+\lambda,y+\lambda}_i(x+\lambda)\in\partial F_{\id}$.
            As $v^{x,y}_i(x)\in\partial F_{\id}$, by Lemma~\ref{lem: vertex in the wall and vertex outside the wall}\ref{lem: vertex in the wall and vertex outside the wall part 2}, we have that $b_i-a_i=(\alpha_j,\lambda)\in\mathbbm{Z}$ is the difference between two numbers having the same floor, so $a_i=b_i$, a contradiction.\qedhere
        \end{itemize}
    \end{proof}
    \begin{claim}\label{claim: ai dist bi implica elemento}
        Let $1\leq i\leq 2$ and suppose that $n_i=m_i$.
        If $a_i\neq b_i$ then $v_i^{x+\lambda,y+\lambda}(x+\lambda)=\cen(z)$ for some $z\in W$.
        Furthermore $\cen(z)-\l \in \mathrm{V}(\Par_{x,y})$ and $\cen(z)-\l\not\in \Pgn_{x,y}$.
    \end{claim}
    \begin{proof} 
        Let $j$ be such that $\{i,j\}=\{1,2\}$.
        By Claim~\ref{claim: ai dist b_i implica posicion de vertice}, we have that $v_i^{x,y}(x)\in \partial F_\id$ and  $ v^{x+\lambda,y+\lambda}_i(x+\lambda)\notin\partial F_{\id}$.
        From Lemma~\ref{lem: vertex in the wall and vertex outside the wall}\ref{lem: vertex in the wall and vertex outside the wall part 1}, we obtain that
        \begin{equation*}
            v\coloneqq v^{x+\lambda,y+\lambda}_i(x+\lambda)=v^{x+\lambda,y+\lambda}_j(y+\lambda).
        \end{equation*}
        This implies that $v$ is a vertex of both $\Par_{x+\lambda,y+\lambda}$ and $\Pgn_{x+\lambda,y+\lambda}$, because it is the intersection of the affine lines $x+\lambda+\mathbbm{R}\alpha_i$ and $y+\lambda+\mathbbm{R}\alpha_j$.
        Hence $v-\l\in \operatorname{V}(\Par_{x,y})$ because it is the intersection of the affine lines $x+\mathbbm{R}\alpha_i$ and $y+\mathbbm{R}\alpha_j$.

        Now, recall that the floor of~$a_i$ is $n_i$,  and $v_i^{x,y}(x)=\cen(x)+a_i\a_i.$
        By Remark~\ref{rem: geometry-of-vertex-of-Pgn}, we have that $a_i$ is either $n_i$, $n_i+\frac{1}{3}$ or $n_i+\frac{2}{3}$.
        Similarly $b_i$ is either $m_i$, $m_i+\frac{1}{3}$ or $m_i+\frac{2}{3}$.
        
        If $a_i=n_i$, since $\cen(W)=\cen(W)+n_i\alpha_i$ we have that $\cen(x)+a_i\alpha_i\in \cen(W)\cap \partial F_\id=\emptyset$, a contradiction.
        So we conclude that $a_i\neq n_i$.
        From hypothesis and Equation~\eqref{eq:bi>ai}, it follows that $a_i<b_i$.
        
        By hypothesis $n_i=m_i$, thus, $a_i<b_i$ implies that $a_i=n_i+\frac{1}{3}$ and $b_i=n_i+\frac{2}{3}$.
        This implies that clearly, 
        \begin{equation}\label{eq: 3c3}
            v-\lambda=\cen(x)+\left(n_i+\frac{2}{3}\right)\alpha_i\in \operatorname{V}(\Par_{x,y}).
        \end{equation}
       
        The point $v^{x,y}_i(x)=\cen(x)+a_i\alpha_i\in \partial F_{\id}$ is  a vertex of an alcove because $v^{x,y}_i(x)\in \partial F_{\id}$.
        Since
        $a_i\in \mathbbm{N}+\frac{1}{3}$, we have that $x$ is down-oriented.
        By Equation \eqref{eq: 3c3} we have that $v-\lambda=\cen(z_v)$ for some up-oriented element $z_v\in W$.
        The claim follows by taking \( z = z_v + \lambda \), since
        \begin{equation*}
            \cen(z) = \cen(z_v) + \lambda = v \in \Pgn_{x+\lambda, y+\lambda}.
        \end{equation*}
        Moreover,  by Equation \eqref{eq: 3c3}
        \begin{equation*}
            \cen(z) - \lambda  = v_i^{x,y}(x) + \frac{1}{3} \alpha_i \not\in F_\id,
        \end{equation*}
        which implies that \( \cen(z) - \lambda \not\in \Pgn_{x,y} \).
    \end{proof}
    \begin{claim}\label{claim: same ni and mi implies same bi}
        If $n_i=m_i$ for all $i\in\{1,2\}$, then either $a_i=b_i$ for all $i\in\{1,2\}$ or $|\cPgn_{x+\lambda,y+\lambda}|-|\cPgn_{x,y}|\geq 1$.
    \end{claim}
    \begin{proof}
        Without loss of generality, we can suppose $a_1\neq b_1$.
        By Claim~\ref{claim: ai dist bi implica elemento}, we have $v_1^{x+\lambda,y+\lambda}(x+\lambda)=\cen(z)\in\Pgn_{x+\lambda,y+\lambda}$ for some $z\in W$.
        Furthermore we have that $\cen(z)-\l\notin \Pgn_{x,y}$.
        Lemma~\ref{lem: identities translation}\ref{lem: identities translation part 2} states that $\Pgn_{x,y} + \lambda \subset \Pgn_{x+\lambda,y+\lambda}$, and this inclusion is strict because $\cen(z)\in \Pgn_{x+\lambda,y+\lambda}\setminus (\Pgn_{x,y} + \lambda)$.
    \end{proof} 
    Now we can finish the proof of Lemma~\ref{lem: no isometria poligono en identidad implica mas cardinal}.
    By the previous claim, if $n_i=m_i$ for all $i\in\{1,2\}$, we have two possibilities.
    The first one is that  $a_i=b_i$ for all $i\in\{1,2\}$.
    In this case by Claim~\ref{claim: aibi} we have that $\Pgn_{x,y}+\lambda=\Pgn_{x+\lambda,y+\lambda}$ and this contradicts the hypothesis.
    The second possibility, by Claim~\ref{claim: same ni and mi implies same bi}, we have $|\cPgn_{x,y}|<|\cPgn_{x+\lambda,y+\lambda}|$.
    By Lemma~\ref{lem: cantidad de centroides de Pgn w} we know that $|\cPgn^w_{x,y}|\leq |\cPgn^w_{x+\lambda,y+\lambda}|$ for all $w\in W_f$.
    Equation~\eqref{eq: cardinal intervalo con centros} implies that $|[x,y]| < |[x+\lambda,y+\lambda]|$, which proves the lemma in this case.
    
    It remains the case when there is $1\leq i_0\leq 2$ such that $n_{i_0}<m_{i_0}$.
    Then $\cen(x)+m_{i_0}\alpha_{i_0}\notin \Pgn_{x,y}$ and the injective map $\cPgn_{x,y}\to \cPgn_{x+\lambda, y+\lambda}$ given by $z\mapsto z+\lambda$ is not surjective because $\cen(x)+\lambda+m_{i_0}\alpha_{i_0}$ has no preimage.
    This proves that $|\cPgn_{x,y}|<|\cPgn_{x+\lambda, y+\lambda}|$.
    By the same reasoning as before, we have $|[x,y]|<|[x+\lambda,y+\lambda]|$.
\end{proof} 
Lemmas~\ref{lem: isometria poligono en identidad implica en otras regiones} and~\ref{lem: no isometria poligono en identidad implica mas cardinal} can be summarized as follows.
\begin{lem}\label{lem: same cardinal iff same poligon}
    $|[x,y]|=|[x+\lambda,y+\lambda]|$ if and only if $\Pgn_{x,y}+\lambda=\Pgn_{x+\lambda,y+\lambda}$.
\end{lem}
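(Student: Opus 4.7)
The statement is essentially a direct repackaging of Lemmas~\ref{lem: isometria poligono en identidad implica en otras regiones} and~\ref{lem: no isometria poligono en identidad implica mas cardinal}, so my plan is simply to assemble these two results into the two directions of the biconditional. No new machinery should be required.

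For the implication $(\Leftarrow)$, I would quote Lemma~\ref{lem: isometria poligono en identidad implica en otras regiones} directly: assuming $\Pgn_{x,y}+\lambda=\Pgn_{x+\lambda,y+\lambda}$, that lemma yields $|[x,y]|=|[x+\lambda,y+\lambda]|$. For the implication $(\Rightarrow)$, I would argue by contrapositive. Suppose $\Pgn_{x,y}+\lambda\neq\Pgn_{x+\lambda,y+\lambda}$. Then Lemma~\ref{lem: no isometria poligono en identidad implica mas cardinal} gives the strict inequality $|[x,y]|<|[x+\lambda,y+\lambda]|$, contradicting the assumed equality of cardinalities.

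The only minor sanity check is that the hypotheses of both lemmas match those of Lemma~\ref{lem: same cardinal iff same poligon}, namely that $x,y,\lambda$ are dominant (with $x<y$ and $\lambda$ a dominant weight), which is indeed the running assumption declared at the start of Section~\ref{section: translations}. No step here is a real obstacle, since the entire technical content---the containment $\Pgn_{x,y}+\lambda\subset\Pgn_{x+\lambda,y+\lambda}$ from Lemma~\ref{lem: identities translation}, the extension from $F_{\id}$ to all zones $F_w$ via Proposition~\ref{prop: igualdad en F_id implica igualdad Pgn^w+l}, and the case analysis on whether the floors $n_i,m_i$ agree or not---has already been carried out in the two previous lemmas. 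The proof therefore consists of three lines recording the combination.
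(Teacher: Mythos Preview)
Your proposal is correct and matches the paper's approach exactly: the paper introduces this lemma with the sentence ``Lemmas~\ref{lem: isometria poligono en identidad implica en otras regiones} and~\ref{lem: no isometria poligono en identidad implica mas cardinal} can be summarized as follows'' and gives no further proof, so your three-line assembly of those two lemmas into the two directions of the biconditional is precisely what is intended.
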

The following Proposition is a direct consequence of Proposition~\ref{prop: traslaciones que preservan Pgn}, Lemma~\ref{lem: cantidad de centroides de Pgn w}, and Lemma~\ref{lem: same cardinal iff same poligon}.
\begin{prop}
    \label{prop: cardinality under translation}
    For $x,y$ dominant elements such that $x\leq y$, the following statements hold:
    \begin{enumerate}
        \item If $[x,y]$ is a parallelogram interval, then $|[x,y]|=|[x+\lambda,y+\lambda]|$, for $i\in\{1,2\}$.
        \item If $[x,y]$ is a pentagon interval, then there is $\{i_0, j_0\}\in\{1,2\}$ such that for every $a\in \mathbbm{Z}_{>0}$, the following holds
        \begin{equation*}
            |[x,y]|=|[x+a\varpi_{i_0},y+a\varpi_{i_0}]| \text{ and } |[x,y]|<|[x+a\varpi_{j_0},y+a\varpi_{j_0}]|.
        \end{equation*}
        Moreover, $\Pgn_{x,y}$ is a pentagon with two adjacent vertices lying on~$\mathbbm{R}_{\geq 0}\varpi_{i_0}+\alpha_{i_0}$.
        \item If $[x,y]$ is a hexagon interval, then $|[x,y]|<|[\tau_i(x),\tau_i(y)]|$, for $i\in\{1,2\}$.
    \end{enumerate}
\end{prop}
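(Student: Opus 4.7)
The plan is to read off each item directly from Proposition~\ref{prop: traslaciones que preservan Pgn}, which classifies, case-by-case in the shape of $\Pgn_{x,y}$, exactly which dominant weights $\lambda$ satisfy the polygon equality $\Pgn_{x,y}+\lambda=\Pgn_{x+\lambda,y+\lambda}$, combined with Lemma~\ref{lem: same cardinal iff same poligon}, which turns that polygon equality into the cardinality equality $|[x,y]|=|[x+\lambda,y+\lambda]|$. When the polygon equality fails, the stronger Lemma~\ref{lem: no isometria poligono en identidad implica mas cardinal} upgrades the discrepancy to the strict inequality $|[x,y]|<|[x+\lambda,y+\lambda]|$. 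One general bookkeeping point, used implicitly throughout, is that $x,y$ are dominant and hence lie in $F_{\id}$, so $\tau_i(x)=x+\varpi_i$ and $\tau_i(y)=y+\varpi_i$.

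Concretely: for (1), Proposition~\ref{prop: traslaciones que preservan Pgn}(1) delivers the polygon equality for every dominant $\lambda$, so the cardinalities agree. For (2), one chooses $i_0\in\{1,2\}$ so that two adjacent vertices of $\Pgn_{x,y}$ lie on $\mathbbm{R}_{\geq 0}\varpi_{i_0}+\alpha_{i_0}$ (the ``Moreover'' clause is essentially the definition of $i_0$); by Proposition~\ref{prop: traslaciones que preservan Pgn}(2) the polygon equality then holds for $\lambda=a\varpi_{i_0}$ with $a>0$ and fails for $\lambda=a\varpi_{j_0}$ with $\{i_0,j_0\}=\{1,2\}$, producing the required equality and strict inequality respectively. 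For (3), Proposition~\ref{prop: traslaciones que preservan Pgn}(3) rules out the polygon equality for every nonzero dominant weight, so translation by $\varpi_i$ always produces a strictly larger interval.

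Since all the heavy lifting has been done in the preceding lemmas, no step is genuinely difficult. The only mildly delicate point is verifying, in the pentagon case, that the index $i_0$ distinguished by the geometry of $\Pgn_{x,y}$ is uniquely determined---equivalently, that a pentagon of the type in question cannot simultaneously have a pair of adjacent vertices on $\mathbbm{R}_{\geq 0}\varpi_{1}+\alpha_{1}$ and a pair on $\mathbbm{R}_{\geq 0}\varpi_{2}+\alpha_{2}$---but this is immediate from comparing interior angles, as in Proposition~\ref{prop: traslaciones que preservan pentagonos}. So the main ``obstacle'' is really just to assemble the three prior results into the trichotomy in the statement.
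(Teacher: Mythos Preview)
Your proposal is correct and mirrors the paper's own one-line proof, which just says the proposition ``is a direct consequence of Proposition~\ref{prop: traslaciones que preservan Pgn}, Lemma~\ref{lem: cantidad de centroides de Pgn w}, and Lemma~\ref{lem: same cardinal iff same poligon}.'' The only cosmetic difference is that you invoke Lemma~\ref{lem: no isometria poligono en identidad implica mas cardinal} directly to get the strict inequality, whereas the paper gets it by combining Lemma~\ref{lem: same cardinal iff same poligon} (cardinalities differ) with Lemma~\ref{lem: cantidad de centroides de Pgn w} (cardinality can only go up); since Lemma~\ref{lem: same cardinal iff same poligon} is itself just a repackaging of Lemmas~\ref{lem: isometria poligono en identidad implica en otras regiones} and~\ref{lem: no isometria poligono en identidad implica mas cardinal}, the two routes are equivalent.
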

\subsection{Piecewise translations as order isomorphisms}\label{subsec:Trasl-Preserv-Bruhat-order}
We conclude the section by establishing sufficient conditions for the map $\tau_\lambda\colon[x,y]\to[\tau_\lambda(x),\tau_\lambda(y)]$ to be a poset isomorphism (Proposition~\ref{prop: trasl-por-dominat}).

Let $z,z'\in W$.
For $i\in\{1,2\}$, consider the following property
\begin{align}
    P_i(z',z): z' \leq z \iff \tau_i(z') \leq \tau_i(z).
\end{align}
We say that the property $P(z',z)$ holds if $P_1(z',z)$ and $P_2(z',z)$ hold.
\begin{lem}\label{lem: eqv menor y trsl D}
    The property $P(z',z)$ holds for $z'\in W$ and $z\in D\cup s_0D$.
\end{lem}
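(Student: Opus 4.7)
The plan is to translate the Bruhat inequality $z'\leq z$ into the geometric statement $\cen(z')\in \CC_z$ via Corollary~\ref{cor: characterization of the Bruhat order}, and then use the local cone description of $\CC_z\cap F_w$ to see that the piecewise translation $\tau_i$ preserves both the cone and the ambient zone. Since the cone is invariant under translation, this will yield the equivalence in $P_i(z',z)$ for free.

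More precisely, given $z'\in W$, let $w\in W_f$ be the unique element with $\cen(z')\in F_w$, so that $\tau_i(z')=z'+w\varpi_i\in F_w$ by Lemma~\ref{lem: traslacion preserva region}. Since $z\in D\cup s_0D\subset F_\id$, we also have $\tau_i(z)=z+\varpi_i\in F_\id$ by the same lemma. By Corollary~\ref{cor: characterization of the Bruhat order},
\begin{equation*}
    z'\leq z \iff \cen(z')\in\CC_z\cap F_w\cap\cen(W),
\end{equation*}
and analogously $\tau_i(z')\leq\tau_i(z) \iff \cen(z')+w\varpi_i\in \CC_{\tau_i(z)}\cap F_w\cap\cen(W)$. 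For $z\in D$, Lemma~\ref{lem: gral local bruhat order} gives the key identity
\begin{equation*}
    \CC_z\cap F_w\cap\cen(W)=\bigl(wz-\Cone^w(\alpha_1,\alpha_2)\bigr)\cap F_w\cap\cen(W),
\end{equation*}
and the same formula for $\CC_{\tau_i(z)}$ with $wz$ replaced by $w\tau_i(z)=wz+w\varpi_i$. Because $\Cone^w(\alpha_1,\alpha_2)$ is translation invariant and $F_w+w\varpi_i\subset F_w$, the map $v\mapsto v+w\varpi_i$ is a bijection between the two right-hand sides, which establishes $P_i(z',z)$ in this case.

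The main obstacle is that Lemma~\ref{lem: gral local bruhat order} is stated only for $z\in D$, while the statement covers $z\in D\cup s_0D$. I would extend the cone description to $z\in s_0D$ by inspecting the hexagonal polygon $\CC_z$ provided in Corollary~\ref{cor: characterization of the Bruhat order}---whose vertex set is $\{z,s_iz,s_{1+i}s_iz,s_{2+i}s_iz,s_is_{1+i}s_iz,s_is_{2+i}s_iz\}$ rather than $W_f\cdot z$---and running the same type of zone-by-zone analysis as in the proofs of Lemma~\ref{lem: Fact 3} and Lemma~\ref{lem: gral local bruhat order}. In each zone, either the cone description holds exactly, or it fails only on a thin boundary strip whose intersection with $\cen(W)$ is empty; in the rare degenerate cases this requires checking a small triangle contains no alcove center, exactly as in the arguments following Figure~\ref{fig: proofb}. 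Once this extension is in hand, the argument of the previous paragraph carries over verbatim, giving $P_1(z',z)$ and $P_2(z',z)$ and hence $P(z',z)$.
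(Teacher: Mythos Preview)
Your proposal is correct and follows essentially the same approach as the paper: reduce $z'\leq z$ to membership of $\cen(z')$ in $\CC_z\cap F_w$ via Corollary~\ref{cor: characterization of the Bruhat order}, apply the cone description of Lemma~\ref{lem: gral local bruhat order}, and use that the cone and the zone are preserved under translation by $w\varpi_i$. One small point: your phrase ``bijection between the two right-hand sides'' is not quite accurate (the map $v\mapsto v+w\varpi_i$ need not be surjective onto the translated set intersected with $F_w$), but this does not matter since you only need the equivalence for the single point $\cen(z')$, which lies in $F_w$ by assumption. For $z\in s_0D$, the paper likewise invokes an analog of Lemma~\ref{lem: gral local bruhat order}, noting that the cone description holds with $wz$ replaced by $ww_0z$; your plan to extend the lemma by the same zone-by-zone argument is exactly what the paper does (and omits).
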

\begin{proof}
    Let $z\in D$.
    Corollary~\ref{cor: characterization of the Bruhat order}, says that  $\mathrm{V}(\CC_{\tau_i(z)})=\{w\tau_i(z)\}_{w\in W_f}$.
    We have:    
    \begin{align*}
        z'\leq z &\iff z'\in \CC_z\cap F_w\cap  \cen(W)&\mbox{(Corollary~\ref{cor: characterization of the Bruhat order})}\\
        &\iff z'\in \big(wz-\Cone^w(\alpha_1,\alpha_2)\big)\cap F_w\cap\cen(W)&\mbox{(Lemma~\ref{lem: gral local bruhat order})}\\
        &\iff \tau_i(z')\in \big(w\tau_i(z)-\Cone^w(\alpha_1,\alpha_2)\big)\cap F_w\cap \cen(W)\\
        &\iff \tau_i(z')\in \CC_{\tau_i(z)}\cap F_w\cap\cen(W)&\mbox{(Lemma~\ref{lem: gral local bruhat order})}\\
        &\iff \tau_i(z')\leq \tau_i(z)&\mbox{(Corollary~\ref{cor: characterization of the Bruhat order})}
    \end{align*}  
    The case \( z \in s_0D \) is proved in the same way, with the only difference being that it requires an analog of Lemma~\ref{lem: gral local bruhat order} for these elements, where the same equation holds but with \( wz \) replaced by \( ww_0z \).
    Since the proof of this analog of Lemma~\ref{lem: gral local bruhat order} follows the same reasoning, we omit it.
\end{proof}
\begin{lem}\label{lem: eqv menor y trsl X}
    Suppose that $\tau_i\colon[x,y]\to[\tau_i(x),\tau_i(y)]$ is a bijection.
    The property $P_i(z',z)$ holds for $z'\in [x,y]$ and $z\in [x,y]\cap \{\mathrm{\mathbf{x}}_n, \sigma(\mathrm{\mathbf{x}}_n)\}_{n\in \mathbbm{N}}$.
\end{lem}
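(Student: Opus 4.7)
The plan is to mirror the proof of Lemma~\ref{lem: eqv menor y trsl D}, now invoking Lemma~\ref{lem: gral local bruhat order for x} in place of Lemma~\ref{lem: gral local bruhat order}. By conjugating with the automorphism $\sigma$ it suffices to treat $z=\mathrm{\mathbf{x}}_n$; small cases $n\leq 3$ reduce to short intervals that can be checked by inspection, so I focus on $n\geq 4$ where Lemma~\ref{lem: gral local bruhat order for x} applies. Let $z'\in[x,y]$ and let $w\in W_f$ be the unique zone with $z'\in F_w$; by Lemma~\ref{lem: traslacion preserva region}, $\tau_i(z')=z'+w\varpi_i\in F_w$ as well.

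The core chain of equivalences runs as follows. First, $z'\leq z$ iff $\cen(z')\in\CC_z\cap F_w\cap\cen(W)$ by Corollary~\ref{cor: characterization of the Bruhat order}. Second, this is iff $\cen(z')\in(w'z-\Cone^{w'}(\alpha_1,\rho))\cap F_w\cap\cen(W)$ by Lemma~\ref{lem: gral local bruhat order for x}, where $w'=w$ when $w\notin\{s_2,s_1s_2\}$ and $w'=ws_2$ otherwise. Third, translating both sides by $w\varpi_i$, which preserves $F_w$, $\cen(W)$, and the cone, yields $\cen(\tau_i(z'))\in(w'z+w\varpi_i-\Cone^{w'}(\alpha_1,\rho))\cap F_w\cap\cen(W)$. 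Finally, I identify the right-hand side with $\CC_{\tau_i(z)}\cap F_w\cap\cen(W)$ using Lemma~\ref{lem: gral local bruhat order for x} applied to $\tau_i(z)$, and conclude via Corollary~\ref{cor: characterization of the Bruhat order} that this is equivalent to $\tau_i(z')\leq\tau_i(z)$.

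The main obstacle is the final identification. For $i=1$ it is clean: $\tau_1(z)=\mathrm{\mathbf{x}}_{n+2}$ remains a wall element by Remark~\ref{rem: facts of theta-partition}\ref{rem-item: difference of wall elements as combination of fundamental weights}, and the identity $s_2\varpi_1=\varpi_1$ forces $w'z+w\varpi_1=w'\tau_1(z)$ in both sub-cases. For $i=2$, however, $\tau_2(z)=z+\varpi_2$ is dominant, so $\CC_{\tau_2(z)}$ is a genuine hexagon governed by Lemma~\ref{lem: gral local bruhat order} with cone $\Cone^w(\alpha_1,\alpha_2)$ rather than $\Cone^{w'}(\alpha_1,\rho)$. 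This is where the bijection hypothesis becomes essential: by Lemma~\ref{lem: same cardinal iff same poligon} together with Proposition~\ref{prop: cardinality under translation}, $\tau_2$ being a bijection on $[x,y]$ forces $\Pgn_{x,y}$ to be either a parallelogram or a pentagon with two adjacent vertices on $\mathbbm{R}_{\geq 0}\varpi_2+\alpha_2$. This shape restriction rules out configurations in which $[\tau_2(x),\tau_2(y)]$ contains lattice points of the hexagon $\CC_{\tau_2(z)}$ that do not already lie in the translated quadrilateral $\CC_z+\varpi_2$, so the equivalence closes when restricted to $[x,y]$.
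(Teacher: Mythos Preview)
Your treatment of the case $i=1$ is correct and matches the paper: the identity $s_2\varpi_1=\varpi_1$ makes $w'z+w\varpi_1=w'\tau_1(z)$ in both sub-cases, so Lemma~\ref{lem: gral local bruhat order for x} applies again to $\tau_1(z)=\mathrm{\mathbf{x}}_{n+2}$ with the same cone and the chain of equivalences closes.

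The case $i=2$ has a genuine gap. You correctly note that $\tau_2(z)\in D$ forces a change of cone to $\Cone^w(\alpha_1,\alpha_2)$, and you correctly extract from the bijection hypothesis (via Proposition~\ref{prop: cardinality under translation}) that $\Pgn_{x,y}$ has no edge on $\mathbbm{R}_{\geq 0}\varpi_1+\alpha_1$. But the final sentence is an assertion, not an argument, and its framing is off: $\tau_2$ is a \emph{piecewise} translation by $w\varpi_2$ in each zone $F_w$, so ``the translated quadrilateral $\CC_z+\varpi_2$'' is not the object the image of $[x,z]$ sits inside. Even with the correct formulation, since $\Cone^{w'}(\alpha_1,\rho)\subsetneq\Cone^{w}(\alpha_1,\alpha_2)$ the backward implication cannot be recovered by cone arithmetic; one must actually exploit the restriction to $[x,y]$.

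The paper closes this gap by a different, more hands-on route. The shape constraint pins down the at most two or three wall elements $\mathrm{\mathbf{x}}_n$ that can occur in $\Pgn_{x,y}$ (they sit adjacent to the vertex $v_1^{x,y}(x)$). One then proves (Claim~\ref{claim: intsec star and Cx}) that $\mathcal{St}^\circ(x)\cap\mathrm{V}(\CC_z)=\{s_2s_1z,\,s_1s_2s_1z\}$ and the analogous identity for $\tau_2(z)$; this forces $[x,z]$ to live entirely in $F_\id\cup F_{s_1}$, where it is described explicitly as the centers in two small parallelograms $P_z$ (opposite vertices $x,z$, edges along $\alpha_1,\alpha_2$) and $P_{s_1z}$ (opposite vertices $x_{s_1},s_1z$, edges along $\alpha_1,\rho$). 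A direct case check then shows $\tau_2$ carries $P_z\uplus P_{s_1z}$ onto the corresponding description of $[\tau_2(x),\tau_2(z)]$, yielding $P_2(z',z)$ for all $z'\geq x$. Your outline is missing this localization to $F_\id\cup F_{s_1}$ and the explicit verification that follows.
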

\begin{proof}
    We prove the result only for \( z \in [x,y] \cap \{\mathrm{\mathbf{x}}_n\}_{n\in \mathbbm{N}} \), as the case \( z \in [x,y] \cap \{\sigma(\mathrm{\mathbf{x}}_n)\}_{n\in \mathbbm{N}} \) follows analogously.
    To continue, we examine each piecewise translation individually.
    \newline
     
    \noindent \textbf{Case $i=1$.}  
    In this case, we observe that \(\tau_1(z) \in \{\mathrm{\mathbf{x}}_n \mid n \geq 3\}\).
    Suppose that $z'\in F_w$, for $w\notin\{s_2,s_1s_2\}$.
    We have:
    \begingroup
    \allowdisplaybreaks
    \begin{align*}
        z'\leq z &\iff z'\in \CC_z\cap F_w\cap  \cen(W)&\mbox{(Corollary~\ref{cor: characterization of the Bruhat order})}\\ 
        &\iff z'\in \big(wz-\Cone^w(\a_1,\rho)\big)\cap F_w\cap\cen(W)&\mbox{(Lemma~\ref{lem: gral local bruhat order for x})}\\
        &\iff \tau_1(z')\in \big(w\tau_1z-\Cone^w(\a_1,\rho)\big)\cap F_w\cap\cen(W)\\
        &\iff \tau_1(z')\in \CC_{\tau_1z}\cap F_w\cap\cen(W)&\mbox{(Lemma~\ref{lem: gral local bruhat order for x})}\\
        &\iff \tau_1(z')\leq \tau_1(z)&\mbox{(Corollary~\ref{cor: characterization of the Bruhat order})}
    \end{align*}
    \endgroup
    Suppose that $z'\in F_w$, for $w\in\{s_2,s_1s_2\}$.
    In the third equivalence below, we use that $s_2\varpi_1=\varpi_1$:
    \begingroup
    \allowdisplaybreaks
    \begin{align*}
        z'\leq z &\iff z'\in \CC_z\cap F_w\cap  \cen(W)&\mbox{(Corollary~\ref{cor: characterization of the Bruhat order})}\\ 
        &\iff z'\in \big(ws_2z-\Cone^{ws_2}(\a_1,\rho)\big)\cap F_w\cap\cen(W)&\mbox{(Lemma~\ref{lem: gral local bruhat order for x})}\\
        &\iff \tau_1(z')\in \big(ws_2\tau_1z-\Cone^{ws_2}(\a_1,\rho)\big)\cap F_w\cap\cen(W)\\
        &\iff \tau_1(z')\in \CC_{\tau_1z}\cap F_w\cap\cen(W)&\mbox{(Lemma~\ref{lem: gral local bruhat order for x})}\\
        &\iff \tau_1(z')\leq \tau_1(z)&\mbox{(Corollary~\ref{cor: characterization of the Bruhat order})}
    \end{align*}
    \endgroup
 
    \noindent \textbf{Case \(i = 2\).}  In this case, we note that $\tau_2(z)\in D$.
    By Lemma~\ref{lem: tras intervalo es subconjunto}, and since $\tau_2$ is a bijection, we have that \( |[x,y]| = |[\tau_2(x),\tau_2(y)]| \).
    By Proposition~\ref{prop: cardinality under translation}, the polygon \(\Pgn_{x,y}\) has no edge supported on \(\mathbbm{R}_{\geq 0}\varpi_1 + \alpha_1\).
    It is not hard to see that if $v_1^{x,y}(x)\notin \partial F_\id$, then $v_1^{x,y}(x)=\mathrm{\mathbf{x}}_m$ for some $m\in \mathbbm{N}$.
    Similarly, it is not hard to see that 
    if $v_1^{x,y}(x)\in \partial F_\id$, then $v_1^{x,y}(x)=\mathrm{\mathbf{x}}_m-\frac{1}{3}\alpha_2$ for some $m\in \mathbbm{N}.$

    \begin{claim}\label{claim: intsec star and Cx}      
        $\mathcal{St}^\circ(x)\cap \mathrm{V}(\CC_z)   =\{s_2s_1z, s_1s_2s_1z\}$.
    \end{claim}
    \begin{proof}
        The proof of this claim is straightforward and will become clear to the reader once we illustrate a single case (see Figure~\ref{fig: dem biyeccion 1}).
        Consider the case $v_1^{x,y}\in\partial F_\id$.
        The green, yellow, and purple alcoves are the only possible choices for $z$ that live in~$\Pgn_{x,y}\cap \{\mathrm{\mathbf{x}}_n\}_{n\in \mathbbm{N}}$.
        In this example, we chose $z$ to be the green alcove.
        The aqua quadrilateral is $\CC_z$.
        In this case, $s_2s_1z$ is the blue dot, and $ s_1s_2s_1z$ is the red dot.
        \begin{figure}[!ht]
            \centering
            \includegraphics[width=0.48\textwidth]{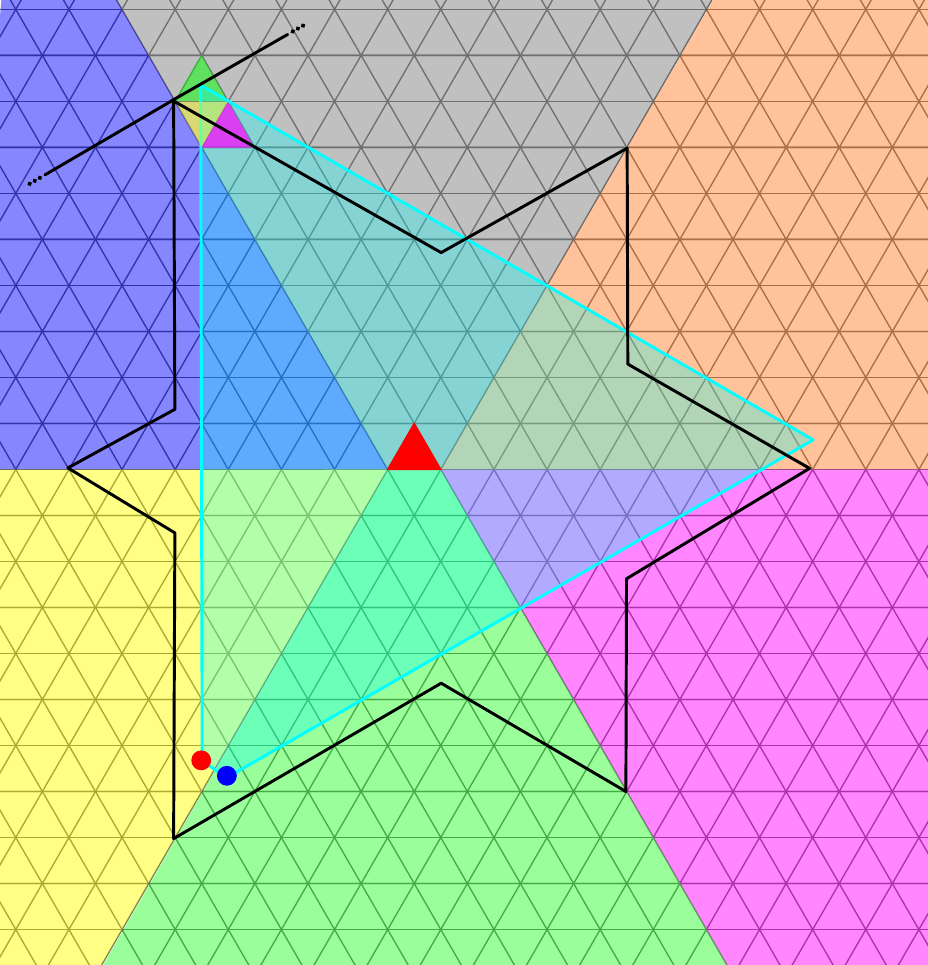}
            \caption{$\CC_{z}$ and $\mathcal{St}(x)$.}
            \label{fig: dem biyeccion 1}
        \end{figure}
    \end{proof}
    In a similar manner, it can be shown that
    \begin{equation}\label{eq: inter St and Cx trasl}
        \mathcal{St}(\tau_2(x)) \cap \mathrm{V}(\CC_{\tau_2(z)}) = \mathrm{V}(\CC_{\tau_2(z)}) \setminus \{\tau_2(z), \tau_2(s_1z)\}.
    \end{equation}
    Since the proof is longer and tedious, it will be omitted.
    
    Returning to the proof of the lemma,      
    by Claim~\ref{claim: intsec star and Cx}, we know that $\{z,s_1z\}\not\in \mathcal{St}^\circ(x)$.
    Let \(P_z\) denote the parallelogram with opposite vertices \(\{x, z\}\)  and edges parallel to \(\alpha_1\) and \(\alpha_2\).
    Similarly, let \(P_{s_1z}\) denote the parallelogram with opposite vertices \(\{x_{s_1}, s_1z\}\) and edges parallel to \(\alpha_1\) and \(\rho\).
    We have two cases:
    \begin{itemize}
        \item $z\in\mathcal{St}(x)$ (for example, the purple or the yellow alcove in Figure~\ref{fig: dem biyeccion 1}).
        The two parallelograms are degenerate and can be expressed by the formulas $
            P_{z}=\Sgm(z,x)$ and $P_{s_1z}=\Sgm(x_{s_1},s_1z).$
            By Corollary~\ref{cor: characterization of the Bruhat order} we have that $[x,z]=\{u\in[x,y]\mid u\in P_z\uplus P_{s_1z}\}$.
        Furthermore, since  
        \begin{align}
            \tau_2(\Sgm(z,x)\cup(\Sgm(x_{s_1},s_1z))&=\Sgm(z,x)\uplus(\Sgm(x_{s_1},s_1z)+\varpi_2.\\
            &=\Sgm(\tau_2(z),\tau_2(x))\uplus\Sgm(\tau_2(x_{s_1}),\tau_2(s_1(z))).
        \end{align}
        From this, it follows that $[\tau_2x, \tau_2z]=\{\tau_2u\mid u\in P_z\uplus P_{s_1z}\}$ so the claim is proved.
        
        \item If $z\in\partial\CC_z\setminus\mathcal{St}(x)$ (for example, see the green alcove in Figure~\ref{fig: dem biyeccion 1}.)  Since $z = v_1^{x,y}(x) + \frac{1}{3}\alpha_2$ and $s_1z=u_3+\frac{1}{3}\rho$, it follows that 
        \begin{align}
            \mathrm{V}(P_z) &= \{x, z, v_1^{x,y}(x), x + \frac{1}{3}\alpha_2\}, \mbox{ and } \\
            \mathrm{V}(P_{s_1z})&=\{x_{s_1}, s_1z, s_1 v_1^{x,y}(x), x_{s_1} + \frac{1}{3}\rho\}.
        \end{align} 
        Additionally, we have 
        \begin{equation}
            \begin{split}\label{eq: cenP_z}
                \mathrm{c}P_z&=\{\cen(u)\mid u\in \Sgm(v_1^{x,y}(x),x)\,\uplus\,\Sgm(y,x+\frac{1}{3}\alpha_2)\},\\
                \mathrm{c}P_{s_1z}&=\{\cen(u)\mid u\in \Sgm(x_{s_1},s_1 v_1^{x,y}(x))\uplus \Sgm(x_{s_1}+\frac{1}{3}\rho,s_1y)\}.
            \end{split}
        \end{equation}
        By Corollary~\ref{cor: characterization of the Bruhat order}, we note that $[x,z]=\{u\mid u\in P_z\uplus P_{s_1z}\}$.
        It is an easy computation to check that in fact $[\tau_2x, \tau_2z]=\{\tau_2u\mid u\in P_z\uplus P_{s_1z}\}$ so the claim is proved.\qedhere
    \end{itemize}
\end{proof}
The following lemma generalizes Lemmas~\ref{lem: eqv menor y trsl D} and~\ref{lem: eqv menor y trsl X}.
Its proof follows ideas similar to those in these lemmas, but we omit it for the sake of brevity.
\begin{lem}\label{lema basura}
    Let $i\in \{0,1,2\}$.
    The property $P(z',z)$ holds for $z'\in W$ and $z\in \d^iD\cup \d^is_0D$.
    Furthermore,  if $z',z\in [x,y]$ and $\tau_i\colon[x,y]\to[\tau_i(x),\tau_i(y)]$ the property $P_i(z',z)$ holds for $z'\in W$ and $z\in X$.
\end{lem}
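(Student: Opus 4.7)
The plan is to deduce both assertions from Lemmas~\ref{lem: eqv menor y trsl D} and~\ref{lem: eqv menor y trsl X} by leveraging the fact that the Dynkin diagram automorphisms $\delta$ and $\sigma$ act as isometries of the Coxeter complex which preserve the Bruhat order (Lemma~\ref{lem: elements of G are isomorphism of poset}) and permute the zones $\{F_w\}_{w\in W_f}$ in a controlled way.

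For the first assertion, fix $i\in\{0,1,2\}$ and $z\in\delta^iD\cup\delta^is_0D$, so that $\delta^{-i}z\in D\cup s_0D$. For any $z'\in W$ and any $j\in\{1,2\}$, Lemma~\ref{lem: eqv menor y trsl D} gives
\begin{equation*}
    \delta^{-i}z'\leq \delta^{-i}z \iff \tau_j(\delta^{-i}z')\leq \tau_j(\delta^{-i}z).
\end{equation*}
Since $\delta$ is a poset automorphism, the left-hand side is equivalent to $z'\leq z$. To recast the right-hand side, I would establish an intertwining identity of the form $\tau_j\circ\delta^{-i}=\delta^{-i}\circ\tau_{j'}$ for a suitably relabeled index $j'=j'(i,j)\in\{1,2\}$. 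This identity follows from the observation that the piecewise rule $\tau_j(u)=u+w\varpi_j$ on $F_w$ is entirely determined, zone by zone, by the action of $w$ on the fundamental weights, and $\delta$ acts compatibly: it cyclically permutes the zones $F_w$ while simultaneously permuting the roles of the fundamental weights, up to the affine part of $\delta$.

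The second assertion follows by the same device, applied to Lemma~\ref{lem: eqv menor y trsl X} in place of Lemma~\ref{lem: eqv menor y trsl D}. By Remark~\ref{rem: remark tontos}, every $z\in X$ has the form $gz_0$ for some $g\in\langle\delta,\sigma\rangle$ and some $z_0\in\{\mathrm{\mathbf{x}}_n,\sigma(\mathrm{\mathbf{x}}_n)\mid n\geq 1\}$, i.e.\@ precisely the class handled in Lemma~\ref{lem: eqv menor y trsl X}. Once the intertwining $g\circ\tau_{i'}=\tau_i\circ g$ is in place, one applies $g^{-1}$ to both sides of $z'\leq z$, transfers bijectivity of $\tau_i$ on $[x,y]$ to bijectivity of $\tau_{i'}$ on $g^{-1}[x,y]$, and concludes with Lemma~\ref{lem: eqv menor y trsl X}.

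The main technical obstacle is verifying the intertwining identities for $\delta$ and $\sigma$ together with the precise relabeling of indices they induce. This is a routine but somewhat tedious case check: $\delta$ is not a linear isometry (its fixed point is the centroid of $A_+$, not the origin), so one must track the translation part of $\delta^i$ as it interacts with the piecewise nature of $\tau_j$, and handle separately the degenerate configurations (such as elements lying on the walls of the $F_w$ or at boundary cases among the $\mathrm{\mathbf{x}}_n$) that caused the small adjustments in the proofs of Lemmas~\ref{lem: eqv menor y trsl D} and~\ref{lem: eqv menor y trsl X}. The authors' decision to omit the verification is presumably motivated by this purely mechanical nature.
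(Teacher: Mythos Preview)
Your reduction hinges on an intertwining identity $\tau_j\circ\delta^{-i}=\delta^{-i}\circ\tau_{j'}$, but this identity does not hold. Since $\tau_j$ is defined zone-by-zone via the partition $\{F_w\}_{w\in W_f}$, the identity would require $\delta$ to permute the zones. It does not: the six rays bounding the $F_w$ lie in the hyperplanes $H_{\alpha_1,-1}$, $H_{\rho,-1}$, $H_{\alpha_2,0}$, $H_{\alpha_1,0}$, $H_{\rho,-1}$, $H_{\alpha_2,-1}$ (mixed levels $0$ and $-1$), and a direct computation with the affine map $\delta(v)=s_1s_2(v)-\varpi_1$ shows, for instance, $\delta(H_{\alpha_1,-1})=H_{\alpha_2,-1}$ and $\delta(H_{\alpha_2,-1})=H_{\rho,0}$. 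Neither image is the boundary between the pair of zones one would need; concretely, $\delta(F_{\mathrm{id}})$ and $F_{s_1s_2}$ agree only up to two strips of width one, and those strips contain infinitely many alcove centers (including all the $X$-type elements on that side). So the failure is not a boundary technicality but occurs on a genuinely infinite set, and repairing it there amounts to the direct argument you were trying to avoid. (For $\sigma$ the situation is better---one does get $\sigma\circ\tau_1=\tau_2\circ\sigma$ on the nose---but $\sigma$ alone cannot reach $\delta^iD$ from $D$.)

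The paper's intended (omitted) argument is the direct one: for $z\in\delta^iD\cup\delta^is_0D$ one writes down the analog of Lemma~\ref{lem: gral local bruhat order} (expressing $\CC_z\cap F_w\cap\cen(W)$ as a translated cone intersected with $F_w\cap\cen(W)$), then repeats the chain of equivalences in the proof of Lemma~\ref{lem: eqv menor y trsl D} verbatim; for $z\in X$ one uses an analog of Lemma~\ref{lem: gral local bruhat order for x} and repeats the proof of Lemma~\ref{lem: eqv menor y trsl X}. These local descriptions all come from Corollary~\ref{cor: characterization of the Bruhat order} by the same kind of case analysis already carried out for $F_{\mathrm{id}}$.
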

\begin{lem}\label{lem: bijec-tras-implica-iso} 
    Suppose that $\tau_i\colon[x,y]\to[\tau_i(x),\tau_i(y)]$ is a bijection.
    Then $\tau_i$ defines an isomorphism of posets between $[x,y]$ and $[\tau_i(x),\tau_i(y)]$.
\end{lem}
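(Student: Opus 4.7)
The plan is to apply Lemma~\ref{lema basura} pointwise. Since $\tau_i$ is already a bijection by hypothesis, the remaining content is the biconditional
\begin{equation*}
    z' \leq z \iff \tau_i(z') \leq \tau_i(z) \qquad \text{for all } z', z \in [x,y].
\end{equation*}
Lemma~\ref{lema basura} delivers exactly this biconditional (with $z' \in W$ arbitrary) whenever $z$ belongs to one of the classes $\delta^j D \cup \delta^j s_0 D$ for some $j \in \{0,1,2\}$, or when $z \in X$ (the second case relying precisely on the bijection hypothesis at hand). The task therefore reduces to verifying that every nonidentity element of $[x,y]$ lies in some covered class, so that for each ordered pair at least one of the two elements can play the role of $z$.

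To verify coverage, I would combine the $X\Theta$-partition $W \setminus \{\id\} = X \uplus \Theta \uplus \Theta^s \uplus {}^s\Theta^s$ with Remark~\ref{rem: remark tontos}: every element of $\Theta \cup \Theta^s \cup {}^s\Theta^s$ has the form $g \cdot h$ with $g \in \langle \delta, \sigma\rangle$ and $h \in \{\theta(m,n), s_0\theta(m,n), \theta^s(m,n), s_0\theta^s(m,n)\}$. Since $\sigma \theta(m,n) = \theta(n,m)$ and likewise $\sigma \theta^s(m,n) = \theta^s(n,m)$, the set $D$ is $\sigma$-stable, and so the $\langle\delta,\sigma\rangle$-orbit of $D$ collapses to $\{D,\delta D,\delta^2 D\}$; the same holds for $s_0 D$. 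Hence every such element lies in $\bigcup_{j \in \{0,1,2\}}(\delta^j D \cup \delta^j s_0 D)$, while elements of $X$ are handled by the ``furthermore'' clause of Lemma~\ref{lema basura}. Iterating over pairs $(z',z) \in [x,y]^2$ and invoking the lemma with $z$ in the appropriate class then yields the required biconditional, and hence the poset isomorphism.

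The main point that requires attention is bookkeeping: for each ordered pair one must choose which element plays the role of $z$ so that the lemma applies, and then use the arbitrary-$z'$ clause to conclude. This is always possible by the coverage argument above. The bijection hypothesis is genuinely essential for pairs involving $X$, reflecting the fact that, as quantified by Proposition~\ref{prop: cardinality under translation}, piecewise translation changes the cardinality of an interval unless its polygon has the correct shape relative to $\lambda$; when that shape condition fails, $\tau_i$ cannot even be a bijection, let alone an order isomorphism, so the hypothesis is invoked at precisely the right moment.
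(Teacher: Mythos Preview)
Your proof is correct and follows essentially the same approach as the paper's own proof: reduce the poset isomorphism to the pointwise biconditional $P_i(z',z)$ and then invoke the case-by-case lemmas (the paper cites Lemmas~\ref{lem: eqv menor y trsl D} and~\ref{lem: eqv menor y trsl X} first and then Lemma~\ref{lema basura} for the remainder, while you go straight to Lemma~\ref{lema basura}, which subsumes the other two). Your coverage argument via the $X\Theta$-partition and the $\sigma$-stability of $D$ and $s_0D$ is a nice explicit justification of what the paper leaves implicit. One small wording quibble: you do not really ``choose which element plays the role of $z$''; rather, for every ordered pair $(z',z)$ the element $z$ must lie in a covered class, and your coverage argument shows that every element of $[x,y]$ does, so the lemma applies directly.
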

\begin{proof}
    Recall that for $z,z'\in W$, we defined the following property:
    \begin{equation*}
        P_i(z',z): z' \leq z \iff \tau_i(z') \leq \tau_i(z).
    \end{equation*}
    The proposition is equivalent to stating that $P_i(z',z)$ holds for all $z, z' \in [x,y]$.
    By Lemma~\ref{lem: eqv menor y trsl D}, $P(z',z)$ holds when \(z \in D \cup s_0D\) and $z'\in W$.
    By Lemma~\ref{lem: eqv menor y trsl X}, $P(z',z)$ holds for \(z \in \{\mathrm{\mathbf{x}}_n,\sigma \mathrm{\mathbf{x}}_n\}_{n\in \mathbbm{N}}\) and $z'\in W$.
    The remaining cases follow from Lemma~\ref{lema basura}.
\end{proof}
\begin{lem}\label{lem: condition for isomorphism t_i}
    Let $[x,y]$ be an interval such that $x,y$ are dominant.
    \begin{enumerate}
        \item If $[x,y]$ is a parallelogram interval, then $\tau_i\colon [x,y]\to [\tau_i(x),\tau_i(y)]$ is an isomorphism of posets, for $i\in\{1,2\}$.
        \item If $[x,y]$ is a pentagon interval, then for some $i\neq j\in\{1,2\}$, we have $\tau_i\colon [x,y]\to [\tau_i(x),\tau_i(y)]$ is an isomorphism of posets and $[x,y]\not\simeq[\tau_j(x),\tau_j(y)]$.
        \item If $[x,y]$ is a hexagon interval, then $[x,y]\not\simeq [\tau_i(x),\tau_i(y)]$, for $i\in\{1,2\}$.
    \end{enumerate}
\end{lem}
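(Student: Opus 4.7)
The plan is to assemble this lemma from the machinery already set up in Sections~\ref{subsec-translations}, \ref{subsec: translations preserving cardinality}, and \ref{subsec:Trasl-Preserv-Bruhat-order}. In each of the three cases the guiding principle is the same: by Lemma~\ref{lem: tras intervalo es subconjunto} and Lemma~\ref{lem: tras es inyectivo} (applied with $\lambda=\varpi_i$), the piecewise translation $\tau_i$ always gives an injection $[x,y]\hookrightarrow[\tau_i(x),\tau_i(y)]$; whether it is a bijection is therefore detected by comparing cardinalities, and whenever it is a bijection Lemma~\ref{lem: bijec-tras-implica-iso} upgrades it for free to a poset isomorphism. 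So every case reduces to a statement about $|[x,y]|$ versus $|[\tau_i(x),\tau_i(y)]|$, which Lemma~\ref{lem: same cardinal iff same poligon} in turn reduces to whether $\Pgn_{x,y}+\varpi_i=\Pgn_{x+\varpi_i,y+\varpi_i}$, and that last question is answered case by case by Proposition~\ref{prop: traslaciones que preservan Pgn}.

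Concretely, for the parallelogram case, Proposition~\ref{prop: traslaciones que preservan Pgn}\ref{item-prop: trasl presev par} (applied with $\lambda=\varpi_i$, which is dominant) gives $\Pgn_{x,y}+\varpi_i=\Pgn_{x+\varpi_i,y+\varpi_i}$ for each $i\in\{1,2\}$. Lemma~\ref{lem: same cardinal iff same poligon} then yields $|[x,y]|=|[\tau_i(x),\tau_i(y)]|$, so the injection $\tau_i\colon[x,y]\to[\tau_i(x),\tau_i(y)]$ is a bijection, and Lemma~\ref{lem: bijec-tras-implica-iso} promotes it to a poset isomorphism.

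For the pentagon case, let $i_0\in\{1,2\}$ be the index such that the two adjacent vertices of $\Pgn_{x,y}$ lie on $\mathbbm{R}_{\geq 0}\varpi_{i_0}+\alpha_{i_0}$ (and let $j_0$ be the other index). Proposition~\ref{prop: traslaciones que preservan Pgn}\ref{item-prop: trasl presev pent} gives $\Pgn_{x,y}+\varpi_{i_0}=\Pgn_{x+\varpi_{i_0},y+\varpi_{i_0}}$, and the same chain of reductions used in the parallelogram case shows that $\tau_{i_0}$ is a poset isomorphism. For the other direction, Proposition~\ref{prop: cardinality under translation}(2) gives $|[x,y]|<|[x+\varpi_{j_0},y+\varpi_{j_0}]|$, so the two posets cannot even be isomorphic as sets, much less as posets. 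Finally, the hexagon case is the quickest: Proposition~\ref{prop: cardinality under translation}(3) yields $|[x,y]|<|[\tau_i(x),\tau_i(y)]|$ for both $i\in\{1,2\}$, ruling out any poset isomorphism.

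There is no genuine obstacle here; all the substantive work was done in the preceding sections, in particular in establishing Proposition~\ref{prop: traslaciones que preservan Pgn} (the geometric trichotomy governing which translations fix $\Pgn_{x,y}$), Lemma~\ref{lem: same cardinal iff same poligon} (the cardinality/polygon equivalence), and Lemma~\ref{lem: bijec-tras-implica-iso} (bijectivity of $\tau_i$ automatically upgrades to a poset isomorphism). The only point that requires mild care is making sure one applies Proposition~\ref{prop: traslaciones que preservan Pgn} with $\lambda=\varpi_i$, which is indeed dominant, so the hypotheses of that proposition are met; everything else is a direct quotation of results already proved.
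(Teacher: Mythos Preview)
Your proof is correct and follows essentially the same approach as the paper, which simply cites Lemmas~\ref{lem: tras es inyectivo}, \ref{lem: tras intervalo es subconjunto}, \ref{lem: bijec-tras-implica-iso}, and Proposition~\ref{prop: cardinality under translation}. The only cosmetic difference is that in the parallelogram case you unpack Proposition~\ref{prop: cardinality under translation} into its constituents (Proposition~\ref{prop: traslaciones que preservan Pgn} and Lemma~\ref{lem: same cardinal iff same poligon}), but since the former is explicitly stated to be a direct consequence of the latter, the arguments are the same.
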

\begin{proof} 
    The result follows directly from Lemmas~\ref{lem: tras es inyectivo},~\ref{lem: tras intervalo es subconjunto},~\ref{lem: bijec-tras-implica-iso}, and Proposition~\ref{prop: cardinality under translation}.
\end{proof}
\begin{figure}
    \centering 
    \includegraphics[scale=0.7]{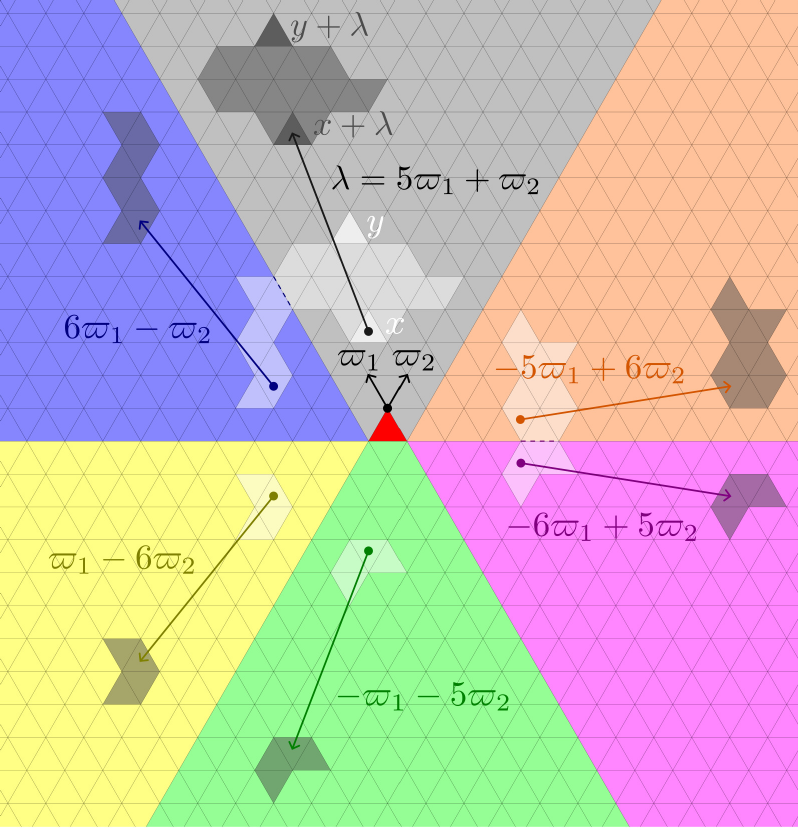}
    \caption{Visualization of the piecewise translation poset isomorphism $\tau_{\lambda} \colon [x, y] \to [x + \lambda, y + \lambda]$, where $x = \theta^s(1,0)$, $y = \theta^s(3,1)$, and $\lambda = 5\varpi_1 + \varpi_2$.}
    \label{fig: bijective translation}
\end{figure}
The following result is of fundamental importance for this paper.
An illustration of the first part is given in Figure~\ref{fig: bijective translation}.
\begin{prop}\label{prop: trasl-por-dominat}
    Let $[x,y]$ be an interval such that $x,y$ are dominant.
    We have:
    \begin{enumerate}
        \item If $[x,y]$ is a parallelogram interval, then $\tau_\lambda\colon[x,y]\to [\tau_\lambda(x),\tau_\lambda(y)]$ is an isomorphism of posets for any dominant weight $\lambda$.
        \item If $[x,y]$ is a pentagon interval (so $\operatorname{Pgn}_{x,y}$ has two adjacent vertices on~$\mathbbm{R}\varpi_{i_0}+\alpha_{i_0}$ for some $i_0\in\{1,2\}$), then $\tau_{\lambda}\colon [x,y]\to[\tau_{\lambda}(x),\tau_{\lambda}(y)]$ is an isomorphism of posets for $\lambda=a\varpi_{i_0}$ and $a\geq 0$.
        For any other $\lambda$, we have $[x,y]\not\simeq[\tau_\lambda(x),\tau_\lambda(y)]$.
        \item If $[x,y]$ is a hexagon interval, then $[x,y]\not\simeq[\tau_\lambda(x),\tau_\lambda(y)]$ for all $\lambda\neq 0$.
    \end{enumerate}
\end{prop}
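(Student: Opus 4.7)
The plan is to bootstrap Lemma~\ref{lem: condition for isomorphism t_i} (the single-step $\tau_i$ result) by decomposing $\tau_\lambda = \tau_1^a\circ\tau_2^b$ for $\lambda=a\varpi_1+b\varpi_2$ and iterating, while using Proposition~\ref{prop: traslaciones que preservan Pgn} to guarantee that the polygon shape (parallelogram, or pentagon with axis $i_0$) is preserved along the iteration, so that Lemma~\ref{lem: condition for isomorphism t_i} keeps applying at every step.

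For the isomorphism statements in~(1) and~(2), I would proceed by induction: in~(1), on $a+b$; in~(2), on $a$ with $\lambda=a\varpi_{i_0}$. At the inductive step, write $\tau_\lambda = \tau_i\circ \tau_{\lambda-\varpi_i}$; the inductive hypothesis gives a poset isomorphism $[x,y]\to [\tau_{\lambda-\varpi_i}(x),\tau_{\lambda-\varpi_i}(y)]$, and Proposition~\ref{prop: traslaciones que preservan Pgn}(1) or~(2) guarantees that the image interval is again a parallelogram interval, respectively a pentagon interval with two adjacent vertices on $\mathbbm{R}_{\geq 0}\varpi_{i_0}+\alpha_{i_0}$ (the relevant affine line being invariant under translation by $\varpi_{i_0}$). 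Then Lemma~\ref{lem: condition for isomorphism t_i}(1) or~(2) closes the induction. A small point to verify in~(2) is that the index for which Lemma~\ref{lem: condition for isomorphism t_i}(2) yields an isomorphism is really $i_0$, not $j_0$: this is forced because a poset isomorphism preserves cardinality (Lemma~\ref{lem: betti invariante}) while $\tau_{j_0}$ strictly increases cardinality by Proposition~\ref{prop: cardinality under translation}(2).

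For the non-isomorphism statements in~(2) and~(3), cardinality suffices: by Lemma~\ref{lem: same cardinal iff same poligon}, $|[x,y]|=|[x+\lambda,y+\lambda]|$ if and only if $\Pgn_{x,y}+\lambda=\Pgn_{x+\lambda,y+\lambda}$, and feeding this into Proposition~\ref{prop: traslaciones que preservan Pgn}(3) and~(2) shows, respectively, that cardinalities differ whenever $\lambda\neq 0$ in the hexagon case, and whenever $\lambda\notin \mathbbm{R}_{\geq 0}\varpi_{i_0}$ in the pentagon case; for dominant $\lambda$, the latter is precisely the complement of the allowed set $\{a\varpi_{i_0}\mid a\in\mathbbm{Z}_{\geq 0}\}$. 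The main obstacle is essentially bookkeeping, namely ensuring the polygon-shape hypothesis persists along the iteration in~(2); this reduces cleanly to the invariance of $\mathbbm{R}_{\geq 0}\varpi_{i_0}+\alpha_{i_0}$ under $+\varpi_{i_0}$ together with Proposition~\ref{prop: traslaciones que preservan Pgn}(2), and no further calculation is required.
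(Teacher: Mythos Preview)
Your proposal is correct and matches the approach implied by the paper, which states the proposition immediately after Lemma~\ref{lem: condition for isomorphism t_i} without proof; the intended argument is exactly the iteration of that single-step lemma that you outline, with Proposition~\ref{prop: traslaciones que preservan Pgn} ensuring the polygon type is preserved at each step and Proposition~\ref{prop: cardinality under translation}/Lemma~\ref{lem: same cardinal iff same poligon} handling the non-isomorphism claims via cardinality.
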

\section{The sides of \texorpdfstring{$\Pgn_{x,y}$}{Pgn x,y} and thick intervals}\label{section: sides}
In this section, $x,y$ are dominant elements such that $[x,y]$ is a full interval.

To avoid the heavy use of square roots, we sometimes use a normalized version of the Euclidean distance $d(-,-)$.
Given $a,b\in E$, we define
\begin{equation*}
    \operatorname{dist}(a,b)\coloneqq \frac{\sqrt{2}}{3} d(a,b).
\end{equation*}
In this normalization, the height of an alcove is $3/2$, its sides have length $\sqrt{3}$, and the distance from any vertex of an alcove to its center is $1$.
\subsection{The edges of \texorpdfstring{$\Pgn_{x,y}^{}$}{Pgn xy}}\label{subsec: edges of pgn}
The main goal of the section is to obtain a formula for the lengths of the edges of~$\Pgn_{x,y}$ in terms of the interval $[x,y]$ (Proposition~\ref{prop: length of all sides}).

Let $v\in \mathrm{V}(\Pgn_{x,y})$ be adjacent to $\cen(y)$.
There is $z\in W$ such that $z\in \Sgm(y,v)$ and
\begin{equation}\label{eq: index free definition for z of y}
    d(y, z)=\max\{d(y,w)\mid w\in W, w\in \Sgm(y,v)\}.
\end{equation}
Such a $z$ is unique.
By Equation \eqref{eq: interval as union of polygons}, we have that $z\in [x,y]$.
Equivalently, $z$ can be characterized as the unique element in~$W$ such that $z\in \Sgm(y,v)$ and $v\in z$ (we recall that by $v\in z$ we mean $v\in\overline{zA_+}$.)
Similarly, if $v\in \mathrm{V}(\Pgn_{x,y})$ is adjacent to $\cen(x)$ there is a unique $z\in W$ such that $z\in \Sgm(v,x)$ and
\begin{equation}\label{eq: index free definition for z of x}
    d(z,x)=\max\{d(w,x)\mid w\in W, w\in \Sgm(v,x)\},
\end{equation}
or equivalently, such that $z\in \Sgm(v,x)$ and $v\in z$.
\begin{lem} \label{lem: Lados Sgm y}
    Let $v\in \mathrm{V}(\Pgn_{x,y})$ be adjacent to $\cen(y)$.
    Let $z\in [x,y]$ be the unique element such that $z\in \Sgm(y,v)$ and $v\in z$ as above.
    Suppose that $z\neq y$, then we have 
    \begin{equation}
        \operatorname{dist}(y,v)=
        \begin{cases}
            \frac{3}{2}(\ell(z,y)-1)+\epsilon(y)+1 &\mbox{ if $v=\cen(z)$ and $\ell(z)$ is odd},\\ 
            \frac{3}{2}(\ell(z,y)-1)+\epsilon(y)+\frac{1}{2}&\mbox{ if $v=\cen(z)$ and $\ell(z)$ is even},\\ 
            \frac{3}{2}(\ell(z,y)-1)+\epsilon(y)+\frac{3}{2}&\mbox{ otherwise}.
        \end{cases}
    \end{equation}
    Where $\epsilon(y)=1/2$ if $y\in \Theta$ and $\epsilon(y)=1$ if $y\in\Theta^s$.
\end{lem}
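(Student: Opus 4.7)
The plan is to split $\operatorname{dist}(y,v)$ as $\operatorname{dist}(y,z) + \operatorname{dist}(z,v)$ and analyze each summand using the geometry of alcoves along a root ray in $F_{\id}$. Since $v$ is adjacent to $\cen(y)$ in $\Pgn_{x,y}$, there is an $i \in \{1,2\}$ with $\cen(y) - v \in \mathbbm{R}_{>0}\alpha_i$, and I can enumerate the elements of $W$ whose centers lie on $\Sgm(y,v)$ as $y = z_0, z_1, \ldots, z_k = z$, ordered so that consecutive alcove-closures intersect. By Lemma~\ref{lem: comparable elements with nonempty intersection}, each pair $(z_{j-1},z_j)$ is a Bruhat cover with $z_j \lessdot z_{j-1}$, so $k = \ell(z,y)$; the same lemma pins down the step sizes in the normalized metric, namely $\operatorname{dist}(z_{j-1},z_j) = 1$ (edge-sharing, $\tfrac13\alpha_i$-shift) if $z_{j-1}$ is down-oriented and $\operatorname{dist}(z_{j-1},z_j) = 2$ (vertex-sharing, $\tfrac23\alpha_i$-shift) if $z_{j-1}$ is up-oriented.

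By Lemma~\ref{lem: length determines orientation}, orientations alternate according to $\ell(y) - j \bmod 2$, so the step pattern is $1,2,1,2,\ldots$ when $y \in \Theta$ and $2,1,2,1,\ldots$ when $y \in \Theta^s$. A short induction on $k$ then shows $\operatorname{dist}(y,z) = \tfrac32(k-1) + \epsilon(y) + \delta$ where $\delta \in \{0,\tfrac12\}$ is determined by the parities of $\ell(y)$ and $\ell(z)$: for $y \in \Theta$ one gets $\delta = 1/2$ when $k$ is even and $\delta = 0$ when $k$ is odd, and for $y \in \Theta^s$ the reverse.

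Next I would compute $\operatorname{dist}(z,v)$ using Remark~\ref{rem: geometry-of-vertex-of-Pgn}: $v$ is the centroid, an edge-midpoint, or a vertex of $\overline{zA_+}$, contributing $0$, $\tfrac12$ (the inradius), or $1$ (the circumradius) respectively. When $v \neq \cen(z)$, the point $v$ is the exit of the ray $\cen(z) + \mathbbm{R}_{<0}\alpha_i$ from $\overline{zA_+}$. Because every alcove of $\widetilde{A}_2$ is a $\Lambda^\vee$-translate of either $A_+$ or $s_1 A_+$, one explicit check per orientation suffices: for up-oriented $z$ the ray exits at the vertex opposite the $\alpha_i$-wall ($\operatorname{dist} = 1$), and for down-oriented $z$ it exits at the midpoint of that wall ($\operatorname{dist} = 1/2$).

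Adding $\operatorname{dist}(y,z)$ and $\operatorname{dist}(z,v)$ and matching on the parity of $\ell(z) = \ell(y)-k$ yields the three branches in the statement; the two subcases of ``otherwise'' (midpoint with $z$ down, vertex with $z$ up) collapse to the common value $\tfrac32(k-1) + \epsilon(y) + \tfrac32$ because switching $z$'s orientation shifts $\operatorname{dist}(y,z)$ by $\pm\tfrac12$ and $\operatorname{dist}(z,v)$ by $\mp\tfrac12$. The main obstacle is organizational rather than conceptual---four $(y,z)$-parity combinations and three positions for $v$ must be tracked independently---but the one genuinely geometric ingredient is the down/up $\mapsto$ midpoint/vertex dichotomy used in the computation of $\operatorname{dist}(z,v)$, which has to be verified directly on one representative of each orientation class before being transported by translation.
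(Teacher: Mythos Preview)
Your approach is sound and essentially parallels the paper's, but with a center-to-center decomposition $\operatorname{dist}(y,z)+\operatorname{dist}(z,v)$ rather than the paper's boundary-to-boundary one. The paper introduces the points $p_y\in\partial(yA_+)$ and $p_z\in\partial(zA_+)$ where the segment leaves $\overline{yA_+}$ and enters $\overline{zA_+}$, and writes
\[
\operatorname{dist}(y,v)=\operatorname{dist}(y,p_y)+\operatorname{dist}(p_y,p_z)+\operatorname{dist}(p_z,v).
\]
The advantage is that the middle term is \emph{uniform}: the $\ell(z,y)-1$ intermediate alcoves each contribute exactly one height $3/2$, so $\operatorname{dist}(p_y,p_z)=\tfrac32(\ell(z,y)-1)$ with no parity bookkeeping. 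All the case analysis is pushed into the short end-pieces $\operatorname{dist}(y,p_y)=\epsilon(y)$ and $\operatorname{dist}(p_z,v)$, each depending on a single orientation. Your alternating $1,2,1,2,\dots$ sum is equivalent but forces you to track the parities of both $\ell(y)$ and $k$ simultaneously.

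One concrete slip: your stated range $\delta\in\{0,\tfrac12\}$ is off by $\tfrac12$. For instance, with $y\in\Theta$ and $k=2$ the steps are $1,2$, so $\operatorname{dist}(y,z)=3=\tfrac32(k-1)+\tfrac32$, whereas your formula gives $\tfrac32(k-1)+\epsilon(y)+\tfrac12=\tfrac52$. The correct statement is $\operatorname{dist}(y,z)=\tfrac32(k-1)+\epsilon(y)+\gamma(z)$ with $\gamma(z)=1$ if $\ell(z)$ is odd and $\gamma(z)=\tfrac12$ if $\ell(z)$ is even (this is exactly the $\gamma$ appearing later in Proposition~\ref{prop: length of all sides}). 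With this correction your final case-merge goes through as written.

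Finally, your claim that ``$v$ is the exit of the ray $\cen(z)+\mathbbm{R}_{<0}\alpha_i$ from $\overline{zA_+}$'' when $v\neq\cen(z)$ deserves one line of justification: by Remark~\ref{rem: geometry-of-vertex-of-Pgn} the point $v$ is a vertex or edge-midpoint of an alcove, hence lies in $\bigcup_{H\in\mathcal H}H$, hence in $\partial(\overline{zA_+})$; since the ray meets that boundary in a single point on the $-\alpha_i$ side, $v$ must be that exit point.
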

\begin{proof}
    Let $p_y,p_z\in \Sgm(y,v)$ be such that $p_y\in \partial(yA_+)$, $p_z\in \partial(zA_+)$, and
    \begin{align}
        d(y,p_y)&=\max\{d(y,p)\mid p\in \Sgm(y,v)\cap \overline{yA_+}\}\\
        d(y,p_z)&=\min\{d(y,p)\mid p\in \Sgm(y,v)\cap \overline{zA_+}\}.
    \end{align}
    We will break the problem into three computations using the following formula.
    \begin{equation*}
        \operatorname{dist}(y,v)=\operatorname{dist}(y,p_y)+\operatorname{dist}(p_y,p_z)+\operatorname{dist}(p_z,v).
    \end{equation*}
    Since $\Sgm(y,v)$ is parallel to a simple root, we have that either $p_y$ (resp.\@ $p_z$) is a vertex or the midpoint of an edge of~$y$ (resp.\@ $z$).
    If $y\in \Theta$, $y$ is down-oriented, so $p_y$ is the midpoint of an edge of~$y$.
    If $y\in \Theta^s$, $y$ is up-oriented,  $p_y$ is one of the bottom vertices of~$y$.
    In both cases, $\operatorname{dist}(y,p_y)=\epsilon(y)$.
    
    Similarly, when $z$ is down-oriented, $p_z$ is a vertex of~$z$, and when $z$ is up-oriented, we have that $p_z$ is a midpoint of one of the edges of~$z$.
    By Lemma~\ref{lem: length determines orientation}, we have
    \begin{equation}
        \operatorname{dist}(p_z,z)=\begin{cases}
        \frac{1}{2}&\mbox{if $\ell(z)$ is even,}\\
        1 & \mbox{if $\ell(z)$ is odd.}
        \end{cases}
    \end{equation}
    If $v = \cen(z)$, then obviously $\operatorname{dist}(p_z,z)=\operatorname{dist}(p_z,v)$.
    If $v\neq \cen(z)$, then \(\Sgm(p_z, v)\) is a height of \(zA_+\), so \(\operatorname{dist}(p_z, v) = \frac{3}{2}\).

    By Corollary~\ref{cor: characterization of the Bruhat order} and Lemma~\ref{lem: local bruhat order}, an element $w$ whose center is in~$\Sgm(y,v)$ is also in~$[z,y]$.
    As in the proof of Lemma~\ref{lem: if distance is 4 then difference is more than 2 times the simple root}, we have that $\{u\in W \mid u\in \Sgm(y,v)\}$ is a maximal chain in~$[z,y]$.
    As 
    \begin{equation*}
        \vert \{u\in W \mid u\in \Sgm(y,v)\} \vert =\ell(z,y)+1,
    \end{equation*}
    one has 
    \begin{equation*}
        \vert \{u\in W \mid u\in \Sgm(p_y,p_z)\} \vert =\ell(z,y)-1.
    \end{equation*}
    Since for any $u\in W$ with $\cen(u)\in \Sgm(p_y,p_z)$, we have that $uA_+$ intersects $\Sgm(p_y,p_z)$ along one of its heights, the equality $\operatorname{dist}(p_y,p_z)=\frac{3}{2}(\ell(z,y)-1)$ follows.
\end{proof}
The next lemma is proved in an analogous way to the previous one, so we omit the proof.
\begin{lem} \label{lem: Lados Sgm x}
    Let $v\in \mathrm{V}(\Pgn_{x,y})$ be adjacent to $\cen(x)$.
    Let $z\in [x,y]$ be the unique element such that $z\in \Sgm(v,x)$ and $v\in z$.
    If $z\neq x$, we have 
    \begin{equation}
        \operatorname{dist}(v,x)=
        \begin{cases}
            \frac{3}{2}(\ell(x,z)-1)+\eta(x)+\frac{1}{2}&\mbox{ if $v=\cen(z)$ and $\ell(z)$ is odd},\\
            \frac{3}{2}(\ell(x,z)-1)+\eta(x)+1&\mbox{ if $v=\cen(z)$ and $\ell(z)$ is even},\\ 
            \frac{3}{2}(\ell(x,z)-1)+\eta(x)+\frac{3}{2} &\mbox{ otherwise}.
        \end{cases}
    \end{equation}
    Where $\eta(x)=1$ if $x\in \Theta$ and $\eta(x)=1/2$ if $x\in\Theta^s$.
\end{lem}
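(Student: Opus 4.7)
The plan is to mirror the proof of Lemma~\ref{lem: Lados Sgm y} on the $x$-end of the segment $\Sgm(v,x)$. I would select boundary points $p_x\in\partial(xA_+)$ and $p_z\in\partial(zA_+)$ along this segment, defined by
\begin{align*}
    d(x,p_x)&=\max\{d(x,p)\mid p\in\Sgm(v,x)\cap\overline{xA_+}\},\\
    d(x,p_z)&=\min\{d(x,p)\mid p\in\Sgm(v,x)\cap\overline{zA_+}\},
\end{align*}
and decompose the quantity to be computed as
\begin{equation*}
    \operatorname{dist}(v,x)=\operatorname{dist}(v,p_z)+\operatorname{dist}(p_z,p_x)+\operatorname{dist}(p_x,x).
\end{equation*}
Each of the three summands is then handled separately, and the three cases in the statement of the lemma arise by combining the various possibilities for each piece.

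For the endpoint contribution at $x$, the adjacency of $v$ and $\cen(x)$ (Definition~\ref{def: label of vertices}) forces $\Sgm(v,x)$ to be parallel to a simple root $\alpha_i$ with $v-\cen(x)\in\mathbbm{R}_{>0}\alpha_i$, so $p_x$ lies on the top face of $\overline{xA_+}$ in the $\alpha_i$-direction. Combining Lemma~\ref{lem: length determines orientation} with Remark~\ref{rem: facts of theta-partition}\ref{remark-item: length of the elements}, an $x\in\Theta$ is down-oriented and its top face is a vertex at normalized distance $1$ from $\cen(x)$, while an $x\in\Theta^s$ is up-oriented and its top face is an edge whose midpoint lies at normalized distance $1/2$. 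This gives $\operatorname{dist}(p_x,x)=\eta(x)$. For the endpoint contribution at $v$, I would replicate the case analysis of Lemma~\ref{lem: Lados Sgm y} with the roles of ``upper'' and ``lower'' interchanged: when $v=\cen(z)$, the parity of $\ell(z)$ determines whether $p_z$ is a midpoint or a vertex of $\overline{zA_+}$, contributing $1/2$ or $1$ respectively; when $v$ is a vertex of $z$ distinct from $\cen(z)$, the segment $\Sgm(v,p_z)$ is a full height of $\overline{zA_+}$, contributing $3/2$.

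For the middle piece $\operatorname{dist}(p_z,p_x)$, I would reuse the chain argument already used in Lemma~\ref{lem: Lados Sgm y}. The segment $\Sgm(v,x)$ lies in the convex region $F_{\id}$ (both endpoints do, and $F_{\id}$ is convex), so by Corollary~\ref{cor: characterization of the Bruhat order} and Lemma~\ref{lem: local bruhat order} every element of $W$ whose center lies on $\Sgm(v,x)$ belongs to $[x,z]$; as in the proof of Lemma~\ref{lem: if distance is 4 then difference is more than 2 times the simple root}, these centers form a maximal chain of length $\ell(x,z)+1$. The $\ell(x,z)-1$ alcoves strictly between $x$ and $z$ each cross $\Sgm(p_z,p_x)$ along one of their heights, yielding $\operatorname{dist}(p_z,p_x)=\frac{3}{2}(\ell(x,z)-1)$. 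Summing the three contributions recovers the three cases of the displayed formula.

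The main obstacle is purely bookkeeping: here $x$ plays the role that $y$ played in Lemma~\ref{lem: Lados Sgm y}, but the orientation dictionary is flipped because one exits the alcove through the top (in the $+\alpha_i$ direction) rather than through the bottom. This is precisely why the correspondences $\Theta\leftrightarrow 1$ and $\Theta^s\leftrightarrow 1/2$ in the definition of $\eta(x)$ are reversed relative to $\epsilon(y)$. Once this parity swap is made, no new idea is required beyond those already developed in the proof of Lemma~\ref{lem: Lados Sgm y}.
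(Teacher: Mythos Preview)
Your proposal is correct and follows exactly the approach the paper intends: the paper omits the proof, stating only that it ``is proved in an analogous way to the previous one,'' and your decomposition $\operatorname{dist}(v,x)=\operatorname{dist}(v,p_z)+\operatorname{dist}(p_z,p_x)+\operatorname{dist}(p_x,x)$ together with the orientation analysis (correctly noting the parity swap that produces $\eta(x)$ in place of $\epsilon(y)$) is precisely that analogy carried out in detail.
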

\begin{lem}\label{lem: length of the middle segment}
    Let $v_0,v_1\in \mathrm{V}(\Pgn_{x,y})$ be vertices that are either identical or adjacent, such that $v_0$ is adjacent to $\cen(x)$ and $v_1$ is adjacent to $\cen(y)$.
    Let $z_0,z_1\in [x,y]$ be the elements such that $z_0\in \Sgm(x,v_0)$, $v_0\in z_0$, $z_1\in \Sgm(v_1,y)$, and $v_1\in z_1$.
    We have
    \begin{equation}
        \operatorname{dist}(v_0,v_1)=    
        \begin{cases}
            \frac{\sqrt{3}}{2}(\ell(z_0,z_1)-1) &\mbox{ if $v_0\neq \cen(z_0)$},\\
            0 &\mbox{  if $v_0=\cen(z_0)$}.
        \end{cases}
    \end{equation}
\end{lem}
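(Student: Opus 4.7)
The proof proceeds by case analysis driven by whether $v_0 = v_1$, and when they coincide, by whether the common point is a center or a vertex of an alcove.

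If $v_0 = v_1$, then $\operatorname{dist}(v_0, v_1) = 0$, and by Lemma~\ref{lem: vertex in the wall and vertex outside the wall}(i) this common vertex lies in the interior of $F_\id$ at the point $\cen(x) + a_i \alpha_i$ for some $i$. When this point is the center of an alcove we are in the branch $v_0 = \cen(z_0)$ and the formula yields $0$ directly. Otherwise it is a vertex of an alcove shared by six alcoves of $\widetilde{A}_2$. Since $\cen(z_0) - v_0 \in \R_{<0}\alpha_i$ and $\cen(z_1) - v_0 \in \R_{>0}\alpha_j$ for $\{i,j\} = \{1,2\}$, the directions of $\cen(z_0)$ and $\cen(z_1)$ from $v_0$ differ by $60^\circ$. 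A direct check in the triangular tiling shows that two alcoves incident to a common vertex share a full edge iff their center-directions differ by $60^\circ$; hence $z_0$ and $z_1$ share an edge, so $\ell(z_0, z_1) = 1$ and the first branch of the formula also yields $0$.

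If $v_0 \neq v_1$, then $\Sgm(v_0, v_1)$ is forced onto $\partial F_\id$: the other two polygon edges incident to $v_0, v_1$ run in root directions from $\cen(x)$ and $\cen(y)$, so the remaining edge must lie along a wall. Thus $\Sgm(v_0, v_1) \subset H_{\alpha_j, -1}$ for some $j$, and is parallel to $\varpi_k$ with $\{j,k\} = \{1,2\}$. Write $v_1 - v_0 = c\, \varpi_k$ with $c \in \tfrac{1}{2}\Z_{>0}$; the half-integer case occurs precisely when $v_1$ is the midpoint of an alcove edge on the wall, as allowed by Remark~\ref{rem: geometry-of-vertex-of-Pgn}. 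I plan to exhibit an explicit sequence $z_0 = W_1 \lessdot W_2 \lessdot \cdots \lessdot W_{2c+2} = z_1$ along the wall, alternating between alcoves whose base lies on the wall and alcoves touching the wall only at an apex; consecutive $W_\ell$ share a slanted edge, so each step is a Bruhat cover. Counting yields $\ell(z_0, z_1) = 2c + 1$, and since $|\varpi_k| = \sqrt{3}$ in the normalized distance we obtain $\operatorname{dist}(v_0, v_1) = c\sqrt{3} = \tfrac{\sqrt{3}}{2}(\ell(z_0, z_1) - 1)$.

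The main technical task is the explicit verification of this chain: parameterize the $W_\ell$ concretely in terms of lattice points on the wall, confirm that each consecutive pair shares a slanted edge (so each step is a Bruhat cover), and check that the endpoints $W_1$ and $W_{2c+2}$ agree with the $z_0, z_1$ specified in the statement. Once these facts are in hand, the graded-poset property of $[x,y]$ (Lemma~\ref{lem-graded}) identifies the chain length with $\ell(z_0, z_1)$, completing the proof.
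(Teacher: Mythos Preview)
Your case split (by whether $v_0=v_1$) is a reasonable reorganization of the paper's split (by whether $v_0=\cen(z_0)$), and the overall strategy can be made to work. However, there are two genuine issues.

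First, your invocation of Lemma~\ref{lem: vertex in the wall and vertex outside the wall}(i) is backwards: that lemma says ``if $v_i(x)\notin\partial F_\id$ then $v_i(x)=v_j(y)$,'' not the converse. In fact the converse is \emph{false}: when $z_0=\mathrm{\mathbf{x}}_{2k-1}$ and $z_1=\mathrm{\mathbf{x}}_{2k}$ one has $v_0=v_1\in\partial F_\id$ with $\ell(z_0,z_1)=1$. Fortunately the ``interior'' claim is never used downstream---your $60^\circ$ argument only needs $\cen(z_0)-v_0\in\R_{<0}\alpha_i$ and $\cen(z_1)-v_0\in\R_{>0}\alpha_j$, which hold regardless---so this is a misattribution rather than a fatal flaw. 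You should also justify the \emph{direction} $z_0<z_1$ once you know they share an edge; this follows from Corollary~\ref{cor: equivalence of being in the hexagon in the gray region}, since $\cen(z_1)-\cen(z_0)=t_0\alpha_i+t_1\alpha_j$ with $t_0,t_1>0$.

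Second, and more substantively, your wall-chain argument in the $v_0\neq v_1$ case is only a sketch. The paper avoids building any chain: it observes that $z_0,z_1$ lie in the $X$-strip, writes them explicitly as $z_0=\mathrm{\mathbf{x}}_{2k-1}$ and $z_1\in\{\mathrm{\mathbf{x}}_{2j-1},\mathrm{\mathbf{x}}_{2j}\}$, uses $\ell(\mathrm{\mathbf{x}}_m)=m$ to read off $\ell(z_0,z_1)$ directly, and computes $v_1-v_0$ as an explicit multiple of $\varpi_1$ via Lemma~\ref{lem: difference of lengths for alcoves in the wall}. This is considerably cleaner than verifying cover relations one by one; in particular it sidesteps the orientation bookkeeping (``base on wall'' versus ``apex on wall'') and the direction check at each step that your chain approach would require. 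If you carry your plan through you will essentially be reproving that $\ell(\mathrm{\mathbf{x}}_m)=m$ along the way.
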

\begin{proof}
    If $v_0=\cen(z_0)$, then $z_0=z_1$ so $v_0=v_1$ and $d(v_0,v_1)=0$.
    
    Let us suppose that $v_0\neq \cen(z_0)$.
    By Remark~\ref{rem: geometry-of-vertex-of-Pgn} and since $z_0\in \Sgm(x,v_0)$, it follows that $v_0$ is one of the top vertices of the down-oriented $z_0$.
    By Lemma~\ref{lem: length determines orientation},
    this can only happen if $\ell(z_0)$ is odd,
    that is, $z_0=\theta(m,n)$ for some $m,n\geq 0$ or $z_0\in X^{\mathrm{odd}}$.
    In the first case, we have $z_1=\theta^s(m,n)$ which leads to $v_0=v_1$ and $d(v_0,v_1)=0=\ell(z_0,z_1)-1$.
    
    For the remainder of the proof, we address the remaining case by assuming that $z_0\in X^{\mathrm{odd}}$.
    Note that $v_0\in \partial F_\id$.
    Since $v_0$ and $v_1$ are adjacent, we have $\Sgm(v_0,v_1)\subset\partial F_\id$.
    This implies that $z_0A_+\cup z_1A_+$ is contained in a strip $\{v\in E : -1<(x,\alpha_i)<0\}$ for some $i\in\{1,2\}$.
    Without loss of generality, we can assume $i=2$.
    So $z_0=\mathrm{\mathbf{x}}_{2k-1}$ for some $k\geq 3$.
    There is $j\geq 3$ such that either $z_1=\mathrm{\mathbf{x}}_{2j}$ or $z_1=\mathrm{\mathbf{x}}_{2j-1}$.
    \begin{itemize}
        \item $z_1=\mathrm{\mathbf{x}}_{2j-1}$.
        From Lemma~\ref{lem: difference of lengths for alcoves in the wall}, we have $z_1-z_0=
        (j-k)\varpi_1$.
        Let $c_1$ be the top vertex of~$z_1$ that belongs to $\partial F_\id$, then $c_1-v_0=(j-k)\varpi_1$.
        Since $z_0$ is down-oriented, $v_1\in \partial F_\id$ is the midpoint of the left edge of~$z_1$.
        More precisely, we have that $v_1=c_1-(1/2)\varpi_1$, hence
        \begin{equation*}
            v_1-v_0=(j-k-1/2)\varpi_1=\frac{\ell(z_0,z_1)-1}{2}\varpi_1.
        \end{equation*}
        Since $\operatorname{dist}(0,\varpi_1)=\sqrt{3}$, the lemma follows.
        \item $z_1=\mathrm{\mathbf{x}}_{2j}$.
        In this case, $z_1$ is up-oriented, so $v_1\in \partial F_\id$ is the bottom-left vertex of~$z_1$.
        Note that $v_1$ is also the top-left vertex of~$\mathrm{\mathbf{x}}_{2j-1}$.
        From the same computation as in the previous case, we have that
        \begin{equation*}
            v_1-v_0=(j-k)\varpi_1=\frac{\ell(z_0,z_1)-1}{2}\varpi_1,
        \end{equation*}
        and the lemma follows.\qedhere
    \end{itemize}
\end{proof}
\begin{defn}\label{def: vertices centres and midpoints}
    Let $z\in W$.
    We define $C(z)$ as the singleton containing the element $\cen(zA_+), \mathrm{V}(z)$ as the three-element vertex set of~$zA_+$, and $\mathrm{ME}(z)$ as the three-element set of midpoints of the edges of~$zA_+.$ 
\end{defn}
\begin{prop}\label{prop: length de adjacencia implica posicion}
    Let $z_0, z_1 \in [x,y]$ and $v_0, v_1 \in \mathrm{V}(\Pgn_{x,y})\setminus \{\cen(x), \cen(y)\}$ be vertices that are either the same or adjacent, such that $v_0$ is adjacent to $x$ and $v_1$ is adjacent to $y$.
    If $v_i \in z_i$ for $i\in\{0,1\}$, $z_0\in \Sgm(v_0,x)$, and $z_1\in \Sgm(y,v_1)$, then one of the following statement holds
    \begin{itemize}
        \item If $\ell(z_0,z_1)=0$, then $v_0\in C(z_0)$ and $v_1\in C(z_1).$ 
        \item If $\ell(z_0,z_1)=1$, then 
        $v_0\in \mathrm{V}(z_0), $  $v_1\in \mathrm{V}(z_1)$ and 
        $v_0 = v_1$ is a vertex of the segment $\overline{z_0A_+} \cap \overline{z_1A_+}$.
        \item If $\ell(z_0, z_1) > 1$, then  $v_0$ and $v_1$ are different points of~$\partial F_\id$.
        Furthermore:
        \begin{itemize}
            \item If $\ell(z_0, z_1)$ is even, $v_0\in \mathrm{V}(z_0)$ and $v_1\in \mathrm{V}(z_1)$.
            \item If $\ell(z_0, z_1)$ is odd, $v_0\in \mathrm{V}(z_0)$ and $v_1\in \mathrm{ME}(z_1)$.
        \end{itemize}
    \end{itemize}
\end{prop}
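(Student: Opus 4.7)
The plan is to perform a case analysis on $\ell(z_0,z_1)$, using Lemma~\ref{lem: length of the middle segment} as the central tool. Recall that this lemma gives the dichotomy $v_0=\cen(z_0)\Longleftrightarrow\operatorname{dist}(v_0,v_1)=0$, together with the explicit formula $\operatorname{dist}(v_0,v_1)=\tfrac{\sqrt{3}}{2}(\ell(z_0,z_1)-1)$ in the off-center case. The additional observation linking $v_0=\cen(z_0)$ with $z_0=z_1$ is that $\cen(z_0)$ lies in the open alcove $z_0A_+$, so $\cen(z_0)\in\overline{z_1A_+}$ forces $z_0=z_1$.

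For $\ell(z_0,z_1)=0$, the off-center formula would give a negative distance; hence $v_0=\cen(z_0)$, the lemma forces $v_0=v_1$, and $z_0=z_1$ follows, which gives $v_0\in C(z_0)$ and $v_1\in C(z_1)$. For $\ell(z_0,z_1)=1$, the case $v_0=\cen(z_0)$ would again yield $z_0=z_1$, a contradiction; so $v_0\neq\cen(z_0)$, whence $v_0\in\mathrm{V}(z_0)$ by Remark~\ref{rem: geometry-of-vertex-of-Pgn}. The lemma then gives $v_0=v_1$; this coincidence rules out $v_1\in\mathrm{ME}(z_1)$, since a vertex of one alcove in the Coxeter complex cannot coincide with an edge-midpoint of another alcove (vertices are $0$-cells, edge-midpoints are not), so $v_1\in\mathrm{V}(z_1)$. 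Finally, $z_0$ and $z_1$ are Bruhat-covering and thus share an edge; the common point $v_0=v_1$, being a vertex of each, must be an endpoint of this shared edge.

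For $\ell(z_0,z_1)>1$, the same argument forces $v_0\in\mathrm{V}(z_0)$, and the formula yields $\operatorname{dist}(v_0,v_1)>0$, so $v_0\neq v_1$. To show $v_0,v_1\in\partial F_\id$, I argue by the contrapositive of Lemma~\ref{lem: vertex in the wall and vertex outside the wall}\ref{lem: vertex in the wall and vertex outside the wall part 1}: if $v_0\notin\partial F_\id$, then $v_0$ is the ``corner'' vertex $v_i^{x,y}(x)=v_j^{x,y}(y)$, which is itself adjacent to $\cen(y)$; but the vertex $v_1$ adjacent to $\cen(y)$ and equal-or-adjacent to $v_0$ in $\Pgn_{x,y}$ must then coincide with this corner, forcing $v_0=v_1$, a contradiction. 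A symmetric argument on the $\cen(y)$-side gives $v_1\in\partial F_\id$. The parity dichotomy then follows by tracking the chain of alcoves that $\Sgm(v_0,v_1)$ crosses along this wall: these alcoves alternate orientation by Lemma~\ref{lem: length determines orientation}, the vertex $v_0\in\mathrm{V}(z_0)$ is the specific vertex of $z_0$ pinned on the wall, and the endpoint of $\Sgm(v_0,v_1)$ inside $\overline{z_1A_+}$ is the analogous wall-vertex of $z_1$ when $\ell(z_0,z_1)$ is even, and the midpoint of the opposite edge of $z_1$ when it is odd.

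The main obstacle is the parity statement in this last case. It requires a careful local analysis of the strip of alcoves adjacent to a wall of $F_\id$: which vertex of $z_0$ is pinned by the hypothesis $v_0\in\mathrm{V}(z_0)\cap\partial F_\id$, and how the ``far end'' of $\Sgm(v_0,v_1)$ inside $z_1$ toggles between vertex and edge-midpoint with each orientation flip. I would carry it out by matching the fractional parts of $\operatorname{dist}(v_0,x)$ from Lemma~\ref{lem: Lados Sgm x} and $\operatorname{dist}(y,v_1)$ from Lemma~\ref{lem: Lados Sgm y} against $\operatorname{dist}(v_0,v_1)$ from Lemma~\ref{lem: length of the middle segment}, which rigidly forces which branch (``$v=\cen(z)$'' vs.\@ ``otherwise'') applies to $v_1$ in $z_1$ and, combined with the orientation alternation along the wall strip, determines whether $v_1$ is a vertex or an edge-midpoint of $z_1$.
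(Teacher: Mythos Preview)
Your overall architecture matches the paper's: case-split on $\ell(z_0,z_1)$, drive each case with Lemma~\ref{lem: length of the middle segment}, and use Remark~\ref{rem: geometry-of-vertex-of-Pgn} to constrain where $v_0$ can sit inside $z_0$. The cases $\ell(z_0,z_1)\in\{0,1\}$ are handled essentially identically. Two comments on the remaining case.

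For showing $v_0,v_1\in\partial F_\id$ when $\ell(z_0,z_1)>1$, the paper does this in one line from the definition $\Pgn_{x,y}=\Par_{x,y}\cap F_\id$: the only vertices of $\Pgn_{x,y}$ not on $\partial F_\id$ are vertices of $\Par_{x,y}$, and two distinct such vertices (other than $\cen(x),\cen(y)$) cannot be adjacent in $\Pgn_{x,y}$. Your route through Lemma~\ref{lem: vertex in the wall and vertex outside the wall} works, but note that lemma is stated only for the $x$-side vertices, so the ``symmetric argument'' for $v_1$ needs the observation just made rather than a literal appeal to that lemma.

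For the parity dichotomy, your proposed method of matching fractional parts of $\operatorname{dist}(v_0,x)$, $\operatorname{dist}(y,v_1)$, and $\operatorname{dist}(v_0,v_1)$ is unnecessarily indirect and does not obviously close: these three quantities lie along different edges of $\Pgn_{x,y}$ and satisfy no additive constraint that would force a particular branch of Lemma~\ref{lem: Lados Sgm y}. The paper's argument is direct and requires no computation: since $v_0\in\mathrm V(z_0)$ with $z_0\in\Sgm(x,v_0)$, the alcove $z_0$ is down-oriented (this was already noted in the proof of Lemma~\ref{lem: length of the middle segment}), so $\ell(z_0)$ is odd. The parity of $\ell(z_0,z_1)$ then fixes the orientation of $z_1$ via Lemma~\ref{lem: length determines orientation}. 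Finally, $\overline{z_1A_+}\cap\partial F_\id$ is a single vertex when $z_1$ is up-oriented and an entire edge when $z_1$ is down-oriented; in the latter case $v_1$ is the midpoint of that edge since $\Sgm(y,v_1)$ is orthogonal to it. This replaces your sketched ``fractional parts'' bookkeeping with a two-line orientation check.

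One small caveat in the $\ell(z_0,z_1)=1$ case: the inference ``Bruhat-covering and thus share an edge'' is not valid in general in $\widetilde A_2$ (e.g.\ $\theta(m,n)\lessdot\theta^s(m{+}1,n{-}1)$ share only a vertex). The paper does not actually verify the ``segment'' clause explicitly either; what is needed, and what both you and the paper establish, is just that $v_0=v_1$ is a common vertex.
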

\begin{proof}
    By Lemma~\ref{lem: local bruhat order}, it follows that $z_0\leq z_1$.
    Now, recall that $z_0$ (resp.\@ $z_1$) is the unique element in~$[x,y]$, such that $z_0\in \Sgm(v_0,x)$ and $v_0\in z_0$ (resp.\@ $z_1\in \Sgm(y,v_1)$ and $v_1\in z_1$).
    \begin{itemize}
        \item $\ell(z_0,z_1)=0$.
        This is equivalent to the equality $z_0=z_1$.
        By Remark~\ref{rem: geometry-of-vertex-of-Pgn}, $v_0\in \{C(z_0),\mathrm{V}(z_0)\}$, but if $v_0\in \mathrm{V}(z_0)$ then $z_1\neq z_0,$ which is a contradiction, so $v_0\in C(z_0).$ This
        implies that $v_0=\cen(z_0)=v_1=\cen(z_1)$, thus proving this bullet.
        \item $\ell(z_0,z_1)=1$.
        Once again, by Remark~\ref{rem: geometry-of-vertex-of-Pgn},  we have $v_0\in \{C(z_0),\mathrm{V}(z_0)\}$.
        In either case, applying the formulas from Lemma~\ref{lem: length of the middle segment}, we find $d(v_0,v_1)=0$, which implies $v_0=v_1$.
        If $v_0=\cen(z_0)$, then $z_0=z_1$, which is a contradiction, so $v_0\in \mathrm{V}(z_0)$.
        A vertex of an alcove cannot be the center or the midpoint of an edge of another alcove, which implies that $v_1\in \mathrm{V}(z_1).$
        \item $\ell(z_0,z_1)>1$.
        As in the last bullet, if $v_0\in C(z_0)$ then $z_0=z_1$, which is a contradiction.
        Then, by Lemma~\ref{lem: length of the middle segment}, we have $d(v_0,v_1)>0$.
        By definition of~$\Pgn_{x,y}$, this can only happen if both $v_0,v_1$ lie on~$\partial F_\id$.
        Once again, by Remark~\ref{rem: geometry-of-vertex-of-Pgn}, we have that $v_0\in \mathrm{V}(z_0)$, with  $z_0$ down-oriented.
        
        Similarly, as $v_1\in\overline{z_1A_+}\cap \partial F_\id,$ we observe that
        \begin{equation}\label{eq vertex or edge}
            \overline{z_1A_+}\cap \partial F_\id=
            \begin{cases}
                \mbox{a vertex of~$z_1$}& \mbox{if $z_1$ is up-oriented;}\\
                \mbox{an edge of~$z_1$}& \mbox{if $z_1$ is down-oriented.}
            \end{cases}
        \end{equation}
        As $z_0$ is down-oriented, by Lemma~\ref{lem: length determines orientation}, we have that $\ell(z_0)$ is odd.
        Additionally, by applying Lemma~\ref{lem: length determines orientation} again, we have that:
        \begin{itemize}
            \item  If $\ell(z_0,z_1)$ is odd, then $z_1$ is up-oriented.
            By Equation \eqref{eq vertex or edge}, $v_1\in \mathrm{V}(z_1)$.
            \item If $\ell(z_0,z_1)$ is even, then   $z_1$ is down-oriented.
            Here $v_1$ is the midpoint of the edge $\overline{z_1A_+}\cap \partial F_\id$, or rephrasing, $v_1\in \mathrm{ME}(z_1).$\qedhere
        \end{itemize}
    \end{itemize}
\end{proof}
We will rewrite the results of the section using a more convenient notation, which is compatible with Definition~\ref{def: label of vertices}.
\begin{nota}\label{nota: z_i's}
    Let $z_i^{x,y}(y)\in W$ be the element as in Equation \eqref{eq: index free definition for z of y} such that $z_i^{x,y}(y)\in \Sgm(y, v_i^{x,y}(y))$ and
    \begin{equation}\label{eq: zeta(y) definition}
        d(y, z_i^{x,y}(y))=\max\{d(y,w)\mid w\in W, w\in \Sgm(y, v_i^{x,y}(y))\}.
    \end{equation}
    Similarly, let $z_i^{x,y}(x)\in W$ be the element as in Equation \eqref{eq: index free definition for z of x} such that $z_i^{x,y}(x)\in \Sgm(v_i^{x,y}(x),x)$ and
    \begin{equation*}
        d(z_i^{x,y}(x),x)=\max\{d(w,x)\mid w\in W, w\in \Sgm(v_i^{x,y}(x),x)\}.
    \end{equation*}
    Furthermore, if the interval $[x,y]$ is clear from the context, we remove the superscript $(-)^{x,y}$ from the notation, so for $1\leq i \leq 2$, we write $z_i(x)$, $z_i(y)$, $v_i(x)$, and $v_i(y)$ instead.
\end{nota}
If $\{i,j\}=\{1,2\}$, note that it is possible that $z_j(y)=z_i(x)$ and/or $v_j(y)=v_j(x)$.
In the latter case, the edge $\{v_j(y), v_i(x)\}$ is degenerate.
We have the set
\begin{equation*}
    \mathrm{V}(\Pgn_{x,y})\setminus \{\cen(x),\cen(y)\}=\{v_1(x), v_1(y), v_2(x), v_2(y)\},
\end{equation*}
consists of~$2,3,$ or $4$ elements; these are precisely the cases where $\Pgn_{x,y}$ is a parallelogram, a pentagon, or a hexagon.
\begin{rem}\label{rem: distancias}
    Let $\{i,j\}=\{1,2\}$.
    We have: 
    \begin{align*}
        \cen(y) &=v_j(y)+d(y,v_j(y))\alpha_j/||\alpha_j||\\
        v_j(y)&=v_i(x)+d(v_j(y),v_i(x))\varpi_i/||\varpi_i||\\
        v_i(x)&=\cen(x)+d(v_i(x),x)\alpha_i/||\alpha_i||
    \end{align*}
\end{rem}
The next two results are a summary of the Lemmas~\ref{lem: Lados Sgm y},~\ref{lem: Lados Sgm x},~\ref{lem: length of the middle segment}, and Proposition~\ref{prop: length de adjacencia implica posicion} in terms of Notation~\ref{nota: z_i's}.
\begin{cor}\label{cor: z1(x) es menor a z2(y)}
    Let $\{i,j\}=\{1,2\}$, then $z_i(x)\leq z_{j}(y)$.
\end{cor}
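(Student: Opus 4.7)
The plan is to reduce the Bruhat inequality $z_i(x) \leq z_j(y)$ to a cone-membership condition via Corollary~\ref{cor: equivalence of being in the hexagon in the gray region}. Both $\cen(z_i(x))$ and $\cen(z_j(y))$ lie in $\Pgn_{x,y} \subset F_\id$, so the hypotheses of that corollary are satisfied, and it suffices to show
\[
\cen(z_j(y)) - \cen(z_i(x)) \in \Cone(\alpha_1,\alpha_2).
\]
Applying the same corollary to the inequality $x \leq y$ produces an expression $\cen(y) - \cen(x) = a_1\alpha_1 + a_2\alpha_2$ with $a_1, a_2 \geq 0$, which will play the role of an upper bound in what follows.

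Next, I will unwind the definitions. By Notation~\ref{nota: z_i's} and Definition~\ref{def: label of vertices}, both elements can be written as
\[
\cen(z_i(x)) = \cen(x) + t_i\alpha_i, \qquad \cen(z_j(y)) = \cen(y) - s_j\alpha_j,
\]
with $t_i, s_j \geq 0$. The crucial step is to bound these coefficients: I claim $t_i \leq a_i$ and $s_j \leq a_j$. To see this, write $v_i(x) = \cen(x) + \bar a_i\alpha_i$ and $v_j(y) = \cen(y) - \bar a_j\alpha_j$. These vertices lie on the two edges of the parallelogram $\Par_{x,y}$ issuing from $\cen(x)$ and $\cen(y)$ in the directions $\alpha_i$ and $-\alpha_j$, whose lengths (projected onto $\alpha_i$ and $\alpha_j$) are exactly $a_i$ and $a_j$. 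Since $\Pgn_{x,y} = \Par_{x,y} \cap F_\id \subset \Par_{x,y}$, this forces $\bar a_i \leq a_i$ and $\bar a_j \leq a_j$. Combining these with the containments $z_i(x) \in \Sgm(v_i(x), x)$ and $z_j(y) \in \Sgm(y, v_j(y))$ gives $t_i \leq \bar a_i \leq a_i$ and $s_j \leq \bar a_j \leq a_j$.

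Putting these together,
\[
\cen(z_j(y)) - \cen(z_i(x)) = (a_i - t_i)\alpha_i + (a_j - s_j)\alpha_j,
\]
and since both coefficients are non-negative, this vector lies in $\Cone(\alpha_1,\alpha_2)$. Corollary~\ref{cor: equivalence of being in the hexagon in the gray region} then yields $z_i(x) \leq z_j(y)$, as desired. I do not anticipate any substantive obstacle: the argument is essentially bookkeeping, and the only point requiring care is ensuring that every comparison takes place inside $F_\id$ so that Corollary~\ref{cor: equivalence of being in the hexagon in the gray region} applies — a condition already built into the definitions of $z_i(x)$, $z_j(y)$, and $\Pgn_{x,y}$.
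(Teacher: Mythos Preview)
Your proof is correct and follows essentially the same approach as the paper. The paper treats this corollary as a summary of earlier results, with the key step already appearing in the proof of Proposition~\ref{prop: length de adjacencia implica posicion} (``By Lemma~\ref{lem: local bruhat order}, it follows that $z_0\leq z_1$''); you have simply unpacked that one-line invocation into an explicit cone-membership computation via Corollary~\ref{cor: equivalence of being in the hexagon in the gray region}, which is the immediate consequence of Lemma~\ref{lem: local bruhat order}.
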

\begin{prop}\label{prop: length of all sides}
    Let $\{i,j\}=\{1,2\}$, then we have:
    \begin{align*}
        \operatorname{dist}(y,v_i(y))&=
        \begin{cases}
            \frac{3}{2}(\ell(z_i(y),y)-1)+\epsilon(y)+\gamma(z_i(y)) &\mbox{ if $v_i(y)=\cen(z_i(y))$},\\
            \frac{3}{2}(\ell(z_i(y),y)-1)+\epsilon(y)+\frac{3}{2}&\mbox{ otherwise}.
        \end{cases}\\
        \operatorname{dist}(v_{j}(y),v_{i}(x))&=    
        \begin{cases}
            \frac{\sqrt{3}}{2}(\ell(z_{i}(x),z_{j}(x))-1) &\mbox{ if $v_{i}(x)\neq \cen(z_{i}(x))$},\\
            0 &\mbox{ otherwise}.
        \end{cases}\\
        \operatorname{dist}(v_i(x),x)&=
        \begin{cases}
            \frac{3}{2}(\ell(x,z_i(x))-1)+\eta(x)+\frac{3}{2}-\gamma(z_i(x))&\mbox{ if $v_i(x)=\cen(z_i(x))$},\\ 
            \frac{3}{2}(\ell(x,z_i(x))-1)+\eta(x)+\frac{3}{2} &\mbox{ otherwise}.
        \end{cases}
    \end{align*}
    Where    
    \begin{equation}
        \begin{aligned}
            \epsilon(y) &= 
            \begin{cases} 
                \frac{1}{2} & \text{if } y\in \Theta \\
                1 & \text{if } y\in\Theta^s 
            \end{cases}; 
            &\eta(x)&= 
            \begin{cases} 
                \frac{1}{2} & \text{if } x \in\Theta^s\\
                1 & \text{if } x \in \Theta
            \end{cases}; 
            &\gamma(z)&= 
            \begin{cases} 
                \frac{1}{2} & \mbox{if $\ell(z)$ is even}  \\
                1 & \mbox{if $\ell(z)$ is odd.}
            \end{cases} 
        \end{aligned}
    \end{equation}
\end{prop}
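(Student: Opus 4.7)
The plan is to recognize that Proposition~\ref{prop: length of all sides} is a unified reformulation of Lemmas~\ref{lem: Lados Sgm y},~\ref{lem: Lados Sgm x}, and~\ref{lem: length of the middle segment}, translated into Notation~\ref{nota: z_i's} and with the parity-dependent sub-cases of each lemma collapsed via the auxiliary function $\gamma$. Almost no new geometric content is required; the work is bookkeeping. First I would fix $\{i,j\}=\{1,2\}$ throughout and observe that the elements $z_i(y)$ and $z_i(x)$ of Notation~\ref{nota: z_i's}, defined by the maximality conditions~\eqref{eq: zeta(y) definition} and its analog, are precisely the unique elements fitting the hypotheses ``$z\in\Sgm(y,v)$ and $v\in z$'' (resp.\ ``$z\in\Sgm(v,x)$ and $v\in z$'') used in Lemmas~\ref{lem: Lados Sgm y} and~\ref{lem: Lados Sgm x}.

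For the first equation, I would apply Lemma~\ref{lem: Lados Sgm y} to the pair $(v_i(y),z_i(y))$. In the ``otherwise'' case the formula reads $\frac{3}{2}(\ell(z_i(y),y)-1)+\epsilon(y)+\frac{3}{2}$, which matches directly. In the case $v_i(y)=\cen(z_i(y))$, the lemma contributes $+1$ when $\ell(z_i(y))$ is odd and $+\frac{1}{2}$ when it is even; by the very definition of $\gamma$ in the proposition, both values are captured by $\gamma(z_i(y))$. For the third equation the argument is symmetric via Lemma~\ref{lem: Lados Sgm x}, with the small twist that the lemma's contributions $\frac{1}{2}$ (odd) and $1$ (even) must be rewritten as $\frac{3}{2}-1$ and $\frac{3}{2}-\frac{1}{2}$ respectively to expose the uniform expression $\frac{3}{2}-\gamma(z_i(x))$.

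For the middle equation I would apply Lemma~\ref{lem: length of the middle segment} with $v_0=v_i(x)$ and $v_1=v_j(y)$, so that the lemma's $z_0$ and $z_1$ are identified with $z_i(x)$ and $z_j(y)$. (Corollary~\ref{cor: z1(x) es menor a z2(y)} ensures $z_i(x)\le z_j(y)$, so $\ell(z_i(x),z_j(y))$ is well-defined; I would treat the formula's $\ell(z_i(x),z_j(x))$ as the same quantity.) The dichotomy in Lemma~\ref{lem: length of the middle segment} is exactly the dichotomy in the proposition: $v_i(x)=\cen(z_i(x))$ is equivalent to $v_0=\cen(z_0)$, which forces $z_0=z_1$ and gives distance zero, while $v_i(x)\neq\cen(z_i(x))$ produces $\frac{\sqrt{3}}{2}(\ell(z_i(x),z_j(y))-1)$.

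The only point that needs care—and arguably the main obstacle, though it is still minor—is the compatibility of the case splits across the three formulas: one must be sure that the ``otherwise'' branch in Lemmas~\ref{lem: Lados Sgm y} and~\ref{lem: Lados Sgm x} corresponds to $v_i(y)\neq\cen(z_i(y))$ (resp.\ $v_i(x)\neq\cen(z_i(x))$) as written in the proposition. This follows from Remark~\ref{rem: geometry-of-vertex-of-Pgn}, which constrains the possible types of vertices of $\Pgn_{x,y}$ adjacent to $\cen(x)$ or $\cen(y)$ to be exactly centers, alcove-vertices, or (for the $y$-side) edge midpoints; the last two possibilities are the ``otherwise'' alternative, so the translation of hypotheses goes through without issue. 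Assembling these three applications completes the proof.
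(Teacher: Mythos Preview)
Your proposal is correct and matches the paper's approach exactly: the paper does not give a separate proof but simply states that Proposition~\ref{prop: length of all sides} (together with Proposition~\ref{Prop: vert are det by length}) is ``a summary of the Lemmas~\ref{lem: Lados Sgm y},~\ref{lem: Lados Sgm x},~\ref{lem: length of the middle segment}, and Proposition~\ref{prop: length de adjacencia implica posicion} in terms of Notation~\ref{nota: z_i's}.'' Your bookkeeping is accurate, including your observation that $\ell(z_i(x),z_j(x))$ in the displayed middle formula should be read as $\ell(z_i(x),z_j(y))$, consistent with Lemma~\ref{lem: length of the middle segment} and Corollary~\ref{cor: z1(x) es menor a z2(y)}.
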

\begin{prop}\label{Prop: vert are det by length}
    $\{i,j\}=\{1,2\}$.
    \begin{itemize}
        \item If $\ell(z_i(x),z_j(y))=0$, then $v_i(x)\in C(z_i(x))$ and $v_j(y)\in C(z_j(y))$.
        \item If $\ell(z_i(x),z_j(y))=1$, then 
        $v_i(x)\in \mathrm{V}(z_i(x))$, $v_j(y)\in \mathrm{V}(z_j(y))$, and $v_i(x) = v_j(y)$ is a vertex of the segment $\overline{z_i(x)A_+} \cap \overline{z_j(y)A_+}$.
        \item If $\ell(z_i(x), z_j(y)) > 1$, then  $v_i(x)$ and $v_j(y)$ are different points of~$\partial F_\id$.
        Furthermore:
        \begin{itemize}
            \item If $\ell(z_i(x), z_j(y))$ is even, $v_i(x)\in \mathrm{V}(z_i(x))$ and $v_j(y)\in \mathrm{V}(z_j(y)).$
            \item If $\ell(z_i(x), z_j(y))$ is odd, $v_i(x)\in \mathrm{V}(z_i(x))$ and $v_j(y)\in \mathrm{ME}(z_j(y)).$ 
        \end{itemize}
    \end{itemize}
\end{prop}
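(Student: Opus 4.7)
The plan is to recognize this proposition as a direct translation of Proposition~\ref{prop: length de adjacencia implica posicion} into the notation introduced in Notation~\ref{nota: z_i's}. Concretely, I set $z_0 \coloneqq z_i(x)$, $z_1 \coloneqq z_j(y)$, $v_0 \coloneqq v_i(x)$, and $v_1 \coloneqq v_j(y)$ for $\{i,j\} = \{1,2\}$, and then verify that the five hypotheses of Proposition~\ref{prop: length de adjacencia implica posicion} are satisfied with these substitutions. Once done, the three conclusions of Proposition~\ref{prop: length de adjacencia implica posicion} give exactly the three bullet points of the current proposition, since $\ell(z_0, z_1) = \ell(z_i(x), z_j(y))$.

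The bulk of the work is in checking the hypotheses. The containment $z_i(x), z_j(y) \in [x,y]$ follows from Equation~\eqref{eq: interval as union of polygons} together with the fact that the defining segments $\Sgm(v_i(x), x)$ and $\Sgm(y, v_j(y))$ lie in $\Pgn_{x,y}$. The conditions $v_i(x) \in z_i(x)$ and $v_j(y) \in z_j(y)$, as well as the inclusions $z_i(x) \in \Sgm(v_i(x), x)$ and $z_j(y) \in \Sgm(y, v_j(y))$, are built into the definition of $z_i(x)$ and $z_j(y)$ given in Notation~\ref{nota: z_i's}. The only slightly subtle point is to check that $v_i(x)$ and $v_j(y)$ are either equal or adjacent vertices of $\Pgn_{x,y}$ whenever $\{i,j\} = \{1,2\}$; this follows from the shape of $\Par_{x,y}$ described in Lemma~\ref{lem: vertice de 60 grados}: its four corners are $\cen(x)$, $\cen(x) + a_1\alpha_1$, $\cen(x) + a_2\alpha_2$, and $\cen(y)$, with edges parallel to $\alpha_1$ and $\alpha_2$. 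Consequently, walking from $\cen(x) + a_i\alpha_i$ to $\cen(y)$ along $\Par_{x,y}$ moves in the direction $\alpha_j$, so after intersecting with $F_\id$ to obtain $\Pgn_{x,y}$, the vertex $v_i(x)$ either coincides with $v_j(y)$ (parallelogram or pentagon case where the two collapse) or is joined to $v_j(y)$ by a single edge of $\Pgn_{x,y}$ along $\partial F_\id$ (hexagon case, or pentagon case where the other pair collapses).

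With all hypotheses checked, invoking Proposition~\ref{prop: length de adjacencia implica posicion} immediately yields the statement. The main obstacle in writing this out carefully is the hypothesis regarding adjacency of $v_i(x)$ and $v_j(y)$: one must perform a brief case analysis on whether $\Pgn_{x,y}$ is a parallelogram, pentagon, or hexagon, and in the degenerate cases check that the two vertices may coincide (which is still permitted by the hypothesis of Proposition~\ref{prop: length de adjacencia implica posicion}, phrased as "either the same or adjacent"). Apart from this geometric bookkeeping, the proof is essentially a translation of notation.
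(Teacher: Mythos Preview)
Your proposal is correct and matches the paper's approach exactly: the paper states just before Corollary~\ref{cor: z1(x) es menor a z2(y)} that the results which follow (including this proposition) are a summary of Lemmas~\ref{lem: Lados Sgm y},~\ref{lem: Lados Sgm x},~\ref{lem: length of the middle segment} and Proposition~\ref{prop: length de adjacencia implica posicion} rewritten in the notation of Notation~\ref{nota: z_i's}, and provides no further argument. Your extra care in verifying the adjacency of $v_i(x)$ and $v_j(y)$ is not needed by the paper (which simply remarks after Notation~\ref{nota: z_i's} that these two vertices are either equal or span an edge of $\Pgn_{x,y}$), but it is harmless and correct.
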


\subsection{Thick intervals}\label{subsec: thicks}
Our aim now is to prove Proposition~\ref{prop: seg max fixed under iso}, which imposes constraints on the possible poset isomorphisms between Bruhat intervals under certain mild assumptions.
Recall that $x,y\in D$ and $[x,y]$ is a full interval.
\begin{defn}
     The set of maximal length elements in~$\DD^l(x)\cap [x,y]$ is denoted by $\DC_{[x,y]}^{l,\max}$.
     The set of minimal length elements in~$\DD^u(y)\cap [x,y]$ is denoted by $\DC_{[x,y]}^{u,\min}$.
\end{defn}
\begin{defn}
    For \( z \in \mathcal{D}_{[x,y]}^{l,\max}\), the subset of maximal length elements of \( (\mathcal{D}^l(x)\cap [x,y]) \setminus [x,z] \) is denoted  \( A_{x,z} \).
    For \( z \in \mathcal{D}_{[x,y]}^{u,\min}\), the subset of minimal length elements of \( (\mathcal{D}^u(y)\cap [x,y]) \setminus [z,y]\) is denoted \( B_{z,y} \).
\end{defn}
\begin{rem}
    Since $[x,y]$ is a full interval, the sets \( A_{x,z} \) and \( B_{z,y} \) are nonempty.
    Indeed, since \( [x,z] \) has at most two upper covers of \( x \), it follows from Lemma~\ref{lem: cardinality of upper covers} that \( \mathcal{D}^l(x) \setminus [x,z] \) is nonempty.
    An analogous argument applies to \( B_{z,y} \), using Lemma~\ref{lem: cardinality of lower covers}.
\end{rem}
\begin{lem}\label{lem: max dih x unicamente det}
    Let $z_1 \in \mathcal{D}_{[x,y]}^{l,\max}$ and $z_2\in A_{x,z_1}$.
    If $\ell(x, z_i) \geq 4$ for $i\in\{1,2\}$, then $\{z_1(x), z_2(x)\}=\{z_1,z_2\}$.
    In particular,
    \begin{equation*}
        \mathcal{D}_{[x,y]}^{l,\max}\subset \{z_1(x), z_2(x)\}.
    \end{equation*}
\end{lem}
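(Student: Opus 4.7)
My plan is to combine Proposition~\ref{prop: Dih-resumen}(\ref{item-prop: Dih-rsm for x}), which constrains long dihedral elements above $x$ to the diagonals $\Sgm(u_1,u_3)$ and $\Sgm(u_2,u_6)$ of the star $\mathcal{St}(x)$, with the description of the edges of $\Pgn_{x,y}$ at $\cen(x)$. A direct inspection of Definition~\ref{def: estrella} shows that each of these two diagonals passes through $\cen(x)$; for $x$ dominant, the half-ray of each diagonal lying in $F_\id$ points in the direction of one simple root $\alpha_i$ ($i\in\{1,2\}$), and intersecting with $\Pgn_{x,y}$ recovers precisely the edge $\Sgm(\cen(x),v_i(x))$.

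Then I would iterate Lemma~\ref{lem: comparable elements with nonempty intersection} along each edge $\Sgm(\cen(x),v_i(x))$ to conclude that the centers of consecutive alcoves on the segment form a maximal chain in Bruhat order, so Bruhat length from $x$ along the edge is strictly monotone in Euclidean distance. Hence on each edge the unique element of maximal length in $\DC^l(x)\cap[x,y]$ is the one whose center is farthest from $\cen(x)$, which is exactly $z_i(x)$ by Notation~\ref{nota: z_i's}. Combined with the exclusion of long dihedral elements on the far-side half of each diagonal (which re-emerges in non-dominant zones after crossing $\mathcal{St}^\circ(x)$), this gives $\DC_{[x,y]}^{l,\max}\subset\{z_1(x),z_2(x)\}$, which is the ``in particular'' part of the statement.

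The remaining claim then follows set-theoretically. Since $\ell(x,z_1)\geq 4$ and $z_1\in\DC_{[x,y]}^{l,\max}$, after relabeling we may assume $z_1=z_1(x)$. The element $z_2\in A_{x,z_1}$ is a maximal-length dihedral element outside $[x,z_1(x)]$ with $\ell(x,z_2)\geq 4$, so again $z_2\in\{z_1(x),z_2(x)\}$. Now $z_1(x)$ and $z_2(x)$ lie on distinct edges of $\Pgn_{x,y}$ parallel to distinct simple roots, so their difference is of the form $c_1\alpha_1-c_2\alpha_2$ with $c_1,c_2>0$, which lies in neither $\Cone(\alpha_1,\alpha_2)$ nor $-\Cone(\alpha_1,\alpha_2)$; by Lemma~\ref{lem: local bruhat order} they are incomparable in Bruhat order, so $z_2(x)\notin[x,z_1(x)]$, forcing $z_2=z_2(x)$.

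The main obstacle is the exclusion of ``far-side'' long dihedral elements alluded to above: the diagonals $\Sgm(u_1,u_3)$ and $\Sgm(u_2,u_6)$ re-emerge in non-dominant zones $F_w$ past $\mathcal{St}^\circ(x)$, and in principle $\CC_y$ could contain points of those segments, since $\CC_y$ has vertices in every zone $F_w$ when $y$ is dominant. The plan would be to use Proposition~\ref{prop: pol otras regiones} and the explicit shape of $\CC_y$ to show that any such far-side point of $[x,y]$ either fails to lie on $\Sgm(u_1,u_3)\cup\Sgm(u_2,u_6)$, or has Bruhat length from $x$ strictly less than $\ell(x,z_i(x))$ for both $i\in\{1,2\}$, a check that I expect to reduce to the classification of $[x,y]$ into parallelogram, pentagon and hexagon types (Definition~\ref{def: types of intervals}).
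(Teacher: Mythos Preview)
Your overall architecture matches the paper's proof: reduce to the two diagonals via Proposition~\ref{prop: Dih-resumen}, split each diagonal into a near half in $F_\id$ (which yields $\Sgm(x,z_i(x))$ after intersecting with $\Pgn_{x,y}$) and a far half in $F_{s_i}$, and use a chain argument on the near half to identify $z_i(x)$ as the maximal element there. Where you diverge is precisely at your ``main obstacle'', and here the paper is much cleaner than what you propose.

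The paper does \emph{not} bound lengths on the far half by a case split over interval types. Instead it proves the stronger (and more useful) fact that every center on the far half $\Sgm(x_{s_1},s_1z_1(x))$ is \emph{Bruhat below} $z_1(x)$. The trick: $s_1z_1(x)\in F_{s_1}=\delta^2s_0D\cup\delta\{\mathrm{\mathbf{x}}_n\}$, so by Corollary~\ref{cor: characterization of the Bruhat order} the point $s_2s_0s_2\cdot s_1z_1(x)=r_1s_1z_1(x)$ is a vertex of $\CC_{s_1z_1(x)}$, and $r_1$ sends the far half into $\Sgm(x,u_1)$. Convexity of $\CC_{s_1z_1(x)}$ then gives $\Sgm(x_{s_1},s_1z_1(x))\subset\CC_{s_1z_1(x)}$, hence any far-side element is $\leq s_1z_1(x)<z_1(x)$. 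This both contradicts the maximality of $z_1$ (ruling out $z_1$ on the far side) and contradicts $z_2\notin[x,z_1]$ (ruling out $z_2$ on the far side of the \emph{same} segment). Your proposed inequality ``length strictly less than $\ell(x,z_i(x))$ for both $i$'' is stronger than needed and need not hold; only comparison with the matching $z_i(x)$ is required, and the Bruhat inequality gives it for free.

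There is also a small logical gap in your last paragraph. The ``in particular'' inclusion only concerns $\DC_{[x,y]}^{l,\max}$, but $z_2\in A_{x,z_1}$ is not assumed to lie there, so ``so again $z_2\in\{z_1(x),z_2(x)\}$'' does not follow directly. The paper closes this by running the same far-side/near-side dichotomy for $z_2$: if $z_2\in\Sgm(u_1,u_3)$ then by the above $z_2\leq z_1(x)=z_1$, contradicting $z_2\notin[x,z_1]$; hence $z_2\in\Sgm(u_2,u_6)$, and the argument for $z_1$ repeats verbatim using the maximality of $\ell(z_2)$ within $A_{x,z_1}$ together with $z_2(x)\notin[x,z_1(x)]$ (your incomparability observation). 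Your incomparability step is correct and useful, but it enters \emph{after} one knows $z_2$ lies on the $\alpha_2$-edge, not before.
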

\begin{proof}
    Let $u_1,u_2,u_3,u_6,$ and $x_2$ be vertices of~$\mathcal{St}(x)$ as in Definition~\ref{def: estrella}.
    By Proposition~\ref{prop: Dih-resumen}, $z_1, z_2 \in \DD_{\mathrm{sgm}}^l(x)$.
    Thus, 
    \begin{equation}
        \label{eq: u2 u6}
        z_1, z_2\in \Sgm(u_1,u_3)\cup \Sgm(u_2,u_6).
    \end{equation}
    Without loss of generality, assume that $z_1\in \Sgm(u_1,u_3)$.
    In this case, we have that $z_1\in F_\id\cup F_{s_1}$.
    Note that
    \begin{equation*}
        \Sgm(u_1,u_3)\setminus \mathcal{St}^\circ(x) = \Sgm(u_1,x)\uplus \Sgm(x_2,u_3).
    \end{equation*}
    Since $z_1\in [x,y]$,
    by Equation \eqref{eq: interval as union of polygons}, we have that $z_1\in \Pgn_{x,y}\cup\Pgn_{x,y}^{s_1}$.
    From Proposition~\ref{prop: pol otras regiones}, it is not hard to see that
    \begin{align*}
        \cPgn_{x,y}\cap \Sgm(u_1,u_3)&=\cSgm(z_1(x),x) \\
        \cPgn^{s_1}_{x,y}\cap \Sgm(u_1,u_3)&=\cSgm(x_2,s_1z_1(x)).
    \end{align*}
    This implies that 
    $z_1\in \Sgm(z_1(x),x)\uplus \Sgm(x_2,s_1z_1(x))$.
    \newline

    \noindent\textbf{Case A.} $z_1\in\Sgm(x_2,s_1z_1(x))$.
    Note that 
    \begin{equation*}
        s_1z_1(x)\in F_{s_1}=\delta^2s_0D\cup \delta \{\mathrm{\mathbf{x}}_n\mid n\geq 2\}.
    \end{equation*}
    By Corollary~\ref{cor: characterization of the Bruhat order}, 
    we have that $s_2s_0s_2\cdot s_1z_1(x)$ is a vertex of~$\CC_{s_1z_1(x)}$.
    Since $s_2H_{\rho,-1} =H_{\alpha_1,-1}$, it follows that $s_2s_0s_2=s_{\alpha_1,-1}=r_1
    $.
    By definition of~$\mathcal{St}(x)$ we have that $r_1(\Sgm(x_2,u_3))\subset \Sgm(x,u_1)$, so $r_1s_1z_1(x)\in \Sgm(x,u_1)$.
    As
    \begin{equation*}
        r_1s_1z_1(x),s_1z_1(x),\in \mathrm{V}(\CC_{s_1z_1(x)}),
    \end{equation*}
    by convexity of~$\CC_{s_1z_1(x)}$, we have that $\Sgm(x_2,s_1z_1(x))\subset \CC_{s_1z_1(x)}$, so $z_1\leq s_1z_1(x)<z_1(x)$, the strict inequality holds because $z_1(x)\in F_{\id}.$
    
    Since $z_1(x)\in \DD_{\mathrm{sgm}}^{l}(x)$, Proposition~\ref{prop: Dih-resumen} implies that $z_1(x)\in \DD^{l}(x)$.
    This contradicts the maximality of~$\ell(z_1)$ in~$\mathcal{D}^l(x)$.
    \newline
    
    \noindent\textbf{Case B.} $z_1\in\Sgm(z_1(x),x)$.
    Suppose by contradiction that $z_1\neq z_1(x)$.
    Clearly $z_1$ and $z_1(x)$ differ by a multiple of~$\alpha_1$.
    By Lemma~\ref{lem: comparable elements with nonempty intersection}, there exists a chain $z_1=t_0\lessdot t_1 \lessdot \hdots \lessdot t_n=z_1(x)$ of elements in~$\Sgm(z_1(x),x)\subset F_\id$, similar to the one described in the proof of Lemma~\ref{lem: if distance is 4 then difference is more than 2 times the simple root}.
    In particular $z_1< z_1(x)$.
    Since $z_1(x)\in \DD_{\mathrm{sgm}}^{l}(x)$, Proposition~\ref{prop: Dih-resumen} implies $z_1(x)\in \DD^{l}(x)$.
    This contradicts the maximality of~$z_1$ in~$\mathcal{D}^l(x)$, so $z_1=z_1(x)$.

    It remains to prove that $z_2=z_2(x)$.
    Recall that by Equation \eqref{eq: u2 u6} we have that $z_2\in \Sgm(u_1,u_3)\cup \Sgm(u_2,u_6)$.
    Let us show that $z_2\in \Sgm(u_2,u_6)$.
    Suppose, by contradiction, that $z_2 \in\Sgm(u_1, u_3)$.
    As in \textbf{Case A}, if $z_2\in \Sgm(x_2,s_1z_1(x))$ we have that $z_2\leq s_1z_1(x)<z_1(x)$ which yields a contradiction.
    Otherwise, $z_2\in\Sgm(z_1(x),x)$ and by the same reasoning as before,
    there exists a chain $z_2=t_0\lessdot t_1 \lessdot \hdots \lessdot t_n=z_1(x)$ of elements in~$\Sgm(z_1(x),x)\subset F_\id$, so  $z_2\leq z_1(x)$, and
    since $z_1=z_1(x)$, this contradicts the assumption that $z_2 \in A_{x, z_1}$.
    Therefore, we conclude that $z_2 \in \Sgm(u_2, u_6)$.
    
    The proof of the fact that $z_2=z_2(x)$ is analogous to that of~$z_1=z_1(x)$, relying on the maximality of \( \ell(z_2) \), so we will omit the details.
\end{proof}
The next lemma is the analog of Lemma~\ref{lem: max dih x unicamente det} for the elements $z_1(y)$ and $z_2(y)$.
Since the proof follows similarly to that of Lemma~\ref{lem: max dih x unicamente det}, we will omit certain details that are not essential.
\begin{lem}\label{lem: max dih y unicamente det}
    Let $z_1 \in \mathcal{D}_{[x,y]}^{u,\min}$ and $z_2\in B_{x,z_2}$.
    If $\ell(x, z_i) \geq 4$ for $i\in\{1,2\}$, then $\{z_1(y), z_2(y)\}=\{z_1,z_2\}$.
    In particular, 
    \begin{equation*}
        \mathcal{D}_{[x,y]}^{u,\min}\subset \{z_1(y), z_2(y)\}.
    \end{equation*}
\end{lem}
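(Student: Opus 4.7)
The plan is to parallel the proof of Lemma~\ref{lem: max dih x unicamente det} structurally, but working near $y$ instead of near $x$ and replacing maximality of length by minimality. The lower analog used Proposition~\ref{prop: Dih-resumen}\ref{item-prop: Dih-rsm for x} to confine $z_i$ to $\Sgm(u_1,u_3)\cup \Sgm(u_2,u_6)$ on the star $\mathcal{St}(x)$, and then ran a convexity argument on $\CC_{s_1 z_1(x)}$ to rule out the non-extremal positions. Here I plan to use Proposition~\ref{prop: Dih-resumen}\ref{item-prop: Dih-rsm for y}, which places dihedral elements $z\in\DC^u(y)$ with $\ell(z,y)\geq 4$ on $\DC^u_{\mathrm{sgm}}(y)$, i.e.\@ on $\Sgm(y, s_1 y) \cup \Sgm(y, s_2 y)$, and then invoke the minimality of $\ell(z_i)$ in $\mathcal{D}^u(y)\cap[x,y]$ in the analogous geometric argument.

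A preliminary reconciliation of the length hypothesis is needed: the lemma is stated with $\ell(x, z_i)\geq 4$, not the condition $\ell(z_i, y) \geq 4$ directly required by Proposition~\ref{prop: Dih-resumen}\ref{item-prop: Dih-rsm for y}. I plan to dispose of the ``short'' case $\ell(z_i, y) < 4$ separately: here $z_i$ lies (up to the $\sigma$ symmetry) in the finite list $\DC^u_{\mathrm{rest}}(y)$ enumerated before Lemma~\ref{lem: descripcion dih y}, and each such element satisfies $\ell(z_i,y)\leq 3$ by Lemma~\ref{lem: dih menores que 3}. A direct inspection of this list, combined with the hypothesis $\ell(x, z_i) \geq 4$ and fullness of $[x,y]$ (which already forces $y\notin\{\theta(0,0), \theta^s(0,0)\}$), shows that whenever a short-case element appears in $\DC^u(y)\cap[x,y]$ one can produce a strictly shorter element of $\DC^u(y)\cap[x,y]$ on one of the two segments $\Sgm(y, s_k y)$, contradicting the minimality of $\ell(z_i)$. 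Thus we reduce to the case $\ell(z_i, y) \geq 4$ and can apply Proposition~\ref{prop: Dih-resumen}\ref{item-prop: Dih-rsm for y}.

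Now, having placed $z_1$ on, say, $\Sgm(y, s_1 y)$, I decompose this segment into its $F_\id$-part and its $F_{s_1}$-part using Equation~\eqref{eq: interval as union of polygons} together with Proposition~\ref{prop: pol otras regiones}. The $F_\id$-part is a saturated chain of centers running from $y$ down to $z_1(y)$ along $-\alpha_1$ (via Lemma~\ref{lem: comparable elements with nonempty intersection}), while the $F_{s_1}$-part is the analogous chain in the reflected strip coming from $s_1(\Pgn_{x,y}\setminus S^{x,y}_{s_1})$. If $z_1$ were to lie in the $F_{s_1}$-part, then, mirroring Case~A of the proof of Lemma~\ref{lem: max dih x unicamente det} and using convexity of $\CC_{s_1 z_1(y)}$, one exhibits an element of $\DC^u(y)\cap[x,y]$ strictly shorter than $z_1$, contradicting minimality; so $z_1$ lies in the $F_\id$-part. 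Along that chain, if $z_1 \neq z_1(y)$ then $z_1(y)$ is strictly below $z_1$, so $\ell(z_1(y)) < \ell(z_1)$, again contradicting minimality (noting $z_1(y) \in \DC^u_{\mathrm{sgm}}(y) \subset \DC^u(y)$ by the second part of Proposition~\ref{prop: Dih-resumen}\ref{item-prop: Dih-rsm for y}). Hence $z_1 = z_1(y)$.

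For $z_2 \in B_{z_1, y}$ (reading the evident typo $B_{x, z_2}$ as $B_{z_1,y}$), the condition $z_2 \notin [z_1, y]$ rules out the segment $\Sgm(y, s_1 y)$, forcing $z_2 \in \Sgm(y, s_2 y)$, and the identical argument yields $z_2 = z_2(y)$. The final inclusion $\DC_{[x,y]}^{u,\min} \subset \{z_1(y), z_2(y)\}$ follows since any minimal-length element of $\DC^u(y)\cap[x,y]$ must live on one of the two segments and coincide with its extremal element there. The main obstacle I anticipate is the combinatorial bookkeeping in the short case $\ell(z_i, y) < 4$: enumerating $\DC^u_{\mathrm{rest}}(y)$ carefully and certifying that under $\ell(x,z_i)\geq 4$ none of its elements can be of minimal length in $\DC^u(y)\cap[x,y]$ will require a delicate case-by-case verification, and this is the step where the asymmetric length hypothesis $\ell(x, z_i)\geq 4$ (rather than $\ell(z_i, y)\geq 4$) is most delicate to use.
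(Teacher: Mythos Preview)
Your overall outline parallels the paper's approach and is sound, but you are chasing a phantom: the hypothesis $\ell(x,z_i)\geq 4$ in the statement is a typo for $\ell(z_i,y)\geq 4$. The paper's own proof opens with ``By Proposition~\ref{prop: Dih-resumen}, $z_1,z_2\in\DD^u_{\mathrm{sgm}}(y)$'', which requires $\ell(z_i,y)\geq 4$; and the only use of this lemma, in Proposition~\ref{prop: seg max fixed under iso}, invokes it under the hypothesis $\ell(z_i(y),y)\geq 4$. With the intended hypothesis your entire ``short case'' detour disappears, and what remains of your argument matches the paper. (Your short-case plan, taken literally, is not obviously salvageable: under only $\ell(x,z_i)\geq 4$ there is no evident mechanism forcing the elements of $\DC^u_{\mathrm{rest}}(y)$ out of $\DC_{[x,y]}^{u,\min}$, and you rightly flagged this as the delicate step.)

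One small correction to your main-case sketch: to rule out $z_1\in F_{s_1}$, the hexagon you want is $\CC_{z_1}$, not $\CC_{s_1 z_1(y)}$. The paper's explicit version of this step (carried out for $z_2$) shows that if $z_2\notin F_\id$ then a suitable reflected vertex lies in $\mathrm{V}(\CC_{z_2})\cap F_\id$, yielding $z_i(y)<z_2$ and contradicting minimality. The element $s_1 z_1(y)$ does not lie on the segment $\Sgm(y,s_1y)$ (note $s_1=s_{\alpha_1,0}$, whereas the boundary reflection is $r_1=s_{\alpha_1,-1}$), and its hexagon plays no role. This is a cosmetic slip in transcribing Case~A rather than a structural error.
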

\begin{proof}
    By Proposition~\ref{prop: Dih-resumen}, $z_1, z_2 \in \DD_{\mathrm{sgm}}^u(y)$.
    Thus, $z_1, z_2\in \Sgm(y,s_1y)\cup\,\Sgm(y, s_2y)$.
    This implies that $z_1,z_2\in F_{\id}\cup F_{s_1}\cup F_{s_2}$.
    Without loss of generality, assume that $z_1\in \Sgm(y,s_1y)$.
    Recall from Section~\ref{subsec: description de mayores que x} that $r_1=s_{\alpha_1,-1}$ and $r_2=s_{\alpha_2,-1}$.
    Since $z_1\in [x,y]$, by Equation~\eqref{eq: interval as union of polygons} and as $z_1\in F_\id\cup F_{s_1}$, we have that $z_1\in \Pgn_{x,y}\cup\Pgn_{x,y}^{s_1}$.
    From Proposition~\ref{prop: pol otras regiones}, it is not hard to see that
    \begin{equation}
        \begin{split}\label{eq: sgm Fid}
            \cPgn_{x,y}\cap \Sgm(y,s_iy) \cap F_{\id}&=\cSgm(y,z_i(y)), \\
            \cPgn^{s_i}_{x,y}\cap \Sgm(y,s_iy) \cap F_{s_i}&=\cSgm(r_iz_i(y),s_iy), \mbox{ for $i\in\{1,2\}$}.
        \end{split}
    \end{equation}
    As in \textbf{Case A} of the proof of Lemma~\ref{lem: max dih x unicamente det}, we conclude that $ z_1 \in F_\id$.
    Furthermore, since $z_1$ and $z_1(y)$ differ by a multiple of~$\alpha_1$, and both are in~$F_\id$, we apply the reasoning as in \textbf{Case B} of the proof of Lemma~\ref{lem: max dih x unicamente det}, to conclude that $z_1=z_1(y)$.
    
    It remains to prove that $z_2=z_2(x)$.
    Before proceeding, we will show that $z_2\in \Sgm(y,s_2y)\cap F_\id$.
    First, suppose that $z_2\not\in F_\id$.
    Then 
    \begin{itemize}
        \item $z_2\in \delta s_0D\cup  (\sigma\delta)\{\mathrm{\mathbf{x}}_n\mid n\geq 2\}$, or
        \item $z_2\in\delta^2s_0 D\cup \delta \{\mathrm{\mathbf{x}}_n\mid n\geq 2\}$.
    \end{itemize}
    In the first case, by Corollary~\ref{cor: characterization of the Bruhat order}, we have that $r_2z_2\in \mathrm{V}(\CC_{z_2})$.
    In the second case, by Corollary~\ref{cor: characterization of the Bruhat order},  $r_1z_2\in \mathrm{V}(\CC_{z_2})$.
    In both cases, since
    \begin{equation*}
        \Sgm(r_iz_i(y),y)~\subset~ r_i\Sgm(r_iz_i(y),s_iy),
    \end{equation*}
    we obtain that $z_i(y)< z_2$.
    Since $z_i(y)\in \mathcal{D}^u_{\mathrm{Sgm}}(y)$, Proposition~\ref{prop: Dih-resumen} implies that $z_i(y)\in \mathcal{D}^u_{[x,y]}$.
    This contradicts the minimality of~$z_2$.
    Hence $z_2\in F_\id.$
    
    Now, suppose by contradiction that $z_2\in\Sgm(y, s_1y)$.
    Clearly, $\ell(z_2)\leq \ell(z_1)$.
    By the same reasoning as in \textbf{Case B} of the proof of Lemma~\ref{lem: max dih x unicamente det}, we can prove that $z_2\leq z_1$.
    This contradicts the assumption that $z_2 \in B_{x, z_1}$.
    Therefore, we conclude that $z_2 \in \Sgm(y, s_2y)\cap F_\id$.
    By Equation \eqref{eq: sgm Fid} we have that $z_2\in\Sgm(y,z_2(y))$.
    Since $z_2$ and $z_2(y)$ differ by $\alpha_2$ and both are in~$F_\id$, we can apply the reasoning as in \textbf{Case B} of the proof of Lemma~\ref{lem: max dih x unicamente det} to conclude that $z_2=z_2(y)$.
\end{proof}
In the following definition, the interval $[x,y]$ is not assumed to be full.
\begin{defn}\label{def: thick interval}
    We say that $[x,y]$ ($x,y$ dominant, $[x,y]$ nonempty) is \emph{thick} if $\ell(x, z_i(x)) \geq 4$ and $\ell(z_i(y),y) \geq 4$ for $i\in\{1,2\}$.
    Equivalently, by Lemma~\ref{lem: if distance is 4 then difference is more than 2 times the simple root} $[x,y]$ is thick if $z_i(x)-x\in \mathbbm{R}_{\geq 2}\alpha_i$ and $y-z_i(y)\in \mathbbm{R}_{\geq 2}\alpha_i$.
\end{defn}
\begin{lem}\label{lem: thick implies full}
    Let $x,y\in D$ be dominant elements, such that $[x,y]$ is a thick interval.
    Then
    \begin{enumerate}
        \item $x,y\in D+\rho$ and $x+\rho,y-\rho\in [x,y]$.
        \item $[x,y]$ is full.
    \end{enumerate}
\end{lem}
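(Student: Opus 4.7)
The plan is to extract inequalities from the thickness hypothesis, translate them into constraints on the centers of $x$ and $y$, and then apply the geometric descriptions of Bruhat intervals already established. For the first half of Part~(1), the assumption $z_i(x) - x \in \mathbbm{R}_{\geq 2}\alpha_i$ combined with $\Pgn_{x,y} \subseteq F_\id$ and the convexity of $\Pgn_{x,y}$ yields $\cen(x) + 2\alpha_i \in F_\id$ for $i \in \{1, 2\}$. Recalling that $F_\id = \{v \in E : (\alpha_1, v) \geq -1 \text{ and } (\alpha_2, v) \geq -1\}$ and $(\alpha_j, \alpha_i) = -1$ for $j \neq i$, I extract $(\alpha_j, \cen(x)) \geq 1$ for both $j \in \{1, 2\}$. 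The explicit formulas $(\alpha_1, \cen(\theta(m,n))) = m + \tfrac{1}{3}$, $(\alpha_2, \cen(\theta(m,n))) = n + \tfrac{1}{3}$, and the analogous ones for $\theta^s$ (with $\tfrac{2}{3}$ in place of $\tfrac{1}{3}$), then force the $\varpi_j$-coefficient of $x$ to be at least $1$, so $x \in D + \rho$. The symmetric argument applied to $\cen(y) - 2\alpha_i \in F_\id$ yields $(\alpha_i, \cen(y)) \geq 3$, which is stronger than needed and in particular gives $y \in D + \rho$.

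For the second half of Part~(1), by Equation~\eqref{eq: interval as union of polygons} it suffices to show $\cen(x) + \rho, \cen(y) - \rho \in \Pgn_{x,y} = \Par_{x,y} \cap F_\id$. Writing $\cen(y) - \cen(x) = b_1\alpha_1 + b_2\alpha_2$ with $b_i \geq 0$ (from Corollary~\ref{cor: equivalence of being in the hexagon in the gray region}), the existence of $z_i(x) \in \Par_{x,y}$ at displacement $t_i\alpha_i$ with $t_i \geq 2$ forces $b_i \geq 2$. Hence both $\cen(x) + \rho = \cen(x) + \alpha_1 + \alpha_2$ and $\cen(y) - \rho = \cen(x) + (b_1 - 1)\alpha_1 + (b_2 - 1)\alpha_2$ lie in $\Par_{x,y}$. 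The $F_\id$ membership follows from $(\alpha_j, \cen(x) + \rho) \geq 1 + 1 = 2$ and $(\alpha_j, \cen(y) - \rho) \geq 3 - 1 = 2$, using the coordinate estimates of the previous paragraph.

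For Part~(2), the inclusions $[x, x + \rho] \subseteq [x, y]$ and $[y - \rho, y] \subseteq [x, y]$ follow at once from Part~(1), so fullness reduces to $\operatorname{U}(x) \subseteq [x, x + \rho]$ and $\operatorname{L}(y) \subseteq [y - \rho, y]$. The first inclusion, via Corollary~\ref{cor: characterization of the Bruhat order}, unpacks to $\cen(z) \in \CC_{x + \rho}$ for every $z \in \operatorname{U}(x)$, where $\CC_{x + \rho}$ is the hexagon with vertex set $W_f \cdot \cen(x + \rho)$. The explicit list of upper covers from Lemma~\ref{lem: cardinality of upper covers} (which applies because $x \in D + \rho$) contains at most six elements, each a small lattice translate of $\cen(x)$, and a direct computation with $\varpi_j$-coordinates places each such center inside the hexagon. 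Dually, Lemma~\ref{lem: set of lower covers} together with Proposition~\ref{prop: no mayores geometrico} reduces the second inclusion to verifying $\cen(z) \notin \mathcal{St}^\circ(y - \rho)$ for each $z \in \operatorname{L}(y)$; the displacement $\rho$ between $\cen(y - \rho)$ and $\cen(y)$ is long enough, relative to the reach of the tips of $\mathcal{St}(y - \rho)$, to keep the coatoms out of the star's interior. The main obstacle is this explicit casework in Part~(2): the $\langle \sigma, \delta \rangle$-equivariance of $\operatorname{U}, \operatorname{L}, \CC$, and $\mathcal{St}$ reduces the number of cases substantially (to essentially the four combinations of $x, y \in \Theta \cup \Theta^s$), but within each case the positions of all covers must still be checked against the polytope inequalities, and no conceptual shortcut seems to avoid this verification.
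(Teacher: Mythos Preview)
Your proposal is correct and follows essentially the same route as the paper: both extract coordinate inequalities from thickness to place $x,y$ in $D+\rho$ and $x+\rho,\,y-\rho$ in $[x,y]$, then reduce fullness to the inclusions $\operatorname{U}(x)\subseteq[x,x+\rho]$ and $\operatorname{L}(y)\subseteq[y-\rho,y]$, verified against the explicit cover lists of Lemmas~\ref{lem: set of lower covers} and~\ref{lem: cardinality of upper covers}. The only cosmetic differences are that you work with the pairings $(\alpha_j,\cdot)$ where the paper uses $(\varpi_i,\cdot)$, and that for $\operatorname{L}(y)\subseteq[y-\rho,y]$ you propose checking $\cen(z)\notin\mathcal{St}^\circ(y-\rho)$ via Proposition~\ref{prop: no mayores geometrico}, whereas the paper writes out short Bruhat chains for the dominant coatoms and a hexagon argument (through $\CC_{s_iy}$) for $s_1y,s_2y$; both amount to the same casework you anticipate.
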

\begin{proof}
    \begin{enumerate}
        \item Suppose by contradiction that $x\in D\setminus D+\rho$, so there is $1\leq i\leq 2$ such that $(\varpi_i,\cen(x))<1$.
        Let $t$ be such that $\cen(z_i(x))=\cen(x)+t\alpha_i$.
        Since $z_i(x)\in F_{\id}$, we have that 
        \begin{equation*}
            -1<(\varpi_i,\cen(z_i(x)))=(\varpi_i,\cen(x))-t
        \end{equation*}
        which implies that $t<2$, contradicting thickness.
        The argument for $y$ is analogous, so $x,y\in D+\rho$.
        It remains to prove that $x+\rho,y-\rho\in [x,y]$.
        
        Let $\{i, j\} = \{1, 2\}$.
        We use the notations from Definition~\ref{def: label of vertices}.
        By  Equation~\ref{eq: zeta(y) definition} and the definition of~$v_i(y)$, it follows that $z_i(y)-v_i(y)\in\mathbbm{R}_{\geq0}\a_i$.
        In the following, the fourth (in)equality follows from thickness, and the other (in)equalities are immediate:  
        \begin{align*}  
            (\varpi_i, \cen(x)) &= (\varpi_i, v_j(x)) \\  
            &\leq (\varpi_i, v_i(y)) \\  
            &\leq (\varpi_i, \cen(z_i(y))) \\
            &\leq (\varpi_i, \cen(y) - \alpha_i) \\  
            &=(\varpi_i, \cen(y - \rho))  
        \end{align*}  
        Since $y \in D$, we have $y-\rho \in F_{\id}$.
        By Corollary~\ref{cor: equivalence of being in the hexagon in the gray region} and the above, we conclude that $x \leq y-\rho$.
        Similarly,  
        \begin{equation*}  
            (\varpi_i, \cen(y - \rho)) = (\varpi_i, \cen(y) - \alpha_i) \leq (\varpi_i, \cen(y)).
        \end{equation*}  
        Thus, by Corollary~\ref{cor: equivalence of being in the hexagon in the gray region}, we have $y-\rho \leq y$.
        The inequalities $x \leq x + \rho \leq y$ follow analogously.
        \item By the previous part, it is enough to show that  $U(x)\subset [x,x+\rho]$ and $L(y)\subset [y-\rho, y]$.
        \newline
    
        \noindent\textbf{Inclusion $\operatorname{L}(y)\subset [y-\rho,y]$.}
        We only need to prove that $z\in \operatorname{L}(y)$ implies $y-\rho\leq z$.
        By Lemma~\ref{lem: set of lower covers}, we have that  $\operatorname{L}(y)=\operatorname{L}_D(y)\cup\{s_1y,s_2y\}$ and $\operatorname{L}(y)\cap D=\operatorname{L}_D(y)$.
        
        Suppose that $z\in \operatorname{L}_D(y)$.
        We have two cases:
        \begin{itemize}
            \item $y=\theta(m,n)$.
            We have that $y-\rho=\theta(m-1,n-1)$.
            Clearly,
            \begin{equation*}
                \theta(m-1,n-1)\leq \theta^s(m-1,n-1)\leq \theta^s(m-1,n), \theta^s(m,n-1).
            \end{equation*}
            \item $y=\theta^s(m,n)$.
            We have that $y-\rho=\theta^s(m-1,n-1)$.
            The following three identities are immediate:
            \begin{align*}
                \theta^s(m-1,n-1)&\leq \theta^s(m,n-1)\vee\theta^s(m-1,n)\\
                \theta^s(m,n-1)&\leq \theta(m+1,n-1)\vee \theta(m,n)\\
                \theta^s(m-1,n)&\leq \theta(m-1,n+1)\vee \theta(m,n)
            \end{align*}
        \end{itemize}
        In each case, $y-\rho\leq z$.
        
        Now, suppose that $z=s_1y$.
        Since $y\in D+\rho$, we obtain that $z\in \delta^2 (s_0D)$.
        By Corollary~\ref{cor: characterization of the Bruhat order}, we have that $y-\alpha_1\leq z$ since
        \begin{equation*}
            y-\alpha_1=s_2s_0s_2z\in \mathrm{V}(\CC_{z})\cap F_\id,
        \end{equation*}
        where the ``$\cap F_\id$'' part comes from thickness.
        By Corollary~\ref{cor: equivalence of being in the hexagon in the gray region}, we have that $y-\rho\leq y-\a_1$.
        Therefore, $y-\rho\leq z$.
        
        The case $z=s_2y$ is analogous.
        This proves the inclusion.
        \newline

        \noindent\textbf{Inclusion $\operatorname{U}(x)\subset [x,x+\rho]$.}
        We only need to prove that $z\in \operatorname{U}(x)$ implies $z\leq x+\rho$.
        Since  $x\in D+\rho$, it follows that $x\in\{\theta(m,n),\theta^s(m,n)\mid m,n>0\}$.
        In the proof of Lemma~\ref{lem: cardinality of upper covers}, we obtain that $\operatorname{U}(x)=\operatorname{U}_D(x)\cup \{s_0x,x_{s_1},x_{s_2}\}$, where $x_{s_1},x_{s_2}$ are as in Definition~\ref{def: estrella} and
        \begin{equation}
            \operatorname{U}_D(x)=\left\{
            \begin{array}{@{}l@{\thinspace}l}
                \{\theta^s(m,n),\theta^s(m+1,n-1),\theta^s(m-1,n+1)\} &\mbox{ if $x=\theta(m,n)$},\medskip\\
                \{\theta(m+1,n),\theta(m,n+1)\}&\mbox{ if $x=\theta^s(m,n)$}.\medskip
            \end{array}
            \right.
        \end{equation}
        
        Suppose that $z\in \operatorname{U}_D(x)$.
        Then $z\leq x+\rho$ by an analogous argument as in the first part of the proof of the \textbf{Inclusion $\operatorname{L}(y)\subset [y-\rho,y]$.}
        
        Now, suppose that $z\in\{s_0x,x_{s_1},x_{s_2}\}$.
        By Corollary~\ref{cor: characterization of the Bruhat order}, it is enough to show that $z\in \CC_{x+\rho}$.
        Since $x+\rho \in D$, we have that $\mathrm{V}(\CC_{x+\rho})=W_f\cdot(x+\rho)$.
        If $z=s_0x$, we have that
        \begin{equation*}
            s_0x=s_{\rho,-1}x=w_0(x+\rho)\in \mathrm{V}(\CC_{x+\rho}).
        \end{equation*}
        By symmetry of the proof, without loss of generality, we can assume that $z=x_{s_1}$.
        By Equation \eqref{eq: identity xw}, we have that
        \begin{equation*}
            x_{s_1}+\rho=s_1(x+\rho)\in \mathrm{V}(\CC_{x+\rho}).
        \end{equation*}
        Note that $s_0x_{s_1}=w_0s_1(x+\rho)\in \mathrm{V}(\CC_{x+\rho})$.
        Since $x_{s_1}\in F_{s_1}$ and $s_0x_{s_1}\in F_{s_1s_2}$, we have that $\cen(x_{s_1})-\cen(s_0x_{s_1})\in \mathbbm{Z}_{\geq 0}\rho$.
        Thus, $\cen(x_{s_1})$ is in the segment $\operatorname{Conv}(x_{s_1}+\rho, s_0x_{s_1})\subset \CC_{x+\rho}$.\qedhere
    \end{enumerate}
\end{proof}
\begin{rem}
    By the previous lemma, thick intervals are full.
    For the rest of the paper, we will use this fact without mention.
\end{rem}
\begin{prop}\label{prop: seg max fixed under iso}
    Let $\phi\colon [x,y]\to [x',y']$ be an isomorphism of posets.
    \begin{itemize}
        \item If $\ell(x,z_i(x))\geq 4$ for $i\in\{1,2\}$, then $\phi(\{z_1(x), z_2(x)\})=\{z_1(x'),z_2(x')\}$.
        \item If $\ell(z_i(y),y)\geq 4$ for $i\in\{1,2\}$, then $\phi(\{z_1(y), z_2(y)\})=\{z_1(y'),z_2(y')\}$.
    \end{itemize}
\end{prop}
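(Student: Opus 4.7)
The plan is to characterize the unordered pair $\{z_1(x),z_2(x)\}$ through data that a poset isomorphism manifestly preserves---namely ``being a dihedral subinterval'', the rank function $\ell(x,-)$, and the subinterval relation. Once this characterization is in place, the identity $\phi(\{z_1(x),z_2(x)\})=\{z_1(x'),z_2(x')\}$ will fall out by applying Lemma~\ref{lem: max dih x unicamente det} on both sides of $\phi$.

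First I would observe that $\phi$ transports all the relevant combinatorial data between the two intervals. By Lemma~\ref{lem: iso-preserva-dih}, $\phi$ sends dihedral subintervals of $[x,y]$ based at $x$ to dihedral subintervals of $[x',y']$ based at $x'$, so $\phi(\mathcal{D}^l(x)\cap[x,y])=\mathcal{D}^l(x')\cap[x',y']$. By Lemma~\ref{lem: betti invariante}, $\phi$ preserves the rank $\ell(x,-)$, hence restricts to a bijection $\phi(\mathcal{D}_{[x,y]}^{l,\max})=\mathcal{D}_{[x',y']}^{l,\max}$. Finally, since $\phi([x,z])=[x',\phi(z)]$ for every $z\in[x,y]$, we also get $\phi(A_{x,z_1})=A_{x',\phi(z_1)}$ for each $z_1\in\mathcal{D}_{[x,y]}^{l,\max}$.

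With these transport principles in hand, pick any $z_1\in\mathcal{D}_{[x,y]}^{l,\max}$ and any $z_2\in A_{x,z_1}$ (the second set is nonempty by the remark preceding Lemma~\ref{lem: max dih x unicamente det}). I claim the hypothesis $\ell(x,z_i(x))\geq 4$ upgrades to $\ell(x,z_i)\geq 4$ for $i\in\{1,2\}$: both $z_1(x)$ and $z_2(x)$ lie in $\mathcal{D}^l(x)\cap[x,y]$ (their intervals from $x$ are dihedral segments parallel to $\alpha_1$ and $\alpha_2$ respectively), so $\ell(z_1)\geq\max\{\ell(z_1(x)),\ell(z_2(x))\}$; and the geometric fact $z_2(x)\not\leq z_1(x)$---the two elements sit on distinct arms of $\mathcal{St}(x)$, so neither lies in the polygon $\mathcal{C}$ of the other---forces $z_2(x)\in(\mathcal{D}^l(x)\cap[x,y])\setminus[x,z_1]$, giving $\ell(z_2)\geq\ell(z_2(x))$. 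Lemma~\ref{lem: max dih x unicamente det} applied inside $[x,y]$ then yields $\{z_1,z_2\}=\{z_1(x),z_2(x)\}$.

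Now set $z_i'\coloneqq\phi(z_i)$. The transport step shows $z_1'\in\mathcal{D}_{[x',y']}^{l,\max}$ and $z_2'\in A_{x',z_1'}$, and rank preservation gives $\ell(x',z_i')=\ell(x,z_i)\geq 4$. Applying Lemma~\ref{lem: max dih x unicamente det} inside $[x',y']$ yields $\{z_1',z_2'\}=\{z_1(x'),z_2(x')\}$, and composing with the previous identity finishes the first bullet. The second bullet is proved identically, replacing Lemma~\ref{lem: max dih x unicamente det} by Lemma~\ref{lem: max dih y unicamente det} and $(\mathcal{D}^l(x),A_{x,\cdot})$ by $(\mathcal{D}^u(y),B_{\cdot,y})$. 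The only step that is not purely formal is the geometric inequality $z_2(x)\not\leq z_1(x)$ needed to verify the lower bound on $\ell(z_2)$; I expect to dispatch it by inspecting the star-shape picture already used in the proof of Lemma~\ref{lem: max dih x unicamente det}, so the remaining content is bookkeeping of how poset isomorphisms interact with rank, dihedrality, and subintervals.
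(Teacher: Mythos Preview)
Your approach is the same as the paper's: transport $\mathcal{D}^{l}(x)\cap[x,y]$, $\mathcal{D}_{[x,y]}^{l,\max}$ and $A_{x,\cdot}$ through $\phi$ using Lemmas~\ref{lem: iso-preserva-dih} and~\ref{lem: betti invariante}, then invoke Lemma~\ref{lem: max dih x unicamente det} on both sides. The paper is actually terser than you are about checking the hypothesis $\ell(x,z_i)\geq 4$ before applying the lemma.

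There is one slip in your verification of that hypothesis. You write that the geometric inequality $z_2(x)\not\leq z_1(x)$ forces $z_2(x)\in(\mathcal{D}^l(x)\cap[x,y])\setminus[x,z_1]$, but at that moment $z_1$ is an arbitrary element of $\mathcal{D}_{[x,y]}^{l,\max}$, not yet known to coincide with $z_1(x)$; the inequality you cite compares $z_2(x)$ with $z_1(x)$, not with $z_1$. The fix is short: you have already shown $\ell(x,z_1)\geq 4$, and the first half of the proof of Lemma~\ref{lem: max dih x unicamente det} (Cases~A and~B for $z_1$, which use only the hypothesis on $z_1$) then yields $z_1\in\{z_1(x),z_2(x)\}$. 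Once that is in hand, your incomparability fact (which holds in both directions, $z_1(x)\not\leq z_2(x)$ and $z_2(x)\not\leq z_1(x)$) gives $z_j(x)\notin[x,z_1]$ for whichever index $j$ is not realized by $z_1$, and your bound $\ell(x,z_2)\geq\ell(x,z_j(x))\geq 4$ follows. With this patch the argument is complete and essentially identical to the paper's.
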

\begin{proof}
    By Lemma~\ref{lem: iso-preserva-dih}, $\phi$ induces bijections
    \begin{gather*}
        \mathcal{D}^{l}(x)\cap [x,y] \longleftrightarrow\mathcal{D}^{l}(x')\cap [x',y']\\
        \mathcal{D}^{u}(y)\cap [x,y] \longleftrightarrow\mathcal{D}^{u}(y')\cap [x',y'].
    \end{gather*}
    The first bijection, plus Lemma~\ref{lem: betti invariante}, implies that
    \begin{equation}\label{eq: dmax is preserved}
        \phi(\mathcal{D}_{[x,y]}^{l,\mathrm{max}}) =\mathcal{D}_{[x',y']}^{l,\mathrm{max}}.
    \end{equation}
    By Lemma~\ref{lem: max dih x unicamente det}, there are distinct indexes $i,j\in\{1,2\}$ such that $z_i(x) \in \mathcal{D}_{[x,y]}^{l,\max}$ and $z_{j}(x)\in A_{x,z_i(x)}$.
    By Equation \eqref{eq: dmax is preserved}, it follows that 
    \begin{equation*}
        \phi(z_i(x)) \in \mathcal{D}_{[x',y']}^{l,\max}\mbox{ and }\phi(z_{j}(x))\in A_{x',\phi(z_i(x))}.
    \end{equation*}
    By Lemma~\ref{lem: max dih x unicamente det}, we conclude that $\phi(\{z_1(x), z_2(x)\})=\{z_1(x'),z_2(x')\}$.
    
    By a similar reasoning, but using Lemma~\ref{lem: max dih y unicamente det} instead of Lemma~\ref{lem: max dih x unicamente det}, we conclude that $\phi(\{z_1(y), z_2(y)\})=\{z_1(y'),z_2(y')\}$.
\end{proof}
\begin{rem}
    The previous proposition fails if $\ell(x,z_i(x))<4$ or $\ell(z_i(y),y)<4$ for some $i\in\{1,2\}$, which motivates the thickness hypothesis in the main theorem of this paper.
\end{rem}
\section{Congruence of polygons and proof of the main theorem}\label{subsec: congruence and proof of main}
In this section, we consider  $x,x',y,y'\in D$ such that the intervals $[x,y]$ and $[x',y']$ are thick.
\begin{prop}\label{prop: iso implies congruent polygons}
    If $[x,y]$ and $[x',y']$ are isomorphic as posets, then there is a weight $\mu$, such that $\Pgn_{x',y'}+\mu$ is equal to $\Pgn_{x,y}$ or to $\sigma\Pgn_{x,y}$.
    In particular, $\Pgn_{x,y}$ and $\Pgn_{x',y'}$ are congruent.
\end{prop}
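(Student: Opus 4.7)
The plan is to reconstruct each polygon from the combinatorial data of its interval using Proposition~\ref{prop: length of all sides}, and then show that a poset isomorphism forces those data, and hence the polygons themselves, to agree up to a translation and a possible reflection implemented by the diagram automorphism $\sigma$.

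First I would set up the endpoints. By Corollary~\ref{cor: iso intvls same theta partition}, there exist weights $\lambda,\mu\in\Lambda$ with $x'=x+\lambda$ and $y'=y+\mu$, and the isomorphism $\phi\colon[x,y]\to[x',y']$ sends the unique minimum to the unique minimum and the unique maximum to the unique maximum. Since thickness gives $\ell(x,z_i(x))\geq 4$ and $\ell(z_i(y),y)\geq 4$ for $i\in\{1,2\}$, Proposition~\ref{prop: seg max fixed under iso} yields permutations $\pi,\pi'$ of $\{1,2\}$ with $\phi(z_i(x))=z_{\pi(i)}(x')$ and $\phi(z_i(y))=z_{\pi'(i)}(y')$. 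Composing with $\sigma$ on $[x',y']$ (which is a poset isomorphism onto $[\sigma x',\sigma y']$ and swaps the roles of $\alpha_1$ and $\alpha_2$ in all geometric data, by Lemma~\ref{lem: elements of G are isomorphism of poset}) reduces us to $\pi=\mathrm{id}$. The compatibility $\pi'=\mathrm{id}$ then follows from the Bruhat relation $z_i(x)\leq z_{j}(y)$ for $\{i,j\}=\{1,2\}$ recorded in Corollary~\ref{cor: z1(x) es menor a z2(y)}: applying $\phi$ transports this to $z_i(x')\leq z_{\pi'(j)}(y')$, and the only way this is consistent with the analogous inequalities in $[x',y']$ together with the length data below is to have $\pi'=\mathrm{id}$.

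Next I would verify that every length and parity entering Proposition~\ref{prop: length of all sides} is invariant under $\phi$. By Lemma~\ref{lem: betti invariante} we have
\begin{equation*}
    \ell(x,z_i(x))=\ell(x',z_i(x')),\quad \ell(z_i(y),y)=\ell(z_i(y'),y'),\quad \ell(z_i(x),z_j(y))=\ell(z_i(x'),z_j(y')).
\end{equation*}
The parities of $\ell(z_i(x))$ and $\ell(z_i(y))$ are determined by whether these elements lie in $\Theta$ or $\Theta^s$, and this is in turn encoded by the cardinality of their upper, respectively lower, cover sets via Lemmas~\ref{lem: cardinality of lower covers} and~\ref{lem: cardinality of upper covers}; hence these parities are preserved by $\phi$. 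Proposition~\ref{Prop: vert are det by length} then identifies, from the same data, which of the three cases (center, vertex, midpoint of an alcove) each of $v_i(x),v_i(y)$ falls into, and this matches case-by-case across the two intervals. Plugging the matched data into Proposition~\ref{prop: length of all sides} shows that every side of $\Pgn_{x,y}$ has the same Euclidean length as the corresponding side of $\Pgn_{x',y'}$.

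Finally, since both polygons are convex, have the same number of sides, and those sides are parallel to a fixed ordered family of directions (each being $\alpha_1$, $\alpha_2$, $\varpi_1$, $\varpi_2$ or their negatives, according to the combinatorial type) and have matching lengths, the two polygons are translates of one another. Taking $\mu := \operatorname{cen}(x)-\operatorname{cen}(x')\in\Lambda$ (respectively the analogous correction after the $\sigma$-reduction in the first step) gives the required weight with $\Pgn_{x',y'}+\mu=\Pgn_{x,y}$ or $\sigma\Pgn_{x,y}$, which also establishes congruence. The main obstacle I expect is the bookkeeping in the first paragraph: ruling out the mixed possibility where $\pi$ and $\pi'$ are inconsistent (one being the identity and the other a transposition), which would correspond to no single isometry mapping one polygon to the other. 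Resolving this requires combining Proposition~\ref{prop: seg max fixed under iso} with the inequalities of Corollary~\ref{cor: z1(x) es menor a z2(y)} and the explicit descriptions of $\mathcal{D}^l(x)$ and $\mathcal{D}^u(y)$ from Lemmas~\ref{lem: descripcion dih y} and~\ref{lem: descripcion dih x}.
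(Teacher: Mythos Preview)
Your overall architecture matches the paper's: use Proposition~\ref{prop: seg max fixed under iso} to pin down $\{z_1(x),z_2(x)\}$ and $\{z_1(y),z_2(y)\}$ up to a permutation, reduce $\pi$ to the identity via $\sigma$, then feed the invariant length data through Proposition~\ref{prop: length of all sides} and Proposition~\ref{Prop: vert are det by length} to conclude the side lengths agree. The paper does exactly this in Lemmas~\ref{lem: iso preserves adjacency}--\ref{lem: isomorphism preserves all distances}.

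The genuine gap is your handling of $\pi'=\mathrm{id}$. You suggest this follows from Corollary~\ref{cor: z1(x) es menor a z2(y)} by transporting $z_i(x)\leq z_j(y)$ through $\phi$. But that inequality holds for \emph{both} pairs $\{i,j\}=\{1,2\}$ in both intervals, so it imposes no constraint on $\pi'$ by itself; and when $\Pgn_{x,y}$ is a hexagon, one may in addition have $z_i(x)\leq z_i(y)$ (Lemma~\ref{lem: z_i(x)<z_i(y) implica hex}), so the mixed case $\pi=\mathrm{id}$, $\pi'\neq\mathrm{id}$ is not immediately contradictory. The paper's resolution is substantially more involved: it first proves $\phi(x+\rho)=x'+\rho$ (Lemma~\ref{lem: x+rho es invariante}, via an explicit Kazhdan--Lusztig computation), then shows that the mixed case would persist on the thick subinterval $[x+\rho,y]$ (Lemmas~\ref{lem: z_i(x)<z_i(x+rho)}--\ref{lem: 3bullets}), and iterates until the interval is too short to be thick (Corollary~\ref{cor: contradiccion}). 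This is the content of Lemma~\ref{lem: z1z1}, and it is not recoverable from the dihedral descriptions you cite.

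A smaller point: the parity of $\ell(z_i(\ast))$ is not read off from cover cardinalities as you claim (those cardinalities are ambiguous across the $X\Theta$-partition). What you actually need is that the parity of $\ell(y)$ is fixed by Corollary~\ref{cor: iso intvls same theta partition}, and the parity of $\ell(z_i(y),y)$ is preserved by $\phi$; together these determine $\gamma(z_i(y))$, and likewise for $z_i(x)$. This is how the paper's Lemma~\ref{lem: isomorphism preserves all distances} works.
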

\subsection{Congruence of polygons}\label{subsec: congruence}
Before embarking on the proof, we need some technical lemmas.
\begin{lem}\label{lem: x+rho es invariante}
    If $\phi\colon[x,y]\to[x',y']$ is a poset isomorphism, then $\phi(x+\rho)=x'+\rho.$
\end{lem}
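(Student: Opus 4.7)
The plan is to identify $x+\rho$ intrinsically as the unique element of $[x,y]$ of length $\ell(x)+4$ that dominates every atom of $[x,y]$, and then observe that this property is transported by $\phi$.

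Step 1 (existence of the property at $x+\rho$). By Lemma~\ref{lem: thick implies full}, thickness gives $x+\rho \in [x,y]$ and guarantees $[x,y]$ is full. Using Remark~\ref{rem: facts of theta-partition}\ref{rem-item: difference of thetas as combination of fundamental weights}, $x+\rho$ equals $\theta(m+1,n+1)$ or $\theta^s(m+1,n+1)$ when $x=\theta(m,n)$ or $x=\theta^s(m,n)$ respectively, so $\ell(x,x+\rho)=4$. The geometric computation used inside the proof of Lemma~\ref{lem: descripcion dih x} shows $\operatorname{U}(x)\subset \CC_{x+\rho}$; combined with Corollary~\ref{cor: characterization of the Bruhat order} this gives $\operatorname{U}(x)\subset[x,x+\rho]$. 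Since $[x,y]$ is full, $\operatorname{U}(x)$ is exactly the set of atoms of $[x,y]$.

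Step 2 (uniqueness). Suppose $z\in[x,y]$ satisfies $\ell(x,z)=4$ and $\operatorname{U}(x)\subseteq[x,z]$. By Corollary~\ref{cor: characterization of the Bruhat order} this is equivalent to the geometric inclusion $\operatorname{U}(x)\subseteq\CC_z$. Thickness forces $x\in D+\rho$ (Lemma~\ref{lem: thick implies full}), so $|\operatorname{U}(x)|$ is maximal ($6$ if $x\in\Theta$, $5$ if $x\in\Theta^s$). Using the explicit description of $\operatorname{U}(x)$ produced in the proof of Lemma~\ref{lem: cardinality of upper covers}, one checks that $\operatorname{U}(x)$ meets every $W_f$-zone $F_w$ around $x$, so $\CC_z$ must enclose $x$ from all six sides. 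Combining this with the rigid shape of $\CC_z$ given by Corollary~\ref{cor: characterization of the Bruhat order} and the length constraint $\ell(z)=\ell(x+\rho)$, a finite case analysis (by the $X\Theta$-partition of $z$) forces $z=x+\rho$.

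Step 3 (conclusion). By Lemma~\ref{lem: betti invariante}, $\phi$ preserves the rank function, so $\ell(x',\phi(x+\rho))=4$. Since $[x,y]$ and $[x',y']$ are full, $\phi$ restricts to a bijection between atoms, whence $\phi(\operatorname{U}(x))=\operatorname{U}(x')$. Therefore $\phi(x+\rho)\in[x',y']$ has length $\ell(x')+4$ and lies above every element of $\operatorname{U}(x')$. Applying Step~2 to the interval $[x',y']$ yields $\phi(x+\rho)=x'+\rho$.

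The main obstacle is the uniqueness claim in Step~2. The difficulty is not conceptual but combinatorial: one must check case by case, using the explicit lists of $\operatorname{U}(x)$ and the explicit form of $\CC_z$ from Corollary~\ref{cor: characterization of the Bruhat order}, that no $z\neq x+\rho$ of length $\ell(x)+4$ has a polygon $\CC_z$ containing the whole configuration $\operatorname{U}(x)$. The thickness hypothesis plays the role of ruling out degenerate boundary cases by ensuring $x\in D+\rho$.
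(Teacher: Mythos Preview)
Your Step~2 does not go through: the uniqueness claim is false. In the paper's proof of this lemma, the set $Z_{12}(x)$ of elements $z\in[x,y]$ with $\ell(x,z)=4$ and $\operatorname{U}(x)\subset[x,z]$ is computed explicitly and has \emph{three} members, not one. For instance, when $x=\theta(m,n)$ with $m,n\geq 1$ (which thickness guarantees via Lemma~\ref{lem: thick implies full}), one finds
\[
Z_{12}(x)=\{\theta(m+1,n+1),\ \delta(s_0\theta^s(m,n+1)),\ \delta^2(s_0\theta^s(m+1,n))\},
\]
and a parallel list holds for $x=\theta^s(m,n)$. Your geometric heuristic (that $\operatorname{U}(x)$ spreads across all zones, so $\CC_z$ must ``enclose $x$ from all six sides'') is correct, but it does not force $z=x+\rho$: the two non-dominant candidates above also have polygons $\CC_z$ that enclose all of $\operatorname{U}(x)$. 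The ``finite case analysis'' you promise would therefore produce three candidates, not one.

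The paper closes this gap by adding a third condition that is still a poset invariant: the number $B_2(x,z)$ of $2$-crown subintervals of $[x,z]$ must equal $2$. To verify that exactly $x+\rho$ satisfies this among the three candidates, the paper computes the relevant Kazhdan--Lusztig polynomials $P_{x,z}$ (using the closed formulas from \cite{LP23}) and reads off $B_2(x,z)$ via the identity $P_{x,z}(q)=1+\bigl(|\operatorname{L}(x)|+\tfrac{1}{2}B_2(x,z)-4\bigr)q$ for intervals of length $4$. Thus the missing ingredient in your argument is precisely this extra invariant; conditions (1) and (2) alone are insufficient.
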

\begin{proof}
    Let $Z(x)$ be the set of elements $z\in [x,y]$ such that 
    \begin{enumerate}
        \item $\ell(x,z)=4$.\label{item: condition 1}
        \item $\operatorname{U}(x)\subset [x,z]$.\label{item: condition 2}
        \item $B_2(x,z):=|\{[a,b]\subset [x,z] \mid \mbox{$[a,b]$ is a $2$-crown}\}|=2$.\label{item: condition 3}
    \end{enumerate}
    We define $Z(x')\subset [x',y']$ in a similar manner.
    Let $Z_{12}(x)$ be the set of elements that satisfy conditions (\ref{item: condition 1}) and (\ref{item: condition 2}).
    In the proof of Lemma~\ref{lem: cardinality of upper covers}, the elements of~$\operatorname{U}(x)$ are given explicitly, depending on whether $x$ is an element of the form $\theta(m,n)$ or $\theta^s(m,n)$, where \( m, n \) are positive integers.
    The cases ($m=0,n>0$ and $m=0,n=0$) are not explicitly written there, but they are easy to compute.
    From this small set of elements, one can check which ones have the correct length.
    Thus, we have: 
    \begin{itemize}
        \item If $x=\theta(m,n)$, 
        \begin{equation*}
            Z_{12}(x) = \{\theta(m+1,n+1),\delta(s_0\theta^s(m,n+1)),\delta^2((s_0\theta^s(m+1,n))\}.
        \end{equation*}
        \item If $x=\theta^s(m,n)$,
        \begin{equation*}
            Z_{12}(x) = \{\theta^s(m+1,n+1),\delta(s_0\theta(m+1,n+1)),\delta^2(s_0\theta(m+1,n+1))\}.
        \end{equation*}
    \end{itemize}
    In~\cite{LP23}, explicit formulas are given for the Kazhdan--Lusztig polynomials $P_{w,w'}(q)$, for any pair of elements $w,w'\in W.$ We can apply this to  obtain the following formulas: 
    \begin{align}
        P_{\theta^s(m,n),\theta^s(m+1,n+1)}&=1+2q,\\
        P_{\theta^s(m,n),\delta(s_0\theta(m+1,n+1))}&=1+q,\\
        P_{\theta^s(m,n),\delta^2(s_0\theta(m+1,n+1))}&=1+q,
    \end{align}
    and for the specific cases.
    \begin{align} 
        \mbox{If $m>0,n>0$:}& & \mbox{If $m>0,n=0$:}  \\
        P_{\theta(m,n),\theta(m+1,n+1)}&=1+q,& P_{\theta(m,0),\theta(m+1,1)}&=1+q,\\
        P_{\theta(m,n),\delta(s_0\theta^s(m,n+1))}&=1+2q,& 
        P_{\theta(m,n),\delta(s_0\theta^s(m,1))}&=1+2q,\\
        P_{\theta(m,n),\delta^2(s_0\theta^s(m+1,n))}&=1+2q.&
        P_{\theta(m,0),\delta^2(s_0\theta^s(m+1,0))}&=1+q.
    \end{align}
    Similarly, the remaining cases ($m=0,n=0$ or $m=0, n>0$) can also be addressed, but we will omit the formulas.
    By~\cite[Exercise 5.8]{BB05} we have
    \begin{equation}\label{crown}
        P_{w,w'}(q)=1+\left(|L(w)|+\frac{B_2(w,w')}{2}-4\right)q.
    \end{equation}
    From Equation \eqref{crown}, Lemma~\ref{lem: cardinality of lower covers}, and the explicit Kazhdan--Lusztig polynomials calculated above, we obtain
    \begin{align}\label{eq: conjunto Z0}
        Z(x)&=
        \begin{cases}
            \{\theta(m+1,n+1)\}& \mbox{if $x=\theta(m,n)$},\\
            \{\theta^s(m+1,n+1)\} & \mbox{if $x=\theta^s(m,n)$}.
        \end{cases}\\
        &= \{x+\rho\}.
    \end{align}
    By the same argument, $Z(x')=\{x'+\rho\}$.
    Since the defining properties of~$Z(x)$ are preserved by any poset isomorphism, we have $\phi(Z(x))\subset Z(x')$, so $\phi(x+\rho)=x'+\rho$.
\end{proof}
For the rest of the section, we use the following shortcuts for $1\leq i\leq 2$:
\begin{align*}
    z_i(x)&\coloneqq z^{x,y}_i(x) &v_i(x)&\coloneqq v^{x,y}_i(x)\\
    z_i(y)&\coloneqq z^{x,y}_i(y) &v_i(y)&\coloneqq v^{x,y}_i(y).
\end{align*}
The same notations apply if we replace $x,y$ with $x',y'$.
\begin{lem}\label{lem: z_i(x)<z_i(y) implica hex}
    Suppose that $z_{i_0}(x)\leq z_{i_0}(y) $, for some $i_0\in\{1,2\}$.
    Then $\Pgn_{x,y}$ is a hexagon.
\end{lem}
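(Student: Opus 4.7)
The plan is to prove the contrapositive: suppose $\Pgn_{x,y}$ is not a hexagon; I will show $z_i(x) \not\leq z_i(y)$ for every $i \in \{1,2\}$. Let $X := \cen(x)$, $Y := \cen(y)$, $a_k := (\varpi_k, Y-X)$ for $k \in \{1,2\}$ (both nonnegative by Corollary~\ref{cor: equivalence of being in the hexagon in the gray region} applied to $x \leq y$), and let $r_{x,i}, r_{y,i} > 0$ be defined by $\cen(z_i(x)) = X + r_{x,i}\alpha_i$ and $\cen(z_i(y)) = Y - r_{y,i}\alpha_i$. A one-line calculation gives
\begin{equation*}
    \cen(z_i(y)) - \cen(z_i(x)) \;=\; a_1\alpha_1 + a_2\alpha_2 - (r_{x,i}+r_{y,i})\alpha_i,
\end{equation*}
and since both $z_i(x), z_i(y) \in F_\id$, Corollary~\ref{cor: equivalence of being in the hexagon in the gray region} collapses $z_i(x) \leq z_i(y)$ to the single scalar test $r_{x,i}+r_{y,i} \leq a_i$ (the $\varpi_j$-component for $j \neq i$ is simply $a_j \geq 0$). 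The goal is therefore the strict reverse inequality $r_{x,i}+r_{y,i} > a_i$ in every non-hexagon case.

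By hypothesis there exist distinct indices $\{p, q\} = \{1,2\}$ with $v_p(x) = v_q(y)$; this common point equals $X + a_p\alpha_p = Y - a_q\alpha_q$. Proposition~\ref{Prop: vert are det by length} applied to these coincident vertices forces $\ell(z_p(x), z_q(y)) \in \{0, 1\}$ and, crucially, rules out the edge-midpoint case: the vertex lies in $C(z_p(x)) \cup \mathrm{V}(z_p(x))$, and symmetrically in $C(z_q(y)) \cup \mathrm{V}(z_q(y))$. Because the center-to-vertex offsets of an alcove lying in the $\alpha_k$ direction equal exactly $\pm \alpha_k/3$, and because $r_{x,p} \leq a_p$ (the center of $z_p(x)$ lies on the segment from $X$ to $v_p(x)$), one reads off
\begin{equation*}
    r_{x,p} \in \left\{a_p,\; a_p - \tfrac{1}{3}\right\}, \qquad r_{y,q} \in \left\{a_q,\; a_q - \tfrac{1}{3}\right\}.
\end{equation*}

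Now fix $i \in \{1,2\}$: either $i = p$ or $i = q$. In the first case $r_{x,i} \geq a_i - 1/3$ by the above, and thickness combined with Lemma~\ref{lem: if distance is 4 then difference is more than 2 times the simple root} gives $r_{y,i} \geq 2$; in the second case the roles of $r_{x,i}$ and $r_{y,i}$ are exchanged. Either way
\begin{equation*}
    r_{x,i} + r_{y,i} \;\geq\; (a_i - \tfrac{1}{3}) + 2 \;=\; a_i + \tfrac{5}{3} \;>\; a_i,
\end{equation*}
so the $\varpi_i$-component of $\cen(z_i(y)) - \cen(z_i(x))$ is strictly negative and $z_i(x) \not\leq z_i(y)$, completing the contrapositive. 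The main subtlety is the midpoint exclusion in the middle paragraph: without it one would only obtain $r_{x,p} \geq a_p - 1/6$, which still suffices numerically but relies on a more delicate orientation-by-orientation case analysis than the clean statement of Proposition~\ref{Prop: vert are det by length} provides in the $\ell \leq 1$ regime.
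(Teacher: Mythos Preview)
Your proof is correct and follows essentially the same approach as the paper's. Both arguments hinge on the same two ingredients: when $v_p(x)=v_q(y)$, the common vertex is either the center of $z_p(x)$ or an alcove-vertex of it (so the offset along $\alpha_p$ is at most $1/3$), and thickness forces the complementary $r$-value to be at least $2$, making the $\varpi_i$-coordinate of $\cen(z_i(y))-\cen(z_i(x))$ negative. The paper argues the direct form (assume $z_1(x)\leq z_1(y)$, derive a contradiction from each possible coincidence) and cites Remark~\ref{rem: geometry-of-vertex-of-Pgn} for the vertex position, whereas you run the contrapositive with the symmetric scalar bookkeeping $r_{x,i},r_{y,i},a_i$ and cite Proposition~\ref{Prop: vert are det by length} instead; these are interchangeable.
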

\begin{proof}
    Without loss of generality, we assume $z_1(x)\leq z_1(y)$.
    We need to prove that $v_2(y)\neq v_1(x)$ and $v_1(y)\neq v_2(x)$.
    
    Suppose that $v_2(y)=v_1(x)$.
    According to Remark~\ref{rem: geometry-of-vertex-of-Pgn},
    $v_1(x)$ is either the center of an alcove or a vertex of an alcove.
    In the first case, we have $v_1(x)=\cen(z_1(x))$, while in the second case, it is straightforward to see that
    $v_1(x)=\frac{1}{3}\alpha_1+\cen(z_1(x))$.
    In any case, $(\varpi_1,v_2(y)-z_1(x))\leq 1/3$.
    As $v_2(y)\in y+\mathbbm{R}\alpha_2$, we have $(\varpi_1, y)=(\varpi_1,v_2(y))$.
    By thickness of~$[x,y]$, there is $t\geq 2$ such that $z_1(y)=y-t\alpha_1$.
    We have,
    \begin{equation*}
        (\varpi_1,z_1(y)-z_1(x))=(\varpi_1,v_2(y)-t\alpha_1-z_1(x))\leq \frac{1}{3}-2<0.
    \end{equation*}
    By Corollary~\ref{cor: equivalence of being in the hexagon in the gray region} we get $z_1(x)\not\leq z_1(y)$, a contradiction.
    
    Suppose \( v_1(y) = v_2(x) \).
    The reasoning here is similar, so we omit the details.
    We have either \( v_2(x) = \cen(z_1(y)) - \frac{1}{3}\alpha_1 \) or \( v_2(x) = \cen(z_1(y)) \).
    In either case, \linebreak \( (\varpi_1, z_1(y) - v_2(x)) \leq \frac{1}{3} \) and \( (\varpi_1, v_2(x)) = (\varpi_1, x) \).
    Again, by thickness of~$[x,y]$,  we deduce \( (\varpi_1, x- z_1(x) ) \leq -2 \).
    Combining these inequalities yields \( (\varpi_1, z_1(y) - z_1(x)) \leq -\frac{5}{3} < 0 \), which, once again, leads to a contradiction.
\end{proof}
In the following three lemmas, we explore the subintervals $[x+\rho, y]\subset [x,y]$ and $[x'+\rho, y']\subset [x',y']$.
For convenience, we replace the superscripts $(-)^{x+\rho,y}$ and $(-)^{x'+\rho,y'}$ by $(-)^{\rho}$.
That is, for $1\leq i\leq 2$:
\begin{align*}
    z^{\rho}_i(x+\rho)&\coloneqq z^{x+\rho,y}_i(x+\rho) &v^{\rho}_i(x+\rho)&\coloneqq v^{x+\rho,y}_i(x+\rho)\\
    z^{\rho}_i(y)&\coloneqq z^{x+\rho,y}_i(y) &v^{\rho}_i(y)&\coloneqq v^{x+\rho,y}_i(y).
\end{align*}
The same notations apply if we replace $x,y$ with $x',y'$.
\begin{lem}\label{lem: z_i(x)<z_i(x+rho)}
    Let $\{i,j\}=\{1,2\}$, then $z_i(x)<z_i^{\rho}(x+\rho)$ and $z_i(x)\not\le z_{j}^{\rho}(x+\rho)$.
\end{lem}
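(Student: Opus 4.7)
The plan is to reduce both statements to the description of the Bruhat order on $F_{\id}$ provided by Corollary~\ref{cor: equivalence of being in the hexagon in the gray region}: for $z,z'\in F_{\id}$, the relation $z\leq z'$ holds if and only if the two pairings $(\varpi_k,\cen(z')-\cen(z))$ for $k\in\{1,2\}$ are non-negative. Since $z_i(x)\in\Sgm(v_i(x),x)$, write $\cen(z_i(x))=\cen(x)+t_i\alpha_i$, and similarly $\cen(z_i^\rho(x+\rho))=\cen(x+\rho)+t_i^\rho\alpha_i$ and $\cen(z_j^\rho(x+\rho))=\cen(x+\rho)+t_j^\rho\alpha_j$ with $t_i,t_i^\rho,t_j^\rho\geq 0$. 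Thickness of $[x,y]$ together with Lemma~\ref{lem: if distance is 4 then difference is more than 2 times the simple root} gives $t_i\geq 2$, and Lemma~\ref{lem: thick implies full} ensures $x+\rho\in[x,y]$ so that all these elements are well-defined. Using $\rho=\alpha_i+\alpha_j$ together with the standard pairings $(\varpi_k,\alpha_\ell)$ in type~$A_2$ (equal to $1$ when $k=\ell$ and to $0$ otherwise), the two center-differences unfold as
\begin{align*}
\cen(z_i^\rho(x+\rho))-\cen(z_i(x)) &= \alpha_j + (t_i^\rho-t_i+1)\alpha_i,\\
\cen(z_j^\rho(x+\rho))-\cen(z_i(x)) &= (1-t_i)\alpha_i + (1+t_j^\rho)\alpha_j.
\end{align*}

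The non-comparability $z_i(x)\not\leq z_j^\rho(x+\rho)$ is then immediate: pairing the second line with $\varpi_i$ gives $1-t_i\leq -1<0$ by thickness, which violates the criterion of Corollary~\ref{cor: equivalence of being in the hexagon in the gray region}. For the first inequality, pairing the top line with $\varpi_j$ gives $1$ and with $\varpi_i$ gives $t_i^\rho-t_i+1$, so the whole question reduces to the discrete bound $t_i^\rho\geq t_i-1$. Once that is in hand, strictness of $z_i(x)<z_i^\rho(x+\rho)$ is automatic since the $\alpha_j$-component of the difference equals $1\neq 0$, so the two centers cannot coincide.

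To prove $t_i^\rho\geq t_i-1$ I would first establish the continuous version $s^\rho\geq s-1$, where $s$ is the $\alpha_i$-distance from $\cen(x)$ to $v_i(x)$ and $s^\rho$ the analogous distance from $\cen(x+\rho)$ to $v_i^\rho(x+\rho)$. Each of these is the minimum of the bound imposed by the $y$-boundary (namely $a_i=(\varpi_i,y-x)$ and $a_i-1$ respectively) and the bound imposed by the wall $H_{\alpha_j,-1}$ (namely $1+(\alpha_j,\cen(x))$ and $1+(\alpha_j,\cen(x))+1$, using $(\alpha_j,\rho)=1$). A short case split according to which constraint is active at $v_i(x)$ yields $s^\rho\geq s-1$ in every case. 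The passage from the continuous to the discrete bound uses that $\rho$ lies in the root lattice, so translation by $\rho$ preserves alcove orientation (Lemma~\ref{lem: length determines orientation}); hence the set of $\alpha_i$-displacements at which alcove centers occur along the ray is the same whether measured from $\cen(x)$ or from $\cen(x+\rho)$. Since $t_i\geq 2$, the value $t_i-1$ belongs to this discrete set and is bounded above by $s^\rho$, forcing $t_i^\rho\geq t_i-1$.

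The main obstacle I expect is the wall sub-case: when $v_i(x)$ sits on $\partial F_{\id}$, the vertex $v_i^\rho(x+\rho)$ may instead be cut out by the $y$-boundary, so $s^\rho-s$ can vary between $-1$ and $+1$ depending on the relative sizes of $a_i-1$ and $s+1$, and one must verify that in each of these subcases the realized alcove-center displacement $t_i^\rho$ really attains the promised bound rather than being pushed down by a loss of one third along the ray. All remaining steps amount to elementary scalar pairings.
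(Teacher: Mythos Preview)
Your proof is correct and follows the paper's approach. The non-comparability argument---pairing with $\varpi_i$ and invoking thickness to get $1-t_i\leq -1<0$---is exactly the paper's proof of the second part. For the first part the paper says only that it ``is not hard to deduce using Lemma~\ref{lem: local bruhat order}''; your argument supplies the details the paper omits, by establishing the continuous bound $s^\rho\geq s-1$ and then discretizing.

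The wall sub-case you flag as a potential obstacle is not a genuine difficulty. In all cases the bound $s^\rho\geq s-1$ holds: if the $y$-constraint is active at $v_i(x)$ then $s^\rho=s-1$; if the wall is active then the wall bound increases by $(\alpha_j,\rho)=1$, so either $s^\rho=s+1$ or (when the active constraint switches) $s^\rho=a_i-1\in(s-1,s+1]$. Your discretization step is also sound, though it can be stated more directly: since $\rho\in\Lambda$ and $\cen(W)+\Lambda=\cen(W)$, the set $T=\{t\geq 0:\cen(x)+t\alpha_i\in\cen(W)\}$ coincides with $\{t\geq 0:\cen(x+\rho)+t\alpha_i\in\cen(W)\}$; thickness gives $t_i\geq 2$, hence $t_i-1\in T$, and $t_i-1\leq s-1\leq s^\rho$ forces $t_i^\rho\geq t_i-1$. (Citing Lemma~\ref{lem: length determines orientation} for this is valid but roundabout---the translation-invariance of $\cen(W)$ under $\Lambda$ is the cleaner reason.)
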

\begin{proof}
    It is not hard to deduce the first part using Lemma~\ref{lem: local bruhat order}.
    
    We now prove the second part.
    By thickness we have that $z_i(x)=x+t\alpha_i$ for $t\geq 2$ leads to the inequality $(\varpi_i, z_i(x)-x)\geq 2$.
    Note that $(\varpi_i, x+\rho)=(\varpi_i, z_j^\rho(x+\rho))$ and $(\varpi_i, x+\rho)=1+(\varpi_i, x)$.
    So 
    \begin{equation*}
        (\varpi_i,z_i(x))\geq (\varpi_i,x)+2= (\varpi_i, x+\rho)+1>(\varpi_i, z_j^\rho(x+\rho)).
    \end{equation*} 
    By Corollary~\ref{cor: equivalence of being in the hexagon in the gray region}, we have $z_i(x)\not\le z_{j}^{\rho}(x+\rho)$.
\end{proof}
\begin{lem}\label{lem: z_i(y) in [x+rho,y]}
    Suppose that $z_{i_0}(x)\leq z_{i_0}(y) $, for some $i_0\in\{1,2\}$.
    Then $z_i^{\rho}(y)=z_i(y)$ for $1\leq i \leq 2$.
\end{lem}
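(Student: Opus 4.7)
The plan is to reduce the statement \(z^{\rho}_i(y)=z_i(y)\) to the Bruhat inequality \(z_i(y)\geq x+\rho\). Once this is established, the conclusion is automatic: \(z_i(y)\in[x+\rho,y]\subset[x,y]\), and since \(z_i(y)\) is by definition the alcove in \([x,y]\) farthest from \(y\) on the ray \(\operatorname{Sgm}(y,v_i(y))\), it remains the farthest inside the smaller interval \([x+\rho,y]\). Because \(v^{\rho}_i(y)\) and \(v_i(y)\) sit on the same ray and \(\Pgn_{x+\rho,y}\subset \Pgn_{x,y}\), one concludes \(z^{\rho}_i(y)=z_i(y)\).

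To prove \(z_i(y)\geq x+\rho\), I plan to apply Corollary~\ref{cor: equivalence of being in the hexagon in the gray region}: write \(z_i(y)=y-s_i\alpha_i\) and \(a_k=(\varpi_k,y-x)\), and use \((\varpi_k,\rho)=1\) together with \((\varpi_k,\alpha_j)=\delta_{kj}\) to get
\[
(\varpi_k,z_i(y)-x-\rho)=a_k-s_i\delta_{ki}-1.
\]
For \(k\neq i\) this equals \(a_k-1\), and it is non-negative because Lemma~\ref{lem: thick implies full} (applied to the thick interval \([x,y]\)) yields \(x+\rho\leq y\), hence \(a_k\geq 1\). The entire problem therefore reduces to verifying \(a_i\geq s_i+1\) for \(i=1,2\).

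The hypothesis is used through Lemma~\ref{lem: z_i(x)<z_i(y) implica hex}, which ensures that \(\Pgn_{x,y}\) is a hexagon; in particular both vertices \(v_1(y),v_2(y)\) lie on \(\partial F_{\id}\), which by Lemma~\ref{lem: vertex in the wall and vertex outside the wall} puts them on the walls \(H_{\alpha_j,-1}\). For the index \(i=i_0\), the inequality \(a_i\geq s_i+1\) is painless: the hypothesis \(z_{i_0}(x)\leq z_{i_0}(y)\) expanded by Corollary~\ref{cor: equivalence of being in the hexagon in the gray region} reads \(a_{i_0}\geq s_{i_0}+t_{i_0}\), and thickness forces \(t_{i_0}\geq 2\), giving \(a_{i_0}\geq s_{i_0}+2\).

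The main obstacle is the remaining index \(j\neq i_0\): here the hypothesis does not directly bound \(a_j-s_j\). My plan is to combine three ingredients. First, the hexagon condition is equivalent to \(a_j>c_j\), where \(c_j=((\alpha_j,y)+1)/2\) is the \(\alpha_j\)-parameter at which the ray from \(y\) crosses \(H_{\alpha_j,-1}\). Second, the description of \(v_j(y)\in\partial(z_j(y)A_+)\) in Proposition~\ref{Prop: vert are det by length}, applied in the hexagon case, writes \(s_j=c_j-\epsilon_j\) with \(\epsilon_j\in\{0,\tfrac{1}{3},\tfrac{2}{3}\}\) determined by the \(X\Theta\)-type of \(z_j(y)\) and by whether \(v_j(y)\) is a vertex or the midpoint of an edge. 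Third, the rational values of \(a_j\) lie in \(\tfrac{1}{3}\mathbbm{Z}\) (explicit from Remark~\ref{rem: facts of theta-partition} and the coordinates of \(x,y\)). A case analysis according to the \(X\Theta\)-partition classes of \(x\) and \(y\) then upgrades \(a_j>c_j\) to \(a_j\geq c_j+1\geq s_j+1\), using in an essential way that the hypothesis \(z_{i_0}(x)\leq z_{i_0}(y)\) rules out the marginal hexagon configurations in which the gap \(a_j-c_j\) could be smaller than one. The bookkeeping in this case split is the only technically painful point of the proof; everything else is immediate from the machinery already developed in Sections~\ref{section: prelim}--\ref{section: sides}.
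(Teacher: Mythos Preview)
Your reduction is sound: showing \(z_i(y)\geq x+\rho\) does imply \(z_i^{\rho}(y)=z_i(y)\), since then \(\cen(z_i(y))\in\Pgn_{x+\rho,y}\) lies on the ray \(\cen(y)-\mathbbm{R}_{\geq 0}\alpha_i\), hence in \(\Sgm(y,v_i^{\rho}(y))\subset\Sgm(y,v_i(y))\), and maximality in the larger segment forces maximality in the smaller one. Moreover, your treatment of the index \(i=i_0\) is correct and is actually cleaner than the paper's Part~A, which argues by contradiction via Lemma~\ref{lem: z_i(x)<z_i(x+rho)}.

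The gap is in the index \(j\neq i_0\). Your plan is to upgrade the hexagon inequality \(a_j>c_j\) to \(a_j\geq c_j+1\) using that \(a_j\in\tfrac{1}{3}\mathbbm{Z}\). But \(c_j=\tfrac{1}{2}((\alpha_j,y)+1)\) lies only in \(\tfrac{1}{6}\mathbbm{Z}\), and the hexagon condition by itself yields at best \(a_j\geq c_j+\tfrac{1}{6}\); this cannot be pushed to a gap of size \(1\) by granularity alone. The phrase ``the hypothesis rules out the marginal hexagon configurations'' is precisely the missing substance: you have not explained any mechanism by which \(z_{i_0}(x)\leq z_{i_0}(y)\) enters the argument for the index \(j\).

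In the paper this is exactly the role of Claim~\ref{claim: distance in hexagon} in Part~B: the hypothesis, combined with thickness, gives the quantitative bound \(v_2(y)-v_1(x)=t\varpi_1\) with \(t\geq\tfrac{5}{2}\). In your coordinates this yields \(a_j-s_j=t/3+\epsilon_j\geq \tfrac{5}{6}+\epsilon_j\), and one then checks \(\epsilon_j\geq\tfrac{1}{6}\) from the possible positions of \(v_j(y)\) on \(\partial(z_j(y)A_+)\). Without an inequality of this strength, your case analysis has no input to work with. So to complete your approach for \(j\), you would need to prove something equivalent to Claim~\ref{claim: distance in hexagon}; once that is in hand, your direct route and the paper's geometric argument converge.
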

\begin{proof}
    Without loss of generality, we assume $z_1(x)\leq z_1(y)$.
    By Lemma~\ref{lem: z_i(x)<z_i(y) implica hex}, $\Pgn_{x,y}$ is a hexagon.
    Since $\Pgn_{x+\rho,y}\subset \Pgn_{x,y}$, we have $z_i^{\rho}(y)\in \Sgm(y,z_i(y))$.
    By Lemma~\ref{lem: local bruhat order}, we have $z_i^{\rho}(y)\in [z_i(y),y]$ for $1\leq i \leq 2$.
    \newline
    
    \noindent\textbf{Part A.} We prove that $z_1^{\rho}(y)= z_1(y)$.
    Suppose by contradiction that $z_1^{\rho}(y)\neq z_1(y)$.
    Then $z_1(y)<z_1^{\rho}(y)$, so $v_1^{\rho}(y)=v_2^{\rho}(x+\rho)\notin \partial F_\id$.
    By Remark~\ref{rem: geometry-of-vertex-of-Pgn}(\ref{rem: vertex of Pgn}), we have that \( v_2^\rho(x + \rho) \) is either the center of \( z_2^\rho(x+\rho) \) or a vertex of \( z_2^\rho(x+\rho) \).
    \begin{enumerate}
        \item In the first case, we have that $z_1^{\rho}(y) = z_2^{\rho}(x+\rho)$.
        This implies $z_1(x)\leq z_1(y)<  z_2^{\rho}(x+\rho)$, which is not possible by Lemma~\ref{lem: z_i(x)<z_i(x+rho)}.
        \item The remaining case corresponds to the second case of Proposition~\ref{prop: length de adjacencia implica posicion}, so we have: 
        \begin{itemize}
            \item $\ell(z_2^{\rho}(x+\rho), z_1^{\rho}(y))=1$ and $z_2^{\rho}(x+\rho)$ and $z_1^{\rho}(y)$ share an horizontal edge.
            \item $z_2^{\rho}(x+\rho)+\frac{1}{3}\alpha_2=v_1^{\rho}(y)$, so $(\varpi_1, z_2^{\rho}(x+\rho)-v_1^{\rho}(y))=0$.
        \end{itemize}
        We also have $v_1^{\rho}(y)-t\alpha_1=\cen(z_1(y))$, where $t>0$.
        Hence $(\varpi_1, z_1(y))\leq (\varpi_1,v_1^{\rho}(y))=(\varpi_1,z_2^{\rho}(x+\rho))$ by the second bullet above.
        Since $\Pgn_{x,y}$ is a hexagon, there is $k\geq 5$ such that $z_1(x)=\mathrm{\mathbf{x}}_k$.
        By Corollary~\ref{cor: equivalence of being in the hexagon in the gray region}, $0\leq (\varpi_1, z_1(y)-\mathrm{\mathbf{x}}_k)\leq (\varpi_1, z_2^{\rho}(x+\rho)-\mathrm{\mathbf{x}}_k)$.
        It is not hard to check that if $z\in F_{\id}$ and $0\leq (\varpi_1, z-\mathrm{\mathbf{x}}_k)$ then $0\leq (\varpi_2, z-\mathrm{\mathbf{x}}_k)$.
        Thus we have that  $0\leq (\varpi_2, z_2^{\rho}(x+\rho)-\mathrm{\mathbf{x}}_k)$ and by Corollary~\ref{cor: equivalence of being in the hexagon in the gray region} we have that $z_1(x)\in \CC_{z_2^{\rho}(x+\rho)}$.
        By Corollary~\ref{cor: characterization of the Bruhat order}, $z_1(x)\leq z_2^{\rho}(x+\rho)$ contradicting again Lemma~\ref{lem: z_i(x)<z_i(x+rho)}.
    \end{enumerate}
  
    \noindent \textbf{Part B.} We prove that $z_2^{\rho}(y)=z_2(y)$.
    We base the proof on the following geometric observation: 
    Let $u_1(x)$ and $u_1(x+\rho)$ be the exterior vertices of~$\mathcal{St}(x)$ and $\mathcal{St}(x+\rho)$, which are on the left boundary of~$F_\id$.
    It is easy to see that $u_1(x)+3\varpi_1=u_1(x+\rho)$.
    Since $[x,y]$ is a hexagon interval, $u_1(x)=v_1(x)$, so \begin{equation}\label{u1}
        v_1(x)+3\varpi_1=u_1(x+\rho).
    \end{equation}    
    \begin{claim}\label{claim: distance in hexagon}
        $\operatorname{dist}(v_2(y),v_1(x))\geq \frac{5}{2}\sqrt{3}$.
    \end{claim}
    \begin{proof}
        Since $\Pgn_{x,y}$ is a hexagon, there is $t>0$ such that $v_2(y)-v_1(x)=t\varpi_1$.
        From $v_2(y)\in y+\mathbbm{R}\alpha_2$, we get $(\varpi_1, y)=(\varpi_1,v_2(y))$.
        By thickness of~$[x,y]$, we have $(\varpi_1, z_1(y))\leq (\varpi_1, y)-2$.
        At the beginning of the proof, we assumed $z_1(x)\leq z_1(y)$, so by Corollary~\ref{cor: equivalence of being in the hexagon in the gray region}, $(\varpi_1, z_1(y)-z_1(x))\geq 0$.
        Then
        \begin{equation*}
            (\varpi_1, z_1(x))\leq (\varpi_1, v_2(y))-2=(\varpi_1, v_1(x))+t(\varpi_1,\varpi_1)-2.
        \end{equation*}
        Since $v_1(x)=z_1(x)+\frac{1}{3}\a_1$, we have $(\varpi_1, z_1(x)-v_1(x))=-1/3$.
        Then
        \begin{equation*}
            \frac{5}{3}=2-\frac{1}{3}\leq (\varpi_1,\varpi_1)t=\frac{2}{3}t,
        \end{equation*}
        or equivalently, $t\geq 5/2$.
    \end{proof}
    We have two scenarios:
    \begin{itemize}
        \item $\operatorname{dist}(v_2(y),v_1(x))=\frac{5}{2}\sqrt{3}$.
        This implies that $v_1(x)+\frac{5}{2} \varpi_1=v_2(y)$.
        In this case $v_2(y)$ must be the midpoint of the left edge of~$z_2(y)$ which must be down-oriented, and since $v_1(x)+3\varpi_1=u_1(x+\rho)$, we have
        $u_1(x+\rho)=v_2(y)+\frac{1}{2}\varpi_1$, so $u_1(x+\rho)$ is the top-left vertex of~$z_2(y)$.
        This implies that 
        \begin{equation*}
            \Sgm(u_1(x+\rho), x+\rho)\cap \Sgm(y, v_2(y))=\{\cen(z_2(y))\}.
        \end{equation*}
        In particular, $\cen(z_2(y))\in \mathrm{V}(\Pgn_{x+\rho,y})$, so
        $v_1^{\rho}(x+\rho)=v_2^{\rho}(y)=\cen(z_2(y))$.
        The second equality implies that $z_2^{\rho}(y)=z_2(y)$.
        \item $\operatorname{dist}(v_2(y),v_1(x))>\frac{5}{2}\sqrt{3}$.
        As $v_2(y)-v_1(x)\in \frac{1}{2}\mathbbm{Z}\varpi_1,$ we have that 
        $v_1(x)+a\varpi_1=v_2(y)$, where $a\geq 3$.
        Furthermore, since $v_1(x)+3\varpi_1= u_1(x+\rho)$, it follows that $u_1(x+\rho)=v_1^{\rho}(x+\rho)\in \partial F_\id$.
        Therefore $v_2^{\rho}(y)\in \partial F_\id$, so  $v_2(y)=v_2^{\rho}(y)$, and we conclude that $z_2(y)=z_2^{\rho}(y)$.\qedhere       
    \end{itemize}
\end{proof}
\begin{lem}\label{lem: v_i(x+rho) =u_i(x+rho)}
    Assume $z_{i_0}(x) \leq z_{i_0}(y)$ for some $i_0\in\{1,2\}.$
    If $1\leq i\leq 2$, then we have that $v_{i}(x)+3\varpi_{i}=u_{i}(x+\rho)$ 
    and  $u_i(x+\rho)$ is a top vertex of~$z_i(x+\rho)$.
    In particular, we have that  
    $z_{i}(x)+3\varpi_{i}=z_{i}^\rho(x+\rho)$.
\end{lem}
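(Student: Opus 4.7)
The hypothesis $z_{i_0}(x)\le z_{i_0}(y)$ forces $\Pgn_{x,y}$ to be a hexagon by Lemma~\ref{lem: z_i(x)<z_i(y) implica hex}, so both $v_i(x)=u_i(x)\in\partial F_\id$ hold for $i\in\{1,2\}$, and Lemma~\ref{lem: z_i(y) in [x+rho,y]} gives $z_i^\rho(y)=z_i(y)$; thus only the $x$-side of $\Pgn_{x+\rho,y}$ differs from that of $\Pgn_{x,y}$. This is the structural setup I would exploit throughout.

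The first identity $v_i(x)+3\varpi_i=u_i(x+\rho)$ is a direct geometric computation. Writing $\{i,j\}=\{1,2\}$, the exterior vertex $u_i(x)$ is the intersection of the ray $\cen(x)+\mathbbm{R}_{\ge 0}\alpha_i$ with the wall $H_{\alpha_j,-1}$, yielding the explicit formula $u_i(x)=\cen(x)+((\alpha_j,\cen(x))+1)\alpha_i$. Since $(\alpha_j,\rho)=1$ and the $A_2$ identity $\rho+\alpha_i=3\varpi_i$ holds, one computes $u_i(x+\rho)=u_i(x)+\rho+\alpha_i=u_i(x)+3\varpi_i$. For $i=1$ this is exactly Equation~(u1) already recorded inside the proof of Lemma~\ref{lem: z_i(y) in [x+rho,y]}; the case $i=2$ is handled symmetrically, e.g.\ by applying the diagram automorphism $\sigma$.

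For the remaining assertions, I would run the same two-case split used in Part~B of the proof of Lemma~\ref{lem: z_i(y) in [x+rho,y]}. Claim~\ref{claim: distance in hexagon} (and its evident analogue with the roles of $1$ and $2$ swapped) gives $\operatorname{dist}(v_j(y),v_i(x))\ge\tfrac{5}{2}\sqrt{3}$. If the inequality is strict, then $u_i(x+\rho)$ lies on the edge of $\Pgn_{x+\rho,y}$ contained in $\partial F_\id$, whence $v_i^\rho(x+\rho)=u_i(x+\rho)$ and this is a top vertex of the down-oriented alcove $z_i^\rho(x+\rho)$. If equality holds, then $v_j(y)$ is the midpoint of the left edge of the down-oriented $z_j(y)$ and $u_i(x+\rho)=v_j(y)+\tfrac12\varpi_i$ is its top corner; since $v_i^\rho(x+\rho)=v_j^\rho(y)=\cen(z_j(y))$, we get $z_i^\rho(x+\rho)=z_j(y)$ and $u_i(x+\rho)$ is a top vertex of it. Finally, Proposition~\ref{Prop: vert are det by length} tells us that $z_i(x)$ is down-oriented with $v_i(x)$ as a top vertex on $\partial F_\id$; because $3\varpi_i=2\alpha_i+\alpha_j$ lies in the coroot lattice, translation by $3\varpi_i$ is orientation-preserving, sends $v_i(x)$ to $u_i(x+\rho)$, and must therefore map the alcove $z_i(x)$ onto the unique down-oriented alcove with the matching top vertex, namely $z_i^\rho(x+\rho)$; this gives $z_i(x)+3\varpi_i=z_i^\rho(x+\rho)$.

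The main obstacle is the equality case: one must verify that the specific corner (left versus right) of the translated $z_i(x)$ is precisely the specific corner of $z_j(y)$ identified with $u_i(x+\rho)$, so that two down-oriented alcoves sharing that vertex actually coincide. This reduces to a direct coordinate calculation once orientation conventions are fixed, but requires careful bookkeeping of the left/right pairing for each $i\in\{1,2\}$; essentially the same bookkeeping is implicit in Part~B of the proof of Lemma~\ref{lem: z_i(y) in [x+rho,y]}, and I would simply transcribe it here for both values of $i$.
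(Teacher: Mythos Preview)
Your proposal is correct and follows essentially the same route as the paper: establish that $\Pgn_{x,y}$ is a hexagon so that $v_i(x)=u_i(x)$, use the identity $\rho+\alpha_i=3\varpi_i$ to get $u_i(x)+3\varpi_i=u_i(x+\rho)$, invoke the distance bound $\operatorname{dist}(v_j(y),v_i(x))\ge\tfrac{5}{2}\sqrt{3}$, and then run the two-case split from Part~B of Lemma~\ref{lem: z_i(y) in [x+rho,y]} to locate $u_i(x+\rho)$ as a top vertex of $z_i^\rho(x+\rho)$; the translation identity then follows.

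One point deserves care. You appeal to Claim~\ref{claim: distance in hexagon} ``and its evident analogue with the roles of $1$ and $2$ swapped''. But the proof of Claim~\ref{claim: distance in hexagon} uses the hypothesis $z_1(x)\le z_1(y)$, and literally swapping indices would require $z_2(x)\le z_2(y)$, which is not part of the assumption (we only have $z_{i_0}(x)\le z_{i_0}(y)$ for a single $i_0$). The paper does not swap the hypothesis; instead, after fixing $i_0=1$ without loss of generality, it derives the second inequality $\operatorname{dist}(v_1(y),v_2(x))\ge\tfrac{5}{2}\sqrt{3}$ from thickness (Equation~\eqref{eq: z_i(*)=*+ta_i}) together with the same relation $z_1(x)\le z_1(y)$ and the incidence $v_1(y)\in v_2(x)+\mathbbm{R}_{\ge 0}\varpi_2$. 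This is still an elementary computation, but it is not the naive symmetric version of the claim. Your final paragraph correctly identifies the left/right vertex bookkeeping in the equality case as the remaining detail; note that in the strict case no ambiguity arises, since both $\cen(z_i(x))$ and $\cen(z_i^\rho(x+\rho))$ lie at distance $\tfrac{1}{3}\alpha_i$ below their respective top vertices $v_i(x)$ and $u_i(x+\rho)$, which pins down the alcove uniquely.
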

\begin{proof} 
    By Lemma~\ref{lem: z_i(x)<z_i(y) implica hex}, we know that $\Pgn_{x,y}$ is an hexagon.
    Consequently, we have that  $v_i(x)=u_i(x)$.
    The first equation follows from the fact that   
    $u_i(x)+3\varpi_i=u_i(x)+\rho+\a_i=u_i(x+\rho)$.
    We now proceed to prove that $u_i(x+\rho)$ is a top vertex of~$z_i^\rho(x+\rho)$.
    Without loss of generality, suppose that $i_0=1$.
    By Claim~\ref{claim: distance in hexagon}, we have that $\operatorname{dist}(v_2(y),v_1(x))\geq\frac{5}{2}\sqrt{3}.$
    By thickness of~$[x,y]$, we have that  
    \begin{equation}\label{eq: z_i(*)=*+ta_i}
        z_i(x)=x+t\a_i,
    \end{equation}
    for some $t\geq 2$.
    By Equation \eqref{eq: z_i(*)=*+ta_i} and since $z_1(x)\leq z_1(y)$ and $v_1(y)\in v_2(x)+\mathbbm{R}_{\geq0}\varpi_2$, it is not hard to check that $\operatorname{dist}(v_1(y),v_2(x))\geq\frac{5}{2}\sqrt{3}.$ 
    By following the argument in the bullets of Part $B$ in the proof of Lemma~\ref{lem: z_i(y) in [x+rho,y]}, we have that $u_i(x+\rho)$ is a top vertex of~$z_i^\rho(x+\rho)$.
    The equation $z_{i}(x)+3\varpi_{i}=z_{i}^\rho(x+\rho)$, follows from the fact that $u_i(x+\rho)$ is a top vertex of~$z_i(x+\rho)$.
    The reason for this is that  since  $u_i(x)$ is a top vertex of~$z_i(x)$, and  $u_i(x)+3\varpi_i=u_i(x+\rho)$, we deduce that $z_i(x)+3\varpi_i=z_i^\rho(x+\rho)$.
\end{proof}
\begin{lem}\label{lem: [x+rho,y] is thick}
    Suppose that $z_{i_0}(x)\leq z_{i_0}(y) $, for some $i_0\in\{1,2\}$.
    Then $[x+\rho,y]$ is thick.
\end{lem}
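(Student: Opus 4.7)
The plan is to deduce the thickness of $[x+\rho,y]$ directly from the two preceding lemmas, together with a short computation relating $3\varpi_i$ and $\rho$ through $\alpha_i$. Concretely, I would verify the four inequalities
\begin{equation*}
    z_i^\rho(x+\rho)-(x+\rho)\in\mathbbm{R}_{\geq 2}\alpha_i,\qquad y-z_i^\rho(y)\in \mathbbm{R}_{\geq 2}\alpha_i,\qquad i\in\{1,2\},
\end{equation*}
which together with Definition~\ref{def: thick interval} give the result.

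First I would handle the ``upper'' side. By Lemma~\ref{lem: z_i(y) in [x+rho,y]}, the hypothesis $z_{i_0}(x)\leq z_{i_0}(y)$ forces $z_i^\rho(y)=z_i(y)$ for $i\in\{1,2\}$. Since $[x,y]$ is thick, $y-z_i(y)\in\mathbbm{R}_{\geq 2}\alpha_i$, and the first pair of inequalities follows immediately.

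Next I would treat the ``lower'' side using Lemma~\ref{lem: v_i(x+rho) =u_i(x+rho)}, which (under the same hypothesis) gives $z_i^\rho(x+\rho)=z_i(x)+3\varpi_i$ for $i\in\{1,2\}$. Write $z_i(x)=x+t_i\alpha_i$ with $t_i\geq 2$, afforded by the thickness of $[x,y]$. The key identity is the root–weight relation in type $A_2$, namely $3\varpi_i-\rho=3\varpi_i-\varpi_1-\varpi_2=2\varpi_i-\varpi_j=\alpha_i$ when $\{i,j\}=\{1,2\}$. Combining these,
\begin{equation*}
    z_i^\rho(x+\rho)-(x+\rho)=\big(x+t_i\alpha_i+3\varpi_i\big)-(x+\rho)=(t_i+1)\alpha_i\in \mathbbm{R}_{\geq 3}\alpha_i,
\end{equation*}
which is strictly stronger than what is needed.

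The only subtle point is confirming that $z_i^\rho(x+\rho)\in F_{\id}$ so that the displacement from $x+\rho$ really is a non-negative multiple of $\alpha_i$ in the sense of Definition~\ref{def: thick interval}; but this is free from the construction, since $v_i^\rho(x+\rho)\in\mathrm{V}(\Pgn_{x+\rho,y})\subset F_{\id}$ and $z_i^\rho(x+\rho)$ lies on the segment from $x+\rho$ to $v_i^\rho(x+\rho)$. No genuine obstacle arises: all the geometric work is already packaged into Lemmas~\ref{lem: z_i(y) in [x+rho,y]} and~\ref{lem: v_i(x+rho) =u_i(x+rho)}, and once those are in hand, the proof reduces to the one-line identity $3\varpi_i-\rho=\alpha_i$.
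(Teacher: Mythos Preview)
Your proof is correct. Both you and the paper invoke the same two preliminary lemmas (Lemma~\ref{lem: z_i(y) in [x+rho,y]} for the ``upper'' side and Lemma~\ref{lem: v_i(x+rho) =u_i(x+rho)} for the ``lower'' side), so the overall architecture is the same. The difference lies in how the lower inequality is extracted. The paper takes the vertex identity $v_i(x)+3\varpi_i=u_i(x+\rho)$ from Lemma~\ref{lem: v_i(x+rho) =u_i(x+rho)} and converts it into a Euclidean distance estimate via Proposition~\ref{prop: length of all sides}, arriving at $\operatorname{dist}(v_i^\rho(x+\rho),x+\rho)\geq 8+\eta(x+\rho)$ through a chain of inequalities. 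You instead use the ``in particular'' clause $z_i^\rho(x+\rho)=z_i(x)+3\varpi_i$ of that same lemma directly, combined with the root--weight identity $3\varpi_i-\rho=\alpha_i$, to compute $z_i^\rho(x+\rho)-(x+\rho)=(t_i+1)\alpha_i$ in one line. Your route avoids the detour through $\operatorname{dist}$ and the constants $\eta,\gamma$, and makes the margin $(t_i+1)\geq 3$ completely transparent; the paper's route, on the other hand, does not rely on the explicit form of $3\varpi_i-\rho$ and stays within the Euclidean framework used elsewhere in Section~\ref{section: sides}.
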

\begin{proof} 
    Let $1\leq i\neq j\leq 2$.
    By Lemma~\ref{lem: z_i(y) in [x+rho,y]}, we have that  $z_i^{\rho}(y)=z_i(y)$.
    As $[x,y]$ is thick, it suffices to show that $\ell(x+\rho,z_i^{\rho}(x+\rho))\geq 4$.
    By Proposition~\ref{prop: length of all sides}, as $\gamma(z_i(x+\rho))>0,$ this inequality is implied by the following inequality.
    \begin{equation}
        \operatorname{dist}(v_i^{\rho}(x+\rho),x+\rho)\geq 6+\eta(x+\rho).
    \end{equation}
    Since $[x,y]$ is thick, it follows that $\ell(x,z_i(x))\geq 4.$
    By Lemma~\ref{lem: z_i(x)<z_i(y) implica hex}, we have that $\Pgn_{x,y}$ is a hexagon.
    Therefore,
    $v_i(x)\in H_{\alpha_{j},-1}$, which means that $v_i(x)$ is a top vertex of~$z_i(x)$, in particular $v_i(x)\neq\cen(z_i(x))$.
    We obtain the following inequality by applying Proposition~\ref{prop: length of all sides}.
    \begin{equation}\label{eq: dis(x,v_i) in proof}
        \operatorname{dist}(v_i(x),x)\geq 6+\eta(x).
    \end{equation}
    Recall that  $u_i(\ast)$ be the exterior vertex of~$\mathcal{St}(\ast)$ in~$H_{\alpha_{j},-1}$, where $*\in\{x,x+\rho\}$.
    Since $v_i(x)\in H_{\alpha_{j},-1}$, it follows that $v_i(x)=u_i(x)$.
    Consequently, $u_i(x)+3\varpi_i$ is a top vertex of~$z_i(x)+3\varpi_i$.
    By Lemma~\ref{lem: v_i(x+rho) =u_i(x+rho)},
    we have that $u_i(x)+3\varpi_i=u_i(x+\rho)$ and $z_i(x)+3\varpi_i=z_i^\rho(x+\rho)$.
    Thus, $u_i(x+\rho)$ is a top vertex of~$z_i^\rho(x+\rho)$, which implies that $u_i(x+\rho)=z_i^\rho(x+\rho)+\frac{1}{3}\a_i$.
    Since $\rho+\alpha_{i}=3\varpi_{i}$, it follows that $v_{i}(x)+\rho+\alpha_{i}=u_{i}(x+\rho)$ which implies that $v_{i}(x)+\rho+\frac{2}{3}\a_i=z_i^\rho(x+\rho)$.
    Therefore, we conclude that 
    \begin{equation*}
        \Sgm(v_i(x)+\rho,x+\rho)\subset \Sgm(z_i^\rho(x+\rho),x+\rho)\subset \Sgm(u_i(x+\rho),x+\rho).
    \end{equation*}
    One has that $\operatorname{dist}(z_i^{\rho}(x+\rho), v_i(x)+\rho)= \operatorname{dist}(0,\frac{2}{3}\alpha_i)= 2$.
    By Lemma~\ref{lem: v_i(x+rho) =u_i(x+rho)}, we can deduce that
    $v_i^\rho(x+\rho)$ is either equal to $z_i^\rho(x+\rho)$ or to $u_i(x+\rho)$.
    Therefore, the result follows from the following inequalities: 
    \begin{align}
        \operatorname{dist}(v_i^{\rho}(x+\rho),x+\rho)&=\operatorname{dist}(v_i^{\rho}(x+\rho), v_i(x)+\rho)+ \operatorname{dist}(v_i(x)+\rho,x+\rho)\\
        &\geq 2+\operatorname{dist}(v_i(x),x)\\
        &\geq 2+(6+\eta(x))\\
        &=8+\eta(x+\rho).
    \end{align}
    Where the inequality in the third line follows from Equation \eqref{eq: dis(x,v_i) in proof} and the final equality is a consequence of the fact that both $x$ and $(x+\rho)$ belong to the same $X\Theta$-partition.
\end{proof}
\begin{lem}\label{lem: 3bullets}
    Let $\{i,j\}=\{1,2\}$.
    Let $\phi\colon [x,y]\to [x',y']$ be a poset isomorphism.
    We have:
    \begin{align*}
        &\begin{array}{l}
            \bullet \quad [x,y] \mbox{ is thick} \\
            \bullet \quad \phi(z_i(x))=z_i(x') \\
            \bullet \quad \phi(z_i(y))=z_j(y') \\
            \bullet \quad z_i(x)\leq z_i(y)       
        \end{array}
        \implies
        \begin{array}{l}
            \bullet \quad [x+\rho,y] \mbox{ is thick} \\
            \bullet \quad \phi(z_i^\rho(x+\rho))=z_i^\rho(x'+\rho) \\
            \bullet \quad  \phi(z_i^\rho(y))=z_j^\rho(y') \\
            \bullet \quad z_i^\rho(x+\rho)\leq z_i^\rho(y).
        \end{array}
    \end{align*}
\end{lem}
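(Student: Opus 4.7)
The plan is to unpack each conclusion bullet by invoking the geometric lemmas from this subsection applied to both $[x,y]$ and $[x',y']$. The key subtlety is that the hypothesis $\phi(z_i(x))=z_i(x')$ and $\phi(z_i(y))=z_j(y')$ \emph{swaps} the index on the $y$-side, so the same-index comparability $z_i(x)\leq z_i(y)$ does not translate directly to $[x',y']$; we will first need to produce an analogous same-index inequality $z_j(x')\leq z_j(y')$ in order to apply the geometric lemmas there.

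First, Lemma~\ref{lem: x+rho es invariante} gives $\phi(x+\rho)=x'+\rho$, so $\phi$ restricts to a poset isomorphism $[x+\rho,y]\to[x'+\rho,y']$. By Proposition~\ref{prop: seg max fixed under iso} (using thickness of $[x,y]$), we have $\phi(\{z_1(x),z_2(x)\})=\{z_1(x'),z_2(x')\}$ and $\phi(\{z_1(y),z_2(y)\})=\{z_1(y'),z_2(y')\}$. Combined with the hypotheses, this forces $\phi(z_j(x))=z_j(x')$ and $\phi(z_j(y))=z_i(y')$. Since $z_j(x)\leq z_i(y)$ always holds by Corollary~\ref{cor: z1(x) es menor a z2(y)}, applying $\phi$ produces the desired $z_j(x')\leq z_j(y')$ in $[x',y']$.

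With a same-index hypothesis available on both sides, Lemma~\ref{lem: [x+rho,y] is thick} applies to $[x,y]$ (with $i_0=i$) and to $[x',y']$ (with $i_0=j$), yielding thickness of $[x+\rho,y]$ and $[x'+\rho,y']$ — the first bullet. Lemma~\ref{lem: z_i(y) in [x+rho,y]} applied to both intervals gives $z_k^\rho(y)=z_k(y)$ and $z_k^\rho(y')=z_k(y')$ for $k\in\{1,2\}$, so
\begin{equation*}
\phi(z_i^\rho(y))=\phi(z_i(y))=z_j(y')=z_j^\rho(y'),
\end{equation*}
which establishes the third bullet.

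For the second bullet, apply Proposition~\ref{prop: seg max fixed under iso} to the restricted isomorphism on the now thick intervals $[x+\rho,y]$ and $[x'+\rho,y']$: it sends $\{z_1^\rho(x+\rho),z_2^\rho(x+\rho)\}$ bijectively onto $\{z_1^\rho(x'+\rho),z_2^\rho(x'+\rho)\}$. Lemma~\ref{lem: z_i(x)<z_i(x+rho)} gives $z_i(x)\leq z_i^\rho(x+\rho)$ but $z_i(x)\not\leq z_j^\rho(x+\rho)$ (and analogously in $[x',y']$); applying $\phi$ to $z_i(x)\leq z_i^\rho(x+\rho)$ and using $\phi(z_i(x))=z_i(x')$ therefore forces $\phi(z_i^\rho(x+\rho))=z_i^\rho(x'+\rho)$. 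Finally, the fourth bullet is automatic from the second and third: via $\phi$, the inequality $z_i^\rho(x+\rho)\leq z_i^\rho(y)$ is equivalent to $z_i^\rho(x'+\rho)\leq z_j^\rho(y')$, which holds by Corollary~\ref{cor: z1(x) es menor a z2(y)} applied to $[x'+\rho,y']$. I expect the only real obstacle to be the careful index bookkeeping needed in the second paragraph to produce $z_j(x')\leq z_j(y')$; once that comparability is in place, every remaining bullet follows by a direct invocation of the previously established lemmas.
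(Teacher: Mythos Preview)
Your proof is correct and follows essentially the same strategy as the paper: establish thickness of $[x+\rho,y]$ via Lemma~\ref{lem: [x+rho,y] is thick}, pin down $\phi(z_i^\rho(x+\rho))$ using Proposition~\ref{prop: seg max fixed under iso} on the restricted isomorphism together with the comparability/non-comparability from Lemma~\ref{lem: z_i(x)<z_i(x+rho)}, transfer the $y$-side using Lemma~\ref{lem: z_i(y) in [x+rho,y]}, and finish with Corollary~\ref{cor: z1(x) es menor a z2(y)}. Your preliminary derivation of $z_j(x')\leq z_j(y')$ is the same trick the paper uses (it derives $z_i(x')\leq z_i(y')$ instead, from $z_i(x)\leq z_j(y)$), and your handling of the fourth bullet via $\phi$ and Corollary~\ref{cor: z1(x) es menor a z2(y)} on $[x'+\rho,y']$ is in fact cleaner than the paper's, which proves the primed inequality rather than the stated unprimed one.
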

\begin{proof}
    Without loss of generality, let us suppose that $i=1$ and $j=2$.
    From Lemma~\ref{lem: x+rho es invariante}, we know that $\phi(x+\rho)=x'+\rho$.
    In particular, we have that $\phi([x+\rho,y])=[x'+\rho,y']$.
    \begin{itemize}
        \item By Lemma~\ref{lem: [x+rho,y] is thick} and the fourth bullet, the interval $[x+\rho,y]$ is thick.
        \item By Lemma~\ref{lem: z_i(x)<z_i(x+rho)}, we know that
        \begin{equation}\label{eq: z_i(x)<z_i(x+rho) in bijec dem}
            z_1(x)\leq z_1^\rho(x+\rho)\quad\text{and} \quad z_1(x)\not\leq z_2^\rho(x+\rho)
        \end{equation}
        Applying $\phi$ in the inequalities \eqref{eq: z_i(x)<z_i(x+rho) in bijec dem},
        we obtain 
        \begin{equation}\label{eq: dem phi(z(x'+rho))}
            z_1(x')\leq \phi(z_1^{\rho}(x+\rho))\quad \text{and}\quad z_1(x')\not\leq \phi(z_2^{\rho}(x+\rho))
        \end{equation}
        Since $[x+\rho,y]$ is thick, Proposition~\ref{prop: seg max fixed under iso} implies that 
        \begin{equation*}
            \{\phi(z_1^{\rho}(x+\rho)),\phi(z_2^{\rho}(x+\rho))\}=\{z_1^{\rho}(x'+\rho),z_2^{\rho}(x'+\rho)\}.
        \end{equation*}
        Thus, applying Lemma~\ref{lem: z_i(x)<z_i(x+rho)} to $[x',y']$, we obtain that $z_1(x')\nleq z_2^{\rho}(x'+\rho)$, so one can not have $\phi(z_1^{\rho}(x+\rho))=z_2^{\rho}(x'+\rho)$, because  the first inequality of \eqref{eq: dem phi(z(x'+rho))} would yield a contradiction.
        So we conclude that $\phi(z_1^\rho(x+\rho))=z_1^\rho(x'+\rho).$
        \item By Corollary~\ref{cor: z1(x) es menor a z2(y)} we have that $z_1(x)\leq z_2(y)$, which implies $z_1(x')\leq z_1(y')$.
        By the fourth bullet and Lemma~\ref{lem: z_i(y) in [x+rho,y]}, we have that $z_1^\rho(y')=z_1(y')$.
        By Lemma~\ref{lem: z_i(y) in [x+rho,y]}, we know that $z_1^{\rho}(y)=z_1(y)$ and $z_2^{\rho}(y)=z_2(y)$, so $\phi(z_2^{\rho}(y))=\phi(z_2(y))=z_1(y')=z_1^\rho(y')$.
        \item Suppose now, that $z_1^\rho(x'+\rho)\not\leq z_1(y')$.
        Applying $\phi^{-1}$, we obtain that $z_1^\rho(x+\rho)\not\leq z_2(y)$, which contradicts Corollary~\ref{cor: z1(x) es menor a z2(y)}.\qedhere
    \end{itemize}
\end{proof}
\begin{cor}\label{cor: contradiccion}
    Let $\{i,j\}=\{1,2\}$, and let $\phi \colon [x, y] \to [x', y']$ be a poset isomorphism.
    The four conditions listed on the left side of Lemma~\ref{lem: 3bullets} cannot all hold simultaneously.
\end{cor}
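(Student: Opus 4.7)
The approach is to iterate Lemma~\ref{lem: 3bullets} and derive a contradiction from unbounded length growth. Assume for contradiction that there exist $\phi$, $[x,y]$, $[x',y']$, and $i\in\{1,2\}$ such that all four conditions on the left of Lemma~\ref{lem: 3bullets} hold. By Lemma~\ref{lem: x+rho es invariante}, $\phi(x+\rho)=x'+\rho$, so $\phi$ restricts to a poset isomorphism between $[x+\rho,y]$ and $[x'+\rho,y']$.

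Applying Lemma~\ref{lem: 3bullets} once produces: $[x+\rho,y]$ thick, $\phi(z_i^\rho(x+\rho))=z_i^\rho(x'+\rho)$, $\phi(z_i^\rho(y))=z_j^\rho(y')$, and $z_i^\rho(x+\rho)\le z_i^\rho(y)$, where $\{i,j\}=\{1,2\}$ remains fixed. Interpreting $z_i^\rho(-)$ as the vertex-element ``$z_i$'' of the new interval $[x+\rho,y]$ (and similarly on the primed side), these four statements are exactly the four hypotheses of Lemma~\ref{lem: 3bullets} for the pair $[x+\rho,y]$ and $[x'+\rho,y']$ with the restricted isomorphism. By straightforward induction on $k$, we conclude that $[x+k\rho,y]$ is thick, and in particular nonempty, for every $k\ge 0$.

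The contradiction comes from length growth. From Remark~\ref{rem: facts of theta-partition}(\ref{rem-item: difference of thetas as combination of fundamental weights}), adding $\rho$ to a dominant element increases its length by exactly $4$, since $\theta(m,n)+\rho=\theta(m+1,n+1)$ and $\theta^s(m,n)+\rho=\theta^s(m+1,n+1)$. Hence $\ell(x+k\rho)=\ell(x)+4k\to\infty$, so for sufficiently large $k$ we have $\ell(x+k\rho)>\ell(y)$, forcing $x+k\rho\not\le y$ and $[x+k\rho,y]=\emptyset$, which contradicts the thickness produced by the induction.

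The only step that requires genuine attention is the bookkeeping that confirms the output of Lemma~\ref{lem: 3bullets} reinstates its own hypotheses on the shifted pair---specifically, tracking that the indices $i$ and $j$ are preserved and that the superscript~$\rho$ notation refers consistently to the $z_i$-vertices of the current interval. Once this is granted, the length-growth contradiction is immediate, and I do not foresee any further obstacle.
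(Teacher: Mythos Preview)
Your proof is correct and follows essentially the same approach as the paper: iterate Lemma~\ref{lem: 3bullets} to obtain that $[x+k\rho,y]$ is thick for all $k\ge 0$, then derive a contradiction from the fact that $x+k\rho\not\le y$ for $k$ large. Your version adds the explicit length computation $\ell(x+k\rho)=\ell(x)+4k$, whereas the paper simply observes that eventually $x+n\rho\not\le y$; otherwise the arguments are identical.
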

\begin{proof}
    If these conditions were satisfied simultaneously, then by Lemma~\ref{lem: 3bullets} and induction on \( n \), the four conditions would also hold when \( x \) is replaced by \( x + n\rho \), \( x' \) by \( x' + n\rho \), and \( \phi \) by its restriction  
    \[
    \phi\vert_{[x+n\rho, y]}\colon [x+n\rho, y] \to [x'+n\rho, y'].
    \]  
    However, this leads to a contradiction because for sufficiently large \( n \), \( x + n\rho \) will no longer satisfy \( x + n\rho \leq y \), so $[x+n\rho, y]$ will not be thick.
\end{proof}
\begin{lem}\label{lem: z1z1}
    Let $1\leq i\leq 2$.
    Let $\phi\colon [x,y]\to [x',y']$ be a poset isomorphism.
    If $\phi(z_i(x))=z_i(x')$ then $\phi(z_i(y))=z_i(y')$.
\end{lem}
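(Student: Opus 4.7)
The plan is to argue by contradiction, leveraging the machinery built up in Corollary~\ref{cor: contradiccion}. Assume, for contradiction, that $\phi(z_i(y))\neq z_i(y')$. Since both $[x,y]$ and $[x',y']$ are thick (by the standing assumption of Section~\ref{subsec: congruence and proof of main}), Proposition~\ref{prop: seg max fixed under iso} applies, giving the bijection $\phi(\{z_1(y),z_2(y)\})=\{z_1(y'),z_2(y')\}$. Combined with the assumption, this forces $\phi(z_i(y))=z_j(y')$ where $\{i,j\}=\{1,2\}$ and $j\neq i$, i.e.\ we are in the ``crossed'' case.

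Next, I would transport a known comparability from $[x',y']$ back to $[x,y]$ via $\phi^{-1}$. Corollary~\ref{cor: z1(x) es menor a z2(y)} applied to $[x',y']$ yields $z_i(x')\leq z_j(y')$. Since $\phi^{-1}$ is also a poset isomorphism, applying it to this inequality and using the two identifications $\phi(z_i(x))=z_i(x')$ (given) and $\phi(z_i(y))=z_j(y')$ (the crossed assumption) produces
\begin{equation*}
    z_i(x)=\phi^{-1}(z_i(x'))\leq \phi^{-1}(z_j(y'))=z_i(y).
\end{equation*}

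At this point the four hypotheses on the left-hand side of Lemma~\ref{lem: 3bullets} are all satisfied simultaneously: $[x,y]$ is thick, $\phi(z_i(x))=z_i(x')$, $\phi(z_i(y))=z_j(y')$ (crossed), and $z_i(x)\leq z_i(y)$. By Corollary~\ref{cor: contradiccion}, this is impossible. The contradiction forces $\phi(z_i(y))=z_i(y')$, as required.

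The whole argument is a short bookkeeping exercise once the crossed case has been isolated, so I do not expect genuine obstacles; the only subtle point is recognizing the right inequality to transport, namely the ``opposite diagonal'' comparability $z_i(x')\leq z_j(y')$ from Corollary~\ref{cor: z1(x) es menor a z2(y)}, which precisely cancels the crossing in $\phi$ and produces the ``uncrossed'' comparability $z_i(x)\leq z_i(y)$ needed to trigger Corollary~\ref{cor: contradiccion}.
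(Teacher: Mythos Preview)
Your proposal is correct and follows essentially the same approach as the paper's proof: assume the crossed case $\phi(z_i(y))=z_j(y')$, use Corollary~\ref{cor: z1(x) es menor a z2(y)} to obtain $z_i(x)\leq z_i(y)$, and then invoke Corollary~\ref{cor: contradiccion} for the contradiction. The only cosmetic difference is that the paper derives $z_i(x)\leq z_i(y)$ by contrapositive (if not, then $z_i(x')\not\leq z_j(y')$, contradicting Corollary~\ref{cor: z1(x) es menor a z2(y)}), whereas you transport the inequality $z_i(x')\leq z_j(y')$ directly via $\phi^{-1}$; these are logically identical.
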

\begin{proof}
    Without loss of generality, let us suppose that $i=1$ and $j=2$.
    Let us suppose that $\phi(z_1(y))=z_2(y')$.
    This implies that $z_1(x)\leq z_1(y)$, because if  $z_1(x)\not\leq z_1(y)$, we have that $z_1(x')\not\leq z_2(y')$ which is a contradiction by Corollary~\ref{cor: z1(x) es menor a z2(y)}.
    By Corollary~\ref{cor: contradiccion}, this is not possible, so by Proposition~\ref{prop: seg max fixed under iso}, we conclude that $\phi(z_1(y))=z_1(y')$.
\end{proof}
\begin{lem}\label{lem: iso preserves adjacency}
    Let \(\phi \colon [x, y] \to [x', y']\) be a poset isomorphism.
    There exists a bijection \(f \colon \mathrm{V}(\Pgn_{x,y}) \to \mathrm{V}(\Pgn_{x',y'})\) such that \(f(x) = x'\), \(f(y) = y'\), and \(f\) preserves adjacency relations.
    Moreover, for \(i \in \{1, 2\}\), we have \(f(v_i(x)) \in \phi(z_i(x))\) and \(f(v_i(y)) \in \phi(z_i(y))\).
\end{lem}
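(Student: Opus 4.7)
My approach is to read off the bijection $f$ directly from $\phi$, using Propositions~\ref{prop: seg max fixed under iso} and~\ref{prop: length de adjacencia implica posicion}. By Proposition~\ref{prop: seg max fixed under iso}, there exist permutations $\tau,\tau'\in S_2$ defined by $\phi(z_i(x))=z_{\tau(i)}(x')$ and $\phi(z_i(y))=z_{\tau'(i)}(y')$ for $i\in\{1,2\}$. I will then set $f(\cen(x))=\cen(x')$, $f(\cen(y))=\cen(y')$, $f(v_i(x))=v_{\tau(i)}(x')$, and $f(v_i(y))=v_{\tau(i)}(y')$; the consistency of using the same $\tau$ on both sides rests on the key equality $\tau=\tau'$.

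The first and most delicate step is to establish that $\tau=\tau'$. Lemma~\ref{lem: z1z1} already gives the implication $\tau=\mathrm{id}\Rightarrow\tau'=\mathrm{id}$. For the complementary implication I would precompose with the Dynkin-diagram automorphism $\sigma$ from Section~\ref{subsec: alcoves} to form a poset isomorphism $\psi\coloneqq\sigma\circ\phi\colon[x,y]\to[\sigma x',\sigma y']$ between thick intervals (thickness being preserved because $\sigma$ acts as an isometry of the Euclidean plane and as an automorphism of the Bruhat order). Since $\sigma$ swaps $\alpha_1$ and $\alpha_2$, one checks that $\sigma(z_i(x'))=z_{3-i}(\sigma x')$ and similarly for $y'$, so the permutations induced by $\psi$ equal $\pi\tau$ and $\pi\tau'$, where $\pi\in S_2$ is the transposition. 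Applying Lemma~\ref{lem: z1z1} to $\psi$ yields $\pi\tau=\mathrm{id}\Rightarrow\pi\tau'=\mathrm{id}$, i.e., $\tau=\pi\Rightarrow\tau'=\pi$, and the two implications together give $\tau=\tau'$.

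Once $\tau=\tau'$ is known, well-definedness of $f$ is nontrivial only when $v_i(x)=v_j(y)$ for some $\{i,j\}=\{1,2\}$, that is, when $\Pgn_{x,y}$ is a pentagon or a parallelogram. By Proposition~\ref{prop: length de adjacencia implica posicion} this forces $\ell(z_i(x),z_j(y))\in\{0,1\}$: either $z_i(x)=z_j(y)$, or $v_i(x)=v_j(y)$ is a shared vertex of the two alcoves. Since $\phi$ is a poset isomorphism, by Lemma~\ref{lem: betti invariante} it preserves the rank function, so the same dichotomy holds for $\phi(z_i(x))=z_{\tau(i)}(x')$ and $\phi(z_j(y))=z_{\tau(j)}(y')$; applying Proposition~\ref{prop: length de adjacencia implica posicion} in reverse then forces $v_{\tau(i)}(x')=v_{\tau(j)}(y')$, which is exactly the compatibility needed.

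The remaining verifications are routine. Adjacency is preserved because the adjacency graph of $\mathrm{V}(\Pgn_{x,y})$ is the cycle $\cen(x),v_1(x),v_2(y),\cen(y),v_1(y),v_2(x)$ (with the identifications above), and $f$ either preserves it or reverses it according to whether $\tau$ is trivial or not. Bijectivity follows from the symmetric construction applied to $\phi^{-1}$, and the ``moreover'' clause is immediate from $f(v_i(x))=v_{\tau(i)}(x')\in z_{\tau(i)}(x')=\phi(z_i(x))$, by the very definition of $v_{\tau(i)}(x')$. The main obstacle is the equality $\tau=\tau'$: Lemma~\ref{lem: z1z1} is stated only in one direction and genuinely covers only the identity case, so the opposite direction requires the reduction through $\sigma$.
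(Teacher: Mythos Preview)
Your proof is correct and follows essentially the same route as the paper's: use Proposition~\ref{prop: seg max fixed under iso} to obtain the permutation $\tau$ on the $x$-side, invoke Lemma~\ref{lem: z1z1} to pin down the $y$-side, and define $f$ by the obvious formula $f(v_i(\ast))=v_{\tau(i)}(\ast')$.

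The differences are that you fill in two details the paper leaves implicit. First, the paper dispatches the case $\phi(z_1(x))=z_2(x')$ by a bare ``without loss of generality'' (with a footnote giving the formula for $f$); you instead make this rigorous via the composition $\sigma\circ\phi$, which reduces to the identity case where Lemma~\ref{lem: z1z1} applies directly. Second, the paper does not discuss well-definedness of $f$ when $\Pgn_{x,y}$ is a pentagon or parallelogram (i.e.\ when some $v_i(x)=v_j(y)$); your use of Proposition~\ref{prop: length de adjacencia implica posicion} together with rank-preservation to show that the corresponding coincidence $v_{\tau(i)}(x')=v_{\tau(j)}(y')$ is forced on the image side is a genuine completion of the argument. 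Both additions are sound, and your proof is in fact more careful than the paper's at these points.
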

\begin{proof}
    By Proposition~\ref{prop: seg max fixed under iso}, either $\phi(z_1(x))$ is equal to $z_1(x')$ or to $z_2(x')$.
    Without loss of generality, we consider the first case.
    Then $\phi(z_2(x))=z_2(x')$.
    We define $f\colon\mathrm{V}(\Pgn_{x,y})\to \mathrm{V}(\Pgn_{x',y'})$ by the formulas\footnote{If $\phi(z_1(x))=z_2(x')$ this formula reads $f(v_i(\ast))=v_j(\ast')$ with $\{i,j\}=\{1,2\}$.} $f(x)=x', f(y)=y'$, and $f(v_i(\ast))=v_i(\ast')$.
    Where $\ast\in \{x,y\}$ and $i\in\{1,2\}$.
    The fact that $f$ preserves the adjacency relations is obvious from the definition of~$f$.
    That $f(v_i(x))\in \phi(z_i(x))$ is trivial.
    Lemma~\ref{lem: z1z1} implies that $\phi(z_1(y))=z_1(y')$ and this implies by Proposition~\ref{prop: seg max fixed under iso} that $\phi(z_2(y))=z_2(y')$.
\end{proof}
\begin{defn}
    Let \(\phi \colon [x, y] \to [x', y']\) be a poset isomorphism.
    We denote by $f_{\phi}\colon \mathrm{V}(\Pgn_{x,y}) \to \mathrm{V}(\Pgn_{x',y'})$ the bijection from Lemma~\ref{lem: iso preserves adjacency}.
\end{defn}
\begin{lem}\label{lem: relative positions}
    Let $\phi\colon [x,y] \to [x',y']$ be a poset isomorphism.
    If $v_i(\ast) \in A(z_i(\ast))$, then $f_{\phi}(v_i(\ast)) \in A(\phi(z_i(\ast)))$ where $A\in \{C,\mathrm{V}, \mathrm{ME}\}$, $\ast\in\{x,y\}$, and $i\in\{1,2\}$.
\end{lem}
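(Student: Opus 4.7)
The plan is to reduce the claim to the combination of two earlier results: the length-preservation property of poset isomorphisms (Lemma~\ref{lem: betti invariante}), and the length-based classification of vertex positions in Proposition~\ref{Prop: vert are det by length}. First I would normalize the setting: by Lemma~\ref{lem: iso preserves adjacency} (together with Lemma~\ref{lem: z1z1}, which forces $\phi(z_i(x)) = z_i(x')$ and $\phi(z_i(y)) = z_i(y')$ once the labeling is chosen consistently on the primed side), I may assume, after possibly swapping the roles of the indices $1$ and $2$ on the interval $[x',y']$, that $\phi(z_i(\ast)) = z_i(\ast')$ and $f_\phi(v_i(\ast)) = v_i(\ast')$ for every $\ast \in \{x,y\}$ and every $i \in \{1,2\}$. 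Under this normalization, what must be shown is that $v_i(\ast) \in A(z_i(\ast))$ if and only if $v_i(\ast') \in A(z_i(\ast'))$.

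Next, I would appeal to Proposition~\ref{Prop: vert are det by length}: for $\{i,j\} = \{1,2\}$, the membership of $v_i(x)$ and $v_j(y)$ in one of $C$, $\mathrm{V}$, $\mathrm{ME}$ is determined \emph{solely} by the integer $n \coloneqq \ell(z_i(x), z_j(y))$. Explicitly, $n = 0$ forces $v_i(x) \in C(z_i(x))$ and $v_j(y) \in C(z_j(y))$; $n = 1$ gives $v_i(x) \in \mathrm{V}(z_i(x))$ and $v_j(y) \in \mathrm{V}(z_j(y))$; $n \geq 2$ even gives both vertices in $\mathrm{V}$; and $n \geq 3$ odd gives $v_i(x) \in \mathrm{V}(z_i(x))$ while $v_j(y) \in \mathrm{ME}(z_j(y))$. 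In particular, the set $A$ containing $v_i(x)$ (respectively $v_j(y)$) is a function of $n$ alone.

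Finally, since $\phi$ is a poset isomorphism, Lemma~\ref{lem: betti invariante} gives $\ell(z_i(x), z_j(y)) = \ell(\phi(z_i(x)), \phi(z_j(y))) = \ell(z_i(x'), z_j(y'))$. Applying Proposition~\ref{Prop: vert are det by length} on both sides and comparing, the set $A$ containing $v_i(\ast)$ inside $z_i(\ast)$ coincides with the set containing $v_i(\ast') = f_\phi(v_i(\ast))$ inside $z_i(\ast') = \phi(z_i(\ast))$, which is the desired conclusion. There is no substantive obstacle: the lemma is a short bookkeeping consequence of the earlier adjacency statement, rank preservation under $\phi$, and the length-determination of vertex types.
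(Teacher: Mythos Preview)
Your proposal is correct and follows essentially the same approach as the paper: both arguments combine Lemma~\ref{lem: iso preserves adjacency} (to know where $f_\phi$ sends the vertices and that adjacency is preserved), Proposition~\ref{Prop: vert are det by length} (to reduce the type $A \in \{C,\mathrm{V},\mathrm{ME}\}$ to the single integer $\ell(z_i(x),z_j(y))$), and Lemma~\ref{lem: betti invariante} (to match this integer across $\phi$). The only cosmetic difference is that you perform an explicit index-normalization step invoking Lemma~\ref{lem: z1z1}, whereas the paper argues directly with $f_\phi$ and $\phi$ without renaming; the content is the same.
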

\begin{proof}
    Let $\{i,j\}=\{1,2\}$.
    By Lemma~\ref{lem: iso preserves adjacency}, we know that:
    \begin{enumerate}[label=(\alph*)]
        \item \label{item adj a}$f_{\phi}(v_i(x))\in \phi(z_i(x))$ and $f_{\phi}(v_i(y))\in \phi(z_i(y))$,
        \item \label{item adj b}$f_{\phi}(v_i(x))$ and $f_{\phi}(v_{j}(y))$ are adjacent or equal.
    \end{enumerate} 
    By~\ref{item adj a}, there are $A,B,A',B'\in \{C,\mathrm{V}, \mathrm{ME}\}$ such that \(v_i(x)\in A(z_i(x))\), \(v_j(y)\in B(z_j(y))\), \(f_{\phi}(v_i(x))\in A'(\phi(z_i(x)))\) and \(f_{\phi}(v_j(y))\in B'(\phi(z_j(y)))\).
    By Proposition~\ref{Prop: vert are det by length}, $A$ and $B$ are determined by \(\ell(z_i(x), z_j(y))\).
    By~\ref{item adj b}, $A'$ and $B'$ are determined by $\ell(\phi(z_i(x)), \phi(z_j(y)))$.
    By Lemma~\ref{lem: betti invariante}, it follows that 
    \[\ell(z_i(x), z_j(y)) = \ell(\phi(z_i(x)), \phi(z_j(y)))\]
    so $A=A'$ and $B=B'$.
\end{proof}
\begin{lem}\label{lem: isomorphism preserves all distances}
    Let $\phi\colon [x,y] \to [x',y']$ be a poset isomorphism., we have for $\{i,j\}=\{1,2\}$:
    \begin{align}
        d(y,v_j(y))&= d(y',f_{\phi}(v_j(y))),\label{eq1: lem igual dis}\\
        d(v_j(y), v_{i}(x))&=d(f_{\phi}(v_j(y)),f_{\phi}(v_i(x))),\label{eq2: lem igual dis} \\
        d(v_i(x),x)&=d(f_{\phi}(v_i(x)),x')\label{eq3: lem igual dis}.
    \end{align}
\end{lem}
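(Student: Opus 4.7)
The plan is to reduce each of the three claimed equalities to an application of the explicit formulas in Proposition~\ref{prop: length of all sides}, and then to verify that every quantity entering those formulas is preserved by $\phi$. First I would expand both sides of each equation according to that proposition: on the $[x,y]$ side the inputs are a length (one of $\ell(z_?(\cdot),\cdot)$ or $\ell(z_i(x),z_j(y))$), a position factor $\epsilon(y)$ or $\eta(x)$ reading off the $X\Theta$-partition of $y$ or $x$, a parity factor $\gamma(z_?(\cdot))$, and a case distinction $v_?(\cdot)=\cen(z_?(\cdot))$ versus $v_?(\cdot)\in\mathrm{V}(z_?(\cdot))\cup\mathrm{ME}(z_?(\cdot))$; on the $[x',y']$ side the analogous inputs are obtained by replacing $z_?(\cdot)$ with $\phi(z_?(\cdot))$ and $v_?(\cdot)$ with $f_\phi(v_?(\cdot))$.

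Next, I would verify that each of these inputs is $\phi$-invariant. By Lemma~\ref{lem: iso preserves adjacency} together with Lemma~\ref{lem: z1z1}, we are in one of two situations: either $\phi(z_i(\ast))=z_i(\ast')$ and $f_\phi(v_i(\ast))=v_i(\ast')$ for both $\ast\in\{x,y\}$, or indices are swapped consistently for both $x$ and $y$. In either case, Lemma~\ref{lem: betti invariante} gives $\ell(z_?(\cdot),\cdot)=\ell(\phi(z_?(\cdot)),\cdot')$, so the length inputs agree. Corollary~\ref{cor: iso intvls same theta partition} guarantees that $x,x'$ and $y,y'$ lie in the same pieces of the $X\Theta$-partition, whence $\eta(x)=\eta(x')$ and $\epsilon(y)=\epsilon(y')$. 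For $\gamma$, Remark~\ref{rem: facts of theta-partition}\ref{remark-item: length of the elements} says that the parity of $\ell$ is a function of the $X\Theta$-piece, and since $\ell(z_?(\cdot))=\ell(\cdot)-\ell(z_?(\cdot),\cdot)$ with $\ell(\cdot)$ and $\ell(\cdot')$ of the same parity and the length difference preserved, we conclude $\gamma(z_?(\cdot))=\gamma(\phi(z_?(\cdot)))$. Finally, the center versus non-center dichotomy is exactly the content of Lemma~\ref{lem: relative positions}.

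Assembling these preservation results, each of the three formulas in Proposition~\ref{prop: length of all sides} evaluates to the same number for $[x,y]$ and for $[x',y']$, yielding the three claimed equalities (after dividing by the scalar $\tfrac{\sqrt{2}}{3}$ relating $\operatorname{dist}$ to $d$). I do not expect a genuine conceptual obstacle here: the principal subtlety is the parallel-versus-swapped dichotomy produced by Lemma~\ref{lem: iso preserves adjacency}, but because the swap is synchronized between $x$-data and $y$-data via Lemma~\ref{lem: z1z1}, the paired terms in each formula align consistently and the argument becomes essentially careful bookkeeping.
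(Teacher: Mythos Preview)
Your proposal is correct and takes essentially the same approach as the paper: the paper's proof is the one-line ``This is immediate from Corollary~\ref{cor: iso intvls same theta partition}, Proposition~\ref{prop: length of all sides} and Lemma~\ref{lem: relative positions},'' and you have simply unpacked exactly why those three ingredients suffice, supplying the bookkeeping (length preservation via Lemma~\ref{lem: betti invariante}, parity of $\ell(z_?(\cdot))$ via the $X\Theta$-partition) that the paper leaves implicit.
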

\begin{proof}
    This is immediate from Corollary~\ref{cor: iso intvls same theta partition}, Proposition~\ref{prop: length of all sides} and Lemma~\ref{lem: relative positions}.
\end{proof}    
We are ready to prove the combinatorial nature of~$\Pgn_{x,y}$.
\begin{proof}[Proof of Proposition~\ref{prop: iso implies congruent polygons}]
    From Proposition~\ref{prop: seg max fixed under iso}, we have two cases:
    \begin{itemize}
        \item $\phi(z_1(x)) = z_1(x')$ and $\phi(z_2(x)) = z_2(x')$.
        By thickness of the interval $[x', y']$, we have $z_1(x') \cap z_2(x') = \emptyset$, so $f_\phi(v_1(x)) = v_1(x')$ and $f_\phi(v_2(x)) = v_2(x')$.
        Since $f_\phi$ is a bijection preserving adjacency relations, and $v_i(x)$ is either adjacent or equal to $v_j(y)$ for $i \neq j$, it follows that $f_\phi(v_1(y)) = v_1(y')$ and $f_\phi(v_2(y)) = v_2(y')$.

        By Corollary~\ref{cor: iso intvls same theta partition}, both $x$ and $x'$ belong either to $\Theta$ or to $\Theta^s$.
        This implies that there exists a weight~$\mu$ such that $x' = x - \mu$.
        Note that $\mathrm{V}(\Pgn_{x,y} - \mu) = \mathrm{V}(\Pgn_{x,y}) - \mu$.
        Let $i, j \in \{1, 2\}$.
        By Remark~\ref{rem: distancias} and Lemma~\ref{lem: isomorphism preserves all distances}, and using the fact that $\cen(x') = \cen(x) - \mu$, we obtain the following:
        \begin{align}
            v_i(x') &= \cen(x') + d(v_i(x'), x') \cdot \alpha_i / \|\alpha_i\| \\
                    &= \cen(x') + d(f_\phi(v_i(x)), x') \cdot \alpha_i / \|\alpha_i\| \\
                    &= \cen(x) - \mu + d(v_i(x), x) \cdot \alpha_i / \|\alpha_i\| \\
                    &= v_i(x) - \mu.
        \end{align}
        Similarly, we obtain $v_j(y') = v_j(y) - \mu$ and $\cen(y') = \cen(y) - \mu$.
        This proves the equality of polygons: $\Pgn_{x,y} - \mu = \Pgn_{x', y'}$.

        \item $\phi(z_1(x)) = z_2(x')$ and $\phi(z_2(x)) = z_1(x')$.
        Recall that $\sigma$ is an automorphism of the Bruhat order.
        In this case, we compose $\phi$ with the map $z \mapsto \sigma(z)$ to reduce to the previous case.
        We now explain this in more detail.

        Clearly, $f_\phi(v_1(x)) = v_2(x')$ and $f_\phi(v_2(x)) = v_1(x')$.
        By the adjacency statement in Lemma~\ref{lem: iso preserves adjacency}, it follows that $f_\phi(v_1(y)) = v_2(y')$ and $f_\phi(v_2(y)) = v_1(y')$.
        The equality of polygons
        \begin{equation*}
            \sigma \Pgn_{x,y} - \mu = \Pgn_{x', y'}
        \end{equation*}
        follows by an argument entirely analogous to the previous case, using the weight $\mu \coloneqq \cen(\sigma x) - \cen(x')$.\qedhere
    \end{itemize}    
\end{proof}
\subsection{Proof of Theorem~\ref{thm-main intro}}\label{subsec: main result}
Let $\lambda$ be a dominant weight.
Let $\tau_{\lambda}$ be as in Section~\ref{subsec-translations}.
\begin{defn}\label{def-mcm}
    We say that $[x,y]$ and $[x',y']$ are \emph{comparable} if there are  $u,v$ dominant elements and $\lambda, \lambda'$ dominant weights such that
    \begin{itemize}
        \item $\tau_{\lambda}$ induces an isomorphism of posets between $[x,y]$ and $[u,v]$.
        \item $\tau_{\lambda'}$ induces an isomorphism of posets between $[x',y']$ and $[u,v]$.
    \end{itemize}
\end{defn}
The following result is a restatement of Theorem~\ref{thm-main intro}.
\begin{thm}\label{thm: main theorem}
    Let $[x,y]$ and $[x',y']$ be thick intervals.
    Then $[x,y]$ and $[x',y']$ are isomorphic as partially ordered sets if and only if there is $g\in \{\id, \sigma\}$ such that $[x,y]$ and $g[x',y']$ are comparable.
\end{thm}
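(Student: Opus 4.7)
The plan is to prove each direction separately. The easy direction is a chase of isomorphisms, while the hard direction reduces to Proposition~\ref{prop: iso implies congruent polygons} combined with the type-dependent translation isomorphisms of Proposition~\ref{prop: trasl-por-dominat}.

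For $(\Leftarrow)$, suppose $[x,y]$ and $g[x',y']$ are comparable via some dominant $u,v$ and dominant weights $\lambda,\lambda'$. Then $\tau_\lambda$ and $\tau_{\lambda'}$ give poset isomorphisms $[x,y]\cong[u,v]\cong g[x',y']$, and since $g\in\{\id,\sigma\}$ induces a Bruhat-order automorphism of $W$ by Lemma~\ref{lem: elements of G are isomorphism of poset}, composing with $g^{-1}$ yields $[x,y]\cong[x',y']$.

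For $(\Rightarrow)$, let $\phi\colon[x,y]\to[x',y']$ be a poset isomorphism. Proposition~\ref{prop: iso implies congruent polygons} supplies $g\in\{\id,\sigma\}$ and a weight $\mu$ with $\Pgn_{x,y}=g\Pgn_{x',y'}+\mu$. Since we only need to prove comparability with $g[x',y']$, we may replace $[x',y']$ by $g[x',y']$ and assume from the outset that $g=\id$, so $\Pgn_{x,y}=\Pgn_{x',y'}+\mu$. The vertices $\cen(x)$ and $\cen(y)$ are the two distinguished ``diagonal'' corners of $\Pgn_{x,y}$ (with $\cen(y)-\cen(x)\in\mathbbm{R}_{\geq 0}\alpha_1+\mathbbm{R}_{\geq 0}\alpha_2$), and by Corollary~\ref{cor: iso intvls same theta partition} the elements $x,x'$ (resp.\ $y,y'$) lie in the same part of the $X\Theta$-partition; matching distinguished vertices therefore identifies $\cen(x)=\cen(x')+\mu$ and $\cen(y)=\cen(y')+\mu$.

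The remaining task is to choose dominant weights $\lambda,\lambda'$ with $\mu=\lambda-\lambda'$ such that both $\tau_{\lambda'}\colon[x,y]\to[x+\lambda',y+\lambda']$ and $\tau_\lambda\colon[x',y']\to[x'+\lambda,y'+\lambda]$ are poset isomorphisms; the identity $\mu=\lambda-\lambda'$ guarantees that the two target intervals coincide, providing the required common $[u,v]$. The admissible decompositions of $\mu$ are dictated by the shape of $\Pgn_{x,y}$ via Proposition~\ref{prop: trasl-por-dominat}. In the parallelogram case, any decomposition of $\mu$ into a difference of dominant weights works. In the pentagon case with distinguished direction $\varpi_{i_0}$, requiring that the two adjacent ``flat-side'' vertices of $\Pgn_{x',y'}+\mu$ still lie on the line $\mathbbm{R}\varpi_{i_0}+\alpha_{i_0}$ forces $\mu\in\mathbbm{Z}\varpi_{i_0}$, so $\mu$ splits as $a\varpi_{i_0}-b\varpi_{i_0}$ with $a,b\in\mathbbm{Z}_{\geq 0}$. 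In the hexagon case, all four non-corner vertices of the polygon lie on the two walls of $\partial F_\id$ (which are parallel to $\varpi_1$ and $\varpi_2$ respectively), so preserving them under translation by $\mu$ forces $(\alpha_1,\mu)=(\alpha_2,\mu)=0$ and hence $\mu=0$, giving $x=x'$, $y=y'$, and $\lambda=\lambda'=0$. I expect the hexagon subcase to be the main technical hurdle; the cleanest route to $\mu=0$ there is likely via Proposition~\ref{prop: traslaciones que preservan Pgn}(3) combined with Lemma~\ref{lem: same cardinal iff same poligon} applied to any dominant decomposition $\mu=\lambda-\lambda'$, exploiting the equality $|[x,y]|=|[x',y']|$ forced by the existence of $\phi$.
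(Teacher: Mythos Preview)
Your argument is correct and follows the paper's route: invoke Proposition~\ref{prop: iso implies congruent polygons}, reduce to $g=\id$ by passing to $[\sigma x',\sigma y']$, identify $\cen(x)=\cen(x')+\mu$ and $\cen(y)=\cen(y')+\mu$, and then branch on the shape of $\Pgn_{x,y}$ using Proposition~\ref{prop: trasl-por-dominat}. Two minor points of comparison: for the pentagon case the paper simply swaps the roles of the two intervals so that $\mu$ itself is dominant (avoiding your split $\mu=a\varpi_{i_0}-b\varpi_{i_0}$, though yours works just as well); and for the hexagon case the paper directly invokes Proposition~\ref{prop: traslaciones que preservan Pgn}(\ref{item-prop: trasl presev hex}), whose proof is exactly your wall-constraint argument forcing $\mu\in\mathbbm{R}\varpi_1\cap\mathbbm{R}\varpi_2=\{0\}$. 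Your direct argument there is fine; the fallback you sketch via Lemma~\ref{lem: same cardinal iff same poligon} and a dominant decomposition of $\mu$ is unnecessary and, as written, does not obviously yield $\mu=0$ (feeding in $|[x,y]|=|[x',y']|$ only confirms the polygon identity you already have), so you should drop that sentence.
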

\begin{proof}
    The necessity is obvious.
    Let us prove the sufficiency.
    By Proposition~\ref{prop: iso implies congruent polygons}, we have $\Pgn_{x,y}+\mu=\Pgn_{x',y'}$ or $\Pgn_{x,y}+\mu=\sigma(\Pgn_{x',y'})$ for some (not necessarily dominant) weight $\mu$.
    Suppose that the first equality is satisfied.
    In this case, we will prove that $[x,y]$ and $[x',y']$ are comparable, i.e., we will take $g=\id$ in the statement of the theorem.
    By Lemma~\ref{lem: iso preserves adjacency}, we have that $x+\mu=x'$ and  $y+\mu = y'$.
    There are three possibilities:
    \begin{itemize}
        \item $\Pgn_{x,y}$ is a hexagon.
        By Proposition~\ref{prop: traslaciones que preservan Pgn}(\ref{item-prop: trasl presev hex}), we have $\mu=0$, which proves this case.
        \item $\Pgn_{x,y}$ is a pentagon.
        Let $i\in \{1,2\}$ be such that $\Pgn_{x,y}$ contains two adjacent vertices lying on~$\mathbbm{R}_{\geq 0}\varpi_i+\a_i$.
        By Proposition~\ref{prop: traslaciones que preservan pentagonos}, $\mu \in \mathbbm{R}\varpi_{i}$.
        We may assume that $\mu$ is dominant.
        If it is not, then $-\mu$ is dominant, in which case we can exchange the roles of~$[x,y]$ and $[x',y']$.
        By Proposition~\ref{prop: trasl-por-dominat}, $\tau_\mu$ is an isomorphism of the posets $[x,y]$ and $[x+\mu,y+\mu]$.
        \item $\Pgn_{x,y}$ are parallelograms.
        As $[x,y]$ and $[x',y']$ are isomorphic, $x$ and $x'$ both belong either to $\Theta$ or to $\Theta^s$.
        This implies that there exist $\lambda,\lambda'$ dominant weights, such that $x+\lambda=x'+\lambda'$.
        This implies that $\Pgn_{x,y}+\lambda=\Pgn_{x',y'}+\lambda'$.
        Consider $u\coloneqq x+\lambda=x'+\lambda'$ and $v \coloneqq y+\lambda=y'+\lambda'$.
        By Proposition~\ref{prop: trasl-por-dominat}, we have $[u,v]$ is isomorphic to $[x,y]$ (resp.\@ $[x',y']$) as posets under the map $\t_\lambda$ (resp.\@ $\t_{\lambda'}$).
        So $[x,y]$ and $[x',y']$ are comparable.
    \end{itemize}
    Finally, suppose that $\Pgn_{x,y}+\mu=\sigma(\Pgn_{x',y'})$.
    In this case, we consider $[\sigma(x'),\sigma(y')]$ which is isomorphic to $[x',y']$ as posets.
    By the way in which the polygons $\Pgn$ are constructed, it is not hard to see that $\Pgn_{\sigma(x'),\sigma(y')}=\sigma(\Pgn_{x',y'})$.
    Now we can use the previous case to get that $[x,y]$ and $[\sigma(x'),\sigma(y')]=\sigma([x',y'])$ are comparable.
\end{proof}
\section{Some applications}\label{section: applications}
In this section, we examine some applications of our results.

\subsection{Non-recursive proof of the Combinatorial Invariance Conjecture for thick intervals in \texorpdfstring{$\widetilde{A}_2$}{A2 tilde}}

Kazhdan--Lusztig polynomials remain among the most profound and enigmatic objects in representation theory.
These polynomials $P_{x,y}(q)$ are defined for any pair $(x, y)$ in a Coxeter system $(W, S)$, and they play a central role in the representation theory of Hecke algebras, Lie algebras, algebraic groups, and the topology of Schubert varieties.

In the 1980s, Lusztig and Dyer independently formulated the following conjecture:
\begin{conj}[Combinatorial Invariance Conjecture]\label{conj: KL-invariance}
    Let $(W,S)$ and $(W',S')$ be Coxeter systems with Bruhat orders $\leq$ and $\leq'$, respectively.
    Given $x,y \in W$ and $x',y' \in W'$ such that the intervals $([x,y], \leq)$ and $([x',y'], \leq')$ are isomorphic as posets, their associated Kazhdan--Lusztig polynomials are equal:
    \begin{equation*}
        P_{x,y}(q) = P_{x',y'}(q).
    \end{equation*}
\end{conj}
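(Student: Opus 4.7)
The plan is to reduce CIC to the statement that the Kazhdan--Lusztig $R$-polynomial $R_{x,y}(q)$ depends only on the isomorphism class of the poset $[x,y]$; since $P_{x,y}(q)$ can be reconstructed from the $R$-polynomials of all subintervals via a standard inversion, this reduction would suffice. The $R$-polynomial obeys a well-known recursion involving a simple reflection $s$ with $sy<y$, so the obstacle is that the distinguished reflection $s$ is not intrinsic to the poset $[x,y]$.

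The approach is to use Brenti's theory of \emph{special matchings}: a special matching of $[x,y]$ is a fixed-point-free involution $M$ exchanging covering relations with covering relations and satisfying a compatibility condition with the Bruhat order. Multiplication on the left by a descent of $y$ is always a special matching, but the content of the theory is that, in favorable cases, \emph{any} special matching can play the role of multiplication by $s$ in the recursion, and all such recursions yield the same $R$-polynomial. For lower intervals $[\mathrm{id},y]$ this was established by du Cloux and by Brenti--Caselli--Marietti, and it is precisely this route that settles CIC in the lower-interval case. The strategy is therefore twofold. First, show that every nontrivial Bruhat interval in every Coxeter system admits a special matching; this is purely a poset-theoretic existence statement and can be attacked by combining Dyer's reflection subgroup machinery with a case analysis in small intervals as a base. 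Second---and this is the crux---show that the $R$-polynomial obtained by iterating \emph{any} choice of special matchings is independent of the choices made. A clean formulation would be that any two special matchings of $[x,y]$ are connected by a chain of ``pentagon'' moves, each of which preserves the recursive output.

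The main obstacle is this independence statement for arbitrary intervals. For lower intervals a pentagon-style commutation between pairs of matchings suffices; for general intervals no such structural result is known, because a general $[x,y]$ lacks the distinguished anchor provided by the identity. Two complementary inputs seem necessary to push through. On the algebraic side, the hypercube decomposition conjectured by Blundell--Buesing--Davies--Veli\v{c}kovi\'c--Williamson provides a conjectural poset-theoretic formula for $P_{x,y}$ that, if established, would bypass the $R$-polynomial machinery entirely and decouple CIC from the choice of a simple reflection. On the geometric side, the philosophy developed in the present paper---that isomorphic Bruhat intervals are exceedingly rare and, when they exist, arise from tightly constrained sources such as the piecewise isometries of Conjecture~\ref{conj: weak conj intro} or the microscopic subposet $D_x\cup D_y$ of Insight~\ref{insight: dihedral}---suggests a drastically different route in affine type: classify all poset isomorphisms between Bruhat intervals explicitly and verify equality of KL polynomials on the finite list that results. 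The hard part will be fusing these perspectives into a uniform proof that handles finite, affine, and infinite-rank Coxeter groups on an equal footing; a realistic intermediate target is to establish the sufficiency of special matchings for \emph{full} intervals, combine it with the classification program of Conjecture~\ref{conj: weak conj intro} in affine type, and then bootstrap to general intervals via the lifting and reflection-subgroup tools available in every Coxeter system.
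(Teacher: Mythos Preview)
The statement you are addressing is labeled a \emph{Conjecture} in the paper, and the paper does not prove it in general; it only establishes the special case of thick intervals with dominant endpoints in $\widetilde{A}_2$ (Proposition~\ref{prop: CIC for thick}), via the explicit classification of Theorem~\ref{thm: main theorem} together with the direct KL computation of Lemma~\ref{lem: KL is translation invariant}. So there is no ``paper's own proof'' of the full CIC to compare against.

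Your proposal, correspondingly, is not a proof but a research outline, and you say so yourself: the independence of the $R$-polynomial recursion under arbitrary special matchings for general intervals is ``not known''; the hypercube decomposition is ``conjectured'' and would suffice ``if established''; the classification route depends on Conjecture~\ref{conj: weak conj intro}. Each of the two prongs you describe therefore rests on an open problem, and the final paragraph is a wish list of intermediate targets rather than an argument. The genuine gap is that every step you identify as decisive---existence plus choice-independence of special matchings on arbitrary intervals, or the hypercube formula, or the full classification of isomorphic affine Bruhat intervals---is itself at least as hard as the conjecture you are trying to prove, and you have not supplied a new idea that closes any of them. In short, this is a fair survey of known strategies, but it does not constitute a proof of the conjecture, nor does the paper claim one.
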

Building on our results, we provide a non-recursive proof of the Combinatorial Invariance Conjecture in~$\widetilde{A}_2$ for thick intervals (Proposition~\ref{prop: CIC for thick}).
This complements---and is considerably cleaner than---the recursive proof of the full conjecture in~$\widetilde{A}_2$ given by the first two authors and Plaza:
\begin{thm}[{\cite[Theorem 5.2]{BLP23}}]
    \label{thm: CIC for A2 tilde}
    Let $[x,y]$ and $[x',y']$ be intervals in~$\widetilde{A}_2$.
    If $[x,y]$ and $[x',y']$ are isomorphic as partially ordered sets, then $P_{x,y}(q) = P_{x',y'}(q)$.
\end{thm}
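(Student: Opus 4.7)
The plan is to combine the classification of thick intervals up to poset isomorphism (Theorem~\ref{thm: main theorem}) with two explicit invariance properties of Kazhdan--Lusztig polynomials, so that the resulting argument requires no recursion. Concretely, given isomorphic thick intervals $[x,y]$ and $[x',y']$, Theorem~\ref{thm: main theorem} produces an element $g\in\{\id,\sigma\}$, a pair $u,v$ of dominant elements, and dominant weights $\lambda,\lambda'$, such that $\tau_\lambda$ induces a poset isomorphism $[x,y]\cong[u,v]$ and $\tau_{\lambda'}$ induces a poset isomorphism $g[x',y']\cong[u,v]$. The equality $P_{x,y}=P_{x',y'}$ therefore reduces to two sub-claims: (i) $P_{x,y}=P_{\sigma(x),\sigma(y)}$ for all $x\le y$; and (ii) $P_{x,y}=P_{\tau_\lambda(x),\tau_\lambda(y)}$ whenever $x,y$ are dominant and $\tau_\lambda$ restricts to a poset isomorphism $[x,y]\to[\tau_\lambda(x),\tau_\lambda(y)]$.

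Claim (i) is essentially a tautology, since $\sigma$ is induced by an automorphism of the Dynkin diagram, hence is a length-preserving Coxeter automorphism and therefore preserves Kazhdan--Lusztig polynomials. The substantive step is (ii), which I would attack by appealing to the explicit closed formulas for $P_{x,y}$ in $\widetilde{A}_2$ developed in \cite{LP23}. By Proposition~\ref{prop: trasl-por-dominat}, the piecewise translation $\tau_\lambda$ is a poset isomorphism precisely when $\Pgn_{x,y}$ is a parallelogram (with $\lambda$ an arbitrary dominant weight) or a pentagon (with $\lambda$ forced into a specific ray $\mathbbm{R}_{\ge 0}\varpi_i$). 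In either situation, the whole polygon $\Pgn_{x,y}$ is merely translated by $\lambda$, so by Proposition~\ref{prop: length of all sides} the lengths of the sides of $\Pgn_{x,y}$---and with them the lengths of the maximal dihedral subintervals $\DD^l(x)\cap[x,y]$ and $\DD^u(y)\cap[x,y]$---are preserved.

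The main obstacle is to make the final step---that the formulas of \cite{LP23} are themselves invariant under this data---fully explicit. The formulas are stratified according to the $X\Theta$-partition of the endpoints, so one must verify (ii) in several sub-cases. In each case, I expect the formula to factor through combinatorial invariants that depend only on the congruence class of $\Pgn_{x,y}$: the relevant $X\Theta$-labels of $x,y$ are preserved by $\tau_\lambda$ (Corollary~\ref{cor: iso intvls same theta partition}), and the numerical inputs are exactly the side lengths given by Proposition~\ref{prop: length of all sides}. The delicate point is the ``approximate'' nature of the dictionary between dihedral subintervals and side lengths flagged in the introduction: in borderline cases the Euclidean length and the combinatorial length differ by a small correction, and one must check case by case that this correction is the same on both sides of the isomorphism. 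Once this bookkeeping is carried out, (ii) follows with no induction on $\ell(x,y)$, yielding the desired non-recursive proof of the CIC for thick intervals in $\widetilde{A}_2$.
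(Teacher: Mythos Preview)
Your proposal does not prove the stated theorem. Theorem~\ref{thm: CIC for A2 tilde} asserts combinatorial invariance for \emph{all} intervals in $\widetilde{A}_2$, and the paper does not prove it here either---it is merely cited from \cite{BLP23}. What the paper actually proves in this section is the restricted version for thick intervals with dominant endpoints (Proposition~\ref{prop: CIC for thick}), and your outline is aimed at that proposition, as you yourself acknowledge in your final sentence. So as a proof of the theorem as stated, there is a gap: nothing in your argument touches non-thick or non-dominant intervals.

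For the thick case, your overall strategy coincides with the paper's: use Theorem~\ref{thm: main theorem} to reduce to $\sigma$-invariance (trivial) plus $\tau_\lambda$-invariance of $P_{x,y}$ whenever $\tau_\lambda$ is a poset isomorphism. The difference lies in how claim~(ii) is executed. You propose to show that the explicit KL formulas factor through the side lengths of $\Pgn_{x,y}$ via Proposition~\ref{prop: length of all sides} and the dihedral-subinterval data, which forces you to track the ``approximate'' dictionary and the borderline corrections you flag. The paper bypasses all of this: Lemma~\ref{lem: KL is translation invariant} invokes the closed formulas of Corollaries~\ref{cor: KLPparatheta} and~\ref{cor: KLPparatheta-s}, which express $P_{x,y}$ purely in terms of the set $J_{x,y}=\{j\ge 0\mid x\le y-j\rho\in D\}$ (and two analogous sets when $y\in\Theta^s$). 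Since $J_{x,y}$ is read off directly from $\Pgn_{x,y}\cap D$, and $\Pgn_{x+\lambda,y+\lambda}=\Pgn_{x,y}+\lambda$ whenever $\tau_\lambda$ is an isomorphism (Lemma~\ref{lem: same cardinal iff same poligon}), the equality $J_{x,y}=J_{x+\lambda,y+\lambda}$ follows from a two-line check, separately in the parallelogram and pentagon cases. Your route via side lengths would likely work, but the paper's argument is shorter and avoids the case analysis you anticipate.
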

Before presenting our proof, we recall some basic notions from~\cite{BLP23}.

The \emph{Hecke algebra of type~$\widetilde{A}_2$} is an associative unital \( \mathbbm{Z}[v, v^{-1}] \)-algebra generated by the set \( \{H_{s_0}, H_{s_1}, H_{s_2}\} \), subject to the relations
\begin{gather*}
    H_{s_i}^2 = (v^{-1} - v) H_{s_i} + 1, \quad \text{for } i \in \{0, 1, 2\}, \\
    H_{s_i} H_{s_j} H_{s_i} = H_{s_j} H_{s_i} H_{s_j}, \quad \text{for } i \neq j.
\end{gather*}
It has two notable \( \mathbbm{Z}[v, v^{-1}] \)-bases: the standard basis \( \{H_w \mid w \in W\} \) and the Kazhdan--Lusztig basis \( \{\underline{H}_w \mid w \in W\} \)~\cite{KL79}.
We have
\begin{equation}
    \underline{H}_w = \sum_{x \in W} h_{x,w}(v)\, H_x,
\end{equation}
where \( h_{x,w}(v) = v^{\ell(x,w)} P_{x,w}(v^{-2}) \), and \( P_{x,w}(q) \in \mathbbm{Z}[q] \) is the Kazhdan--Lusztig polynomial associated with the pair \( (x, w) \).

For \( x \in W \), define
\begin{equation}
    N_x \coloneqq \sum_{z \leq x} v^{\ell(z,x)} H_z.
\end{equation}
Recall that \( \theta(m-1, n-1) = \theta(m,n) - \rho \).
The following proposition provides a non-recursive formula for the Kazhdan--Lusztig basis elements associated with the sets~$\Theta$ and~$\Theta^s$.
\begin{prop}[{\cite[Proposition 3.1]{BLP23}}]\label{prop: formulas para KLP base}
    Let $m,n$ be non-negative integers.
    Then
    \begin{align}
        \underline{H}_{\theta(m,n)} &= N_{\theta(m,n)} + v^2 \underline{H}_{\theta(m,n) - \rho}, \\
        \underline{H}_{\theta(m,n)s} &= N_{\theta(m,n)} + v \underline{H}_{\theta(m-1,n)} + v \underline{H}_{\theta(m,n-1)}.
    \end{align}
\end{prop}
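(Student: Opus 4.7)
The plan is to verify, for each of the two formulas, that the proposed right-hand side satisfies the two characterizing properties of a Kazhdan--Lusztig basis element---namely (i) it has the correct triangular form in the standard basis (coefficient $1$ on $H_w$, all other coefficients in $v\mathbbm{Z}[v]$), and (ii) it is bar-invariant. Uniqueness of the KL basis then forces equality with the left-hand side. I would treat the first formula first, and then deduce the second from it by an analogous but shorter argument, using the already-established expression for $\underline{H}_{\theta(m,n)-\rho}$ and the structure of $\operatorname{L}(\theta(m,n)s)$ from Lemma~\ref{lem: set of lower covers}.

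Triangularity is essentially a bookkeeping check. Expanding $N_{\theta(m,n)} = H_{\theta(m,n)} + \sum_{z<\theta(m,n)} v^{\ell(z,\theta(m,n))}H_z$, the coefficient of $H_{\theta(m,n)}$ is $1$, and every other coefficient is of the form $v^k$ with $k\geq 1$, hence lies in $v\mathbbm{Z}[v]$. Since $\theta(m,n)-\rho<\theta(m,n)$, the summand $v^2\underline{H}_{\theta(m,n)-\rho}$ contributes nothing to $H_{\theta(m,n)}$, and its lower coefficients lie in $v\cdot v\mathbbm{Z}[v]=v^2\mathbbm{Z}[v]\subset v\mathbbm{Z}[v]$. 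The same reasoning handles the second formula, using that $\theta(m-1,n),\theta(m,n-1)$ lie strictly below $\theta(m,n)s$.

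The main obstacle is bar-invariance. Since each $\underline{H}_w$ is automatically bar-invariant, verifying that $N_{\theta(m,n)}+v^2\underline{H}_{\theta(m,n)-\rho}$ is bar-invariant is equivalent to the identity
\begin{equation*}
    \overline{N_{\theta(m,n)}}-N_{\theta(m,n)} = (v^2-v^{-2})\,\underline{H}_{\theta(m,n)-\rho}.
\end{equation*}
To prove this, I would use the geometric description of lower intervals supplied by Corollary~\ref{cor: characterization of the Bruhat order}: $[\mathrm{id},\theta(m,n)]$ is the hexagon with vertex set $W_f\cdot\theta(m,n)$, and it decomposes as $[\mathrm{id},\theta(m,n)-\rho]$ together with a ``hexagonal shell'' of outer strips. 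The bar involution acts on each $H_z$ via the $R$-polynomials, and along each of the six strips the relevant subintervals are dihedral, so their $R$-polynomials are completely explicit. Summing the shell contributions and using the hexagonal symmetry (the action of $W_f$), I would match the result to $(v^2-v^{-2})\underline{H}_{\theta(m,n)-\rho}$, with an induction on $m+n$ unwinding the $\underline{H}_{\theta(m,n)-\rho}$ side. Base cases where $m=0$ or $n=0$ are handled separately using the corresponding quadrilateral/pentagon descriptions of the lower intervals. The second formula then follows by reducing its bar-invariance identity to a consequence of the first via the explicit structure of $N_{\theta(m,n)}$ and the KL elements $\underline{H}_{\theta(m-1,n)}$ and $\underline{H}_{\theta(m,n-1)}$ obtained inductively.
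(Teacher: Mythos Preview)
This proposition is not proved in the present paper; it is quoted from \cite[Proposition~3.1]{BLP23}, so there is no in-paper argument to compare your proposal against.

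Your strategy---verify triangularity and bar-invariance, then invoke uniqueness of the Kazhdan--Lusztig basis---is the standard route and is correct in principle. Two cautions on the details. First, your shell argument for bar-invariance is plausible but vaguer than it looks: the assertion that ``along each of the six strips the relevant subintervals are dihedral'' is not literally true, and matching the resulting sum to $(v^{2}-v^{-2})\,\underline{H}_{\theta(m,n)-\rho}$ hides a genuine induction that you have only gestured at. Second, observe that the second identity as printed has $N_{\theta(m,n)}$ on the right, not $N_{\theta(m,n)s}$; if that is not a transcription slip, your triangularity check for the second formula fails at the very first step, since the top term $H_{\theta^{s}(m,n)}$ then never appears on the right-hand side at all.
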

The first and second identities in this proposition imply the following two corollaries, respectively.
\begin{cor}\label{cor: KLPparatheta}
    Let $m$ and $n$ be non-negative integers, and let $x \in W$ be dominant with $x \leq \theta(m,n)$.
    Then
    \begin{equation}\label{eq: KLP para theta}
        P_{x,\theta(m,n)}(q) = \sum_{i=0}^k q^i,
    \end{equation}
    where
    \begin{equation*}
        k = \max \{ j \geq 0 \mid x \leq \theta(m,n) - j\rho \in D \}.
    \end{equation*}
\end{cor}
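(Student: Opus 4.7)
The plan is to derive this corollary directly from the first identity in Proposition~\ref{prop: formulas para KLP base} by comparing coefficients in the standard basis, and then inducting on~$k$.

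First, I would translate the identity $\underline{H}_{\theta(m,n)} = N_{\theta(m,n)} + v^2 \underline{H}_{\theta(m,n)-\rho}$ into a recursion on Kazhdan--Lusztig polynomials. Using $h_{x,w}(v) = v^{\ell(x,w)} P_{x,w}(v^{-2})$ and the length identity $\ell(\theta(m,n)) - \ell(\theta(m,n)-\rho) = 4$, which follows from Remark~\ref{rem: facts of theta-partition}\ref{rem-item: difference of thetas as combination of fundamental weights}, the coefficient of~$H_x$ on the right-hand side becomes
\begin{equation*}
    v^{\ell(x,\theta(m,n))} + v^{2}\, v^{\ell(x,\theta(m,n))-4}\, P_{x,\theta(m,n)-\rho}(v^{-2})
\end{equation*}
whenever $x \leq \theta(m,n)-\rho$. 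Equating with $v^{\ell(x,\theta(m,n))} P_{x,\theta(m,n)}(v^{-2})$, dividing by $v^{\ell(x,\theta(m,n))}$, and setting $q = v^{-2}$ gives the clean recursion
\begin{equation*}
    P_{x,\theta(m,n)}(q) = 1 + q\, P_{x,\theta(m,n)-\rho}(q),
\end{equation*}
with the convention $P_{x,z}(q) = 0$ when $x \not\leq z$ or when $z$ does not lie in~$W$.

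I would then induct on~$k$. For the base case $k = 0$, either $x \not\leq \theta(m,n)-\rho$ or $\theta(m,n)-\rho \notin D$ (the latter covering the degenerate situations $m=0$ or $n=0$, where the right-hand term of the identity is absent). In both subcases the recursion forces $P_{x,\theta(m,n)}(q) = 1 = \sum_{i=0}^{0} q^i$. For the inductive step, assume $k \geq 1$, so that $\theta(m,n)-\rho = \theta(m-1,n-1) \in D$ and $x \leq \theta(m,n)-\rho$. Directly from the definition, the integer $k'$ associated to the pair $(x, \theta(m,n)-\rho)$ equals $k-1$, so by the inductive hypothesis $P_{x,\theta(m,n)-\rho}(q) = \sum_{i=0}^{k-1} q^i$, and the recursion collapses the geometric sum into $\sum_{i=0}^{k} q^i$.

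The only point requiring attention is the edge case where $\theta(m,n)-\rho \notin D$, i.e., $m=0$ or $n=0$. Here the second summand in Proposition~\ref{prop: formulas para KLP base} should be interpreted as zero, so that $\underline{H}_{\theta(m,n)} = N_{\theta(m,n)}$ and hence $P_{x,\theta(m,n)}(q) = 1$, consistently with $k = 0$. I do not anticipate any other obstacle; the argument is essentially bookkeeping once the recursion is extracted, and the main content is the observation that the identity of Proposition~\ref{prop: formulas para KLP base} already encodes the geometric series.
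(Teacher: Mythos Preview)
Your proof is correct and is exactly the intended derivation: the paper does not spell out a proof but simply states that the corollary follows from the first identity in Proposition~\ref{prop: formulas para KLP base}, and what you have written is the natural way to extract the formula from that identity. The length computation, the recursion $P_{x,\theta(m,n)}(q) = 1 + q\,P_{x,\theta(m,n)-\rho}(q)$, the handling of the boundary cases $m=0$ or $n=0$, and the induction on $k$ are all sound.
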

\begin{cor}\label{cor: KLPparatheta-s}
    Let $m$ and $n$ be non-negative integers, and let $x \in D$ be dominant with $x \leq \theta^s(m,n)$.
    Then
    \begin{equation}
        P_{x,\theta^s(m,n)}(q) = -1 + \sum_{i=0}^{t} q^i + \sum_{i=0}^{r} q^i,
    \end{equation}
    where
    \begin{align*}
        t &= \max\left( \{0\} \cup \{ j \geq 0 \mid x \leq \theta(m-1,n) - j\rho \in D \} \right), \\
        r &= \max\left( \{0\} \cup \{ j \geq 0 \mid x \leq \theta(m,n-1) - j\rho \in D \} \right).
    \end{align*}
\end{cor}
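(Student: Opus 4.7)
The plan is to mirror, for $\theta^s(m,n)$, the derivation of Corollary~\ref{cor: KLPparatheta} from the first identity of Proposition~\ref{prop: formulas para KLP base}, now applying the argument to the second identity
\begin{equation*}
    \underline{H}_{\theta^s(m,n)} \;=\; N_{\theta(m,n)} + v\,\underline{H}_{\theta(m-1,n)} + v\,\underline{H}_{\theta(m,n-1)}.
\end{equation*}
Concretely, I would compare the coefficients of $H_x$ on both sides, expanding each Kazhdan--Lusztig basis element via $\underline{H}_w = \sum_{z \leq w} v^{\ell(z,w)} P_{z,w}(v^{-2}) H_z$ and using the definition $N_{\theta(m,n)} = \sum_{z \leq \theta(m,n)} v^{\ell(z,\theta(m,n))} H_z$. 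The length shifts entering the computation are $\ell(\theta^s(m,n)) - \ell(\theta(m,n)) = 1$ and $\ell(\theta^s(m,n)) - \ell(\theta(m-1,n)) = \ell(\theta^s(m,n)) - \ell(\theta(m,n-1)) = 3$, which dictate the powers of $v$ that must be matched on both sides.

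After canceling the common factor $v^{\ell(x,\theta^s(m,n))}$, the identity rearranges into a closed-form relation
\begin{equation*}
    P_{x,\theta^s(m,n)}(q) \;=\; C_x + \chi_1\, q\, P_{x,\theta(m-1,n)}(q) + \chi_2\, q\, P_{x,\theta(m,n-1)}(q),
\end{equation*}
where $\chi_i \in \{0,1\}$ are the Bruhat comparability indicators for $x \leq \theta(m-1,n)$ and $x \leq \theta(m,n-1)$, and $C_x$ is a boundary constant extracted from the $N_{\theta(m,n)}$ term. Substituting the explicit formulas from Corollary~\ref{cor: KLPparatheta}, namely $P_{x,\theta(m-1,n)}(q) = \sum_{i=0}^t q^i$ when $\chi_1 = 1$ and similarly $P_{x,\theta(m,n-1)}(q) = \sum_{i=0}^r q^i$ when $\chi_2 = 1$, turns each $q$-multiplied polynomial into $\sum_{i=1}^{t+1} q^i$ and $\sum_{i=1}^{r+1} q^i$. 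Gathering terms and absorbing the constants should reproduce the claimed expression $-1 + \sum_{i=0}^{t} q^i + \sum_{i=0}^{r} q^i$, the $-1$ arising to prevent double-counting of the two overlapping constant~$1$'s.

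The bookkeeping must also cover the degenerate cases, namely $x \not\leq \theta(m-1,n)$ or $x \not\leq \theta(m,n-1)$, and the boundary $m=0$ or $n=0$ where $\theta(m-1,n)$ or $\theta(m,n-1)$ is not defined; in the latter case one adopts the convention that the corresponding $\underline{H}$-term is $0$. The use of $\max\bigl(\{0\} \cup \cdots\bigr)$ in the definitions of $t$ and $r$ is designed precisely so that $\sum_{i=0}^{0} q^i = 1$ substitutes correctly for the missing polynomial whenever $\chi_i$ vanishes in the recursion. The main obstacle — more a matter of careful accounting than of mathematical substance — is verifying that the constant $C_x$ combined with the leading terms of the two sums aggregates precisely to the $-1 + 1 + 1 = 1$ constant term on the right-hand side of the Corollary.
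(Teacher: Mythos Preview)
Your approach is exactly the paper's intended one: extract the coefficient of $H_x$ from the second identity of Proposition~\ref{prop: formulas para KLP base}, then plug in Corollary~\ref{cor: KLPparatheta}. The paper offers nothing more than the sentence ``the second identity implies the corollary,'' so in that sense you are aligned.

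However, the one step you defer with ``gathering terms \emph{should} reproduce the claimed expression'' is precisely where the argument breaks. Two issues surface once you actually do the bookkeeping:
\begin{enumerate}
\item The subscript $N_{\theta(m,n)}$ in the displayed identity is almost certainly a typo for $N_{\theta^s(m,n)}$: with $N_{\theta(m,n)}$, the coefficient of $H_{\theta^s(m,n)}$ on the right is zero while on the left it is $1$. So your ``length shift $1$'' for the $N$-term is spurious; the correct constant is $C_x=1$.
\item With $C_x=1$ and $\chi_1=\chi_2=1$, your own computation gives
\[
P_{x,\theta^s(m,n)}(q)=1+\sum_{i=1}^{t+1}q^i+\sum_{i=1}^{r+1}q^i
=-1+\sum_{i=0}^{t+1}q^i+\sum_{i=0}^{r+1}q^i,
\]
which is off by one in each upper index from the statement you are trying to prove. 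This is not a cosmetic mismatch: take $x=\theta^s(m,n)$, $y=\theta^s(m{+}1,n{+}1)$. The paper itself (proof of Lemma~\ref{lem: x+rho es invariante}) records $P_{x,y}=1+2q$, yet the corollary's definitions give $t=r=0$ (since $\theta^s(m,n)\not\le\theta(m{-}1,n)$ by length), hence $-1+1+1=1$.
\end{enumerate}
So the statement as printed appears to carry an index slip (it becomes correct if $t,r$ are defined via $\theta(m,n{+}1)$ and $\theta(m{+}1,n)$, or equivalently if one adds $1$ to each max when the underlying set is nonempty). Your derivation, had you completed it, would have exposed this rather than confirmed the formula; the phrase ``should reproduce'' hides a genuine failure, not just routine accounting.
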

\begin{lem}\label{lem: KL is translation invariant}
    Let $x,y\in D$ be dominant elements, and let $\lambda$ be a dominant weight.
    If $[x,y]\simeq[x+\lambda,y+\lambda]$, then $P_{x,y}(q)=P_{x+\lambda,y+\lambda}(q)$.
\end{lem}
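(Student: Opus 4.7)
The plan is to combine the explicit Kazhdan--Lusztig formulas of Corollaries~\ref{cor: KLPparatheta} and~\ref{cor: KLPparatheta-s} with the polygon identity provided by Lemma~\ref{lem: same cardinal iff same poligon}. The core observation is that the combinatorial parameters $k$ (in the $\Theta$-case) and $t,r$ (in the $\Theta^s$-case) appearing in those formulas count alcove-centers of a specific form lying on certain distinguished lines inside $\operatorname{Pgn}_{x,y}$, and these counts transport correctly under translation by $\lambda$. Indeed, from $[x,y]\simeq [x+\lambda,y+\lambda]$ as posets and Corollary~\ref{cor: iso intvls same theta partition}, $y$ and $y+\lambda$ lie in the same part of the $X\Theta$-partition, and since a poset isomorphism preserves cardinality, Lemma~\ref{lem: same cardinal iff same poligon} yields $\operatorname{Pgn}_{x,y}+\lambda = \operatorname{Pgn}_{x+\lambda,y+\lambda}$.

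I would then split into the two cases according to the $X\Theta$-class of $y$. Suppose first $y=\theta(m,n)$, so that by Remark~\ref{rem: facts of theta-partition}\ref{rem-item: difference of thetas as combination of fundamental weights} we have $y+\lambda = \theta(m+a,n+b)$ where $\lambda = a\varpi_1+b\varpi_2$. Corollary~\ref{cor: KLPparatheta} expresses $P_{x,y}(q)=\sum_{i=0}^k q^i$, where $k$ is the largest $j\geq 0$ for which $\theta(m-j,n-j)\in [x,y]$, and analogously $P_{x+\lambda,y+\lambda}(q)=\sum_{i=0}^{k'}q^i$. By Equation~\eqref{eq: interval as union of polygons}, the condition $\theta(m-j,n-j)\in[x,y]$ is equivalent to $\operatorname{cen}(\theta(m-j,n-j))\in \operatorname{Pgn}_{x,y}$. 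Since translation by $\lambda$ sends $\operatorname{cen}(\theta(m-j,n-j))$ to $\operatorname{cen}(\theta(m+a-j,n+b-j))$, the polygon identity transports the enumeration bijectively and forces $k=k'$. The case $y=\theta^s(m,n)$ is handled analogously: Corollary~\ref{cor: KLPparatheta-s} expresses $P_{x,y}(q)$ as $-1$ plus two partial-sum contributions indexed by the lines parallel to $-\rho$ through $\operatorname{cen}(\theta(m-1,n))$ and $\operatorname{cen}(\theta(m,n-1))$, each of which transports correctly under $\lambda$, yielding $t=t'$ and $r=r'$.

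The only delicate point in the plan is a bookkeeping step. On each of the relevant lines through $\operatorname{Pgn}_{x,y}$, both $\theta$-type and $\theta^s$-type centers occur; one must verify that the parameters $k$, $t$, $r$ count only the $\theta$-type ones, and that this selection is preserved by the translation. This is ensured by the fact that translation by any weight $\lambda$ sends the sublattice $\operatorname{cen}(\Theta)$ into itself (again by Remark~\ref{rem: facts of theta-partition}\ref{rem-item: difference of thetas as combination of fundamental weights}), while the restriction ``$\in D$'' in the definition of $k$, $t$, $r$ together with the explicit $\theta(\cdot,\cdot)$ form encodes precisely the $\theta$-type selection. Once this is noted, the proof reduces to a direct combination of the geometric input (the polygon identity) and the algebraic input (the explicit Kazhdan--Lusztig formulas).
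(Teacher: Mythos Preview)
Your approach is essentially the paper's: invoke Lemma~\ref{lem: same cardinal iff same poligon} to obtain $\Pgn_{x,y}+\lambda=\Pgn_{x+\lambda,y+\lambda}$, then feed this into Corollaries~\ref{cor: KLPparatheta} and~\ref{cor: KLPparatheta-s}. However, there is a genuine gap in the step where you claim the polygon identity ``transports the enumeration bijectively.'' The parameter $k$ in Corollary~\ref{cor: KLPparatheta} is $\max\{j:\ y-j\rho\in D\ \text{and}\ \cen(y)-j\rho\in\Pgn_{x,y}\}$. The polygon identity transports only the second condition; the first condition $y-j\rho\in D$ (i.e.\ $j\le\min(m,n)$) is \emph{not} invariant under translation by an arbitrary dominant $\lambda$, since $\min(m+a,n+b)$ can strictly exceed $\min(m,n)$. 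Concretely, one can have $\cen(y)-j\rho\in\Pgn_{x,y}$ with $y-j\rho$ a wall element $\mathrm{\mathbf x}_{2k+1}\in F_{\mathrm{id}}\setminus D$ (e.g.\ $x=\theta(2,0)$, $y=\theta(5,0)$, $j=1$), and the question is whether the translated element $(y+\lambda)-j\rho$ is also outside $D$. The paper closes this gap by invoking Proposition~\ref{prop: trasl-por-dominat}: the hypothesis forces $\Pgn_{x,y}$ to be a parallelogram or a pentagon, and in each case a short separate argument shows the $D$-condition is preserved (in the pentagon case because $\lambda\in\mathbb{Z}_{\ge0}\varpi_i$ leaves the relevant coordinate of $y$ unchanged; in the parallelogram case because $y-j\rho\in\Pgn_{x,y}$ already forces $y-j\rho\in D$).

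Your ``delicate point'' paragraph misidentifies the subtlety: along the ray $\{\cen(y)-j\rho:j\in\mathbb{Z}_{\ge0}\}$ there are \emph{no} $\theta^s$-type centers (integer translates of $\rho$ preserve the $\Theta$-sublattice), so the $\theta$ versus $\theta^s$ selection is automatic. The real issue is $D$ versus $X\cap F_{\mathrm{id}}$, as above. Also, your appeal to Corollary~\ref{cor: iso intvls same theta partition} is both unnecessary and technically illicit (it requires fullness, not assumed here): that $y$ and $y+\lambda$ lie in the same $X\Theta$-class is immediate from Remark~\ref{rem: facts of theta-partition}\ref{rem-item: difference of thetas as combination of fundamental weights}.
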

\begin{proof}
    We need to split the proof into two cases: $y = \theta(m,n)$ or $y = \theta^s(m,n)$ for some $m,n \in \mathbbm{N}$.
    \begin{itemize}
        \item[$\bullet$] Let $y = \theta(m,n)$.
        Define
        \begin{equation*}
            J_{x, y} \coloneqq \{ j \geq 0 \mid x \leq y - j\rho \in D \}.
        \end{equation*}
        By Corollary~\ref{cor: KLPparatheta}, the Kazhdan--Lusztig polynomial \( P_{x, y} \) is determined by the set \( J_{x, y} \).
        Thus, it suffices to show
        \begin{equation*}
            J_{x, y} = J_{x + \lambda, y + \lambda}.
        \end{equation*}

        Since \( y - j\rho \in D \) implies \( y - j\rho \leq y \), it follows that
        \begin{equation*}
            J_{x, y} = \{ j \geq 0 \mid y - j\rho \in \Pgn_{x, y} \cap\, D \}.
        \end{equation*}
        By Lemma~\ref{lem: same cardinal iff same poligon}, we have
        \begin{equation*}
            \Pgn_{x + \lambda, y + \lambda} = \Pgn_{x, y} + \lambda.
        \end{equation*}
        Furthermore, by Proposition~\ref{prop: trasl-por-dominat}, the polygon \( \Pgn_{x, y} \) is either a pentagon or a parallelogram.
        Consider the two cases:
        \begin{itemize}
            \item  If \( \Pgn_{x, y} \) is a pentagon, then the above identity implies
            \begin{equation*}
                \Pgn_{x + \lambda, y + \lambda} \cap\, D = (\Pgn_{x, y} \cap\, D) + \lambda,
            \end{equation*}
            which immediately gives \( J_{x, y} = J_{x + \lambda, y + \lambda} \).
            \item  If $\Pgn_{x,y}$ is  a parallelogram,
            then one can verify---by using the fact that \( y = \theta(m, n) \) and by drawing \( \Pgn_{x, y} \)---that
            \begin{equation*}
                y - j\rho \in \Pgn_{x, y} \cap\, D \iff y - j\rho \in \Pgn_{x, y}.
            \end{equation*}
            Hence, we again conclude that \( J_{x, y} = J_{x + \lambda, y + \lambda} \).
        \end{itemize}
        \item[$\bullet$] Now, suppose that $y = \theta^s(m,n)$.
        The proof is similar to the first bullet but using  Corollary~\ref{cor: KLPparatheta-s} instead of Corollary~\ref{cor: KLPparatheta}.\qedhere
    \end{itemize}
\end{proof}
Finally, we deduce the following result from Theorem~\ref{thm: main theorem} and Lemma~\ref{lem: KL is translation invariant}.
\begin{prop}\label{prop: CIC for thick}
    Let $[x, y]$ and $[x', y']$ be thick intervals.
    If $[x, y]$ and $[x', y']$ are isomorphic as partially ordered sets, then \( P_{x, y}(q) = P_{x', y'}(q) \).
\end{prop}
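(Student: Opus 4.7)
The plan is to deduce Proposition~\ref{prop: CIC for thick} by combining Theorem~\ref{thm: main theorem} (the classification of isomorphic thick intervals up to piecewise translation and a Dynkin-diagram symmetry) with Lemma~\ref{lem: KL is translation invariant} (which shows piecewise translations that preserve the Bruhat poset also preserve Kazhdan--Lusztig polynomials), together with the standard fact that diagram automorphisms preserve Kazhdan--Lusztig polynomials.

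Concretely, starting from a poset isomorphism $[x,y]\simeq [x',y']$ of thick intervals, Theorem~\ref{thm: main theorem} yields an element $g\in\{\mathrm{id},\sigma\}$ such that $[x,y]$ and $g[x',y']$ are comparable in the sense of Definition~\ref{def-mcm}. Unpacking this definition, there exist dominant elements $u,v$ and dominant weights $\lambda,\lambda'$ such that $\tau_{\lambda}$ induces a poset isomorphism $[x,y]\cong [u,v]$ and $\tau_{\lambda'}$ induces $[g(x'),g(y')]\cong [u,v]$. Since $x,y,g(x'),g(y')$ are dominant (note that $\sigma$ preserves the dominant chamber as it is the reflection across $\mathbbm{R}\rho$), the piecewise translations act as ordinary translations on these endpoints, giving $u=x+\lambda=g(x')+\lambda'$ and $v=y+\lambda=g(y')+\lambda'$. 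Applying Lemma~\ref{lem: KL is translation invariant} once to $(x,y,\lambda)$ and once to $(g(x'),g(y'),\lambda')$ yields
\begin{equation*}
    P_{x,y}(q)=P_{u,v}(q)=P_{g(x'),g(y')}(q).
\end{equation*}
Finally, since $g$ is induced by a Dynkin-diagram automorphism of $(W,S)$, it respects the Hecke algebra structure and hence $P_{g(x'),g(y')}(q)=P_{x',y'}(q)$, which completes the proof.

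There is no real obstacle at this stage: the hard combinatorial and geometric work is entirely encoded in Theorem~\ref{thm: main theorem} and Lemma~\ref{lem: KL is translation invariant}, and the deduction is essentially a formal reshuffling. The one point that should be verified explicitly is that the hypotheses of Lemma~\ref{lem: KL is translation invariant} genuinely hold in each application (dominance of the endpoints and dominance of the translating weight), which is immediate from Definition~\ref{def-mcm}, and that $\sigma$ preserves dominance, which was observed above.
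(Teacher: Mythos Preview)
Your proof is correct and follows exactly the approach indicated by the paper, which simply states that the result is deduced from Theorem~\ref{thm: main theorem} and Lemma~\ref{lem: KL is translation invariant}. You have spelled out the deduction carefully, including the observation that $\sigma$ preserves dominance and Kazhdan--Lusztig polynomials, which the paper leaves implicit.
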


\subsection{Stabilization of affine Bruhat intervals}\label{subsec: stabilization}
Let  $W = \mathbbm{Z}\Phi^\vee \rtimes W_f$ be an affine Weyl group with root system $\Phi$ and finite Weyl group $W_f$ with longest element $w_f$.
Let $C_+$ be the dominant Weyl chamber corresponding to the set of positive roots $\Phi_+$.

Let $A_x$ be the alcove corresponding to $x$.
The coweight lattice $\Lambda^\vee \supset \mathbbm{Z}\Phi^\vee$ acts on~$W$ via translations: for any $\mu \in \Lambda^\vee$ and $x \in W$, there exists a unique element $x + \mu \in W$ whose alcove is $\mu + A_x$, that is, the translation by $\mu$ of the alcove $A_x$.

An element $x\in W$ is \emph{dominant} if $A_x\subset C_+$.
A coweight $\mu\in\Lambda^\vee$ is \emph{dominant} if $\mu\in \overline{C_+}$ (i.e., $\langle\alpha, \mu \rangle \geq 0$ for all $\alpha \in \Phi_+$.) 
\begin{conj}[Stabilization of dominant Bruhat intervals]
    \label{conj: stabilization of dominant Bruhat intervals}
    For any dominant coweight $\lambda \in \Lambda^\vee$ and $x, y \in W$ such that $x$ and $y$ are dominant, there exists an integer $N_0 = N_0(x, y, \lambda)$ such that for all $N \geq N_0$,
    \begin{equation*}
        [x+N_0\lambda,\, y+N_0\lambda] \cong [x+N\lambda,\, y+N\lambda].
    \end{equation*}
\end{conj}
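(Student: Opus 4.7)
Plan: I propose to prove this conjecture in the case of $\widetilde{A}_2$, where the machinery developed in Sections~\ref{section: geometry of intervals}--\ref{section: sides} applies directly. The strategy rests on two pillars: (a) establishing a polygon-stabilization lemma showing that $\Pgn_{x+N\lambda,\,y+N\lambda}$ eventually translates rigidly with $N$, and (b) invoking Proposition~\ref{prop: trasl-por-dominat} to upgrade this geometric stability to a genuine poset isomorphism of intervals.

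First I would prove that there exists an integer $N_0 = N_0(x, y, \lambda)$ such that for every $N \geq N_0$,
\begin{equation*}
    \Pgn_{x+(N+1)\lambda,\,y+(N+1)\lambda} \;=\; \Pgn_{x+N\lambda,\,y+N\lambda} + \lambda.
\end{equation*}
Since $\Pgn_{z,w} = \Par_{z,w} \cap F_{\id}$ and Lemma~\ref{lem: identities translation}(i) shows that the parallelogram $\Par$ translates rigidly by $\lambda$ under simultaneous translation of both endpoints, the only question is how the boundary $\partial F_{\id}$ truncates this parallelogram as $N$ grows. The signed distances of $x + N\lambda$ and $y + N\lambda$ to each wall bounding $F_{\id}$ are non-decreasing linear functions of $N$, so after finitely many steps each wall's relationship with $\Par$ enters a final configuration---either no longer cutting the parallelogram, or cutting it along an edge whose position relative to the alcove lattice is preserved by further $\lambda$-translations. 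Combined with the discreteness analysis of Lemmas~\ref{lem: vertex in the wall and vertex outside the wall} and~\ref{lem: no isometria poligono en identidad implica mas cardinal}, this forces the displayed equality to hold for all sufficiently large $N$.

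The argument then concludes quickly. For $N \geq N_0$, the displayed stabilization together with Proposition~\ref{prop: traslaciones que preservan Pgn} implies that $\Pgn_{x+N\lambda,\,y+N\lambda}$ cannot be a hexagon (since a hexagon polygon is never preserved by a nonzero translation), so it is either a parallelogram or a pentagon whose two wall-vertices lie on $\mathbbm{R}_{\geq 0}\varpi_i + \alpha_i$ with $\lambda \in \mathbbm{R}_{\geq 0}\varpi_i$. In either case Proposition~\ref{prop: trasl-por-dominat} supplies a poset isomorphism
\begin{equation*}
    \tau_\lambda \colon [x+N\lambda,\, y+N\lambda] \;\xrightarrow{\sim}\; [x+(N+1)\lambda,\, y+(N+1)\lambda],
\end{equation*}
and iteration yields $[x+N\lambda,\, y+N\lambda] \cong [x+N_0\lambda,\, y+N_0\lambda]$ for all $N \geq N_0$.

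The main obstacle is the polygon-stabilization lemma itself. Although geometrically natural, its proof requires tracking the shape transitions (hexagon $\to$ pentagon $\to$ parallelogram) and verifying that they terminate in finitely many steps, with careful control over the alcove-lattice integrality that governs whether a given wall intersection contributes an additional vertex to $\Pgn$. For general affine Weyl groups, the analogous strategy would first require identifying the correct polytope $\operatorname{Ptp}_{x,y}$ encoding arbitrary intervals---Proposition~\ref{prop: An intro} provides a partial description in the dominant cone of $\widetilde{A}_n$---and then establishing a polytope-stabilization analogue. Both steps appear considerably more delicate, which is presumably why the paper states the higher-rank case only as a conjecture.
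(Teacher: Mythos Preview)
Your proposal is correct and follows essentially the same two-step strategy as the paper: first establish polygon stabilization (the paper's Lemma~\ref{lem: Pgn stabilizes}), then apply Proposition~\ref{prop: trasl-por-dominat}. The paper's proof of the stabilization lemma is more direct than you suggest---rather than invoking Lemmas~\ref{lem: vertex in the wall and vertex outside the wall} and~\ref{lem: no isometria poligono en identidad implica mas cardinal}, it simply splits into cases according to whether $(\alpha_i,\lambda)>0$ for both $i$ or just one, and in each case checks by an explicit inner-product computation that the vertices $v_i$ of $\Par_{x,y}$ (from Lemma~\ref{lem: vertice de 60 grados}) eventually land in $F_{\id}$, after which Proposition~\ref{prop: traslaciones que preservan Pgn} applies directly.
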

Using SageMath, we verified this conjecture for several intervals and $\lambda$ in several directions, when $\Phi$ is of type~$A_3$, $B_3$, and~$A_4$.
Including translations, we verified around $50$ interval isomorphisms in each of these types.

The reason we believe this conjecture is true stems from Proposition~\ref{prop: An intro}.
The existence of the parallelotopes $\Par^i_{x,y}$ associated with an interval $[x, y]$ suggests that a statement similar to Theorem~\ref{thm-main intro} may hold in general.

Instead of defining $\Pgn_{x,y}$ as before, one could define
\begin{equation*}
    \Pgn^i_{x,y} \coloneqq R \cap \Par^i_{x,y},
\end{equation*}
where $R$ is a region similar to $F_{\mathrm{id}}$.
Although we do not know precisely what this region should be, a reasonable candidate is
\begin{equation*}
    R = \{v \in E \mid -1 \leq \langle \alpha_i^\vee, v \rangle\}.
\end{equation*}
Suppose the poset class of $[x, y]$ depends only on the collection ${\Pgn^i_{x,y}}$.
Since the members of this collection stabilize after translation by a sufficiently large multiple $N_0\lambda$, this provides further support for the conjecture.

This intuition is reinforced by the fact that each $\Pgn^i_{x,y}$ depends only on the relative position of $x$ and $y$, and on the distances from each vertex of $A_x$ to the walls. (Below, we prove a related result in type~$\widetilde{A}_2$.)

In type~$A$, Proposition~\ref{prop: dominant translations in An tilde intro} offers additional evidence---though only within the dominant region.

We will prove this conjecture in the setting of~$\widetilde{A}_2$.
To proceed, we first establish the following lemmas.
\begin{lem}
    \label{lem: Pgn stabilizes}
    Suppose $x, y$ are dominant elements and $\lambda$ is a dominant weight.
    There exists an integer $N_0$ such that for all $N \geq N_0$, we have
    \begin{equation*}
        \Pgn_{x+N\lambda,\, y+N\lambda}=(N-N_0)\lambda+\Pgn_{x+N_0\lambda,\, y+N_0\lambda}.
    \end{equation*}
    Moreover, there are two cases:
    \begin{itemize}
        \item $\Pgn_{x+N_0\lambda,\, y+N_0\lambda}$ is a parallelogram.
        \item $\Pgn_{x+N_0\lambda,\, y+N_0\lambda}$ is a pentagon with two adjacent vertices on~$\mathbbm{R}\varpi_{i}+\alpha_{i}$ and $\lambda\in \mathbbm{R}_{\geq0}\varpi_i$ for some $i\in\{1,2\}$.
    \end{itemize}
\end{lem}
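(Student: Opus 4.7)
The plan is to use the rigid translation identity $\Par_{x+N\lambda,\,y+N\lambda}=\Par_{x,y}+N\lambda$ from Lemma~\ref{lem: identities translation}\ref{lem: identities translation part 1}, so that the only $N$-dependence of $\Pgn_{x+N\lambda,\,y+N\lambda}=(\Par_{x,y}+N\lambda)\cap F_\id$ lies in the intersection with $F_\id$. Since $F_\id$ is the convex wedge cut out by the two inequalities $(\alpha_1,v)\geq -1$ and $(\alpha_2,v)\geq -1$ (these being the two walls bounding $F_\id$, corresponding to $r_1$ and $r_6$ in the star construction), the only walls that can truncate $\Par_{x,y}+N\lambda$ are $H_{\alpha_1,-1}$ and $H_{\alpha_2,-1}$.

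Writing $\lambda=a\varpi_1+b\varpi_2$ with $a,b\geq 0$, the identity $(\alpha_i,\varpi_j)=\delta_{ij}$ shows that translation by $\lambda$ adds $a$ to $(\alpha_1,\cdot)$ and $b$ to $(\alpha_2,\cdot)$. The case $\lambda=0$ is trivial, so assume $\lambda\neq 0$. If both $a>0$ and $b>0$, I would pick $N_0$ large enough that every vertex $v$ of $\Par_{x,y}+N_0\lambda$ satisfies $(\alpha_i,v)\geq -1$ for $i\in\{1,2\}$; for $N\geq N_0$ the translate lies entirely inside $F_\id$, so $\Pgn_{x+N\lambda,\,y+N\lambda}=\Par_{x+N\lambda,\,y+N\lambda}$ is a parallelogram and the displayed identity is immediate.

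If instead $\lambda=a\varpi_i$ with $a>0$ for some $i\in\{1,2\}$, set $\{i,j\}=\{1,2\}$ and pick $N_0$ so that every vertex of $\Par_{x,y}+N_0\lambda$ satisfies $(\alpha_i,v)\geq -1$. For $N\geq N_0$ the parallelogram no longer crosses $H_{\alpha_i,-1}$, while its position relative to $H_{\alpha_j,-1}$ is unchanged because $(\alpha_j,\lambda)=0$, that is, $\lambda$ translates parallel to $H_{\alpha_j,-1}$. Hence the intersection with $F_\id$ reduces to the truncation of $\Par_{x,y}+N\lambda$ by the single wall $H_{\alpha_j,-1}$, and one obtains
\begin{equation*}
\Pgn_{x+N\lambda,\,y+N\lambda}=(N-N_0)\lambda+\Pgn_{x+N_0\lambda,\,y+N_0\lambda},
\end{equation*}
where the right-hand side is either a parallelogram (if $H_{\alpha_j,-1}$ does not cut) or a pentagon whose two adjacent new vertices lie on $H_{\alpha_j,-1}=\mathbbm{R}\varpi_i+\alpha_i$; in the pentagon case one also has $\lambda\in\mathbbm{R}_{\geq 0}\varpi_i$ by hypothesis, as required.

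No serious obstacle is anticipated: the existence of $N_0$ in each case is an elementary consequence of $(\alpha_i,\lambda)>0$ for at least one $i$. The only point requiring some care is to verify, in the pentagon case, that the portion of $\Par_{x,y}+N\lambda$ lying below $H_{\alpha_j,-1}$ is exactly the $(N-N_0)\lambda$-translate of the corresponding portion at step $N_0$; this reduces to the observation that $\lambda$ is parallel to that wall, so translation by $\lambda$ commutes with intersection with the half-plane $\{v:(\alpha_j,v)\geq -1\}$.
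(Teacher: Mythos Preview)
Your argument is correct and follows the same case split as the paper (both coefficients of $\lambda$ positive versus $\lambda\in\mathbbm{Z}_{>0}\varpi_i$); where the paper invokes Proposition~\ref{prop: traslaciones que preservan Pgn} to obtain the translation identity in each case, you re-derive it directly via the commutativity of translation by $\lambda$ with intersection against a half-plane whose boundary is parallel to $\lambda$, which is precisely the geometric fact used in that proposition's proof. One small inaccuracy to fix: $F_\id$ is not the full wedge $\{(\alpha_1,v)\geq-1,\ (\alpha_2,v)\geq-1\}$ but has a third bounded edge on $H_{\rho,-1}$ (cf.\ the proof of Lemma~\ref{lem: vertex in the wall and vertex outside the wall}\ref{lem: vertex in the wall and vertex outside the wall part 2}); however, since $(\rho,v)\geq(\rho,\cen(x))>0$ for every $v\in\Par_{x,y}$ when $x$ is dominant, this third wall never cuts the parallelogram and your argument is unaffected.
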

\begin{proof}
    By Lemma~\ref{lem: vertice de 60 grados}, we have that
    $
        v_i\coloneqq \cen(x)+(\varpi_i,\cen(y)-\cen(x))\alpha_i\in \mathrm{V}(\Par_{x,y}),
    $
    for $1\leq i \leq 2$.
    \newline
    
    \noindent\textbf{Case A.} Suppose that $(\alpha_i, \lambda)>0$ for all $1\leq i \leq 2$.
    Let $j$ such that $\{i,j\}=\{1,2\}$.
    Clearly, there exists an integer $N_i$ such that $(\alpha_i,v_i+N_i\lambda)>-1$.
    We have $(\alpha_j, v_i+N_i\lambda)>0$ and $(\rho,v_i+N_i\lambda)>0$, which implies that $v_i+N_i\lambda\in F_{\operatorname{id}}$ for $1\leq i \leq 2$.
    Let $N_0=\max\{N_1,N_2\}$, $x'=x+N_0\lambda$, $y'=y+N_0\lambda$, and $v'_i\coloneqq v_i+N_0\lambda\in F_{\operatorname{id}}$.
    By Lemma~\ref{lem: vertice de 60 grados}, $v'_i\in \mathrm{V}(\Par_{x',y'})$.
    By the convexity of~$F_{\id}$, we have $\Par_{x',y'}\subset F_{\id}$,  so $\Par_{x',y'}=\Pgn_{x',y'}$ and $\Par_{x',y'}$ is a parallelogram.
    The result follows from Proposition~\ref{prop: traslaciones que preservan Pgn}(\ref{item-prop: trasl presev par}).
    \newline
    
    \noindent\textbf{Case B.}
    As always, we use Definition~\ref{def: label of vertices} for the notation of the vertices of~$\Pgn_{x,y}$.
    Suppose that $(\alpha_1, \lambda)>0$ and $(\alpha_2, \lambda)=0$.
    In this case, $\lambda=a\varpi_1$ for some $a>0$.
    Clearly, there is an integer $N_0$ such that $(\alpha_1,v'_2)>-1$, where $v'_2\coloneqq v_2+N_0\lambda$.
    Since $x$ is dominant, we have $(\alpha_2, v'_2)>-1$ and $(\rho, v'_2)>-1$, so $v'_2\in F_{\operatorname{id}}\setminus \partial F_{\id}$.
    Let $x'=x+N_0\lambda$, $y'=y+N_0\lambda$.
    By Lemma~\ref{lem: vertice de 60 grados}, we have that $v'_2\in \mathrm{V}(\Par_{x',y'})$, so $v'_2=v_2^{x',y'}(x')\in \mathrm{V}(\Pgn_{x',y'})$.
    By Lemma~\ref{lem: vertex in the wall and vertex outside the wall}\ref{lem: vertex in the wall and vertex outside the wall part 1}, we have $v_2^{x',y'}(x')=v_1^{x',y'}(y')$, so $\Pgn_{x',y'}$ is either a parallelogram or a pentagon with two adjacent vertices on~$\mathbbm{R}_{\geq 0}\varpi_{1}+\alpha_{1}$.
    In either case, the result follows from Proposition~\ref{prop: traslaciones que preservan Pgn}(\ref{item-prop: trasl presev par}) or Proposition~\ref{prop: traslaciones que preservan Pgn}(\ref{item-prop: trasl presev pent}).
    \newline

    \noindent\textbf{Case C.} Suppose that $(\alpha_1, \lambda)=0$ and $(\alpha_2, \lambda)>0$.
    This is analogous to the previous case.
\end{proof}
\begin{prop}
    \label{prop: dominant Bruhat intervals stabilize in A2}
    Conjecture~\ref{conj: stabilization of dominant Bruhat intervals} holds for $W$ of type~$\widetilde{A}_2$.
\end{prop}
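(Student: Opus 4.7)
The plan is to combine Lemma~\ref{lem: Pgn stabilizes} with Proposition~\ref{prop: trasl-por-dominat}, realizing the claimed poset isomorphism as a piecewise translation. First I would apply Lemma~\ref{lem: Pgn stabilizes} to obtain an integer $N_0$ such that for every $N\geq N_0$ one has
\begin{equation*}
    \Pgn_{x+N\lambda,\,y+N\lambda}=(N-N_0)\lambda+\Pgn_{x+N_0\lambda,\,y+N_0\lambda},
\end{equation*}
and such that the stabilized polygon $\Pgn_{x+N_0\lambda,\,y+N_0\lambda}$ is either a parallelogram, or a pentagon having two adjacent vertices on $\mathbbm{R}\varpi_i+\alpha_i$ with $\lambda\in\mathbbm{R}_{\geq 0}\varpi_i$ for some $i\in\{1,2\}$.

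Writing $x'\coloneqq x+N_0\lambda$, $y'\coloneqq y+N_0\lambda$, and $\mu\coloneqq(N-N_0)\lambda$ (which is a dominant weight), the next step is to feed $[x',y']$ and $\mu$ into Proposition~\ref{prop: trasl-por-dominat}. In the parallelogram case, part~(1) of that proposition produces a poset isomorphism $\tau_\mu\colon[x',y']\to[\tau_\mu(x'),\tau_\mu(y')]$ for any dominant weight. In the pentagon case, part~(2) produces the same isomorphism provided $\mu\in\mathbbm{R}_{\geq 0}\varpi_i$, which does hold since $\lambda\in\mathbbm{R}_{\geq 0}\varpi_i$ and $N-N_0\geq 0$. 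To identify the target interval with $[x+N\lambda,\,y+N\lambda]$, I would invoke Lemma~\ref{lem: traslacion preserva region}: because $x',y'$ are dominant and $\mu$ is a dominant weight, $\tau_\mu$ acts on the dominant zone $F_{\id}$ as honest translation by $\mu$, so $\tau_\mu(x')=x'+\mu=x+N\lambda$ and likewise $\tau_\mu(y')=y+N\lambda$.

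There is no genuine obstacle here: the argument is essentially a direct concatenation of the two preparatory results, with the key verification being that the directional constraint $\lambda\in\mathbbm{R}_{\geq 0}\varpi_i$ supplied by Lemma~\ref{lem: Pgn stabilizes} in the pentagon case is exactly the hypothesis required by part~(2) of Proposition~\ref{prop: trasl-por-dominat}. The only place one must be careful is this matching of cases, together with the elementary check that the domain and target of $\tau_\mu$ truly lie in $F_{\id}$ so that the piecewise translation collapses to ordinary translation---both points being immediate from dominance of $x,y,\lambda$.
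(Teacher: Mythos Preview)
Your proposal is correct and follows essentially the same route as the paper: apply Lemma~\ref{lem: Pgn stabilizes} to obtain $N_0$ and the parallelogram/pentagon dichotomy for $\Pgn_{x+N_0\lambda,\,y+N_0\lambda}$, then feed each case into the matching clause of Proposition~\ref{prop: trasl-por-dominat} with the dominant weight $\mu=(N-N_0)\lambda$. Your extra care in invoking Lemma~\ref{lem: traslacion preserva region} to identify $\tau_\mu(x')$ and $\tau_\mu(y')$ with $x+N\lambda$ and $y+N\lambda$ is a detail the paper leaves implicit, but it is the right justification.
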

\begin{proof}
    By Lemma~\ref{lem: Pgn stabilizes}, there is $N_0$ such that if $N\geq N_0$, we have
    \begin{equation*}
        \Pgn_{x+N\lambda,\, y+N\lambda}=(N-N_0)\lambda+\Pgn_{x+N_0\lambda,\, y+N_0\lambda},
    \end{equation*}
    and we have two cases:
    \begin{itemize}
        \item $\Pgn_{x+N_0\lambda,\, y+N_0\lambda}$ is a parallelogram.
        \item $\Pgn_{x+N_0\lambda,\, y+N_0\lambda}$ is a pentagon with two adjacent vertices on~$\mathbbm{R}\varpi_{i}+\alpha_{i}$ and $\lambda\in \mathbbm{R}_{\geq0}\varpi_i$ for some $i\in\{1,2\}$.
    \end{itemize}
    In each case, by Proposition~\ref{prop: trasl-por-dominat} the map 
    \begin{equation*}
        \tau_{(N-N_0)\lambda}\colon [x+N_0\lambda,\, y+N_0\lambda]\to [x+N\lambda,\, y+N\lambda]
    \end{equation*}
    is an isomorphism of posets.

    The second part of the proposition follows from Proposition~\ref{prop: traslaciones que preservan Pgn}.
\end{proof}

\subsection{Classification of lower intervals in \texorpdfstring{$\widetilde{A}_2$}{A2~}} \label{subsection: Classification lower}
Let $W$ be the Weyl group of affine type~$A_2$.
In this section, we classify the lower Bruhat intervals $[\mathrm{id}, y]$ for $y\in W$ up to poset isomorphism.
\begin{prop}\label{prop: main lower} 
    Let $u,v$ be elements in~$W$.
    Then $[\mathrm{id},u]\simeq [\mathrm{id},v]$ if and only if $v$ belongs to the $G$-orbit of~$u$.
\end{prop}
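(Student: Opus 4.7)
The forward direction is immediate from Lemma~\ref{lem: elements of G are isomorphism of poset}: every $g\in G$ fixes $\mathrm{id}$ (the generators $\delta,\sigma$ are group automorphisms induced by Dynkin diagram symmetries, and $\iota$ is inversion, which fixes $\mathrm{id}$), so $[\mathrm{id},u]\simeq[\mathrm{id},g(u)]$.

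For the converse, the plan is to reduce to canonical representatives and then separate $G$-orbits by poset invariants. By Remark~\ref{rem: remark tontos}, applying a suitable element of $\langle\delta,\sigma\rangle\subset G$ to each of $u,v$ (which preserves the isomorphism class of the lower interval), we may assume both lie in
\[
R\,=\,\{\theta(m,n),\,s_0\theta(m,n),\,\theta^s(m,n),\,s_0\theta^s(m,n),\,\mathbf{x}_k\mid m,n\geq 0,\,k\geq 1\}\cup\{\mathrm{id},s_0,s_1,s_2\}.
\]
A further application of $\sigma\in G$ normalizes $m\geq n$ in the first four families. The problem reduces to showing that two elements of $R$, in this normalized form, with isomorphic lower intervals must already be $G$-conjugate.

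The three invariants I would exploit are the length $\ell(\cdot)$ (the poset rank, by Lemma~\ref{lem-graded}), the coatom count $|L(\cdot)|$, and the cardinality $|[\mathrm{id},\cdot]|$, all preserved by poset isomorphism (Lemma~\ref{lem: betti invariante}). Proposition~\ref{prop: cardinal lower} gives closed formulas for $|[\mathrm{id},\cdot]|$ in four of the five families of $R$; the missing cardinality $|[\mathrm{id},s_0\theta(m,n)]|$ is obtained by the same geometric method, applying Lemma~\ref{prop: intervals are union by right} to the covering $\theta(m,n)<s_0\theta(m,n)$ and combining with Corollary~\ref{cor: characterization of the Bruhat order} to count alcove centers in $\mathcal{C}_{s_0\theta(m,n)}$. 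The parity of $\ell$ together with $|L|$, bounded by Lemma~\ref{lem: cardinality of lower covers} (and by the analogous covers for $X$ obtained from Equation~\eqref{eq: set lower xk}), separates the four parts $\Theta,\Theta^s,{}^s\Theta^s,X$ of the $X\Theta$-partition. Within each part, the cardinality is a strictly monotone polynomial in the parameters, so $(\ell,|[\mathrm{id},\cdot]|)$ determines $(m,n)$ up to the $\sigma$-swap already enforced, and determines $k$ in the $X$ case.

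The only subtlety is the placement of the representative $s_0\theta(m,n)\in R$ among the $G$-orbits: one verifies, starting from the model case $\iota(\theta^s(0,0))=\iota(w_0s_0)=s_0w_0=s_0\theta(0,0)$ and extending via the identity $\iota(\theta^s(m,n))=s_{m,n}\,\theta(m,n)^{-1}$, that $s_0\theta(m,n)$ lies in the $G$-orbit of a unique $\theta^s(m',n')\in R$ with $m'+n'=m+n$. This absorbs the $s_0\theta$-branch into the $\Theta^s$ enumeration. The main obstacle I anticipate is carrying out this $\iota$-bookkeeping cleanly for general $(m,n)$; once that is done, matching the invariants across normalized representatives and using the monotonicity of the formulas in Proposition~\ref{prop: cardinal lower} completes the proof.
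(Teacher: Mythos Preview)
Your claim that parity of $\ell$ together with $|L(\cdot)|$ separates the four parts $\Theta,\Theta^s,{}^s\Theta^s,X$ of the $X\Theta$-partition is false, and this is not repaired by also using the cardinality. The ranges in Lemma~\ref{lem: cardinality of lower covers} overlap at $|L|=4$, and the following pairs agree in all three of your invariants $(\ell,|L|,|[\mathrm{id},\cdot]|)$:
\[
(s_0\theta^s(0,0),\mathbf{x}_5):\ (5,4,22),\qquad
(\theta^s(1,0),\mathbf{x}_6):\ (6,4,30),\qquad
(\theta(1,1),\mathbf{x}_7):\ (7,4,42).
\]
(Check with Proposition~\ref{prop: cardinal lower} and Lemma~\ref{lem: set of lower covers}.) So your three invariants cannot distinguish these elements from different parts, and your argument breaks exactly here. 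The paper isolates these exceptional collisions (Lemma~\ref{lema: casos no isomorfos}) and separates them with a fourth invariant, the cardinality of the dihedral set $\mathcal{D}(y)$; you would need something of this sort.

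A secondary point: your reduction to the set $R$ forces you into the $\iota$-bookkeeping for $s_0\theta(m,n)$ that you flag as the main obstacle. The paper avoids this entirely by arguing directly at the level of the $G$-stable partition $\Theta\uplus\Theta^s\uplus{}^s\Theta^s\uplus X$: once Lemmas~\ref{lemma: lowerlargo impar} and~\ref{lemma: lowerlargo par} show (via cardinality comparisons, with the exceptional cases above handled separately) that $u,v$ lie in the same part, the pair $(\ell,|[\mathrm{id},\cdot]|)$ pins down the $G$-orbit inside that part with a one-line Diophantine check. No inversion identity is needed.
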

We need to deal first with ``exceptional'' intervals $[\mathrm{id}, y]$ for small values of~$\ell(y)$ which cannot be told apart from their cardinalities or $\operatorname{LC}(\mathrm{id}, y)$.
\begin{lem}\label{lema: casos no isomorfos}
    We have $[\mathrm{id},u]\not\simeq [\mathrm{id},v]$ in the following cases: 
    \begin{enumerate}
         \item\label{item: lem no iso 0}  $\{u,v\}=\{s_0\theta^s(0,0), \mathrm{\mathbf{x}}_{5}\}$.
         \item\label{item: lem no iso 1.2} $\{u,v\}=\{\theta^s(0,1), \mathrm{\mathbf{x}}_{6}\}$.
         \item\label{item: lem no iso 1}  $\{u,v\}=\{\theta^s(1,0), \mathrm{\mathbf{x}}_{6}\}$.
         \item\label{item: lem no iso 2}  $\{u,v\}=\{\theta(1,1), \mathrm{\mathbf{x}}_{7}\}$.
    \end{enumerate}
\end{lem}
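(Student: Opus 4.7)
The plan is to distinguish each of the four pairs by exhibiting a poset invariant that separates $[\id,u]$ from $[\id,v]$. By Lemma~\ref{lem: betti invariante}, the length counting sequence $\operatorname{LC}(\id,\cdot)$ is preserved by any poset isomorphism, so it suffices, in each case, to produce a level $i$ at which $f_i^u \neq f_i^v$. I would compute these counts directly from the geometric description in Corollary~\ref{cor: characterization of the Bruhat order}: for each of the eight elements $y$ appearing in the statement, the polygon $\CC_y$ is explicitly given (a quadrilateral when $y \in \{\mathrm{\mathbf{x}}_5, \mathrm{\mathbf{x}}_6, \mathrm{\mathbf{x}}_7\}$, and a hexagon in the remaining four cases), and $f_i^y$ is simply the number of centers of alcoves of length~$i$ lying in $\CC_y$.

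The key qualitative observation is that each $\mathrm{\mathbf{x}}_k$ lives on a wall of the dominant chamber, so $\CC_{\mathrm{\mathbf{x}}_k}$ is a thin parallelogram of width one alcove, whereas $s_0\theta^s(0,0)$, $\theta^s(0,1)$, $\theta^s(1,0)$, and $\theta(1,1)$ are interior elements whose $\CC_y$ are genuine hexagons. This asymmetry forces different distributions of alcoves across lengths, even when the total cardinalities coincide---which is precisely how the four pairs were selected, via the formulas of Proposition~\ref{prop: cardinal lower}. For each pair, I would simply tabulate $f_i^y$ for $i=0,1,\ldots,\ell(y)$ by inspecting the polygon, and then read off a level where the sequences differ.

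The task thus reduces to a small, uniform, finite verification; the main obstacle is purely bookkeeping, in particular treating the down-oriented versus up-oriented alcoves consistently when some alcove centers lie on the boundary of $\CC_y$. Should the $\operatorname{LC}$ sequence happen to coincide in a sub-case (which I do not expect, given the qualitative ``thin versus thick'' dichotomy), a natural fallback is to use finer invariants: the number of dihedral subintervals with fixed endpoint-lengths (a poset invariant by Lemma~\ref{lem: iso-preserva-dih}), or the count of $2$-crowns as employed in the proof of Lemma~\ref{lem: x+rho es invariante}. Both are highly sensitive to whether the lower interval is thin (as for $\mathrm{\mathbf{x}}_k$) or thick (as for the interior elements), and will separate any sub-case left untouched by $\operatorname{LC}$.
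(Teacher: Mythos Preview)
Your primary plan---distinguishing the pairs via the length counting sequence $\operatorname{LC}(\id,\cdot)$---will not work. The paper singles out precisely these four pairs because they \emph{cannot} be told apart by cardinality or by $\operatorname{LC}(\id,y)$; this is stated explicitly in the sentence introducing the lemma. So your ``thin versus thick'' heuristic, while geometrically appealing, is misleading here: the parallelogram $\CC_{\mathbf{x}_k}$ and the hexagon $\CC_y$ happen to contain exactly the same number of alcove centers at each length. Your expectation that $\operatorname{LC}$ suffices is therefore wrong in all four cases, not just in ``a sub-case.''

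Your fallback, however, is exactly what the paper does and is the right move. The paper computes the set $\mathcal{D}(y)=\{z\leq y\mid [z,y]\text{ is dihedral}\}$ for each of the eight elements (using Proposition~\ref{prop: Dyer dihedral} and Corollary~\ref{cor: characterization of the Bruhat order}) and observes that $|\mathcal{D}(u)|\neq|\mathcal{D}(v)|$ in every case; Lemma~\ref{lem: iso-preserva-dih} then finishes the argument. So your proposal is salvageable, but you should promote the dihedral-count invariant from a contingency to the actual method, and drop the $\operatorname{LC}$ step entirely.
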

\begin{proof}
    Let $\phi\colon[\id,u]\to[\id,v]$ be an isomorphism.
    By Lemma~\ref{lem: iso-preserva-dih}, we have $[a,b]\subset [\id,u]$ is a dihedral subinterval if and only if $\phi([a,b])\subset [\id,v]$ is a dihedral subinterval.
    
    From Proposition~\ref{prop: Dyer dihedral} and Corollary~\ref{cor: characterization of the Bruhat order}, it is an easy (but tedious) task to see that:
    \begingroup
    \allowdisplaybreaks
    \begin{align*}
        \mathcal{D}(s_0\theta^s(0,0)) &= \{121, 123, 213, 312, 313, 321, 323\}, \\
        \mathcal{D}(\mathbf{x}_5) &= \{213, 1213, 1321, 2313, 12312, 12313, 13231, \\
                                  &\phantom{=\{\;} 23121, 23123, 31231, 31321, 31323\}, \\
        \mathcal{D}(\theta^s(0,1)) &= \{321, 1213, 1231, 1321, 1323, 2132, 2313, 3231\}, \\
        \mathcal{D}(\theta^s(1,0)) &= \{312, 1213, 1231, 1323, 2132, 2312, 2313, 3132\}, \\
        \mathcal{D}(\mathbf{x}_6) &= \{132, 213, 1213, 1231, 1323, 2312, 2313, 3123, 3132\}, \\
        \mathcal{D}(\theta(1,1)) &= \{1213, 3121, 12312, 12313, 13231, 21321, 21323, \\
                                     &\phantom{=\{\;} 23121, 23132, 31321\}, \\
        \mathcal{D}(\mathbf{x}_7) &= \{213, 1213, 1321, 2313, 12312, 12313, 13231, \\
                                  &\phantom{=\{\;} 23121, 23123, 31231, 31321, 31323\}.
    \end{align*}
    \endgroup
    In each case, we observe that $|\mathcal{D}(u)|\neq |\mathcal{D}(v)|$, so $[\mathrm{id},u]\not\simeq [\mathrm{id},v]$.
\end{proof}
\begin{lem}\label{lemma: lowerlargo impar}
    If $[\id,u]\simeq [\id,v]$ and $\ell(u)=\ell(v)$ is odd, then $u,v\in \Theta$ or $u,v\in {^s\Theta^s}$ or $u,v\in X^\text{odd}$.
\end{lem}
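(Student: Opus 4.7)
The plan is to exploit two invariants of the lower interval $[\id,y]$: the rank $\ell(y)$, preserved by Lemma~\ref{lem: betti invariante}, and the cardinality $|[\id,y]|$, for which Proposition~\ref{prop: cardinal lower} gives explicit quadratic formulas on each part of the $X\Theta$-partition. By Remark~\ref{rem: facts of theta-partition}\ref{remark-item: length of the elements}, odd length forces $u,v \in X^{\mathrm{odd}} \uplus \Theta \uplus {^s\Theta^s}$. I argue by contradiction: assume $u$ and $v$ lie in distinct parts. Since $G$ preserves the Bruhat order (Lemma~\ref{lem: elements of G are isomorphism of poset}), the intervals $[\id,u]$ and $[\id, gu]$ are isomorphic for any $g \in G$, so by Remark~\ref{rem: remark tontos} one may independently replace $u$ and $v$ by canonical representatives of the form $\theta(m,n)$, $s_0\theta^s(m,n)$, or $\mathrm{\mathbf{x}}_{2k+1}$ according to the part they belong to.

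The cross-part comparisons then reduce to short polynomial calculations using $|[\id, u]| = |[\id, v]|$ together with $\ell(u) = \ell(v)$. If $u=\theta(m,n)$ and $v=s_0\theta^s(m',n')$, the rank constraint gives $m+n = m'+n'+1$, and substitution into Proposition~\ref{prop: cardinal lower} produces the equation $6(mn-m'n') = 6(m'+n')+4$, which has no solution modulo $6$. If $u=s_0\theta^s(m,n)$ and $v=\mathrm{\mathbf{x}}_{2k+1}$, then $k = m+n+2$, and the cardinality equation reduces to $6mn + 4(m+n) = 0$, forcing $m=n=0$ and placing $\{u,v\}$ in the $G$-orbit of $\{s_0\theta^s(0,0),\mathrm{\mathbf{x}}_5\}$, excluded by Lemma~\ref{lema: casos no isomorfos}\ref{item: lem no iso 0}. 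If $u=\theta(m,n)$ and $v=\mathrm{\mathbf{x}}_{2k+1}$, then $k=m+n+1$ and the cardinality equation becomes $3mn = m+n+1$, whose only nonnegative integer solution is $(m,n)=(1,1)$; thus $\{u,v\}$ is in the $G$-orbit of $\{\theta(1,1),\mathrm{\mathbf{x}}_7\}$, excluded by Lemma~\ref{lema: casos no isomorfos}\ref{item: lem no iso 2}.

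There is no real conceptual obstacle---the proof is pure case analysis---but the delicate point worth flagging is that cardinality alone \emph{fails} to separate parts at lengths $5$ and $7$: the pairs $\{s_0\theta^s(0,0),\mathrm{\mathbf{x}}_5\}$ and $\{\theta(1,1),\mathrm{\mathbf{x}}_7\}$ each exhibit matching cardinalities across different parts, and these are precisely the exceptional pairs already handled in Lemma~\ref{lema: casos no isomorfos} via the finer invariant $|\mathcal{D}(\cdot)|$ counting dihedral subintervals. All the serious work therefore happens upstream, and the present lemma reduces to assembling the three polynomial identities above together with the appropriate cases of Lemma~\ref{lema: casos no isomorfos}.
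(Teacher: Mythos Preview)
Your proof is correct and follows essentially the same approach as the paper's: reduce via the $X\Theta$-partition to the three odd-length parts, compare cardinalities across parts using the formulas of Proposition~\ref{prop: cardinal lower} together with the rank constraint, and defer the two coincidental-cardinality pairs to Lemma~\ref{lema: casos no isomorfos}. Your treatment is in fact slightly more explicit than the paper's, which handles the ${^s\Theta^s}$ versus $X^{\mathrm{odd}}$ case by analogy rather than writing out the equation $6mn+4(m+n)=0$.
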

\begin{proof} 
    By Remark~\ref{rem: facts of theta-partition}\ref{remark-item: length of the elements}, we have $u,v\in \Theta\uplus {^s\Theta^s}\uplus X^\text{odd}$.
    We will suppose by contrapositive that $u, v$ belong to different regions, then we will prove that $[\id,u]\not\simeq [\id,v]$.
    Modulo swapping $u$ and $v$, there are only three possibilities.
    \begin{itemize}
        \item $u\in \Theta$ and $v\in X^\text{odd}$.
        By Remark~\ref{rem: remark tontos}, we can assume that $u=\theta(m,n)$ and $v=\mathrm{\mathbf{x}}_{2(m+n+2)+1}$.
        By Proposition~\ref{prop: cardinal lower}, we have 
        \begin{equation*}
            |[\id,\theta(m,n)]|=|[\mathrm{id}, \mathrm{\mathbf{x}}_{2(m+n+1)+1}]|\implies 6mn-2m-2n-2=0.
        \end{equation*}
        The unique solution over the non-negative integers of that equation is $(m,n)=(1,1)$.
        By Lemma~\ref{lema: casos no isomorfos}(\ref{item: lem no iso 2}), we have $[\id,u]\not\simeq [\id,v]$.
        \item $u\in {^s\Theta^s}$ and $v\in X^\text{odd}$.
        The proof is similar to the previous case.
        \item $u\in \Theta$ and $v\in {^s\Theta^s}$.
        By Remark~\ref{rem: remark tontos}, we can assume that $u=\theta(m,n)$ and $v=s_0\theta^s(h,k)$.
        If $\ell(u)=\ell(v)$ then $m+n=h+k+2$.
        If in addition $|[\id,\theta(m,n)]|=|[\id,s_0\theta^s(h,k)]|$, we have
        \begin{equation*}
            3(mn-hk-h-k)=2,
        \end{equation*}
        which does not have solutions in~$\mathbbm{Z}$, so $[\id,u]\not\simeq [\id,v]$.\qedhere
    \end{itemize}
\end{proof}
The proof of the following lemma is similar to the previous one, so we will skip it.
\begin{lem}\label{lemma: lowerlargo par}
    If $[\id,u]\simeq [\id,v]$ and $\ell(u)$ is even, then $u,v\in \Theta^s$ or $u,v\in X^{\mathrm{even}}$.
\end{lem}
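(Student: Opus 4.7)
The strategy will mirror the proof of Lemma~\ref{lemma: lowerlargo impar}. By Remark~\ref{rem: facts of theta-partition}\ref{remark-item: length of the elements}, an element of even length lies in $\Theta^s \uplus X^{\text{even}}$, so there is only one case left to rule out: $u \in \Theta^s$ and $v \in X^{\text{even}}$ (or the symmetric case). I will argue by contrapositive, assuming $u$ and $v$ belong to different regions and deriving that $[\id,u] \not\simeq [\id,v]$.

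First, by Remark~\ref{rem: remark tontos} and Lemma~\ref{lem: elements of G are isomorphism of poset}, I may replace $u$ and $v$ by elements in their respective $\langle \delta, \sigma\rangle$-orbits without affecting the poset isomorphism class of the lower intervals, so I reduce to $u = \theta^s(m,n)$ for some $m,n \geq 0$ and $v = \mathrm{\mathbf{x}}_{2k}$ for some $k \geq 1$. Matching lengths via $\ell(\theta^s(m,n)) = 2m+2n+4$ and $\ell(\mathrm{\mathbf{x}}_{2k}) = 2k$ forces $k = m+n+2$. If furthermore $[\id,u]\simeq [\id,v]$, then by Lemma~\ref{lem: betti invariante} the cardinalities must agree, and applying the formulas of Proposition~\ref{prop: cardinal lower} gives
\begin{equation*}
    3m^2+3n^2+12mn+15m+15n+12 = 3(m+n+2)^2+(m+n+2),
\end{equation*}
which simplifies to the Diophantine equation $3mn+m+n=1$.

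The key step is to enumerate the non-negative integer solutions of this equation. When both $m$ and $n$ are at least one, the left-hand side is at least $5$, ruling out all such pairs. The only solutions are therefore $(m,n)=(0,1)$ and $(m,n)=(1,0)$, both giving $k=3$, hence $\{u,v\}=\{\theta^s(0,1), \mathrm{\mathbf{x}}_6\}$ or $\{u,v\}=\{\theta^s(1,0), \mathrm{\mathbf{x}}_6\}$. These exceptional cases are precisely the content of Lemma~\ref{lema: casos no isomorfos}\ref{item: lem no iso 1.2} and Lemma~\ref{lema: casos no isomorfos}\ref{item: lem no iso 1}, which tell us that $[\id,u]\not\simeq[\id,v]$ in both situations. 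I expect no real obstacle here: the proof reduces, as in the odd case, to a short cardinality computation and a finite case analysis that has already been prepared by Lemma~\ref{lema: casos no isomorfos}.
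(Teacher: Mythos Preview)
Your proof is correct and follows exactly the approach the paper indicates: the paper omits the proof entirely, stating only that it is similar to that of Lemma~\ref{lemma: lowerlargo impar}, and your argument is precisely that analogous computation. The Diophantine equation $3mn+m+n=1$ and its two exceptional solutions $(0,1),(1,0)$ are correctly identified, and Lemma~\ref{lema: casos no isomorfos}\ref{item: lem no iso 1.2}--\ref{item: lem no iso 1} were prepared exactly to handle them.
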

\begin{proof}[Proof of Proposition~\ref{prop: main lower}]
    The only if part follows from the definition of the  Bruhat order and the group $G$.
    Conversely, if $[\mathrm{id},u]\simeq [\mathrm{id},v]$, we have $\ell(u)=\ell(v)$.
    We can suppose that $\ell(u)$ is even, as the proof of the other case is similar.
    By Remark~\ref{rem: facts of theta-partition}\ref{remark-item: length of the elements} and Lemma~\ref{lemma: lowerlargo par}, either $u,v\in X^\mathrm{even}$ or $u,v\in \Theta^s$.
    
    If $u,v\in X^\mathrm{even}$, the equality $\ell(u)=\ell(v)$ immediately implies $v\in G\cdot u$.
    
    If $u,v\in \Theta^s$, by Remark~\ref{rem: remark tontos}, we can assume $u=\theta^s(m,n)$, $v=\theta^s(h,k)$.
    The equality $\ell(u)=\ell(v)$ implies $m+n=h+k$.
    From $|[\id,\theta^s(m,n)]|=|[\id,\theta^s(h,k)]|$ and Proposition~\ref{prop: cardinal lower}, we get $\{m,n\}=\{h,k\}$.
    Then either $u=\sigma v$ or $u=v$.
\end{proof}
\section{Towards a general picture in affine Weyl groups}\label{section: towards a general picture}
Let $W = \mathbbm{Z}\Phi^\vee \rtimes W_f$ be an arbitrary affine Weyl group, as in Section~\ref{subsec: stabilization}.
We denote by $W_+ \coloneqq \{x \in W \mid A_x \subset C_+\}$ the set of \emph{dominant elements} in~$W$.

By abuse of notation, we write $y - x \in \Lambda^\vee$ if there exists a coweight $\lambda \in \Lambda^\vee$ such that $x + \lambda = y$ (that is, $A_x + \lambda = A_y$).

\subsection{A conjecture for arbitrary affine Weyl groups}\label{subsec: weak conjecture}
Let us recall Conjecture~\ref{conj: weak conj intro}.
\begin{conj}\label{conj: weak generalization Gen} 
    Let $x, x', y, y' \in W_+$ be such that:
    \begin{itemize}
        \item $[x, y]$ and $[x', y']$ are isomorphic as posets,
        \item $[x, y]$ and $[x', y']$ are full,
        \item there exists $g_0 \in G$ such that $x - g_0 x' \in \Lambda^\vee$.
    \end{itemize}
    Then there exists $g \in G$ such that $y - g y' = x - g x' \in \Lambda^\vee$.
\end{conj}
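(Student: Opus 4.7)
My approach would first reduce to the case $g_0 = \mathrm{id}$ by replacing $(x',y')$ with $(g_0 x', g_0 y')$, which is legitimate since $G$ acts on intervals by poset isomorphisms (Lemma~\ref{lem: elements of G are isomorphism of poset}). Writing $\lambda \coloneqq x - x' \in \Lambda^\vee$, it then suffices to produce a $g \in G$ and $\mu \in \Lambda^\vee$ with $gx' = x - \mu$ and $gy' = y - \mu$, the simplest case being $g = \mathrm{id}$ and $\mu = \lambda$, i.e., $y - y' = \lambda$. Given any poset isomorphism $\phi\colon[x,y]\to[x',y']$, the overall plan mirrors the route to Theorem~\ref{thm-main intro}: extract enough geometric data from the poset $[x,y]$ to determine the translation class of $y - x$, and show that $\phi$ forces this class to agree with that of $y' - x'$.

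The three technical ingredients I would aim to generalize from $\widetilde{A}_2$ are the following. First, an analog of Lemma~\ref{lem: x+rho es invariante} should give $\phi(x + \rho^\vee) = x' + \rho^\vee$, where $\rho^\vee$ is the sum of fundamental coweights. The hope is that $x+\rho^\vee$ admits a purely combinatorial characterization inside $[x,y]$---as the unique element $z$ of minimal length with $\operatorname{U}(x)\subset [x,z]$, having a prescribed number of $2$-crowns---via the formula~\eqref{crown}. Iterating along a dominant direction, one obtains $\phi(x + n\rho^\vee) = x' + n\rho^\vee$ for all admissible $n$, which already pins down the ``diagonal'' of the intervals. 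Second, an analog of Proposition~\ref{prop: seg max fixed under iso} would use the sets $\mathcal{D}^l(x)$ and $\mathcal{D}^u(y)$ to single out, in a $G$-equivariant manner, distinguished long dihedral subintervals whose endpoints $\phi$ must match. Third, one would use a higher-rank analog of Proposition~\ref{prop: An intro}, encoding the relative position of $x$ and $y$ by a collection of parallelotopes playing the role of $\Pgn_{x,y}$. Combining these three pieces along the same lines as the proof of Proposition~\ref{prop: iso implies congruent polygons} would force the collections of parallelotopes to be congruent up to a translation, hence force $y - y' = x - x'$ modulo a symmetry in $G$.

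The main obstacle is the third ingredient. In $\widetilde{A}_2$ this rested on Corollary~\ref{cor: characterization of the Bruhat order}, which already fails in $\widetilde{A}_3$, so the side-length formulas of Proposition~\ref{prop: length of all sides}---the cornerstone of the $\widetilde{A}_2$ proof of congruence---cannot be read off from the poset directly. A natural workaround, supported by Propositions~\ref{prop: An intro} and~\ref{prop: dominant translations in An tilde intro} and by the philosophy of Insight~\ref{insight: dihedral}, is to work first inside the dominant cone, where the lattice formula of~\cite{CdLP23} provides partial geometric information, and then bootstrap to the remaining chambers using $G$-symmetry and the action of the piecewise-translation maps $\tau_\lambda$. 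A secondary obstacle, specific to the passage from the isomorphism of polytopes to the equality $y - y' = x - x'$, is ruling out spurious $g \in G$ that relate $x$ to $x'$ by a non-trivial Weyl-group symmetry but fail to extend compatibly to $y,y'$; in the thick $\widetilde{A}_2$ case this was controlled by the thickness hypothesis via Proposition~\ref{prop: seg max fixed under iso}, and one expects the fullness hypothesis in Conjecture~\ref{conj: weak generalization Gen} to play the analogous role in general---verifying this is where the argument is likely to become most delicate.
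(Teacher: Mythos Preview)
The statement you are attempting to prove is a \emph{conjecture}, not a theorem: the paper does not provide a proof of Conjecture~\ref{conj: weak generalization Gen}. The paper's only evidence is computational---exhaustive SageMath checks for interval pairs within the bounds listed in Table~\ref{tab: conj-evidence}, together with the fact that Theorem~\ref{thm-main intro} settles the thick case in $\widetilde{A}_2$. So there is no ``paper's own proof'' to compare against.

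Your outline is a reasonable sketch of how one might \emph{try} to attack the conjecture by pushing the $\widetilde{A}_2$ machinery to higher rank, and you are candid about the obstacles. But you should be aware that the paper itself flags these same obstacles as genuinely open: the analogue of Corollary~\ref{cor: characterization of the Bruhat order} fails already in $\widetilde{A}_3$ (see the remark following that corollary), there is no known higher-rank replacement for Lemma~\ref{lem: x+rho es invariante} or Proposition~\ref{prop: seg max fixed under iso}, and the authors explicitly state in the introduction that they do not know how to define the analogue of the six zones (and hence piecewise translations) outside the dominant cone. Your proposal is therefore not a proof but a research program, and its key steps are precisely the ones the paper leaves unresolved.
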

We verified Conjecture~\ref{conj: weak generalization Gen} through exhaustive SageMath checks on all interval pairs within the bounds listed in Table~\ref{tab: conj-evidence}.
\begin{table}[ht]
    \centering
    \renewcommand{\arraystretch}{1.6}
    \begin{tabular}{|c|c|c|}
        \hline
        \textbf{Type} & \textbf{Bound on $\ell(x,y)$} & \textbf{Bound on $\ell(x)$} \\
        \hline
        $\widetilde{A}_2$ & Thick intervals & \textemdash \\
        \hline
        $\widetilde{A}_3$ & $\leq 10$ & $\leq 20$ \\
        \hline
        $\widetilde{A}_4$ & $\leq 12$ & $\leq 10$ \\
        \hline
        $\widetilde{B}_2$ & $\leq 15$ & $\leq 14$ \\
        \hline
        $\widetilde{B}_3$ & $\leq 11$ & $\leq 15$ \\
        \hline
        $\widetilde{C}_3$ & $\leq 11$ & $\leq 12$ \\
        \hline
        $\widetilde{G}_2$ & $\leq 15$ & $\leq 20$ \\
        \hline
    \end{tabular}
    \caption{Verified interval pairs by type and bounds as evidence for Conjecture~\ref{conj: weak conj intro}.
    The $\widetilde{A}_2$ case is fully proven for thick intervals in Theorem~\ref{thm-main intro}.
    All affine types of rank~$\leq 3$ are represented.}
    \label{tab: conj-evidence}
\end{table}

\subsection{Towards a general picture in type~\texorpdfstring{$A$}{A}}\label{subsec: results in An tilde}
In this section, we propose extensions of Theorem~\ref{thm-main intro}, prove Proposition~\ref{prop: An intro}, and provide a partial generalization of Proposition~\ref{prop: trasl-por-dominat} for dominant weights in type~$\widetilde{A}_n$ (Proposition~\ref{prop: dominant translations in An tilde intro}).

The following conjecture extends Theorem~\ref{thm-main intro} to other intervals in type~$\widetilde{A}_2$.
Although we have not been able to prove it, doing so would require a deeper understanding of the cardinalities of the intervals $[x, y]$, which appears to be more subtle than the analysis in Section~\ref{subsec: translations preserving cardinality}.
Nonetheless, the conjecture has been verified using SageMath for all pairs of full intervals with $\ell(x, y) \leq 10$ and $\ell(x) \leq 20$.
\begin{conj}\label{conj: a2general}
    Let \( W \) be an affine Weyl group of type~\( \widetilde{A}_2 \).
    Suppose \( x, x', y, y' \in W \) satisfy \( x, x' \in D \), and that both intervals \( [x, y] \) and \( [x', y'] \) are full and poset isomorphic.
    Then there exists an element \( g \in G \) such that
    \begin{equation*}
        \tau_{\lambda} \colon [x, y] \to [gx', gy']
    \end{equation*}
    induces a poset isomorphism, where \( \lambda = gx' - x \in \Lambda \).
    
    An analogous statement holds for \( x, x' \in s_0 D \), provided the regions used to define \( \tau_{\lambda} \) are suitably modified.
\end{conj}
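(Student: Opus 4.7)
The plan is to extend the framework developed in Sections~\ref{section: sides}--\ref{subsec: congruence and proof of main} from thick dominant intervals to arbitrary full intervals with endpoints in $D$ (or $s_0 D$). The overall architecture mirrors that of Theorem~\ref{thm-main intro}: first show that any poset isomorphism between two full intervals forces the associated polygons $\Pgn_{x,y}$ and $\Pgn_{x',y'}$ to be translates of one another (up to the symmetry $\sigma$), and then invoke Proposition~\ref{prop: trasl-por-dominat} to upgrade this congruence to an explicit piecewise-translation isomorphism.

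First I would use the group $G$ to reduce to the trivial case $g = \id$: up to replacing $x', y'$ by $gx', gy'$ for a suitable $g \in \langle \sigma, \delta \rangle$, we may assume $x'$ lies in the same $X\Theta$-stratum as $x$. This is essentially the content of Corollary~\ref{cor: iso intvls same theta partition}, which uses only fullness, and it furnishes a weight $\lambda = x' - x \in \Lambda$. The heart of the argument is then to prove $\Pgn_{x,y} + \lambda = \Pgn_{x',y'}$, which forces $y' = y + \lambda$ and places us directly in the hypotheses of Proposition~\ref{prop: trasl-por-dominat}.

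For the polygon-recovery step, the thick case (Proposition~\ref{prop: iso implies congruent polygons}) relied critically on Proposition~\ref{prop: seg max fixed under iso}: the maximal-length dihedral subintervals $[x, z_i(x)]$ and $[z_i(y), y]$ combinatorially pin down the vertices of $\Pgn_{x,y}$. For non-thick full intervals this identification can fail when the dihedral legs are too short, and the natural substitute is a finer combinatorial invariant in the spirit of Lemma~\ref{lem: x+rho es invariante}, which uses $2$-crown counts and upper-cover cardinalities to detect the distinguished element $x + \rho$. Concretely, I would enumerate the small full intervals (those with $\ell(x, z_i(x)) < 4$ for some $i$) by hand, in analogy with the case-by-case lists of $\mathcal{D}^l_{\mathrm{rest}}$ and $\mathcal{D}^u_{\mathrm{rest}}$ in Section~\ref{section: maximal dihedral}, and upgrade Proposition~\ref{prop: seg max fixed under iso} by supplementing dihedral data with these additional local invariants.

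The hard part will be the $s_0 D$ case and the uniform treatment of the six $W_f$-zones when the endpoints are not dominant. The machinery of $F_w$, $\Pgn_{x,y}^w$, and $\tau_\lambda$ is built around dominant $x$; for $x \in s_0 D$ one must reposition the star $\mathcal{St}(x)$ around the anti-dominant fixed point and redefine the zones and piecewise translations accordingly, as suggested by the $\triangleleft_a, \triangleright_a$ construction in Section~\ref{subsec: geometry of general intervals}. Verifying that the analogs of Lemmas~\ref{lem: identities translation}--\ref{lem: bijec-tras-implica-iso}, as well as the dihedral-invariance arguments of Section~\ref{section: maximal dihedral}, persist in this reparametrized setting is the principal technical obstacle; the computational evidence recorded in the paper suggests this is feasible but laborious.
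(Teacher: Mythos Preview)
This statement is presented in the paper as a \emph{conjecture}, not a theorem; the authors explicitly write that they ``have not been able to prove it, doing so would require a deeper understanding of the cardinalities of the intervals $[x, y]$, which appears to be more subtle than the analysis in Section~\ref{subsec: translations preserving cardinality}.'' There is therefore no paper proof to compare against, only computational evidence (SageMath verification for $\ell(x,y)\leq 10$, $\ell(x)\leq 20$).

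Your sketch is a plausible outline of how one would \emph{try} to extend the thick-interval argument, but it does not close the gap the authors themselves flag. Two concrete issues: first, the conjecture only assumes $x,x'\in D$, whereas $y,y'$ may be arbitrary; the polygon $\Pgn_{x,y}$, Corollary~\ref{cor: iso intvls same theta partition}, and the entire dihedral-leg machinery of Section~\ref{section: sides} are all stated for $y$ dominant, so the framework you invoke is not available as written. Second, and more seriously, the thick hypothesis is exactly what makes Proposition~\ref{prop: seg max fixed under iso} work---it guarantees the maximal dihedral subintervals are long enough to sit in $\mathcal{D}^l_{\mathrm{sgm}}(x)$ and $\mathcal{D}^u_{\mathrm{sgm}}(y)$ rather than in the exceptional ``rest'' pieces. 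Your proposal to enumerate the short cases by hand and patch in extra invariants (crown counts, etc.) is precisely the approach the authors would have taken; the fact that they left this as a conjecture, and singled out the cardinality comparison of Section~\ref{subsec: translations preserving cardinality} as the missing ingredient, indicates that this enumeration does not go through cleanly. In particular, showing $|[x,y]|=|[x+\lambda,y+\lambda]|$ forces $\Pgn_{x,y}+\lambda=\Pgn_{x+\lambda,y+\lambda}$ (Lemma~\ref{lem: same cardinal iff same poligon}) is what drives the reverse implication in the thick case, and the analog for non-dominant $y$ is not established.
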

Henceforth, let $W$ denote the affine Weyl group of type~$\widetilde{A}_n$ with simple roots $\Delta = \{ \alpha_1, \ldots, \alpha_n \}$, so that we may identify $\Lambda$ with the coweight lattice $\Lambda^\vee$.

Let $\Lambda_+ \coloneqq \Lambda \cap \overline{C_+}$ denote the set of dominant weights, equipped with the dominance order $\preceq$.
Let $\{\varpi_1, \dots, \varpi_n\}$ be the fundamental weights, and for convenience, set $\varpi_0 = \mathbf{0}$ to denote the origin of~$E$.
Then the vertices of the fundamental alcove are $\{\varpi_0, -\varpi_1, \dots, -\varpi_n\}\subset\Lambda$.
We let $W$ act on $\Lambda$ from the left in the natural way.

Note that $x$ is dominant if and only if $x(-\varpi_i) \in \Lambda_+$ for all $i \in \{0, 1, \dots, n\}$

The Bruhat order on dominant elements is characterized by the following criterion.
\begin{prop}[{\cite[Proposition 1.1]{CdLP25}}]\label{prop: criterion bruhat-coweight}
    Let $x, y \in W_+$.
    Then $x \leq y$ in the Bruhat order if and only if 
    \begin{equation*}
        x(-\varpi_i) \preceq y(-\varpi_i) \quad \text{for all } i \in \{0,1,\hdots,n\}.
    \end{equation*}
\end{prop}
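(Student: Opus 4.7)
The plan is to prove both implications separately, with the forward direction being straightforward and the reverse direction requiring a careful inductive argument. Throughout, I will identify $W$ with its action on $\Lambda$ and exploit the fact that the fundamental alcove $A_{\mathrm{id}}$ has vertices $\{-\varpi_0, -\varpi_1, \ldots, -\varpi_n\}$ (with $\varpi_0 = \mathbf{0}$), so $x$ is determined by the tuple $(x(-\varpi_0), x(-\varpi_1), \ldots, x(-\varpi_n))$ of vertices of $A_x$.

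For the forward direction, it is enough to check the claim for cover relations $x \lessdot y$ with $x, y \in W_+$, since Bruhat order is generated by covers and dominance order is transitive. Every such cover has the form $y = s_{\beta,k}\, x$ for some affine reflection across the hyperplane $H_{\beta,k}$ separating $A_x$ from $A_y$, with $\beta \in \Phi_+$. Applying the reflection formula gives $y(-\varpi_i) - x(-\varpi_i) = c_i\, \beta^\vee$ for each $i$, where $c_i = k - \beta(x(-\varpi_i))$. The length-increasing condition $x < y$, combined with the fact that both alcoves lie in $\overline{C_+}$, forces $c_i \geq 0$ for every $i$. Since $\beta^\vee$ is a non-negative integer combination of simple coroots in type $\widetilde{A}_n$, we conclude $x(-\varpi_i) \preceq y(-\varpi_i)$.

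For the reverse direction, I proceed by induction on $\sum_{i=0}^n \operatorname{ht}(y(-\varpi_i) - x(-\varpi_i))$, where $\operatorname{ht}$ is the height with respect to the simple coroots. The base case $x = y$ is trivial. For the inductive step, assume $x \neq y$ and all the coordinate-wise inequalities hold. I would pick an index $i_0$ for which $y(-\varpi_{i_0}) - x(-\varpi_{i_0}) \neq 0$, and locate a simple coroot $\alpha_j^\vee$ that appears with positive coefficient in this difference. The key lemma to establish is that one can always find an affine reflection $t = s_{\beta, k}$ with $\beta \in \Phi_+$ such that $z \coloneqq tx$ satisfies $x \lessdot z$, $z \in W_+$, and $z(-\varpi_i) \preceq y(-\varpi_i)$ for all $i$. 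Given such a $z$, the tuple of differences $y(-\varpi_i) - z(-\varpi_i)$ is strictly smaller in total height, so the inductive hypothesis yields $z \leq y$, whence $x < z \leq y$.

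The main obstacle is proving existence of the intermediate element $z$ in the inductive step; this is where the type-$A$ structure is essential. The natural candidate is to take $\beta$ to be a root for which the cover $x \lessdot s_{\beta,k} x$ exists and such that the induced motion of the vertices is ``small enough'' not to overshoot any $y(-\varpi_i)$. One way to organize this is via the affine permutation realization of $W$: writing $x$ as a periodic bijection $\mathbbm{Z} \to \mathbbm{Z}$, the tuple $(x(-\varpi_i))$ encodes the sorted partial sums of $x$'s window entries, and the dominance order on these tuples is equivalent to the tableau criterion for Bruhat order on affine permutations. Turning this into a rigorous argument requires handling the fact that the map $x \mapsto (x(-\varpi_i))_i$ is not injective on all of $W$ but is injective on $W_+$, and carefully checking that swaps adjacent window entries (which give Bruhat covers) produce controlled changes in the vertex tuple. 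Once the existence of $z$ is established, the induction closes immediately.
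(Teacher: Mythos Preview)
The paper does not prove this proposition at all: it is quoted verbatim from \cite{CdLP25} and used as a black box. So there is no ``paper's proof'' to compare your attempt against, and the relevant question is simply whether your argument stands on its own.

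It does not, for two reasons.

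\textbf{Forward direction.} Your first sentence asserts that it suffices to check cover relations $x \lessdot y$ with \emph{both} $x,y\in W_+$. But Bruhat order is generated by covers in $W$, not in $W_+$: a saturated chain from a dominant $x$ to a dominant $y$ may well pass through non-dominant elements, and you have not shown that one can always find a chain staying inside $W_+$. Even if such a chain exists in type $\widetilde{A}_n$ (which is plausible), it requires its own argument. Without it, the telescoping step collapses, because for intermediate $z\notin W_+$ the vertices $z(-\varpi_i)$ need not be dominant and the sign analysis of the $c_i$ no longer follows from ``both alcoves lie in $\overline{C_+}$''. A cleaner route for this direction is to prove the stronger statement that for \emph{every} cover $z\lessdot z'$ in $W$ one has $z'(-\varpi_i)-z(-\varpi_i)\in\mathbb{Z}_{\ge 0}\Phi^\vee_+$; this telescopes regardless of where the chain wanders, but it too requires a careful sign argument that you have not supplied.

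\textbf{Reverse direction.} You correctly identify the crux---the existence of an intermediate $z$ with $x\lessdot z\in W_+$ and $z(-\varpi_i)\preceq y(-\varpi_i)$ for all $i$---and then do not prove it. Gesturing at the affine-permutation model and the tableau criterion is the right instinct, but as written this is a proof outline rather than a proof. The reverse implication is the substantive half of the proposition, and the work you defer is precisely the content of \cite[Proposition 1.1]{CdLP25}.
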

By Proposition~\ref{prop: criterion bruhat-coweight}, we directly obtain the following result.
\begin{lem}\label{lem: tras in A_n dominante}
    Let $x,y\in W_+$ and  $\lambda\in\Lambda_+\cap \mathbbm{Z}\Phi$.
    Then $x\leq y\iff x+\lambda\leq y+\lambda$.
\end{lem}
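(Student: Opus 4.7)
The plan is to reduce the statement to Proposition~\ref{prop: criterion bruhat-coweight} by showing that translation by $\lambda$ intertwines the relevant dominance comparisons on the vertex weights. The hypothesis $\lambda \in \mathbbm{Z}\Phi \cap \Lambda_+$ plays two distinct roles, and both must be checked before the criterion can be applied. First, since $W = \mathbbm{Z}\Phi^\vee \rtimes W_f$ and the identification $\Lambda \cong \Lambda^\vee$ in type $\widetilde{A}_n$ sends $\mathbbm{Z}\Phi$ to $\mathbbm{Z}\Phi^\vee$, the assumption $\lambda \in \mathbbm{Z}\Phi$ ensures that $x+\lambda$ is a genuine element of $W$ (and not merely of the extended affine Weyl group). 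Second, dominance of $\lambda$ together with dominance of $x$ guarantees that the translated alcove $A_x + \lambda$ stays inside $\overline{C_+}$, so $x+\lambda \in W_+$; the same reasoning gives $y+\lambda \in W_+$.

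With $x+\lambda, y+\lambda$ confirmed dominant, I would apply Proposition~\ref{prop: criterion bruhat-coweight} to both pairs:
\begin{equation*}
    x \leq y \iff x(-\varpi_i) \preceq y(-\varpi_i) \text{ for all } i,
\end{equation*}
\begin{equation*}
    x+\lambda \leq y+\lambda \iff (x+\lambda)(-\varpi_i) \preceq (y+\lambda)(-\varpi_i) \text{ for all } i.
\end{equation*}
The key geometric fact bridging these is the identity
\begin{equation*}
    (x+\lambda)(-\varpi_i) = x(-\varpi_i) + \lambda \qquad (i = 0, 1, \dots, n).
\end{equation*}
I would justify this by noting that the vertex $x(-\varpi_i)$ of $A_x$ is the unique vertex $v$ of $A_x$ such that $v - (-\varpi_i) \in \mathbbm{Z}\Phi^\vee$ (this is the convention used implicitly by Proposition~\ref{prop: criterion bruhat-coweight}, and made explicit via $v(x)$ in Proposition~\ref{prop: An intro}). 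Since $\lambda \in \mathbbm{Z}\Phi^\vee$, the vertex $x(-\varpi_i) + \lambda$ of $A_x + \lambda = A_{x+\lambda}$ satisfies the analogous congruence modulo $\mathbbm{Z}\Phi^\vee$, and so must equal $(x+\lambda)(-\varpi_i)$ by uniqueness.

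Finally, I would invoke the elementary fact that the dominance order on $\Lambda$ is translation invariant: $\mu \preceq \nu \iff \mu + \lambda \preceq \nu + \lambda$, since both sides amount to $\nu - \mu \in \mathbbm{Z}_{\geq 0}\Phi_+$. Stringing these equivalences together,
\begin{equation*}
    x(-\varpi_i) \preceq y(-\varpi_i) \iff x(-\varpi_i) + \lambda \preceq y(-\varpi_i) + \lambda \iff (x+\lambda)(-\varpi_i) \preceq (y+\lambda)(-\varpi_i),
\end{equation*}
and combining with both applications of Proposition~\ref{prop: criterion bruhat-coweight} yields the desired equivalence $x \leq y \iff x+\lambda \leq y+\lambda$.

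There is no serious obstacle here: the argument is essentially a bookkeeping exercise once the vertex-labeling identity $(x+\lambda)(-\varpi_i) = x(-\varpi_i) + \lambda$ is correctly formulated. The only subtle point is making sure all three uses of the hypothesis (well-definedness in $W$, preservation of dominance, and the coroot-lattice congruence used to identify vertex labels) are handled cleanly and in the right order.
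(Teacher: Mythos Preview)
Your proof is correct and follows essentially the same approach as the paper, which simply states that the lemma follows directly from Proposition~\ref{prop: criterion bruhat-coweight}. You have carefully unpacked the details the paper leaves implicit: that $\lambda\in\mathbbm{Z}\Phi$ keeps us inside $W$, that dominance of $\lambda$ preserves $W_+$, that $(x+\lambda)(-\varpi_i)=x(-\varpi_i)+\lambda$, and that the dominance order is translation invariant.
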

For any $\lambda \in E$, we define the \emph{weight polytope associated with $\lambda$} by $P(\lambda)\coloneqq \operatorname{Conv}(\{w\lambda\mid w\in W_f\}).$ 
We have the following lemma.
\begin{lem}{\cite[Proposition~8.44]{Hal15}}\label{lem: equiv dominance order}
    Suppose that $\lambda,\mu\in \Lambda_+$.
    Then $\mu\preceq\lambda$ if and only if $\mu\in P(\lambda)$.
\end{lem}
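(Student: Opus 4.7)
The plan is to establish both implications using the support function of $P(\lambda)$ together with the classical fact that, for dominant $\lambda$, every element $\lambda - w\lambda$ with $w \in W_f$ lies in the nonnegative cone spanned by the simple roots.

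For the implication $(\Rightarrow)$, suppose $\mu \preceq \lambda$, so that $\lambda - \mu = \sum_{i=1}^n c_i \alpha_i$ with $c_i \geq 0$. Since $P(\lambda)$ is a convex, compact, $W_f$-invariant polytope, we have $\mu \in P(\lambda)$ if and only if $(v,\mu) \leq \max_{\nu \in P(\lambda)}(v,\nu)$ for every $v \in E$, and by $W_f$-invariance of both sides it suffices to verify this for $v \in \overline{C_+}$. The first step is the key identity
\begin{equation*}
    \max_{w\in W_f}(v, w\lambda) = (v,\lambda) \qquad \text{for all } v \in \overline{C_+},
\end{equation*}
which follows because $\lambda - w\lambda$ is a nonnegative combination of simple roots (see below) and $(v,\alpha_i) \geq 0$ for dominant $v$. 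Given this, the equality $\lambda - \mu = \sum c_i \alpha_i$ immediately yields $(v,\mu) \leq (v,\lambda) = \max_{\nu \in P(\lambda)}(v,\nu)$ for all dominant $v$, whence $\mu \in P(\lambda)$.

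For the implication $(\Leftarrow)$, suppose $\mu = \sum_{w \in W_f} c_w\, w\lambda$ is a convex combination ($c_w \geq 0$, $\sum c_w = 1$). I will invoke the classical lemma that for dominant $\lambda$ and any $w \in W_f$, one has $\lambda - w\lambda \in \mathbbm{Z}_{\geq 0}\Delta$. This is proved by induction on $\ell(w)$: writing $w = s_i w'$ with $\ell(w') < \ell(w)$, we get $\lambda - w\lambda = (\lambda - w'\lambda) + w'\lambda - s_i(w'\lambda)$; the first summand is in $\mathbbm{Z}_{\geq 0}\Delta$ by the inductive hypothesis, and the second equals $\langle w'\lambda, \alpha_i^\vee\rangle\alpha_i$, which is a nonnegative multiple of $\alpha_i$ exactly when $w'\lambda$ lies above the hyperplane fixed by $s_i$, a standard consequence of the reduced-expression assumption. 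Averaging, $\lambda - \mu = \sum_w c_w(\lambda - w\lambda)$ lies in the nonnegative real cone spanned by $\Delta$.

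The main (minor) obstacle is converting this nonnegative real combination into a \emph{nonnegative integer} combination to conclude $\mu \preceq \lambda$. For this I would argue as follows: since $\mu \in P(\lambda)$, the difference $\mu - \lambda$ belongs to the affine span of $W_f\lambda$, which is contained in $\lambda + \mathbbm{Z}\Phi$ because each $w\lambda - \lambda$ is an integer combination of simple roots by the previous paragraph. Therefore $\lambda - \mu \in \mathbbm{Z}\Phi$, and since $\Delta$ is a $\mathbbm{Z}$-basis of $\mathbbm{Z}\Phi$ and $\lambda - \mu$ also lies in $\mathbbm{R}_{\geq 0}\Delta$, the coefficients are forced to be nonnegative integers, giving $\mu \preceq \lambda$. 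The remaining verification---that the argument in the first paragraph genuinely needs only dominant test vectors---reduces via $W_f$-invariance of $P(\lambda)$ to noting that any $v \in E$ can be moved into $\overline{C_+}$ by some $w \in W_f$, and correspondingly the support value transforms equivariantly.
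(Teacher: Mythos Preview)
The paper does not give a proof of this lemma; it simply cites \cite[Proposition~8.44]{Hal15}. So there is no in-paper argument to compare against, and your proof attempt should be judged on its own.

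Your forward implication and your argument that $\lambda-\mu\in\mathbbm{R}_{\ge 0}\Delta$ in the backward implication are the standard and correct proofs: the support-function reduction to dominant test vectors, together with the fact $\lambda-w\lambda\in\mathbbm{Z}_{\ge 0}\Delta$ for dominant $\lambda$, is exactly how this is done. (Your inductive step for the latter fact is fine, though the nonnegativity of $\langle w'\lambda,\alpha_i^\vee\rangle$ comes from $\langle w'\lambda,\alpha_i^\vee\rangle=\langle\lambda,(w')^{-1}\alpha_i^\vee\rangle\ge 0$, using that $\ell(s_iw')>\ell(w')$ forces $(w')^{-1}\alpha_i\in\Phi_+$.)

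The genuine gap is in your integrality paragraph. The \emph{real} affine span of $W_f\lambda$ is an affine subspace of $E$; it is certainly not contained in the discrete coset $\lambda+\mathbbm{Z}\Phi$, so the sentence ``the affine span of $W_f\lambda$\dots is contained in $\lambda+\mathbbm{Z}\Phi$'' is false, and nothing in your argument forces $\lambda-\mu\in\mathbbm{Z}\Phi$. In fact, without extra hypotheses this can fail: in type $A_1$ take $\lambda=\varpi_1$, $\mu=0$; then $0\in P(\varpi_1)$ but $\lambda-\mu=\varpi_1=\tfrac{1}{2}\alpha_1\notin\mathbbm{Z}_{\ge 0}\Delta$. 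So if $\preceq$ is taken with integer coefficients, the lemma as stated is false for general $\lambda,\mu\in\Lambda_+$, and your ``obstacle'' cannot be overcome. Either $\preceq$ here means $\lambda-\mu\in\mathbbm{R}_{\ge 0}\Delta$ (in which case your integrality paragraph is unnecessary and should be deleted), or one is implicitly assuming $\lambda-\mu\in\mathbbm{Z}\Phi$, which does hold in every application the paper makes of this lemma: there $\lambda=y(-\varpi_i)$ and $\mu=z(-\varpi_i)$ for the same $i$, and elements of the affine Weyl group move $-\varpi_i$ within a single $\mathbbm{Z}\Phi$-coset. You should state clearly which convention you are using and drop the incorrect affine-span argument.
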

\begin{defn}\label{def: i-th parallelepiped}
    Let $x,y\in W_+$ be such that $x\leq y$.
    For $i\in \{0,1,\hdots,n\}$, we define the \emph{$i$-th parallelotope $\Par_{x,y}^i$ associated with $[x,y]$} as 
    \begin{equation*}
        \Par_{x,y}^i\coloneqq(x(-\varpi_i)+\mathbbm{R}_{\geq0}\Delta)\cap(y(-\varpi_i)-\mathbbm{R}_{\geq0}\Delta).
    \end{equation*}
\end{defn}
The following result is a reformulation of Proposition~\ref{prop: An intro}.
\begin{prop}\label{prop: An}
    Let $x, y \in W_+$ with $x \leq y$.
    Then $z \in [x, y] \cap W_+$ if and only if $z(-\varpi_i) \in \Par_{x,y}^i \cap\, \overline{C_+}$ for all $i \in \{0, 1, \dots, n\}$.
\end{prop}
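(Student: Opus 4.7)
The plan is to deduce Proposition~\ref{prop: An} essentially as a direct unpacking of Proposition~\ref{prop: criterion bruhat-coweight}, once two preliminary observations are in place.

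First, I would note that the fundamental alcove $A_+$ has vertex set $\{-\varpi_0, -\varpi_1, \ldots, -\varpi_n\}$ (with the convention $\varpi_0 = 0$), so the vertices of $A_z$ are precisely $\{z(-\varpi_i)\}_{i=0}^n$ for any $z \in W$. Consequently, $z \in W_+$ if and only if all these vertices lie in $\overline{C_+}$: the forward direction is immediate, and the converse holds because if every vertex of the open alcove $A_z$ lies in the closed convex cone $\overline{C_+}$, then $\overline{A_z} \subset \overline{C_+}$, and since $A_z$ crosses no walls of the dominant chamber, in fact $A_z \subset C_+$.

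The main technical step is to identify, for $z, x \in W_+$, the geometric condition $z(-\varpi_i) \in x(-\varpi_i) + \mathbbm{R}_{\geq 0}\Delta$ with the dominance order relation $x(-\varpi_i) \preceq z(-\varpi_i)$. The non-trivial direction is upgrading a non-negative \emph{real} combination of simple roots to a non-negative \emph{integer} one, and this is where the structure of type~$\widetilde{A}_n$ enters. Every $z \in W = \mathbbm{Z}\Phi^\vee \rtimes W_f$ acts on $\Lambda$ as $\lambda \mapsto w(\lambda) + \mu$ with $\mu \in \mathbbm{Z}\Phi^\vee = \mathbbm{Z}\Delta$, and each simple reflection in $W_f$ changes a weight by an integer multiple of a simple root. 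Hence $W$ preserves cosets modulo $\mathbbm{Z}\Delta$, so $z(-\varpi_i) - x(-\varpi_i) \in \mathbbm{Z}\Delta$. Since $\Delta$ is a basis of $E$, intersecting the real cone $\mathbbm{R}_{\geq 0}\Delta$ with the lattice $\mathbbm{Z}\Delta$ yields exactly $\mathbbm{Z}_{\geq 0}\Delta$, giving $x(-\varpi_i) \preceq z(-\varpi_i)$ in the dominance order.

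With these two observations, both directions of the proposition follow immediately. If $z \in [x,y] \cap W_+$, Proposition~\ref{prop: criterion bruhat-coweight} yields the chain $x(-\varpi_i) \preceq z(-\varpi_i) \preceq y(-\varpi_i)$ for all $i$, while dominance of $z$ gives $z(-\varpi_i) \in \overline{C_+}$; combined, each vertex lies in $\Par_{x,y}^i \cap \overline{C_+}$. Conversely, the condition $z(-\varpi_i) \in \Par_{x,y}^i \cap \overline{C_+}$ for every $i$ places $z$ in $W_+$ via the vertex characterization and produces the dominance chain, whence Proposition~\ref{prop: criterion bruhat-coweight} delivers $x \leq z \leq y$. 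The only delicate point is the lattice-theoretic upgrade described in the middle paragraph, which is automatic in type~$\widetilde{A}_n$ but would demand more care in non simply-laced settings where $\Phi^\vee$ and $\Phi$ need not be identified.
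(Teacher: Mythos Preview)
Your proof is correct and rests on the same backbone as the paper's---namely, Proposition~\ref{prop: criterion bruhat-coweight}---but you bridge the dominance order $\preceq$ and the real cone condition by a different mechanism. The paper invokes Lemma~\ref{lem: equiv dominance order} (the weight-polytope characterization $\mu \preceq \lambda \iff \mu \in P(\lambda)$) together with the standard identity $P(\lambda) \cap \overline{C_+} = (\lambda - \mathbbm{R}_{\geq 0}\Delta) \cap \overline{C_+}$; you instead observe directly that $z(-\varpi_i) - x(-\varpi_i) \in \mathbbm{Z}\Delta$ (because $W$ preserves cosets of the root lattice), so a non-negative \emph{real} combination of simple roots is automatically a non-negative \emph{integer} one. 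Your route is more self-contained (no appeal to the weight-polytope lemma) and you are more explicit about why the vertex condition forces $z \in W_+$, a step the paper leaves implicit. The paper's route, on the other hand, cites a result (Lemma~\ref{lem: equiv dominance order}) that holds in all types, whereas your lattice argument leans on the simply-laced identification $\Phi = \Phi^\vee$, as you correctly note.
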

\begin{proof}
    By Proposition~\ref{prop: criterion bruhat-coweight}, we have
    \begin{equation}\label{eq: proof general order and geom.}
        z \in [x, y] \cap W_+ 
        \iff 
        x(-\varpi_i) \preceq z(-\varpi_i) \preceq y(-\varpi_i)
        \quad \text{for all } i \in \{0, 1, \dots, n\}.
    \end{equation}
    By Lemma~\ref{lem: equiv dominance order},
    \begin{equation}\label{eq: proof equiv dom order}
        z(-\varpi_i) \preceq y(-\varpi_i)
        \iff 
        z(-\varpi_i) \in P(y(-\varpi_i)).
    \end{equation}
    Observe that
    \begin{equation*}
        P(y(-\varpi_i)) \cap \overline{C_+}
        = \big(y(-\varpi_i) - \mathbb{R}_{\geq 0} \Delta\big) \cap \overline{C_+}.
    \end{equation*}
    Thus, Equation~\eqref{eq: proof equiv dom order} becomes
    \begin{equation}\label{eq1: proof equic dom order}
        z(-\varpi_i) \preceq y(-\varpi_i)
        \iff 
        z(-\varpi_i) \in \big(y(-\varpi_i) - \mathbb{R}_{\geq 0} \Delta\big) \cap \overline{C_+}.
    \end{equation}
    Analogously,
    \begin{align}
        \begin{split}\label{eq2: proof equic dom order}
            x(-\varpi_i) \preceq z(-\varpi_i)
            &\iff 
            x(-\varpi_i) \in \big(z(-\varpi_i) - \mathbb{R}_{\geq 0} \Delta\big) \cap \overline{C_+} \\
            &\iff 
            z(-\varpi_i) \in \big(x(-\varpi_i) + \mathbb{R}_{\geq 0} \Delta\big) \cap \overline{C_+}.
        \end{split}
    \end{align}
    The result follows from Equations~\eqref{eq: proof general order and geom.}, \eqref{eq1: proof equic dom order}, and \eqref{eq2: proof equic dom order}, together with Definition~\ref{def: i-th parallelepiped}.
\end{proof}
The next lemma is immediate.
\begin{lem}\label{lem: tras presv n-parall}
    Let $\lambda \in \Lambda_+ \cap \mathbbm{Z} \Phi$, and let $x, y \in W_+$ with $x \leq y$.
    Then, for every $i \in \{0, 1, \dots, n\}$, we have
    \begin{equation*}
        \Par^i_{x,y} + \lambda = \Par^i_{x+\lambda,\, y+\lambda}.
    \end{equation*}
\end{lem}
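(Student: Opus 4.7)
The plan is short because the statement essentially unwinds a definition once the correct identification is in place. The first step is to observe that, by our convention, $x+\lambda$ denotes the unique element of $W$ whose alcove is $A_x+\lambda$. Under the identification $\Lambda=\Lambda^\vee$ fixed at the start of Section~\ref{subsec: results in An tilde}, the sublattice $\mathbbm{Z}\Phi$ coincides with the coroot lattice $\mathbbm{Z}\Phi^\vee$, so for any $\lambda\in\mathbbm{Z}\Phi$ the translation $t_\lambda$ is an actual element of $W=\mathbbm{Z}\Phi^\vee\rtimes W_f$. Hence, for $\lambda\in\Lambda_+\cap\mathbbm{Z}\Phi$, the operation of adding $\lambda$ is realized inside $W$ as left multiplication by $t_\lambda$, that is, $x+\lambda=t_\lambda\cdot x$ and $y+\lambda=t_\lambda\cdot y$.

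Next, I would apply both sides of this identity to the vertex $-\varpi_i$ of $A_{\mathrm{id}}$. Since $t_\lambda$ acts on $E$ by translation, we obtain
\begin{equation*}
    (x+\lambda)(-\varpi_i)=t_\lambda\big(x(-\varpi_i)\big)=x(-\varpi_i)+\lambda,
\end{equation*}
and similarly $(y+\lambda)(-\varpi_i)=y(-\varpi_i)+\lambda$ for every $i\in\{0,1,\ldots,n\}$.

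Finally, I would substitute these two identities into Definition~\ref{def: i-th parallelepiped} applied to the pair $(x+\lambda,y+\lambda)$ and factor out the common translation by $\lambda$:
\begin{align*}
    \Par^i_{x+\lambda,y+\lambda}
    &=\big((x+\lambda)(-\varpi_i)+\mathbbm{R}_{\geq0}\Delta\big)\cap\big((y+\lambda)(-\varpi_i)-\mathbbm{R}_{\geq0}\Delta\big)\\
    &=\big(x(-\varpi_i)+\lambda+\mathbbm{R}_{\geq0}\Delta\big)\cap\big(y(-\varpi_i)+\lambda-\mathbbm{R}_{\geq0}\Delta\big)\\
    &=\Par^i_{x,y}+\lambda.
\end{align*}
There is no hard step here: the whole argument is a one-line verification once one recognizes that adding an element of $\mathbbm{Z}\Phi$ to a group element $x$ coincides with left multiplication by $t_\lambda$. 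The only point that deserves a brief explicit mention in the write-up is why $\lambda\in\Lambda_+\cap\mathbbm{Z}\Phi$ is a legitimate translation inside the affine Weyl group (which is exactly the coroot-lattice identification above); the dominance hypothesis $\lambda\in\Lambda_+$ itself plays no role in this particular identity but guarantees that the resulting pair $(x+\lambda,y+\lambda)$ stays in $W_+$ with $x+\lambda\leq y+\lambda$ via Lemma~\ref{lem: tras in A_n dominante}, so that $\Par^i_{x+\lambda,y+\lambda}$ is defined in the first place.
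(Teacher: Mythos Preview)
Your argument is correct and is precisely the direct unwinding of Definition~\ref{def: i-th parallelepiped} that the paper has in mind: the paper gives no proof at all, simply stating that the lemma is immediate. Your explicit identification $(x+\lambda)(-\varpi_i)=x(-\varpi_i)+\lambda$ via $x+\lambda=t_\lambda x$ is exactly the one-line check being alluded to, and your remark that dominance of $\lambda$ is only needed so that $\Par^i_{x+\lambda,y+\lambda}$ is well defined is a nice clarification beyond what the paper says.
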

The following is a restatement of Proposition \ref{prop: dominant translations in An tilde intro}
\begin{prop}\label{prop: dominant translations in An tilde}
    Let $x, y \in W_+$ be such that $x \leq y$.
    Define the set 
    \begin{equation}
        I(x,y)\coloneqq\{1\leq i \leq n \mid \overline{A_z}\cap H_{\alpha_i,0}\neq \emptyset \mbox{ for some $z\in [x,y]\cap W_+$} \}.
    \end{equation}
    Then for every $\lambda\in \mathbbm{Z}\Phi\cap \bigoplus_{i\not\in I(x,y)}\mathbbm{Z}_{\geq 0}\varpi_i$
    the map
    \begin{align}
        [x, y] \cap W_+ &\to [x+\lambda, y+\lambda] \cap W_+\\
        z&\mapsto z+\lambda
    \end{align}
    is a poset isomorphism.
\end{prop}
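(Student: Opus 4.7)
The plan is to prove that $\tau_\lambda\colon z\mapsto z+\lambda$ is an order-preserving bijection from $[x,y]\cap W_+$ onto $[x+\lambda,y+\lambda]\cap W_+$. Since $\lambda$ lies in $\Lambda_+$, each vertex $(z+\lambda)(-\varpi_k)=z(-\varpi_k)+\lambda$ remains in $\overline{C_+}$ whenever $z\in W_+$, so $\tau_\lambda$ sends dominant elements to dominant elements; and since $\lambda\in\mathbbm{Z}\Phi\cap\Lambda_+$, Lemma~\ref{lem: tras in A_n dominante} shows that it preserves the Bruhat order on dominant elements. Hence $\tau_\lambda$ is a well-defined, order-preserving, injective map $[x,y]\cap W_+\to[x+\lambda,y+\lambda]\cap W_+$, and everything reduces to surjectivity.

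Given $z'\in[x+\lambda,y+\lambda]\cap W_+$, let $z\in W$ be the unique element with $A_z=A_{z'}-\lambda$, so that $z+\lambda=z'$. Applying Proposition~\ref{prop: An} to $[x+\lambda,y+\lambda]$ and then Lemma~\ref{lem: tras presv n-parall},
\begin{equation*}
    z'(-\varpi_k)\;\in\;\Par^k_{x+\lambda,y+\lambda}\cap\overline{C_+}\;=\;(\Par^k_{x,y}+\lambda)\cap\overline{C_+}
\end{equation*}
for every $k$, hence $z(-\varpi_k)=z'(-\varpi_k)-\lambda\in\Par^k_{x,y}$ automatically. A second invocation of Proposition~\ref{prop: An}, now applied to $[x,y]$, will place $z$ in $[x,y]\cap W_+$ as soon as each $z(-\varpi_k)$ also lies in $\overline{C_+}$. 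Writing $\lambda=\sum_i\lambda_i\varpi_i$ and using the simply-laced normalization $(\alpha_j,\varpi_i)=\delta_{ij}$, this amounts to the inequality $(\alpha_j,z'(-\varpi_k))\geq\lambda_j$ for every simple root $\alpha_j$ and every $k$. When $j\in I(x,y)$, the hypothesis forces $\lambda_j=0$, and the inequality is just the dominance of $z'$; when $j\notin I(x,y)$, the hypothesis gives only the weaker bound $(\alpha_j,v)\geq 1$ at every vertex $v$ of every dominant alcove of $[x,y]$, and we must amplify this "unit buffer" into a "buffer of $\lambda_j$" for the translated interval.

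This amplification is the main obstacle. The proposed strategy is to induct on $\sum_i\lambda_i$ and reduce to the base case $\lambda=\varpi_i$ with $i\notin I(x,y)$, chaining one-weight translations along the way. In this base case the required inequality $(\alpha_i,z'(-\varpi_k))\geq 1$ for every $z'\in[x+\varpi_i,y+\varpi_i]\cap W_+$ and every $k$ is equivalent to the assertion that $i\notin I(x+\varpi_i,y+\varpi_i)$. Assuming for contradiction that some alcove $A$ in $[x+\varpi_i,y+\varpi_i]\cap W_+$ touches $H_{\alpha_i,0}$, the translate $A-\varpi_i$ has all $n+1$ vertices in the parallelotopes $\Par^\bullet_{x,y}$, with one vertex on $H_{\alpha_i,-1}$. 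Using the product-cone structure of each $\Par^k_{x,y}$ in the basis $\{\alpha_1,\dots,\alpha_n\}$, together with the constraint that the remaining vertices of $A-\varpi_i$ pair non-negatively with every $\alpha_{j'}\neq\alpha_i$, one aims to force one of these remaining vertices onto $H_{\alpha_i,0}$, contradicting $i\notin I(x,y)$. Making this final combinatorial step precise in $\widetilde{A}_n$---controlling precisely how the $\widetilde{A}_n$ Dynkin adjacencies govern which lattice points of a parallelotope can appear as vertices of a single alcove---is the heart of the argument and where the principal difficulty lies.
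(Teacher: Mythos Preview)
Your proposal has a genuine gap, which you yourself flag: the ``final combinatorial step'' showing $i\notin I(x+\varpi_i,\,y+\varpi_i)$ is never carried out. There is also a second, unflagged gap in the induction. To chain single-weight translations you must apply the inductive hypothesis to $[x+\varpi_i,\,y+\varpi_i]$ with weight $\lambda-\varpi_i$, and for this you need $I(x+\varpi_i,\,y+\varpi_i)\subset I(x,y)$, i.e.\ that \emph{every} $j\notin I(x,y)$ (not just $i$) stays outside the index set after translation; you do not address this. Finally, the sketched contradiction for the base case does not work as written: the translate $A-\varpi_i$ has a vertex on $H_{\alpha_i,-1}$, hence is not dominant, so even if another of its vertices were forced onto $H_{\alpha_i,0}$ this alcove would not lie in $[x,y]\cap W_+$ and would not witness $i\in I(x,y)$.

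The paper sidesteps the induction entirely by upgrading your ``unit buffer at alcove vertices'' to a buffer on the whole parallelotope. The key claim is that for $j\notin I(x,y)$ one has $\Par_{x,y}^k\cap H_{\alpha_j,0}=\emptyset$ for every $k$, so $\Par_{x,y}^k$ lies entirely in the open half-space $\{(\alpha_j,\cdot)>0\}$. Setting $H=\bigcap_{j\in I(x,y)}\{(\alpha_j,\cdot)\ge 0\}$, this gives $\Par_{x,y}^k\cap H=\Par_{x,y}^k\cap\overline{C_+}$; and since $(\alpha_j,\lambda)=0$ for $j\in I(x,y)$, intersection with $H$ commutes with translation by $\lambda$, yielding
\[
\Par_{x+\lambda,\,y+\lambda}^{\,k}\cap\overline{C_+}\;=\;\bigl(\Par_{x,y}^{\,k}\cap\overline{C_+}\bigr)+\lambda
\]
directly for every admissible $\lambda$. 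Surjectivity then follows from Proposition~\ref{prop: An} in one stroke. The conceptual difference is that the paper reasons about the full continuous parallelotope rather than about individual alcoves, which both eliminates the induction and handles all indices $j\notin I(x,y)$ simultaneously.
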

\begin{proof}
    By Lemma~\ref{lem: tras in A_n dominante}, the map is an order embedding.
    It remains to show surjectivity.

    Let $H = \bigcap_{i \in I(x,y)} \{v \in E \mid (v, \alpha_i) \geq 0\}$.
    For a subset $X \subset E$, denote $X_H \coloneqq X \cap H$.
    Let $S \subset E$ and $s \in S$.
    Since $(s + \lambda, \alpha_i) = (s, \alpha_i)$ for all $i \in I(x,y)$, it follows that
    \begin{equation*}
        S_H + \lambda = (S + \lambda)_H.
    \end{equation*}
    Let $0 \leq k \leq n$.
    Applying this to $S = \Par_{x,y}^k$, we get
    \begin{equation}\label{eq: prop gnrl Par tras}
        (\Par_{x,y}^k)_H + \lambda = (\Par_{x,y}^k + \lambda)_H.
    \end{equation}
    By convexity of alcoves, we have $i \notin I(x, y)$ if and only if $\Par_{x,y}^k \cap\, H_{\alpha_i, 0} = \emptyset$ for all $0 \leq k \leq n$.
    In particular,
    \begin{equation}\label{eq: prop gnrl Par inter}
        (\Par_{x,y}^k)_H = \Par_{x,y}^k \cap\, \overline{C_+}.
    \end{equation}
    Combining \eqref{eq: prop gnrl Par tras} and \eqref{eq: prop gnrl Par inter}, we obtain
    \begin{equation*}
        (\Par_{x,y}^k + \lambda)_H = (\Par_{x,y}^k \cap\, \overline{C_+}) + \lambda.
    \end{equation*}
    By Lemma~\ref{lem: tras presv n-parall}, we have
    \begin{equation*}
        \Par_{x+\lambda, y+\lambda}^k \cap\, \overline{C_+} 
        = (\Par_{x+\lambda, y+\lambda}^k)_H 
        = (\Par_{x,y}^k + \lambda)_H 
        = (\Par_{x,y}^k \cap\, \overline{C_+}) + \lambda.
    \end{equation*}
    The surjectivity then follows from Proposition~\ref{prop: An}.
\end{proof}
\bibliographystyle{abbrv}
\bibliography{biblio}

\begin{thebibliography}{10}

\bibitem{BB05}
A.~Björner and F.~Brenti.
\newblock {\em Combinatorics of {C}oxeter groups}, volume 231 of {\em Graduate Texts in Mathematics}.
\newblock Springer, New York, 2005.

\bibitem{BE09}
A.~Björner and T.~Ekedahl.
\newblock On the shape of {B}ruhat intervals.
\newblock {\em Ann. of Math. (2)}, 170(2):799--817, 2009.

\bibitem{Bou68}
N.~Bourbaki.
\newblock {\em {É}léments de mathématique. Fasc.\,XXXIV: Groupes et algèbres de {L}ie. Chap.\,IV--VI}, volume 1337 of {\em Actual. Sci. Ind.}
\newblock Hermann, Paris, 1968.

\bibitem{BCM06}
F.~Brenti, F.~Caselli, and M.~Marietti.
\newblock Special matchings and {K}azhdan\,--\,{L}usztig polynomials.
\newblock {\em Adv. Math.}, 202(2):555--601, 2006.

\bibitem{BGH23}
G.~Burrull, T.~Gui, and H.~Hu.
\newblock Asymptotic log-concavity of dominant lower {B}ruhat intervals via {Brunn\,--\,Minkowski} inequality.
\newblock Preprint, arXiv:2311.17980, 2023.

\bibitem{BLP23}
G.~Burrull, N.~Libedinsky, and D.~Plaza.
\newblock Combinatorial invariance conjecture for {$\widetilde{A}_2$}.
\newblock {\em Int. Math. Res. Not.}, 2023(10):8903--8933, 2023.

\bibitem{CdLP23}
F.~Castillo, D.~de~la Fuente, N.~Libedinsky, and D.~Plaza.
\newblock On the size of {B}ruhat intervals.
\newblock Preprint, arXiv:2309.08539, 2023.

\bibitem{CdLP25}
F.~Castillo, D.~de~la Fuente, N.~Libedinsky, and D.~Plaza.
\newblock Paper {BOAT}.
\newblock Preprint, arXiv:2504.04489, 2025.

\bibitem{dC03}
F.~du~Cloux.
\newblock Rigidity of {S}chubert closures and invariance of {K}azhdan\,--\,{L}usztig polynomials.
\newblock {\em Adv. Math.}, 180(1):146--175, 2003.

\bibitem{Dy91}
M.~Dyer.
\newblock On the “{B}ruhat graph” of a {C}oxeter system.
\newblock {\em Compositio Math.}, 78(2):185--191, 1991.

\bibitem{Dyer87}
M.~J. Dyer.
\newblock {\em Hecke algebras and reflections in {C}oxeter groups}.
\newblock PhD thesis, University of Sydney, 1987.

\bibitem{Hal15}
B.~Hall.
\newblock {\em Lie Groups, Lie Algebras, and Representations—An Elementary Introduction}, volume 222 of {\em Graduate Texts in Mathematics}.
\newblock Springer, Cham, second edition, 2015.

\bibitem{hul03}
A.~Hultman.
\newblock Bruhat intervals of length 4 in {W}eyl groups.
\newblock {\em J. Combin. Theory Ser. A}, 102(1):163--178, 2003.

\bibitem{HulPhd2003}
A.~Hultman.
\newblock {\em Combinatorial complexes, {B}ruhat intervals and reflection distances}.
\newblock PhD thesis, KTH, Stockholm, Sweden, 2003.

\bibitem{Hum90}
J.~Humphreys.
\newblock Reflection groups and {C}oxeter groups.
\newblock {\em Cambridge Studies in Advanced Mathematics}, No. 90, 1990.

\bibitem{Jan79}
J.~Jantzen.
\newblock {\em Moduln mit einem höchsten Gewicht}, volume 750 of {\em Lecture Notes in Mathematics}.
\newblock Springer, Berlin, 1979.

\bibitem{Kat85}
S.~Kato.
\newblock On the {K}azhdan\,--\,{L}usztig polynomials for affine {W}eyl groups.
\newblock {\em Adv. Math.}, 55(2):103--130, 1985.

\bibitem{KL79}
D.~Kazhdan and G.~Lusztig.
\newblock Representations of {C}oxeter groups and {H}ecke algebras.
\newblock {\em Invent. Math.}, 53(2):165--184, 1979.

\bibitem{LP23}
N.~Libedinsky and L.~Patimo.
\newblock On the affine hecke category for {$SL_3$}.
\newblock {\em Selecta Math. (N.S.)}, 29(4):Article\,64, 2023.

\bibitem{Lus80}
G.~Lusztig.
\newblock Hecke algebras and {J}antzen's generic decomposition patterns.
\newblock {\em Adv. Math.}, 37(2):121--164, 1980.

\bibitem{LW18}
G.~Lusztig and G.~Williamson.
\newblock Billiards and tilting characters for {$\mathrm{SL}_3$}.
\newblock {\em SIGMA Symmetry Integrability Geom. Methods Appl.}, 14:Paper\,No.\,015,\,22, 2018.

\end{thebibliography}
\end{document}